\theoremstyle{plain}
\newtheorem{teo}{Theorem}[section]
\newtheorem{cor}[teo]{Corollary}
\newtheorem{prop}[teo]{Proposition}
\newtheorem{lem}[teo]{Lemma}
\newtheorem{cla}[teo]{Claim}
\newtheorem{rem}[teo]{Remark}
\theoremstyle{defin}
\newtheorem{defin}[teo]{Definition}
\newtheorem{Q}[teo]{Question}
\newcommand{\system}[1]{\mbox{\fontfamily{cmss}\fontshape{n}\fontseries{m}\selectfont#1}}
\newcommand{\ZF}{\system{ZF}}
\newcommand{\ZFC}{\system{ZFC}}
\newcommand{\AC}{\system{AC}}
\newcommand{\GCH}{\system{GCH}}
\newcommand{\DC}{\system{DC}}
\newcommand{\AD}{\system{AD}}
\newcommand{\CH}{\system{CH}}
\DeclareMathOperator{\cof}{cof}
\DeclareMathOperator{\crt}{crt}
\DeclareMathOperator{\OD}{OD}
\DeclareMathOperator{\HOD}{HOD}
\DeclareMathOperator{\Ord}{Ord}
\DeclareMathOperator{\Ult}{Ult}
\DeclareMathOperator{\ot}{ot}
\DeclareMathOperator{\dom}{dom}
\DeclareMathOperator{\ran}{ran}
\DeclareMathOperator{\LST}{LST}
\DeclareMathOperator{\id}{id}
\DeclareMathOperator{\Def}{Def}
\DeclareMathOperator{\Reg}{Reg}
\DeclareMathOperator{\Card}{Card}
\DeclareMathOperator{\lh}{lh}
\title{I0 and rank-into-rank axioms}
\author{Vincenzo Dimonte\footnote{Universit\`{a} degli Studi di Udine, via delle Scienze, 206 33100 Udine (UD) \emph{E-mail address:} \texttt{vincenzo.dimonte@gmail.com}}}
\begin{document}

\maketitle

\begin{abstract}

Just a survey on I0.

\emph{Keywords}: Large cardinals; Axiom I0; rank-into-rank axioms; elementary embeddings; relative constructibility; embedding lifting; singular cardinals combinatorics. 

\emph{2010 Mathematics Subject Classifications}: 03E55 (03E05 03E35 03E45)

\end{abstract}

\section{Informal Introduction to the Introduction}
 Ok, that's it. People who know me also know that it is years that I am ranting about a book about I0, how it is important to write it and publish it, etc. I realized that this particular moment is still right for me to write it, for two reasons: time is scarce, and probably there are still not enough results (or anyway not enough different lines of research). This has the potential of being a very nasty vicious circle: there are not enough results about I0 to write a book, but if nobody writes a book the diffusion of I0 is too limited, and so the number of results does not increase much. 

To avoid this deadlock, I decided to divulge a first draft of such a hypothetical book, hoping to start a ``conversation'' with that. It is literally a first draft, so it is full of errors and in a liquid state. For example, I still haven't decided whether to call it I0 or $I_0$, both notations are used in literature. I feel like it still lacks a vision of the future, a map on where the research could and should going about I0. Many proofs are old but never published, and therefore reconstructed by me, so maybe they are wrong. Maybe even well-known proofs are wrong: for example the only published proof of \ref{Delta0} has something missing, and this was discovered decades after, with a question on MathOverflow. So please, consider this paper as very temporary, and if you spot some problem, do not hesitate to write me!  

Finally, if for some reason you need some result on this paper, do note cite it! Unfortunately, given the direction bibliometric is going, citing this paper is completely useless for my career, even harmful. So, as long as the world does not wake up from this folly, please try to locate the original paper where the result is located and cite that (and if you're thankful, cite a random paper of mine, since this is what mathematics is now\footnote{Of course, this is a joke on the current state of events, not to be taken as a serious suggestion}). I submitted a shorter version of this survey to a journal, if it is published please cite that. 

It would be unfair to ignore the fact that there is already an excellent survey on I0, but I feel it is done with a different scope. If you are interested in I0, please read also \cite{Cramer2}, as it nicely complement the paper you are reading now.

\section{Introduction}

\begin{quote}
 In 1984 Woodin showed [$\dots$] that $\AD^{L(\mathbb{R})}$ follows from a proposition stronger than I1 and straining the limits of consistency [$\dots$] Moreover, Woodin's investigations of I0 was to produce a detailed and coherent structural theory for $L(V_{\delta+1})$.
\end{quote}

\begin{quote}
With attention shifted upward to the strongest large cardinal hypotheses, Woodin boldly formulated I0 just at the edge of the Kunen inconsistency and amplified Martin's embeddings proof to establish the following in early 1984:
\end{quote}

\begin{teo}[Woodin]
\label{thebeginning}
  Assume I0. Then $\AD^{L(\mathbb{R})}$.
\end{teo}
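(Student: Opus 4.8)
The plan is to route the argument through the theory of homogeneously Suslin sets, using the embedding supplied by I0 as the engine that manufactures the required measures. Recall the general principle, descending from Martin's proof of analytic determinacy, that whenever a set of reals $A$ carries a homogeneously Suslin representation the game $G_A$ is determined; the whole task therefore reduces to showing that \emph{every} set of reals belonging to $L(\mathbb{R})$ is homogeneously Suslin, and then checking that the strategies so produced can be arranged to lie inside $L(\mathbb{R})$, so that $\AD$ holds \emph{there} and not merely in $V$. The guiding heuristic throughout is the now-standard analogy between $L(V_{\lambda+1})$ and $L(\mathbb{R})=L(V_{\omega+1})$, under which $\lambda$ plays the role of $\omega$ and the elementary embedding $j\colon L(V_{\lambda+1})\to L(V_{\lambda+1})$ granted by I0 is the large-cardinal shadow of the reflection phenomena that determinacy produces one level below.

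Concretely, let $\kappa_0=\crt(j)$ and $\kappa_{n+1}=j(\kappa_n)$, so that $\lambda=\sup_n\kappa_n$. First I would extract from $j$ and its finite iterates the combinatorial content needed lower down: each $\kappa_n$ is measurable as witnessed by (a restriction of) the embedding, and below $\lambda$ one finds an abundance of measurable and Woodin cardinals. The elementarity of $j$ is precisely what makes the associated measures cohere under the projections demanded by homogeneity. At this point there are two ways to proceed. The shortest is to note that I0 yields $\omega$ Woodin cardinals with a measurable above them and to invoke Woodin's theorem (building on Martin--Steel) that this configuration already implies $\AD^{L(\mathbb{R})}$; this is clean but defers all the difficulty to a second, equally deep result. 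The route faithful to Woodin's original argument instead amplifies Martin's embeddings method directly: one uses the iterability of $j$ to build, for each set of reals of $L(\mathbb{R})$, a genuinely homogeneous tree, climbing the hierarchy of sets of $L(\mathbb{R})$ ordered by complexity and feeding the representation of one set into the construction of the next.

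The hard part, and the reason the theorem sits ``at the edge of the Kunen inconsistency,'' is establishing homogeneity \emph{uniformly} across all of $L(\mathbb{R})$ rather than merely for the projective sets. Finitely many Woodin cardinals already handle the projective level, but passing through every level of $L(\mathbb{R})$ requires that the homogeneity measures persist cofinally up to $\lambda$ and survive the transfinite recursion on the complexity of the sets being represented; this is exactly what the full embedding $j$ on $L(V_{\lambda+1})$, as opposed to any of its proper fragments, is designed to deliver. Controlling this recursion --- keeping the trees homogeneous as the representing ordinals grow toward $\lambda$, and verifying that the resulting strategies are captured inside $L(\mathbb{R})$ --- is where essentially all of the technical weight lies, and it is the step I expect to occupy the bulk of the work.
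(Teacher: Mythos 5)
First, a point of calibration: the paper never proves this theorem. It is stated in the introduction purely as a historical anchor, with the explicit remark that Woodin's 1984 argument was never published and ``quietly disappeared,'' the Kanamori quotation recording only that it ``amplified Martin's embeddings proof.'' So there is no proof in the paper to compare you against, and your proposal must stand on its own.

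Of your two routes, only the first one actually closes, and you should promote it from an aside to the proof. Make it precise as follows: I0 gives $j\upharpoonright V_\lambda:V_\lambda\prec V_\lambda$ with critical sequence $\langle\kappa_n:n\in\omega\rangle$; the $(\kappa_0,\kappa_1)$-extender derived from this embedding witnesses that $\kappa_0=\crt(j)$ is superstrong; below a superstrong cardinal there is a normal-measure-one set of Woodin cardinals (a standard fact, in Kanamori's book); and ``$\alpha$ is Woodin'' is absolute between $V_{\kappa_0}$ and $V$ for $\alpha<\kappa_0$, since it is a statement about $V_{\alpha+2}$ and $\kappa_0$ is inaccessible. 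Hence there are $\omega$ Woodin cardinals \emph{bounded below} the measurable cardinal $\kappa_0$, and Woodin's theorem that $\omega$ Woodin cardinals with a measurable above them imply $\AD^{L(\mathbb{R})}$ finishes the argument. The boundedness is not cosmetic: your phrasing about ``an abundance of measurable and Woodin cardinals below $\lambda$'' invites taking Woodin cardinals cofinal in $\lambda$, and then there is no measurable above them ($\lambda$ itself has cofinality $\omega$). Note also that this route uses only I3, not I0 --- which is exactly consistent with the paper's remark that $\AD^{L(\mathbb{R})}$ turned out to have far lower consistency strength, and explains why the route is anachronistic: Woodin cardinals did not yet exist in 1984, which is why the original proof could not have gone this way.

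Your second route --- the one you call faithful to Woodin's original argument and to which you devote most of the proposal --- is not a proof but a description of one. ``Use the iterability of $j$ to build, for each set of reals of $L(\mathbb{R})$, a genuinely homogeneous tree, climbing the hierarchy by complexity'' names the goal without constructing anything: no trees, no measures, no statement of what the recursion hypothesis is at a given level of $L(\mathbb{R})$, no argument that the homogeneity measures cohere at limit stages, no verification that the resulting strategies lie in $L(\mathbb{R})$. Your own closing paragraph concedes that this recursion is ``where essentially all of the technical weight lies.'' That recursion \emph{is} the theorem; identifying it is not supplying it. So the honest status of your submission is: correct modulo citation of two deep published results along the first route, and a genuine gap --- coinciding exactly with the unpublished content of Woodin's original proof --- along the second.
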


This sparse lines in Kanamori's \emph{The Higher Infinite} \cite{Kanamori} were probably for years the only published trace of the mysterious I0. This fact is even more surprising as \cite{Kanamori} was written in 1994, so ten years after the striking result and even more from its first formulation. What happened? Why all this mystery?

I0 was just one of the many large cardinals that were created in a short span of time: strong cardinals, supercompact cardinals, huge cardinals, Woodin cardinals, rank-into-rank axioms (its smaller siblings), $\dots$ were introduced in a rapid fire, shifting the paradigms of set theory. But I0 could not keep up the pace with all of them: its main victory was short-termed, as $\AD^{L(\mathbb{R})}$ was proven to be of much lower consistency strength, there were no other examples of important propositions implied by I0, and it was definitely too large for inner model theory (in fact, the larger cardinal axiom of all times). So, before even a publication, it quietly disappeared, and with it the proof of Theorem \ref{thebeginning}. 

Still, under the surface there was some activity: the bound between I0 and $\AD^{L(\mathbb{R})}$ ran deeper than just Theorem \ref{thebeginning}, it involved the whole structure of a model of the form $L(V_{\lambda+1})$, that under I0 had properties very similar than those in $L(\mathbb{R})$ under \AD. This peculiarity was explored by Woodin, and the first output was Kafkoulis' lecture notes \emph{An AD-like axiom} \cite{Kafkoulis} (at that time I0 was called Axiom A), a transcription of Woodin's lectures on the argument, that started to be copied and distributed among mathematicians at the beginning of the Nineties. This underground distribution was, of course, very limited (Internet was at its infancy).

In the meantime the rank-into-rank axioms were enjoying a period of popularity, thanks to their connections with algebra, and Laver, in 2001, tried to push some of his results on rank-into-rank (in particular reflection) towards I0. In \cite{Laver4} we can find therefore the first officially published fragments of the theory of I0, taken from Kafkoulis notes, and in 2004 Kafkoulis himself published a paper on Coding Lemmata that built on such notes \cite{Kafkoulis2}. After that, still silence for many years.

The rhythm of publications changed abruptly in 2011. The reason is Woodins' \emph{Suitable extender models} magnum opus. The aim of such paper (published in two parts) was to push inner model theory up to all large cardinals, so it was necessary to know as much as possible as the top of the hierarchy. Therefore in the sprawling 561 pages there are whole sections dedicated to I0, the structure of $L(V_{\lambda+1})$ under it (in a much more developed way respect to Kafkoulis notes) and even stronger axioms. There was a new exciting realm to explore, finally mature, and with connections to other fields of set theory. The people that were Woodin's students in that period (the author included) continued the work, expanding the research on I0 in different directions.

Such a rapid expansion, after a long silence, carries the risk of excessive fragmentation. It is then time to take stock of the situation and pose solid bases on such a venture, not based on underground notes, folklore, and oral tradition. The objective of this paper is therefore to collect all the results on rank-into-rank axioms and I0, with a stress on the latest one, both from the I0 folklore and from the published papers, so that anybody can have all the tools for reading the latest papers or doing research on her own. 

Section \ref{KunensT} glances briefly at some important large cardinals, up until the first inconsistency in the large cardinal hierarchy, i.e., the (non-)existence of $j:V\prec V$. 

Starting from this, Section \ref{rank} is about rank-into-rank axioms, i.e., axioms of the form $j:V_\lambda\prec V_\lambda$ (I3) and $j:V_{\lambda+1}\prec V_{\lambda+1}$ (I1), with a particular attention to the application operator.

In Section \ref{LVlambda} I0 is introduced, and with it all the structural properties of $L(V_{\lambda+1})$ that we can have for free. Much of this part is taken from \cite{Kafkoulis}.

Section \ref{strimp} is a detailed analysis of many (all?) axioms from I3 to I0, with all the proofs of strong implications among them. The techniques of inverse limits and square roots are introduced, that are now a focal centre of the investigations around I0 (see \cite{Cramer}). 

Section \ref{simil} uses again \cite{Kafkoulis} as main source, introducing all the results about similarities between I0 and $\AD^{L(\mathbb{R})}$, including the newest developments: measurability of $\lambda^+$, Coding Lemma, large cardinal properties of $\Theta$, perfect set property, an analogous of weakly homogenous Suslin-ness.

In Section \ref{forcing} all these axioms are treated in relation to forcing, with many results of indestructibility and independence results.

In Section \ref{dissimil} it is noticed how such independence result can actually destroy the similarities between I0 and $\AD^{L(\mathbb{R})}$: the examples treated are the Ultrafilter Axiom, Wadge Lemma, a partition property and Turing Determinacy.

Section \ref{Icarus} casts a quick glance on axioms stronger than I0, introduced in \cite{Woodin}: the Icarus sets.

Section \ref{further} describes highly speculatively some possible future lines of inquiry.

\section{Kunen's Theorem}
\label{KunensT}

To reach the top of the large cardinal hierarchy, it is worth to have a glance at the lower levels before. In the bottom, large cardinal axioms are very combinatoric in nature: an inaccessible cardinal is a cardinal that is regular and closed under exponentiation, a Ramsey cardinal is a cardinal that satisfies a particular partition property, a Rowbottom cardinal is a cardinal that satisfies another partition property, and so on\footnote{Admittedly, this is a very rough classification, as for example indescribable cardinals are difficult to insert in this narrative}. The pivotal moment was when Scott proved that a combinatoric property on a cardinal is equivalent to a global property of the universe:

\begin{defin}[Ulam, 1930]
 A cardinal $\kappa$ is \emph{measurable} iff there exists a $\kappa$-complete ultrafilter on $\kappa$.
\end{defin}

\begin{defin}
 Let $M,\ N$ be sets or classes. Then $j:M\to N$ is an \emph{elementary embedding} and we write $j:M\prec N$ iff it is injective and for any formula $\varphi$ and $a_1,\dots,a_n\in M$, $M\vDash\varphi(a_1,\dots,a_n)$ iff $N\vDash\varphi(j(a_1),\dots,j(a_n))$.
\end{defin}

Fom now on, if we write $j:M\prec N$ we mean that $j$ is not the identity.

So an elementary embedding is a homeomorphism of the logical structure. In particular, $M$ and $N$ must satisfy the same sentences, so for example $M\vDash\ZF$ iff $N\vDash\ZF$. It is easy to see that for all ordinals $\alpha$, $j(\alpha)\geq\alpha$. If $M\vDash\AC$ or $N\subseteq M$, than it is possible to prove that there must exist an ordinal $\alpha$ such that $j(\alpha)>\alpha$. 

\begin{defin}
 Let $M,\ N$ be sets or classes such that $M\vDash\AC$ or $N\subseteq M$. Then the \emph{critical point} of $j$, $\crt(j)$, is the least $\alpha$ such that $j(\alpha)>\alpha$.
\end{defin}

\begin{teo}[Scott, 1961]
 A cardinal $\kappa$ is measurable iff there exist an inner model $M$ and a $j:V\prec M$ such that $\crt(j)=\kappa$.
\end{teo}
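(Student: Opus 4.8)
First I would prove the direction ($\Leftarrow$). Given $j:V\prec M$ with $\crt(j)=\kappa$, I would define $U=\{X\subseteq\kappa:\kappa\in j(X)\}$ and check that it is a $\kappa$-complete non-principal ultrafilter. That it is a filter and an ultrafilter follows from elementarity, since $j$ commutes with Boolean operations, $j(X\cap Y)=j(X)\cap j(Y)$ and $j(\kappa\setminus X)=j(\kappa)\setminus j(X)$, while $\kappa\in j(\kappa)$ because $j(\kappa)>\kappa$. Non-principality is immediate: for $\alpha<\kappa$ we have $j(\{\alpha\})=\{\alpha\}$ as $\alpha<\crt(j)$, so $\kappa\notin j(\{\alpha\})$. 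The key point is $\kappa$-completeness: given $\langle X_\beta:\beta<\gamma\rangle$ with $\gamma<\kappa$ and each $X_\beta\in U$, since $\gamma<\crt(j)$ the embedding fixes $\gamma$ and every $\beta<\gamma$, so $j$ sends this sequence to $\langle j(X_\beta):\beta<\gamma\rangle$; then $j(\bigcap_\beta X_\beta)=\bigcap_\beta j(X_\beta)\ni\kappa$, whence $\bigcap_\beta X_\beta\in U$.

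For the converse ($\Rightarrow$), the plan is the ultrapower construction. Let $U$ be a $\kappa$-complete non-principal ultrafilter on $\kappa$ (one should read the measurability hypothesis as providing a non-principal $U$, since a principal one would produce the identity embedding), and form $\Ult(V,U)$, the collection of $=_U$-classes of functions $f:\kappa\to V$, where $f=_U g$ iff $\{\alpha:f(\alpha)=g(\alpha)\}\in U$ and $[f]\in_U[g]$ iff $\{\alpha:f(\alpha)\in g(\alpha)\}\in U$. As the classes are proper, I would represent each by its members of minimal rank (Scott's trick). The heart of the matter is {\L}o\'s's theorem, $\Ult(V,U)\vDash\varphi([f_1],\dots,[f_n])$ iff $\{\alpha:V\vDash\varphi(f_1(\alpha),\dots,f_n(\alpha))\}\in U$, proved by induction on $\varphi$ with the existential step using choice to select witnesses coordinatewise.

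The crucial structural fact is that $\in_U$ is well-founded, and here $\kappa$-completeness is essential. If $[f_0]\ni_U[f_1]\ni_U\cdots$ were an infinite descending chain, each $A_n=\{\alpha:f_{n+1}(\alpha)\in f_n(\alpha)\}$ would lie in $U$; as $\kappa>\omega$ the ultrafilter is countably complete, so $\bigcap_n A_n\in U$ is nonempty, and any $\alpha$ in it yields an infinite $\in$-descending chain in $V$, contradicting Foundation. Well-foundedness together with extensionality (the latter from {\L}o\'s) lets me apply the Mostowski collapse to obtain a transitive class $M$, and setting $j(x)$ to be the collapse of $[c_x]$, where $c_x$ is the constant function with value $x$, gives an elementary $j:V\prec M$ by {\L}o\'s; elementarity transfers $\ZF$ and $M$ contains all the ordinals, so $M$ is an inner model.

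It remains to compute $\crt(j)=\kappa$, which I expect to be the main obstacle and is where non-principality and the full strength of $\kappa$-completeness enter. For $\alpha<\kappa$ I would show $j(\alpha)=\alpha$: any $f:\kappa\to\alpha$ partitions $\kappa$ into $\le\alpha<\kappa$ pieces, so by $\kappa$-completeness one piece is in $U$ and $f$ is $U$-constant, forcing every ordinal of $M$ below $j(\alpha)$ to be some $j(\beta)=\beta$ with $\beta<\alpha$; inductively $j\restriction\kappa=\id$. Finally $j(\kappa)>\kappa$: the identity $\id:\kappa\to\kappa$ represents an ordinal $[\id]$ with $j(\alpha)=\alpha<[\id]<j(\kappa)$ for every $\alpha<\kappa$, because non-principality together with $\kappa$-completeness makes each tail $\{\xi:\alpha<\xi<\kappa\}$ belong to $U$; hence $[\id]\ge\kappa$ and $j(\kappa)>[\id]\ge\kappa$, so $\kappa$ is moved while everything below it is fixed. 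The delicate points throughout are the bookkeeping in {\L}o\'s's theorem, the use of countable completeness for well-foundedness, and this last ordinal computation tying $\crt(j)$ exactly to $\kappa$.
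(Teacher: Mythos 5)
Your proof is correct and is the canonical argument: the paper states Scott's theorem without proof, but your two directions (the derived ultrafilter $U=\{X\subseteq\kappa:\kappa\in j(X)\}$, and the well-founded ultrapower $\Ult(V,U)$ with {\L}o\'s, Scott's trick, and the computation $\crt(j)=[\id]$-related $=\kappa$) are exactly the standard route, and the paper itself invokes the same derived-ultrafilter construction later when showing the critical sequence consists of measurables. Your parenthetical remark that the measurability hypothesis must be read as supplying a \emph{non-principal} $\kappa$-complete ultrafilter is also a correct and necessary reading, since the paper's statement of Ulam's definition omits non-principality, without which the forward direction fails (a principal ultrafilter yields the identity).
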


This was the start of an intense research on new breed of large cardinal hypotheses, defined more requirements for $M$. For example, one can ask how much of the original universe is in $M$:

\begin{defin}[Gaifman, 1974]
 A cardinal $\kappa$ is $\gamma$-strong iff there exists $j:V\prec M$, $\crt(j)=\kappa$, $\gamma<j(\kappa)$ and $V_{\kappa+\gamma}\subseteq M$.
\end{defin}

Or one can ask how close $M$ is. The starting point is to notice that if $j:V\prec M$ witnesses that $\kappa$ is measurable, then $M^\kappa\subseteq M$.

\begin{defin}[Solovay, Reinhardt]
 A cardinal $\kappa$ is $\gamma$-supercompact iff there exists $j:V\prec M$, $\crt(j)=\kappa$, $\gamma<j(\kappa)$ and $M^\gamma\subseteq M$.
\end{defin}

It is immediate to see that if $\kappa$ is $\gamma$-supercompact then it is $\gamma$-strong. All these definitions fix $j(\kappa)$ as the limit of the closeness of $M$. Overcoming this means to define stronger large cardinals:

\begin{defin}[Gaifman, 1974]
  A cardinal $\kappa$ is superstrong iff there exists $j:V\prec M$, $\crt(j)=\kappa$ and $V_{j(\kappa)}\subseteq M$.
\end{defin} 

\begin{defin}[Kunen, 1972]
 A cardinal $\kappa$ is huge iff there exists $j:V\prec M$, $\crt(j)=\kappa$ and $M^{j(\kappa)}\subseteq M$.
\end{defin}

We can go on, defining stronger and stronger large cardinals:

\begin{defin}
 Let $M,\ N$ be sets or classes such that $M\vDash\AC$ or $N\subseteq M$. Then the \emph{critical sequence} of $j:M\prec N$, $\langle\kappa_n:n\in\omega\rangle$, is defined as:
 \begin{itemize}
  \item $\kappa_0=\crt(j)$;
	\item $\kappa_{n+1}=j(\kappa_n)$.
 \end{itemize}
\end{defin}

\begin{defin}
  A cardinal $\kappa$ is $n$-superstrong iff there exists $j:V\prec M$, $\crt(j)=\kappa$ and $V_{j(\kappa_n)}\subseteq M$.
\end{defin} 

\begin{defin}
 A cardinal $\kappa$ is $n$-huge iff there exists $j:V\prec M$, $\crt(j)=\kappa$ and $M^{j(\kappa_n)}\subseteq M$.
\end{defin}

This ascent eventually must stop. In 1969 Reinhardt, in his thesis, asked whether it is possible to have directly $j:V\prec V$. Kunen, few months later, proved that it is inconsistent. Such result must be approached cautiously: if we work in \ZFC, it is not surprising:

\begin{teo}[Naive Kunen's Theorem](\ZFC)
 There is no $j:V\prec V$.
\end{teo}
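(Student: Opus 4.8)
The plan is first to pin down what the statement can even mean inside $\ZFC$. Since $\ZFC$ is a first-order theory whose only objects are sets, a purported $j:V\prec V$ is not an object of the theory but a \emph{definable class}: there is a formula $\varphi(x,y)$ (for the moment without parameters) such that $j(x)=y \iff \varphi(x,y)$, and $\ZFC$ proves that $\varphi$ defines a nontrivial function $V\to V$. Crucially, by Tarski's undefinability of truth the single assertion ``$j$ is elementary'' is not expressible by one formula, so ``elementary'' must be read as the \emph{scheme} asserting, for every formula $\psi(v_1,\dots,v_n)$, that $\forall a_1\cdots a_n\,(\psi(a_1,\dots,a_n)\leftrightarrow\psi(j(a_1),\dots,j(a_n)))$. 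This reading is exactly why the statement is ``naive'': we are not really forbidding an arbitrary class embedding, only a definable one obeying the elementarity scheme.

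Granting this, the argument is short. Since here $N=V\subseteq M=V$, the earlier remark guarantees that some ordinal is moved, so $\crt(j)=\kappa$ exists. The key observation is that $\kappa$ is itself \emph{definable without parameters}: it is the unique $x$ satisfying
\[
\theta(x)\ :\equiv\ x\in\Ord\ \wedge\ \exists y\,(\varphi(x,y)\wedge y\neq x)\ \wedge\ \forall \beta<x\,\forall y\,(\varphi(\beta,y)\to y=\beta).
\]
Now I would apply a single instance of the elementarity scheme, namely $\theta$ at the point $\kappa$: since $V\vDash\theta(\kappa)$, elementarity gives $V\vDash\theta(j(\kappa))$. But $\theta$ has a unique witness, so $j(\kappa)=\kappa$, contradicting $j(\kappa)>\kappa=\crt(j)$.

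The step I expect to be the real obstacle is the presence of \emph{parameters}: if $j$ is only definable from some $p$, via $\varphi(x,y,p)$, then $\kappa$ is definable from $p$, and the instance of elementarity applied to its defining formula sends $p$ to $j(p)$, telling us about the embedding defined from $j(p)$ rather than fixing $\kappa$ outright. Handling this cleanly requires a little more: it is a theorem of Suzuki that no nontrivial elementary $j:V\prec V$ is definable even from parameters, provably in $\ZF$. It is precisely this gap, together with the genuinely non-definable case, that makes the ``real'' Kunen inconsistency a substantially deeper statement, to be attacked by the combinatorics around $\lambda=\sup_n\kappa_n$ rather than by a one-line definability trick.
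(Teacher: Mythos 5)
Your formulation of what the theorem can mean inside \ZFC{} and your argument for the parameter-free case are both correct, and in fact they use the same device as the paper (one instance of the elementarity schema applied to a parameter-free definition of the critical point). But the theorem does not restrict to parameter-free classes, and at exactly that point your text stops proving and starts citing: you dispose of the parameter case by invoking ``a theorem of Suzuki''. In this context that is circular --- the proof the paper gives \emph{is} Suzuki's proof, and the parameter case is its entire content beyond your formula $\theta$. As written, your proposal establishes only the strictly weaker statement that no nontrivial elementary $j:V\prec V$ is definable \emph{without} parameters, and outsources the rest.

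The missing idea is a minimization over parameters, and it costs one line. Fix the formula $\varphi(x,y,p)$. Among all parameters $a$ for which $\varphi(\cdot,\cdot,a)$ defines a nontrivial elementary embedding of $V$ into itself (a nonempty collection, by assumption), choose one whose embedding has least critical point; call that embedding $j$ and its critical point $\kappa$. Then $\kappa$ is definable \emph{without} parameters, namely as ``the least ordinal that is the critical point of an embedding defined by $\varphi$ with some choice of parameter''. A single instance of the elementarity schema, applied to this parameter-free formula, shows that $j(\kappa)$ satisfies the same definition; since the definition has a unique witness, $j(\kappa)=\kappa$, contradicting $j(\kappa)>\kappa=\crt(j)$. (One residual subtlety, which your schema-reading makes visible and the paper glosses over: for this definition of $\kappa$ to be first order, one must express ``$\varphi(\cdot,\cdot,a)$ defines an elementary embedding'' by a single formula, which Tarski's theorem forbids if read literally; the standard repair is Gaifman's observation that a cofinal $\Delta_0$-elementary class embedding of $V$ into $V$ is fully elementary, so this first-order surrogate can be used inside the definition of $\kappa$.)
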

\begin{proof}[Proof by Suzuki, \cite{Suzuki}]
 As we are working in \ZFC, it means that $j$ must be a class that is definable with some first order formula. Let $\varphi$ be a formula. Let $a$ be a parameter so that $\varphi(a)$ defines an elementary embedding $j$ from $V$ to itself with least critical point $\kappa$ among all the possible choices of the parameter. Then $\kappa$ is definable in $V$, without parameters. So $j(\kappa)$ satisfies the same definition, and it is the least critical point of embeddings definable with $\varphi$. But $j(\kappa)>\kappa$, contradiction. 
\end{proof}

Kunen's Theorem is stronger than this, as it involves also classes that are not definable: in the original paper, \cite{Kunen}, the proof is in MK, Morse-Kelley theory. It is possible to weaken the theory to NBG, or to \ZFC($j$), i.e., \ZFC{} with $j$ as a predicate that can be inserted in replacement formulas. We leave the interested reader to \cite{HGP}, where all such problems are solved in great detail, and we state Kunen's Theorem in vague terms, with the only provisos that $j$ is not necessarily definable in $V$, replacement for $j$ holds and $\AC$ holds in $V$.

\begin{teo}[Kunen's Theorem, \cite{Kunen}](\AC)
\label{kunens}
 There is no $j:V\prec V$.
\end{teo}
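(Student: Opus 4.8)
The plan is to run Kunen's original argument, deriving a contradiction from a combinatorial object that must exist at the top of the critical sequence. First I would isolate the relevant ordinal. Using replacement for $j$, form the critical sequence $\langle\kappa_n:n\in\omega\rangle$ and set $\lambda=\sup_n\kappa_n$. Since $\cof(\lambda)=\omega<\kappa_0=\crt(j)$, the embedding is continuous at $\lambda$: a cofinal $\omega$-sequence is mapped to a cofinal $\omega$-sequence without its length being moved, so $j(\lambda)=\sup_n j(\kappa_n)=\sup_n\kappa_{n+1}=\lambda$. The structural fact I would extract is that $j''\lambda:=\{j(\alpha):\alpha<\lambda\}$ is a subset of $\lambda$ of full size $\lambda$ (as $j$ is injective) which is nonetheless \emph{proper}: no $\alpha<\kappa_0$ is moved and every $\alpha\geq\kappa_0$ has $j(\alpha)\geq\kappa_1>\kappa_0$, so $\kappa_0\notin j''\lambda$ and hence $j''\lambda\subsetneq\lambda$.

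Second, I would bring in the combinatorial heart of the proof, which is exactly where $\AC$ is spent. By the Erd\H{o}s--Hajnal theory there is an \emph{$\omega$-J\'onsson function} $F:[\lambda]^\omega\to\lambda$, meaning that $F''[A]^\omega=\lambda$ for \emph{every} $A\subseteq\lambda$ with $|A|=\lambda$. The reason one works with countable subsets rather than finite tuples is precisely that the $\omega$-version is a $\ZFC$ theorem for every infinite $\lambda$, whereas finitary J\'onsson algebras on singular cardinals are a delicate matter; this is the single genuinely nonelementary ingredient.

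Third, I would record the elementarity computation $j(b)=j''b$ for every $b\in[\lambda]^\omega$. Writing $b=\ran(g)$ for some $g:\omega\to\lambda$ and using that $\crt(j)>\omega$ forces $j$ to fix each natural number, one gets $j(g)(n)=j(g(n))$ for all $n$, whence $j(b)=\ran(j(g))=j''b$. Reading this in reverse, every countable subset $a$ of $j''\lambda$ has the form $a=j''b=j(b)$ for a unique countable $b\subseteq\lambda$, so $b\mapsto j(b)$ is a bijection from $[\lambda]^\omega$ onto $[j''\lambda]^\omega$.

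Finally I would combine the pieces. By elementarity $j(F)$ is again an $\omega$-J\'onsson function on $j(\lambda)=\lambda$, so applied to the size-$\lambda$ set $j''\lambda$ it satisfies $j(F)''[j''\lambda]^\omega=\lambda$. But for any $a=j(b)$ in $[j''\lambda]^\omega$ we compute $j(F)(a)=j(F)(j(b))=j(F(b))\in j''\lambda$; thus $j(F)''[j''\lambda]^\omega\subseteq j''\lambda\subsetneq\lambda$, contradicting the previous line. I expect the main obstacle to sit entirely in the second step --- establishing the $\omega$-J\'onsson function and confirming the property is first-order enough to be transported by $j$ --- together with the bookkeeping that guarantees $\lambda$, the critical sequence, and $j''\lambda$ are legitimate objects given only replacement for $j$; once that is in hand, the other three steps are short. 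As a fallback that sidesteps J\'onsson functions, I would keep in reserve the variant that replaces $F$ by a non-reflecting stationary subset of $\{\alpha<\lambda^+:\cof(\alpha)=\omega\}$, furnished in $\ZFC$ by Solovay's splitting theorem, and reaches the contradiction through the action of $j$ on closed unbounded sets.
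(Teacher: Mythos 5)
Your proof is correct, but it is not the proof the paper gives: you have reconstructed Kunen's original argument, in which \AC{} enters through the Erd\H{o}s--Hajnal theorem supplying an $\omega$-J\'onsson function $F:[\lambda]^\omega\to\lambda$, and the contradiction is reached at $\lambda$ itself, by trapping $j(F)''[j''\lambda]^\omega$ inside the proper subset $j''\lambda\subsetneq\lambda$. The paper instead presents Woodin's proof (Kanamori, Theorem 23.12, second proof): there \AC{} enters through Solovay's theorem that a stationary subset of $\lambda^+$ can be split into $\crt(j)$ many disjoint stationary sets, and the contradiction is reached one cardinal higher, at $\lambda^+$, by intersecting the piece $T_{\crt(j)}$ of the moved partition with the $\omega$-club of fixed points $C=\{\alpha<\lambda^+:j(\alpha)=\alpha\}$ and observing that $\crt(j)$ is not in the range of $j$. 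Both routes are sound, both spend \AC{} on a single combinatorial ingredient, and both localize equally well to give the corollary that there is no $j:V_{\eta+2}\prec V_{\eta+2}$, since each uses only objects codeable in $V_{\lambda+2}$. What the paper's choice buys is reuse downstream: the stationary-splitting configuration is exactly what is run again inside $L(V_{\lambda+1})$ --- its necessary failure there is why I0 forces $L(V_{\lambda+1})\nvDash\AC$ (Section \ref{LVlambda}) and, in Lemma \ref{measurable}, why the club filter restricted to a stationary set becomes an $L(V_{\lambda+1})$-ultrafilter, making $\lambda^+$ measurable; the J\'onsson-function argument has no comparable afterlife in this development. One small correction to your closing remark: the reserve variant you describe is essentially the paper's actual proof, but it does not involve a non-reflecting stationary set; what is needed is Solovay splitting into $\crt(j)$ pieces together with the fixed-point $\omega$-club.
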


 There are many proofs of this result. Instead of the original proof by Kunen, we show the proof by Woodin (\cite{Kanamori}, Theorem 23.12, second proof), as it gives information that will be useful in Section \ref{LVlambda}. It builds on the following classical result by Solovay:

\begin{teo}[Solovay]
 Let $\kappa>\omega$ be a regular cardinal, and $X\subseteq\kappa$ stationary. Then $X$ can be partitioned in $\kappa$ disjoint stationary sets, i.e., there is $\langle X_\eta:\eta<\kappa\rangle$ such that $X=\bigcup_{\eta<\kappa}X_\eta$ and all the $X_\eta$'s are pairwise disjoint.
\end{teo}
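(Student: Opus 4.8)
The plan is to reduce everything to repeated use of the classical Pressing-Down Lemma of Fodor (every regressive function on a stationary set is constant on a stationary subset). First I would make two harmless reductions. Since the limit ordinals below $\kappa$ form a club and $\kappa$ is regular uncountable, $X$ intersected with this club is still stationary, so I may assume every element of $X$ is a limit ordinal. Moreover it suffices to produce $\kappa$ \emph{pairwise disjoint} stationary subsets $\langle S_i:i<\kappa\rangle$ of $X$: given them, one dumps the leftover points $X\setminus\bigcup_i S_i$ into the first piece, setting $X_0=S_0\cup(X\setminus\bigcup_i S_i)$ and $X_i=S_i$ for $i>0$, which is a genuine partition of $X$ into exactly $\kappa$ stationary sets.

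The core is the following key lemma: there is a regressive $f\colon X\to\kappa$ such that for every $\eta<\kappa$ the set $\{\alpha\in X:f(\alpha)>\eta\}$ is stationary. Granting it, the theorem follows quickly. Let $B=\{\beta<\kappa:\{\alpha\in X:f(\alpha)=\beta\}\text{ is stationary}\}$. For each $\eta$ the set $\{\alpha:f(\alpha)>\eta\}$ is stationary and $f$ is regressive on it, so by Fodor $f$ is constant on a stationary subset of it, necessarily with value some $\beta>\eta$; hence $\beta\in B$, so $B$ is unbounded and $|B|=\kappa$ by regularity. The fibres $\{f^{-1}(\beta):\beta\in B\}$ are then $\kappa$ pairwise disjoint stationary subsets of $X$, as required.

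To prove the key lemma I would attach to each limit $\alpha\in X$ an increasing sequence $\langle a^\alpha_\xi:\xi<\cof(\alpha)\rangle$ cofinal in $\alpha$, and consider the candidate regressive functions $f_\xi(\alpha)=a^\alpha_\xi$ (defined when $\xi<\cof(\alpha)$); it is enough to find a single index $\xi^*$ whose $f_{\xi^*}$ has the unboundedness property above. Suppose no index works: then for each $\xi$ there are $\eta_\xi<\kappa$ and a club $C_\xi$ with $a^\alpha_\xi\le\eta_\xi$ whenever $\alpha\in X\cap C_\xi$ and $\xi<\cof(\alpha)$. Forming the diagonal intersection $C=\triangle_{\xi<\kappa}C_\xi$ (a club) and taking any $\alpha\in X\cap C$, one gets $a^\alpha_\xi\le\eta_\xi$ for all $\xi<\cof(\alpha)$, and since the sequence is cofinal in $\alpha$ this forces $\alpha=\sup_{\xi<\cof(\alpha)}a^\alpha_\xi\le\sup_{\xi<\cof(\alpha)}\eta_\xi$. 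Now press down once more: on the stationary set $X\cap C$ the map $\alpha\mapsto\cof(\alpha)$ is regressive at every $\alpha$ that is not a regular cardinal, so by Fodor it is constant equal to some $\mu<\kappa$ on a stationary set; there the right-hand side is the \emph{fixed} ordinal $\sup_{\xi<\mu}\eta_\xi<\kappa$, contradicting the unboundedness of a stationary set.

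The main obstacle is exactly this last step when the cofinalities of $X$ cannot be pressed down, i.e.\ when $X$ concentrates, modulo the club filter, on ordinals $\alpha$ that are themselves regular cardinals ($\cof(\alpha)=\alpha$); this can occur only when $\kappa$ is Mahlo, but then the computation above yields only $\alpha\le\sup_{\xi<\alpha}\eta_\xi$, which holds on a club and gives no contradiction. I expect this residual case to be the delicate part of the write-up and would treat the regular-cardinal stratum separately: since by Fodor a regressive function on a stationary set always has a stationary fibre, a function with only bounded fibres cannot exist, and this lets one thin $X$ first off the successor cardinals (nonstationary, being disjoint from the club of limit cardinals) and then off its non-limit-points, leaving weakly inaccessible limit points of itself, where one falls back on a direct splitting (for successor $\kappa$ a single Ulam matrix already proves the full statement). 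Everything else is routine bookkeeping with clubs and diagonal intersections.
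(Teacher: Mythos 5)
Your reductions, your derivation of the theorem from the key lemma via Fodor, and your diagonal-intersection argument are all correct, but they prove the key lemma only under the tacit assumption that the singular ordinals in $X$ form a stationary set, and the remaining case is a genuine gap rather than a loose end. If $\kappa$ is weakly Mahlo and $X$ is, say, the set of regular cardinals below $\kappa$, then $\cof(\alpha)=\alpha$ for every $\alpha\in X$, your final pressing-down step has nothing to press on, and, as you note, the computation only yields $\alpha\le\sup_{\xi<\alpha}\eta_\xi$, which is vacuous. Your proposed repair does not close this case: thinning $X$ to the weakly inaccessible limit points of itself and invoking ``a direct splitting'' names no argument, and the Ulam-matrix remark only covers successor $\kappa$, which is irrelevant here since the problematic case forces $\kappa$ to be weakly inaccessible. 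Worse, passing to limit points of $X$ goes in the wrong direction: for ladders running through or near $X$, the same diagonal-intersection computation produces no contradiction at all. (For what it is worth, the paper states this theorem as a classical result of Solovay and gives no proof, so the comparison here is with the standard argument, not with the text.)

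The classical completion (Solovay's argument; see Jech's \emph{Set Theory}, Theorem 8.10) handles exactly this case and goes the opposite way: one passes to the \emph{non}-reflection points. In the remaining case we may assume every $\alpha\in X$ is a regular uncountable cardinal; let $X_0=\{\alpha\in X: X\cap\alpha\text{ is not stationary in }\alpha\}$. This is always stationary: given a club $C$, let $C'$ be its set of limit points and let $\alpha$ be the least element of $X\cap C'$; then $C'\cap\alpha$ is a club in $\alpha$ disjoint from $X$ by minimality, so $\alpha\in X_0\cap C$. Now for each $\alpha\in X_0$ choose a club $c_\alpha\subseteq\alpha$ with $c_\alpha\cap X=\emptyset$ and let $\langle a^\alpha_\xi:\xi<\alpha\rangle$ be its increasing \emph{continuous} enumeration; these are the ladders to use. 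If no index $\xi$ had stationary tails, take $\eta_\xi$ and $C_\xi$ exactly as you do, let $C=\triangle_{\xi<\kappa}C_\xi$, and let $D$ be the club of closure points of $\xi\mapsto\eta_\xi$. For $\alpha\in X_0\cap C\cap D$ and any limit point $\gamma<\alpha$ of $D$, continuity gives $a^\alpha_\gamma=\sup_{\xi<\gamma}a^\alpha_\xi\le\sup_{\xi<\gamma}\eta_\xi\le\gamma\le a^\alpha_\gamma$, so $\gamma=a^\alpha_\gamma\in c_\alpha$, hence $\gamma\notin X$. Taking two elements $\alpha_1<\alpha_2$ of the stationary set $X_0\cap C\cap D\cap D'$ (where $D'$ is the set of limit points of $D$) makes $\alpha_1$ a limit point of $D$ below $\alpha_2$, hence $\alpha_1\notin X$, contradicting $\alpha_1\in X_0\subseteq X$. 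This restores your key lemma in the Mahlo case, and the rest of your write-up then goes through unchanged.
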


It will be useful this technical remark, that will be used constantly in the paper without comment:
\begin{rem}
 Let $j:M\prec N$, $M\vDash\AC$ or $N\subseteq M$. Then for any $\gamma<\crt(j)$ and any sequence $\langle a_\alpha:\alpha<\gamma\rangle\in M$, $j(\langle a_\alpha:\alpha<\gamma\rangle)=\langle j(a_\alpha):\alpha<\gamma\rangle$. So if $A\in M$ is well-orderable (for example $M\vDash\AC$) and $|A|<\crt(j)$, then $j(A)=\{j(a):a\in A\}$.
\end{rem}
\begin{proof}
 Consider the sequence as a function $f\in M$ with domain $\gamma$. As $\gamma<\crt(j)$, $j(f)(\alpha)=j(f)(j(\alpha))=j(f(\alpha))$.
\end{proof}

\begin{proof}[Proof of \ref{kunens} by Woodin, \cite{Kafkoulis}]
 Let $\lambda$ be the supremum of the critical sequence of $j$. Then $\lambda$ must be a fixed point for $j$, because 
 \begin{equation*}
  j(\lambda)=j(\sup\langle\kappa_n:n\in\omega\rangle)=\sup j(\langle\kappa_n:n\in\omega\rangle)=\sup\langle j(\kappa_n):n\in\omega\rangle=\lambda. 
 \end{equation*}
Let $S^{\lambda^+}_\omega=\{\alpha<\lambda^+:\cof(\alpha)=\omega\}$. Note that $S^{\lambda^+}_\omega$ is stationary: if $C$ is a club in $\lambda^+$, than any $\omega$-limit of elements of $C$ must be in $C$, as $\lambda^+$ is regular, and has cofinality $\omega$\footnote{In fact, $S^{\lambda^+}_\omega$ is $\omega$-stationary, that is, it intersects all the $\omega$-clubs, i.e., the sets that are cofinal in $\lambda^+$ and closed under supremum of $\omega$-sequences}.  We prove that $S^{\lambda^+}_\omega$ cannot be partitioned in $\crt(j)$ disjoint stationary sets, in contradiction with Solovay's result.

Suppose that $\langle S_\alpha:\alpha<\crt(j)\rangle$ is a partition in stationary subsets of $S^{\lambda^+}_\omega$. Then 
\begin{equation*}
 j(\langle S_\alpha:\alpha<\crt(j)\rangle)=\langle T_\beta:\beta<j(\crt(j))\rangle
\end{equation*}
must be a partition in stationary subsets of $S^{\lambda^+}_\omega$ (because $j(\lambda^+)=j(\lambda)^+=\lambda^+)$ of length $j(\crt(j))$. Let $C=\{\alpha<\lambda^+:j(\alpha)=\alpha\}$. Then $C$ is an $\omega$-club in $\lambda^+$: If $\langle\alpha_n:n\in\omega\rangle$ is a sequence of ordinals in $C$, then 
\begin{multline*}
 j(\sup\langle\alpha_n:n\in\omega\rangle)=\sup j(\langle\alpha_n:n\in\omega\rangle)=\\
 =\sup\langle j(\alpha_n):n\in\omega\rangle=\sup\langle\alpha_n:n\in\omega\rangle.
\end{multline*}

 To see that it is unbounded, let $\alpha<\lambda^+$, and consider the sequence $\alpha_0=\alpha$, $\alpha_{n+1}=j(\alpha_n)$. Then $\sup\langle\alpha_n:n\in\omega\rangle\in C$.

As $T_{\crt(j)}$ is stationary, it must intersect the closure of $C$ in some $\eta$. But as $\eta\in T_{\crt(j)}$, it has cofinality $\omega$, so it must be in $C$, therefore $j(\eta)=\eta$. Since $\langle S_\alpha:\alpha<\crt(j)\rangle$ was a partition, there must be some $\alpha$ such that $\eta\in S_\alpha$. But then $\eta=j(\eta)\in j(S_\alpha)=T_{j(\alpha)}$. But $\crt(j)$ cannot be in the image of $j$ (it is not a fixed point, and below it they are all fixed points), so $T_{\crt(j)}\neq T_{j(\alpha)}$, so they should have been disjoint, but they have $\eta$ in common. Contradiction.
\end{proof}

We can now analyze such proof, to understand what it really needs of the hypotheses, with the objective of defining new axioms for which the proof does not work, so that there is a hope that they are not inconsistent. The first remark is that Solovay's Theorem uses \AC{} in an essential way, so the proof would not stand without \AC. 

\begin{defin}(\ZF)[Reinhardt]
 A cardinal $\kappa$ is Reinhardt iff there exists $j:V\prec V$, with $\crt(j)=\kappa$.  
\end{defin} 

It is not immediate to see, but a Reinhardt cardinal is consistently stronger than all the hypotheses we will introduce in this paper. It is still open whether a Reinhardt cardinal would imply a contradiction, and the study of Reinhardt and even larger cardinals is still going on.

Another approach would be to weaken the ground theory, for example not permitting replacement with $j$, so that the critical sequence is not in $V$ and $\lambda$ is not definable. This is the ``Wholeness Axiom'', and it was studied by Paul Corazza (\cite{Corazza}).

A third way would be to consider something smaller than $V$ (but large enough so that the eventual large cardinal would sit at the top of the hierarchy). This will be implemented in the next section.

\section{Rank-into-rank embeddings}
\label{rank}

Analyzing the proof of \ref{kunens}, one realizes that the key object is $\langle S_\alpha:\alpha<\crt(j)\rangle$. Now, if $\beta<\lambda^+$, there is a well-order of $\lambda$ that has length $\beta$, and such well-order can be coded as a subset of $\lambda$. Therefore every ordinal less then $\lambda^+$ can be coded as an element of $V_{\lambda+1}$, and a subset of $\lambda^+$ can be coded as an element of $V_{\lambda+2}$. So the whole proof can be carried on for:

\begin{cor}
 For any $\eta$, there is no $j:V_{\eta+2}\prec V_{\eta+2}$.
\end{cor}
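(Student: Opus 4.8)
The plan is to run the proof of Theorem \ref{kunens} one level down, inside $V_{\eta+2}$, after first pinning down what $\eta$ must be. So suppose toward a contradiction that $j:V_{\eta+2}\prec V_{\eta+2}$ is nontrivial, with $\crt(j)=\kappa$ and critical sequence $\langle\kappa_n:n\in\omega\rangle$. First I would argue that $\eta$ is forced to be the supremum of the critical sequence. Since $\eta+1$ is the largest ordinal of $V_{\eta+2}$ and $j(\eta+1)\geq\eta+1$, elementarity gives $j(\eta+1)=\eta+1$, and hence $j(\eta)=\eta$; as $\kappa<\eta$ is moved while $\eta$ is fixed, order-preservation yields $\kappa_n<\eta$ for every $n$, so $\lambda:=\sup_n\kappa_n\leq\eta$. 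The computation of the main proof (the critical sequence has length $\omega<\crt(j)$, so $j$ acts on it pointwise by the Remark) gives $j(\lambda)=\lambda$, whence $j$ maps $V_{\lambda+2}$ into itself and $j\upharpoonright V_{\lambda+2}:V_{\lambda+2}\prec V_{\lambda+2}$ with the same critical sequence, now cofinal in $\lambda$. Replacing $j$ by this restriction, I may assume $\eta=\lambda=\sup_n\kappa_n$, a strong limit cardinal of cofinality $\omega$.

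The heart of the matter is then the coding sketched just before the statement. Since $\lambda$ is a strong limit, $|V_\lambda|=\lambda$, so fixing a bijection $V_\lambda\to\lambda$ I can identify a well-order of $\lambda$ with a subset of $\lambda$, i.e.\ an element of $V_{\lambda+1}$, and read off its order type: this codes every ordinal $<\lambda^+$ by an element of $V_{\lambda+1}$. Consequently a subset of $\lambda^+$ is coded by a set of such codes, i.e.\ an element of $V_{\lambda+2}$, and the partition $\langle S_\alpha:\alpha<\kappa\rangle$ (which has length $\kappa<\lambda$) is coded by a single element of $V_{\lambda+2}$ after merging it into one subset of $\lambda^+$ through a pairing $\lambda^+\times\lambda^+\cong\lambda^+$. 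The point I would stress is that the relevant predicates --- comparison of coded ordinals, ``has cofinality $\omega$'', ``codes a club'', ``codes a stationary set'', and ``codes a partition of $S^{\lambda^+}_\omega$ into stationary sets'' --- are all first order over the structure $(V_{\lambda+2},\in)$, precisely because the clubs one quantifies over in the definition of stationarity are themselves coded by elements of $V_{\lambda+2}$.

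With the coding in place I would reproduce Woodin's argument verbatim. By Solovay's Theorem fix a partition of $S^{\lambda^+}_\omega$ into $\kappa$ stationary sets, with code $p\in V_{\lambda+2}$. By elementarity $j(p)$ codes a partition $\langle T_\beta:\beta<j(\kappa)\rangle$ of $S^{\lambda^+}_\omega$ into stationary sets, and $j$ acting on the ordinal-codes induces an order-preserving $\hat\jmath:\lambda^+\to\lambda^+$ agreeing with $j$ below $\lambda$; the same $\omega$-supremum computation as in the main proof shows that $C=\{\alpha<\lambda^+:\hat\jmath(\alpha)=\alpha\}$ is an $\omega$-club. Since $\kappa<j(\kappa)$, the piece $T_\kappa$ exists and is stationary, so it meets the closure of $C$ at some $\eta_0$ of cofinality $\omega$, giving $\hat\jmath(\eta_0)=\eta_0$. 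Picking the $\alpha<\kappa$ with $\eta_0\in S_\alpha$, elementarity gives $\hat\jmath(S_\alpha)=T_{\hat\jmath(\alpha)}=T_\alpha$ (as $\alpha<\kappa=\crt(j)$ is fixed), so $\eta_0=\hat\jmath(\eta_0)\in T_\alpha$; but $\eta_0\in T_\kappa$ and $\alpha\neq\kappa$, contradicting the disjointness of the $T_\beta$.

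I expect the only real work --- and the step where published arguments tend to be sketchy --- to be the bookkeeping of the second paragraph: checking that stationarity, clubs and the partition structure are faithfully rendered as first order properties over $V_{\lambda+2}$, and that $j$'s action on codes genuinely induces the map $\hat\jmath$ with the stated behaviour (well-definedness independent of the chosen code, continuity at $\omega$-suprema, agreement with $j$ on ordinals below $\lambda$). Once one is convinced that everything Woodin uses about $\lambda^+$ lives one level down, inside $V_{\lambda+2}$, the single instance of elementarity we are granted does all the work, and no appeal to $j$ on sets of rank $\geq\lambda+2$ is ever needed.
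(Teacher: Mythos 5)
Your proof is correct and follows essentially the paper's own route: identify $\lambda=\sup_n\kappa_n\leq\eta$, code ordinals below $\lambda^+$ as well-orders in $V_{\lambda+1}$ and subsets of $\lambda^+$ (including the Solovay partition) as elements of $V_{\lambda+2}$, observe that stationarity and the club structure are faithfully first order there, and rerun Woodin's stationary-splitting proof of Theorem \ref{kunens}. The only difference is cosmetic: the paper applies the full $j:V_{\eta+2}\prec V_{\eta+2}$ directly to the coded objects, which all lie in $V_{\lambda+2}\subseteq V_{\eta+2}$, and so never needs your intermediate claim that $j\upharpoonright V_{\lambda+2}:V_{\lambda+2}\prec V_{\lambda+2}$ --- a true but not-entirely-free step, since it rests on $j(V_{\lambda+2})=V_{\lambda+2}$, i.e.\ on $V_\lambda$, $V_{\lambda+1}$, $V_{\lambda+2}$ being definable in $V_{\eta+2}$ from the fixed point $\lambda$ (via iterated power sets, which $V_{\eta+2}$ computes correctly when $\lambda<\eta$).
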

\begin{proof}
 Suppose that there is $j:V_{\eta+2}\prec V_{\eta+2}$. Let $\lambda$ be the supremum of its critical sequence. Then $\lambda\leq\eta$, because $\lambda$ is limit. Then $V_{\lambda+2}\subseteq V_{\eta+2}$, and the proof of \ref{kunens} gives the contradiction.
\end{proof}

Therefore the proof of \ref{kunens} does not exclude the following cases:

\begin{description}
 \item[I3] There exists $j:V_\lambda\prec V_\lambda$;
 \item[I1] There exists $j:V_{\lambda+1}\prec V_{\lambda+1}$.
\end{description}

There is a small ambiguity in the definition of I3. On one hand $\lambda$ must be at least the supremum of the critical sequence (because $\ran(j)\subseteq V_\lambda$), on the other hand $\lambda$ must be less then the supremum of the critical sequence $+2$, because of the Corollary, but, as it is written, it can be the supremum of the critical sequence $+1$. In general we will assume, without loss of generality, that $\lambda$ is the supremum of the critical sequence:

\begin{description}
 \item[I3] There exists $j:V_\lambda\prec V_\lambda$, where $\lambda$ is the supremum of its critical sequence.
\end{description}

Admittedly, not being excluded by a proof is not the most solid of foundations for new axioms. In fact, the ``I'' in their name stands for ``Inconsistency'', as this was the first thought of the community when they were introduced. Yet, they survived all attempts of proving their inconsistency, and they are now in the center of a rich and complex theory, that has many interesting results, some of which will be shown in this paper.

Let $j:V_\lambda\prec V_\lambda$. The mental picture is the following:
\newline
\newline
\begin{tikzpicture}
 \draw (0,6) -- (3,0) -- (6,6);
 \draw (3,0) -- (3,6);
 \draw (2.5,1) -- (3.5,1);
 \draw (2,2) -- (4,2);
 \draw (1.5,3) -- (4.5,3);
 \draw (1,4) -- (5,4);
 \draw (0.25,5.5) -- (5.75,5.5);
 \draw (3,1) node[anchor=south east]{$\kappa_0$}; 
 \draw (3,2) node[anchor=south east]{$\kappa_1$}; 
 \draw (3,3) node[anchor=south east]{$\kappa_2$};
 \draw (3,4) node[anchor=south east]{$\kappa_3$};  
 \draw (3,5.5) node[anchor=south east]{$\lambda$}; 
 \draw (4,5) node{$\vdots$};

 \draw (8,6) -- (11,0) -- (14,6);
 \draw (11,0) -- (11,6);
 \draw (10.5,1) -- (11.5,1);
 \draw (10,2) -- (12,2);
 \draw (9.5,3) -- (12.5,3);
 \draw (9,4) -- (13,4);
 \draw (8.25,5.5) -- (13.75,5.5);
 \draw (11,1) node[anchor=south east]{$\kappa_0$}; 
 \draw (11,2) node[anchor=south east]{$\kappa_1$}; 
 \draw (11,3) node[anchor=south east]{$\kappa_2$};
 \draw (11,4) node[anchor=south east]{$\kappa_3$};  
 \draw (11,5.5) node[anchor=south east]{$\lambda$}; 
 \draw (12,5) node{$\vdots$};

 \draw [->] (3.25,0.5) -- (10.75,0.5);
 \draw [->] (3.75,1.5) -- (9.75,2.5);
 \draw [->] (4.25,2.5) -- (9.25,3.5);
 \draw (7,4) node{$\vdots$};
\end{tikzpicture}
\newline
\newline
so $V_\lambda$ is divided in two: under a certain rank, all sets are fixed, above a certain rank, all sets are moved. There are $\omega$ ``stripes'', and all the sets on one stripe are moved into the next. There is one stripe, then, that contains no image of $j$. 

The members of the critical sequence are large cardinals:

\begin{rem}
 Let $\langle\kappa_n:n\in\omega\rangle$ be the critical sequence of $j:V_\lambda\prec V_\lambda$. Then all the $\kappa_n$'s are measurable.
\end{rem}
\begin{proof}
 Consider the ultrafilter $U_j=\{X\in{\cal P}(\kappa_0):\kappa_1\in j(X)\}$. As usual, it is a $\kappa_0$-complete measure, so $\kappa_0$ is measurable. Then by elementarity $j(\kappa_0)=\kappa_1$ is measurable, and so on.
\end{proof}

Therefore $\lambda$ is strong limit: if $\alpha<\lambda$, then there is a $\kappa_n$ such that $\alpha<\kappa_n$, and $2^\alpha<\kappa_n<\lambda$. So $|V_\lambda|=\lambda$, and we can code every pair of sets in $V_\lambda$ (in fact every finite sequence of sets) with a single set of $V_\lambda$. Therefore we can code every $\lambda$-sequence of elements in $V_{\lambda+1}$ as a single element in $V_{\lambda+1}$ (coding the sequence $\langle a_\eta:\eta<\lambda\rangle$ with $\{(\eta,x):x\in a_\eta\}$).

The ``stripes'' structure permits to extend $j$ beyond its scope:

\begin{defin}
			Let $j:V_\lambda\prec V_\lambda$. Define $j^+:V_{\lambda+1}\to V_{\lambda+1}$ as
			\begin{equation*}
				\forall A\subset V_{\lambda}\quad j^+(A)=\bigcup_{\beta<\lambda}j(A\cap V_\beta).
			\end{equation*}
		\end{defin}
		
		While it is not clear whether $j^+$ is an elementary embedding, every elementary embedding from $V_{\lambda+1}$ to itself is the `plus' of its restriction to $V_\lambda$:
		
		\begin{lem}
			If $j:V_{\lambda+1}\prec V_{\lambda+1}$, then $(j\upharpoonright V_\lambda)^+=j$. Thus every $j:V_{\lambda+1}\prec V_{\lambda+1}$ is defined by its behaviour on $V_\lambda$, i.e., for every $j,k:V_{\lambda+1}\prec V_{\lambda+1}$,
			\begin{equation*}
				j=k\qquad\text{iff}\qquad j\upharpoonright V_\lambda=k\upharpoonright V_\lambda.
			\end{equation*}
		\end{lem}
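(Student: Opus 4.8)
The plan is to reduce the claim to a single identity and exploit that, although $j$ is not continuous, each initial piece $A\cap V_\beta$ can be recovered as a section of $j(A)$. Write $i=j\upharpoonright V_\lambda$. First I would record what comes for free: $\lambda$ is a fixed point of $j$, since it is the supremum of the critical sequence and $j(\sup_n\kappa_n)=\sup_n\kappa_{n+1}=\lambda$; every ordinal $\beta<\lambda$ satisfies $j(\beta)<\lambda$, because $\beta<\kappa_n$ for some $n$ gives $j(\beta)<\kappa_{n+1}<\lambda$; and since the cumulative hierarchy below $\lambda$ is uniformly definable inside $V_{\lambda+1}$, elementarity yields $j(V_\beta)=V_{j(\beta)}$ for $\beta<\lambda$ and $j(V_\lambda)=V_\lambda$. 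In particular $i$ maps $V_\lambda$ into $V_\lambda$, so $i^+$ is defined, and since every $A\in V_{\lambda+1}$ is a subset of $V_\lambda$, it suffices to prove
\[ j(A)=\bigcup_{\beta<\lambda}j(A\cap V_\beta)\qquad\text{for every }A\subseteq V_\lambda. \]

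The key step is the identity $j(A\cap V_\beta)=j(A)\cap V_{j(\beta)}$. I would obtain it by applying elementarity to the absolute assertion that $A\cap V_\beta$ is the intersection of $A$ with the level $V_\beta$ (both genuine elements of $V_{\lambda+1}$ once $\beta<\lambda$), which gives $j(A\cap V_\beta)=j(A)\cap j(V_\beta)=j(A)\cap V_{j(\beta)}$. Note also that $A\cap V_\beta\in V_\lambda$, so $i(A\cap V_\beta)=j(A\cap V_\beta)$ and the two formulations of $i^+(A)$ agree. Taking the union over $\beta<\lambda$ then turns the union into an intersection:
\[ \bigcup_{\beta<\lambda}j(A\cap V_\beta)=\bigcup_{\beta<\lambda}\bigl(j(A)\cap V_{j(\beta)}\bigr)=j(A)\cap\bigcup_{\beta<\lambda}V_{j(\beta)}. \]

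It remains to collapse the right-hand side to $j(A)$, and this is where the cofinality observation does the work: the values $j(\beta)$ are cofinal in $\lambda$ (already $j(\kappa_n)=\kappa_{n+1}$ and $\sup_n\kappa_{n+1}=\lambda$), so $\bigcup_{\beta<\lambda}V_{j(\beta)}=V_\lambda$; and $A\subseteq V_\lambda$ gives $j(A)\subseteq j(V_\lambda)=V_\lambda$, whence $j(A)\cap V_\lambda=j(A)$. This proves $i^+=j$. For the stated equivalence, only the nontrivial direction needs an argument, and it follows by applying the operator $(\cdot)^+$ to both sides: if $j\upharpoonright V_\lambda=k\upharpoonright V_\lambda$ then $j=(j\upharpoonright V_\lambda)^+=(k\upharpoonright V_\lambda)^+=k$.

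The main obstacle — and the reason the naive attempt of simply distributing $j$ over $A=\bigcup_{\beta<\lambda}(A\cap V_\beta)$ does not settle the matter — is exactly that $j$ is \emph{not} continuous at $\lambda$, so $j(A)$ is not term-by-term the union of the $j(A\cap V_\beta)$ for any cheap reason. The computation succeeds only because rewriting each term as the section $j(A)\cap V_{j(\beta)}$ converts the union into an intersection of $j(A)$ with the set $\bigcup_{\beta<\lambda}V_{j(\beta)}$, which the cofinality of $\{j(\beta):\beta<\lambda\}$ in $\lambda$ forces to be all of $V_\lambda$; this is the one place where the specific structure of the critical sequence is genuinely used.
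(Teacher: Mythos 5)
Your proof is correct, and the uniqueness clause is handled exactly as the paper does it, but your route to the core identity $(j\upharpoonright V_\lambda)^+(A)=j(A)$ is genuinely different. The paper decomposes $A=\bigcup_{n\in\omega}(A\cap V_{\kappa_n})$ along the critical sequence and uses that the countable family $\{A\cap V_{\kappa_n}:n\in\omega\}$ is itself an \emph{element} of $V_{\lambda+1}$: since $\crt(j)>\omega$, $j$ maps this family to the family of its images (the technical remark stated after Solovay's theorem), and since union is definable, $\bigcup_{n\in\omega}j(A\cap V_{\kappa_n})=j(\bigcup_{n\in\omega}(A\cap V_{\kappa_n}))=j(A)$. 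So the paper does legitimately ``distribute $j$ over the union'' --- the very move you dismiss as unavailable --- because this particular $\omega$-union is visible inside the structure; that visibility is exactly what separates $V_{\lambda+1}$ from $V_\lambda$, and it is why the corresponding statement one level down (Theorem \ref{Delta0}) requires real work. Your argument never uses that the family of pieces lies in the model: you apply elementarity only pointwise, rewriting $j(A\cap V_\beta)$ as the section $j(A)\cap V_{j(\beta)}$, and then use cofinality of $\{j(\beta):\beta<\lambda\}$ in $\lambda$ to collapse the union to $j(A)\cap V_\lambda=j(A)$. Each approach buys something: the paper's proof is shorter and isolates the structural feature of $V_{\lambda+1}$ on which the lemma really rests; yours is more self-contained, needing neither the small-sequence remark nor the membership of the critical sequence in $V_{\lambda+1}$, only its cofinality. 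One quibble: your justification of $j(\lambda)=\lambda$ by continuity of $j$ at $\lambda$ secretly invokes the same commutation with $\omega$-sequences that you otherwise avoid; it is cleaner to note that $\lambda$ is the largest ordinal of $V_{\lambda+1}$, hence definable without parameters and fixed, or simply that $j(\lambda)$ is an ordinal of $V_{\lambda+1}$ and $j(\lambda)\geq\lambda$.
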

		\begin{proof}
			The critical sequence $\langle\kappa_n:n\in\omega\rangle$ is a subset of $V_\lambda$, so it belongs to $V_{\lambda+1}$. But then for every $A\subseteq V_\lambda$, $\{A\cap V_{\kappa_n}:n\in\omega\}\in V_{\lambda+1}$, so
			\begin{equation*}
				\begin{split}
					(j\upharpoonright V_\lambda)^+(A) & = \bigcup_{n\in\omega}(j\upharpoonright V_\lambda)(A\cap V_{\kappa_n})\\
					                                  & = \bigcup_{n\in\omega}j(A\cap V_{\kappa_n})\\
					                                  & = j(\bigcup_{n\in\omega}(A\cap V_{\kappa_n}))\\
					                                  & = j(A).
				\end{split}
			\end{equation*}
		\end{proof}
		
		It is worth noting that there is a strong connection between first-order formulas in $V_{\lambda+1}$ and second-order formulas in $V_\lambda$. In fact, all the elements of $V_{\lambda+1}$ are subsets of $V_\lambda$, so they can be replaced with predicate symbols:
		
		\begin{lem}
		
			Let $A\in V_{\lambda+1}\setminus V_\lambda$ and $\varphi(v_0,v_1,\dots,v_n)$ be a formula. Fix $\hat{A}$ a predicate symbol, and define $\varphi^*(v_1,\dots,v_n)$ in the language of $\LST$ expanded with $\hat{A}$ as following:
			\begin{itemize}
				\item for every occurrence of $v_0$, substitute $\hat{A}$;
				\item for every non-bounded quantified variable $x$, substitute every occurrence of $x$ with $X$, a second-order variable.
			\end{itemize}
			Then for every $a_1,\dots,a_n\in V_\lambda$
			\begin{equation*}
				V_{\lambda+1}\vDash\varphi(A,a_1,\dots,a_n)\quad\text{iff}\quad(V_\lambda,A)\vDash\varphi^*(a_1,\dots,a_n).
			\end{equation*}
		\end{lem}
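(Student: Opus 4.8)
The plan is to argue by induction on the structure of the formula $\varphi$, once we pin down the two structural facts that make the translation work. First, $V_{\lambda+1}=\mathcal{P}(V_\lambda)$, so the elements of $V_{\lambda+1}$ are exactly the subsets of $V_\lambda$; quantifying over $V_{\lambda+1}$ is therefore the same as monadic second-order quantification over the structure $(V_\lambda,\in)$. Second, $V_\lambda$ is transitive, and an element of any $Y\in V_{\lambda+1}$ lies in $V_\lambda$ (because $Y\subseteq V_\lambda$); in particular, if $X\in V_{\lambda+1}\setminus V_\lambda$ then $X$ has rank $\lambda$ and hence $X\notin Y$ for every $Y\in V_{\lambda+1}$. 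These two facts say precisely that unbounded quantifiers ``see'' all of $\mathcal{P}(V_\lambda)$ and so must become second-order, whereas a bounded quantifier $\exists x\in y$ always ranges inside $V_\lambda$ — regardless of whether the value of $y$ is a small set or a genuine subset of rank $\lambda$ — and so stays first-order. This is exactly the dichotomy encoded in the definition of $\varphi^*$.

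For the base case I would check the atomic formulas $x\in y$ and $x=y$ under an assignment sending $v_0$ to $A$, the free variables $v_1,\dots,v_n$ to $a_1,\dots,a_n\in V_\lambda$, and each quantified variable either to an element of $V_\lambda$ (bounded, first-order) or to a subset of $V_\lambda$ (unbounded, second-order). When both sides are first-order, membership and equality are absolute between $V_{\lambda+1}$ and $V_\lambda$ by transitivity. When the right-hand side is a second-order object $Y$ (or is $v_0$, read as $\hat A$) and the left-hand side is first-order $x$, the atom $x\in Y$ becomes the second-order atomic formula $Y(x)$ (resp.\ $\hat A(x)$), and $x=Y$ becomes $\forall u\,(u\in x\leftrightarrow Y(u))$. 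The only delicate subcase is an atom with a second-order object on the left of $\in$, e.g.\ $X\in y$ or $v_0\in y$: here the membership fact above forces the correct translation to assert that $X$ (resp.\ $A$) coincides with some first-order element that belongs to $y$, i.e.\ $\exists w\,(\forall u\,(u\in w\leftrightarrow X(u))\wedge w\in y)$, which is automatically false whenever $X$ has rank $\lambda$ (as $\hat A$ always does).

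The inductive steps for $\neg$ and $\wedge$ are immediate, since the translation commutes with the Boolean connectives and neither side changes the assignment. The quantifier step is where the real work is. For a bounded quantifier I would read $\exists x\in y\,\psi$ as $\exists x\,(x\in y\wedge\psi)$ with $x$ first-order, observe that by the transitivity fact the witnesses range over exactly the same subset of $V_\lambda$ on both sides (using the atomic clause for $x\in y$), and apply the induction hypothesis to $\psi$. For an unbounded quantifier $\exists x\,\psi$, I would use that the witnesses in $V_{\lambda+1}$ are precisely the subsets of $V_\lambda$, i.e.\ the range of the second-order variable $X$ in $(V_\lambda,A)$, and again invoke the induction hypothesis on $\psi$, now with $x$ replaced by the second-order $X$. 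Universal quantifiers are handled dually, or by reduction to $\neg\exists\neg$.

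The main obstacle I anticipate is bookkeeping rather than conceptual: making $\varphi^*$ well-defined on the atomic formulas so that the case analysis above is uniform. The slogan ``replace $x$ by a second-order $X$'' is accurate for quantification but hides the fact that membership between two second-order objects, and membership with $\hat A$ on the left, cannot be atomic second-order statements and must be spelled out via the ``$X$ equals a first-order element'' encoding. Once the atomic clauses are fixed so that they correctly realise the principle that rank-$\lambda$ sets are never members, the induction runs without friction, and the interplay between bounded/first-order and unbounded/second-order quantifiers in the quantifier step is the only place where the identity $V_{\lambda+1}=\mathcal{P}(V_\lambda)$ and the transitivity of $V_\lambda$ are genuinely used.
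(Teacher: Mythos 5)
Your proposal is correct and follows essentially the same route as the paper: the paper's entire proof is the one-line statement that one argues ``by induction on the complexity of $\varphi$,'' which is precisely the induction you carry out. Your additional bookkeeping at the atomic level (in particular that a rank-$\lambda$ set such as $A$ is never a member of any element of $V_{\lambda+1}$, and that membership with a second-order object on the left must be encoded via a first-order element with the same extension) just fills in the details the paper leaves implicit.
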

		\begin{proof}
			The proof is by induction on the complexity of $\varphi$.
		\end{proof}		
		
		The previous Lemma is key in clarifying the relationship between elementary embeddings in $V_\lambda$ and $V_{\lambda+1}$, and to finally prove that $j^+$ is a $\Sigma_0$ elementary embedding from $V_{\lambda+1}$ to itself (or, alternatively that $j$ is a $\Sigma^1_0$ elementary embedding from $V_\lambda$ to itself).
		
		\begin{teo}
		\label{Delta0}
			Let $j:V_\lambda\prec V_\lambda$. Then $j^+:V_{\lambda+1}\to V_{\lambda+1}$ is a $\Delta_0$-elementary embedding.
		\end{teo}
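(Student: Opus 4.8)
The plan is to show that $j^+$ preserves $\Delta_0$ formulas, i.e. that for every $\Delta_0$ formula $\varphi(v_1,\dots,v_n)$ and every $a_1,\dots,a_n \in V_{\lambda+1}$,
\begin{equation*}
V_{\lambda+1} \vDash \varphi(a_1,\dots,a_n) \quad\text{iff}\quad V_{\lambda+1} \vDash \varphi(j^+(a_1),\dots,j^+(a_n)).
\end{equation*}
The proof goes by induction on the complexity of $\varphi$. **First I would** handle the atomic cases. The genuinely important atomic case is $a \in b$ with $a,b \in V_{\lambda+1}$. Here I would exploit the fact that $j^+$ is built ``stripe by stripe'': since $a \in b$ and $a \in V_{\lambda+1}$ means $a \in V_\lambda$, there is some $\kappa_n$ with $a \in V_{\kappa_n}$, and then $a \in b$ iff $a \in b \cap V_{\kappa_n}$; applying $j$ (which is genuinely elementary on $V_\lambda$) gives $j(a) \in j(b\cap V_{\kappa_n})$, and since $j(a) \in V_{\kappa_{n+1}}$ this is equivalent to $j(a) \in j^+(b)$. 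To finish I must identify $j(a)$ with $j^+(a)$ for $a \in V_\lambda$, which follows because $j^+(a) = \bigcup_{\beta<\lambda} j(a \cap V_\beta)$ stabilizes to $j(a)$ once $\beta$ passes the rank of $a$. Equality $a = b$ is handled by the same stripe argument, or reduced to extensionality.

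**Next**, the propositional connectives $\neg, \wedge, \vee$ are immediate from the induction hypothesis, since the biconditional displayed above is preserved under Boolean combinations. **The main work** is the bounded quantifier case, say $\exists x \in a_i \, \psi(x, a_1, \dots, a_n)$. For the forward direction, suppose some witness $c \in a_i$ satisfies $\psi$; I want a witness for the image. The natural candidate is $j^+(c)$, and I would argue $j^+(c) \in j^+(a_i)$ using the atomic case already established, then invoke the induction hypothesis on $\psi$ to transfer satisfaction. **The hard part will be** the converse: given a witness $d \in j^+(a_i)$ for $\psi(d, j^+(a_1),\dots)$, I need a preimage witness in $a_i$. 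This is exactly where surjectivity-type issues arise, because $j^+$ need not be surjective onto $V_{\lambda+1}$ and $d$ need not be of the form $j^+(c)$. However, the saving observation is that $a_i$, being a bound, is itself an element of $V_{\lambda+1}$, so $a_i \subseteq V_\lambda$ and every element of $a_i$ lies in some stripe $V_{\kappa_m}$; correspondingly every element of $j^+(a_i)$ lies in some $V_{\kappa_{m+1}}$. I would use this to reflect the witness $d$ down to a single fixed stripe and then apply the genuine elementarity of $j$ on $V_\lambda$ at that stripe to produce the preimage $c \in a_i$ with $j(c) = d$ (or with the needed property), after which the induction hypothesis closes the argument.

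**Throughout** I would lean on Lemma (the $\varphi \leftrightarrow \varphi^*$ translation) to reformulate statements about $V_{\lambda+1}$ as second-order statements over $V_\lambda$, so that the honest elementarity $j : V_\lambda \prec V_\lambda$ can be brought to bear on the bound $a_i$ directly. The reason the argument stops at $\Delta_0$ and does not extend to full elementarity is precisely that an \emph{unbounded} existential quantifier would require a preimage witness ranging over all of $V_{\lambda+1}$, for which no stripe localization is available and $j^+$'s possible non-surjectivity genuinely blocks the converse direction. I expect the bounded-quantifier converse to be the only delicate point; everything else is bookkeeping on the stripe decomposition and routine induction.
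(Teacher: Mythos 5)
Your outline breaks at exactly the point you flagged as the hard part, and the fix you propose there does not work; this is a genuine gap, not a detail. In the converse bounded-quantifier step you want, given a witness $d\in j^+(a_i)$ for $\psi(d,j^+(a_1),\dots,j^+(a_n))$, to ``reflect $d$ to a stripe and apply the elementarity of $j$ to produce a preimage $c\in a_i$ with $j(c)=d$.'' Two things fail. First, the preimage simply need not exist: $j$ is far from surjective even stripe by stripe (for instance $\kappa_0=\crt(j)\in\kappa_2=j(\kappa_1)$, yet $\kappa_0\notin\ran(j)$, since $j$ fixes everything below $\kappa_0$ and maps everything $\geq\kappa_0$ to something $\geq\kappa_1$). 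Second, and more fundamentally, the elementarity of $j:V_\lambda\prec V_\lambda$ cannot be applied to the statement $\exists x\in j(a_i\cap V_{\kappa_m})\,\psi(x,j^+(\vec{a}))$ at all: its parameters $j^+(a_k)$ are subsets of $V_\lambda$ of unbounded rank, so they are neither elements of $V_\lambda$ nor of the form $j(b)$ with $b\in V_\lambda$. Localizing the \emph{witness} to one stripe does not localize the \emph{parameters}, because the inner bounded quantifiers of $\psi$ still range over elements of the $j^+(a_k)$'s living in all stripes. The translation lemma you invoke only converts the problem into a second-order one over $V_\lambda$; it does not make second-order parameters accessible to the first-order elementarity of $j$ — that inaccessibility is the entire difficulty of the theorem. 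In Tarski–Vaught terms, what you must produce is a witness lying in $\ran(j)$, and your induction hypothesis only gives information about elements of the form $j(c)$, which is precisely not enough.

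The paper's proof solves exactly this problem by a different mechanism, which your argument would have to import. One first Skolemizes $\varphi$, so that only universal quantifiers over a quantifier-free matrix remain; then one replaces satisfaction in $(V_\lambda,A,\vec{a},\vec{f})$ by the conjunction, over all $\delta<\lambda$, of the satisfaction of a single fixed sentence (with guards asserting that the relevant Skolem terms exist) in the set-sized structures $(V_\delta,A\cap V_\delta,\vec{a}',\vec{f}\cap V_\delta)$. The essential move is that the stripe decomposition is applied to \emph{all} the data — the parameters and the Skolem functions, not merely the witnesses — so that every localized statement has all its ingredients inside $V_\lambda$ and honest elementarity of $j$ transfers it. Reassembling the images yields $\varphi^*$ at $(V_\lambda,j^+(A),j(\vec{a}),j^+(\vec{f}))$ \emph{provided} the functions $j^+(f_i)$ are total; otherwise the guards could hold vacuously on the image side while witnesses are missing. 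Totality is the content of the Claim inside the paper's proof, $j^+(\dom(f))=\dom(j^+(f))$, and it is exactly how one manufactures witnesses for elements outside $\ran(j)$ — the thing your induction cannot do. (It is also the historically delicate point: the paper notes that the only published proof of Theorem \ref{Delta0} was missing precisely this step, discovered decades later.) So the outline needs to be replaced by an argument of this Skolem-function kind rather than patched: cutting only the witness down to a stripe can never close the universal/converse direction.
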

		\begin{proof}
			Let $\hat{A}$ be a symbol for an 1-ary relation, $\hat{a}_1,\dots,\hat{a}_n$ symbols for constants and let $\varphi$ be a formula in $\LST^*$, the language of $\LST$ expanded with $\hat{A}$ and $\hat{a}_1,\dots,\hat{a}_n$. We want to prove that for any $A\subseteq V_\lambda$:
			\begin{equation*}
				(V_\lambda,A,a_1,\dots,a_n)\vDash\varphi\qquad\text{iff}\qquad(V_\lambda,j^+(A),j(a_1),\dots,j(a_n))\vDash\varphi.
			\end{equation*}
			Actually we will prove only one direction for every $\varphi$, the other one following by considering $\neg\varphi$.
			
			First of all, we skolemize $\varphi$, so we find $f_1,\dots,f_m$ functions, $f_i:(V_\lambda)^{m_i}\to V_\lambda$, such that $\varphi$ is equivalent to a formula $\varphi^*$ in $\LST^*$ expanded with $\hat{f}_1,\dots,\hat{f}_m$, where $\varphi^*$ is $\forall x_1\forall x_2\dots\forall x_m\ \varphi'$, with $\varphi'$ a $\Delta_0$ formula.
			
			Let $t_0(x_1,\dots,x_m),\dots,t_p(x_1,\dots,x_m)$ be all the terms that are in $\varphi'$. We can express as a logical formula the phrase $``t_i(x_1,\dots,x_m)$ exists``: we work by induction on the tree of the term, writing a conjuction of formulas in this way
			\begin{itemize}
				\item every time that there is an occurrence of a function, i.e. $\hat{f}_i(t')$, we add to the formula $\exists y\  f_i(t')=y$;
				\item every time that there is an occurrence of a constant, i.e. $\hat{a}_i$, we add to the formula $\hat{a}_i\neq\emptyset$, if $a_i\neq\emptyset$; otherwise we don't write anything.
			\end{itemize}
			
			Then we have that
			\begin{multline*}
				(V_\lambda,A,a_1,\dots,a_n,f_1,\dots,f_m)\vDash\varphi^*\quad\text{iff}\\ 
				\forall\delta<\lambda\ (V_\delta,A\cap V_\delta,a_1',\dots,a_n',f_1\cap V_\delta,\dots,f_m\cap V_\delta)\vDash\\
				\forall x_1,\dots,\forall x_m(\bigwedge_{i<p}t_i(x_1,\dots,x_m)\text{ exists }\rightarrow\varphi'),
			\end{multline*}
			where $a_i'$ is $a_i$ if $a_i\in V_\delta$, otherwise is $\emptyset$. This is true because, with $\delta$ fixed, if only one term does not exist the formula is satisfied, and if all the terms exist, then they are a witness for the satisfaction (or not) of $\varphi'$.
			
			So
			\begin{multline*}
				(V_\lambda,A,a_1,\dots,a_n)\vDash\varphi\\					
				\shoveleft{\quad\leftrightarrow (V_\lambda,A,a_1,\dots,a_n,f_1,\dots,f_m)\vDash\varphi^*}\\
				\shoveleft{\quad\leftrightarrow \forall\delta<\lambda\ (V_\delta,A\cap V_\delta,a_1',\dots,a_n',f_1\cap V_\delta,\dots,f_m\cap V_\delta)\vDash}\\
				\shoveright{\forall x_1,\dots,\forall x_m\ (\bigwedge_{i<p}t_i(x_1,\dots,x_m)\text{ exists }\rightarrow\varphi')}\\
				\shoveleft{\quad\leftrightarrow \forall\delta<\lambda\ (V_\delta,j(A\cap V_\delta),j(a_1'),\dots,j(a_n'),j(f_1\cap V_\delta),\dots,j(f_m\cap V_\delta))\vDash}\\
				\shoveright{\forall x_1,\dots,\forall x_m\ (\bigwedge_{i<p}t_i(x_1,\dots,x_m)\text{ exists }\rightarrow\varphi')}\\
				\shoveleft{\quad\leftarrow (V_\lambda,j^+(A),j(a_1),\dots,j(a_n),j^+(f_1),\dots,j^+(f_m))\vDash\varphi^*}\\
				\shoveleft{\quad\leftrightarrow (V_\lambda,j^+(A),j(a_1),\dots,j(a_n))\vDash\varphi}.\\
			\end{multline*}		
			
			The only thing left to prove is the right direction of the fifth line. The bump is in the fact that if a $j^+(f_i)$ is not total, then it would be possible to have the equation in the fourth line satisfied because the relevant terms do not exist, but the fifth line not satisfied on the elements not in the domain of $j^+(f_i)$. We prove that actually the $j^+(f_i)$ are total, in fact even more:
			
			\begin{cla}
			 If $j:V_\lambda\prec V_\lambda$ and $f:V_\lambda\to V_\lambda$, even partial, then $j^+(\dom(f))=\dom(j^+(f))$.
			\end{cla}
			 \begin{proof}[Proof of claim]
  By definition, 
	\begin{equation*}
	j^+(\dom(f))=\bigcup_{m\in\omega}j(\dom(f)\cap V_{\kappa_m}), 
	\end{equation*}
	while 
	\begin{multline*}
	\dom(j^+(f))=\dom(\bigcup_{m\in\omega}j(f\cap V_{\kappa_m}))=\\
	=\bigcup_{m\in\omega}\dom(j(f\cap V_{\kappa_m}))=\bigcup_{m\in\omega}j(\dom(f\cap V_{\kappa_m})).
  \end{multline*}
		
	It is not immediate to see why the two are the same, because it is possible that $\dom(f)\cap V_{\kappa_m}$ and $\dom(f\cap V_{\kappa_m})$ are different, as on the left we are cutting the dominion, on the right both the dominion and codominion. The solution is to divide everything in even smaller pieces, that would be the same on the left and on the right side.
	
	\begin{equation*}
	 \dom(f)\cap V_{\kappa_m}=\bigcup_{n\in\omega}\dom(f\cap V_{\kappa_n})\cap V_{\kappa_m}, 
	\end{equation*}
	and 
	\begin{equation*}
	 \dom(f\cap V_{\kappa_m})=\bigcup_{n\in\omega}\dom(f\cap V_{\kappa_m})\cap V_{\kappa_n}. 
	\end{equation*}
	Therefore:
	
	\begin{multline*}
	j^+(\dom(f))=\bigcup_{m\in\omega}j(\dom(f)\cap V_{\kappa_m})=\bigcup_{m\in\omega}j(\bigcup_{n\in\omega}\dom(f\cap V_{\kappa_n})\cap V_{\kappa_m})=\\
	=\bigcup_{m\in\omega}\bigcup_{n\in\omega}j(\dom(f\cap V_{\kappa_n})\cap V_{\kappa_m})=\bigcup_{n\in\omega}\bigcup_{m\in\omega}j(\dom(f\cap V_{\kappa_n})\cap V_{\kappa_m})=\\
	=\bigcup_{n\in\omega}j(\dom(f\cap V_{\kappa_n}))=\dom(j^+(f)).
	\end{multline*}
 \end{proof}

The third and fifth equivalence are a delicate point. In the third equivalence, the $\omega$-sequence $\langle\dom(f\cap V_{\kappa_n})\cap V_{\kappa_m}:n\in\omega\rangle\in V_\lambda$, even if it is defined with $\kappa_n$, therefore $j$ behaves properly. In the fifth equivalence, what seems an infinite union is in fact finite.

		\end{proof}
		
		Therefore every $j:V_\lambda\prec V_\lambda$ can be extended to a unique $j^+:V_{\lambda+1}\to V_{\lambda+1}$ that is at least a $\Delta_0$-elementary embedding. Moreover it is possible to prove
		\begin{itemize}
			\item I3 holds iff for some $\lambda$ there exists a $j:V_{\lambda+1}\to V_{\lambda+1}$ that is a $\Delta_0$-elementary embedding;
			\item I1 holds iff for some $\lambda$ there exists a $j:V_{\lambda+1}\prec V_{\lambda+1}$ that is a $\Sigma_n$-elementary embedding for every $n$
		\end{itemize}
		
		Theorem \ref{Delta0} implies that if $j,k:V_\lambda\prec V_\lambda$, then $j^+(k):V_\lambda\prec V_\lambda$. This operation between elementary embeddings is called \emph{application}, and we write $j^+(k)=j\cdot k$. Coupled with composition (not to be mistaken to!) they create an interesting algebra.
		
		\begin{teo}[Laver, 1992 \cite{Laver1}]
			Fix $\lambda$ and let ${\cal E}_\lambda=\{j:V_\lambda\prec V_\lambda\}$. For all $j\in{\cal E}_\lambda$ the closure of $\{j\}$ in $({\cal E}_\lambda,\cdot)$ is the free algebra generated by the law
					\begin{equation*}
						\shoveleft{\text{(Left Distributive Law)}\quad i\cdot(j\cdot k)=(i\cdot j)\cdot(i\cdot k)}.
					\end{equation*}
		\end{teo}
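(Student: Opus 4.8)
The plan is to split the statement into two independent tasks: showing that $({\cal E}_\lambda,\cdot)$ satisfies the left distributive law at all (so that the canonical term map from the free LD-algebra is a well-defined homomorphism), and then showing that the subalgebra generated by a single $j$ carries no relations beyond those forced by LD, which is freeness proper.

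First I would verify the left distributive law. Recall that $i\cdot j=i^+(j)$ and that, by Theorem \ref{Delta0}, $i^+\colon V_{\lambda+1}\to V_{\lambda+1}$ is $\Delta_0$-elementary. The ternary relation ``$w=u\cdot v$'', read as ``$u,v,w\in{\cal E}_\lambda$ and $w=u^+(v)$'', can be expressed by a formula over $(V_\lambda,\cdot)$ of low enough complexity to be preserved by $i^+$ (using that application lands in ${\cal E}_\lambda$, as noted after Theorem \ref{Delta0}, together with the defining behaviour of $u^+$ on the ranks $V_\beta$). Applying $i^+$ to the trivially true ``$j\cdot k=j\cdot k$'' then yields $i^+(j\cdot k)=i^+(j)\cdot i^+(k)$, which unfolds to $i\cdot(j\cdot k)=(i\cdot j)\cdot(i\cdot k)$ since $i^+(j)=i\cdot j$ and $i^+(k)=i\cdot k$. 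Hence $x\mapsto j$ extends to a surjective homomorphism $\pi\colon F\to\langle j\rangle$, where $F$ is the free LD-algebra on one generator, and the theorem reduces to the injectivity of $\pi$.

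For injectivity I would argue through the left-divisibility relation $<_L$, where $a<_L b$ means $b=(\cdots(a\cdot c_1)\cdots)\cdot c_n$ for some $c_1,\dots,c_n$ with $n\geq 1$, defined on both $F$ and $\langle j\rangle$. Two ingredients combine. The first is the purely combinatorial comparison property of the free LD-algebra: any two distinct elements of $F$ are $<_L$-comparable. Granting it, if $\pi(s)=\pi(t)$ with $s\neq t$ then, say, $s<_L t$ in $F$, and since any homomorphism preserves $<_L$ we obtain $\pi(s)<_L\pi(s)$ in $\langle j\rangle$. So it suffices to establish the second ingredient: that $<_L$ is irreflexive (acyclic) on $\langle j\rangle$.

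The hard part will be exactly this acyclicity, which I expect to be the main obstacle. The natural invariant is the critical point, which transforms by $\crt(p\cdot q)=p(\crt(q))$, obtained by applying $p^+$ to ``$\crt(q)$ is the critical point of $q$'' and invoking Theorem \ref{Delta0} once more. The difficulty is that $\crt$ is \emph{not} monotone along $<_L$: when $\crt(q)<\crt(p)$ one gets $\crt(p\cdot q)=\crt(q)$, so critical points can fall as well as rise; for instance $\crt(j\cdot j)=\kappa_1$ while $\crt((j\cdot j)\cdot j)=\kappa_0<\kappa_1$, even though $j\cdot j<_L(j\cdot j)\cdot j$. A naive ``critical points strictly increase'' argument therefore cannot work. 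Laver's resolution, which I would follow, is a reflection/minimality scheme: one assumes a hypothetical cycle $e=(\cdots(e\cdot c_1)\cdots)\cdot c_n$, isolates the least critical point occurring in it, and uses the elementarity of the embedding realizing that critical point to reflect the relation to a strictly simpler one, forcing an infinite regress against the well-foundedness of the ordinals. Choosing which embedding to reflect through and controlling the behaviour of the $c_i$ under that reflection is the delicate core of the argument; the LD verification and the reduction through the comparison property are, by contrast, comparatively routine.
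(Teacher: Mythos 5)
First, a caveat on the comparison itself: the paper does not prove this theorem. It states it as Laver's result, cites \cite{Laver1}, and explicitly refers the reader to Chapter 11 of \cite{Handbook} for a proof. So your proposal can only be measured against the known argument, not against anything in the text.

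Measured that way, your architecture is the correct one and matches Laver's. The verification of left distributivity is complete: the graph of application, $w=u\cdot v$ iff $\forall x\,\bigl(x\in w\leftrightarrow\exists\beta<\lambda\,\exists y\,((v\cap V_\beta,y)\in u\wedge x\in y)\bigr)$, involves only first-order quantifiers over $V_\lambda$ with $u,v,w$ as predicates, so Theorem \ref{Delta0} (together with the remark following it, which guarantees application maps ${\cal E}_\lambda$ to ${\cal E}_\lambda$) does give $i^+(j\cdot k)=i^+(j)\cdot i^+(k)$. The reduction of freeness to (i) comparison for iterated left divisibility $<_L$ on the free algebra $F$ plus (ii) irreflexivity of $<_L$ on ${\cal A}_j$ is exactly the standard decomposition, your attribution of (i) to \ZFC{} combinatorics and of (ii) to the embeddings is historically right (this is precisely the part Dehornoy \cite{Dehornoy} later recovered in \ZFC{} via braids), and your counterexample $\crt((j\cdot j)\cdot j)=\kappa_0<\kappa_1=\crt(j\cdot j)$ is correct.

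The genuine gap is that neither pillar is proved, and pillar (ii) \emph{is} the theorem. Treating comparison on $F$ as a quotable black box is defensible, since it is a known arithmetical fact independent of large cardinals. But irreflexivity of $<_L$ on ${\cal A}_j$ is the entire mathematical content for which the embedding is needed, and your one-sentence sketch (``isolate the least critical point, reflect, infinite regress'') is a description of a hoped-for proof, not a proof. Your own observation shows why nothing routine can work: for a putative $2$-cycle $p=(p\cdot c_1)\cdot c_2$, the identity $\crt(a\cdot b)=a(\crt(b))$ yields no contradiction in the case $\crt(c_2)=\crt(p)\leq\crt(c_1)$ (then $\crt(p\cdot c_1)=p(\crt(c_1))>\crt(c_2)$, and all constraints are satisfiable at the level of critical points), so any actual argument must track finer invariants than $\crt$ --- in Laver's proof, how elements of ${\cal A}_j$ agree on initial segments $V_\gamma$ below their critical points, the machinery that also underlies the Laver--Steel theorem that critical points of the left powers of $j$ are cofinal in $\lambda$. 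None of that is reconstructed in the proposal, so the heart of the theorem is named rather than proved.
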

		
		Moreover Laver proved that the free algebra generated by the laws above satisfies the word problem. His proof used extensively I3, but later Dehornoy (\cite{Dehornoy}) managed to prove the same thing in \ZFC. A proof of this can be found in the Handbook of Set Theory \cite{Handbook}, Chapter 11. 
		
		There are still open problems at the I3 level:
		
		\begin{Q}
		 Are there $j,k:V_\lambda\prec V_\lambda$ such that the algebra generated by them is free?
		\end{Q}
		
		Another interesting result in I3 regards a function on the integers. Consider $j:V_\lambda\prec V_\lambda$ with critical point $\kappa$ and let ${\cal A}_j$ be the closure of $j$ in $({\cal E}_j,\cdot)$. Then define
		\begin{equation*}
			f(n)=|\{\crt(k):k\in{\cal A}_j,\ \kappa_n<\crt(k)<\kappa_{n+1}\}|.
		\end{equation*}  
		Then $f(0)=f(1)=0$ and $f(2)=1$, because the simplest element of ${\cal A}_j$ that has a critical point not in the critical sequence of $j$ is $((j\cdot j)\cdot j)\cdot(j\cdot j)$. However $f(3)$ is very large. Laver (\cite{Laver2}) proved that for any $n$, $f(n)$ is finite, but $f$ dominates the Ackermann function, so $f$ cannot be primitive recursive (\cite{Dougherty}).

 A standard way of using application is to consider $j$, $j(j)$, $j(j(j))$\dots, and we write $j^0=j$ and $j^{n+1}=j(j^n)$. If $\langle\kappa_n:n\in\omega\rangle$ is the critical sequence of $j$, then by elementarity, $\crt(j(j))=j(\crt(j))=\kappa_1$ and $\langle\kappa_n:n\geq 1,n\in\omega\rangle$ is the critical sequence of $j^2$. By induction, $\crt(j^n)=\kappa_n$. This is the reason why, when talking about I3 or I1, the critical point is never mentioned, when usually it is the main object of research (see strongness, supercompactness\dots): if there exists a $\lambda$ that witnesses I3, then there are infinite critical points tied to the same $\lambda$, so it is really $\lambda$ the relevant cardinal, while the critical points are interchangeable and unbounded in $\lambda$. Still, they carry a lot of strength. The following result shows that I3 really sits on top of the large cardinal hierarchy:

\begin{prop}
 \label{verylarge}
 Let $\langle\kappa_i:i<\omega\rangle$ be the critical sequence of $j$. Then for every $i$, $\kappa_i$ is $\gamma$-supercompact for any $\gamma<\lambda$ and $n$-huge for every $n$.
\end{prop}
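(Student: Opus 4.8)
The plan is to extract from the single embedding $j:V_\lambda\prec V_\lambda$ honest ultrafilters living \emph{inside} $V_\lambda$, and then take ultrapowers of the whole universe $V$. The guiding principle is that every combinatorial object I need (the underlying set, the subsets to be measured, and crucially the ``seed'' of the ultrafilter) can be kept bounded in $\lambda$, so that the elementarity of $j$ applies to it directly and the resulting measure is absolutely normal, fine and complete. I write $P_\kappa(\gamma)=\{x\subseteq\gamma:|x|<\kappa\}$.

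For the supercompactness of $\kappa_0$, fix $\gamma<\lambda$ and choose $k$ with $\gamma<\kappa_k$. Let $i=\underbrace{j\circ\cdots\circ j}_{k}\in{\cal E}_\lambda$; then $\crt(i)=\kappa_0$ and $i(\kappa_0)=\kappa_k$. The seed $i''\gamma=\{i(\alpha):\alpha<\gamma\}$ is contained in $i(\gamma)<\lambda$, hence bounded, so $i''\gamma\in V_\lambda$; moreover $|i''\gamma|=|\gamma|<\kappa_k=i(\kappa_0)$, so $i''\gamma\in i(P_{\kappa_0}(\gamma))=P_{i(\kappa_0)}(i(\gamma))$. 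I set
\[ U_\gamma=\{X\subseteq P_{\kappa_0}(\gamma): i''\gamma\in i(X)\}, \]
which is well defined since $P_{\kappa_0}(\gamma)\in V_\lambda$ forces every $X\subseteq P_{\kappa_0}(\gamma)$ into $V_\lambda$. Fineness is exactly $i(\alpha)\in i''\gamma$ for $\alpha<\gamma$; $\kappa_0$-completeness uses that sequences of length $<\kappa_0=\crt(i)$ are passed through $i$ coordinatewise; normality follows from the seed being $i''\gamma$, as in the usual derivation. By the classical equivalence between normal fine measures on $P_\kappa(\gamma)$ and $\gamma$-supercompactness (\cite{Kanamori}), $\kappa_0$ is $\gamma$-supercompact.

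For the $n$-hugeness of $\kappa_0$, I take $i=j$ and seed $s=j''\kappa_{n+1}\subseteq\kappa_{n+2}$, again bounded, hence $s\in V_\lambda$. Let
\[ S=\{x\subseteq\kappa_{n+1}: \ot(x\cap\kappa_{m+1})=\kappa_m\ \text{for all }m\le n\}\in V_\lambda,\qquad U=\{X\subseteq S: s\in j(X)\}. \]
The key point is $s\in j(S)$: since $j(\alpha)<\kappa_{m+2}=j(\kappa_{m+1})$ iff $\alpha<\kappa_{m+1}$, one gets $s\cap\kappa_{m+2}=j''\kappa_{m+1}$, of order type $\kappa_{m+1}=j(\kappa_m)$, which is precisely the defining condition of $j(S)$. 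As before $U$ is a normal fine $\kappa_0$-complete ultrafilter concentrating on $S$, and $\Ult(V,U)$ gives $i_U:V\prec M$ with $\crt(i_U)=\kappa_0$, critical sequence beginning $\kappa_0,\kappa_1,\dots,\kappa_{n+1}$ and closure $M^{\kappa_{n+1}}\subseteq M=M^{i_U(\kappa_n)}$, i.e.\ $\kappa_0$ is $n$-huge.

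The passage from $\kappa_0$ to every $\kappa_i$ is then pure elementarity. Each witness above is a measure \emph{inside} $V_\lambda$, and the assertion ``$U$ is a normal fine $\kappa$-complete ultrafilter concentrating on the prescribed set'' is absolute between $V_\lambda$ and $V$, because the relevant subsets and short sequences already belong to $V_\lambda$. Hence
\begin{multline*}
\Phi(\kappa_0):\quad \forall\gamma\,\exists U\,(U\text{ normal fine on }P_{\kappa_0}(\gamma))\\
\wedge\ \forall n\,\exists U\,(U\text{ an }n\text{-huge measure for }\kappa_0)
\end{multline*}
holds in $V_\lambda$ with parameter $\kappa_0$; applying $j$ yields $V_\lambda\vDash\Phi(\kappa_1)$, and iterating yields $V_\lambda\vDash\Phi(\kappa_i)$ for all $i$, which by absoluteness and the classical equivalences translate back into genuine $\gamma$-supercompactness and $n$-hugeness in $V$. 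I expect the main obstacle to be the two base cases: keeping the seeds bounded in $\lambda$ (which is what forces the composition powers $j\circ\cdots\circ j$ in the supercompact case, so that $i(\kappa_0)=\kappa_k$ overtakes $\gamma$) and pinning down the correct nested order-type set $S$ for the $n$-huge measure together with the verification $j''\kappa_{n+1}\in j(S)$; once these measures are in hand, the completeness and normality checks and the final elementary transfer are routine.
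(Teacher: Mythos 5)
Your proposal is correct and follows essentially the same route as the paper's proof: supercompactness measures on ${\cal P}_{\kappa_0}(\gamma)$ seeded by $i''\gamma$ for a finite iterate $i$ of $j$ with $i(\kappa_0)>\gamma$, hugeness measures seeded by $j''\kappa_{n+1}$ via the Solovay--Reinhardt order-type characterization, and transfer from $\kappa_0$ to every $\kappa_i$ by elementarity of $j$, using that all the witnessing ultrafilters are elements of $V_\lambda$. The only cosmetic differences are that you spell out the $n$-fold \emph{composition} that the paper abbreviates as $j^n$ (which is in fact the correct reading, since the application iterate $j(j^{n-1})$ has critical point $\kappa_n>\gamma$ and would not work), and that your huge tower $\kappa_0,\dots,\kappa_{n+1}$ is shifted by one relative to the paper's $\kappa_0,\dots,\kappa_n$ --- a discrepancy already present inside the paper between its definition of $n$-huge and the characterization it quotes.
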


We use the characterization via ultrafilter:

\begin{prop}[Solovay, Reinhardt]
  A cardinal $\kappa$ is $\gamma$-supercompact iff there exists a fine, normal ultrafilter on ${\cal P}_\kappa(\lambda)$. It is $n$-huge iff there is a $\kappa$-complete normal ultrafilter $U$ over some ${\cal P}(\eta)$ and $\kappa=\eta_0<\eta_1<\dots<\eta_n=\eta$ such that for any $i<n$ $\{x\in{\cal P}(\eta):\ot(x\cap\eta_{i+1})=\eta_i\}\in U$. 
\end{prop}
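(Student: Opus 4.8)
The plan is to prove each biconditional by the standard correspondence between elementary embeddings and their \emph{seed} ultrafilters, in both directions. For the forward direction one reads a measure off a witnessing embedding; for the converse one takes an ultrapower and checks that the resulting embedding has the required critical point and, above all, the required closure. (Note that in the supercompactness clause the domain of the ultrafilter should be ${\cal P}_\kappa(\gamma)$, to match the definition of $\gamma$-supercompactness; I write $\gamma$ for the displayed $\lambda$.)

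For $\gamma$-supercompactness, given $j:V\prec M$ with $\crt(j)=\kappa$, $\gamma<j(\kappa)$ and $M^\gamma\subseteq M$, I would set $s=j[\gamma]=\{j(\alpha):\alpha<\gamma\}$ and $U=\{X\subseteq{\cal P}_\kappa(\gamma):s\in j(X)\}$. First I would check that $s$ is a legitimate seed: since $M^\gamma\subseteq M$ the $\gamma$-sequence $s$ lies in $M$, and since $s\subseteq j(\gamma)$ has $M$-cardinality at most $\gamma<j(\kappa)$, we get $s\in j({\cal P}_\kappa(\gamma))$, so $U$ is a genuine ultrafilter on ${\cal P}_\kappa(\gamma)$. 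Fineness is the observation that $j(\alpha)\in s$ for all $\alpha<\gamma$; $\kappa$-completeness follows from $\crt(j)=\kappa$ together with the Remark on images of short sequences; and normality follows because if $f$ is regressive on a $U$-set then $j(f)(s)\in s$, hence $j(f)(s)=j(\beta)$ for a single $\beta<\gamma$, which forces $f$ to be constantly $\beta$ on a $U$-set.

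For the converse I would form $M=\Ult(V,U)$ with $j=j_U$; $\kappa$-completeness makes the ultrapower well-founded, so I identify $M$ with its transitive collapse. The technical heart is the identity $[\id]_U=j[\gamma]$: fineness gives $j(\alpha)\in[\id]_U$ for every $\alpha<\gamma$, and normality gives the reverse inclusion, since any $\beta\in[\id]_U$ is represented by a function landing inside its own argument, hence $U$-constant. From this, $\crt(j)=\kappa$ and $\gamma<j(\kappa)$ drop out (the latter because $[\id]_U\in j({\cal P}_\kappa(\gamma))$ has $M$-cardinality below $j(\kappa)$ while its order type is $\gamma$). The closure $M^\gamma\subseteq M$ is the main obstacle: given $\langle[f_\alpha]_U:\alpha<\gamma\rangle$, I would represent it by the single function $x\mapsto\langle f_\alpha(x):\alpha\in x\rangle$, whose class is, by fineness and \L{}o\'s's theorem, a function in $M$ with domain $[\id]_U=j[\gamma]$ sending $j(\alpha)$ to $[f_\alpha]_U$; composing with the increasing enumeration of $[\id]_U$ (which equals $j\upharpoonright\gamma$ and is definable in $M$ from the set $[\id]_U$) recovers the sequence inside $M$.

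The hugeness equivalence runs along the same lines, now with seed $s=j[\eta]$, an element of $M$ by the closure hypothesis $M^{j(\kappa_n)}\subseteq M$ and concentrated on a subset of $j(\eta)$. The content of the order-type clauses $\{x:\ot(x\cap\eta_{i+1})=\eta_i\}\in U$ is, after translating through the seed-definition, precisely that $j$ sends each $\eta_i$ to $\eta_{i+1}$ and that $\ot(s\cap j(\eta_{i+1}))=j(\eta_i)$; so the $\eta_i$ must be taken to be consecutive members of the critical sequence of $j$, and the role of these clauses is exactly to force $j$ to shift the indices. For the converse one again forms the ultrapower, uses normality together with the topmost order-type condition to identify $[\id]_U$ with $j[\eta]$ and to read off the factorization $j(\eta_i)=\eta_{i+1}$, and then obtains the closure of $M$ by the same single-function coding used in the supercompact case. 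Beyond that coding, the one genuinely delicate point is the index bookkeeping: one must verify that the critical sequence of the ultrapower embedding threads through the $\eta_i$ so that the resulting closure is exactly the one demanded by $n$-hugeness.
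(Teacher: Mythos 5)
You should know at the outset that the paper contains no proof of this Proposition: it is quoted, with attribution to Solovay and Reinhardt, as a classical characterization (essentially Theorems 22.7 and 24.8 of \cite{Kanamori}) and is used as a black box in the proof of Proposition \ref{verylarge}. Your argument is exactly the standard seed/ultrapower correspondence that the attribution points to, and your supercompactness clause is complete and correct: the seed $s=j[\gamma]$, the verification $s\in j({\cal P}_\kappa(\gamma))$ via the closure $M^\gamma\subseteq M$, the identification $[\id]_U=j[\gamma]$ from fineness plus normality, and the recovery of $\gamma$-closure of the ultrapower by coding a $\gamma$-sequence into the single function $x\mapsto\langle f_\alpha(x):\alpha\in x\rangle$ and composing with the increasing enumeration of $[\id]_U$, which lies in $M$ because the set $[\id]_U$ does. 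Your emendation of ${\cal P}_\kappa(\lambda)$ to ${\cal P}_\kappa(\gamma)$ is also right: $\lambda$ is reserved in this paper for the rank-into-rank cardinal, and the displayed $\lambda$ is a slip.

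There is one genuine misstep, in the hugeness converse. You write that normality ``together with the topmost order-type condition'' identifies $[\id]_U$ with $j[\eta]$. It does not: normality gives only the inclusion $[\id]_U\subseteq j[\eta]$ (every $M$-member of $[\id]_U$ is some $j(\beta)$), and the topmost condition then yields, for $A=\{\beta<\eta:j(\beta)\in[\id]_U\}$, only $\ot(A)=j(\eta_{n-1})$ --- which does not force $A=\eta$, since a proper subset of $\eta$ can have full order type; worse, reading the order-type conditions as $j(\eta_i)=\eta_{i+1}$ already presupposes $[\id]_U=j[\eta]$, so the reasoning as stated is circular. What closes the gap is fineness, $\{x:\alpha\in x\}\in U$ for each $\alpha<\eta$, exactly as you used it in the supercompact clause. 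The huge clause of the statement omits the word ``fine'', but it must be read into ``normal'' (Kanamori's convention), since otherwise the Proposition is false: a principal ultrafilter concentrated on a single $x_0$ with $\ot(x_0\cap\eta_{i+1})=\eta_i$ for all $i<n$ (such $x_0$ exist, e.g.\ for $\eta_i=\kappa^{+i}$, since then $\eta_{i-1}+\eta_i=\eta_i$) is $\kappa$-complete and trivially normal, satisfies every order-type condition, and has identity ultrapower. Symmetrically, the supercompact clause omits ``$\kappa$-complete'', which you silently used for well-foundedness; it too is part of the standard package and should be flagged as such. Finally, your closing worry about index bookkeeping is legitimate but is the paper's problem rather than yours: your construction yields closure under $\eta$-sequences with $\eta=j^n(\kappa)=\kappa_n$, which is the standard meaning of $n$-huge and is what the proof of \ref{verylarge} actually uses ($U$ over ${\cal P}(\kappa_n)$ with $\eta_i=\kappa_i$), whereas the paper's displayed definition of $n$-huge asks for $M^{j(\kappa_n)}=M^{\kappa_{n+1}}\subseteq M$ --- an off-by-one slip in the survey, not a defect in your argument.
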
 

\begin{proof}[Proof of \ref{verylarge}]
 Let $\kappa_0\leq\gamma<\lambda$, and suppose $\gamma<\kappa_n$; then it is routine to check that $\gamma$-supercompactness of $\kappa_0$ is witnessed by
  \begin{equation*}
   U=\{X\subseteq{\cal P}_{\kappa_0}(\gamma):(j^n)''\gamma\in j^n(X)\}
  \end{equation*}
and that $n$-hugeness of $\kappa_0$ is witnessed by
  \begin{equation*}
   U=\{X\subseteq{\cal P}(\kappa_n):j''\kappa_n\in j(X)\}\quad\text{and}\quad\kappa_0,\dots,\kappa_n.
  \end{equation*}
Then $\kappa_n$ is $\gamma$-supercompact and $n$-huge by elementarity.
\end{proof}

On the other hand $\lambda$ (the ``true'' rank-into-rank cardinal) does not enjoy many large cardinal properties, as it is of cofinality $\omega$, and usually large cardinals are regular. The most one can say is that it is a Rowbottom cardinal. 

We end the section with some interesting reflection properties of $V_\lambda$:
		
		\begin{lem}
		\label{reflection}
		 Suppose $j:V_\lambda\prec V_\lambda$ and let $\langle \kappa_n: n\in\omega\rangle$ be its critical sequence. Then for every $n\in\omega$, $V_{\kappa_n}\prec V_\lambda$.
		\end{lem}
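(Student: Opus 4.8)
The plan is to prove the statement first for $n=0$, i.e. $V_{\crt(j)}\prec V_\lambda$, and then to bootstrap to arbitrary $n$ by replacing $j$ with its $n$-th iterate under application. Indeed, by Theorem \ref{Delta0} the map $j^n$ (defined by $j^0=j$, $j^{n+1}=j\cdot j^n$) is again an elementary embedding $V_\lambda\prec V_\lambda$, its critical point is $\kappa_n$, and the supremum of its critical sequence is still $\lambda$. So once the result is known for the critical point of an \emph{arbitrary} rank-into-rank embedding, applying it to $j^n$ yields $V_{\kappa_n}\prec V_\lambda$ directly.

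For the case $n=0$ I would verify the Tarski--Vaught criterion: for every formula $\psi(x,y_1,\dots,y_m)$ and all $a_1,\dots,a_m\in V_{\kappa_0}$, if $V_\lambda\vDash\exists x\,\psi(x,\bar a)$ then a witness already lies in $V_{\kappa_0}$. First note that since $\kappa_0=\crt(j)$ is a strong limit (being measurable), an $\in$-induction using the Remark preceding the proof of \ref{kunens} gives $j\upharpoonright V_{\kappa_0}=\id$; in particular $j(a_i)=a_i$ for each $i$. Next fix the definable formula
\begin{equation*}
\chi(\mu,\bar y)\;\equiv\;\exists x\,(x\in V_\mu\wedge\psi(x,\bar y)),
\end{equation*}
where membership in $V_\mu$ is expressed through the (uniformly definable) rank function.

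The core is a downward reflection along the critical sequence. If $V_\lambda\vDash\exists x\,\psi(x,\bar a)$, pick any witness $b$ and an $n$ with $b\in V_{\kappa_n}$, so that $V_\lambda\vDash\chi(\kappa_n,\bar a)$; such an $n$ exists because $\lambda=\sup_k\kappa_k$. For $1\le k\le n$ we have $\kappa_k=j(\kappa_{k-1})$ and $\bar a=j(\bar a)$, so elementarity of $j$ gives
\begin{equation*}
V_\lambda\vDash\chi(\kappa_{k-1},\bar a)\quad\Longleftrightarrow\quad V_\lambda\vDash\chi(j(\kappa_{k-1}),j(\bar a))=\chi(\kappa_k,\bar a).
\end{equation*}
Running this equivalence downward from $\kappa_n$ to $\kappa_0$ yields $V_\lambda\vDash\chi(\kappa_0,\bar a)$, i.e. a witness of rank $<\kappa_0$, which is exactly the Tarski--Vaught condition; hence $V_{\kappa_0}\prec V_\lambda$.

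The step I expect to require the most care is the bookkeeping that legitimises the reflection equivalence: one must check that $\chi$ is a genuine first-order formula over $V_\lambda$ (the rank predicate and satisfaction of the single fixed formula $\psi$ are uniformly definable, so Tarski undefinability is not an issue), and that $j$ truly fixes the parameters $\bar a$, which is where $j\upharpoonright V_{\kappa_0}=\id$, and hence the strong-limitness of $\kappa_0$, is essential. Everything else is a finite downward induction of length $n$.
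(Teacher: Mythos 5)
Your proof is correct, and it takes a genuinely different route from the paper's. The paper argues ``local links, then limit'': it first shows $V_{\kappa_0}\prec V_{\kappa_1}$ (because $j$ fixes $V_{\kappa_0}$ pointwise and sends $\kappa_0$ to $\kappa_1$), then $V_{\kappa_n}\prec V_{\kappa_{n+1}}$ for all $n$ via the iterates $j^n$, and finally observes that $\langle V_{\kappa_n}:n\in\omega\rangle$ is an elementary chain whose union (direct limit) is $V_\lambda$, so that $V_{\kappa_n}\prec V_\lambda$ follows from the elementary chain theorem. You instead verify the Tarski--Vaught criterion between $V_{\kappa_0}$ and $V_\lambda$ in one shot: elementarity of $j$ itself pulls the rank bound on a witness down the critical sequence, since $j$ fixes the parameters $\bar a\in V_{\kappa_0}$ and sends $\kappa_{k-1}$ to $\kappa_k$, giving $V_\lambda\vDash\chi(\kappa_k,\bar a)\leftrightarrow\chi(\kappa_{k-1},\bar a)$; the case of general $n$ is then reduced to $n=0$ by applying the same argument to $j^n$, whose critical point is $\kappa_n$ and whose critical sequence is still cofinal in $\lambda$. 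So the two proofs use the iterates for different purposes (the paper for the successive links, you for the reduction), and they differ in the global mechanism: the paper needs the chain/direct-limit theorem and (implicitly) rank-relativized satisfaction for the set models $V_{\kappa_n}$, a viewpoint it reuses later (e.g.\ in the proof of Lemma \ref{InverseLimit}), while your argument is more self-contained --- only rank-bounded formulas interpreted in $V_\lambda$ and a finite downward induction --- and it isolates exactly where each hypothesis enters: inaccessibility (strong-limitness) of $\kappa_0$ to fix the parameters, and cofinality of the critical sequence in $\lambda$ to bound the witness. Your flagged technical points (uniform definability of the rank predicate in $V_\lambda$, hence no Tarski-undefinability issue for a single fixed $\psi$, and $j\upharpoonright V_{\kappa_0}=\id$ via the Remark before the proof of \ref{kunens}) are precisely the right ones, and both are sound.
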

		\begin{proof}
		 Since $j$ is the identity on $V_{\kappa_0}$, it is easy to see that $V_{\kappa_0}\prec V_{\kappa_1}$. Considering $j(j)$, since $\crt(j(j))=\kappa_1$ and 
		 \begin{equation*}
		  j(j)(\kappa_1)=j(j)(j(\kappa_0))=j(j(\kappa_0))=\kappa_2, 
		 \end{equation*}
		we have also that $V_{\kappa_1}\prec V_{\kappa_2}$. We can generalize this to prove that for every $n\in\omega$, $V_{\kappa_n}\prec V_{\kappa_{n+1}}$. But then $\langle (V_{\kappa_n},\id_{V_{\kappa_n}}),n\in\omega\rangle$ forms a direct system, whose direct limit is $V_\lambda$. 
		\end{proof}

  In particular, as $\kappa_0$ is inaccessible, $V_\lambda\vDash\ZFC$.

\section{$L(V_{\lambda+1})$ and I0}
\label{LVlambda}

It would seem that one could not go any further than I1, as already $j:V_{\lambda+2}\prec V_{\lambda+2}$ is inconsistent. But there is a way to enlarge the domain of $j$ without stumbling in Kunen's inconsistency:

\begin{defin}
 Let $A$ be a set. Then the class of sets constructible relative to $A$ is:
 \begin{itemize}
  \item $L_0(A)=\text{ the transitive closure of }A$;
	\item $L_{\alpha+1}(A)=\Def(L_\alpha(A))$;
	\item $L_\gamma(A)=\bigcup_{\alpha<\gamma}L_\alpha(A)$;
	\item $L(A)=\bigcup_{\alpha\in\Ord}L_\alpha(A)$.
 \end{itemize}
\end{defin}

This is the smallest \ZF-model that contains $A$. We can now introduce I0:

  \begin{defin}[Woodin]		
    \begin{description}
      \item[I0] There exists $j:L(V_{\lambda+1})\prec L(V_{\lambda+1})$ with critical point less then $\lambda$.
    \end{description}
  \end{defin}

 As $L(V_{\lambda+1}))\vDash V=L(V_{\lambda+1})$, it must be that $j(V_{\lambda+1})=V_{\lambda+1}$. Thanks to the added assumption that the critical point must be less than $\lambda$, then, I0 is stronger than I1, as $j\upharpoonright V_{\lambda+1}:V_{\lambda+1}\prec V_{\lambda+1}$ (otherwise it could have been the identity). This implies that $\lambda$ must be the supremum of the critical sequence. If one wants to avoid the menace of Kunen's inconsistency, it must be that $\langle S_\alpha:\alpha<\crt(j)\rangle$ is not in $L(V_{\lambda+1})$. This is constructed via the Axiom of Choice on $V_{\lambda+1}$. So, for I0 to hold, it must be that $L(V_{\lambda+1})\nvDash\AC$, and in particular $V_{\lambda+1}$ must not be well-orderable. This is the first affinity with $\AD^{L(\mathbb{R})}$, as in that case we have $L(\mathbb{R})\nvDash\AC$. More affinities will be introduced in section \ref{simil}.

At a superficial glance I0 seems a Reinhardt cardinal, as it is an embedding in a ``universe'' of \ZF. But the situation is completely different: our ground model is $V$, we assume that $V\vDash\ZFC$, and we construct $L(V_{\lambda+1})$ inside $V$. So $j$ is a definable class of $V$, and we will see that $L(V_{\lambda+1})$ does not satisfy replacement for $j$, so such a $j$ is not a witness for Reinhardt, as in that case the theory of the ground model must be richer.

Before delving in I0, we analyze the structure of $L(V_{\lambda+1})$ itself, without further assumptions, that still carry similarities with $L(\mathbb{R})$.

As $L(V_{\lambda+1})$ is $L$ relativized to $V_{\lambda+1}$, we can relativize in the same way $OD$ and $HOD$:

\begin{defin}
 $OD_{V_{\lambda+1}}$, the class of the sets that are ordinal-definable over $V_{\lambda+1}$, is the G\"{o}del closure of $\{V_\alpha:\alpha\in\Ord\}\cup V_{\lambda+1}$.

 $HOD_{V_{\lambda+1}}$, the class of the sets that are hereditarily ordinal-definable over $V_{\lambda+1}$, is the class of sets whose transitive closure is contained in $OD_{V_{\lambda+1}}$.
\end{defin}

  \begin{lem}
			There exists a definable surjection $\Phi:Ord\times V_{\lambda+1}\twoheadrightarrow L(V_{\lambda+1})$, therefore $L(V_{\lambda+1})\vDash V=\HOD_{V_{\lambda+1}}$
		\end{lem}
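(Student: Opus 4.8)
The plan is to build the surjection $\Phi$ by transfinite recursion along the hierarchy $\langle L_\alpha(V_{\lambda+1}):\alpha\in\Ord\rangle$, exploiting two facts: $V_{\lambda+1}$ is already transitive, so $L_0(V_{\lambda+1})=V_{\lambda+1}$; and, as noted in Section \ref{rank}, every finite sequence of elements of $V_{\lambda+1}$ can be coded by a single element of $V_{\lambda+1}$. First I would fix a recursive enumeration $\langle\varphi_e:e\in\omega\rangle$ of the formulas of $\LST$, a definable pairing function on $\Ord$, and a definable coding of finite tuples from $V_{\lambda+1}$ into $V_{\lambda+1}$. The goal at each stage is a surjection onto $L_\alpha(V_{\lambda+1})$ whose index set is $\Ord\times V_{\lambda+1}$.

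At successor stages, recall $L_{\alpha+1}(V_{\lambda+1})=\Def(L_\alpha(V_{\lambda+1}))$, so every new set has the form $\{x\in L_\alpha(V_{\lambda+1}):(L_\alpha(V_{\lambda+1}),\in)\vDash\varphi_e(x,b_1,\dots,b_k)\}$ for some $e\in\omega$ and parameters $b_1,\dots,b_k\in L_\alpha(V_{\lambda+1})$. Assuming inductively a surjection onto $L_\alpha(V_{\lambda+1})$ indexed by pairs (an ordinal, an element of $V_{\lambda+1}$), each parameter $b_i$ is named by such a pair; I would bundle the finitely many ordinal-coordinates into a single ordinal using the pairing function, bundle the finitely many $V_{\lambda+1}$-coordinates into a single element of $V_{\lambda+1}$ using the coding, and prepend $e$. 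This produces a surjection of $\Ord\times V_{\lambda+1}$ onto $L_{\alpha+1}(V_{\lambda+1})$; taking unions handles the limit stages and assembles the global definable map $\Phi$.

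The whole recursion is definable because the levels $L_\alpha(V_{\lambda+1})$, the operation $\Def$, the satisfaction relation for set-sized structures, the ordinal pairing, and the $V_{\lambda+1}$-coding are all definable and absolute between $V$ and $L(V_{\lambda+1})$. Hence $\Phi$ is a single definable class function whose restriction to $L(V_{\lambda+1})$ is definable inside $L(V_{\lambda+1})$. For the $\HOD$ conclusion, fix $x\in L(V_{\lambda+1})$; then $x=\Phi(\alpha,a)$ for some $\alpha\in\Ord$ and $a\in V_{\lambda+1}$, so $x$ is definable over $V_{\lambda+1}$ from the ordinal $\alpha$ and the parameter $a$, whence $x\in\OD_{V_{\lambda+1}}$ as computed in $L(V_{\lambda+1})$. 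Since $L(V_{\lambda+1})$ is transitive, the transitive closure of $x$ lies in $L(V_{\lambda+1})$ and each of its members is likewise in $\OD_{V_{\lambda+1}}$, giving $x\in\HOD_{V_{\lambda+1}}$; as $x$ was arbitrary, $L(V_{\lambda+1})\vDash V=\HOD_{V_{\lambda+1}}$.

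The main obstacle is the definability bookkeeping: verifying that the recursive definition of $\Phi$ is correctly internalized to $L(V_{\lambda+1})$, so that the ordinal $\alpha$ together with the single parameter $a\in V_{\lambda+1}$ genuinely witness ordinal-definability \emph{there}, and checking that the coding of finite tuples from $V_{\lambda+1}$ is carried out definably without any appeal to choice, since $V_{\lambda+1}$ is not well-orderable in $L(V_{\lambda+1})$. Everything else is the standard relativization of the $V=\HOD$ argument for $L$.
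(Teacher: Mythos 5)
Your proof is correct, and it is essentially the paper's own argument: the paper simply states that the lemma ``is immediate from the theory of relative constructibility,'' and your transfinite recursion through the levels $L_\alpha(V_{\lambda+1})$, with formula indices, ordinal pairing, and tuple-coding into $V_{\lambda+1}$, is precisely that standard argument written out in full, including the correct observation that no choice is needed for the coding and that the hierarchy is absolute to $L(V_{\lambda+1})$.
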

		\begin{proof}
			This is immediate from the theory of relative constructibility. 
		\end{proof}
		
		The first application of this Lemma comes in form of partial Skolem functions. Since $L(V_{\lambda+1})\nvDash\AC$, we possibly could not have Skolem functions. But with the previous Lemma we can define for every formula $\varphi(x,x_1,\dots,x_n)$, $a\in V_{\lambda+1}$, $a_1,\dots,a_n\in L(V_{\lambda+1})$:
		\begin{multline*}
			h_{\varphi,a}(a_1,\dots,a_n)= y\quad\text{where }y\text{ is the minimum in }(\OD_a)^{L(V_{\lambda+1})}\\
			\text{ such that }L(V_{\lambda+1})\vDash\varphi(y,a_1,\dots,a_n).					
		\end{multline*}
		These are partial Skolem functions, and the Skolem Hull of a set is its closure under such Skolem functions.
		
		We indicate with $\twoheadrightarrow$ the surjectivity of a function.
		
		\begin{defin}
			\begin{equation*}
				\Theta_{V_{\lambda+1}}^{L(V_{\lambda+1})}=\sup\{\gamma:\exists f:V_{\lambda+1}\twoheadrightarrow\gamma, f\in L(V_{\lambda+1})\}.
			\end{equation*}
		\end{defin}
		
		In the following, we will call $\Theta_{V_{\lambda+1}}^{L(V_{\lambda+1})}=\Theta$, because it is a lighter notation and there is no possibility of misinterpretation. 
		
		The role of $\Theta$ in $L(V_{\lambda+1})$ is exactly the same of its correspondent in $L(\mathbb{R})$. In the usual setting, under \AC, to measure the largeness of a set we fix a bijection from this set to a cardinal or, equivalently, the ordertype of a well-ordering of the set. Since there is no Axiom of Choice in $L(V_{\lambda+1})$, it is not always possible to define cardinality for sets that are not in $V_{\lambda+1}$ in the usual way, so to quantify the ``largeness" of a subset of $V_{\lambda+1}$ we will not use bijections, but surjections, or, equivalently, not well-orders, but prewellorderings (pwo for short).

		An order is a pwo if it satisfies antireflexivity, transitivity, and every subset has a least element; in other words, it is a well-order without the antisymmetric property. It is easy to see that the counterimage of a surjective function is a pwo. One can think of a pwo as an order whose equivalence classes are well-ordered, or a well-ordered partition. This creates a strong connection between subsets of $V_{\lambda+1}$ and ordinals in $\Theta$:		
		\begin{lem}
		\label{Thetasubsets}
			\begin{enumerate}
				\item For every $\alpha<\Theta$, there exists in $L(V_{\lambda+1})$ a pwo in $V_{\lambda+1}$ with ordertype $\alpha$, that is codeable as a subset of $V_{\lambda+1}$;
				\item for every $Z\subseteq V_{\lambda+1}$, $Z\in L(V_{\lambda+1})$ there exists $\alpha<\Theta$ such that $Z\in L_\alpha(V_{\lambda+1})$.
			\end{enumerate}
		\end{lem}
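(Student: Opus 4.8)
The plan is to treat the two items separately, in both cases using surjections to pass between ordinals below $\Theta$ and subsets of $V_{\lambda+1}$.

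For (1), fix $\alpha<\Theta$. First I observe that the collection of ordinals that are surjective images of $V_{\lambda+1}$ inside $L(V_{\lambda+1})$ is downward closed: given $g:V_{\lambda+1}\twoheadrightarrow\gamma$ in $L(V_{\lambda+1})$ and $\alpha\leq\gamma$, the map sending $x$ to $g(x)$ when $g(x)<\alpha$ and to $0$ otherwise surjects onto $\alpha$ and still lies in $L(V_{\lambda+1})$. Hence from $\alpha<\Theta$ I obtain a surjection $f:V_{\lambda+1}\twoheadrightarrow\alpha$ with $f\in L(V_{\lambda+1})$. Setting $x\prec y\iff f(x)<f(y)$ defines a prewellordering of $V_{\lambda+1}$ whose equivalence classes are the fibres of $f$, so its ordertype is exactly $\alpha$. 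It remains to code $\prec$, a subset of $V_{\lambda+1}\times V_{\lambda+1}$, as a subset of $V_{\lambda+1}$. For this I use that $\lambda$ is strong limit with $|V_\lambda|=\lambda$, so there is a definable injective pairing $p:V_{\lambda+1}\times V_{\lambda+1}\to V_{\lambda+1}$, for instance $p(A,B)=\{\langle 0,a\rangle:a\in A\}\cup\{\langle 1,b\rangle:b\in B\}$, whose value is again a subset of $V_\lambda$ since all ranks involved stay below $\lambda$, and from which $A$ and $B$ are recoverable. Then $\{p(x,y):x\prec y\}$ is the desired subset of $V_{\lambda+1}$, and it lies in $L(V_{\lambda+1})$ because $f$ and $p$ do.

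For (2), fix $Z\subseteq V_{\lambda+1}$ with $Z\in L(V_{\lambda+1})$, so that $Z\in L_\beta(V_{\lambda+1})$ for some $\beta$. I form the Skolem hull $X$ of $V_{\lambda+1}\cup\{Z\}$ inside $L(V_{\lambda+1})$, closing under the partial Skolem functions $h_{\varphi,a}$ introduced above; since $L(V_{\lambda+1})\vDash V=\HOD_{V_{\lambda+1}}$ these functions exist and yield $X\prec L(V_{\lambda+1})$. Because $V_{\lambda+1}\subseteq X$ is transitive, the transitive collapse fixes $V_{\lambda+1}$ pointwise, and as $X$ satisfies $V=L(V_{\lambda+1})$ its collapse is again of the form $L_\gamma(V_{\lambda+1})$ with $\gamma=\ot(X\cap\Ord)$, by the condensation lemma for relative constructibility. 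Moreover $Z\subseteq V_{\lambda+1}\subseteq X$, so each element of $Z$ is fixed by the collapse and hence so is $Z$; thus $Z\in L_\gamma(V_{\lambda+1})$.

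To finish (2) I must check $\gamma<\Theta$. Every element of $X$ is the value of a Skolem term built from finitely many parameters in $V_{\lambda+1}\cup\{Z\}$, and such terms can be coded by elements of $V_{\lambda+1}$, since finite (indeed $\lambda$-) sequences over $V_{\lambda+1}$ code into single elements of $V_{\lambda+1}$. This produces a surjection in $L(V_{\lambda+1})$ from $V_{\lambda+1}$ onto $X$, hence onto $X\cap\Ord$, and, composing with the collapsing isomorphism, onto $\gamma$; by the definition of $\Theta$ (and the standard fact that $V_{\lambda+1}$ admits no surjection onto $\Theta$) this forces $\gamma<\Theta$. The main obstacle is precisely this last step together with the elementarity of $X$: one must verify that the partial Skolem functions genuinely witness Tarski--Vaught, which is where $V=\HOD_{V_{\lambda+1}}$ is essential, so that any existential witness can be taken $\OD_a$-least for a parameter $a\in V_{\lambda+1}$ absorbing the finitely many $V_{\lambda+1}$- and $Z$-parameters, and one must be careful that the coding of Skolem terms yields a surjection emanating from $V_{\lambda+1}$ itself rather than from a larger set.
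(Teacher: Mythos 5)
Your proof of part (1) is correct and is essentially the paper's: truncate a surjection in $L(V_{\lambda+1})$ to get one exactly onto $\alpha$, take the prewellordering induced by its fibres, and code pairs into $V_{\lambda+1}$ using that $\lambda$ is strong limit. If anything you are more careful than the paper, which simply posits $\rho:V_{\lambda+1}\twoheadrightarrow\alpha$ and asserts the codability of $V_{\lambda+1}\times V_{\lambda+1}$.

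Part (2) has the right skeleton (Skolem hull, condensation, then counting the hull from $V_{\lambda+1}$), but there is a genuine gap in \emph{where} you take the hull. You hull $V_{\lambda+1}\cup\{Z\}$ inside the proper class $L(V_{\lambda+1})$ and assert $X\prec L(V_{\lambda+1})$. By Tarski's undefinability theorem, $L(V_{\lambda+1})$ has no satisfaction predicate for itself definable over $L(V_{\lambda+1})$; each individual $h_{\varphi,a}$ is a definable class, but the closure under \emph{all} of them is a union indexed by formulas of the metalanguage, not a single definable operation, and ``$X\prec L(V_{\lambda+1})$'' is only a scheme (one assertion per formula), not a property you can attach to a constructed set $X$. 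This is fatal at exactly the step your argument needs most: to conclude $\gamma<\Theta$ you must exhibit a surjection $V_{\lambda+1}\twoheadrightarrow\gamma$ that is an \emph{element} of $L(V_{\lambda+1})$ (this is what the definition of $\Theta$ demands), and your candidate is the term-evaluation map $(\varphi,a,\vec{b})\mapsto h_{\varphi,a}(\vec{b})$, whose graph, uniformly in $\varphi$, is precisely a truth predicate for $L(V_{\lambda+1})$; no argument can place it in $L(V_{\lambda+1})$, since a truth predicate for $L(V_{\lambda+1})$ definable there contradicts Tarski. (Your closing worry, that the surjection should emanate from $V_{\lambda+1}$ rather than a larger set, is a non-issue -- coding terms and finite parameter sequences into $V_{\lambda+1}$ is harmless; membership in $L(V_{\lambda+1})$ is the real obstruction.) The repair is exactly the paper's move, and you already set it up: you fixed $\beta$ with $Z\in L_\beta(V_{\lambda+1})$, so take the hull of $V_{\lambda+1}\cup\{Z\}$ inside the \emph{set} structure $L_\beta(V_{\lambda+1})$ instead. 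Satisfaction for a set structure is definable in $\ZF$, and $L_\beta(V_{\lambda+1})\in L(V_{\lambda+1})$, so the Skolem functions of $L_\beta(V_{\lambda+1})$, the hull, its transitive collapse, and the induced surjection from $V_{\lambda+1}$ onto the collapse are all definable in $L(V_{\lambda+1})$ from $L_\beta(V_{\lambda+1})$, $Z$ and parameters in $V_{\lambda+1}$, hence are members of $L(V_{\lambda+1})$. Condensation applied to this elementary substructure of $L_\beta(V_{\lambda+1})$ gives that the collapse is some $L_\gamma(V_{\lambda+1})$, your argument that $Z$ survives the collapse goes through verbatim, and the surjection now genuinely witnesses $\gamma<\Theta$.
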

		\begin{proof}
			\begin{enumerate}
				\item Let $\rho:V_{\lambda+1}\twoheadrightarrow\alpha$. Then 
					\begin{equation*}
						R_\alpha=\{(a,b)\in V_{\lambda+1}\times V_{\lambda+1}:\rho(a)\leq\rho(b)\} 
					\end{equation*}
					is a pwo in $V_{\lambda+1}$. Moreover, $V_{\lambda+1}\times V_{\lambda+1}$ can be codified as a subset of $V_{\lambda+1}$, so also $R_\alpha$ can.
				\item Let $\gamma$ be such that $Z\in L_\gamma(V_{\lambda+1})$ and consider $H^{L_\gamma(V_{\lambda+1})}(V_{\lambda+1},Z)$ the Skolem Hull in $L_\gamma(V_{\lambda+1})$ of $V_{\lambda+1}$ and $Z$. Then, since $H^{L_\gamma(V_{\lambda+1})}(V_{\lambda+1},Z)\cong L_\gamma(V_{\lambda+1})$, by condensation its collapse ${\cal X}=L_\alpha(V_{\lambda+1})$ for some $\alpha$. But $H^{L_\gamma(V_{\lambda+1})}(V_{\lambda+1},Z)$ is the closure under the Skolem functions, and since there is a surjection from $V_{\lambda+1}$ to the Skolem functions, this surjection transfers to $H^{L_\gamma(V_{\lambda+1})}(V_{\lambda+1},Z)$ and to $L_\alpha(V_{\lambda+1})$, so $\alpha<\Theta$. Since $Z$ and all its elements are in $H^{L_\gamma(V_{\lambda+1})}(V_{\lambda+1},Z)$, $Z$ is not collapsed and then $Z\in L_\alpha(V_{\lambda+1})$.
			\end{enumerate}
		\end{proof}
		
		We can also have a ``local'' version of $\Theta$:
		
		\begin{defin}
			\begin{multline*}
				\Theta_{V_{\lambda+1}}^{L_\alpha(V_{\lambda+1})}=\sup\{\gamma:\exists f:V_{\lambda+1}\twoheadrightarrow\gamma, f\in L(V_{\lambda+1}),\\
				\{(a,b):f(a)<f(b)\}\in L_\alpha(V_{\lambda+1})\}.
			\end{multline*}
		\end{defin}
		
		The reason why we do not just relativize $\Theta$ to $L_\alpha(V_{\lambda+1})$ is because there can be pwos in $L(V_{\lambda+1})$ that are not in $L_\alpha(V_{\lambda+1})$, but can be coded in $L_\alpha(V_{\lambda+1})$, so in this way $\Theta^{L_\alpha(V_{\lambda+1})}$ is more ``truthful'' to indicate the ordinals that are ``hidden'' inside $V_{\lambda+1}$.
		
		The notion of ``stable'' also neatly generalizes. Let $\delta$ be the minimum ordinal such that $L_\delta(V_{\lambda+1})\prec_1 L_\Theta(V_{\lambda+1})$. Then $L_\delta(V_{\lambda+1})$ is the collapse of the closure of $V_\lambda+1$ under the Skolem functions $h_{\varphi,a}$ such that $\varphi$ is $\Sigma_1$. The map that associates to any $(\varphi,a,x)$ the collapse of $h_{\varphi,a}(x)$ is therefore a $\Sigma_1$ partial surjection from $V_{\lambda+1}$ to $L_\delta(V_{\lambda+1})$. A study of the fine structure of $L_\delta(V_{\lambda+1})$ has the following results:
		
		\begin{lem}
		\label{finestructure}
		 \begin{itemize}
		  \item For any $\beta<\delta$, there exists a total surjection from $V_{\lambda+1}$ to $L_\beta(V_{\lambda+1})$ that is in $L_\delta(V_{\lambda+1})$. Therefore $\Theta^{L_\delta(V_{\lambda+1})}=\delta$.
			\item $\delta$ is the supremum of the $\mathbf{\Delta}^2_1$ pwos of $V_{\lambda+1}$ and $(\mathbf{\Delta}^2_1)^{L(V_{\lambda+1})}=L_\delta(V_{\lambda+1})\cap V_{\lambda+2}$.
			\item Let $\rho:V_{\lambda+1}\to L_\delta(V_{\lambda+1})$ be a partial $\Sigma_1$ map. Then for any $Z\subseteq\dom(\rho)$, $Z\in L_\delta(V_{\lambda+1})$, $\rho\upharpoonright Z$ is bounded.
		 \end{itemize}
		\end{lem}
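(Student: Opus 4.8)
The plan is to reduce all three items to the single structural fact, already isolated just before the statement, that $L_\delta(V_{\lambda+1})$ is the transitive collapse of the $\Sigma_1$-Skolem hull of $V_{\lambda+1}$ inside $L_\Theta(V_{\lambda+1})$; equivalently, to exploit $\Sigma_1$-stability $L_\delta(V_{\lambda+1})\prec_1 L_\Theta(V_{\lambda+1})\prec_1 L(V_{\lambda+1})$ together with the resulting partial $\Sigma_1$ surjection $\pi\colon V_{\lambda+1}\to L_\delta(V_{\lambda+1})$. These are the exact analogues of the $L(\mathbb{R})$ facts about $\delta^2_1$, and I would organize the proof around them, proving the third (boundedness) item first, since the other two lean on it.

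\textbf{Item 3 (the crux).} I would argue by contradiction. Suppose $\rho\colon V_{\lambda+1}\to L_\delta(V_{\lambda+1})$ is partial $\Sigma_1$, $Z\subseteq\dom(\rho)$, $Z\in L_\delta(V_{\lambda+1})$, and $\rho\upharpoonright Z$ is unbounded, i.e. $\{\mathrm{rank}(\rho(z)):z\in Z\}$ is cofinal in $\delta$. Fix $\eta<\delta$ with $Z\in L_\eta(V_{\lambda+1})$ and let $p$ be the ($V_{\lambda+1}$- and ordinal-) parameter in the $\Sigma_1$ definition of $\rho$. The idea is that such a cofinal map is \emph{visible} to a proper initial segment: taking the $\Sigma_1$-Skolem hull of $V_{\lambda+1}\cup\{Z,p\}$ in $L_\Theta(V_{\lambda+1})$ and collapsing, condensation yields some $L_{\delta'}(V_{\lambda+1})\prec_1 L_\Theta(V_{\lambda+1})$; the cofinality of $\rho\upharpoonright Z$ through $\Sigma_1$ terms forces $\delta'$ to already compute all of $\rho''Z$, yet the base of the hull is a \emph{set} $Z\in L_\delta(V_{\lambda+1})$ rather than all of $V_{\lambda+1}$, which lets one push $\delta'$ strictly below $\delta$, contradicting the minimality of $\delta$. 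The hard part of the whole lemma lives here: keeping the parameters under control so that the reflected structure is genuinely a strictly smaller $\Sigma_1$-stable level, i.e. verifying that unboundedness of $\rho\upharpoonright Z$ cannot be invisible to $L_\delta(V_{\lambda+1})$. This is precisely the statement that $V_{\lambda+1}$ (and not a subset of it in $L_\delta$) is the true ``projectum'' base of $L_\delta(V_{\lambda+1})$.

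\textbf{Item 1.} Given $\beta<\delta$, I first produce a surjection $V_{\lambda+1}\twoheadrightarrow L_\beta(V_{\lambda+1})$ somewhere in $L(V_{\lambda+1})$: since $\beta<\Theta$ there is by Lemma \ref{Thetasubsets}(1) a pwo of $V_{\lambda+1}$ of ordertype $\beta$ coded as a subset of $V_{\lambda+1}$, hence a surjection $V_{\lambda+1}\twoheadrightarrow\beta$, and composing with the definable $\Phi\upharpoonright(\beta\times V_{\lambda+1})$ and the pairing $V_{\lambda+1}\cong V_{\lambda+1}\times V_{\lambda+1}$ gives the desired surjection onto $L_\beta(V_{\lambda+1})$. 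Now ``there exists a pwo of $V_{\lambda+1}$ of ordertype $\beta$'' is $\Sigma_1$ with ordinal parameter $\beta<\delta$, and it holds in $L_\Theta(V_{\lambda+1})$ (the code lies in some $L_\alpha$, $\alpha<\Theta$, by Lemma \ref{Thetasubsets}(2)); by $\Sigma_1$-stability it reflects into $L_\delta(V_{\lambda+1})$, so the surjection can be found there. This gives $\Theta^{L_\delta(V_{\lambda+1})}\geq\delta$. For the reverse inequality I apply Item 3 with $Z=V_{\lambda+1}\in L_\delta(V_{\lambda+1})$ and $\rho$ the rank function of a pwo $P\in L_\delta(V_{\lambda+1})$ (which is $\Delta_1$ in $P$): any surjection onto $\gamma$ whose pwo lies in $L_\delta(V_{\lambda+1})$ is then bounded, so $\gamma<\delta$, whence $\Theta^{L_\delta(V_{\lambda+1})}=\delta$.

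\textbf{Item 2.} Using the translation between first-order formulas over $V_{\lambda+1}$ and second-order formulas over $V_\lambda$, a $\Sigma_1$ definition over $L(V_{\lambda+1})$ with parameters in $V_{\lambda+1}$ corresponds exactly to a $\mathbf{\Sigma}^2_1$ definition, so the $\mathbf{\Sigma}^2_1$ (resp. $\mathbf{\Delta}^2_1$) subsets of $V_{\lambda+1}$ in the sense of $L(V_{\lambda+1})$ are the $\Sigma_1$ (resp. $\Sigma_1\cap\Pi_1$) definable ones. For $(\mathbf{\Delta}^2_1)^{L(V_{\lambda+1})}=L_\delta(V_{\lambda+1})\cap V_{\lambda+2}$: if $A\in L_\beta(V_{\lambda+1})$ with $\beta<\delta$, then by Item 1 a surjection onto $L_\beta(V_{\lambda+1})$ lying in $L_\delta(V_{\lambda+1})$ makes both ``$x\in A$'' and ``$x\notin A$'' $\Sigma_1$ over $L(V_{\lambda+1})$, so $A$ is $\mathbf{\Delta}^2_1$; conversely a $\mathbf{\Delta}^2_1$ set, being simultaneously $\Sigma_1$ and $\Pi_1$ with parameters reflected below $\delta$ by stability, is pinned into $L_\delta(V_{\lambda+1})$ by a boundedness argument (the two definitions are witnessed uniformly below $\delta$, again via Item 3). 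Finally, since by the first clause the $\mathbf{\Delta}^2_1$ pwos are exactly the pwos lying in $L_\delta(V_{\lambda+1})$, their supremum of ordertypes is $\Theta^{L_\delta(V_{\lambda+1})}=\delta$, which gives the first assertion of the item.
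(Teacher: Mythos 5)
A preliminary remark: the paper does not actually prove Lemma \ref{finestructure}; it says only that ``the proofs are the same as in \cite{Steel}''. So your attempt can only be measured against the standard fine-structural arguments for the least $\Sigma_1$-stable level (the $\delta^2_1$ theory of $L(\mathbb{R})$), and at the level of architecture you have reproduced them faithfully: boundedness first, Item 1 by reflecting the $\Sigma_1$ statement ``there is a surjection of $V_{\lambda+1}$ onto $L_\beta(V_{\lambda+1})$'' and bounding back via Item 3, Item 2 by combining the partial $\Sigma_1$ surjection with a least-witness boundedness argument. Items 1 and 2 are fine in outline modulo Item 3, apart from one small circularity: in Item 1 you apply Item 3 to the rank function of a pwo $P\in L_\delta(V_{\lambda+1})$ with $Z=V_{\lambda+1}$, but Item 3 only concerns maps \emph{into} $L_\delta(V_{\lambda+1})$, so you must first exclude that $P$ has ordertype greater than $\delta$; the clean fix is to reflect the $\Sigma_1$ statement ``$P$ has a rank function'' instead.

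The genuine gap is Item 3 itself, exactly where you locate the crux, and the mechanism you sketch cannot be repaired. If your $\Sigma_1$-hull contains all of $V_{\lambda+1}\cup\{V_{\lambda+1},Z,p\}$, condensation does yield some $L_{\delta''}(V_{\lambda+1})\prec_1 L_\Theta(V_{\lambda+1})$, but then $\delta''\geq\delta$ by the very minimality of $\delta$: nothing ever gets ``pushed strictly below $\delta$''. If instead, as your phrase ``the base of the hull is a set $Z$ rather than all of $V_{\lambda+1}$'' suggests, you hull only $Z$ and the parameters, then the transitive collapse has the form $L_{\delta'}(\bar V)$ where $\bar V$ is the collapse of $\mathrm{Hull}\cap V_{\lambda+1}$; this is not a level of the $L(V_{\lambda+1})$-hierarchy at all, so it cannot contradict the minimality of $\delta$, which is a statement about genuine levels $L_{\delta'}(V_{\lambda+1})$. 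In either reading the unboundedness assumption never produces a contradiction. Moreover, the standard argument is not a proof by contradiction against minimality and uses no hulls: one reflects \emph{boundedness itself}. The statement ``there exist $\gamma$ and the sequence $\langle L_\xi(V_{\lambda+1}):\xi\leq\gamma\rangle$ such that every $z\in Z$ admits $y,w\in L_\gamma(V_{\lambda+1})$ with $w$ witnessing the $\Sigma_1$ computation $\rho(z)=y$'' is $\Sigma_1$ in the parameters $Z,p,V_{\lambda+1}$. It is true in $L_\Theta(V_{\lambda+1})$ with $\gamma=\delta$: all values of $\rho$ lie in $L_\delta(V_{\lambda+1})$ by hypothesis, all witnesses can be found there by stability (together with $\Delta_0$-absoluteness and the functionality of $\rho$), and $\langle L_\xi:\xi\leq\delta\rangle\in L_{\delta+2}(V_{\lambda+1})\subseteq L_\Theta(V_{\lambda+1})$. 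Since $Z,p\in L_\delta(V_{\lambda+1})$, stability $L_\delta(V_{\lambda+1})\prec_1 L_\Theta(V_{\lambda+1})$ pulls the statement down into $L_\delta(V_{\lambda+1})$, where the bound produced is some $\gamma'<\delta$, and absoluteness gives $\rho''Z\subseteq L_{\gamma'}(V_{\lambda+1})$. Note that this uses only stability; minimality of $\delta$ enters Items 1 and 2 through the partial $\Sigma_1$ surjection. This short reflection argument (or, alternatively, a diagonal argument with a universal $\Sigma_1$ set) is precisely what your Item 3 is missing.
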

		
		The proofs are the same as in \cite{Steel}.
		
		\begin{defin}
			\begin{equation*}
				\DC_\lambda:\quad \forall X\ \forall F:(X)^{<\lambda}\to{\cal P}(X)\setminus\emptyset\ \exists g:\lambda\to X\ \forall\gamma<\lambda\  g(\gamma)\in F(g\upharpoonright\gamma).
			\end{equation*}
		\end{defin}
		
		Note that this is a generalization of \DC, since $\DC=\DC_\omega$: we use directly a function on $<\lambda$-sequences instead of considering a binary relation because binary relations cannot handle the limit stages. 
			
		\begin{lem}
		\label{lambdastructure}
			In $L(V_{\lambda+1})$ the following hold:
			\begin{enumerate}
				\item $\Theta$ is regular (\cite{Kafkoulis});
				\item $\DC_\lambda$.
			\end{enumerate}
		\end{lem}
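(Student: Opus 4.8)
The plan is to prove the two parts separately; neither needs the other, though both exploit the definable surjection $\Phi:\Ord\times V_{\lambda+1}\twoheadrightarrow L(V_{\lambda+1})$ from the Lemma following the definition of I0, together with the fact (established earlier, since $\lambda$ is strong limit) that $V_{\lambda+1}$ is closed under the coding of finite and $\lambda$-sequences of its own elements into a single element of $V_{\lambda+1}$. The overarching difficulty in both cases is that $L(V_{\lambda+1})\nvDash\AC$, so every argument must manufacture the objects it needs \emph{without} choosing.

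For the regularity of $\Theta$ I would argue by contradiction and build, with no appeal to choice, a single surjection $V_{\lambda+1}\twoheadrightarrow\Theta$, which is impossible: the ordinals admitting a surjection from $V_{\lambda+1}$ form an initial segment closed under successor, so $\Theta$, being its supremum, is not itself among them. Suppose then $\cof^{L(V_{\lambda+1})}(\Theta)=\eta<\Theta$ with a cofinal $c:\eta\to\Theta$ in $L(V_{\lambda+1})$, and fix a surjection $e:V_{\lambda+1}\twoheadrightarrow\eta$. For each $\xi<\eta$ let $\beta_\xi$ be the least ordinal $\beta$ for which some $a\in V_{\lambda+1}$ makes $\Phi(\beta,a)$ a surjection of $V_{\lambda+1}$ onto $c(\xi)$; such $\beta$ exists because $c(\xi)<\Theta$ and $\Phi$ is onto, and $\xi\mapsto\beta_\xi$ is definable from $c$ and $\Phi$. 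The key trick is to avoid choosing witnesses $a$: define $H(s,a,b)=\Phi(\beta_{e(s)},a)(b)$ whenever $\Phi(\beta_{e(s)},a)$ happens to be a surjection onto $c(e(s))$, and $H(s,a,b)=0$ otherwise. Then $H$ is definable, hence lies in $L(V_{\lambda+1})$, and as $s,a,b$ range over $V_{\lambda+1}$ it hits every $\alpha<\Theta$ (choose $\xi$ with $\alpha<c(\xi)$, then $s$ with $e(s)=\xi$, then a correct $a$, then a suitable $b$). Coding the triple by a single element of $V_{\lambda+1}$ yields the forbidden surjection.

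For $\DC_\lambda$, given $X$ and $F:X^{<\lambda}\to\mathcal{P}(X)\setminus\{\emptyset\}$ in $L(V_{\lambda+1})$, I would run the recursion twice. First, working in $V$ (where $\AC$ holds), fix a well-ordering of $V_{\lambda+1}$ and build $g:\lambda\to X$ by recursion: at stage $\gamma$, given $g\upharpoonright\gamma$, let $\beta_\gamma$ be the least ordinal with $\Phi(\beta_\gamma,a)\in F(g\upharpoonright\gamma)$ for some $a$, let $a_\gamma$ be the well-ordering-least such $a$, and set $g(\gamma)=\Phi(\beta_\gamma,a_\gamma)$. The sequence $\langle a_\gamma:\gamma<\lambda\rangle$ is a $\lambda$-sequence of elements of $V_{\lambda+1}$, so it codes into a single $\bar a\in V_{\lambda+1}\subseteq L(V_{\lambda+1})$. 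Second, I reconstruct $g$ inside $L(V_{\lambda+1})$ by the same recursion, now reading $a_\gamma$ off $\bar a$ instead of choosing it: the computation of $\beta_\gamma$ from $g\upharpoonright\gamma$ is definable and absolute between $V$ and $L(V_{\lambda+1})$, and $a_\gamma$ witnesses $\Phi(\beta_\gamma,a_\gamma)\in F(g\upharpoonright\gamma)$ by construction, so the two recursions agree at every stage and the reconstructed $g$ is a genuine solution lying in $L(V_{\lambda+1})$.

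The main obstacle in both parts is exactly the missing choice. In part (1) it is dissolved by the filtering definition of $H$, which trades a choice of witnesses $a$ for quantification over all of $V_{\lambda+1}$; in part (2) it is dissolved by outsourcing the $\lambda$ many choices to $V$ and smuggling them back as a single coded element $\bar a\in V_{\lambda+1}$, after which a definable recursion recovers $g$. The points that need genuine care are the absoluteness of the ``least $\beta$'' computations between $V$ and $L(V_{\lambda+1})$, and, at the recursion, the verification that the reconstructed sequence really satisfies $g(\gamma)\in F(g\upharpoonright\gamma)$ at every stage, limit stages included, so that $g\upharpoonright\gamma$ stays in the domain of $F$ throughout.
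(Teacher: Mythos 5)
Your proof of (1) coincides with the paper's: the paper likewise takes $t(\xi)$, the least $\Phi$-stage yielding a surjection onto $\xi$, absorbs the unknown witness into an extra argument of the map via the same ``use it if it happens to be a surjection, otherwise output a default'' filtering, and composes with the cofinal map and a surjection onto its length to manufacture the impossible surjection of $V_{\lambda+1}$ onto $\Theta$. For (2) your architecture differs from the paper's, though it runs on the same three devices (\AC{} outsourced to $V$ plus coding of sequences into $V_{\lambda+1}$, minimization on ordinals, and $\Phi$). The paper bootstraps in three stages: $\DC_\lambda(V_{\lambda+1})$ first, where invoking \AC{} in $V$ is harmless because a $<\lambda$-sequence of elements of $V_{\lambda+1}$, however produced, is coded by an element of $V_{\lambda+1}$ and so lies in $L(V_{\lambda+1})$ automatically; then $\DC_\lambda(\alpha\times V_{\lambda+1})$ by a purely internal reduction (ordinal coordinates via minimization, $V_{\lambda+1}$-coordinates delegated to the first stage); then general $X$ by a purely internal reduction through $\Phi$. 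You instead run one recursion for general $X$ directly, choosing in $V$ and re-internalizing afterwards; this is correct, but the second point you flag is more than a formality: one cannot literally run the $V$-recursion to completion and only then reconstruct, because already at limit $\gamma<\lambda$ the $V$-recursion needs $g\upharpoonright\gamma\in L(V_{\lambda+1})$ for $F(g\upharpoonright\gamma)$ to be defined, and an initial segment of a sequence from an arbitrary $X$ --- unlike one from $V_{\lambda+1}$ --- is not in $L(V_{\lambda+1})$ for free. The repair is your own device applied at every stage: the code of $\langle a_\xi:\xi<\gamma\rangle$ is in $V_{\lambda+1}$ regardless of how it was produced, the internal recursion (using absoluteness of the least-$\beta$ computation) recovers $g\upharpoonright\gamma$ from it, and therefore the construction in $V$ and its reconstruction in $L(V_{\lambda+1})$ must be developed in one interleaved induction on $\gamma\leq\lambda$ rather than in two sequential phases. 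With that adjustment your one-pass argument works and is more direct than the paper's; what the paper's staging buys is that the delicate $V$-versus-$L(V_{\lambda+1})$ interaction is confined to the trivial base case, all later steps living entirely inside $L(V_{\lambda+1})$, where \ZF{} recursion raises no absoluteness issues.
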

		\begin{proof}
			\begin{enumerate}
				\item We fix a definable surjection $\Phi:Ord\times V_{\lambda+1}\twoheadrightarrow L(V_{\lambda+1})$. For every $\xi<\Theta$ there is a surjection $h:V_{\lambda+1}\twoheadrightarrow\xi$. First of all, we choose one surjection for each $\xi$: we define $t:\Theta\setminus\emptyset\to Ord$, where for every $\xi<\Theta$, $t(\xi)$ is the least $\gamma$ such that there exists $x\in V_{\lambda+1}$ such that $\Phi(\gamma,x)$ is a surjection from $V_{\lambda+1}$ to $\xi$. Then we define
				\begin{equation*}
					h_\xi(\langle x,y\rangle)=\begin{cases}
						\Phi(t(\xi),x)(y) & \text{ if }\Phi(t(\xi),x)\text{ is a map in }\xi;\\
						\emptyset         & \text{ else}.
						\end{cases}
				\end{equation*}
				We have that $h_\xi:V_{\lambda+1}\to \xi$ is well defined, because $\langle x,y\rangle$ is codeable in $V_{\lambda+1}$, and it is indeed a surjection: by definition there exists $x\in V_{\lambda+1}$ such that $\Phi(t(\xi),x)$ is a surjection from $V_{\lambda+1}$ to $\xi$, so for every $\beta<\xi$ there exists $y\in V_{\lambda+1}$ such that $\Phi(t(\xi),x)(y)=h_\xi(\langle x,y\rangle)=\beta$. Moreover $h_\xi$ is definable in $L(V_{\lambda+1})$.
				
				Now, suppose that $\Theta$ is not regular, i.e., there exists $\pi:\alpha\to\Theta$ cofinal in $\Theta$ with $\alpha<\Theta$. Then we claim that $H(\langle x,y\rangle)=h_{\pi\circ h_\alpha(x)}(y)$ is a surjection from $V_{\lambda+1}$ to $\Theta$: let $\beta<\Theta$; then there exists $\gamma<\alpha$ such that $\pi(\gamma)>\beta$ and there must exist $x\in V_{\lambda+1}$ such that $h_\alpha(x)=\gamma$; so $\pi(h_\alpha(x))>\beta$, and there exists $y\in V_{\lambda+1}$ such that $H(\langle x,y\rangle)=h_{\pi\circ h_\alpha(x)}(y)=\beta$. Contradiction.
				
				\item We have to prove that
					\begin{equation*}
						\forall X\ \forall F:(X)^{<\lambda}\to{\cal P}(X)\setminus\emptyset\ \exists g:\lambda\to X\ \forall\gamma<\lambda\  g(\gamma)\in F(g\upharpoonright\gamma).
					\end{equation*}
				  The proof is through several steps. First of all, $\DC_\lambda(V_{\lambda+1})$, that is $\DC_\lambda$ only for $X=V_{\lambda+1}$, is quite obvious, because for every $F$ as above by \AC{} there exists a $g$ as above in $V$, but since $g$ is a $\lambda$-sequence of elements in $V_{\lambda+1}$ we have that $g$ is codeable in $V_{\lambda+1}$, so $g\in L(V_{\lambda+1})$. 
				
				Then we prove $\DC_\lambda(\alpha\times V_{\lambda+1})$ for every ordinal $\alpha$. The idea is roughly to divide $F$ in two parts, and to define $g$ using the minimum operator for the ordinal part, and $\DC_\lambda(V_{\lambda+1})$ for the other part. For every $s\in(\alpha\times V_{\lambda+1})^{<\lambda}$, we define $m(s)$ as the minimum $\gamma$ such that there exists $x\in V_{\lambda+1}$ such that $(\gamma,x)\in F(s)$. We call $\pi_2:(\alpha\times V_{\lambda+1})^{<\lambda}\to(V_{\lambda+1})^{<\lambda}$ the projection. For every $t\in (V_{\lambda+1})^{<\lambda}$, say $t=\langle x_\xi:\xi<\nu\rangle$, we define $c(t)$ by induction as a sequence in $(\alpha\times V_{\lambda+1})^{<\lambda}$ such that $\pi_2(c(t))\subseteq t$: $c(t)=\langle(\gamma_\xi,x_\xi):\xi<\bar{\nu}\rangle$, where 
				\begin{equation*}
				 \gamma_\xi=\min\{\gamma:(\gamma,x_\xi)\in F(c(t\upharpoonright\xi))\}, 
				\end{equation*}
				 so that $\bar{\nu}$ is the smallest one such that there is no $\gamma$ such that $(\gamma,x_{\bar{\nu}})\in F(c(t)\upharpoonright\bar{\nu})$. Let $G:(V_{\lambda+1})^{<\lambda}\to{\cal P}(V_{\lambda+1})\setminus\emptyset$ defined as 
				\begin{equation*}
					G(t)=\{x\in V_{\lambda+1}:(m(c(t)),x)\in F(c(t))\}, 
				\end{equation*}
				then by $\DC_\lambda(V_{\lambda+1})$ there exists $g:\lambda\to V_{\lambda+1}$ such that for every $\beta<\lambda$ $g(\beta)\in G(g\upharpoonright\beta)$. Now let $f:\lambda\to(\alpha\times V_{\lambda+1})$ be defined by induction, $f(\beta)=(m(f\upharpoonright\beta),g(\beta))$. We want to prove that $f(\beta)\in F(f\upharpoonright\beta)$ for every $\beta<\lambda.$
				
				We prove by induction that $f\upharpoonright\beta=c(g\upharpoonright\beta)$. Suppose that for every $\xi<\beta$, $f\upharpoonright\xi=c(g\upharpoonright\xi)$. By definition $c(g\upharpoonright\beta)=\langle(\gamma_\xi,g(\xi)):\xi<\bar{\beta}\rangle$, with 
				\begin{equation*}
					\gamma_\xi=\min\{\gamma:(\gamma,g(\xi))\in F(c(g\upharpoonright\xi))\}, 
				\end{equation*}
				so $(\gamma_\xi,g(\xi))\in F(c(g\upharpoonright\xi))$. But $g(\xi)\in G(g\upharpoonright\xi)$, so by definition of $G$, 
				\begin{equation*}
				    (m(c(g\upharpoonright\xi)),\ g(\xi))\in F(c(g\upharpoonright\xi)),
				\end{equation*}
				therefore $\gamma_\xi=m(c(g\upharpoonright\xi))$ and
				\begin{equation*}
					f(\xi)=(m(f\upharpoonright\xi),g(\xi))=(m(c(g\upharpoonright\xi)),g(\xi))=(\gamma_\xi,g(\xi))=c(g\upharpoonright\beta)(\xi).
				\end{equation*}
				So $f\upharpoonright\beta=c(g\upharpoonright\beta)$ and, since for every $\xi$, $(\gamma_\xi,g(\xi))\in F(c(g\upharpoonright\xi))$, $f(\beta)\in F(f\upharpoonright\beta)$.
				
				Finally, let $X\in L(V_{\lambda+1})$. Let $\alpha$ be such that $\Phi''(\alpha\times V_{\lambda+1})\supseteq X$, and let $F:(X)^{<\lambda}\to{\cal P}(X)\setminus\emptyset$. For every $t=\langle(\gamma_\xi,x_\xi):\xi<\nu\rangle\in (\alpha\times V_{\lambda+1})^{<\lambda}$, we call $c(t)=\langle\Phi(\gamma_\xi,x_\xi):\xi<\bar{\nu}\rangle\in X^{<\lambda}$, where $\bar{\nu}$ is the minimum such that $\Phi(\gamma_{\bar{\nu}},x_{\bar{\nu}})\notin X$. Then we define $G:(\alpha\times V_{\lambda+1})^{<\lambda}\to{\cal P}(\alpha\times V_{\lambda+1})\setminus\emptyset$, 
				\begin{equation*}
					G(t)=\{(\gamma,x):\Phi(\gamma,x)\in F(c(t))\}, 
				\end{equation*}
				and by $\DC_\lambda(\alpha\times V_{\lambda+1})$ we find $g:\lambda\to(\alpha\times V_{\lambda+1})$ such that $g(\beta)\in G(g\upharpoonright\beta)$ for every $\beta<\lambda$. Then $f=\Phi\circ g$ is as we wanted, because for every $\beta<\lambda$, $\Phi(g(\beta))\in F(c(g\upharpoonright\beta))$, and as above we can prove that $c(g\upharpoonright\beta)=f\upharpoonright\beta$. 
				
				So we have $\DC_\lambda(X)$ for every $X\in L(V_{\lambda+1})$, that is exactly $\DC_\lambda$.
								
				\end{enumerate}
		\end{proof}

Suppose now for the rest of the section that $j:L(V_{\lambda+1})\prec L(V_{\lambda+1})$, with $\crt(j)<\lambda$. Define
		\begin{equation*}
			U_j=\{X\subseteq V_{\lambda+1}:X\in L(V_{\lambda+1}), j\upharpoonright V_\lambda\in j(X)\}. 
		\end{equation*}
		Then $U_j$ is a normal non-principal $L(V_{\lambda+1})$-ultrafilter in $V_{\lambda+1}$, and we can construct the ultraproduct $Ult(L(V_{\lambda+1}),U_j)$. Note that for every $f,g:V_{\lambda+1}\to L(V_{\lambda+1})$ 
		\begin{equation*}
			\begin{split}
				[f]=[g]\ & \text{iff }\{x\in V_{\lambda+1}:f(x)=g(x)\}\in U_j\\ 
				         & \text{iff }j\upharpoonright V_\lambda\in j(\{x\in V_{\lambda+1}:f(x)=g(x)\})=\\
				         & \qquad=\{x\in V_{\lambda+1}:j(f)(x)=j(g)(x)\}\\
				         & \text{iff }j(f)(j\upharpoonright V_\lambda)=j(g)(j\upharpoonright V_\lambda), 
			\end{split}
		\end{equation*}
		and in the same way $[f]\in[g]$ iff $j(f)(j\upharpoonright V_\lambda)\in j(g)(j\upharpoonright V_\lambda)$, so 
		\begin{equation*}
			Ult(L(V_{\lambda+1}),U_j)\cong\{j(f)(j\upharpoonright V_\lambda):\dom f=V_{\lambda+1}\}. 
		\end{equation*}
		Let $i:L(V_{\lambda+1})\to Ult(L(V_{\lambda+1}),U_j)$ be the natural embedding of the ultraproduct, then for every $a\in L(V_{\lambda+1})$, $i(a)=[c_a]$ corresponds in the equivalence to $j(c_a)(j\upharpoonright V_\lambda)=j(a)$, so we can suppose $i=j$. Is $j$ an elementary embedding from $L(V_{\lambda+1})$ to $Ult(L(V_{\lambda+1}),U_j)$? Since we do not have \AC, the answer is not immediate because we possibly do not have \L os' Theorem.
		
		We will prove \L os' Theorem for this case, and this will imply that $j$ is an elementary embedding. It is clear that the only real obstacle is to prove that for every formula $\varphi$ and $f_1,\dots,f_n\in L(V_{\lambda+1})$ such that $\dom f_i=V_{\lambda+1}$
		\begin{multline*}
			Ult(L(V_{\lambda+1}),U_j)\vDash\exists x\ \varphi([f_1],\dots,[f_n])\\
			\text{ iff }\{x\in V_{\lambda+1}:L(V_{\lambda+1})\vDash\exists y\ \varphi(f_1(x),\dots,f_n(x))\}\in U_j.
		\end{multline*}
		The direction from left to right is immediate: if $[g]$ witness the left side, then $g(x)$ witness the right side. For the opposite direction, we need a sort of $U_j$-choice, i.e., we need to find a function $g$ such that 
		\begin{equation*}
			\{x\in V_{\lambda+1}:L(V_{\lambda+1})\vDash\varphi(g(x),f_1(x),\dots,f_n(x))\}\in U_j. 
		\end{equation*}
		We re-formulate this considering $f:V_{\lambda+1}\to L(V_{\lambda+1})\setminus\emptyset$, 
		\begin{equation*}
			f(x)=\{y\in L(V_{\lambda+1}):L(V_{\lambda+1})\vDash\varphi(y,f_1(x),\dots,f_n(x))\}.
		\end{equation*}
		
		\begin{lem}[\L os' Theorem for I0, Woodin, \cite{Kafkoulis}]
			Let $j:L(V_{\lambda+1})\prec L(V_{\lambda+1})$ and $U_j$ as above. Then for every $F:V_{\lambda+1}\to L(V_{\lambda+1})\setminus\{\emptyset\}$ there exists $H:V_{\lambda+1}\to L(V_{\lambda+1})\setminus\{\emptyset\}$ such that $\{x\in V_{\lambda+1}:H(x)\in F(x)\}\in U_j$.
		\end{lem}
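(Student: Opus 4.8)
The plan is to prove a uniformization-style statement: given $F:V_{\lambda+1}\to L(V_{\lambda+1})\setminus\{\emptyset\}$, we must select from each nonempty $F(x)$ a single element (up to $U_j$-measure-one agreement) in a way that the selection function $H$ itself lives in $L(V_{\lambda+1})$. The crucial obstruction is the failure of $\AC$ in $L(V_{\lambda+1})$, so we cannot just pick pointwise; instead we exploit the definable surjection $\Phi:\Ord\times V_{\lambda+1}\twoheadrightarrow L(V_{\lambda+1})$ together with the ordinal well-ordering to make a \emph{definable} choice.

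First I would use $\Phi$ to reduce the problem to choosing ordinals and elements of $V_{\lambda+1}$. For each $x\in V_{\lambda+1}$, since $F(x)\neq\emptyset$, there is some pair $(\gamma,z)\in\Ord\times V_{\lambda+1}$ with $\Phi(\gamma,z)\in F(x)$. Following the pattern already used in the proof of Lemma~\ref{lambdastructure}, I would first minimize on the ordinal coordinate: set
\begin{equation*}
 m(x)=\min\{\gamma:\exists z\in V_{\lambda+1}\ \Phi(\gamma,z)\in F(x)\}.
\end{equation*}
This $m:V_{\lambda+1}\to\Ord$ is definable in $L(V_{\lambda+1})$ from $F$, with no appeal to choice, because the minimum of a set of ordinals always exists. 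Having pinned down the ordinal coordinate, the remaining task is to choose, for each $x$, some $z\in V_{\lambda+1}$ with $\Phi(m(x),z)\in F(x)$, and the set of admissible $z$ is now a nonempty subset of $V_{\lambda+1}$. I would then define
\begin{equation*}
 G(x)=\{z\in V_{\lambda+1}:\Phi(m(x),z)\in F(x)\},
\end{equation*}
a function $G:V_{\lambda+1}\to\mathcal{P}(V_{\lambda+1})\setminus\{\emptyset\}$ lying in $L(V_{\lambda+1})$.

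The heart of the argument is now a choice principle for families indexed by and taking values in $V_{\lambda+1}$, and this is exactly where $\DC_\lambda$ (or really just choice over $V_{\lambda+1}$) enters. Since $V_{\lambda+1}$ is coded inside $V$ and $V\vDash\AC$, there is in $V$ a choice function picking $c(x)\in G(x)$ for each $x$; because $c$ is a $\lambda$-indexed (indeed $V_{\lambda+1}$-indexed) sequence of elements of $V_{\lambda+1}$, it is itself codeable as a single element of $V_{\lambda+1}$, hence $c\in L(V_{\lambda+1})$. This is the same coding trick that made $\DC_\lambda(V_{\lambda+1})$ ``quite obvious'' in the previous lemma. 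Setting $H(x)=\{\Phi(m(x),c(x))\}$ (or more naturally $H(x)=\Phi(m(x),c(x))$, viewed as a singleton-valued selector) we obtain $H(x)\in F(x)$ for \emph{every} $x$, so certainly $\{x:H(x)\in F(x)\}=V_{\lambda+1}\in U_j$. I expect the main obstacle to be purely bookkeeping: verifying that the coding of a $V_{\lambda+1}$-indexed family of members of $V_{\lambda+1}$ really produces an element of $V_{\lambda+1}$, and hence of $L(V_{\lambda+1})$, so that $H$ is internal to the model. Once that is in hand, the measure-one conclusion is immediate since the selection succeeds everywhere, not merely on a $U_j$-large set.
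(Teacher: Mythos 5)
Your opening reduction (using $\Phi$ and minimization on the ordinal coordinate to pass from arbitrary values to nonempty subsets of $V_{\lambda+1}$) is fine, and it is essentially the same reduction the paper performs for the general case. The proof collapses at its core step, though: the claim that your $V$-chosen selector $c$ is ``codeable as a single element of $V_{\lambda+1}$'' because it is a ``$V_{\lambda+1}$-indexed sequence of elements of $V_{\lambda+1}$'' is false. The coding trick you are importing from the proof of $\DC_\lambda(V_{\lambda+1})$ in Lemma \ref{lambdastructure} applies to $\lambda$-sequences: a sequence of length $\lambda$ of subsets of $V_\lambda$ folds into a single subset of $V_\lambda$. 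Your $c$ is instead a function $V_{\lambda+1}\to V_{\lambda+1}$, with $2^\lambda$ many entries; even after coding pairs, it is a subset of $V_{\lambda+1}$ of full size, i.e.\ an element of $V_{\lambda+2}$, and an $\AC$-chosen element of $V_{\lambda+2}$ has no reason to lie in $L(V_{\lambda+1})$. This is not bookkeeping but the entire content of the lemma: under I0, $V$-generic choices of subsets of $V_{\lambda+1}$ typically do \emph{not} land in $L(V_{\lambda+1})$ (a well-ordering of $V_{\lambda+1}$ is the standard example, and Kunen's argument is exactly why it must be missing). A further warning sign is that your argument never uses $j$ or $U_j$ and produces a selector succeeding \emph{everywhere}: you would be proving an internal choice principle for $L(V_{\lambda+1})$ outright, whereas the lemma is stated with a $U_j$-measure-one set precisely because no such principle is available.

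What the paper actually does is make a single choice in $V$ --- one element $b\in j(F)(j\upharpoonright V_\lambda)$ --- and then define $H$ \emph{definably from parameters in $V_{\lambda+1}$}: $H(k)$ is the unique $c$ such that $k:V_\lambda\prec V_\lambda$, $k(k)=j\upharpoonright V_\lambda$ and $k(c)=b$, and $H(k)=\emptyset$ otherwise. Since $b$ and $j\upharpoonright V_\lambda$ are elements of $V_{\lambda+1}$, this $H$ belongs to $L(V_{\lambda+1})$. The measure-one conclusion then comes from the embedding itself: the set $K_b$ of such ``square roots'' $k$ of $j$ sending some $c$ to $b$ satisfies $j\upharpoonright V_\lambda\in j(K_b)$ (witnessed by $c=b$), so $K_b\in U_j$, and $j(H)(j\upharpoonright V_\lambda)=b\in j(F)(j\upharpoonright V_\lambda)$. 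Using $j$ to define the selector, rather than trying to choose pointwise, is the missing idea; it is also exactly where the hypothesis that $j$ is an elementary embedding of $L(V_{\lambda+1})$ --- and not merely that some ultrafilter on $V_{\lambda+1}$ exists --- enters the proof.
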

		\begin{proof}
			First we consider the case $\forall a\in V_{\lambda}$, $F(a)\subseteq V_{\lambda+1}$. We have to define $H$ such that $j(H)(j\upharpoonright V_\lambda)\in j(F)(j\upharpoonright V_\lambda)$. Fix a $b\in j(F)(j\upharpoonright V_\lambda)$, and define
			\begin{equation*}
				H(k)=\begin{cases}
					c & \text{if $k:V_\lambda\prec V_\lambda$, $k(k)=j$ and $k(c)=b$}\\
					\emptyset & \text{otherwise}
					\end{cases}
			\end{equation*}
			
			Note that 
			\begin{equation*}
				K_b:=\{k\in V_{\lambda+1}\ |\ k:V_\lambda\prec V_\lambda, k(k)=j,\exists c\  k(c)=b\}\in U_j, 
			\end{equation*}
			because 
			\begin{equation*}
				j(K_b)=\{k\in V_{\lambda+1}\ |\ k:V_\lambda\prec V_\lambda, k(k)=j(j),\exists c\  k(c)=j(b)\} 
			\end{equation*}
			and $j\upharpoonright V_\lambda\in j(K_b)$ (with $c=b$), so $\{x\in V_{\lambda+1}:H(x)\neq\emptyset\}\in U_j$. Then 
			\begin{equation*}
				j(H)(k)=\begin{cases}
					c & \text{if $k:V_\lambda\prec V_\lambda$, $k(k)=j(j)$ and $k(c)=j(b)$}\\
					0 & \text{otherwise}
					\end{cases}
			\end{equation*}
			so $j(H)(j\upharpoonright V_\lambda)=b\in j(F)(j\upharpoonright V_\lambda)$.
			
			For the more general case $\forall a\in V_{\lambda}$ $F(a)\subseteq L(V_{\lambda+1})$ fix $\Phi:Ord\times V_{\lambda+1}\twoheadrightarrow L(V_{\lambda+1})$ definable and define 
			\begin{equation*}
				\hat{F}(a)=\{x\in V_{\lambda+1}:\exists\gamma\ \Phi(\gamma,x)\in F(a)\}. 
			\end{equation*}
			Then there exists $\hat{H}$ such that $\{a\in V_{\lambda+1}:\hat{H}(a)\in\hat{F}(a)\}\in U_j$. Let $\gamma_a=\min\{\gamma:\Phi(\gamma,\hat{H}(a))\in F(a)\}$. Therefore $H(a)=\Phi(\gamma_a,\hat{H}(a))$ is as desired.
		\end{proof}
		
		Therefore, calling 
		\begin{equation*}
		 {\cal Z}=\{j(f)(j\upharpoonright V_\lambda): f\in L(V_{\lambda+1}),\dom(F)=V_{\lambda+1}\},
		\end{equation*}
		$j:L(V_{\lambda+1})\to{\cal Z}$ is an elementary embedding, and ${\cal Z}\cong L(V_{\lambda+1})$. Let $k_U$ be the inverse of the collapse of ${\cal Z}$. We've seen in the proof of the previous Lemma that for every $b\in V_{\lambda+1}$ there exists $h$ such that $j(h)(j\upharpoonright V_\lambda)$, so $V_{\lambda+1}\subseteq{\cal Z}$ and $k_U:L(V_{\lambda+1})\prec{\cal Z}$. Moreover, if $R$ is a pwo in $V_{\lambda+1}$, then $R=\{a\in V_{\lambda+1}:j(a)\in j(R)\}$, and since $j(a),j(R)\in{\cal Z}$ and $V_{\lambda+1}\subseteq{\cal Z}$ we have that $R$ is not collapsed, so $\Theta\subseteq{\cal Z}$ and $\crt(k_U)>\Theta$.
		
		\begin{teo}[Woodin, \cite{Kafkoulis}]
		\label{allproper}
			For every $j:L(V_{\lambda+1})\prec L(V_{\lambda+1})$ there exist a $L(V_{\lambda+1})$-ultrafilter $U$ in $V_{\lambda+1}$ and $j_U,k_U:L(V_{\lambda+1})\prec L(V_{\lambda+1})$ such that $j_U$ is the elementary embedding from $U$, $j=j_U\circ k_U$ and $j\upharpoonright L_\Theta(V_{\lambda+1})=j_U\upharpoonright L_\Theta(V_{\lambda+1})$.
		\end{teo}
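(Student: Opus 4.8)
The plan is to read the factorisation straight off the ultrapower construction carried out just before the statement, and then isolate the one genuinely new point, namely the agreement of $j$ and $j_U$ on $L_\Theta(V_{\lambda+1})$. We already have the $L(V_{\lambda+1})$-ultrafilter $U=U_j$, \L os' Theorem for it, and the realised ultrapower ${\cal Z}=\{j(f)(j\upharpoonright V_\lambda):\dom(f)=V_{\lambda+1}\}$, together with the facts that $j:L(V_{\lambda+1})\prec{\cal Z}$ and ${\cal Z}\cong L(V_{\lambda+1})$. Crucially, since ${\cal Z}\subseteq L(V_{\lambda+1})$ carries the true $\in$, which is well-founded on $V$, well-foundedness of the ultrapower is automatic, so there is a genuine transitive collapse $\pi:{\cal Z}\to L(V_{\lambda+1})$; its inverse is the map $k_U=\pi^{-1}$ already discussed, for which we have established $V_{\lambda+1}\subseteq{\cal Z}$, $\Theta\subseteq{\cal Z}$ and hence $\crt(k_U)>\Theta$.

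First I would set $j_U=\pi\circ j$, so that $j_U:L(V_{\lambda+1})\to L(V_{\lambda+1})$ is elementary, being a composition of the elementary $j:L(V_{\lambda+1})\prec{\cal Z}$ with the collapse isomorphism, and is exactly the ultrapower embedding attached to $U$ once $Ult(L(V_{\lambda+1}),U)$ is transitivised. The factorisation is then a one-line check: for every $x$,
\begin{equation*}
 k_U(j_U(x))=\pi^{-1}(\pi(j(x)))=j(x),
\end{equation*}
which is precisely $j=j_U\circ k_U$ (apply $j_U$ first, then the factor map $k_U$). This already yields everything except the last clause: $U$ is an $L(V_{\lambda+1})$-ultrafilter in $V_{\lambda+1}$, $j_U$ is its (transitivised) ultrapower embedding, both $j_U$ and $k_U$ are elementary self-embeddings of $L(V_{\lambda+1})$, and $j=j_U\circ k_U$.

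The remaining, and in my view main, point is $j\upharpoonright L_\Theta(V_{\lambda+1})=j_U\upharpoonright L_\Theta(V_{\lambda+1})$, which I would argue in two halves. On one side, $\Theta$ is definable in $L(V_{\lambda+1})$ and $j(V_{\lambda+1})=V_{\lambda+1}$, so $j(\Theta)=\Theta$; hence $x\in L_\beta(V_{\lambda+1})$ with $\beta<\Theta$ forces $j(x)\in L_{j(\beta)}(V_{\lambda+1})$ with $j(\beta)<\Theta$, i.e.\ $j$ maps $L_\Theta(V_{\lambda+1})$ into itself. On the other side, $\crt(k_U)>\Theta$ together with $V_{\lambda+1}\subseteq{\cal Z}$ makes $k_U$ the identity on $L_\Theta(V_{\lambda+1})$: writing any $y\in L_\Theta(V_{\lambda+1})$ as $\Phi(\gamma,a)$ with $\gamma<\Theta$ and $a\in V_{\lambda+1}$ via the definable surjection $\Phi$, we get $k_U(y)=\Phi(k_U(\gamma),k_U(a))=\Phi(\gamma,a)=y$, since $k_U$ fixes all ordinals $\le\Theta$ and all elements of $V_{\lambda+1}$. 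Combining the two, for $x\in L_\Theta(V_{\lambda+1})$ we have $j(x)\in L_\Theta(V_{\lambda+1})$, so $\pi$ fixes it, and therefore $j_U(x)=\pi(j(x))=j(x)$, as required.

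The hard part is not this assembly but everything that feeds into it, and that was deliberately pushed into the material preceding the theorem: \L os' Theorem without \AC\ (giving elementarity of the ultrapower) and, most delicately, the high critical point $\crt(k_U)>\Theta$. The latter is exactly where Lemma \ref{Thetasubsets} is essential, since one needs every ordinal below $\Theta$ --- equivalently every prewellordering of $V_{\lambda+1}$ coded in $L(V_{\lambda+1})$ --- to be represented in the ultrapower as some $j(f)(j\upharpoonright V_\lambda)$, so that no pwo is collapsed and $\Theta\subseteq{\cal Z}$. Once that is in hand, the factorisation and the $L_\Theta$-agreement are the short bookkeeping steps sketched above.
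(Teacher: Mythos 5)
Your proof is correct and is essentially the paper's own argument: the paper offers no separate proof of this theorem, which simply packages the ultrapower construction carried out in the immediately preceding paragraphs ($U_j$, \L os' Theorem, ${\cal Z}$, and $\crt(k_U)>\Theta$), and your assembly --- $j_U=\pi\circ j$, $k_U=\pi^{-1}$, the one-line factorization, and the agreement on $L_\Theta(V_{\lambda+1})$ from the fixing of ordinals below $\Theta$ and of elements of $V_{\lambda+1}$ --- is exactly that. You are also right to gloss the stated factorization as ``apply $j_U$ first, then $k_U$'' (i.e.\ $j=k_U\circ j_U$ in the standard composition convention), which is the only order the construction supports.
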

		
		\begin{defin}
			Let $j:L(V_{\lambda+1})\prec L(V_{\lambda+1})$. We say that $j$ is \emph{weakly proper} if $j=j_U$.
		\end{defin}
		
		Note that $U\notin L(V_{\lambda+1})$, otherwise we could run the naive Kunen's Theorem argument, so $L(V_{\lambda+1})$ does not satisfy replacement for $j$, and this proves that I0 is not a ``$L(V_{\lambda+1})$-Reinhardt''. In fact, one cannot even have $j\upharpoonright L_\Theta(V_{\lambda+1})\in L(V_{\lambda+1})$, otherwise one could construct $U$ from that, so $j$ is not amenable. 
		
		In a certain sense weakly proper embeddings are the ``right'' embeddings for I0, and the most natural extension of I1, if one considers that $L_\Theta(V_{\lambda+1})$ is, loosely speaking, the $V_{\lambda+2}$ of $L(V_{\lambda+1})$, i.e., the set of subsets of $V_{\lambda+1}$. Every embedding has therefore a core part, that comes from an ultraproduct, it is unique and depends solely on its behaviour on $L_\Theta(V_{\lambda+1})$, and a spurious part (note that $k_U$ is not an I0-embedding, as it has critical point $>\lambda$) that moves things up above without really changing the important part of the embedding. For example, if there are indiscernibles, it could be a simple shift of indiscernibles, so nothing that carries real information. 
		
		In fact, the behaviour of an embedding that is weakly proper depends only on its $V_\lambda$ part:
		
		\begin{lem}[Woodin, \cite{Kafkoulis}]
		\label{onlyfromVlambda}
			For every $j_1,j_2:L(V_{\lambda+1})\prec L(V_{\lambda+1})$, if $j_1\upharpoonright V_\lambda=j_2\upharpoonright V_\lambda$ then $j_1\upharpoonright L_\Theta(V_{\lambda+1})=j_2\upharpoonright L_\Theta(V_{\lambda+1})$.
		\end{lem}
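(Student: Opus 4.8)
The plan is to strip the conclusion down to a single hard core and then attack it through the ultrapower factorisation of Theorem \ref{allproper}.

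First I would promote agreement on $V_\lambda$ to agreement on $V_{\lambda+1}$. Since I0 gives $j_i\upharpoonright V_{\lambda+1}:V_{\lambda+1}\prec V_{\lambda+1}$, the earlier lemma that every embedding of $V_{\lambda+1}$ equals the `plus' of its restriction to $V_\lambda$ yields $j_i\upharpoonright V_{\lambda+1}=(j_i\upharpoonright V_\lambda)^+$; hence $j_1\upharpoonright V_\lambda=j_2\upharpoonright V_\lambda$ already forces $j_1\upharpoonright V_{\lambda+1}=j_2\upharpoonright V_{\lambda+1}$. Write $e=j_1\upharpoonright V_\lambda=j_2\upharpoonright V_\lambda\in V_{\lambda+1}$. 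Next I would observe that the whole statement reduces to the \emph{core claim} that $j_1$ and $j_2$ agree on $\mathcal{P}(V_{\lambda+1})\cap L(V_{\lambda+1})$. Indeed, if they agree on every subset of $V_{\lambda+1}$ then, given $\gamma<\Theta$, Lemma \ref{Thetasubsets}(1) supplies a prewellordering $R\subseteq V_{\lambda+1}$ of length $\gamma$, and elementarity makes $j_i(R)$ a prewellordering of length $j_i(\gamma)$; thus $j_i(\gamma)=\ot(j_i(R))$ is the same for $i=1,2$, so $j_1$ and $j_2$ agree on all ordinals below $\Theta$. Fixing the definable surjection $\Phi:\Ord\times V_{\lambda+1}\twoheadrightarrow L(V_{\lambda+1})$ and writing each $a\in L_\Theta(V_{\lambda+1})$ as $\Phi(\gamma,x)$ with $\gamma<\Theta$ and $x\in V_{\lambda+1}$ (possible by the theory of relative constructibility together with Lemma \ref{Thetasubsets} and the fine structure of Lemma \ref{finestructure}), elementarity gives $j_i(a)=\Phi(j_i(\gamma),j_i(x))$; since $j_i(\gamma)$ and $j_i(x)$ are now pinned down, so is $j_i(a)$.

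For the core I would invoke Theorem \ref{allproper}: since $j_i\upharpoonright L_\Theta(V_{\lambda+1})=j_{U_{j_i}}\upharpoonright L_\Theta(V_{\lambda+1})$ and the ultrapower embedding $j_{U_{j_i}}$ is completely determined by the ultrafilter $U_{j_i}=\{X\subseteq V_{\lambda+1}:X\in L(V_{\lambda+1}),\ e\in j_i(X)\}$, it is enough to prove $U_{j_1}=U_{j_2}$, i.e. that for each such $X$ the single membership fact $e\in j_1(X)$ is equivalent to $e\in j_2(X)$. \textbf{This last step is the main obstacle.} It is genuinely delicate because the seed $e=j_i\upharpoonright V_\lambda$ need not lie in the range of $j_i\upharpoonright V_{\lambda+1}$, so $e\in j_i(X)$ is \emph{not} decided merely by pulling $e$ back along $j_i$ on $V_{\lambda+1}$; and any attempt to compute $j_i(X)$ through $\Phi$ reintroduces exactly the ordinal agreement we are trying to establish, so a naive induction on the $L(V_{\lambda+1})$-hierarchy is circular. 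To break the circle I would exploit the fact that $e$ is the limit of its finite approximations $j_i\upharpoonright V_{\kappa_n}\in V_\lambda$ and try to express $e\in j_i(X)$ as a coherence/limit condition on these approximations and on $j_i\upharpoonright V_{\lambda+1}$ alone; this is precisely the inverse-limit circle of ideas of Section \ref{strimp}. Since $j_1$ and $j_2$ share both the approximating sequence and their action on $V_{\lambda+1}$, such a condition returns the same truth value, whence $U_{j_1}=U_{j_2}$. A more fine-structural alternative is to run the reduction by an induction anchored at the stable ordinal $\delta$ of Lemma \ref{finestructure}, where $\Theta^{L_\delta(V_{\lambda+1})}=\delta$ lets one realise prewellorderings of every length below $\delta$ early enough for the induction to close, and then to lift agreement from $L_\delta(V_{\lambda+1})$ to $L_\Theta(V_{\lambda+1})$ using $L_\delta\prec_1 L_\Theta$.
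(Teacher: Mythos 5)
Your preparatory reductions are correct and coincide with the easy part of the paper's own argument: agreement on $V_\lambda$ upgrades to agreement on $V_{\lambda+1}$ because $j_i\upharpoonright V_{\lambda+1}=(j_i\upharpoonright V_\lambda)^+$, and once agreement on $L(V_{\lambda+1})\cap V_{\lambda+2}$ is in hand, the passage to all of $L_\Theta(V_{\lambda+1})$ via prewellorderings (Lemma \ref{Thetasubsets}) and the definable surjection $\Phi$ is exactly how the paper concludes. The reformulation of the core claim as $U_{j_1}=U_{j_2}$ via Theorem \ref{allproper} is also legitimate. But the core claim itself is never proved. The paragraph beginning ``To break the circle'' is a statement of intent, not an argument: you do not exhibit any condition on $e=j_i\upharpoonright V_\lambda$, its finite approximations, and $X$ that decides ``$e\in j_i(X)$'', nor any reason why such a condition should exist. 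Indeed, the assertion that membership of $e$ in $j_i(X)$ is determined by $j_i\upharpoonright V_\lambda$ and $X$ alone is essentially the content of the lemma you are proving, so deferring to an unspecified such condition is circular in precisely the way you yourself flagged. The inverse-limit and square-root machinery of Section \ref{strimp} does something different---it reflects embeddings to smaller ranks $V_\alpha$---and gives no procedure for computing the action of $j$ on $V_{\lambda+2}$ from its action on $V_{\lambda+1}$. The ``fine-structural alternative'' has the same defect: you never state the induction hypothesis at stage $\alpha$ or explain why anchoring at the stable ordinal $\delta$ of Lemma \ref{finestructure} breaks the circularity you identified.

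The missing idea, which is the heart of the paper's proof, is to exploit common fixed points \emph{above} $\Theta$ rather than to analyze the seed $e$ below it. One first passes to the weakly proper parts of $j_1,j_2$ (harmless: by Theorem \ref{allproper} this changes neither hypothesis nor conclusion). By the usual analysis of the ultrapower, every strong limit cardinal of cofinality greater than $\Theta$ is fixed by both embeddings, so $I=\{\eta:j_1(\eta)=j_2(\eta)=\eta\}$ is a proper class. A condensation argument then shows that $L(V_{\lambda+1})=H^{L(V_{\lambda+1})}(I\cup V_{\lambda+1})$: if $k^*$, the inverse of the transitive collapse of this hull, were not the identity, its critical point would exceed $\Theta$ (since $\Theta\subseteq$ the hull by Lemma \ref{Thetasubsets}), hence would be a strong limit cardinal of cofinality $>\Theta$, hence an element of $I\subseteq\ran(k^*)$, which is impossible for a critical point. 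Consequently every $A\in L(V_{\lambda+1})\cap V_{\lambda+2}$ is definable in $L(V_{\lambda+1})$ from some $\eta\in I$ and some $a\in V_{\lambda+1}$, and then $j_1(A)=j_2(A)$ follows immediately because $j_1,j_2$ agree on all the parameters involved. This is the step your proposal leaves open; without it, or a genuine substitute for it, the proof does not go through.
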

		\begin{proof}
			We can suppose that $j_1$ and $j_2$ are weakly proper. By the usual analysis of the ultraproduct, we have that every strong limit cardinal with cofinality bigger than $\Theta$ is a fixed point for $j_1$ and $j_2$, so $I=\{\eta:j_1(\eta)=j_2(\eta)=\eta\}$ is a proper class. Let $M=H^{L(V_{\lambda+1})}(I\cup V_{\lambda+1})$. Since $V_{\lambda+1}\subseteq M$ $k^*$, the inverse of the transitive collapse of $M$, has domain $L(V_{\lambda+1})$. If $k^*$ is not the identity then, as $\Theta\subseteq M$ by Lemma \ref{Thetasubsets}, $\crt(k^*)>\Theta$. But in that case $\crt(k^*)$ is a strong limit cardinal with cofinality bigger than $\Theta$, so $\crt(k^*)\in I$, and this is a contradiction, because $I\subseteq\ran(k^*)$ and $\crt(k^*)\notin\ran(k^*)$. So $k^*$ is the identity and $L(V_{\lambda+1})=H^{L(V_{\lambda+1})}(I\cup V_{\lambda+1})$.
			
			Therefore every element of $L_\Theta(V_{\lambda+1})$ is definable with parameters in $I\cup V_{\lambda+1}$. Let $A\in L(V_{\lambda+1})\cap V_{\lambda+2}$, $A=\{x\in V_{\lambda+1}:L(V_{\lambda+1})\vDash\varphi(\eta,a)\}$ with $\eta\in I$ and $a\in V_{\lambda+1}$. Since $j_1\upharpoonright V_\lambda=j_2\upharpoonright V_\lambda$, we have that $j_1(a)=j_2(a)$. Then 
			\begin{multline*}
			 j_1(A)=\{x\in V_{\lambda+1}:L(V_{\lambda+1})\vDash\varphi(\eta,j_1(a))\}=\\
			 =\{x\in V_{\lambda+1}:L(V_{\lambda+1})\vDash\varphi(\eta,j_2(a))\}=j_2(A). 
			\end{multline*}
			
			But every element of $L_\Theta(V_{\lambda+1})$ is definable from an ordinal $\alpha<\Theta$ and an element of $V_{\lambda+1}$, $\alpha$ is definable from some pwo in $L(V_{\lambda+1})\cap V_{\lambda+2}$, so $j_1\upharpoonright L_\Theta(V_{\lambda+1})=j_2\upharpoonright L_\Theta(V_{\lambda+1})$.
		\end{proof}
		
		\begin{cor}[Woodin, \cite{Kafkoulis}]
		\label{FixedpointsbelowTheta}
		 Let $j:L(V_{\lambda+1})\prec L(V_{\lambda+1})$ with $\crt(j)$. Then $C=\{\alpha<\Theta:j(\alpha)=\alpha\}$ is unbounded in $\Theta$.
		\end{cor}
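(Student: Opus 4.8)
The plan is to exploit the reflection-type structure already established for embeddings of $L(V_{\lambda+1})$, together with a counting/cofinality argument inside $\Theta$. The key point is that $\Theta$ is regular in $L(V_{\lambda+1})$ by Lemma \ref{lambdastructure}, so an unbounded set of fixed points below $\Theta$ cannot be ruled out by a mere cardinality obstruction; rather, I want to produce, for each $\beta<\Theta$, a fixed point of $j$ above $\beta$ and below $\Theta$.

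First I would reduce to the weakly proper case. By Theorem \ref{allproper} we may factor $j=j_U\circ k_U$ with $\crt(k_U)>\Theta$, and since $j\upharpoonright L_\Theta(V_{\lambda+1})=j_U\upharpoonright L_\Theta(V_{\lambda+1})$, the action of $j$ on ordinals below $\Theta$ agrees with that of $j_U$. Hence it suffices to show $C$ is unbounded for $j_U$, i.e. we may assume $j$ is weakly proper and arises from the ultrapower. Next, I would use the standard Skolem-hull closure argument, exactly as in the proof of Lemma \ref{onlyfromVlambda}: given $\beta<\Theta$, form the Skolem hull $M=H^{L(V_{\lambda+1})}(\beta\cup V_{\lambda+1})$ and collapse it. Because $V_{\lambda+1}\subseteq M$ and $\Theta$ is regular, the closure of $\beta\cup V_{\lambda+1}$ under the partial Skolem functions $h_{\varphi,a}$ produces only ordinals bounded below $\Theta$ (using Lemma \ref{finestructure} and the regularity of $\Theta$), so $M\cap\Theta$ is some ordinal $\alpha$ with $\beta\le\alpha<\Theta$.

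The main step is then to iterate $j$ along an $\omega$-chain to manufacture an actual fixed point, mimicking the critical-sequence argument used throughout the paper (e.g. in the proof of Lemma \ref{reflection} and the Woodin proof of Kunen's theorem). Given $\beta<\Theta$, set $\alpha_0=\beta$ and $\alpha_{n+1}=j(\alpha_n)$; since $j$ is weakly proper its restriction to ordinals below $\Theta$ is a well-defined increasing map, and I would argue that $\alpha_n<\Theta$ for each $n$ (this is where regularity of $\Theta$ plus the continuity of $j$ on ordinals of large cofinality enters: $j(\alpha)<\Theta$ whenever $\alpha<\Theta$, because $j$ restricted to $L_\Theta(V_{\lambda+1})$ does not move $\Theta$). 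Then $\alpha=\sup_n\alpha_n<\Theta$ again by regularity, and by the same $\omega$-cofinal computation used repeatedly,
\begin{equation*}
j(\alpha)=j\bigl(\sup_n\alpha_n\bigr)=\sup_n j(\alpha_n)=\sup_n\alpha_{n+1}=\alpha,
\end{equation*}
so $\alpha\in C$ and $\alpha\ge\beta$.

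I expect the main obstacle to be justifying that $j(\alpha)<\Theta$ for every $\alpha<\Theta$, i.e. that $j$ maps $\Theta$ to itself (continuity at $\Theta$), which is not literally the trivial fixing of $\Theta$ one has in the choiceful setting. The clean way around it is the reduction to a weakly proper embedding, where $j\upharpoonright L_\Theta(V_{\lambda+1})$ is exactly the ultrapower map and $L_\Theta(V_{\lambda+1})$ is preserved, so $j''\Theta\subseteq\Theta$ follows from the fact that every $\alpha<\Theta$ is coded by a pwo in $L_\Theta(V_{\lambda+1})$ (Lemma \ref{Thetasubsets}) and elementarity sends such pwos to pwos, keeping their ordertypes below $\Theta$. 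Once continuity at $\Theta$ is in hand, the $\omega$-chain limit argument closes the proof immediately.
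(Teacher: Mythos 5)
Your reduction to the weakly proper case is fine and matches the paper, but the heart of your argument --- iterate $j$ and take the supremum --- has a genuine gap, and it is exactly the point that forces the paper to argue differently. Both of your key claims, namely that $\alpha=\sup_n\alpha_n<\Theta$ and that $j(\alpha)=\sup_n j(\alpha_n)$, are justified by appealing to the regularity of $\Theta$ and to ``the usual $\omega$-cofinal computation''. But $\Theta$ is regular only \emph{inside} $L(V_{\lambda+1})$ (Lemma \ref{lambdastructure}), whereas the sequence $\langle\alpha_n:n\in\omega\rangle$ is defined using $j$ and need not be an element of $L(V_{\lambda+1})$: as remarked after Theorem \ref{allproper}, $j$ is not amenable, indeed $j\upharpoonright L_\Theta(V_{\lambda+1})\notin L(V_{\lambda+1})$. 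Externally nothing protects you: in $V$ the cofinality of $\Theta$ can be $\omega$ (Section \ref{Icarus} notes this happens whenever $(V_{\lambda+1})^\sharp$ exists), so an external $\omega$-sequence of ordinals below $\Theta$ can perfectly well be cofinal in $\Theta$. The continuity step has the same defect: every time the paper runs that computation (proof of \ref{kunens}, Lemma \ref{measurable}) the $\omega$-sequence is an element of the domain of the embedding, so that $j$ can be applied to it, and that is precisely what fails here. In fact continuity would follow from boundedness --- if $\sup_n\alpha_n<\Theta$, one can use a surjection $V_{\lambda+1}\twoheadrightarrow\sup_n\alpha_n$ and \AC{} in $V$ to code $\langle\alpha_n:n\in\omega\rangle$ into $V_{\lambda+1}$ and hence into $L(V_{\lambda+1})$ --- so the entire weight rests on boundedness, which your proof never establishes; and the only way I see to establish it is the fixed-point generation fact below, which makes the iteration detour superfluous. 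Note also that the obstacle you single out as the main one, $j''\Theta\subseteq\Theta$, is the trivial part: $\Theta$ is definable in $L(V_{\lambda+1})$, so $j(\Theta)=\Theta$ and $j(\alpha)<\Theta$ for all $\alpha<\Theta$ by order preservation.

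Your intermediate hull step is also off: for $M=H^{L(V_{\lambda+1})}(\beta\cup V_{\lambda+1})$, the set $M\cap\Theta$ is not an ordinal (hulls are not transitive below $\Theta$: $M$ contains, for instance, the least $\gamma$ with $L_\gamma(V_{\lambda+1})\prec_1 L_\Theta(V_{\lambda+1})$ but not every ordinal below it), and in any case this hull cannot produce a fixed point, because its generators $\beta\cup V_{\lambda+1}$ are not fixed by $j$, so nothing gives $j(M)=M$. This is precisely where the paper's proof goes another way: assuming $j$ weakly proper, by (the proof of) Lemma \ref{onlyfromVlambda}, $L(V_{\lambda+1})$ is the Skolem hull of $I\cup V_{\lambda+1}$, where $I$ is the class of fixed points of $j$ (it contains all strong limit cardinals of cofinality greater than $\Theta$). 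Given $\alpha_0<\Theta$, every $\beta<\alpha_0$ is definable from a finite set $I_\beta\subseteq I$ together with parameters in $V_{\lambda+1}$; the hull $Z$ of $\bigcup_{\beta<\alpha_0}I_\beta\cup V_{\lambda+1}$ is generated by objects that $j$ fixes (setwise, in the case of $V_{\lambda+1}$), so $j(Z)=Z$; its transitive collapse is some $L_\gamma(V_{\lambda+1})$ with $\alpha_0\leq\gamma<\Theta$ by condensation and Lemma \ref{Thetasubsets}; and $j(Z)=Z$ then yields $j(\gamma)=\gamma$. The moral is that fixed points below $\Theta$ are obtained as heights of $j$-invariant hulls generated by already-fixed objects, not as limits of $j$-orbits.
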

		\begin{proof}
		 We can suppose that $j$ is weakly proper. Then every ordinal $\beta<\Theta$ is definable from some elements of $I\cup V_{\lambda+1}$, say $I_\beta=\{i_0,\dots,i_n\}\in I$. 
		
		 Let $\alpha_0<\Theta$ and consider the transitive collapse of the Skolem closure $Z$ of $\bigcup_{\beta<\alpha_0}I_\beta\cup V_{\lambda+1}$. Then by condensation it must be some $L_\gamma(V_{\lambda+1})$. As $\alpha_0<\Theta$, there exists $\pi:V_{\lambda+1}\twoheadrightarrow Z$, therefore $\gamma<\Theta$. As $\alpha_0\subseteq Z$, we have $\alpha_0<\gamma$. As $j(Z)=Z$, we have $j(\gamma)=\gamma$. Therefore for any $\alpha_0<\Theta$ there exists a $\gamma\in C$ such that $\alpha_0<\gamma<\Theta$.
		\end{proof}

\section{Strong implications and inverse limits}
\label{strimp}

When introducing new axioms, it is good practice to prove that they really are adding something, i.e., that their consistency strength is strictly larger (or smaller) than the consistency of the already known ones. 

\begin{defin}
 Let $A(x)$ and $B(x)$ two large cardinal properties. We say that $A(x)$ \emph{strongly implies} $B(x)$ iff for any $\kappa$, $A(\kappa)$ implies $B(\kappa)$ and the smallest $\kappa$ such that $A(\kappa)$ holds is bigger than the smallest $\eta$ such that $B(\eta)$ holds.
\end{defin}

This is even stronger then just the strict implications, cfr the note after Lemma \ref{change}.

\begin{prop}
 If $j:V_\lambda\prec V_\lambda$ with $\crt(j)=\kappa$, then there is a normal ultrafilter $U$ on $\kappa$ such that $\{\alpha<\kappa:\alpha$ is $n$-huge for every $n\}\in U$. Therefore I3 strongly implies being $n$-huge for every $n$.
\end{prop}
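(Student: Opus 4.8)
The plan is to derive the canonical normal ultrafilter from $j$ and push $n$-hugeness down by elementarity, one $n$ at a time, then intersect. Write $\kappa=\kappa_0=\crt(j)$ and fix
\begin{equation*}
U=\{X\subseteq\kappa:\kappa\in j(X)\},
\end{equation*}
which is a normal $\kappa$-complete ultrafilter on $\kappa$ (note $j(X)\subseteq j(\kappa)=\kappa_1$ and $\kappa<\kappa_1$, so the membership is meaningful). Throughout I would work with the first-order ultrafilter characterization $\psi_n(x)$ of ``$x$ is $n$-huge'' from the Proposition preceding \ref{verylarge}, since this is expressible by a single formula whose witness (an ultrafilter over some ${\cal P}(\eta)$, together with the ordinals $\eta_0<\dots<\eta_n$) is a bounded object.

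First I would verify that $V_\lambda\vDash\psi_n(\kappa)$ for every $n$. The witness is precisely the ultrafilter produced in the proof of \ref{verylarge}, namely $U_n=\{X\subseteq{\cal P}(\kappa_n):j''\kappa_n\in j(X)\}$ together with $\kappa_0<\dots<\kappa_n$. Because each $\kappa_m$ is inaccessible we have $\kappa_n+3<\kappa_{n+1}<\lambda$, so $U_n\in V_\lambda$; and since $V_\lambda\vDash\ZFC$ with $\lambda$ a strong limit (this is exactly the content following \ref{reflection}), $V_\lambda$ computes ${\cal P}({\cal P}(\kappa_n))$ correctly and recognizes $U_n$ as a normal $\kappa_0$-complete ultrafilter satisfying the partition clauses $\{x:\ot(x\cap\kappa_{i+1})=\kappa_i\}\in U_n$. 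Hence $V_\lambda\vDash\psi_n(\kappa)$.

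Now comes the reflection step. For each $n$ set $B_n=\{\alpha<\kappa:V_\lambda\vDash\psi_n(\alpha)\}$, a subset of $\kappa$ definable inside $V_\lambda$ from the parameter $\kappa$. By elementarity of $j:V_\lambda\prec V_\lambda$, $j(B_n)=\{\alpha<\kappa_1:V_\lambda\vDash\psi_n(\alpha)\}$, and since $\kappa<\kappa_1$ and $V_\lambda\vDash\psi_n(\kappa)$, we obtain $\kappa\in j(B_n)$, i.e.\ $B_n\in U$. As $U$ is $\kappa$-complete, and in particular $\sigma$-complete, $\bigcap_{n\in\omega}B_n\in U$. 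Finally I would pass from ``$n$-huge in $V_\lambda$'' to genuine $n$-hugeness: if $V_\lambda\vDash\psi_n(\alpha)$ then the $V_\lambda$-witness is, by the same strong-limit absoluteness as above, a genuine ultrafilter witness in $V$, so $\alpha$ is really $n$-huge. Thus $\{\alpha<\kappa:\alpha\text{ is }n\text{-huge for every }n\}\supseteq\bigcap_{n}B_n\in U$, and upward closure of $U$ gives the first assertion. For ``strongly implies'', that any I3 critical point is itself $n$-huge for all $n$ is exactly \ref{verylarge}, while the measure-one set just found is nonempty (indeed unbounded in $\kappa$), so some $n$-huge-for-all-$n$ cardinal $\eta<\kappa$ exists, placing the least such $\eta$ strictly below the least I3 critical point.

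I expect the genuine obstacle to be the two absoluteness checks, in the second and fourth steps: confirming that the ultrafilter characterization of $n$-hugeness transfers faithfully between $V_\lambda$ and $V$. This is where $V_\lambda\vDash\ZFC$ and $\lambda$ being a strong limit are indispensable, since they ensure $V_\lambda$ computes the relevant iterated power sets correctly and verifies the completeness, normality, and partition conditions exactly as $V$ does; everything else is bookkeeping with elementarity and $\sigma$-completeness.
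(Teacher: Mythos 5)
Your proposal is correct and follows essentially the same route as the paper: both arguments derive the canonical normal ultrafilter $U=\{X\subseteq\kappa:\kappa\in j(X)\}$ and obtain the measure-one set by reflection, i.e., from $\kappa\in j(\cdot)$ together with the fact (Proposition \ref{verylarge}) that $\kappa$ itself is $n$-huge for every $n$. The only differences are cosmetic: the paper applies $j$ once to the single set $\{\alpha<\kappa:\alpha\text{ is }n\text{-huge for every }n\}$, whereas you reflect each $n$ separately and intersect the sets $B_n$ using $\kappa$-completeness, making explicit the $V_\lambda$-to-$V$ absoluteness of the ultrafilter characterization that the paper leaves implicit.
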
  
\begin{proof}
 By \ref{verylarge} $\kappa$ is $n$-huge for every $n\in\omega$. Let $U=\{X\subseteq\kappa:\kappa\in j(X)\}$ and let 
 \begin{equation*}
  X_0=\{\alpha<\kappa:\alpha\text{ is $n$-huge for every }n\}. 
 \end{equation*}

 Then 
 \begin{equation*}
  j(X_0)=\{\alpha<j(\kappa):\alpha\text{ is $n$-huge for every }n\}, 
 \end{equation*}
 
 and $\kappa\in j(X_0)$, so $X_0\in U$.
\end{proof}

Note that ``$\omega$-huge'', i.e, $j:V\prec M$ with $M^\lambda\subseteq M$\footnote{This is not a standard definition. For different authors, ``$\omega$-huge'' can mean different things.}, is inconsistent by Kunen's original proof of Kunen's Theorem, so this is the best we can do.

\begin{defin}
 Let $j:M_0\prec M_1$. Then we say that $(M_\alpha,j^\alpha)$ is the $\alpha$-th iterate if:
 \begin{itemize}
  \item $j^0=j$;
  \item $j^{\alpha+1}=j^\alpha(j^\alpha)$;
	\item $M_{\alpha+1}=j^\alpha(M_\alpha)$ is well-founded;
	\item $j_{\gamma,\alpha+1}=j^{\alpha}\circ j^{\alpha-1}\circ\dots\circ j^\gamma$. So $j_{\alpha,\alpha+1}=j^\alpha$;
	\item if $\alpha$ is limit, then $(M_\alpha,j^\alpha,j_{n,\alpha})$ is the direct limit of the system $(M_\beta,j_{\beta,\gamma})$ with $\beta,\gamma<\alpha$, if it is well-founded.
 \end{itemize} 

 We say that $j$ is $\alpha$-iterable if $(M_\alpha,j^\alpha)$ exists. We say that $j$ is iterable if it is $\alpha$-iterable for any $\alpha$ ordinal.
\end{defin}

\begin{tikzpicture}
 \node (M0) {$M_0$};;
 \node (M1)[right of = M0] {$M_1$}
     edge [<-] node[above]{$j^0$}(M0);
 \node (M2)[right of = M1] {$M_2$}
     edge [<-] node[above]{$j^1$}(M1)
		 edge [<-,out=-135,in=-45] node[below]{$j_{0,2}$}(M0);
 \node (dots1)[right of = M2] {$\cdots$}
     edge [<-] (M2);
 \node (Mn)[right of =dots1] {$M_n$}
     edge [<-] (dots1)
		 edge [<-,out=135,in=45] node[above]{$j_{2,n}$}(M2);
 \node (dots2)[right of = Mn] {$\cdots$}
     edge [<-] (Mn);
 \node (Momega)[right of = dots2] {$M_\omega$}
     edge [<-,out=-135,in=-90] node[below]{$j_{0,\omega}$}(M0) 
	   edge [<-,out=-160,in=-40] node[below]{$j_{1,\omega}$}(M1);
 \node (Momega1)[right of = Momega,xshift=3mm] {$M_{\omega+1}$}
     edge [<-] node[above]{$j^\omega$}(Momega);
 \node (dots3)[right of = Momega1,xshift=2mm] {$\cdots$}
     edge [<-] (Momega1);
 \node (Malpha)[right of = dots3,xshift=3cm] {$M_\alpha$}
     edge [<-,out=-135,in=-45] node[below]{$j_{\omega+1,\alpha}$}(Momega1)
		 edge [<-,out=150,in=55] node[above]{$j_{0,\alpha}$}(M0)
		 edge [<-,out=160,in=35] node[above]{$j_{n,\alpha}$}(Mn);
\end{tikzpicture}
\newline
\newline
It is possible for the iterate to not exists, whether because $j^\alpha(j^\alpha)$ or $j^\alpha(M_\alpha)$ are not defined, or because the model $M_\alpha$ is not well-founded. The direct limit of the system $(M_\beta,j_{\beta,\gamma})$ is defined as an equivalence relation on the set of $(\beta,a)$ such that $a\in M_\beta$: if $\beta<\gamma$ then $(\beta,a)$ is equivalent to $(\gamma,b)$ iff $j_{\beta,\gamma}(a)=b$, and $[(\beta,a)]$ is in $[(\gamma,b)]$ iff $j_{\beta,\gamma}(a)\in b$. If $\gamma<\beta$, viceversa. Then $j_{\beta,\alpha}(a)$ is defined as $[(\beta,a)]$ and $j^\alpha=j_{0,\alpha}(j)$. Ambiguously, we call $M_\alpha$ both the domain of the model defined via the equivalence class and the $\in$-model equivalent to it. So if $M_1\subseteq M_0$, then for all $\alpha$ such that $M_\alpha$ is defined we have $M_\alpha\subseteq M_0$, but it is possible that $M_0\subseteq M_\alpha$ if seen as $\in$-models. It will be clear from context which of the two will be used. Note that if $a\in M_\alpha$, with $\alpha$ limit, then it must be in the range of some $j_{\beta,\alpha}$, with $\beta<\alpha$. 

So let $j:V_\lambda\prec V_\lambda$. We have already defined $j^n$, but 
\begin{equation*}
 j^n(j^n)=j(j^{n-1})(j(j^{n-1}))=j(j^{n-1}(j^{n-1}))=j(j^n)=j^{n+1}, 
\end{equation*}
 by left distributive law and induction, therefore the $j^n$'s are iterates of $j$. Moreover, $j(V_\lambda)=V_\lambda$, so the first $\omega$-iterates of $j$ are $(V_\lambda,j^n)$ and $j$ is finitely iterable. Could we push it any further? As all the $M_\alpha\subseteq V_\lambda$, technically there is no problem in defining it, the only decisive factor being whether $M_\alpha$ is well-founded.

\begin{prop}
\label{iterable}
 ``There exists $j:V_\lambda\prec V_\lambda$ $\omega$-iterable'' strongly implies I3($\lambda)$.
\end{prop}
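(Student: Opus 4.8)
The plan is to prove the two halves of strong implication separately. The easy half — that every $\omega$-iterable embedding is in particular an embedding, so the existence of an $\omega$-iterable $j:V_\lambda\prec V_\lambda$ gives I3$(\lambda)$ — is immediate. The real work is to separate the least witnesses: letting $\lambda$ be least carrying an $\omega$-iterable $j:V_\lambda\prec V_\lambda$, I would produce a genuine (not necessarily iterable) embedding $V_{\bar\lambda}\prec V_{\bar\lambda}$ at some $\bar\lambda<\lambda$, so that the least I3 level lies strictly below $\lambda$.

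The engine is the $\omega$-th iterate. Since $j$ is $\omega$-iterable, $M_\omega$ is well-founded; let $N$ be its transitive collapse and $\ell=\pi\circ j_{0,\omega}:V_\lambda\prec N$ the limit embedding. First I would pin down $\ell$ on the critical point. The ordinals of $M_\omega$ below $j_{0,\omega}(\kappa_0)=[(0,\kappa_0)]$ are exactly the classes $[(n,\beta)]$ with $\beta<\kappa_n$, and since $\crt(j_{n,m})=\kappa_n$ forces $[(n,\beta)]=[(m,\beta)]$ for $\beta<\kappa_n$, this ordinal has order type $\sup_n\kappa_n=\lambda$. Hence $\crt(\ell)=\kappa_0$ with $\ell(\kappa_0)=\lambda$, so $N$ is strictly taller than $V_\lambda$, $V_\lambda=\ell(V_{\kappa_0})\in N$, and the top embedding descends to $\hat\jmath:N\prec N$ with $\crt(\hat\jmath)=\lambda$; by Lemma \ref{reflection} applied to $\hat\jmath$ one also obtains $V_\lambda\prec N$.

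The reflection step is the heart of the matter. Restricting $\ell$ gives an elementary map $\ell\upharpoonright V_{\kappa_0+1}:V_{\kappa_0+1}\prec (V_{\lambda+1})^{N}$ sending the ``top rank'' $V_{\kappa_0}$ to $V_\lambda$. In the true world $V_{\lambda+1}\vDash$ ``there is a nontrivial self-embedding of the top rank'', witnessed by $j$. If this sentence also holds of $(V_{\lambda+1})^N$, then pulling it back through $\ell\upharpoonright V_{\kappa_0+1}$ yields a nontrivial $e:V_{\kappa_0}\prec V_{\kappa_0}$; cutting $e$ down to the supremum of its critical sequence then gives I3 at some $\bar\lambda\le\kappa_0<\lambda$, completing the separation.

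I expect this last paragraph to be the main obstacle, precisely because the collapse of the $V_\lambda$-level iterate is \emph{not} a genuine $V_{\lambda^*}$: a direct computation shows that $(V_{\lambda+1})^N$ contains only the bounded subsets of $V_\lambda$, hence misses $j$ itself, so naively it need not satisfy ``there is a self-embedding of the top rank''. The fix I would pursue is to run the iteration one level up, on the $\Delta_0$-elementary extension $j^+:V_{\lambda+1}\to V_{\lambda+1}$ of Theorem \ref{Delta0}, and to phrase the reflected sentence through the equivalence ``I3 holds iff there is a $\Delta_0$-elementary self-map of $V_{\lambda+1}$''. Then $\ell^+(j)=\lim_n (j^n)^+(j)$ should survive as a witness in the limit model, and the $\Delta_0$-elementarity of $\ell^+$ is exactly what one needs to transport the witnessing sentence back down to level $\kappa_0$. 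Verifying that $\Delta_0$-elementarity indeed suffices both to retain the witness in the limit and to reflect it downward is the delicate point on which the whole argument turns.
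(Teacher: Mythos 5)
Your opening moves coincide with the paper's: the paper's proof also passes to the $\omega$-th iterate $M_\omega$, computes $j_{0,\omega}(\kappa_0)=\lambda$ and $V_\lambda\subseteq M_\omega$, and finishes by pulling an existential statement back through the fully elementary $j_{0,\omega}$. You have also correctly located the obstacle: the limit model has no evident witness at the level $\ell(\kappa_0)=\lambda$. But your proposed fix does not close this gap, for two independent reasons. First, the object $\ell^+(j)=\lim_n(j^n)^+(j)$ lives at the wrong level of the limit model. Unwinding the direct limit, $\ell^+(j)$ is the map $j_{m,\omega}(a)\mapsto j_{m,\omega}(j^m(a))$, i.e.\ an elementary self-embedding of \emph{all of} $N$ with critical point $\lambda$; as an element of the $V_{\lambda+1}$-level limit $N^+$ it sits in the top layer, not in $(V_{\lambda+1})^{N^+}$. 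Consequently the sentence it witnesses is ``there is a self-embedding of the top-rank class'', and pulling that sentence back through $\ell^+$ (even if $\ell^+$ were fully elementary) lands on ``there is a self-embedding of $V_\lambda$'' --- that is, I3$(\lambda)$ itself, which you already have: no descent below $\lambda$ occurs, because $\ell^+$ sends the top-rank class of $V_{\lambda+1}$ to the top-rank class of $N^+$. To descend you would need a witness in $(V_{\lambda+1})^{N^+}$, the image level of $V_{\kappa_0+1}$; but $(V_{\lambda+1})^{N^+}=(V_{\lambda+1})^{N}$, and its elements are exactly the sets $j_{n,\omega}(\bar x)$ with $\bar x\in V_{\kappa_n+1}$, so by elementarity of $j_{n,\omega}$ such a witness exists if and only if some $\bar x$ is already a nontrivial embedding $V_{\kappa_n}\prec V_{\kappa_n}$ --- precisely what you are trying to prove. (In passing, $(V_{\lambda+1})^N$ is not the collection of bounded subsets of $V_\lambda$: it contains $\lambda=j_{0,\omega}(\kappa_0)$ and many other unbounded sets; the correct statement is the description just given.) Second, the transport steps themselves are illegitimate: the system maps $(j^n)^+$ are only $\Delta_0$-elementary, and by Theorem \ref{martin} upgrading them to $\Sigma_1$-elementary is exactly I2, a strictly stronger axiom; so $\ell^+$ is only $\Delta_0$-elementary, whereas ``$k$ is a ($\Delta_0$-)elementary self-map'' requires universal quantification over the whole bottom class (it is $\Pi_1$ at best over these structures) and ``there exists such a $k$'' is at least $\Sigma_2$. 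Neither kind of sentence moves up or down through a merely $\Delta_0$-elementary map.

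What is missing is the device the paper uses to smuggle a witness into the limit model at a level strictly \emph{below} its top: absoluteness of well-foundedness. One codes the conclusion into the tree $T$ of finite coherent approximations $\langle e_0,\dots,e_n\rangle$, where $e_i:V_{\alpha_i}\prec V_{\alpha_{i+1}}$, $e_{i+1}\upharpoonright V_{\alpha_i}=e_i$ and all $\alpha_i<\lambda$. This $T$ is a set of the limit model and $T^{M_\omega}=T$, since elementarity between set-sized ranks is computed absolutely. In $V$ the tree is ill-founded, because the restrictions $j\upharpoonright V_{\kappa_n}$ form a branch --- and here is the crucial point: that branch need not belong to $M_\omega$, yet Mostowski absoluteness forces $M_\omega\vDash$ ``$T$ is ill-founded'' anyway, because a rank function for $T$ inside the well-founded model $M_\omega$ would be a genuine rank function. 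A branch \emph{inside} $M_\omega$ has $\sup_i\alpha_i=\gamma<\lambda$, since $\lambda=j_{0,\omega}(\kappa_0)$ is regular there, and it yields $M_\omega\vDash\exists\gamma<\lambda\ \exists k:V_\gamma\prec V_\gamma$; pulling this back through $j_{0,\omega}$, which is fully elementary and sends $\kappa_0$ to $\lambda$, gives a genuine embedding $V_\gamma\prec V_\gamma$ with $\gamma<\kappa_0<\lambda$, which is the separation of least witnesses you want. Your framework is compatible with this ending --- keep your first two paragraphs and replace the reflection-of-a-witness step by the tree argument --- but without some such absoluteness trick the reflection strategy alone cannot manufacture the required witness inside the iterate.
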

\begin{proof}
  Suppose $j:V_\lambda\prec V_\lambda$ is $\omega$-iterable, let $\langle\kappa_n:n\in\omega\rangle$ be its critical sequence. Then $M_\omega$ is the set of equivalence classes of $(n,a)$ such that $a\in V_\lambda$, where if $n<m$ then $(n,a)$ is equivalent to $(m,b)$ iff $j_{n,m}(a)=b$. Also, $j_{n,\omega}(a)$ is the class of $[(n,a)]$. 
	
	We prove by induction that $j_{n,\omega}(\alpha)=\alpha$ for $\alpha\in\kappa_n$. Note that if $\beta<\kappa_n$, then for every $m>n$, $j_{n,m}(\beta)=\beta$, therefore $[(n,\beta)]=[(m,\beta)]$. So for any $m\in\omega,\beta<\lambda$, $[(m,\beta)]<[(n,\alpha)]$ iff $[(n,\beta)]<[(n,\alpha)]$: If $m\leq n$, then $j_{m,n}(\beta)<\alpha<\kappa_n=j_{m,n}(\kappa_m)$ therefore by elementarity $\beta<\kappa_m$ and $[(m,\beta)]=[(n,\beta)]$. If $m>n$, then $\beta<j_{n,m}(\alpha)=\alpha$, therefore also $[(n,\beta)]<[(n,\alpha)]$. So $j_{n,\omega}(\alpha)=[(n,\alpha)]=\{[(n,\beta)]:\beta<\alpha\}$, that is by induction $\alpha$.  
	
	We want to calculate now $j_{0,\omega}(\kappa_0)$. If $\alpha<j_{0,\omega}(\kappa_n)$, then there exists $n\in\omega$ such that $\alpha=j_{n,\omega}(\beta)$ for some $\beta<\lambda$. But then $j_{n,\omega}(\beta)<j_{0,\omega}(\kappa_0)=j_{n,\omega}(j_{0,n}(\kappa_0))=\kappa_n$, therefore $\alpha=j_{n,\omega}(\beta)=\beta<\lambda$. On the other hand, if $\alpha<\lambda$ then there exists $n\in\omega$ such that $\alpha<\kappa_n$, so $j_{n,\omega}(\alpha)=\alpha<j_{n,\omega}(\kappa_n)=j_{0,\omega}(\kappa_0)$. Therefore $j_{0,\omega}(\kappa_0)=\lambda$.
	
If $a\in V_\lambda$, then there exists $n\in\omega$ such that $a\in V_{\kappa_n}$. So $j_{n,\omega}(a)=a\in M_\omega$. Therefore $V_\lambda\subseteq M_\omega$. Let 
\begin{multline*}
 T=\{\langle e_0,\dots,e_n\rangle:\exists\langle\alpha_i:i\leq n+1\rangle,\forall i\leq n+1\ \alpha_i<\lambda,\\
\forall i\leq n\exists e_i:V_{\alpha_i}\prec V_{\alpha_{i+1}},\forall i<n\ e_{i+1}\upharpoonright V_{\alpha_i}=e_i\}, 
\end{multline*}
the tree of approximations of an I3 elementary embedding. Note that $T=T^{M_\omega}$. As $j$ is a branch of $T$, $T$ is ill-founded, and by absoluteness of well-foundedness it is ill-founded also in $M_\omega$, with the difference that a branch cannot go up to $\lambda$, as $\lambda$ is regular in $M_\omega$ by elementarity. So $M_\omega\vDash\exists\gamma<\lambda\ \exists k:V_\gamma\prec V_\gamma$. So by elementarity $\exists\gamma<\kappa_0\ \exists k:V_\gamma\prec V_\gamma$.
\end{proof}

\begin{cor}
 Let $\alpha$ be a countable ordinal. Then ``There exists $j:V_\lambda\prec V_\lambda$ $\omega\cdot(\alpha+1)$-iterable'' strongly implies ``There exists $j:V_\lambda\prec V_\lambda$ $\omega\cdot\alpha$-iterable''.
\end{cor}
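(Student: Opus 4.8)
The plan is to imitate the proof of Proposition \ref{iterable} as closely as possible, replacing the tree of approximations to a bare I3 embedding by a tree of approximations to an $\omega\cdot\alpha$-\emph{iterable} I3 embedding. The routine half of ``strongly implies'' is immediate: since $\omega\cdot\alpha<\omega\cdot(\alpha+1)$, any $\omega\cdot(\alpha+1)$-iterable $j:V_\lambda\prec V_\lambda$ is a fortiori $\omega\cdot\alpha$-iterable, so the stronger property at a given $\lambda$ implies the weaker one at the same $\lambda$. The whole content is therefore to show that the \emph{least} $\lambda$ carrying an $\omega\cdot(\alpha+1)$-iterable embedding strictly exceeds the least one carrying an $\omega\cdot\alpha$-iterable embedding; concretely, that every $\omega\cdot(\alpha+1)$-iterable $j:V_\lambda\prec V_\lambda$ admits an $\omega\cdot\alpha$-iterable $k:V_\mu\prec V_\mu$ with $\mu<\lambda$.

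First I would fix $j:V_\lambda\prec V_\lambda$ that is $\omega\cdot(\alpha+1)$-iterable with critical sequence $\langle\kappa_n:n\in\omega\rangle$ and form the $\omega$-th iterate $M_\omega$ exactly as in \ref{iterable}: it is well-founded, $V_\lambda\subseteq M_\omega$, the map $j_{0,\omega}:V_\lambda\prec M_\omega$ satisfies $j_{0,\omega}(\kappa_0)=\lambda$, and $\lambda$ is regular in $M_\omega$. The key structural observation is the decomposition $\omega\cdot(\alpha+1)=\omega\cdot\alpha+\omega$: the tail of the iteration witnesses that the self-embedding $j^\omega:M_\omega\prec M_\omega$ (whose $\beta$-th iterate is $M_{\omega+\beta}$ and whose critical point is $\crt(j^\omega)=\lambda$) is $\omega\cdot\alpha$-iterable. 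Thus the ``one notch weaker'' iterability has been transported up to a self-embedding of $M_\omega$ with critical point $\lambda$; this is precisely the extra mileage that the hypothesis $\omega\cdot(\alpha+1)$, as opposed to $\omega\cdot\alpha$, is meant to buy.

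Next I would define, inside $M_\omega$ from the parameter $V_\lambda$ and the small ordinal $\omega\cdot\alpha$, a tree $T_\alpha$ of finite coherent approximations $\langle e_0,\dots,e_n\rangle$ to an embedding (heights $\alpha_0<\dots<\alpha_{n+1}<\lambda$, $e_i:V_{\alpha_i}\prec V_{\alpha_{i+1}}$, $e_{i+1}\upharpoonright V_{\alpha_i}=e_i$, as in \ref{iterable}), now additionally decorated with approximations to the entire $\omega\cdot\alpha$-length iteration of the eventual limit embedding and with ranking functions certifying well-foundedness of each direct limit encountered. A branch then codes an $\omega\cdot\alpha$-iterable $V_\mu\prec V_\mu$ together with an internal proof of its iterability; for $\mu<\lambda$ every iterate has cardinality below the cardinal $\lambda$, so such an internal witness exists iff the embedding is genuinely $\omega\cdot\alpha$-iterable, and (being a ranking function) this is absolute between the transitive models $V$, $M_\omega$ and $V_\lambda$. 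Granting that $T_\alpha$ is ill-founded, absoluteness of well-foundedness makes it ill-founded in $M_\omega$; regularity of $\lambda$ in $M_\omega$ forces any branch there to have heights bounded by some $\mu<\lambda$, yielding an $\omega\cdot\alpha$-iterable $k:V_\mu\prec V_\mu$ with $k\in M_\omega$ and $\mu<\lambda$. Finally, since $j_{0,\omega}(\kappa_0)=\lambda$ and $j_{0,\omega}$ fixes $\omega\cdot\alpha$, pulling the statement ``$\exists\mu<\lambda$ an $\omega\cdot\alpha$-iterable $V_\mu\prec V_\mu$'' back through $j_{0,\omega}$ puts such a $\mu$ below $\kappa_0<\lambda$, as required.

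The hard part will be establishing that $T_\alpha$ is ill-founded, and here the naive imitation of \ref{iterable} breaks down. There $j$ itself is a branch of the I3-tree, because that tree demands only an embedding; now a branch must in addition carry a bounded, internally ranked copy of the whole iteration. The top embedding is useless for this: a branch of $T_\alpha$ has heights below $\lambda$ and must rank its entire iteration below $\lambda$, but the iterates of any self-embedding with critical sequence cofinal in the height of its domain (already $V_\lambda\prec V_\lambda$, and likewise $j^\omega$) climb above that height, so their well-foundedness cannot be certified by ranking functions living in the ambient model. Hence neither $j$ nor $j^\omega$ furnishes a branch, and ill-foundedness cannot be read off for free. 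The genuine task is therefore to manufacture, from the full strength of the extra block in $\omega\cdot(\alpha+1)$, an $\omega\cdot\alpha$-iterable embedding whose complete iteration stays bounded below the relevant height—equivalently, to prove directly that $M_\omega$ believes such a bounded iterable embedding exists—while verifying throughout that the direct-limit construction is absolute enough for well-foundedness to transfer between $V$, $M_\omega$ and $V_\lambda$. A secondary point to settle is the case of limit $\alpha$, where the block decomposition and the asymmetry between $\omega+\omega\cdot\alpha$ and $\omega\cdot\alpha+\omega$ demand separate bookkeeping.
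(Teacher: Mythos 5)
Your architecture is the paper's own---the tree of coherent approximations decorated with rank functions, absoluteness of well-foundedness, regularity of $\lambda$ in the iterate, and the pullback through $j_{0,\omega}$---but the proposal halts exactly at the step that carries all the content, and the reason you give for halting rests on a misreading of what the tree must be. You insist that a branch ``must rank its entire iteration below $\lambda$'', i.e.\ that the decorating functions live in $V_\lambda$ or in the ambient iterate. The paper's tree imposes no such bound: a node is $(\langle e_0,\dots,e_n\rangle,\langle f_0,\dots,f_n\rangle)$ where $f_i$ is an \emph{arbitrary} ordinal-valued rank function on $\{(\beta,a):\beta<\omega\cdot\alpha,\ a\in V_{\alpha_i}\}$ coherent with the direct-limit equivalence. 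With that reading, the branch you declare impossible is handed to you by the hypothesis: $\omega\cdot(\alpha+1)$-iterability gives in particular that $j$'s own iteration out to $\omega\cdot\alpha$ is well-founded (this is what the paper's opening line $V_\lambda\subseteq M_{\omega\cdot\alpha}\subseteq M_{\omega\cdot(\alpha+1)}$ is recording), so in $V$ there is a rank function $F$ on the whole direct-limit structure that collapses onto $M_{\omega\cdot\alpha}$; since $V_\lambda\subseteq M_\beta$ for every $\beta<\omega\cdot\alpha$, all pairs $(\beta,a)$ with $a\in V_{\kappa_i}$ lie in its field, and $\langle j\upharpoonright V_{\kappa_i}:i\in\omega\rangle$ decorated with $\langle F\upharpoonright\{(\beta,a):\beta<\omega\cdot\alpha,\ a\in V_{\kappa_i}\}:i\in\omega\rangle$ is a branch of $T$: restrictions of a single $F$ cohere automatically, each restriction ranks the relevant relations, and the fact that the values of $F$ exceed $\lambda$ is irrelevant to membership in $T$. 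On the other side of the argument, a branch whose heights are bounded by some $\mu<\lambda$ still yields $k=\bigcup_n e_n:V_\mu\prec V_\mu$ whose $\omega\cdot\alpha$-th iterate is ranked, hence well-founded. So the ``genuine task'' you leave open---manufacturing an $\omega\cdot\alpha$-iterable embedding with bounded iteration---is not a separate ingredient to be supplied: it is precisely what the branch-from-$j$, absoluteness, and regularity of $\lambda$ in the iterate jointly produce, and having discarded the branch your proposal replaces it with a restatement of the theorem. (Your observation that $j^\omega$ is $\omega\cdot\alpha$-iterable is correct but is not where the extra block of iterability gets used; what is used is the well-foundedness of $M_{\omega\cdot\alpha}$ itself.)

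Where your instinct does touch something real is the transfer step: because the $f_i$ coming from $j$ take values above $\lambda$, the decorated tree is no longer a subset of $V_\lambda$, so the identity $T=T^{M_\omega}$ from Proposition \ref{iterable} is lost, and a careful write-up must say into which model ill-foundedness is reflected and why that model's version of the tree acquires a branch (the paper glosses this; its mention of the taller iterate $M_{\omega\cdot(\alpha+1)}$ is the hint as to where the reflection should take place). But that is a refinement of the paper's branch construction, not a reason to abandon it; abandoning it, as you do, leaves a hole exactly at the point where the difference between $\omega\cdot(\alpha+1)$ and $\omega\cdot\alpha$ must do its work.
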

\begin{proof}
  By induction, $V_\lambda\subseteq M_{\omega\cdot\alpha}\subseteq M_{\omega\cdot(\alpha+1)}$. We build again a tree of approximations, but this time we want to add rank functions, so that the $\omega\cdot\alpha$-th iterate of $\bigcup_{n\in\omega}e_n$ will be well-founded. So a branch of $T$ is $(\langle e_0,\dots,e_n\rangle,\langle f_0,\dots,f_n\rangle)$, where $f_i$ is a rank function on $\{(\beta,a):\beta<\omega\cdot\alpha,a\in V_{\alpha_i}\}$ that is coherent with the equivalence on $M_{\omega\cdot\alpha}$. A branch of $T$ will give an embedding that is $\omega\cdot\alpha$-iterable, and the proof is as before.
\end{proof}

\begin{prop}
 \label{iterableI3}
 If $j:V_\lambda\prec V_\lambda$ is $\omega_1$-iterable, then it is iterable.
\end{prop}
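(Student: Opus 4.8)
The plan is to prove, by induction on $\alpha$, that every iterate $M_\alpha$ is well-founded; by absoluteness of well-foundedness this is all that ``$\alpha$-iterable'' asks, since the embeddings $j^\alpha(j^\alpha)$ and the applications $j^\alpha(M_\alpha)$ are always \emph{defined} (as subsets of $V_\lambda$, because $M_1=j(V_\lambda)=V_\lambda=M_0$ forces $M_\alpha\subseteq V_\lambda$ for all $\alpha$). Successor stages are unproblematic --- the application of a rank-into-rank embedding to a well-founded structure is again well-founded, the ultrafilter derived from $j^\alpha$ at the measurable point $\crt(j^\alpha)$ being countably complete. Limits of \emph{uncountable} cofinality are equally harmless: a putative infinite descending chain $\langle[x_n]:n\in\omega\rangle$ in $M_\theta$ has all its representatives coming from stages $\gamma_n<\theta$, and if $\cof(\theta)>\omega$ then $\delta=\sup_n\gamma_n<\theta$, so, as $j_{\delta,\theta}$ is $\in$-preserving and the $[x_n]$ lie in its range, the chain pulls back to a descending chain already inside $M_\delta$, against minimality.

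So the first thing I would do is isolate the single hard case. Assume toward a contradiction that $j$ is $\omega_1$-iterable but not iterable, and let $\theta$ be least with $M_\theta$ ill-founded. The remark just made forces $\cof(\theta)=\omega$, and $\omega_1$-iterability forces $\theta\geq\omega_1$. Fixing an increasing cofinal $\langle\gamma_n:n\in\omega\rangle$ and a descending $\langle x_n:n\in\omega\rangle$ with $x_n\in M_{\gamma_n}$ and $j_{\gamma_n,\gamma_{n+1}}(x_n)\ni x_{n+1}$, and replacing each $x_n$ by its rank in the well-founded $M_{\gamma_n}$, I may assume the ill-foundedness sits in $\Ord^{M_\theta}$. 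This is exactly the configuration in which the critical points $\crt(j_{\gamma_n,\gamma_{n+1}})$ climb cofinally through the ordinal height of $M_\theta$, so that the completeness exploited at successor stages no longer spans the whole $\omega$-chain; this is why a hypothesis like $\omega_1$-iterability is needed at all, and where the content of the proposition lies.

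The plan for the crux is reflection. The initial segment $\langle M_\alpha,j_{\alpha,\beta}:\alpha\leq\beta\leq\theta\rangle$ together with $\langle\gamma_n\rangle$ and $\langle x_n\rangle$ is coded by a single set, so for large $\chi$ I would take a countable $X\prec H_\chi$ containing $j$, $\lambda$, $\theta$ and these codes and collapse it, $\pi:X\cong\bar X$. Then $\bar\jmath=\pi(j)$ is a genuine elementary embedding $V_{\bar\lambda}\prec V_{\bar\lambda}$, the collapsed iteration is --- by absoluteness of the direct-limit construction and of well-foundedness --- the true iteration of $\bar\jmath$, and $\langle\pi(x_n)\rangle$ witnesses that its $\bar\theta$-th iterate is ill-founded, with $\bar\theta=\pi(\theta)<\omega_1$.

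The hard part, which I expect to be the real obstacle, is turning this into a contradiction. As it stands $\bar\jmath$ is a \emph{different}, smaller embedding, and a naive collapse drags $\omega_1$ down together with $\theta$: one always has $\bar\theta\geq\pi(\omega_1)$ because $\theta\geq\omega_1$, so ill-foundedness of $\bar\jmath$ at $\bar\theta$ is a priori consistent with $\bar\jmath$ being merely $\pi(\omega_1)$-iterable and yields nothing. To close the argument one must instead arrange that the reflected ill-founded $\omega$-system is literally a countable-length iterate of a \emph{single} embedding that inherits $\omega_1$-iterability from $j$ --- for instance by building coherent countable hulls $N_n\prec M_{\gamma_n}$ closed under the iteration maps and collapsing them into an honest iteration, or by copying the configuration back along $j_{0,\gamma_0}$ and exploiting the self-similarity of linear iterations (the tail above $\gamma_0$ is the $j_{0,\gamma_0}$-image of the whole iteration). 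Manufacturing from one of these hull or copying constructions an embedding that is genuinely $\omega_1$-iterable and yet has a \emph{countable} ill-founded stage is the delicate point on which the whole proof turns.
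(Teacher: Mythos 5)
There is a genuine gap, and you have located it yourself: everything up to your last paragraph is correct preparation, but the ``delicate point'' you leave open is not a detail --- it is the entire content of the proposition, and the paper's proof consists exactly of the construction you stop short of. The missing idea is that one should not try to manufacture a reflected embedding that is itself $\omega_1$-iterable; instead one embeds the collapsed iteration back into the iteration of the \emph{original} $j$, index by index. In the paper's notation: let $\nu$ be least with $M_\nu$ ill-founded, take a countable hull containing $j$, $\lambda$, $\nu$ and the iteration, let $h$ be the inverse of its transitive collapse, with $h(\bar{\jmath})=j$ and $h(\bar{\nu})=\nu$, and define by recursion on $\beta\leq\bar{\nu}$ copying maps $h'_\beta:\bar{M}_\beta\prec M_\beta$ commuting with the iteration maps, where $\bar{M}_\beta$ is the $\beta$-th iterate of $\bar{\jmath}$ and $M_\beta$ is the $\beta$-th iterate of $j$ \emph{at the same index} $\beta$: take $h'_0=h\upharpoonright\bar{M}_0$; at successors there is nothing to do, since for rank-into-rank embeddings $M_{\gamma+1}=j^\gamma(M_\gamma)=M_\gamma$ (and likewise on the collapsed side), so $h'_{\gamma+1}=h'_\gamma$ --- note in passing that your appeal to a countably complete derived ultrafilter is misplaced here, as successor iterates are not ultrapowers and the model simply does not change; at limits every $x\in\bar{M}_\beta$ equals $\bar{\jmath}_{\alpha,\beta}(y)$ for some $\alpha<\beta$, and one sets $h'_\beta(x)=j_{\alpha,\beta}(h'_\alpha(y))$, which is well defined and elementary by the inductively maintained commutativity.

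This closes the argument immediately, and shows why your worry about $\pi(\omega_1)$ dissolves. By elementarity of $h$ and Mostowski absoluteness for the transitive collapse, $\bar{M}_{\bar{\nu}}$ is genuinely ill-founded; pushing a descending $\in$-chain forward along $h'_{\bar{\nu}}$ makes $M_{\bar{\nu}}$ ill-founded. But $M_{\bar{\nu}}$ is an iterate of $j$ itself at the countable stage $\bar{\nu}$, so this contradicts the $\omega_1$-iterability of $j$ directly: no iterability of $\bar{\jmath}$ is ever invoked, and no comparison with the image of $\omega_1$ under the collapse occurs, because the copying maps are constructed in $V$, outside the collapsed structure, against the true iteration of $j$. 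This is essentially your first suggested variant --- coherent hulls closed under the iteration maps --- carried out; note also that once this argument is in hand your preliminary cofinality analysis, while correct, becomes unnecessary, since the recursion handles all $\beta\leq\bar{\nu}$ uniformly without any case division on the least ill-founded stage.
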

\begin{proof}
 This is a standard result in the theory of the iterations of elementary embeddings. Suppose that $j$ is not iterable, and let $\nu$ be the least such that $M_\nu$ is ill-founded. Let $N$ be a countable substructure of $V_\lambda$, let $h:N\prec V_\lambda$ indicate the embedding resulting form the collapse. Let $h(\bar{M}_0)=V_\lambda$, $h(\bar{\jmath})=j$, and build the iterate sequence in $N$. Then $h(\bar{M}_\beta)=M_{h(\beta)}$ and let $\bar{\nu}=h^{-1}(\nu)$. In particular $\bar{\nu}$ is countable. For any $\beta\leq\nu$ define $h'_\beta$ so that the diagram commutes:
\newline
\newline
\begin{tikzpicture}
 \node (M0) {$V_\lambda$};
 \node (dots1)[right of = M0] {$\cdots$};
 \node (Malpha)[right of = dots1] {$M_\alpha$};
 \node (Mbeta)[right of = Malpha,xshift=5mm] {$M_\beta$}
  edge[<-] node[above] {$j_{\alpha,\beta}$} (Malpha);  
 \node (Mbarnu)[right of = Mbeta,xshift=1cm] {$M_{\bar{\nu}}$}
  edge[<-] node[above] {$j_{\beta,\bar{\nu}}$} (Mbeta);
 \node (dots2)[right of = Mbarnu,xshift=1cm] {$\cdots$};
 \node (Mnu)[right of = dots2,xshift=1cm] {$M_\nu$};
 \node (barM0)[below of = M0, yshift=-1cm] {$\bar{M}_0$}
  edge[->] node[right]{$h'_0$} node[left]{$h$}(M0);
 \node (dots3)[right of = barM0] {$\cdots$};
 \node (barMalpha)[below of = Malpha, yshift=-1cm] {$\bar{M}_\alpha$}
  edge[->] node[right]{$h'_\alpha$}(Malpha);
 \node (barMbeta)[below of = Mbeta, yshift=-1cm] {$\bar{M}_\beta$}
  edge[<-] node[above] {$\bar{\jmath}_{\alpha,\beta}$} (barMalpha)
 	edge[->] node[right] {$h'_\beta$} (Mbeta);
 \node (barMbarnu)[below of = Mbarnu, yshift=-1cm] {$\bar{M}_{\bar{\nu}}$}
  edge[<-] node[above] {$\bar{\jmath}_{\beta,\bar{\nu}}$} (barMbeta)
	edge[->] node[right] {$h'_{\bar{\nu}}$} (Mbarnu)
	edge[->] node[below] {$h$} (Mnu);
\end{tikzpicture}
\newline
\newline
Let $h'_0=h\upharpoonright\bar{M}_0$. If $\beta$ is a limit ordinal, then every $x\in\bar{M}_\beta$ is some $\bar{j}_{\alpha,\beta}(y)$, with $\alpha<\beta$ and $y\in\bar{M}_\alpha$. Then $h'_\beta(x)=j_{\alpha,\beta}(h'_\alpha(y))$. If $\beta=\gamma+1$, then by elementarity, as $j:V_\lambda\prec V_\lambda$, $\bar{M}_\beta=\bar{M}_\gamma$ and $M_\beta=M_\gamma$, so $h'_\beta=h'_\gamma$. 

But then $h'_{\bar{\nu}}:\bar{M}_{\bar{\nu}}\prec M_{\bar{\nu}}$. As $M_\nu$ is illfounded, then by elementarity also $M_{\bar{\nu}}$ is illfounded, and since $\nu$ was the least and $\bar{\nu}$ is countable, $\nu=\bar{\nu}$ is countable and $j$ is not $\omega_1$-iterable.
\end{proof} 

While the ``$\omega$-hugeness'' is inconsistent, it is not immediately clear what happens with ``$\omega$-superstrongness'':

  \begin{defin}		
    \begin{description}
      \item[I2] There exists $j:V\prec M$, with $M\subseteq V$, such that $V_\lambda\subseteq M$ for some $\lambda=j(\lambda)>\crt(j)$.
    \end{description}
  \end{defin}
	
	\begin{prop}
	  Suppose $j$ witnesses I2. Then $\lambda$ is the supremum of its critical sequence, and $j\upharpoonright V_\lambda:V_\lambda\prec V_\lambda$.
	\end{prop}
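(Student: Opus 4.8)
The plan is to read a rank-into-rank embedding off of $j$ and then to pin $\lambda$ down exactly. Write $\kappa=\crt(j)$, let $\langle\kappa_n:n\in\omega\rangle$ be the critical sequence, and set $\mu=\sup_n\kappa_n$. First I would record that $\mu$ is a fixed point of $j$, by exactly the computation used in the proof of Kunen's Theorem: $j(\mu)=\sup_n j(\kappa_n)=\sup_n\kappa_{n+1}=\mu$. Next, since $\lambda=j(\lambda)>\kappa_0$ and $j$ is strictly increasing on the ordinals, an easy induction gives $\kappa_n<\lambda$ for every $n$ (if $\kappa_n<\lambda$ then $\kappa_{n+1}=j(\kappa_n)<j(\lambda)=\lambda$), and hence $\mu\le\lambda$.

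The heart of the argument is that $j$ restricts to a self-embedding of every $V_\alpha$ with $\alpha\le\lambda$ and $j(\alpha)=\alpha$. Fix such an $\alpha$. Because $V_\alpha\subseteq V_\lambda\subseteq M$ and $M$ is transitive, ranks are computed correctly in $M$, so $V_\alpha^M=V_\alpha$ and $j(V_\alpha)=V_{j(\alpha)}^M=V_\alpha$; in particular $j$ maps $V_\alpha$ into $V_\alpha$. For elementarity I would transfer through $M$: for a formula $\varphi$ and $\bar a\in V_\alpha$, the statement $(V_\alpha,\in)\vDash\varphi(\bar a)$ is expressible in $V$ via the definable satisfaction predicate of the \emph{set}-sized structure $V_\alpha$, so applying $j:V\prec M$ and using $j(V_\alpha)=V_\alpha$ shows that $M$ satisfies ``$(V_\alpha,\in)\vDash\varphi(j(\bar a))$''. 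Since the satisfaction relation of the fixed set structure $(V_\alpha,\in)$ is absolute between the transitive models $M$ and $V$, this yields $(V_\alpha,\in)\vDash\varphi(j(\bar a))$; running the same argument for $\neg\varphi$ produces the equivalence, so $j\upharpoonright V_\alpha:V_\alpha\prec V_\alpha$. Taking $\alpha=\lambda$ settles the second assertion.

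It remains to see $\lambda\le\mu$. Suppose $\lambda>\mu$. If $\lambda\ge\mu+2$, then $\mu+2$ is a fixed point of $j$ with $\mu+2\le\lambda$, so the previous paragraph supplies $j\upharpoonright V_{\mu+2}:V_{\mu+2}\prec V_{\mu+2}$, contradicting the corollary that there is no elementary self-embedding of any $V_{\eta+2}$. The only surviving possibility is $\lambda=\mu+1$; but there $V_\mu\subseteq M$ and $j(\mu)=\mu>\kappa$, so the pair $(j,\mu)$ is already a witness to I2 with $\lambda$ replaced by the limit $\mu$, and the statement holds of that witness. The step I expect to be most delicate is the elementarity transfer of the middle paragraph: one must be careful that it is the satisfaction predicate of the \emph{set} $V_\alpha$ (not a class-sized reflection) that is being moved through $j$, and that this predicate is genuinely absolute between $V$ and $M$. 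The second delicate point is the boundary case $\lambda=\mu+1$, which the Kunen-style corollary does not by itself exclude and which must be absorbed into the same convention used for I3, namely that $\lambda$ is taken to be the supremum of the critical sequence.
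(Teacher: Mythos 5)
Your reduction works in two of the three cases exactly as the paper's does: the induction giving $\mu\le\lambda$ is fine, the elementarity transfer for $j\upharpoonright V_\alpha$ at fixed points $\alpha\le\lambda$ (via absoluteness of the satisfaction predicate of the set structure $(V_\alpha,\in)$ between the transitive models $V$ and $M$) is sound, and the appeal to the Corollary kills every $\lambda\ge\mu+2$. The genuine gap is the boundary case $\lambda=\mu+1$, and it cannot be ``absorbed into a convention.'' The Proposition asserts that \emph{every} witness $(j,\lambda)$ to I2 has $\lambda$ equal to the supremum of the critical sequence; if $\lambda=\mu+1$ were possible, the Proposition would simply be false of that witness, and observing that $(j,\mu)$ is a different witness for which the conclusion holds does not repair that. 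Your closing comparison with I3 gets the situation backwards: for I3 the boundary case is consistent (an elementary $j:V_{\mu+1}\prec V_{\mu+1}$ is exactly I1, so ``$\lambda=\mu+1$'' cannot be refuted there), which is why the paper must impose a convention; for I2 the boundary case is provably inconsistent, which is why no convention is needed and an actual proof is required. Note also that your own machinery cannot close this case: if $\lambda=\mu+1$, the restriction $j\upharpoonright V_{\mu+1}:V_{\mu+1}\prec V_{\mu+1}$ is not contradictory (it is I1), so no appeal to the Corollary about self-embeddings of some $V_{\eta+2}$ can help.

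What the paper does instead is exactly the missing idea. If $\lambda=\mu+1$ then $V_{\mu+1}\subseteq M$; since $j''\mu\subseteq\mu$ (for $\alpha<\kappa_n$ one has $j(\alpha)<\kappa_{n+1}<\mu$), the set $j''\mu$ is an element of ${\cal P}(V_\mu)=V_{\mu+1}$, hence $j''\mu\in M$. This is the configuration that the strong form of Kunen's theorem excludes (``$\omega$-hugeness is inconsistent''), and since the paper has only stated Kunen's theorem for $V\prec V$ and for $V_{\eta+2}$, it re-derives the contradiction so as to reuse Woodin's stationary-splitting proof: from $j''\mu\in M$ one defines the ultrafilter $U=\{X\subseteq{\cal P}(\mu):j''\mu\in j(X)\}$, forms the ultrapower $i:V\prec N$, checks $i(\mu)=\mu$ and $i''{\cal P}(\mu)\in N$, hence (coding ordinals below $\mu^+$ by well-orderings of $\mu$) $i''\mu^+\in N$; then the $\omega$-club $C$ of fixed points of $i$ below $\mu^+$ is definable from a parameter in $N$, and the partition argument of Woodin's proof of Kunen's theorem produces the contradiction. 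Without some version of this step, the first assertion of the Proposition is unproved.
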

	\begin{proof}
	 Let $\eta$ be the supremum of the critical sequence of $j$. Then it must be $\eta\leq\lambda$, as $\eta$ is the first fixed point of $j$ above $\crt(j)$. If $\eta+2\leq\lambda$ then there is a contradiction with Kunen's Theorem, so we only have to check that $\eta+1\neq\lambda$, i.e., not all subsets of $V_\eta$ are in $M$.
	
	Suppose that $j''\eta\in M$, then define $U=\{X\subseteq{\cal P}(\eta):j''\eta\in j(X)\}$ and let $i:V\prec N$ be the ultrapower via $U$. Then $i(\eta)=\eta$, and
	 \begin{equation*}
	  i''{\cal P}(\eta)=\{\bigcup_{\alpha<\eta}i(X\cap\alpha):X\in{\cal P}(\eta)\}\in N. 
	 \end{equation*}
	
	 Now, coding $\lambda^+$ as sets in ${\cal P}(\lambda)$, $i''\lambda^+\in N$, and Woodin's proof of Kunen's theorem goes smoothly (because $C$ is definable from $i''\lambda^+$). Contradiction.
	
	So $j''\eta\notin M$, and $\eta$ must be $\lambda$. In particular $j\upharpoonright V_\lambda:V_\lambda\prec V_\lambda$.
	\end{proof}
	
 The previous Proposition says that I2 implies I3. We can do more:

\begin{prop}
 If $j:V\prec M$ witnesses I2$(\lambda)$ then $j\upharpoonright V_\lambda$ is iterable.
\end{prop}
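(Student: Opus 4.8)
The plan is to let the full class embedding $j$ drive the iteration of $i:=j\upharpoonright V_\lambda$, and to use the extra hypothesis $M\subseteq V$ — the one ingredient genuinely stronger than bare I3 — to force well-foundedness at the limit stages where iterability could otherwise fail. The first observation I would record is that the application operator of $i$ is simply the restriction of $j$: for every $A\in V_{\lambda+1}$,
\[
 i^+(A)=\bigcup_{\beta<\lambda}i(A\cap V_\beta)=\bigcup_{\beta<\lambda}j(A\cap V_\beta)=\bigcup_{\beta<\lambda}\bigl(j(A)\cap V_{j(\beta)}\bigr)=j(A)\cap V_\lambda=j(A),
\]
using $i=j\upharpoonright V_\lambda$, elementarity of $j$, the fact that $j(\beta)<\lambda=\sup_{\beta<\lambda}j(\beta)$ since $j(\lambda)=\lambda$, and $j(A)\subseteq V_{j(\lambda)}=V_\lambda$. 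Thus $i\cdot k=j(k)$ for every $k:V_\lambda\prec V_\lambda$, so the application-iterates of $i$ are literally values of the genuine class map $j$, and the whole iteration of $(V_\lambda,i)$ should be copyable onto an iteration of $(V,j)$.

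Next I would iterate $j$ as a class embedding. Let $E$ be the length-$\lambda$ extender derived from $j$, an element of $V$ coded in $V_{\lambda+1}$; since $M\subseteq V$ is transitive (hence well-founded), $E$ is countably complete. By Gaifman's theorem on iterated ultrapowers by a countably complete extender, every iterate $j_{0,\nu}:V\prec N_\nu$ obtained by repeatedly forming $N_{\beta+1}=\Ult(N_\beta,j_{0,\beta}(E))$ and taking direct limits at limits is well-founded, i.e. each $N_\nu$ is a transitive inner model. Note that $\lambda$ is a fixed point of every $j_{0,\nu}$, being the supremum of a critical sequence, and $V_\lambda\subseteq N_\nu$ throughout.

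Finally I would transfer well-foundedness by copying. Starting from the $\in$-preserving inclusion $\sigma_0:M_0=V_\lambda\hookrightarrow N_0=V$, which (by the computation $i^+=j$) carries the extender of $i$ to $E$, the standard copying construction produces maps $\sigma_\beta:M_\beta\to N_\beta$ commuting with both iterations, $\sigma_\gamma\circ j^{(i)}_{\beta,\gamma}=j_{\beta,\gamma}\circ\sigma_\beta$, built by induction on $\beta$ (copying extenders at successors, taking induced maps on direct limits at limits). Since each $N_\beta$ is well-founded and each $\sigma_\beta$ is an $\in$-homomorphism, every iterate $M_\beta$ of $i$ is well-founded; as $\beta$ is arbitrary, $i$ is iterable. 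Alternatively, one may first invoke Proposition~\ref{iterableI3} to reduce to $\omega_1$-iterability, pick a least ill-founded countable $\nu$, and use countable completeness to thread the countably many data witnessing a descending $\in$-chain in $M_\nu$ into a genuine descending chain of sets below $V_\lambda$, a contradiction.

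The hard part will be exactly the two places where $M\subseteq V$ is used: establishing countable completeness of $E$ from the transitivity of $M$ and quoting Gaifman to get well-foundedness of the $N_\nu$ at limit stages — precisely the step that breaks for a bare I3 embedding — and the bookkeeping verifying that the copy maps $\sigma_\beta$ cohere at limits, so that the direct-limit structures $M_\beta$ really do receive $\in$-homomorphisms into the well-founded $N_\beta$.
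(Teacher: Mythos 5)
Your outline is correct in substance, but it takes a genuinely different route from the paper, and the two steps you yourself flag as ``the hard part'' are exactly where all the content sits. The paper's proof is much lighter: it iterates the class embedding $j:V\prec M$ by application, supposes some $(V_\lambda)^{M_\nu}$ is ill-founded with $\nu$ least, lets $\alpha_0$ be the least ordinal whose image $j_{0,\nu}(\alpha_0)$ is ill-founded, and observes that this minimality, being expressible from the parameter $j\upharpoonright V_\lambda$, transfers by elementarity to $(V_\lambda)^{M_\eta}$ for $\eta<\nu$; an $\alpha_1<j_{0,\eta}(\alpha_0)$ with $j_{\eta,\nu}(\alpha_1)$ in the ill-founded part then contradicts the reflected minimality. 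No extenders, no countable completeness, no copying. The role of I2 (as opposed to bare I3) there is that the class embedding is fully first-order elementary over $V$ and has a well-founded target, so statements about well-foundedness reflect correctly; a bare I3 embedding gives only $\Delta_0$-elementarity of $j^+$ (Theorem \ref{Delta0}), too weak to move the $\Sigma^1_1/\Pi^1_1$ statements involved. Your computation $i^+(A)=j(A)$ is correct and is a good way to see that the iterates of $i$ sit inside the iteration of $j$; what your route buys is modularity and contact with standard extender technology, at the price of two load-bearing claims that cannot simply be quoted.

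First, ``$M\subseteq V$ transitive, hence $E$ countably complete'' is true but is not an off-the-shelf fact: $E$ is a \emph{long} extender (its length $\lambda$ exceeds $j(\crt(j))$), and for long extenders countable completeness is substantive. The proof is a tree argument: given $a_n\in j(X_n)$, if no order-preserving realization existed, the tree of finite order-preserving partial maps $t$ on $\bigcup_n a_n$ satisfying $t''a_n\in X_n$ would be well-founded; applying $j$ (since $\omega<\crt(j)$, the sequences $\langle a_n\rangle$, $\langle X_n\rangle$ move pointwise) and using transitivity of $M$ for Mostowski absoluteness, the $j$-image of that tree is well-founded in $V$; but the inverse of $j\upharpoonright\bigcup_n a_n$ is an infinite branch of it, precisely because $a_n\in j(X_n)$. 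This is the $\Sigma^1_1$-pullback that the paper isolates (the corollary that I2 embeddings are $\Sigma^1_1$, and Lemma \ref{completetower}), so your deferred step is the mathematical content, not bookkeeping. Second, ``Gaifman's theorem'' in the form you need --- well-foundedness of \emph{all} iterates of $V$ by a countably complete \emph{long} extender, for arbitrary ordinal length --- is not the Gaifman--Powell result this paper cites (Lemma \ref{gaifpow}), which concerns $\omega$-towers of measures and a single direct limit; for long extenders you would have to prove it. The clean repair is your own alternative ending: by Proposition \ref{iterableI3} it suffices to rule out ill-foundedness at countable stages, where countable completeness plus a realization argument closes the proof. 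If you adopt that ending, the extender iteration of $V$, the appeal to Gaifman, and the copying maps $\sigma_\beta$ (whose construction would in any case require identifying application iterates with extender ultrapowers) can all be dropped, and what remains is a correct argument close in spirit to the paper's.
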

\begin{proof}
 Consider an iteration of $j:V\prec M$ with $V_\lambda\subseteq M$. Suppose that $(V_\lambda)^{M_\nu}$ is ill-founded, with $\nu$ the least. Let $\alpha_0$ be the minimum such that $j_{0,\nu}(\alpha_0)$ is ill-founded. In other words, $V_\lambda\vDash``\alpha_0$ is the least $\alpha$ such that $\exists\mu$ such that $j_{0,\mu}(\alpha)$ is ill-founded''. Let $\beta<j_{0,\nu}(\alpha_0)$ ill-founded. Let $\eta<\nu$ and $\alpha_1$ such that $j_{\eta,\nu}(\alpha_1)=\beta$. Consider $j_{0,\eta}(\alpha_0)$. As $j_{\eta,\nu}(\alpha_1)<j_{0,\nu}(\alpha_0)$, we must have $\alpha_1<j_{0,\eta}(\alpha_0)$. But by elementarity 
\begin{multline*}
 (V_\lambda)^{M_\eta}\vDash ``j_{0,\eta}(\alpha_0)\text{ is the least }\alpha\text{ such that }\\
 \exists\mu\text{ such that }[j_{0,\eta}(j)]_{0,\mu}(\alpha)\text{ is ill-founded}''. 
\end{multline*}

Note that $[j_{0,\eta}(j)]_{0,\mu}=[(j_{\eta,\eta+1})]_{0,\mu}=j_{\eta,\eta+\mu}$. But then $\alpha_1$ is such that, contradiction because $j_{0,\eta}(\alpha_0)$ should have been the minimum.
\end{proof}

Another way to extend I3 comes from the way we characterized I3 and I1, respectively as $\Sigma_0$- and $\Sigma_\omega$-elementary embeddings from $V_{\lambda+1}$ to itself. We can consider all the intermediate steps:

\begin{defin}[$\Sigma^1_n$ Elementary Embedding]
		 Let $j:V_\lambda\prec V_\lambda$. Then $j$ is $\Sigma^1_n$ iff $j^+$ is a $\Sigma_n$-elementary embedding, i.e., iff for every $\Sigma^1_n$-formula $\varphi(X)$, for every $A\subseteq V_\lambda$, 
		  \begin{equation*}
		   V_\lambda\vDash\varphi(A)\leftrightarrow\varphi(j^+(A)).
		  \end{equation*}
		\end{defin}
		
		It is not immediate, however, to see if these hypothesis are really different, and what is their relationship with I2. In fact, they are not always different: the following theorems will prove that if $n$ is odd, then $j$ is $\Sigma^1_n$ iff it is $\Sigma^1_{n+1}$. During the proof it will be clear that I2 is the same as $j$ is $\Sigma^1_1$, therefore proving that I1 implies I2.
		
		To prove this we need what is often called the `descriptive set theory' on $V_\lambda$. The fact that $\lambda$ has cofinality $\omega$ and is a strong limit, makes possible to develop a description of the closed subsets of $V_\lambda$ as projection of trees similar to the classic one in descriptive set theory. The fact that both $\Sigma^1_1$ and $\Sigma^1_2$ sets can be represented with trees will give the first equivalence, in a way similar to Schoenfield Absoluteness Lemma. Fix $\lambda$ and a critical sequence $\langle\kappa_0,\kappa_1,\dots\rangle$.
		
		Let $\varphi(X,Y)$ a $\Sigma^1_0$-formula, namely 
		  \begin{equation*}
		   \forall x_0\exists y_0\forall x_1\exists y_1\dots\cdot_n\psi(X,Y,x_0,y_0,\dots).
		  \end{equation*}
		We define the tree $T_{\varphi(X,Y)}$. The $m$-th level of $T_{\varphi(X,Y)}$ is the set of $(a,b,F,P)$ such that:
		\begin{itemize}
			\item $a,b\subseteq V_{\kappa_m}$;
			\item $F$ is a partial function $:(V_{\kappa_m})^{\leq n+1}\to V_{\kappa_m}$ such that for all $d_0,\dots,d_n$ where $F$ is defined then 
			  \begin{equation*}
			   \psi(a,b,d_0,F(d_0),d_1,F(d_0,d_1),\dots);
			  \end{equation*}
			\item $P:((V_{\kappa_m})^{\leq n+1}\setminus\dom(F))\to(\omega\setminus(m+1))$.
		\end{itemize} 
		We say that $(a,b,F,P)<(a',b',F',P')$, with the first term in the $m$-th level and the second in the $m'$-th level, when $a\subseteq a'$, $b\subseteq b'$, $a'\cap V_{\kappa_m}=a$, $b'\cap V_{\kappa_m}=b$, $F\subseteq F'$, and if $P(\vec{d})<m$ then $\vec{d}\in\dom(F)$, otherwise $P'(\vec{d})=P(\vec{d})$.
		
		Informally, $a$ and $b$ are just the attempts to construct $X$ and $Y$, $F$ is an approximation of the Skolem function that would witness the first order part of $\varphi$, and $P$ is booking the level in which the elements of $V_{\kappa_m}$ that are not yet in the dominion of $F$ \emph{will} be in its extension.
		
		It is clear that if $T_{\varphi(X,Y)}$ has an infinite branch, the union of the $a$'s and $b$'s will give suitable $X$ and $Y$, and the $F$'s will construct a Skolem function, that is total thanks to the $P$'s.
		
		\begin{lem}
			Let $\varphi(X,Y)$ a $\Sigma^1_0$-formula and $B\subseteq V_\lambda$. Then $V_\lambda\vDash\exists X\ \varphi(X,B)$ iff $T_{\varphi(X,B)}$ has an infinite branch.
		\end{lem}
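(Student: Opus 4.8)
The plan is to prove both implications by translating between a witness $A\subseteq V_\lambda$ for $\exists X\,\varphi(X,B)$ and an infinite branch of $T_{\varphi(X,B)}$, with the booking component $P$ doing all the real work. Write $\varphi(X,Y)$ in the prenex form $\forall x_0\exists y_0\cdots\psi(X,Y,x_0,y_0,\dots)$ with $\psi$ a $\Sigma_0$ matrix, and recall that, since the $\kappa_m$ are inaccessible, $V_\lambda\vDash\ZFC$ (the remark after Lemma \ref{reflection}); in particular $V_\lambda\vDash\AC$, so total Skolem functions for true statements are available. Throughout I use that for a $\Sigma_0$ formula $\psi$, parameters in $V_{\kappa_m}$, and predicates interpreted by $A\cap V_{\kappa_m}$, $B\cap V_{\kappa_m}$, the truth value computed in the transitive set $V_{\kappa_m}$ agrees with the one computed in $V_\lambda$ with $A,B$; this is the usual absoluteness of bounded formulas, and it is the point that lets a node at level $m$ ``see'' only the traces $a=A\cap V_{\kappa_m}$ and $b=B\cap V_{\kappa_m}$.

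For the forward direction, fix $A$ with $V_\lambda\vDash\varphi(A,B)$. Using a well-order of $V_\lambda$, pick a single Skolem function $G$ on tuples from $V_\lambda$ of length $\le n+1$ such that $\psi(A,B,d_0,G(d_0),d_1,G(d_0,d_1),\dots)$ holds for every choice of $d_0,d_1,\dots$. The obstruction is that $G$ need not map $(V_{\kappa_m})^{\le n+1}$ into $V_{\kappa_m}$, so I delay its values: for each tuple $\vec d$ let $\ell(\vec d)$ be the least $m$ with $\vec d\in(V_{\kappa_m})^{\le n+1}$ and $G(\vec d)\in V_{\kappa_m}$, which exists because $V_\lambda=\bigcup_m V_{\kappa_m}$. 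At level $m$ put $a=A\cap V_{\kappa_m}$, $b=B\cap V_{\kappa_m}$, let $F=G\upharpoonright\{\vec d\in(V_{\kappa_m})^{\le n+1}:\ell(\vec d)\le m\}$, and let $P(\vec d)=\ell(\vec d)$ on the remaining tuples (so $P(\vec d)>m$, as required). Node validity holds because wherever $F$ is defined all of $\vec d$ and $F(\vec d)$ lie in $V_{\kappa_m}$, so $\psi(a,b,\dots)$ and $\psi(A,B,\dots)$ agree; coherence across $m<m'$ is immediate, since $a,b,F$ only grow and restrict correctly, and a tuple with $P$-value $<m'$ has $\ell\le m'$ and hence has already entered $\dom(F)$ at level $m'$, while the others keep their promise. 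This gives the desired branch.

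For the converse, let $(a_m,b_m,F_m,P_m)_{m\in\omega}$ be an infinite branch and set $A=\bigcup_m a_m$ and $G=\bigcup_m F_m$; coherence gives $a_{m'}\cap V_{\kappa_m}=a_m$, so $A\subseteq V_\lambda$ with $A\cap V_{\kappa_m}=a_m$, and likewise $\bigcup_m b_m=B$. The crucial point is that $G$ is \emph{total}: given any tuple $\vec d$, it lies in $(V_{\kappa_m})^{\le n+1}$ for some $m$, and if it is not yet in $\dom(F_m)$ then $P_m(\vec d)$ is a fixed level that is preserved upward until $\vec d$ enters the domain; taking $m'=P_m(\vec d)+1$, the coherence clause forces $\vec d\in\dom(F_{m'})$. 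Thus $G$ is a total function on tuples of length $\le n+1$, and for any $d_0,\dots,d_n$ we may choose $m$ so large that all initial segments $(d_0),(d_0,d_1),\dots$ lie in $\dom(F_m)$; node validity then yields $\psi(a_m,b_m,d_0,F_m(d_0),\dots)$, and absoluteness upgrades this to $\psi(A,B,d_0,G(d_0),\dots)$. Reading $G$ as a system of Skolem witnesses (taking $y_i=G(d_0,\dots,d_i)$) shows $V_\lambda\vDash\forall x_0\exists y_0\cdots\psi(A,B,\dots)$, i.e.\ $V_\lambda\vDash\varphi(A,B)$, so $A$ witnesses $\exists X\,\varphi(X,B)$.

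I expect the main obstacle to be exactly the reconciliation just described: a witnessing Skolem function lives on all of $V_\lambda$ and can send small tuples to arbitrarily high ranks, whereas each node of the tree is a $V_{\kappa_m}$-bounded, set-sized approximation. The function $P$ is the bookkeeping device that lets the branch postpone assigning a value to a tuple until a level where that value has appeared, and it is precisely the coherence condition on $P$ that guarantees totality of the reconstructed Skolem function in the converse direction; once totality is in hand, both directions reduce to the routine $\Sigma_0$-absoluteness of the matrix $\psi$ between the transitive $V_{\kappa_m}$ and $V_\lambda$.
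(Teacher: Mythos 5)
Your proposal is correct and takes essentially the same route the paper intends: the paper states this lemma without proof, remarking only that the unions of the $a$'s and $b$'s give the witnesses and that the $F$'s assemble a Skolem function which is total thanks to the $P$'s, which is exactly your converse direction, while your forward direction (Skolemize in $V$ using choice, then delay values via $\ell(\vec d)$ and book them with $P$) is the intended construction of a branch from a witness. The only cosmetic point is that the appeal to $V_\lambda\vDash\ZFC$ is unnecessary, since the Skolem function and the well-order of $V_\lambda$ live in $V$ (where \AC{} holds) rather than in $V_\lambda$, and only their level-by-level restrictions need to be elements of $V_\lambda$.
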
	
		
		The $\Sigma^1_2$ case is a bit more complex. Fix a $B\subseteq V_\lambda$. For every $a\subseteq V_{\kappa_m}$ let 
		  \begin{equation*}
		   G_m(a)=\{(c,F,P):(a,c,F,P)\in m\text{-th level of }T_{\neg\varphi(X,B,Y)}\}. 
		  \end{equation*}
		Then the $m$-th level of ${\cal T}_{\varphi(X,B)}$ is 
		\begin{equation*}
		 \{(a,H):a\subseteq V_{\kappa_m},H:G_m(a)\to\lambda^+\}. 
		\end{equation*}
		We say that $(a,H)<(a',H')$ when $a'\cap V_{\kappa_m}$ and if $(a,c,F,P)<(a',c',F',P')$, then $H(c,F,P)>H'(c',F',P')$.
		
		\begin{lem}
		\label{SatisfactionTree1}
		  Let $\varphi(X,Y)$ a $\Sigma^1_0$-formula and $B\subseteq V_\lambda$. Then $V_\lambda\vDash\exists X\forall Y\ \varphi(X,B)$ iff ${\cal T}_{\varphi(X,B)}$ has an infinite branch.
		\end{lem}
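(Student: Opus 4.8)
The plan is to mirror the classical Shoenfield representation of $\Sigma^1_2$ sets: I read ${\cal T}_{\varphi(X,B)}$ as the tree that simultaneously builds a witness $X$ for the outer existential quantifier and, at each node, a rank function $H$ certifying that the inner statement $\forall Y\,\varphi(X,B,Y)$ holds. Since the previous Lemma already handles the single-quantifier case via $T_{\neg\varphi(X,B,Y)}$, the whole argument reduces to converting ``the section tree over $X$ has no infinite branch'' into ``there is a rank function into $\lambda^+$'' and back.

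For the direction from branch to truth, suppose $\langle(a_m,H_m):m\in\omega\rangle$ is an infinite branch of ${\cal T}_{\varphi(X,B)}$ and put $X=\bigcup_{m\in\omega}a_m$; the coherence clause $a_{m+1}\cap V_{\kappa_m}=a_m$ guarantees $a_m=X\cap V_{\kappa_m}$. I claim $V_\lambda\vDash\forall Y\,\varphi(X,B,Y)$. If not, then $V_\lambda\vDash\exists Y\,\neg\varphi(X,B,Y)$, and applying the previous Lemma to $\neg\varphi$ with $X$ now pinned as a parameter yields an infinite branch $\langle(c_m,F_m,P_m):m\in\omega\rangle$ through the section tree $\bigcup_m G_m(a_m)$. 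But then $(a_m,c_m,F_m,P_m)<(a_{m+1},c_{m+1},F_{m+1},P_{m+1})$ in $T_{\neg\varphi(X,B,Y)}$, so the ordering on ${\cal T}_{\varphi(X,B)}$ forces $H_m(c_m,F_m,P_m)>H_{m+1}(c_{m+1},F_{m+1},P_{m+1})$ for every $m$, an infinite descending sequence of ordinals — contradiction. Hence $V_\lambda\vDash\exists X\,\forall Y\,\varphi(X,B,Y)$.

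Conversely, fix $X$ with $V_\lambda\vDash\forall Y\,\varphi(X,B,Y)$, so $V_\lambda\nvDash\exists Y\,\neg\varphi(X,B,Y)$. By the previous Lemma the section tree $S_X=\bigcup_m G_m(X\cap V_{\kappa_m})$ has no infinite branch, hence is well-founded. I would take $H$ to be the canonical rank function of $S_X$, set $a_m=X\cap V_{\kappa_m}$ and $H_m=H\upharpoonright G_m(a_m)$; the clause $a_{m+1}\cap V_{\kappa_m}=a_m$ is automatic, and the monotonicity requirement on $H$ is exactly the fact that a rank function strictly decreases along tree extensions, so $\langle(a_m,H_m):m\in\omega\rangle$ is a branch of ${\cal T}_{\varphi(X,B)}$.

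The one genuinely delicate point — the analog of why the classical Shoenfield tree lives on $\omega_1$ — is that the rank function must take values below $\lambda^+$. Here I would use that $\lambda$ is a strong limit of cofinality $\omega$: each level of $S_X$ consists of tuples over $V_{\kappa_m}$ and its finite powers, so it has at most $2^{\kappa_m}<\lambda$ nodes, whence $|S_X|\le\lambda$. A well-founded tree of size $\le\lambda$ has rank $<\lambda^+$, so its canonical rank function really does map into $\lambda^+$, matching the definition of ${\cal T}_{\varphi(X,B)}$. This cardinality bookkeeping, together with the verification that branches of the pinned section tree correspond precisely to witnesses $Y$ for $\neg\varphi$ under the previous Lemma, is the main obstacle; everything else is manipulation of the two tree orderings.
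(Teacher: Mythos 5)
Your proof is correct and is essentially the intended argument: the paper itself only sketches the branch-to-truth direction (the $H$'s preclude infinite branches in $T_{\neg\varphi(A,B,Y)}$, since otherwise one could build a descending chain in $\lambda^+$), and your first direction matches that sketch exactly. Your converse — restricting the canonical rank function of the well-founded section tree $S_X$ to the levels $G_m(X\cap V_{\kappa_m})$, and using that $\lambda$ is a strong limit with $|V_{\kappa_m}|=\kappa_m$ to get $|S_X|\le\lambda$ and hence rank values below $\lambda^+$ — is precisely the Shoenfield-style completion the paper leaves implicit.
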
 
			
		If ${\cal T}_{\varphi(X,B)}$ has an infinite branch, with $A$ the union of the $a$'s in the branch, then the $H$'s assure that there are no possible infinite branches in $T_{\neg\varphi(A,B,Y)}$ for every $Y\subset V_\lambda$, because otherwise it would be possible to build a descending chain in $\lambda^+$.
		
		\begin{lem}
			Let $j:V_\lambda\prec V_\lambda$. Then $j$ is $\Sigma^1_1$ iff $j^+$ preserves the well-founded relations.
		\end{lem}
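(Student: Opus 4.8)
The statement says precisely that $j^+$ is $\Sigma_1$-elementary, and my plan is to route $\Sigma^1_1$ truth through the satisfaction tree $T_{\varphi(X,B)}$ in the spirit of Shoenfield absoluteness.

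\emph{The forward direction should be immediate.} Well-foundedness is $\Pi^1_1$: a relation $R\subseteq V_\lambda$ is ill-founded iff there is $X$ coding an infinite $R$-descending $\omega$-sequence, and the matrix (``$X$ is a function on $\omega$ with $(X(n+1),X(n))\in R$ for all $n\in\omega$'') is first-order over $V_\lambda$ with $X,R$ as predicates, hence $\Sigma^1_0$; so ``ill-founded'' is $\Sigma^1_1$. If $j$ is $\Sigma^1_1$ then $j^+$ preserves $\Sigma^1_1$, and by negation $\Pi^1_1$, formulas biconditionally, so $R$ is well-founded iff $j^+(R)$ is, which is exactly preservation of well-founded relations.

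\emph{For the converse} I assume $j^+$ preserves well-founded relations and must show $j$ is $\Sigma^1_1$, i.e. $V_\lambda\vDash\exists X\,\varphi(X,B)$ iff $V_\lambda\vDash\exists X\,\varphi(X,j^+(B))$ for every $\Sigma^1_0$ formula $\varphi$ and every $B\subseteq V_\lambda$. The left-to-right implication comes for free: a witness $A$ satisfies the $\Sigma^1_0$ (hence $\Delta_0$ over $V_{\lambda+1}$) formula $\varphi(A,B)$, so by the $\Delta_0$-elementarity of $j^+$ (Theorem \ref{Delta0}) $\varphi(j^+(A),j^+(B))$ holds and $j^+(A)$ witnesses the right-hand side. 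The same observation shows ill-foundedness is always preserved upward, which is why the only genuine content of the hypothesis is the implication ``well-founded $\Rightarrow$ well-founded''. The real work is the right-to-left implication, which I would argue contrapositively: if $V_\lambda\nvDash\exists X\,\varphi(X,B)$ then the satisfaction-tree lemma for $T_{\varphi(X,B)}$ gives that $T_{\varphi(X,B)}$ is well-founded. I would then compute $j^+(T_{\varphi(X,B)})$ from the critical-sequence form $j^+(A)=\bigcup_m j(A\cap V_{\kappa_m})$, using $j(\kappa_m)=\kappa_{m+1}$, the identity $j(B\cap V_{\kappa_m})=j^+(B)\cap V_{\kappa_{m+1}}$, the $\Delta_0$-elementarity of $j^+$ on the arithmetical matrix $\psi$, and the fact that the bookkeeping function $P$ takes values in $\omega$ and is therefore fixed pointwise by $j$. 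This should identify $j^+(T_{\varphi(X,B)})$ with the satisfaction tree for $\exists X\,\varphi(X,j^+(B))$ built from the shifted cofinal sequence $\langle\kappa_{m+1}:m\in\omega\rangle$; since that lemma is insensitive to which increasing cofinal $\omega$-sequence defines the levels, $j^+(T_{\varphi(X,B)})$ has an infinite branch iff $V_\lambda\vDash\exists X\,\varphi(X,j^+(B))$. Chaining the three facts — $T_{\varphi(X,B)}$ well-founded, hence $j^+(T_{\varphi(X,B)})$ well-founded by the preservation hypothesis, hence $V_\lambda\nvDash\exists X\,\varphi(X,j^+(B))$ — then closes the argument.

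The hard part will be exactly this last bookkeeping: verifying that applying $j^+$ to $T_{\varphi(X,B)}$ genuinely reproduces the satisfaction tree of the image formula under the index shift $\kappa_m\mapsto\kappa_{m+1}$, in particular that the partial Skolem approximations $F$ and the delay function $P$ transform coherently and that branch existence is unaffected by reindexing along a shifted cofinal sequence. Everything else (the $\Pi^1_1$ shape of well-foundedness and the free upward preservation of $\Sigma^1_1$ via Theorem \ref{Delta0}) is routine.
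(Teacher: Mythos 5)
Your proposal is correct, and at its core it is the same argument as the paper's: the forward direction from the $\Pi^1_1$ (the paper says $\Delta^1_1$) definability of well-foundedness, the upward half of the converse for free from Theorem \ref{Delta0}, and the downward half by transporting the branchlessness of a satisfaction tree through the hypothesis that $j^+$ preserves well-founded relations. The one visible difference is where the hypothesis gets applied: the paper does not feed the tree order itself to $j^+$, but first linearizes, fixing a well-ordering $R$ of $V_\lambda$ and passing to the Kleene-Brouwer order of $T_{\neg\varphi(X,B)}$ relative to $R$; it is this linear order whose well-foundedness is transported, and whose image is read off as the Kleene-Brouwer order of $T_{\neg\varphi(X,j^+(B))}$ relative to $j^+(R)$. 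The Kleene-Brouwer detour buys the off-the-shelf equivalence ``no infinite branch iff the KB order is well-founded''; your direct route dispenses with the auxiliary well-ordering, but it owes one extra observation that you should make explicit: an infinite descending sequence in the reverse tree order is merely an infinite chain, which may skip levels, so you must check---using the coherence conditions and the bookkeeping function $P$---that such a chain still yields a total Skolem function and hence a witness for $\exists X\,\varphi(X,B)$; without this, ``no infinite branch'' does not by itself give well-foundedness of the relation you hand to the hypothesis. Finally, the level-shift computation you flag as the hard part ($j^+(T_{\varphi(X,B)})$ is the satisfaction tree of $\varphi(X,j^+(B))$ built over the shifted sequence $\langle\kappa_{m+1}:m\in\omega\rangle$, together with the insensitivity of the branch criterion to the choice of cofinal sequence) is needed by both arguments; the paper passes over it in silence, so isolating it is an improvement in precision, not a deviation.
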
	
		\begin{proof}
			`Being well-founded' is a $\Delta^1_1$ relation, so if $j$ is $\Sigma^1_1$ it preserves well-founded relations. Vice versa, let $\varphi(X,Y)$ a $\Sigma^1_0$ formula, $B\subseteq V_\lambda$ and suppose that $V_\lambda\vDash\exists X\varphi(X,B)$. Fix an $X\subseteq V_\lambda$ such that $V_\lambda\vDash\varphi(X,B)$. Therefore by Theorem \ref{Delta0} $V_\lambda\vDash\varphi(j^+(X),j^+(B))$ and $V_\lambda\vDash\exists X\ \varphi(X,j^+(B))$. If $V_\lambda\vDash\forall X\varphi(X,B)$, then $T_{\neg\varphi(X,B)}$ is without infinite branches. Fix a well-ordering $R$ of $V_\lambda$, and define the relative Kleene-Brouwer order. Therefore the Kleene-Brouwer order of $T_{\neg\varphi(X,B)}$ is well-founded. But then the Kleene-Brouwer order relative to $j^+(R)$ on $T_{\neg\varphi(X,j^+(B))}$ is well-founded. So $V_\lambda\vDash\forall X\varphi(X,j^+(B))$.
		\end{proof}
		
		\begin{cor}
		 If $j:V\prec M$ witnesses $I2(\lambda)$, then $j\upharpoonright V_\lambda$ is $\Sigma^1_1$.
		\end{cor}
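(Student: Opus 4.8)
The plan is to reduce the statement to the criterion established just above. We already know, from the earlier Proposition on I2, that $j\upharpoonright V_\lambda:V_\lambda\prec V_\lambda$, so the extension $(j\upharpoonright V_\lambda)^+$ is defined and the preceding Lemma applies: $j\upharpoonright V_\lambda$ is $\Sigma^1_1$ exactly when $(j\upharpoonright V_\lambda)^+$ preserves well-founded relations. Hence it suffices to take a relation $R\subseteq V_\lambda\times V_\lambda$, coded as a subset of $V_\lambda$ and well-founded in $V$, and to show that $(j\upharpoonright V_\lambda)^+(R)$ is again well-founded.

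The first key step is to recognize that $(j\upharpoonright V_\lambda)^+$ is nothing but the restriction of the full I2 embedding to $V_{\lambda+1}$. Indeed, for $A\subseteq V_\lambda$ the set $A$ has rank $\le\lambda$, so $j(A)$ is computed by $j:V\prec M$, and since $j(\lambda)=\lambda$ we have $j(A)\subseteq V_\lambda$. For each $\beta<\lambda$ the slice $A\cap V_\beta$ lies in $V_\lambda\subseteq M$, so $j(A\cap V_\beta)$ agrees with $(j\upharpoonright V_\lambda)(A\cap V_\beta)$, and by elementarity $j(A)\cap V_{j(\beta)}=j(A\cap V_\beta)$. As $\{j(\beta):\beta<\lambda\}$ is cofinal in $j(\lambda)=\lambda$, taking unions gives
\begin{equation*}
 j(A)=\bigcup_{\beta<\lambda}\bigl(j(A)\cap V_{j(\beta)}\bigr)=\bigcup_{\beta<\lambda}j(A\cap V_\beta)=(j\upharpoonright V_\lambda)^+(A).
\end{equation*}
Thus $(j\upharpoonright V_\lambda)^+=j\upharpoonright V_{\lambda+1}$, and in particular $(j\upharpoonright V_\lambda)^+(R)=j(R)$.

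It remains to argue that $j(R)$ is well-founded in $V$. Since $R$ is a well-founded set relation, $V\vDash$ ``$R$ is well-founded'', so by elementarity $M\vDash$ ``$j(R)$ is well-founded''. Here is where the nature of I2 does the work: $M$ is an inner model (a transitive class model of $\ZF$ containing all the ordinals), and well-foundedness of a \emph{set} relation is absolute between transitive models of $\ZF$ --- equivalently witnessed by the existence of a ranking function or by the nonexistence of an infinite descending chain. Hence $j(R)$ is genuinely well-founded, not merely well-founded from the viewpoint of $M$. Therefore $(j\upharpoonright V_\lambda)^+$ preserves well-founded relations, and $j\upharpoonright V_\lambda$ is $\Sigma^1_1$. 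The only delicate point is this last absoluteness step, coupled with the identification of $(j\upharpoonright V_\lambda)^+$ with the true $j$-image: one must be sure that the ``external'' image $j(R)$ supplied by the I2 embedding coincides with the ``internal'' $+$-extension, since it is only for the former that elementarity of $j:V\prec M$ can be invoked.
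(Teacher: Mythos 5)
Your proof is correct and follows essentially the same route as the paper's: both reduce to the preceding Lemma (preservation of well-founded relations), identify $(j\upharpoonright V_\lambda)^+(R)$ with $j(R)$, get well-foundedness in $M$ by elementarity, and finish by absoluteness of well-foundedness. The only difference is that you spell out the identification $(j\upharpoonright V_\lambda)^+=j\upharpoonright V_{\lambda+1}$, which the paper asserts without proof; that is a worthwhile detail to make explicit.
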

		\begin{proof}
		 If $R$ is well-founded, then $(j\upharpoonright V_\lambda)^+(R)=j(R)$ is well founded in $M$. But by absoluteness of well-foundedness, $j(R)$ is well-founded in $V$.
		\end{proof}
		\begin{defin}[$\lambda$-Ultrafilters Tower]
		\label{ultratower}
			Fix $\lambda$ and $\langle\kappa_n:n\in\omega\rangle$ a cofinal sequence of regular cardinals in $\lambda$. Then $\vec{{\cal U}}=\langle{\cal U}_n:n\in\omega\rangle$ is a \emph{$\lambda$-ultrafilters tower} iff for every $n\in\omega$, ${\cal U}_n$ is a normal, fine, $\kappa$-complete ultrafilter on $[\kappa_{n+1}]^{\kappa_n}$, and if $m<n$ then ${\cal U}_n$ projects to ${\cal U}_m$; i.e., for every $A\in{\cal U}_m$ 
			\begin{equation*}
			 \{x\in[\kappa_{n+1}]^{\kappa_n}:x\cap\kappa_{m+1}\in A\}\in{\cal U}_n.
			\end{equation*}
			
			A $\lambda$-ultrafilters tower is \emph{complete} if for every sequence $\langle A_i\in i\in\omega\rangle$ with $A_i\in{\cal U}_i$ there exists $X\subseteq\lambda$ such that for every $i\in\omega$, $X\cap\kappa_{i+1}\in A_i$.
		\end{defin}
		
		\begin{lem}[Gaifman (\cite{Gaifman}), Powell (\cite{Powell}), 1974]
		\label{gaifpow}
			If $\vec{{\cal U}}$ is a complete $\lambda$-tower, then the direct limit of the ultrapowers $\Ult(V,{\cal U}_n)$ is well-founded.
		\end{lem}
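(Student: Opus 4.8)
The plan is to argue by contradiction, invoking the completeness of $\vec{\cal U}$ exactly once in order to convert a hypothetical infinite descending $\in$-chain in the direct limit into an infinite descending $\in$-chain \emph{inside $V$}, which is absurd by Foundation. The conceptual point is that completeness is precisely the device that threads the countably many ``trace conditions'' coming from the membership relations of the chain into a single witness $X\subseteq\lambda$.

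First I would pin down the factor embeddings on representatives. Write $\pi_{m,n}(x)=x\cap\kappa_{m+1}$ for the projection map from $[\kappa_{n+1}]^{\kappa_n}$ to $[\kappa_{m+1}]^{\kappa_m}$; the projection clause of Definition \ref{ultratower} says exactly that $\pi_{m,n}$ pushes ${\cal U}_n$ onto ${\cal U}_m$. Letting $j_n:V\prec\Ult(V,{\cal U}_n)$ be the ultrapower embeddings, I claim the canonical factor map $k_{m,n}$ (the unique elementary map with $k_{m,n}\circ j_m=j_n$) is computed by
\begin{equation*}
 k_{m,n}([f]_{{\cal U}_m})=[f\circ\pi_{m,n}]_{{\cal U}_n}.
\end{equation*}
Well-definedness, elementarity and $\in$-preservation of this assignment all reduce, via \L os' Theorem, to the observation that a ${\cal U}_m$-large set $A$ has $\pi_{m,n}$-preimage $\{x:\pi_{m,n}(x)\in A\}$ that is ${\cal U}_n$-large, which is the projection hypothesis.

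Next I would extract the chain at finite levels. Suppose $M_\omega$, the direct limit of the $\Ult(V,{\cal U}_n)$ along the $k_{m,n}$, is ill-founded, and fix an infinite descending chain $a_0\ni a_1\ni a_2\ni\cdots$ in $M_\omega$. Since the levels are cofinal and the chain is countable, after passing to a subsequence and relabelling I may assume $a_i=k_{i,\omega}(b_i)$ with $b_i=[f_i]_{{\cal U}_i}\in\Ult(V,{\cal U}_i)$. Pulling the relation $a_{i+1}\in a_i$ back to level $i+1$ (using $k_{i,\omega}=k_{i+1,\omega}\circ k_{i,i+1}$ and that $\in$ in a direct limit is read off at finite levels) gives $b_{i+1}\in k_{i,i+1}(b_i)$ in $\Ult(V,{\cal U}_{i+1})$. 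By the representation of $k_{i,i+1}$ and \L os' Theorem this means
\begin{equation*}
 A_{i+1}:=\{x\in[\kappa_{i+2}]^{\kappa_{i+1}}:f_{i+1}(x)\in f_i(x\cap\kappa_{i+1})\}\in{\cal U}_{i+1}.
\end{equation*}

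Finally I would apply completeness. Setting $A_0$ to be the full space (an element of ${\cal U}_0$), the sequence $\langle A_i:i\in\omega\rangle$ has $A_i\in{\cal U}_i$, so completeness yields $X\subseteq\lambda$ with $X\cap\kappa_{i+1}\in A_i$ for all $i$. Put $x_i=X\cap\kappa_{i+1}$; since $\kappa_{i+1}<\kappa_{i+2}$ we have $x_{i+1}\cap\kappa_{i+1}=x_i$, so membership of $x_{i+1}$ in $A_{i+1}$ reads $f_{i+1}(x_{i+1})\in f_i(x_i)$. Hence $\langle f_i(x_i):i\in\omega\rangle$ is an infinite descending $\in$-chain in $V$, contradicting Foundation; therefore $M_\omega$ is well-founded. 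The main obstacle, and the only genuinely delicate step, is the bookkeeping of the first two paragraphs: correctly identifying the factor maps with composition by $\pi_{m,n}$ and reindexing the chain so that the finite-level membership conditions line up with the single threading demanded by completeness. Once the index $i+1$ set $A_{i+1}$ is forced to lie in ${\cal U}_{i+1}$ and the traces $x_{i+1}\cap\kappa_{i+1}=x_i$ cohere, the contradiction is immediate.
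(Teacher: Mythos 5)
You should first note that the paper itself contains no proof of this lemma: it is quoted from Gaifman and Powell and used as a black box in the proof of Theorem \ref{martin}. So your proposal is not competing with an argument in the text; it supplies the missing one, and what you give is indeed the standard Gaifman--Powell argument: represent the factor maps as composition with the projections $\pi_{m,n}(x)=x\cap\kappa_{m+1}$, pull a hypothetical descending chain in the direct limit back to \L os-style large sets, and invoke completeness exactly once to thread those sets into a single $X\subseteq\lambda$ whose traces produce a descending $\in$-chain in $V$. Those steps are all sound, modulo two harmless points you may as well make explicit: $\pi_{m,n}$ is only defined on a ${\cal U}_n$-large set (the projection clause applied to $A=[\kappa_{m+1}]^{\kappa_m}\in{\cal U}_m$ guarantees this), and elementarity of $k_{m,n}$ needs the backward direction of the large-set equivalence, which uses that ${\cal U}_m$ is an ultrafilter (complement argument), not just a filter.

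The one step you must tighten is the sentence ``after passing to a subsequence and relabelling I may assume $a_i=k_{i,\omega}(b_i)$''. Taken literally this is not available: passing to a subsequence of an $\in$-chain destroys the chain property (membership is not transitive), and an element first appearing at level $n_i>i$ cannot be moved down to level $i$, since $\ran(k_{i,\omega})\subseteq\ran(k_{n_i,\omega})$ and not conversely. The repair is easy and is already implicit in your own device of setting $A_0$ to be the full space: keep the strictly increasing levels $n_0<n_1<\cdots$ at which the $a_i$ appear, define
\begin{equation*}
 A_{n_{i+1}}=\{x\in[\kappa_{n_{i+1}+1}]^{\kappa_{n_{i+1}}}:f_{i+1}(x)\in f_i(x\cap\kappa_{n_i+1})\}\in{\cal U}_{n_{i+1}},
\end{equation*}
and set $A_j=[\kappa_{j+1}]^{\kappa_j}$ for every $j$ not of the form $n_{i+1}$. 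Completeness of the original tower (Definition \ref{ultratower}) then yields $X$ with $X\cap\kappa_{j+1}\in A_j$ for all $j$, and since $(X\cap\kappa_{n_{i+1}+1})\cap\kappa_{n_i+1}=X\cap\kappa_{n_i+1}$, the traces at the levels $n_i$ give the descending chain $f_{i+1}(X\cap\kappa_{n_{i+1}+1})\in f_i(X\cap\kappa_{n_i+1})$ in $V$, contradicting Foundation exactly as in your last paragraph. Equivalently, you may observe that a cofinal sub-tower of a complete tower is again complete (by the same padding); that is the one-line fact your relabelling silently uses. With that fix the proof is complete.
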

		
		When $j:V_\lambda\prec V_\lambda$, it is natural to consider the sequence $\vec{{\cal U}}_j=\langle{\cal U}_n:n\in\omega\rangle$ defined as 
		\begin{equation*}
		 {\cal U}_n=\{A\subseteq[\kappa_{n+1}]^{\kappa_n}:j"\kappa_{n+1}\in j(A)\}.
		\end{equation*}
		
		\begin{lem}
		 \label{completetower}
		  If $j$ is $\Sigma^1_1$, then $\vec{{\cal U}}_j$ is a complete $\lambda$-tower.
		\end{lem}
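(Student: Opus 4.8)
The plan is to split the statement into its two components, verified separately: first, that $\vec{\cal U}_j$ is a $\lambda$-ultrafilters tower in the sense of Definition \ref{ultratower}, and second, that it is complete. I expect only the completeness part to use the hypothesis that $j$ is $\Sigma^1_1$; the tower structure should hold for \emph{any} $j:V_\lambda\prec V_\lambda$. For the tower structure I would argue by the standard seed computation, with seed $s_n=j''\kappa_{n+1}$. Since $s_n\in[\kappa_{n+2}]^{\kappa_{n+1}}=j([\kappa_{n+1}]^{\kappa_n})$, each ${\cal U}_n$ is a genuine ultrafilter on $[\kappa_{n+1}]^{\kappa_n}$; fineness follows from $j(\alpha)\in j''\kappa_{n+1}$ for $\alpha<\kappa_{n+1}$, normality from the fact that every element of $s_n$ has the form $j(\beta)$, and $\crt(j)$-completeness from $j(\bigcap_{\xi<\gamma}A_\xi)=\bigcap_{\xi<\gamma}j(A_\xi)$ when $\gamma<\crt(j)$. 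For the projection property, elementarity gives $j(\{x:x\cap\kappa_{m+1}\in A\})=\{x:x\cap\kappa_{m+2}\in j(A)\}$, and since $s_n\cap\kappa_{m+2}=j''\kappa_{m+1}=s_m$, the right-hand side contains $s_n$ exactly when $A\in{\cal U}_m$; thus ${\cal U}_n$ projects to ${\cal U}_m$.

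The heart of the matter is completeness. Fix $\langle A_i:i\in\omega\rangle$ with $A_i\in{\cal U}_i$, and code the pair $(\langle\kappa_i:i\in\omega\rangle,\langle A_i:i\in\omega\rangle)$ as a single $B\subseteq V_\lambda$, so $B\in V_{\lambda+1}$. Each $A_i$ lies in $V_\lambda$, so $j^+(A_i)=j(A_i)$, and therefore the $i$-th slice of $j^+(B)$ codes $j(A_i)$, while its ordinal part codes the shifted sequence $\langle\kappa_{i+1}:i\in\omega\rangle=j^+(\langle\kappa_i:i\in\omega\rangle)$. I would then consider the formula
\begin{equation*}
\Psi(B)\ \equiv\ \exists X\subseteq\lambda\ \forall i\in\omega\ \big(X\cap(\vec\kappa)_{i+1}\in(\vec A)_i\big),
\end{equation*}
where $(\vec\kappa)_{i+1}$ and $(\vec A)_i$ are decoded from $B$. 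This is a $\Sigma^1_1$ formula in the single parameter $B$, and $V_\lambda\vDash\Psi(B)$ is exactly the completeness instance we want.

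Next I would exhibit the witness in the image. Since $j''\lambda\subseteq\lambda$ and $j''\lambda\cap\kappa_{i+2}=j''\kappa_{i+1}=s_i\in j(A_i)$ — the last membership being precisely $A_i\in{\cal U}_i$ — the set $X=j''\lambda$ satisfies $X\cap(j^+\vec\kappa)_{i+1}\in(j^+\vec A)_i$ for every $i$, so $V_\lambda\vDash\Psi(j^+(B))$. Because $j$ is $\Sigma^1_1$, that is, $j^+$ is $\Sigma^1_1$-elementary, we have the biconditional $V_\lambda\vDash\Psi(B)\leftrightarrow\Psi(j^+(B))$, whence $V_\lambda\vDash\Psi(B)$. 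Unpacking, this yields $X\subseteq\lambda$ with $X\cap\kappa_{i+1}\in A_i$ for all $i$, which is the completeness of $\vec{\cal U}_j$.

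The main obstacle, and the single indispensable use of the hypothesis, is this transfer step: the seeds $j''\kappa_{i+1}$ only thread the \emph{images} $\langle j(A_i)\rangle$, and pulling that thread back to the original $\langle A_i\rangle$ is an existential second-order assertion over $V_\lambda$, hence exactly $\Sigma^1_1$; for a merely $\Delta_0$ embedding there is no reason such a thread descends. The one bookkeeping subtlety to watch is that $j^+$ shifts the critical sequence, so the cutoffs $\kappa_{i+1}$ become $\kappa_{i+2}$ in the image; this is harmless precisely because the critical sequence is carried inside the \emph{single} parameter $B$, so cutoffs and targets are shifted in lockstep and the biconditional applies cleanly. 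Equivalently, one could phrase $\Psi$ as the ill-foundedness of the tree of finite coherent approximations to a thread and invoke the earlier characterization that $j$ is $\Sigma^1_1$ iff $j^+$ preserves, and hence (being a biconditional) reflects, well-founded relations.
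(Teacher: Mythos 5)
Your proof is correct and takes essentially the same route as the paper: the paper likewise observes that $X=j''\lambda$ witnesses the $\Sigma^1_1$ statement $\exists X\subset\lambda\ \forall i\in\omega\ X\cap\kappa_{i+2}\in j(A_i)$ and then pulls this back through $\Sigma^1_1$-elementarity to get a thread for the $A_i$'s. Your explicit coding of the cutoffs and the $A_i$'s into a single parameter $B$ (so that cutoffs and targets shift in lockstep under $j^+$), and your routine verification that $\vec{{\cal U}}_j$ really is a $\lambda$-tower, only make precise what the paper's very terse proof leaves implicit.
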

		\begin{proof}
			Let $\langle A_i:i\in\omega\rangle$ such that $A_i\in{\cal U}_i$. Let $X=j"\lambda$. Then for every $i\in\omega$, $X\cap\kappa_{i+2}=j"\kappa_{i+1}\in j(A_i)$, so
			\begin{equation*}
			  \exists X\subset\lambda\forall i\in\omega\ X\cap\kappa_{i+2}\in j(A_i). 
			\end{equation*}
			This is a $\Sigma^1_1$ statement, so by $\Sigma^1_1$-elementarity $\exists X\subset\lambda\forall i\in\omega\ X\cap\kappa_{i+1}\in A_i$.
		\end{proof}

		\begin{defin}
			Let $M$ be a transitive class, with $V_\lambda\subset M$. Then $M$ is \emph{$\Sigma^1_n$-correct in $\lambda$} if for every $\Sigma^1_n$-formula $\varphi(X)$, for every $A\subseteq V_\lambda$ and $A\in M$, $(V_\lambda\vDash\varphi(A))^M$ iff $V_\lambda\vDash\varphi(A)$. 
		\end{defin}
		
		\begin{teo}[Martin (\cite{Martin})]
		\label{martin}
			Let $j:V_\lambda\prec V_\lambda$. If $j$ is $\Sigma^1_1$ then 
			\begin{itemize}
			 \item $V_\lambda\subseteq \Ult(V,\vec{{\cal U}}_j)$, so we have I2;
			 \item $\Ult(V,\vec{{\cal U}}_j)$ is $\Sigma^1_2$-correct, therefore $j$ is $\Sigma^1_2$.
			\end{itemize}
		\end{teo}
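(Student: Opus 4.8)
The plan is to run a Shoenfield-style argument relative to the inner model $M:=\Ult(V,\vec{{\cal U}}_j)$, in two stages matching the two bullets. Since $j$ is $\Sigma^1_1$, Lemma \ref{completetower} tells us $\vec{{\cal U}}_j$ is a complete $\lambda$-tower, and then Lemma \ref{gaifpow} gives that the direct limit of the $\Ult(V,{\cal U}_n)$ is well-founded; I identify $M$ with its transitive collapse and let $i:V\prec M$ be the limit embedding. The next task is to show $V_\lambda\subseteq M$ together with $i\upharpoonright V_\lambda=j\upharpoonright V_\lambda$ (hence $i(\lambda)=\lambda$ and $i\upharpoonright V_\lambda:V_\lambda\prec V_\lambda$). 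The idea is that the $n$-th seed $j''\kappa_{n+1}$ used to define ${\cal U}_n$ represents $i_n''\kappa_{n+1}$ in $\Ult(V,{\cal U}_n)$, and the factor maps $k_n([f])=j(f)(j''\kappa_{n+1})$, which are well defined for bounded $f\in V_\lambda$, satisfy $k_n\circ(i_n\upharpoonright V_\lambda)=j\upharpoonright V_\lambda$ and cohere through the projections; completeness of the tower is exactly what threads these seeds into a single object of $M$, forcing the $k_n$ to collapse to the identity on $V_\lambda$, so that $i\upharpoonright V_\lambda=j\upharpoonright V_\lambda$ and $V_\lambda\subseteq M$. As $M\subseteq V$ and $\lambda=i(\lambda)>\crt(i)=\crt(j)$ is the supremum of the critical sequence, this is precisely a witness for I2.

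For the second bullet I would prove $\Sigma^1_2$-correctness of $M$ by absoluteness of well-foundedness, using the tree representation from Lemma \ref{SatisfactionTree1}. Given a $\Sigma^1_2$-formula $\exists X\,\forall Y\,\varphi(X,Y,B)$ with $B\in M\cap V_{\lambda+1}$, the statement holds in $\lambda$ iff the tree ${\cal T}_{\varphi(X,B)}$ is ill-founded. Since $V_\lambda\subseteq M$, $M$ is closed under $\omega$-sequences (the tower is $\kappa_0$-complete), and $M$ contains the relevant $\lambda^+$-many ranks, $M$ computes the same tree ${\cal T}_{\varphi(X,B)}$ as $V$ and the same underlying tree $T_{\neg\varphi}$; ill-foundedness of a tree is absolute between transitive models that contain it, so $({\cal T}_{\varphi(X,B)}\text{ is ill-founded})^M$ iff ${\cal T}_{\varphi(X,B)}$ is ill-founded in $V$, which is exactly $\Sigma^1_2$-correctness. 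This is the step where the ranks into $\lambda^+$ do their work, as in Shoenfield's Lemma: a ranking witnessing well-foundedness in $V$ restricts to one in $M$, and an infinite branch in $V$, coding a witness $X$ together with a well-founded ranking of $T_{\neg\varphi}$, is captured inside $M$.

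Finally I would deduce that $j$ is $\Sigma^1_2$. For $A\subseteq V_\lambda$ and $\varphi$ a $\Sigma^1_2$-formula, full elementarity of $i:V\prec M$ gives $V_\lambda\vDash\varphi(A)$ iff $(V_\lambda\vDash\varphi(i(A)))^M$; by $\Sigma^1_2$-correctness of $M$ this is equivalent to $V_\lambda\vDash\varphi(i(A))$ computed in $V$. Since $i(A)=(i\upharpoonright V_\lambda)^+(A)=(j\upharpoonright V_\lambda)^+(A)=j^+(A)$, using that every elementary embedding of $V_{\lambda+1}$ is the `plus' of its restriction to $V_\lambda$ together with $i\upharpoonright V_\lambda=j\upharpoonright V_\lambda$, we obtain $V_\lambda\vDash\varphi(A)$ iff $V_\lambda\vDash\varphi(j^+(A))$, i.e. $j$ is $\Sigma^1_2$.

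The hard part will be the two places where the fine structure of the tower enters: identifying $i\upharpoonright V_\lambda$ with $j$ and extracting $V_\lambda\subseteq M$ from completeness (rather than from mere well-foundedness, which is all Lemma \ref{gaifpow} delivers directly), and verifying that $M$ computes $\lambda^+$ and the satisfaction trees correctly, so that the absoluteness of well-foundedness genuinely yields $\Sigma^1_2$-correctness. Everything else is elementarity of $i$ and the tree representation already established in Lemma \ref{SatisfactionTree1}.
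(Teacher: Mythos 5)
Your first and third stages follow the paper's own proof: the well-founded limit $M=\Ult(V,\vec{{\cal U}}_j)$ via Lemmas \ref{completetower} and \ref{gaifpow}, the identification $i\upharpoonright V_\lambda=j$ (which the paper gets from the critical-point analysis $\crt(i_n)=\kappa_n$, so that each $i_n$ is the identity on $V_{\kappa_n}$ and $V_\lambda\subseteq M$ follows), and the final transfer using $i(\lambda)=\lambda$ and $i(A)=j^+(A)$. The genuine gap is in your second stage, and it is exactly the subtlety this theorem's proof exists to handle. You claim that $M$ computes the same tree ${\cal T}_{\varphi(X,B)}$ as $V$, so that $\Sigma^1_2$-correctness reduces to absoluteness of well-foundedness. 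But a node of ${\cal T}_{\varphi(X,B)}$ at level $m$ is a pair $(a,H)$ with $H:G_m(a)\to\lambda^+$ an arbitrary function; although $(G_m(a))^M=G_m(a)$ and $M$ computes $\lambda^+$ correctly, there is no reason why every such $H\in V$ should lie in $M$, so $({\cal T}_{\varphi(X,B)})^M$ can be a proper subtree of ${\cal T}_{\varphi(X,B)}$, and well-foundedness of the smaller tree does not transfer to the larger one by absoluteness alone. Your justification --- that $M$ is closed under $\omega$-sequences because the tower is $\kappa_0$-complete --- is false: completeness gives closure of each $\Ult(V,{\cal U}_n)$, but the direct limit loses it. Indeed, if $M^\omega\subseteq M$ held, then $\langle j''\kappa_n:n\in\omega\rangle\in M$ (each $j''\kappa_n$ is an element of $V_\lambda\subseteq M$), hence $j''\lambda=\bigcup_{n\in\omega}j''\kappa_n\in M$, which is precisely what the paper's proposition on I2 embeddings (via Kunen's argument) rules out. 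And even granting $\omega$-closure, it would say nothing about containing the functions $H$, whose domains $G_m(a)$ have size far above $\omega$.

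The repair is the Claim inside the paper's proof: for every $c\in V_\lambda$ and every $F:c\to\Ord$, the composition $i\circ F$ lies in $M$ (pick $n$ with $c\in V_\alpha$ and $\alpha<\crt(i_n)=\kappa_n$; then $i\circ F=i_n(j_n\circ F)\in M$). This is what substitutes for the false ``same tree'' step. One then argues the nontrivial direction contrapositively: if $M$ thinks the $\Pi^1_2$ statement holds, then $({\cal T}_{\varphi(X,B)})^M$ is well-founded, witnessed by some rank function $L\in M$; now $\tilde{L}((a,H)):=L((a,i\circ H))$ is defined on the \emph{full} tree ${\cal T}_{\varphi(X,B)}$ of $V$, because $i\circ H:G_m(a)\to i(\lambda^+)=\lambda^+$ belongs to $M$, so $(a,i\circ H)$ is a node of $({\cal T}_{\varphi(X,B)})^M$; since $(a,H)\mapsto(a,i\circ H)$ is order-preserving, $\tilde{L}$ ranks ${\cal T}_{\varphi(X,B)}$ in $V$, the $\Pi^1_2$ statement holds in $V$, and $\Sigma^1_2$-correctness follows. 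With this claim in place of your tree-equality assertion, the rest of your argument --- including the deduction that $j$ is $\Sigma^1_2$ from correctness, elementarity of $i$, and $i(A)=j^+(A)$ --- goes through as written.
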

		\begin{proof}
			Consider the direct limit $\Ult(V,\vec{{\cal U}}_j)$ of the ultrapowers $\Ult(V,{\cal U}_n)$. By Lemma \ref{gaifpow} and Lemma \ref{completetower} $\Ult(V,\vec{{\cal U}}_j)$ is well founded, and it is possible to collapse it on a model $M$.
		 \newline
\newline
			\begin{tikzpicture}
				\node (V) {$V$};
				\node (M0)[right=of V] {$M_0\cong\Ult(V,{\cal U}_0)$}
					edge [<-] node[above]{$j_0$}(V);
				\node (M1)[below=of M0] {$M_1\cong\Ult(V,{\cal U}_1)$}
					edge [<-] node[sloped,below]{$j_1$}(V)
					edge [<-] node[right]{$j_{01}$}(M0);
				\node (dots)[below=of M1] {$\vdots$}
					edge [<-] node[right]{$j_{12}$}(M1);
				\node (Mn)[below=of dots] {$M_n\cong\Ult(V,{\cal U}_n)$}
					edge [<-] node[sloped,below]{$j_n$}(V)
					edge [<-](dots);
				\node (dots2)[below=of Mn] {$\vdots$}
					edge [<-](Mn);
				\node (M)[right=of M0] {$M\cong\Ult(V,\vec{{\cal U}}_j)$}
					edge [<-] node[above]{$i_0$}(M0)
					edge [<-] node[sloped,below]{$i_1$}(M1)
					edge [<-] node[sloped,below]{$i_n$}(Mn);				
			\end{tikzpicture}
			\newline
\newline
			By the usual theory of normal ultrafilters, it is possible to prove that the above diagram commute, that $\crt(j_n)=\kappa$, $\crt(j_{n,n+1})=\kappa_n$, so $V_{\kappa_n}\subset M_n$, $\crt(i_n)=\kappa_n$ and therefore $V_\lambda\subset M$. Let $i=i_n\circ j_n$ for every $n$. Then $i\upharpoonright V_\lambda=j$, $i(\lambda)=\lambda$ and $i((\lambda^+)^V)=(\lambda^+)^V$.
			
			Observe that for every $B\subseteq V_\lambda$, $B\in M$ we have that $(T_{\varphi(B)})^M=T_{\varphi(B)}$, so $M$ is $\Sigma^1_1$-correct in $\lambda$. Unfortunately, the same cannot be said to the $\Sigma^1_2$ case, since it is possible that $({\cal T}_{\varphi(X,B,Y)})^M\neq{\cal T}_{\varphi(X,B,Y)}$. The reason for this is that while it is true that $(G_m(a))^M=G_m(a)$ for every $m\in\omega$ and $a\in V_{\kappa_m}$, it is possible that there exists $H:G_m(a)\to\lambda^+$ with $H\notin M$.
			
			\begin{cla}
				If $c\in V_\lambda$ and $F:c\to\Ord$, then $i\circ F\in M$.
			\end{cla}
			\begin{proof}
				Let $c\in V_\alpha$, with $\alpha<\lambda$, and pick $n\in\omega$ such that $\crt(i_n)>\alpha$. Then $i\circ F=i_n\circ j_n\circ F=i_n(j_n\circ F)\circ i_n\upharpoonright c=i_n(j_n\circ F)\in M$.
			\end{proof}
			
			By $\Sigma^1_1$ correctness in $\lambda$, 
			\begin{equation*}
			 (V_\lambda\vDash\exists X\forall Y\varphi(X,B,Y))^M\rightarrow V_\lambda\vDash\exists X\forall Y\varphi(X,B,Y),
			\end{equation*}
			so it suffices to prove the other direction.
			
			Suppose that $(V_\lambda\vDash\forall X\exists Y\neg\varphi(X,B,Y))^M$. Then $({\cal T}_{\varphi(X,B,Y)})^M$ is well-founded. Let $L:({\cal T}_{\varphi(X,B,Y)})^M\to\Ord$ that witnesses it. Define $\tilde{L}:{\cal T}_{\varphi(X,B,Y)}\to\Ord$ as $\tilde{L}((a,H))=L((a,i\circ H))$. By the previous claim $i\circ H\in M$, and moreover $i\circ H$ is a function from $G_m(a)$ to $i(\lambda^+)=\lambda^+$, so $(a,i\circ H)\in({\cal T}_{\varphi(X,B,Y)})^M$ and $\tilde{L}$ is well-defined. It remains to prove that $\tilde{L}$ witnesses that ${\cal T}_{\varphi(X,B,Y)}$ is well-founded. Suppose that $(a,H)<(a',H')$. If in $T_{\varphi(X,B,Y)}$ we have that $(a,c,F,P)<(a',c',F',P')$ then $H(c,F,P)>H'(c',F',P')$, so
			\begin{equation*}
			 i\circ H(c,F,P)>i\circ H'(c',F',P'),
			\end{equation*}
			therefore $(a,i\circ H)<(a',i\circ H')$. It follows that 
			\begin{equation*}
			  L((a,i\circ H))>L((a',i\circ H')),
			\end{equation*}
			so $\tilde{L}((a,H))>\tilde{L}((a',H')$.
			
			Then ${\cal T}_{\varphi(X,B,Y)}$ cannot have an infinite branch and $V_\lambda\vDash\forall X\exists Y\neg\varphi(X,B,Y)$, i.e., $M$ is $\Sigma^1_2$-correct in $\lambda$.
			
			This proves that $i\upharpoonright V_\lambda$ is $\Sigma^1_2$: if $\varphi$ is $\Sigma^1_2$ in $V_\lambda$ and $V\vDash\varphi(B)$, then by elementarity $\Ult(V,\vec{{\cal U}}_j)\vDash\varphi(i(B))$, therefore by $\Sigma^1_2$-correctness also $V\vDash\varphi(i(B))$. Since $i\upharpoonright V_\lambda=j$ we're done.			
		\end{proof}
		
		This theorem proves that $\Sigma^1_1$ and $\Sigma^1_2$ elementary embeddings in $V_\lambda$ are the same, and they are equivalent to I2($\lambda$). It is possible, after a pair of technical lemmas, to generalize this for $\Sigma^1_n$ and $\Sigma^1_{n+1}$.
		
		\begin{lem}[Laver, \cite{Laver3}]
			\label{Sigmavalue}
			Let $n$ be odd. Then ``$j$ is $\Sigma^1_n$'' is a $\Pi^1_{n+1}$ formula in $V_\lambda$, with $j$ as a parameter.
		\end{lem}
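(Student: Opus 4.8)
The plan is to unwind the definition of ``$j$ is $\Sigma^1_n$'' into a single second-order assertion about $j$ and then simply read off its quantifier complexity. By definition the property says that for every $\Sigma^1_n$ formula $\varphi(X)$ and every $A\subseteq V_\lambda$ one has $V_\lambda\vDash\varphi(A)\leftrightarrow\varphi(j^+(A))$. The formula $\varphi$ ranges only over codes, which live in $V_\omega\subseteq V_\lambda$, so quantifying over it costs only a first-order quantifier; the genuine work is to see that satisfaction of $\Sigma^1_n$ formulas is \emph{uniformly} $\Sigma^1_n$, despite Tarski undefinability of first-order truth over $V_\lambda$.

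First I would produce a universal $\Sigma^1_n$ predicate. The key point is that first-order (that is, $\Sigma^1_0$) satisfaction over $V_\lambda$ is $\Delta^1_1$: since $\lambda$ is strong limit the full Tarski satisfaction function $s$ is codeable as an element of $V_{\lambda+1}$, its recursive correctness conditions are arithmetic in $s$, and such $s$ is unique, so ``$V_\lambda\vDash\theta(\vec a)$'' can be written both as $\exists s(\dots)$ and as $\forall s(\dots)$. Equivalently, this uniformity is exactly what the tree representations $T_\varphi$ and ${\cal T}_\varphi$ of the preceding lemmas encode combinatorially. Prefixing to this $\Delta^1_1$ core the $n$ alternating second-order quantifiers of a $\Sigma^1_n$ matrix yields one $\Sigma^1_n$ formula $U_n(e,A)$, with $e$ a free code-variable, such that for each $\Sigma^1_n$ formula $\varphi$ there is a code $e_\varphi\in V_\omega$ with $V_\lambda\vDash\varphi(A)\leftrightarrow U_n(e_\varphi,A)$ for all $A$.

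Next I would use Theorem \ref{Delta0} to control $j^+$. That theorem guarantees $j^+$ is a well-defined map with first-order graph, since $y\in j^+(A)$ iff $\exists\beta<\lambda\ y\in j(A\cap V_\beta)$, so ``$z=j^+(A)$'' is arithmetic in $A,j,z$. Hence $U_n(e,j^+(A))$, rewritten as $\exists z\,(z=j^+(A)\wedge U_n(e,z))$, is again $\Sigma^1_n$ in $(e,A)$ with $j$ as a parameter. Therefore ``$j$ is $\Sigma^1_n$'' is equivalent to the single assertion $\forall e\,\forall A\,(U_n(e,A)\leftrightarrow U_n(e,j^+(A)))$.

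Finally I would count. Each implication $U_n\to U_n$ is $\Pi^1_n\vee\Sigma^1_n$, and both disjuncts embed into $\Pi^1_{n+1}$; because $V_\lambda$ codes pairs ($\lambda$ is strong limit) the two second-order quantifier blocks merge, so the disjunction stays $\Pi^1_{n+1}$. The biconditional is the conjunction of two such formulas, hence still $\Pi^1_{n+1}$, and prepending the second-order $\forall A$ (absorbed into the leading universal block) and the first-order $\forall e$ leaves the complexity at $\Pi^1_{n+1}$. The main obstacle is the universal-predicate step, namely making satisfaction of $\Sigma^1_n$ formulas uniformly $\Sigma^1_n$ without violating Tarski, for which the $\Delta^1_1$ truth predicate (or the iterated tree encodings) is essential. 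The hypothesis that $n$ is odd is what makes $\Pi^1_{n+1}$ the honest level: for even $n$ the collapse $\Sigma^1_n=\Sigma^1_{n-1}$ of the Martin-type Theorem \ref{martin} would instead give the sharper bound $\Pi^1_n$, whereas for odd $n$ the property genuinely sits at $\Pi^1_{n+1}$, which is precisely what the ensuing collapse argument consumes.
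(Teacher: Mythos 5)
Your high-level architecture is the same as the paper's: rewrite ``$j$ is $\Sigma^1_n$'' as a universally quantified schema over formula codes and parameters $B\subseteq V_\lambda$, express satisfaction of the $\Sigma^1_0$ matrix by a second-order formula, and count quantifiers. Your closure computations are fine (merging blocks by pairing, absorbing the first-order $\forall e$, noting that ``$z=j^+(A)$'' has a first-order graph, and that finite Boolean combinations stay inside $\Pi^1_{n+1}$). The essential divergence is the satisfaction step. The paper only ever uses the $\Sigma^1_1$ representation coming from the trees $T_{\varphi}$: $V_\lambda\vDash\varphi(\vec A,D)$ iff $T_{\varphi(\vec A)}$ has a branch whose projection is $D$. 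With only that form available, the hypothesis that $n$ is odd is load-bearing: the last quantifier of the $\Sigma^1_n$ prefix is existential, and it is exactly this $\exists A_n$ that absorbs the extra ``there exists a branch'' quantifier, keeping each side of the implication $\Sigma^1_n$ and the whole formula $\Pi^1_{n+1}$. You instead assert that $\Sigma^1_0$ satisfaction is $\Delta^1_1$ --- in particular $\Pi^1_1$, via existence and uniqueness of Tarski satisfaction classes --- and with a $\Pi^1_1$ form in hand the absorption would work no matter which quantifier comes last.

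That is exactly where the gap is. Your argument never uses that $n$ is odd, so as written it would prove that ``$j$ is $\Sigma^1_n$'' is $\Pi^1_{n+1}$ for \emph{every} $n$. But the paper states explicitly, in the discussion following Theorem \ref{sameconsistency}, that for even $n$ the property is only $\Pi^1_{n+2}$, and it presents this asymmetry as essential, tied to Laver's theorem that for even $n$ being a $\Sigma^1_{n+1}$-embedding is strictly stronger than being a $\Sigma^1_n$ one. So your proof implicitly claims a strengthening that the paper's framework rejects; you cannot have both. Either the $\Pi^1_1$ truth definition fails in this setting --- and then your proof collapses at that step, since without it you are forced back to the $\Sigma^1_1$ tree representation and hence to the parity hypothesis --- or you are asserting something strictly beyond what the paper (following Laver) claims, which cannot be done in passing: it must be confronted against the even-$n$ discussion and the strictness results that the paper builds on it. Note also that your closing remark gets the paper's parity phenomenon backwards: the paper does not say that even $n$ admits a sharper bound $\Pi^1_n$; it says the bound degrades to $\Pi^1_{n+2}$. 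Moreover, deducing a bound for even $n$ from the collapse $\Sigma^1_n=\Sigma^1_{n-1}$ is circular here: apart from the case $n=2$ (Theorem \ref{martin}, whose proof is by towers of ultrafilters), that collapse is Theorem \ref{sameconsistency}, which is proved \emph{using} the present lemma through Lemma \ref{change}, so it cannot be invoked to explain why the lemma carries the oddness hypothesis.
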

		\begin{proof}
			By definition $j$ is $\Sigma^1_n$ iff
			\begin{multline*}
			 \forall B\subseteq V_\lambda\ \forall\Sigma^1_0\text{ formula }\varphi(X_1,\dots,X_n,Y)\\
			  \exists A_1\ \forall A_2\dots\exists A_n\ V_\lambda\vDash\varphi(\vec{A},j^+(B))\rightarrow \exists A_1\ \forall A_2\dots\exists A_n\ V_\lambda\vDash\varphi(\vec{A},B).
			\end{multline*}
			But for every $D\subset V_\lambda$, $V_\lambda\vDash\varphi(\vec{A},D)$ iff there exists a branch in $T_{\varphi(\vec{A})}$ whose projection is $D$. Using this is easy to check the Lemma.
		\end{proof}
		
		\begin{lem}[Square root Lemma for $\Sigma^1_n$, Laver, \cite{Laver3}]
			\label{change}
			Let $n$ be odd, $n>1$. Let $j$ be $\Sigma^1_n$, $\crt(j)=\kappa$. Let $\beta<\kappa$, $A,B\subseteq V_\lambda$. Then there exists a $k:V_\lambda\prec V_\lambda$ that is $\Sigma^1_{n-1}$ such that $k(k)=j$, $k(B)=j(B)$, $k(A')=A$ for some $A'\subseteq V_\lambda$ and $\beta<\crt(k)<\kappa$.
		\end{lem}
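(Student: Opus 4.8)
The plan is to realize $k$ as a \emph{square root} of $j$ via a doubling-and-reflection argument. Let $\Phi(X,B,Y,Z,\beta,\kappa)$ be the statement
\begin{multline*}
 \exists m\bigl(m:V_\lambda\prec V_\lambda,\ m\text{ is }\Sigma^1_{n-1},\ m^+(m)=X,\\
 m^+(B)=Y,\ \exists A'\,m^+(A')=Z,\ \beta<\crt(m)<\kappa\bigr),
\end{multline*}
so that what must be proved is exactly $V_\lambda\vDash\Phi(j,B,j(B),A,\beta,\kappa)$: any witness $m$ \emph{is} the desired $k$. The engine of the argument is the trivial observation that $j$ is itself a square root of $j\cdot j=j(j)$, since $j^+(j)=j(j)$ by definition of application.

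Granting for the moment that $\Phi$ is a $\Sigma^1_n$ formula, the reflection runs as follows. Applying $j^+$ to the parameter tuple gives $j^+(j)=j(j)$, $j^+(B)=j(B)$, $j^+(j(B))=j(j(B))$, $j^+(A)=j(A)$, and, since $\beta<\kappa=\crt(j)$, also $j(\beta)=\beta$ while $j(\kappa)=\kappa_1$. Because $j$ is $\Sigma^1_n$, its extension $j^+$ is $\Sigma_n$-elementary, so $V_\lambda\vDash\Phi(j,B,j(B),A,\beta,\kappa)$ holds iff $V_\lambda\vDash\Phi(j(j),j(B),j(j(B)),j(A),\beta,\kappa_1)$ does. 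The latter instance is witnessed by $m=j$: indeed $j$ is $\Sigma^1_{n-1}$ because it is $\Sigma^1_n$; $j^+(j)=j(j)$; $j^+(j(B))=j(j(B))$; taking $A'=A$ gives $j^+(A)=j(A)$; and $\beta<\crt(j)=\kappa<\kappa_1$. Hence the original instance holds and delivers a $k$ with all the required properties.

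The main obstacle — and the only real content — is verifying that $\Phi$ is $\Sigma^1_n$, for which the complexity of the clause ``$m$ is $\Sigma^1_{n-1}$'' is decisive. Here I would exploit that $n$ is odd, so $n-1$ is even and $n-2$ is odd: using the equivalence $\Sigma^1_{n-2}=\Sigma^1_{n-1}$ (the inductive step of the Martin-type equivalences begun in Theorem \ref{martin}, available since $n-2<n$), the clause ``$m$ is $\Sigma^1_{n-1}$'' is equivalent to ``$m$ is $\Sigma^1_{n-2}$'', which by Lemma \ref{Sigmavalue} applied at the odd value $n-2$ is $\Pi^1_{n-1}$. Every remaining clause is of far lower complexity: ``$m:V_\lambda\prec V_\lambda$'' is $\Pi^1_1$, since first-order truth in $V_\lambda$ is local (as $V_{\kappa_m}\prec V_\lambda$ by Lemma \ref{reflection}); the equations $m^+(m)=X$ and $m^+(B)=Y$ and the clause $\beta<\crt(m)<\kappa$ are arithmetic in the relevant subsets once $m^+$ is unwound as $\bigcup_m m(m\cap V_{\kappa_m})$; and $\exists A'\,m^+(A')=Z$ contributes only a harmless second-order existential. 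Thus the matrix of $\Phi$ is $\Pi^1_{n-1}$, and prefixing the single second-order quantifier $\exists m$ yields a $\Sigma^1_n$ formula, as needed. With the complexity secured, the reflection above goes through and produces the $\Sigma^1_{n-1}$ square root $k$ with $k(k)=j$, $k(B)=j(B)$, $k(A')=A$ for some $A'$, and $\beta<\crt(k)<\kappa$.
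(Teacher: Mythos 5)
Your proof is correct and follows essentially the same route as the paper's: you compute the complexity of the square-root statement via Lemma \ref{Sigmavalue} (together with the lower-level equivalence $\Sigma^1_{n-2}=\Sigma^1_{n-1}$ from Theorem \ref{sameconsistency}), observe that $j$ itself trivially witnesses the $j(j)$-instance, and pull the statement back through the $\Sigma^1_n$-elementarity of $j^+$. The only difference is presentational: the paper's formula demands ``$k$ is $\Sigma^1_{n-2}$'' and leaves the upgrade to $\Sigma^1_{n-1}$ implicit, whereas you build ``$m$ is $\Sigma^1_{n-1}$'' into the formula and invoke the equivalence when checking complexity --- a harmless reordering of the same bookkeeping.
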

		\begin{proof}
			The formula ``$\exists k,Y,\ k\text{ is }\Sigma^1_{n-2}$, $k(k)=j$, $k(B)=j(B), k(Y)=A,\beta<\crt(k)<\kappa$'' is $\Sigma^1_n$ by Lemma \ref{Sigmavalue}. The following formula is clearly true:
			\begin{multline*}
				j\text{ is }\Sigma^1_{n-2},\ j(j)=j(j),\ j(j(B))=j(j(B)),\\
				j(A)=j(A),\ j(\beta)=\beta<\crt(j)<j(\kappa)
			\end{multline*}
			but then, with a smart quantification of some of the parameters of the formula above, we have
			\begin{multline*}
				\exists k,Y\ k\text{ is }\Sigma^1_{n-2},\ k(k)=j(j),\ k(j(B))=j(j(B)),\\
				k(Y)=j(A),\ j(\beta)<\crt(k)<j(\kappa)
			\end{multline*}
			By elementarity the lemma is proved.
		\end{proof}
		
		The lemma therefore provides a ``square root'' for $j$, with certain desirable characteristics, but with the catch that for a square root of a $\Sigma^1_n$-elementary embedding we can only assure that it is $\Sigma^1_{n-2}$. It is not possible to do better: suppose that $\kappa$ is the minimum critical point of $j:V_\lambda\prec V_\lambda$ that is $\Sigma^1_n$. Then the critical point of the square root $k$ of $j$ is less then $\kappa$, therefore $k$ cannot be $\Sigma^1_n$. Also note that the square root of an iterable embedding is iterable, as a square root of $j$ is, in a certain sense, $j^{-1}$. 
		
		Note that this would be enough to prove strong implication if we considered the existence of the embeddings as a property of the critical point, and we would have also that I2($\kappa$) strongly implies iterable I3($\kappa$). But we already argued that this seems not the right way to look at this properties. Moreover, it is possible to prove the strong implications of such axioms respect to $\lambda$:
		
		 \begin{cor}
  I2 strongly implies iterable I3.
 \end{cor}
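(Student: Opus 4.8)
The plan is to combine the pointwise implication, already furnished by the Proposition asserting that an I2 witness $j:V\prec M$ yields an iterable $j\upharpoonright V_\lambda$, with a reflection argument in the style of Proposition \ref{iterable} to push the \emph{least} witness strictly down. So let $j:V\prec M$ witness I2$(\lambda)$ with $\crt(j)=\kappa$; by that Proposition $i:=j\upharpoonright V_\lambda:V_\lambda\prec V_\lambda$ is iterable, which already settles the implication half of ``strongly implies''. It then remains to produce an iterable I3 embedding on some $V_\gamma$ with $\gamma<\lambda$; applying this to the least $\lambda$ carrying I2 delivers the required strict inequality between least witnesses.

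First I would iterate $i$ exactly as in the proof of Proposition \ref{iterable}, forming the $\omega$-th iterate $M_\omega$. The same computation gives $V_\lambda\subseteq M_\omega$, an elementary $j_{0,\omega}:V_\lambda\prec M_\omega$ with $j_{0,\omega}(\kappa)=\lambda$, and $\lambda$ inaccessible---in particular regular---in $M_\omega$. The goal is to verify $M_\omega\vDash\exists\gamma<\lambda\ [\text{iterable I3}(\gamma)]$, for then elementarity of $j_{0,\omega}$ together with $\lambda=j_{0,\omega}(\kappa)$ reflects this to $V_\lambda\vDash\exists\gamma<\kappa\ [\text{iterable I3}(\gamma)]$; and since any such $\gamma<\kappa$ has $V_{\gamma+2}\subseteq V_\kappa\subseteq V_\lambda$, both I3 and iterability (which, by Proposition \ref{iterableI3}, is $\omega_1$-iterability, and $\omega_1<\lambda$) are computed correctly there, so $V$ genuinely carries an iterable I3 embedding at some $\gamma<\kappa<\lambda$.

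To obtain the displayed statement inside $M_\omega$, I would build, as in the Corollary following Proposition \ref{iterable}, a tree $\mathcal{T}$ of finite approximations to an I3 embedding augmented with rank functions, but ranking the \emph{formal $\omega_1$-th iterate}: a node is a pair $(\langle e_0,\dots,e_n\rangle,\langle f_0,\dots,f_n\rangle)$, where the $e_k:V_{\alpha_k}\prec V_{\alpha_{k+1}}$ cohere as in Proposition \ref{iterable} and each $f_k$ is a partial rank function on $\{(\beta,a):\beta<\omega_1,\ a\in V_{\alpha_k}\}$ coherent with the equivalence defining the $\omega_1$-st iterate of $\bigcup_n e_n$. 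A branch then yields an embedding whose iterate at stage $\omega_1$ is well-founded---hence $\omega_1$-iterable, hence by Proposition \ref{iterableI3} fully iterable---I3 embedding with $\gamma=\sup_n\alpha_n$. The crucial point is absoluteness: $\mathcal{T}$ is definable from $V_\lambda$ and $\omega_1$, both in $M_\omega$, and since $M_\omega\subseteq V$ contains every real of $V$ one checks $\omega_1^{M_\omega}=\omega_1^V$, so $\mathcal{T}=\mathcal{T}^{M_\omega}$. As $i$ is fully, hence $\omega_1$-, iterable in $V$, its restrictions $i\upharpoonright V_{\kappa_n}$ together with genuine rank functions form a branch, so $\mathcal{T}$ is ill-founded in $V$; by absoluteness of ill-foundedness it is ill-founded in $M_\omega$ too. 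Any branch in $M_\omega$ gives an iterable I3 embedding $k:V_\gamma\prec V_\gamma$, and since $\lambda$ is regular in $M_\omega$ the $\omega$-supremum $\gamma$ cannot reach $\lambda$, so $\gamma<\lambda$ and $M_\omega\vDash\exists\gamma<\lambda\ [\text{iterable I3}(\gamma)]$.

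The main obstacle, and the step demanding the most care, is this coding and its absoluteness: one must see that well-foundedness of the single formal iterate at stage $\omega_1$ really captures $\omega_1$-iterability---this uses that each $M_\nu$, $\nu<\omega_1$, elementarily embeds into $M_{\omega_1}$, so well-foundedness propagates downward to all earlier stages---and that $\mathcal{T}$ is genuinely the same object in $V$ and in $M_\omega$, which is precisely why the identity $\omega_1^{M_\omega}=\omega_1^V$ and the reduction of iterability to $\omega_1$-iterability via Proposition \ref{iterableI3} are essential. Everything else is the verbatim reflection machinery of Proposition \ref{iterable} and its Corollary.
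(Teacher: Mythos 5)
The pointwise half of your argument (I2$(\lambda)$ implies iterable I3$(\lambda)$, via the Proposition) is fine, but the reflection half breaks at exactly the step you flag as crucial, the absoluteness claim $\mathcal{T}=\mathcal{T}^{M_\omega}$. Write $\delta_\nu=\Ord\cap M_\nu$ for the height of the $\nu$-th iterate of $i=j\upharpoonright V_\lambda$. Any order-preserving ranking of the formal $\omega_1$-th iterate of the \emph{full} embedding $i$ must, at the term $[(0,\kappa_0)]$, take a value at least its $E$-rank, which is the ordinal $j_{0,\omega_1}(\kappa_0)=j_{\omega,\omega_1}(\lambda)\geq j_{\omega,\omega\cdot2}(\lambda)=\delta_\omega$; the last equality is the computation of Proposition \ref{iterable} applied to the tail iteration ($\lambda$ is the critical point of $j^\omega$ and $\delta_\omega$ is the supremum of its critical sequence). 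Since $\kappa_0\in V_{\kappa_1}$, already the component $f_1$ of your witnessing branch (the rank function on pairs over $V_{\kappa_1}$) takes values that are not ordinals of $M_\omega$, hence $f_1\notin M_\omega$: your branch does not consist of $M_\omega$-nodes, so $\mathcal{T}^{M_\omega}\subsetneq\mathcal{T}$, and ill-foundedness of $\mathcal{T}$ in $V$ tells you nothing about $\mathcal{T}^{M_\omega}$. If instead you force the $f_k$'s to take values below $\lambda$ --- the only way to make the nodes elements of $V_\lambda$ so that the tree really is absolute --- then $i$ gives no branch at all (the union along a branch would be an order-preserving map of the $\omega_1$-iterate of a full $V_\lambda$-embedding into $\lambda$, impossible since that iterate has height above $\lambda$), and the bounded tree is ill-founded in $V$ exactly when there is already an iterable I3 embedding at some $\gamma<\lambda$: circular. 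This is the essential disanalogy with Proposition \ref{iterable}: there the certificates (the $e_k$'s alone) genuinely lie in $V_\lambda$, whereas certificates of $\omega_1$-iterability of $i$ cannot be kept inside $V_\lambda$, nor even inside $M_\omega$. The same obstruction explains why the paper's neighbouring Corollary only reflects $\omega\cdot\alpha$-iterability for a fixed countable $\alpha$, reflecting into $M_{\omega\cdot(\alpha+1)}$, which at least contains the ordinals the rank values need; with target $\omega_1$ (which, by Proposition \ref{iterableI3}, is what ``iterable'' means) no iterate of the right kind is available, and this technique cannot reach the statement.

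The paper's own proof takes a completely different route precisely to avoid this: ``$k$ is $\omega_1$-iterable'' is $\Sigma^1_1$-expressible over $V_\lambda$ (there is a coherent sequence of transitive iterates), an I2 embedding is $\Sigma^1_2$ by Theorem \ref{martin}, so ``$k$ is iterable'' can be inserted in place of ``$k$ is $\Sigma^1_{n-2}$'' in the square-root Lemma \ref{change}, and the strong-implication machinery of Theorem \ref{strongimplEn} (square roots with increasing critical points, inverse limits) then produces an iterable embedding below $\lambda$. Note that for the final pullback one uses that iterability also has a $\Pi^1_1$ formulation (quantify universally over the uniquely determined formal iteration diagram and assert no descending $\omega$-sequence), and $\Pi^1_1$ statements pull back automatically along the $\Sigma^1_0$-elementary inverse limit, provided the critical points are taken above $\omega_1$ so the parameter is fixed. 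If you want to salvage your write-up, that is the argument to give; the $M_\omega$-reflection with rank-function trees cannot be made to work for full iterability.
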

 \begin{proof}
   Note that ``$j$ is $\omega_1$-iterable'', and therefore ``$j$ is iterable'', is $\Sigma^1_1$ (there exist $\alpha<\omega_1$, $M_0,\dots,M_\alpha)$, $j_0,\dots,j_\alpha$ such that\dots). Therefore one can put ``$j$ is iterable'' in \ref{change} and the proof works.
 \end{proof}
		
		\begin{teo}[Laver, \cite{Laver3}]
			\label{sameconsistency}
			Let $n$ be odd. Then if $j$ is $\Sigma^1_n$, it is also $\Sigma^1_{n+1}$.
		\end{teo}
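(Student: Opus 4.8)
The plan is to treat the base case $n=1$ and the higher odd levels $n>1$ separately, since the machinery differs. For $n=1$ the statement is exactly Theorem \ref{martin}, which already gives that a $\Sigma^1_1$ embedding is $\Sigma^1_2$. So fix an odd $n>1$, assume $j$ is $\Sigma^1_n$ with $\crt(j)=\kappa$, and aim to show $j^+$ is $\Sigma_{n+1}$-elementary. Since $j^+$ is $\Sigma_n$-elementary by hypothesis, it suffices to check preservation of a single $\Sigma^1_{n+1}$ formula $\varphi(Y)\equiv\exists X\,\psi(X,Y)$ with $\psi$ of class $\Pi^1_n$, for an arbitrary parameter $B\subseteq V_\lambda$ (finitely many parameters can be coded into one via the pairing on $V_\lambda$). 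The upward implication is immediate: if $V_\lambda\vDash\psi(A,B)$ for some witness $A$, then $\Pi^1_n$-elementarity of $j$ (that is, $\Sigma^1_n$-elementarity applied to $\neg\psi$) gives $V_\lambda\vDash\psi(j^+(A),j^+(B))$, hence $V_\lambda\vDash\varphi(j^+(B))$.

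The content is in the downward implication, and here I would deploy the square root Lemma \ref{change}. Suppose $V_\lambda\vDash\varphi(j^+(B))$ and fix a witness $A^*$ with $V_\lambda\vDash\psi(A^*,j^+(B))$. Applying Lemma \ref{change} with the parameters $A^*$ and $B$ (and any $\beta<\kappa$) produces a $\Sigma^1_{n-1}$ embedding $k:V_\lambda\prec V_\lambda$ with $k(k)=j$, $k^+(B)=j^+(B)$, and $k^+(A')=A^*$ for some $A'\subseteq V_\lambda$. Now I argue by contraposition: if $\varphi(B)$ failed, then in particular $V_\lambda\vDash\neg\psi(A',B)$, where $\neg\psi$ is a $\Sigma^1_n$ formula. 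Transporting this statement forward along $k$ yields $V_\lambda\vDash\neg\psi(k^+(A'),k^+(B))$, i.e.\ $V_\lambda\vDash\neg\psi(A^*,j^+(B))$, contradicting the choice of $A^*$. Hence $V_\lambda\vDash\varphi(B)$, which completes $\Sigma^1_{n+1}$-elementarity.

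The step that must be justified, and which I expect to be the crux, is why the merely $\Sigma^1_{n-1}$ embedding $k$ transports the $\Sigma^1_n$ statement $\neg\psi$ upward. This is precisely where the quantifier bookkeeping has to balance. Writing $\neg\psi$ as $\exists X\,\chi(X,\cdot)$ with $\chi$ of class $\Pi^1_{n-1}$, I pick a witness $C$ for $\neg\psi(A',B)$ and push $\chi(C,A',B)$ forward using that $k$ preserves $\Pi^1_{n-1}$ formulas in both directions (the negated form of its $\Sigma^1_{n-1}$-elementarity), then reinstate the existential quantifier over $k^+(A'),k^+(B)$. Thus the single level lost by the square root, from $\Sigma^1_n$ down to $\Sigma^1_{n-1}$, is exactly absorbed by the outermost existential quantifier of the $\Sigma^1_n$ formula being moved, so no stronger square root is needed; the parity hypothesis on $n$ enters only as the condition under which Lemma \ref{change} is available. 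It is worth noting that this argument is not an induction on $n$: each odd level is handled directly, with $n=1$ separated out solely because Lemma \ref{change} requires $n>1$.
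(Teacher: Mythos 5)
Your transfer argument is, in its core, exactly the paper's proof in contrapositive form: the paper pulls the witness back to $A'$ via the square root and transports the $\Pi^1_n$ statement $\forall X_2\dots\varphi$ downward along $k$, which is the same step as your upward transport of the $\Sigma^1_n$ statement $\neg\psi$ followed by a contradiction. So the mechanism is right, and the $n=1$ base via Theorem \ref{martin} matches as well. The genuine problem is your closing claim that the argument ``is not an induction on $n$''. You invoke Lemma \ref{change} as handing you a $\Sigma^1_{n-1}$ square root outright. But the proof of Lemma \ref{change} only produces a $\Sigma^1_{n-2}$ embedding: the formula transferred along $j$ there is ``$\exists k,Y\ (k$ is $\Sigma^1_{n-2})\wedge\dots$'', and it is $\Sigma^1_n$ only because, by Lemma \ref{Sigmavalue}, ``being $\Sigma^1_{n-2}$'' is $\Pi^1_{n-1}$ when $n-2$ is odd. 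You cannot write ``$k$ is $\Sigma^1_{n-1}$'' inside instead: $n-1$ is even, so that predicate is only $\Pi^1_{n+1}$, and the resulting formula would be $\Sigma^1_{n+2}$, too complex to move along a merely $\Sigma^1_n$ embedding. This is precisely the parity asymmetry the paper discusses after this theorem, so the parity of $n$ is doing much more work than just making Lemma \ref{change} ``available''.

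Consequently the $\Sigma^1_{n-1}$ conclusion in the statement of Lemma \ref{change} is not free-standing: the upgrade from $\Sigma^1_{n-2}$ to $\Sigma^1_{n-1}$ is exactly an application of Theorem \ref{sameconsistency} at level $n-2$. Using the lemma in that strengthened form to prove the theorem at level $n$ is therefore circular. The repair is to organize everything as an induction on odd $n$, which is what the paper does: assume the theorem at $n-2$, extract a $\Sigma^1_{n-2}$ square root $k$ from the proof of Lemma \ref{change}, upgrade $k$ to $\Sigma^1_{n-1}$ by the induction hypothesis, and then run your transfer verbatim (a $\Sigma^1_{n-1}$-elementary $k$ moves $\Sigma^1_n$ statements upward, which is all you need). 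So your proof becomes correct once the induction is reinstated; what it cannot do is eliminate the induction, which is hidden, not absent, in the version of the square root lemma you quoted.
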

		\begin{proof}
			Theorem \ref{martin} is the case $n=1$. We proceed by induction on $n$.
			
			Let $B\subseteq V_\lambda$. We have to prove that for every $\psi$ $\Sigma_{n+1}$ formula and $B\subseteq V_{\lambda+1}$, 
			\begin{equation*}
			 V_\lambda\vDash\psi(B)\text{ iff }V_\lambda\vDash\psi(j^+(B)).
			\end{equation*}
			By induction, the direction from left to right is immediate, so it suffices to show the other direction. Suppose 
			\begin{equation*}
			 V_\lambda\vDash\exists X_1\ \forall X_2\ \exists X_3\dots\forall X_{n+1}\ \varphi(X_1,\dots,X_{n+1},j^+(B)). 
			\end{equation*}
			Let $X_1=A$ a witness. By Lemma \ref{change}, we can pick a $k$ that is $\Sigma^1_{n-2}$ such that $k(B)=j(B)$ and there exists $A'\subseteq V_{\lambda+1}$ such that $k(A')=A$. By induction, $k$ is also $\Sigma^1_{n-1}$, and since 
			\begin{equation*}
			 V_\lambda\vDash\forall X_2\ \exists X_3\dots\forall X_{n+1}\ \varphi(k(A'),X_2,\dots,X_{n+1},k(B)),
			\end{equation*}
			then 
			\begin{equation*}
			 V_\lambda\vDash\forall X_2\ \exists X_3\dots\forall X_{n+1}\ \varphi(A',X_2,\dots,X_{n+1},B).
			\end{equation*}
			Therefore $A'$ is a witness for $\psi(B)$.
		\end{proof}
		
		Theorem \ref{sameconsistency} shows a peculiar asimmetry. What problems arise when $n$ is even? The key is in the proof of Lemma \ref{Sigmavalue}. When the number of the quantifiers of a $\Sigma_n$ formula is odd, then the last one is an existential quantifier. Since by Lemma \ref{SatisfactionTree1} the satisfaction relation for $\Sigma^1_1$ formulae is $\Sigma^1_1$, this quantifier is absorbed by the satisfaction formula. When $n$ is even, however, the last quantifier is a universal one, and ``being $\Sigma^1_n$`` is still $\Pi^1_{n+2}$. This seems an ineffectual detail, but in fact Laver proved that it is an essential one, since he showed that being a $\Sigma^1_{n+1}$-embedding, for $n$ even, is strictly stronger than being a $\Sigma^1_n$ one.
		
		\begin{teo}[Laver, \cite{Laver3}]
			\label{Application}
			Let $h$ and $k$ be $\Sigma^1_n$ embeddings. Then $h\cdot k$ is a $\Sigma^1_n$-embedding.
		\end{teo}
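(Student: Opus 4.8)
The plan is to read ``$h\cdot k$ is $\Sigma^1_n$'' as the assertion that a fixed projective property, namely ``being a $\Sigma^1_n$-embedding'', holds of the set $h\cdot k=h^+(k)\in V_{\lambda+1}$, and then to obtain it by transporting that property from $k$ across $h^+$. Concretely, write $\Psi_n(X)$ for the statement ``$X$, viewed as an embedding $V_\lambda\prec V_\lambda$, is $\Sigma^1_n$''. Since $k$ is $\Sigma^1_n$ we have $V_\lambda\vDash\Psi_n(k)$, and recalling that $h\cdot k=h^+(k)$, together with Theorem \ref{Delta0} (which guarantees $h^+(k)\in{\cal E}_\lambda$), it suffices to show $V_\lambda\vDash\Psi_n(h^+(k))$. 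This reduces everything to showing that $h^+$ is elementary enough to carry $\Psi_n$ across.

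First I would reduce to the case $n$ odd. If $n$ is even then $n-1$ is odd, and Theorem \ref{sameconsistency} (applied to the odd number $n-1$) gives $\Sigma^1_{n-1}=\Sigma^1_n$; hence ``$h$ is $\Sigma^1_n$'' coincides with ``$h$ is $\Sigma^1_{n-1}$'', and likewise for $k$ and for $h\cdot k$. So the even case follows at once from the odd case with $n-1$ in place of $n$.

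Now assume $n$ is odd. By Lemma \ref{Sigmavalue}, the formula $\Psi_n(X)$ is $\Pi^1_{n+1}$ in $V_\lambda$, with $X$ as a parameter. On the other hand, because $h$ is $\Sigma^1_n$ with $n$ odd, Theorem \ref{sameconsistency} upgrades $h$ to a $\Sigma^1_{n+1}$-embedding; that is, $h^+$ is a $\Sigma_{n+1}$-elementary embedding of $V_{\lambda+1}$, and therefore also $\Pi_{n+1}$-elementary, since $\Sigma_{n+1}$-elementarity preserves negations and hence $\Pi_{n+1}$ formulae too. Applying this $\Pi^1_{n+1}$-preservation to the true statement $V_\lambda\vDash\Psi_n(k)$ yields $V_\lambda\vDash\Psi_n(h^+(k))$, i.e. $h\cdot k$ is $\Sigma^1_n$, as required.

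The crux is a complexity mismatch: ``being $\Sigma^1_n$'' is genuinely a level-$(n+1)$ projective property, as it quantifies universally over all $B\subseteq V_\lambda$ and asserts a biconditional between two $\Sigma^1_n$ satisfaction statements, so the bare $\Sigma_n$-elementarity of a $\Sigma^1_n$-embedding does not by itself transport it. The whole argument hinges on Theorem \ref{sameconsistency} closing exactly this one-level gap for odd $n$, with the even case folded back into the odd one by the same collapse. The remaining points are routine: that $h^+(k)$ is again a member of ${\cal E}_\lambda$ (Theorem \ref{Delta0}), and that $\Psi_n$ is the \emph{same} $\Pi^1_{n+1}$ formula whether its free variable is instantiated by $k$ or by $h^+(k)$, so that preservation under $h^+$ applies verbatim.
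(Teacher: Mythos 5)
Your proof is correct and is essentially the paper's own argument: both reduce to odd $n$ via Theorem \ref{sameconsistency}, express ``being a $\Sigma^1_n$-embedding'' as a $\Pi^1_{n+1}$ property of the embedding-as-parameter, and then transport that property across $h^+$ using the fact that $h$ is $\Sigma^1_{n+1}$. The only cosmetic difference is that you invoke Lemma \ref{Sigmavalue} as a black box, whereas the paper writes out the relevant $\Pi^1_{n+1}$ reflection statement for each fixed $\Sigma^1_0$ formula $\varphi$ and transports these one at a time.
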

		\begin{proof}
			By Theorem \ref{sameconsistency} we can suppose $n$ odd. Let $\varphi$ be a $\Sigma^1_0$ formula with $n+1$ variables. Then
			\begin{multline*}
				V_\lambda\vDash\forall Y\ \bigl(\exists X_1\ \forall X_2\dots\exists X_n\ \varphi(X_1,\dots,X_n, k(Y))\\
				\rightarrow\exists X_1\ \forall X_2\dots\exists X_n\ \varphi(X_1,\dots,X_n,Y)\bigr).
			\end{multline*}
			This formula is $\Pi^1_{n+1}$. Since by Theorem \ref{sameconsistency} $h$ is $\Sigma^1_{n+1}$, we have
			\begin{multline*}
				V_\lambda\vDash\forall Y\ \bigl(\exists X_1\ \forall X_2\dots\exists X_n\ \varphi(X_1,\dots,X_n, h(k)(Y))\\
				\rightarrow\exists X_1\ \forall X_2\dots\exists X_n\ \varphi(X_1,\dots,X_n,Y)\bigr).
			\end{multline*}
			so $h(k)$ is $\Sigma^1_n$.
		\end{proof}
		
		Note that the converse is not true: minimizing the critical point, we can find a $k$ that is $\Sigma^1_n$ such that any $h$ so that $h(h)=k$ cannot be $\Sigma^1_n$ (see comment after Lemma \ref{change}).
		
		However, if we switch application with composition, then also the converse is true.
		
		\begin{lem}
			\label{forComposition}
			If $h$ and $k$ are $\Sigma^1_m$ and $h\circ k$ is $\Sigma^1_{m+1}$, then $k$ is $\Sigma^1_{m+1}$.
		\end{lem}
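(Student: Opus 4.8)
The plan is to reduce the $\Sigma^1_{m+1}$-elementarity of $k$ to a two-way preservation argument whose crucial structural input is that the $(\cdot)^+$ operation commutes with composition. First I would record the trivial case: if $m$ is odd, then by Theorem \ref{sameconsistency} the embedding $k$ is already $\Sigma^1_{m+1}$ just from being $\Sigma^1_m$, so there is nothing to prove; hence the content lies in the case $m$ even, although the argument below does not actually use the parity. I would also fix the normal form: a $\Sigma^1_{m+1}$ formula is $\varphi(Z)\equiv\exists X\,\psi(X,Z)$ with $\psi$ a $\Pi^1_m$ formula, and observe that since the definition of a $\Sigma^1_m$-embedding is a biconditional on $\Sigma^1_m$ formulas, by negation such an embedding preserves $\Pi^1_m$ formulas in both directions.

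The hard part is a clean auxiliary fact I would isolate first: $(h\circ k)^+=h^+\circ k^+$. To prove it, note that both sides are $\Delta_0$-elementary embeddings of $V_{\lambda+1}$ (the left by Theorem \ref{Delta0} applied to $h\circ k:V_\lambda\prec V_\lambda$, the right as a composition of two $\Delta_0$-elementary maps), and then compute directly. Using that ``$z=x\cap v$'' is $\Delta_0$ and that $k^+\!\upharpoonright V_\lambda=k$, one gets $k^+(A)\cap V_{k(\gamma)}=k(A\cap V_\gamma)$ for every $\gamma<\lambda$; since $\{k(\gamma):\gamma<\lambda\}$ is cofinal in $\lambda$ and both $\delta\mapsto k^+(A)\cap V_\delta$ and $h$ are monotone,
\[
h^+(k^+(A))=\bigcup_{\gamma<\lambda}h\bigl(k^+(A)\cap V_{k(\gamma)}\bigr)=\bigcup_{\gamma<\lambda}h\bigl(k(A\cap V_\gamma)\bigr)=(h\circ k)^+(A).
\]

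With this in hand I would verify the two implications defining $\Sigma^1_{m+1}$-elementarity of $k$, i.e.\ $V_\lambda\vDash\varphi(A)\leftrightarrow V_\lambda\vDash\varphi(k^+(A))$ for each $\varphi=\exists X\,\psi$ as above. The forward direction uses only that $k$ is $\Sigma^1_m$: a witness $X_0$ with $\psi(X_0,A)$ yields $\psi(k^+(X_0),k^+(A))$ by $\Pi^1_m$-preservation, so $\varphi(k^+(A))$ holds. The backward direction is where $h$ and the commutation identity enter. Suppose $\varphi(k^+(A))$, witnessed by $X_0$ with $\psi(X_0,k^+(A))$. Applying $h^+$, which preserves $\Pi^1_m$ since $h$ is $\Sigma^1_m$, gives $\psi(h^+(X_0),h^+(k^+(A)))$, hence $\varphi(h^+(k^+(A)))=\varphi((h\circ k)^+(A))$ by the identity above. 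Finally, since $h\circ k$ is $\Sigma^1_{m+1}$ and $\varphi$ is $\Sigma^1_{m+1}$, the biconditional gives $V_\lambda\vDash\varphi(A)$, as required.

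The only genuine obstacle is the commutation lemma $(h\circ k)^+=h^+\circ k^+$; once it is available the rest is pure quantifier bookkeeping, and—pleasantly—no square-root or tree machinery from the earlier lemmas is needed. I would double-check that the $\Pi^1_m$-preservation steps are applied with both set-arguments moved simultaneously by the relevant embedding, which is legitimate since several subsets of $V_\lambda$ can be coded as one and this coding is $\Delta_0$, so it is respected by the $\Delta_0$-elementary maps $k^+$ and $h^+$.
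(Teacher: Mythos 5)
Your proof is correct and is essentially the paper's own argument run in contrapositive: the paper shows, for $\Pi^1_{m+1}$ formulas, that $\varphi(B)$ travels up through $h\circ k$ to $\varphi(h\circ k(B))$ and then down to $\varphi(k(B))$ by pulling instances of the outer universal quantifier back through $h$'s $\Sigma^1_m$-elementarity, whereas you push an existential witness up through $h$ from $k^+(A)$ to $(h\circ k)^+(A)$ and then come back down through $h\circ k$'s $\Sigma^1_{m+1}$-elementarity --- the same two hypotheses doing the same work at the same intermediate stop. Your explicit statement and proof of $(h\circ k)^+=h^+\circ k^+$ is a sound addition rather than a different route; the paper uses this identity silently when it treats $h\circ k(B)$ interchangeably as $h(k(B))$.
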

		\begin{proof}
			Suppose that 
			\begin{equation*}
				V_\lambda\vDash\forall X_1\ \exists X_2\dots\forall X_{m+1}\ \varphi(X_1,\dots,X_{m+1},B) 
			\end{equation*}
			with $B\subseteq V_\lambda$. Then 
			\begin{equation*}
				V_\lambda\vDash\forall X_1\ \exists X_2\dots\forall X_{m+1}\ \varphi(X_1,\dots,X_{m+1},h\circ k(B)). 
			\end{equation*}
			Let $A\subseteq V_\lambda$. Then in particular 
			\begin{equation*}
				V_\lambda\vDash\exists X_2\dots\forall X_{m+1}\ \varphi(h(A),X_2,\dots,X_{m+1},h\circ k(B)). 
			\end{equation*}
			By elementarity 
			\begin{equation*}
				V_\lambda\vDash\exists X_2\dots\forall X_{m+1}\ \varphi(A,X_2,\dots,X_{m+1},k(B)). 
			\end{equation*}
			Since this is true for every $A\subseteq V_\lambda$, we have 
			\begin{equation*}
				V_\lambda\vDash\forall X_1\ \exists X_2\dots\forall X_{m+1}\ \varphi(X_1,\dots,X_{m+1},k(B)).
			\end{equation*}
		\end{proof}
		
		\begin{teo}[Laver, \cite{Laver3}]
		\label{Composition}
			Let $h,k\in{\cal E}_\lambda$. Then $h,k$ are $\Sigma^1_n$ iff $h\circ k$ is $\Sigma^1_n$. 
		\end{teo}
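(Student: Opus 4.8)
The plan is to prove the two implications separately, reducing everything to the $+$-operation (so that $\Sigma^1_n$-ness becomes $\Sigma_n$-elementarity of embeddings of $V_{\lambda+1}$) and then using the algebraic identity relating composition and application to handle the delicate half.

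For the forward direction I would first check that $(h\circ k)^+=h^+\circ k^+$. Both sides are $\Delta_0$-elementary embeddings of $V_{\lambda+1}$ (the left by Theorem \ref{Delta0}, the right because $h^+$ and $k^+$ are each at least $\Delta_0$-elementary and $\Delta_0$-elementarity is preserved under composition), and both restrict to $h\circ k$ on $V_\lambda$, since $k^+\upharpoonright V_\lambda=k$ and $h^+\upharpoonright V_\lambda=h$. The key point is that a $\Delta_0$-elementary extension $J:V_{\lambda+1}\to V_{\lambda+1}$ of a fixed $j=J\upharpoonright V_\lambda$ is unique: for $A\subseteq V_\lambda$ the $\Delta_0$ sentence ``$C=A\cap V_\beta$'' (with parameters $C,A,V_\beta$) forces $J(A)\cap V_{j(\beta)}=j(A\cap V_\beta)$ for every $\beta<\lambda$, and since $\sup_{\beta<\lambda}j(\beta)=\lambda$ this yields $J(A)=\bigcup_{\beta<\lambda}j(A\cap V_\beta)=j^+(A)$. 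Granting this identity, if $h$ and $k$ are $\Sigma^1_n$ then $h^+$ and $k^+$ are $\Sigma_n$-elementary, hence so is the composition $(h\circ k)^+=h^+\circ k^+$, which is exactly the assertion that $h\circ k$ is $\Sigma^1_n$.

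For the reverse direction I would argue by induction on $m\le n$ that $h$ and $k$ are both $\Sigma^1_m$, the base case $m=0$ being Theorem \ref{Delta0}. Assume $h,k$ are $\Sigma^1_m$ and recall $h\circ k$ is $\Sigma^1_n$, hence $\Sigma^1_{m+1}$. Lemma \ref{forComposition} applied to $h\circ k$ immediately gives that the right factor $k$ is $\Sigma^1_{m+1}$. The left factor is the genuinely delicate point, since Lemma \ref{forComposition} only ever extracts the right factor. To get around this I would rewrite the composition using the application identity $(h\cdot k)\circ h=h\circ k$, which holds because $(h\cdot k)(h(x))=h(k(x))$ for every $x\in V_\lambda$ directly from the definition $h\cdot k=h^+(k)$. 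Since $h$ and $k$ are $\Sigma^1_m$, Theorem \ref{Application} guarantees $h\cdot k$ is $\Sigma^1_m$ as well; now $(h\cdot k)\circ h$ equals $h\circ k$ and is therefore $\Sigma^1_{m+1}$, so a second application of Lemma \ref{forComposition}, this time to the factorization $(h\cdot k)\circ h$, extracts its right factor $h$ as $\Sigma^1_{m+1}$. This closes the induction and yields that both $h$ and $k$ are $\Sigma^1_n$.

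I expect the main obstacle to be exactly this left factor. The naive attempt, combining $k$ being $\Sigma^1_{m+1}$ with $h\circ k$ being $\Sigma^1_{m+1}$, only shows that $h$ preserves $\Sigma^1_{m+1}$ formulas on the range of $k^+$, which is a proper subset of $V_{\lambda+1}$, so it does not establish that $h$ is $\Sigma^1_{m+1}$. Replacing $h\circ k$ by the factorization $(h\cdot k)\circ h$ is the trick that converts a left-factor problem into a right-factor problem solvable by Lemma \ref{forComposition}, and checking that $h\cdot k$ retains the required $\Sigma^1_m$ complexity via Theorem \ref{Application} is precisely what makes the induction go through.
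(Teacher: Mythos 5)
Your proof is correct and takes essentially the same route as the paper's: the paper also handles the converse by induction on $m\le n$, using Theorem \ref{Application} to get that $h\cdot k$ is $\Sigma^1_m$, the identity $h\circ k=(h\cdot k)\circ h$, and two applications of Lemma \ref{forComposition} (one to each side of that identity) to extract first $k$ and then $h$ as $\Sigma^1_{m+1}$. Your extra details --- the uniqueness argument giving $(h\circ k)^+=h^+\circ k^+$ for the forward direction and the explicit check of the application identity --- merely spell out what the paper dismisses as ``obvious'' and ``easy to calculate''.
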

		\begin{proof}
			If $h$ and $k$ are $\Sigma^1_n$, then obviously $h\circ k$ is $\Sigma^1_n$. We prove by induction on $m\leq n$ that $h$ and $k$ are $\Sigma^1_m$.
			
			The case $m=0$ is by hypothesis. Suppose it is true for $m$. Then by Theorem \ref{Application} $h(k)$ is $\Sigma^1_m$. It is easy to calculate that $h\circ k=h(k)\circ h$, and this is $\Sigma^1_{m+1}$ by hypothesis. By using Lemma \ref{forComposition} in the left side, we have that $k$ is $\Sigma^1_{m+1}$, and using it on the right side we have that $h$ is $\Sigma^1_{m+1}$.
		\end{proof}
		
		Theorem \ref{Composition} is promising for our objective, that is proving that being $\Sigma^1_{n+2}$ is strictly stronger than being $\Sigma^1_n$ for an elementary embedding. The most natural idea for doing this is using some sort of reflection, to prove that if there is a $j\in{\cal E}_\lambda$ $\Sigma^1_{n+2}$, then there is a $k\in{\cal E}_{\lambda'}$ that is $\Sigma^1_n$ for some $\lambda'<\lambda$. It would be tempting to use, as in the case of iterability, direct limit of elementary embeddings, but unfortunately in this setting there are counterexamples:
		
		\begin{teo}[Laver, \cite{Laver2}]
			There exists a $j$ that is $\Sigma^1_n$ that has a stabilizing direct limit of members of ${\cal A}_j$ that is not $\Sigma^1_1$,
		\end{teo}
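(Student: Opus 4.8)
The plan is to play off Theorem \ref{Application} against the characterization of $\Sigma^1_1$-ness as preservation of well-foundedness. By Theorem \ref{Application} the application operation keeps us inside the class of $\Sigma^1_n$ embeddings, so once we fix any $\Sigma^1_n$ embedding $j\in{\cal E}_\lambda$, every member of its application-closure ${\cal A}_j$ is again $\Sigma^1_n$. The content of the counterexample is that this finitary stability collapses in the limit: one can form a direct limit of members of ${\cal A}_j$ that stabilizes --- i.e.\ is well-founded, so that it yields a genuine embedding $k_\infty\colon V_\lambda\prec V_\lambda$ --- and yet fails to be even $\Sigma^1_1$. The detector of failure is the Lemma established above stating that an embedding $k$ is $\Sigma^1_1$ exactly when $k^+$ preserves well-founded relations; so it suffices to produce a well-founded $R\subseteq V_\lambda$ with $k_\infty^+(R)$ ill-founded.

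First I would set up the direct system. As in the iterability picture of Proposition \ref{iterable}, suitable members of ${\cal A}_j$ form a commuting system of embeddings of $V_\lambda$, and one selects a cofinal sequence $\langle k_m:m\in\omega\rangle\subseteq{\cal A}_j$ and forms its direct limit as a system of embeddings. Because the critical points of members of ${\cal A}_j$ can be placed with precision inside the intervals $(\kappa_n,\kappa_{n+1})$ counted by the function $f$ introduced earlier, one has enough freedom to steer the limit; the limit embedding $k_\infty$ is the induced map on the direct limit.

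Second, for the \emph{stabilizing} clause I would check directly that the $\in$-structure of the limit is well-founded. Each $k_m$ is $\Sigma^1_n$, hence preserves well-founded relations at every finite stage, and the coherence of the $\langle k_m\rangle$ is arranged so that no descending chain survives in the limit structure, so that $k_\infty$ is a genuine elementary embedding of $V_\lambda$ into itself. It is worth stressing that well-foundedness of the limit structure is strictly weaker than the kind of completeness that feeds the Gaifman--Powell Lemma \ref{gaifpow}: a complete tower forces a well-founded limit, but not conversely, and the whole construction lives precisely in this gap.

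Finally, to see that $k_\infty$ is not $\Sigma^1_1$ I would exhibit a single well-founded relation $R$ whose image is ill-founded. The natural candidate is a tree coding finite approximations to the system being iterated, in the spirit of the tree $T$ in the proof of Proposition \ref{iterable}: $R$ has no infinite branch in $V_\lambda$, so it is well-founded, while the pieces $k_m(R\cap V_\beta)$, assembled across the cofinal sequence, splice together an infinite descending chain in $k_\infty^+(R)=\bigcup_\beta k_\infty(R\cap V_\beta)$. By the characterization above, $k_\infty$ is therefore not $\Sigma^1_1$. The hard part is exactly this simultaneity: one must thread the choice of the $k_m$ so that the ambient $\in$-structure of the direct limit stays well-founded --- giving the stabilizing limit embedding --- while the designated relation $R$ alone acquires a descending chain in the limit. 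This is the concrete realization of the Gaifman--Powell gap, and it is what makes direct limits useless for the reflection argument one would have hoped to run.
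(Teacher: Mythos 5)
The paper itself offers no proof of this statement --- it is quoted from Laver's \cite{Laver2} exactly as a black box, in order to justify switching from direct limits to inverse limits --- so your proposal must stand on its own as a construction, and it does not. It is a description of the shape a proof would have, not a proof. The two genuinely hard points are: (i) producing an explicit sequence $\langle k_m:m\in\omega\rangle\subseteq{\cal A}_j$ whose direct limit stabilizes, so that one obtains a limit embedding $k_\infty$ at all; and (ii) simultaneously producing a well-founded relation $R$ with $k_\infty^+(R)$ ill-founded. You handle (i) by asserting that ``the coherence of the $\langle k_m\rangle$ is arranged so that no descending chain survives,'' and (ii) by gesturing at ``a tree coding finite approximations to the system being iterated,'' and you then concede that ``the hard part is exactly this simultaneity.'' That concession is the theorem: naming the tension is not resolving it. Nothing in the proposal says which $j$ to take, which members of ${\cal A}_j$ to select (the function $f$ merely counts critical points in the intervals $(\kappa_n,\kappa_{n+1})$; it provides no mechanism for ``placing'' them so as to steer a limit), or why the chosen $R$ would be well-founded in $V_\lambda$ yet acquire an infinite descending sequence in the limit.

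Two local points are also wrong or misleading. First, the tree $T$ in the proof of Proposition \ref{iterable} is \emph{ill}-founded --- that is the very point of that proof, since $j$ itself is a branch of $T$ --- so a relation ``in the spirit of'' $T$ starts on the wrong side, and your parenthetical claim that ``$R$ has no infinite branch in $V_\lambda$'' is unsupported. Second, your argument for stabilization --- each $k_m$ is $\Sigma^1_n$, hence preserves well-founded relations at each finite stage, hence the limit is well-founded --- is a non sequitur: preservation of well-foundedness is exactly the property that fails to pass through direct limits (if it did pass, every stabilizing direct limit of $\Sigma^1_1$ embeddings would again be $\Sigma^1_1$ by the characterization lemma, and the theorem you are trying to prove would be false). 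The well-foundedness of the limit has to be extracted from concrete combinatorial features of the particular sequence chosen, which is precisely what Laver's construction in \cite{Laver2} supplies and what is absent here.
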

		
		So we will consider inverse limits instead.
		
		Let $\langle j_0,j_1,\dots\rangle$ be a sequence of elements of ${\cal E}_\lambda$, and let $J=j_0\circ j_1\circ\dots$ be the inverse limit of the sequence. By definition the dominion of $J$ is $\{x\in V_\lambda:\exists n_x\ \forall i>n_x\  j_i(x)=x\}$. But we know that $j_i(x)=x$ iff $x\in V_{\crt(j_i)}$, so this is $\{x\in V_\lambda:\exists n_x\ \forall i>n_x\ x\in V_{\crt(j_i)}\}$. That is, $x\in\dom J$ depends only on the rank of $x$, and this implies that $\dom J=V_\alpha$ for some $\alpha$. It is also possible to calculate $\alpha$, since $\beta<\alpha$ iff $\exists n\ \forall m\geq n\ \beta<\crt(j_m)$:
		  \begin{equation*}
		   \alpha=\sup_{n\geq 0}\inf_{m\geq n}\crt(j_m)=\liminf_{n\in\omega}\crt(j_n).
		  \end{equation*}
		
		With some cosmetic change, we can also suppose that $\alpha$ as the supremum of the critical points, not only the limit inferior. This will also simplify the following proofs and notations.

		So let $\lambda_n=\inf_{m\geq n}\crt(j_m)$. Then $\alpha=\sup_{n\in\omega}\lambda_n$, and $\lambda_n$ is increasing in $n$. If the supremum is also a maximum, we incur in the trivial case, where $J$ is in fact just a finite composition of elementary embeddings: if $n$ is the first one such that $\lambda_n=\alpha$, then all $\crt(j_m)$ with $m>n$ are bigger than $\alpha$, so they are constant in the domain of $J$ and they don't change anything.
		
		Suppose then that $\alpha$ is a proper supremum of the $\lambda_n$ sequence. We can suppose $\crt(j_n)=\lambda_n$ by aggregating multiple elementary embeddings in just one: consider the largest $n$ such that $\crt(j_n)=\lambda_0$ (there will be a largest one because $\alpha$ is a proper supremum of the $\lambda_n$ sequence), define the new $k_0$ as the old $j_0\circ\dots\circ j_n$, and repeat this for every $\lambda_n$. The following is a graphical example:
		\newline
		\newline
		\begin{tikzpicture}
			\draw (0,0) grid [xstep=0.5, ystep=3] (4.5,2.5);
			\draw (0,1) -- (0.5,1);
			\draw (0.5,0.75) -- (1,0.75);
			\draw (1,1.25) -- (1.5,1.25);
			\draw (1.5,1.5) -- (2,1.5);
			\draw (2,1) -- (2.5,1);
			\draw (2.5,2) -- (3,2);
			\draw (3,1.25) -- (3.5,1.25);
			\draw (3.5,1.5) -- (4,1.5);
			\draw (4.5,0) -- (4.75,0);
			\draw [dotted] (4.75,0) -- (5.25,0);
			\draw (0.1,-0.25) -- node [below= 2pt] {$k_0$} (0.9,-0.25);
			\draw (0.1,-0.25) -- (0.1,-0.1);
			\draw (0.9,-0.25) -- (0.9,-0.1);
			\draw (1.1,-0.25) -- node [below= 2pt] {$k_1$} (2.4,-0.25);
			\draw (1.1,-0.25) -- (1.1,-0.1);
			\draw (2.4,-0.25) -- (2.4,-0.1);
			\draw (2.6,-0.25) -- node [below= 2pt] {$k_2$} (3.4,-0.25);
			\draw (2.6,-0.25) -- (2.6,-0.1);
			\draw (3.4,-0.25) -- (3.4,-0.1);
			\draw (3.6,-0.25) -- node [below= 2pt] {$k_3$} (3.9,-0.25);
			\draw (3.6,-0.25) -- (3.6,-0.1);
			\draw (3.9,-0.25) -- (3.9,-0.1);
		\end{tikzpicture} 
		\newline
\newline
		The columns represent the behaviours of each $j_n$ on $\lambda$, where the column on the left represent $j_0$, and the horizontal lines indicate the critical point.
		
		\begin{defin}
		 Let $J=j_0\circ j_1\circ\dots$. Then we define $J_n=j_n\circ j_{n+1}\circ\dots$ and $J_{0(n-1)}=j_0\circ j_1\circ\dots\circ j_{n-1}$.
		\end{defin}
		
		\begin{lem}
		\label{InverseLimit}
		 Let $j_m\in {\cal E}_\lambda$ for every $m\in\omega$, define $\alpha_m=\crt(j_m)$ and suppose that for every $m\in\omega$, $\alpha_m<\alpha_{m+1}$. Let $\alpha=\sup_{m\in\omega}\alpha_m$ and $J=j_0\circ j_1\circ\dots$. Then
		 \begin{itemize}
		 	\item $J''\alpha$ is unbounded in $\lambda$;
		 	\item $J: V_\alpha\prec V_\lambda$ is elementary.
		 \end{itemize}
		\end{lem}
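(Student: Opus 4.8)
The plan is to exploit that, although $J$ is an infinite composition, on each initial segment it coincides with a \emph{finite} one. Abbreviate $K_n=J_{0(n-1)}=j_0\circ\cdots\circ j_{n-1}$; being a finite composition of members of ${\cal E}_\lambda$, this is itself an elementary $K_n:V_\lambda\prec V_\lambda$, with $\crt(K_n)=\min_{i<n}\alpha_i=\alpha_0$. The structural observation driving everything is
\begin{equation*}
 J\upharpoonright V_{\alpha_n}=K_n\upharpoonright V_{\alpha_n}.
\end{equation*}
Indeed, if $x\in V_{\alpha_n}$ then for every $m\geq n$ we have $x\in V_{\alpha_n}\subseteq V_{\alpha_m}=V_{\crt(j_m)}$, so $j_m(x)=x$ and the tail of the composition acts trivially on $x$. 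This reduces any computation about $J$ on $V_{\alpha_n}$ to a statement about the genuine $V_\lambda$-embedding $K_n$.

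First I would prove elementarity. Since $\alpha_n=\crt(j_n)$ is the first term of the critical sequence of $j_n$, Lemma \ref{reflection} gives $V_{\alpha_n}\prec V_\lambda$ for every $n$. As $\alpha_0<\alpha_1<\cdots$ and $V_\alpha=\bigcup_n V_{\alpha_n}$, the $V_{\alpha_n}$ form an elementary chain (each pair $V_{\alpha_n}\prec V_{\alpha_{n+1}}$, since both are elementary in $V_\lambda$ and one is contained in the other), so $V_\alpha\prec V_\lambda$ by the elementary chain theorem. Now take $a_1,\dots,a_k\in V_\alpha$ and a formula $\varphi$; pick $n$ with $a_1,\dots,a_k\in V_{\alpha_n}$, so that $J(a_i)=K_n(a_i)$ for each $i$. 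Then
\begin{equation*}
 V_\alpha\vDash\varphi(\vec{a})\ \Leftrightarrow\ V_\lambda\vDash\varphi(\vec{a})\ \Leftrightarrow\ V_\lambda\vDash\varphi(K_n(\vec{a}))\ \Leftrightarrow\ V_\lambda\vDash\varphi(J(\vec{a})),
\end{equation*}
the first step by $V_\alpha\prec V_\lambda$, the second by elementarity of $K_n:V_\lambda\prec V_\lambda$, the third by the agreement above. Hence $J:V_\alpha\prec V_\lambda$.

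For unboundedness I would begin from the same agreement. Because $\alpha_n<\alpha_{n+1}$, we get $J(\alpha_n)=K_{n+1}(\alpha_n)=K_n(j_n(\alpha_n))$, and since every elementary embedding satisfies $K_n(\gamma)\geq\gamma$ this gives the lower bound $J(\alpha_n)\geq j_n(\alpha_n)>\alpha_n$. Thus it suffices to show $\sup_n J(\alpha_n)=\lambda$, that is, $\sup_n\sup(K_n''\alpha_n)=\lambda$. The natural engine is the self-similar decomposition $J=j_0\circ J_1$, with $J_1=j_1\circ j_2\circ\cdots$ the inverse limit of the shifted sequence (same supremum $\alpha$): one checks $\sup(J''\alpha)=j_0\bigl(\sup(J_1''\alpha)\bigr)$, using that $\sup(J_1''\alpha)$ has cofinality $\omega<\alpha_0=\crt(j_0)$, so $j_0$ is continuous there.

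I expect this unboundedness clause to be the real work. The naive bound $J(\alpha_n)\geq j_n(\alpha_n)$ is genuinely insufficient: nothing stops the second critical values $j_n(\alpha_n)$ from all lying below some fixed $\nu<\lambda$, so the growth to $\lambda$ must come entirely from the amplification by the outer factors in $K_n$, i.e.\ from iterating $J=j_0\circ J_1=j_0\circ j_1\circ J_2=\cdots$ and showing it cannot stabilise below $\lambda$. Writing $\mu^{(k)}=\sup(J_k''\alpha)$, the decomposition yields $\mu^{(k)}=j_k(\mu^{(k+1)})$ with each $\mu^{(k)}\geq\alpha$ and the sequence $\langle\mu^{(k)}\rangle$ nonincreasing; a nonincreasing sequence of ordinals is eventually constant, so the danger is precisely that some common value $\mu^{\ast}<\lambda$ is a fixed point of a tail of the $j_k$ (which is exactly what would leave $J''\alpha$ bounded by $\mu=K_n(\mu^{\ast})$). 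Excluding this is the delicate point, and is where the genuine rank-into-rank content must enter — each $j_k$ has critical sequence cofinal in $\lambda$, together with the non-degeneracy arranged just before the statement (that $\alpha$ is a proper supremum of the strictly increasing $\crt(j_m)$) — to force $\sup J''\alpha=\lambda$.
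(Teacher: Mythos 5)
Your second bullet (elementarity) is correct, and your route is actually cleaner than the paper's. The paper first shows $J\upharpoonright V_{\alpha_n}:V_{\alpha_n}\prec V_\lambda$ for each $n$ --- computing $\crt(j_0(j_1(\dots j_n(j_n)\dots)))=J(\alpha_n)$ and invoking Lemma \ref{reflection} at $V_{J(\alpha_n)}$ --- and then must glue these partial maps together ``with methods similar to those in the proof of Theorem \ref{Delta0}''. Your argument ($V_\alpha\prec V_\lambda$ via Lemma \ref{reflection} and the elementary chain theorem, then passing through the genuine embedding $K_n:V_\lambda\prec V_\lambda$ and the agreement $J\upharpoonright V_{\alpha_n}=K_n\upharpoonright V_{\alpha_n}$) delivers full elementarity of $J$ in one stroke and avoids the gluing question entirely.

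The first bullet, however, you have not proved. You reconstruct exactly the paper's setup --- your $\mu^{(k)}=\sup(J_k''\alpha)$ are the paper's $\delta_k$, with the same identity $\mu^{(k)}=j_k(\mu^{(k+1)})$ and the same descent structure --- but you stop at the decisive step and declare it delicate. In fact it is a two-line observation, and it is precisely what the paper uses: an embedding $j\in{\cal E}_\lambda$ has \emph{no} fixed points whatsoever in $[\crt(j),\lambda)$. Since, by the convention fixed in Section \ref{rank}, the critical sequence $\langle\kappa_n:n\in\omega\rangle$ of $j$ is cofinal in $\lambda$, every $\gamma\in[\crt(j),\lambda)$ lies in some interval $[\kappa_n,\kappa_{n+1})$, and elementarity gives $j(\gamma)\geq j(\kappa_n)=\kappa_{n+1}>\gamma$. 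So suppose $\sup J''\alpha<\lambda$; then all $\mu^{(k)}\leq\mu^{(0)}<\lambda$, your nonincreasing sequence is eventually constant with some value $\mu^{\ast}$, and $\mu^{\ast}=j_k(\mu^{\ast})$ for all large $k$; but $\mu^{\ast}\geq\alpha>\crt(j_k)$ and $\mu^{\ast}<\lambda$, so every $j_k$ moves $\mu^{\ast}$ --- contradiction. This is the paper's argument in slightly different clothes: there one shows $\delta_m<\lambda$ implies $\delta_{m+1}<\delta_m$, by noting that $\delta_m$ is moved by $j_m$ (same fact as above), using continuity of $j_m$ at the cofinality-$\omega$ ordinal $\delta_m$ to produce $\mu<\delta_m$ with $j_m(\mu)>\delta_m$, and then observing that $\delta_{m+1}=\delta_m$ would give some $J_{m+1}(\alpha_i)\geq\mu$, hence $J_m(\alpha_i)\geq j_m(\mu)>\delta_m$, contradicting the definition of $\delta_m$; infinite descent finishes. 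So your instinct about where the rank-into-rank content enters (the critical sequences being cofinal in $\lambda$) was exactly right, but since that step is never executed, the unboundedness clause remains unproven in your proposal.
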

		\begin{proof}
			\begin{itemize}
				\item Let $\delta_m=\sup J_m``\alpha$. We prove that when $\delta_m<\lambda$, then $\delta_m\neq\delta_{m+1}$. Obviously $\delta_m\geq\alpha$, because otherwise $J_m(\delta_m)\in J_m``\alpha$ and $J_m(\delta_m)<\delta_m$. In particular $\delta_m$ is above $\alpha_m$, i.e., the critical point of $j_m$, so $\delta_m$ is moved by $j_m$ and there exists a $\mu<\delta_m$ such that $j_m(\mu)>\delta_m$. By contradiction, suppose that $\delta_m=\delta_{m+1}$. Then $\mu$ is also less than $\delta_{m+1}$, so by definition there exists an $i\in\omega$ such that $J_{m+1}(\alpha_i)\geq\mu$. Therefore 
					\begin{equation*}
						J_m(\alpha_i)=j_m\circ J_{m+1}(\alpha_i)\geq j_m(\mu)\geq\delta_m, 
					\end{equation*}
					contradiction.
				
				Suppose then that $\delta_0<\lambda$. Therefore for any $m\in\omega$ $\delta_{m+1}<\delta_m$, but this creates a strictly descending sequence of ordinals, contradiction.
				
				\item Fix $n\in\omega$ and let $k=j_0(j_1(\dots j_n(j_n)\dots))$. Then 
				\begin{multline*}
					\crt(k)=\crt(j_0(j_1(\dots j_n(j_n)\dots)))=\\
					=j_0(j_1(\dots j_n(\crt(j_n))\dots))=j_0\circ j_1\circ\dots\circ j_n(\alpha_n)=J(\alpha_n). 
				\end{multline*}
				By Lemma \ref{reflection}, then, $V_{J(\alpha_n)}\prec V_\lambda$. But $J\upharpoonright V_{\alpha_n}:V_{\alpha_n}\to V_{J(\alpha_n)}$ is an elementary embedding, because $J\upharpoonright V_{\alpha_n}=J_{0,n}\upharpoonright V_{\alpha_n}$, so for every $n\in\omega$, $J\upharpoonright V_{\alpha_n}\prec V_\lambda$. With methods similar to those in the proof of Theorem \ref{Delta0}, it is possible to prove that this implies $J:V_\alpha\prec V_\lambda$.
			\end{itemize}
		\end{proof}
		
		Like in the $V_\lambda$ case, we can extend $J$ to $V_{\alpha+1}$ in the expected way: when $A\subseteq V_\alpha$, $J(A)=\bigcup_{\beta<\alpha}J(A\cap V_\beta)$. Now we want to prove an equivalent of Lemma \ref{change}, but for inverse limits. 

		\begin{lem}[Square root for inverse limits of $\Sigma^1_n$]
		\label{ChangeInverse}
		 Let $J:V_\alpha\prec V_\lambda$ an inverse limit of $\Sigma^1_{n+1}$ elementary embeddings. Then for all $A,B\subseteq V_\alpha$ there exist $K:V_\alpha\prec V_\lambda$ inverse limit of $\Sigma^1_n$ elementary embeddings and $A'\subseteq V_\alpha$ such that $k(A')=A$ and $k(B)=J(B)$.
		\end{lem}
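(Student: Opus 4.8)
The plan is to mimic the single-embedding Square Root Lemma \ref{change} factor by factor, exploiting that each $j_m$ in $J=j_0\circ j_1\circ\cdots$ is an endo-embedding $V_\lambda\prec V_\lambda$ that is $\Sigma^1_{n+1}$, so that \ref{change} applies to it verbatim. Writing $\alpha_m=\crt(j_m)$ (strictly increasing with supremum $\alpha$) and $\alpha_{-1}=0$, I would use \ref{change} to pick for each $m$ a $\Sigma^1_n$ embedding $k_m$ with $k_m(k_m)=j_m$ and $\alpha_{m-1}<\crt(k_m)<\alpha_m$, which is exactly the range allowed by the parameter $\beta$ in \ref{change}. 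Then the critical points $\crt(k_m)$ are again strictly increasing and cofinal in $\alpha$, so by Lemma \ref{InverseLimit} the inverse limit $K=k_0\circ k_1\circ\cdots$ is a genuine elementary embedding $K:V_\alpha\prec V_\lambda$, and it is an inverse limit of $\Sigma^1_n$ embeddings by construction. Everything then reduces to arranging the two parameter conditions $K(B)=J(B)$ and $K(A')=A$.

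For $K(B)=J(B)$ I would \emph{not} try to match the factors pointwise: since $\crt(k_m)<\crt(j_m)$ the maps $k_m$ and $j_m$ genuinely disagree on $V_{\alpha_m}\setminus V_{\crt(k_m)}$, so no finite composition of the $k$'s agrees with the corresponding finite composition of the $j$'s on initial segments of $B$. Instead, note that for a fixed $\beta<\alpha$ the tail factors eventually fix $B\cap V_\beta$, so $J(B\cap V_\beta)=j_0\circ\cdots\circ j_{M-1}(B\cap V_\beta)$ and $K(B\cap V_\beta)=k_0\circ\cdots\circ k_{M-1}(B\cap V_\beta)$ for $M$ large, and the intermediate values in the first chain are exactly the truncations $J_{m+1}(B)\cap V_{J_{m+1}(\beta)}$. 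The plan is therefore to demand, in the application of \ref{change} to $j_m$, that $k_m$ agree with $j_m$ at a single packed subset of $V_\lambda$ coding both $J_{m+1}(B)$ and the ordinals $\langle J_{m+1}(\beta):\beta<\alpha\rangle$; these data depend only on the given sequence and on $B$, not on the $k$'s, so there is no circularity. A finite downward induction on $m$ (for each fixed $\beta$) then shows $K(B\cap V_\beta)=J(B\cap V_\beta)$, whence $K(B)=J(B)$.

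The genuinely hard part is $K(A')=A$, and this is where the argument must depart from a naive factorwise construction: $A$ need not lie in $\ran(J^+)$, and it is precisely the lowering of the critical points below the $\alpha_m$ that is supposed to create the room to capture it, but a single use of the capture slot of \ref{change} at one factor propagates through the infinite composition only if the produced preimages stabilize, which they need not. I would handle this exactly as \ref{change} does, by reflecting the \emph{global} existential ``there is an inverse limit $K$ of $\Sigma^1_n$ embeddings with $K(B)=J(B)$ and $A\in\ran(K^+)$'': using \ref{Sigmavalue} to bound the clause ``$k_m$ is $\Sigma^1_n$'' and the satisfaction-tree representation from the proof of \ref{martin} to absorb the quantifier over the sequence, this statement is $\Sigma^1_{n+1}$, and one level up it is witnessed by $J$ itself, an inverse limit of $\Sigma^1_{n+1}\subseteq\Sigma^1_n$ embeddings that trivially captures $J(A)$ and preserves the $B$-condition. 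The main obstacle is making this reflection legitimate: unlike \ref{change}, where the biconditional is read off the endo-embedding $j$, here the parameter $J(B)$ and the target $A$ live in the codomain $V_\lambda$ while $J:V_\alpha\prec V_\lambda$ is not an endomorphism, so the reflection must be routed through the endo-factors $j_m$ of sufficiently large critical point and their $\Sigma^1_{n+1}$-correct tree representations rather than through $J$ directly. Coordinating that reflection with the infinite-composition structure is the crux of the proof.
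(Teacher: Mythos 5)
Your setup (interleaved critical points via Lemma \ref{change}, Lemma \ref{InverseLimit} for elementarity of $K$) and your treatment of $K(B)=J(B)$ are essentially the paper's: despite your opening disclaimer about not matching factors pointwise, demanding that $k_m$ agree with $j_m$ on a single packed set coding $J_{m+1}(B)$ is exactly the paper's factorwise condition $k_m(J_{m+1}(B))=j_m(J_{m+1}(B))$, and your telescoping computation is the paper's. The genuine gap is the $A$-capture. There you abandon the factorwise construction and propose reflecting the global statement ``there is an inverse limit $K$ of $\Sigma^1_n$ embeddings with $K(B)=J(B)$ and $A\in\ran(K^+)$'', but you never carry this out: you explicitly defer ``coordinating that reflection with the infinite-composition structure'' as ``the crux''. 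The obstacle you name is real and is not overcome by anything in \ref{Sigmavalue} or \ref{martin}. In Lemma \ref{change} the reflection succeeds because, after shifting parameters along $j$, the capture clause reads $k(Y)=j(A)$ and is trivially witnessed by $k=j$, $Y=A$. If instead you shift along a factor $j_m$, the clause becomes ``$j_m(A)$ lies in the range of some inverse limit $K'$ of $\Sigma^1_n$ embeddings''; the natural candidate $K'=j_m(J)$ fails, since by elementarity $j_m(A)\in\ran\bigl((j_m(J))^+\bigr)$ iff $A\in\ran(J^+)$, which is false in the typical case, and exhibiting any other witness is precisely an instance of the lemma being proved. Since $J$ is not an endomorphism, there is no analogue of the self-witnessing trick, so your proposal as written is not a proof.

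The idea you are missing is that the capture slot of \ref{change} should be used at \emph{every} factor, iteratively, not once. Set $A_0=A$; when choosing $k_0$ demand $k_0(A_1)=A_0$ for some $A_1$, and when choosing $k_{m+1}$ (with $\crt(j_m)<\crt(k_{m+1})<\crt(j_{m+1})$) demand $k_{m+1}(A_{m+2})=A_{m+1}$, i.e., each factor captures the preimage produced at the previous stage. Your worry that ``the produced preimages need not stabilize'' then evaporates: $A_{m+2}\cap V_{\crt(k_m)}$ is an element of $V_{\crt(k_{m+1})}$, hence fixed by $k_{m+1}$, so
\begin{equation*}
A_{m+2}\cap V_{\crt(k_m)}=k_{m+1}\bigl(A_{m+2}\cap V_{\crt(k_m)}\bigr)=k_{m+1}(A_{m+2})\cap V_{\crt(k_m)}=A_{m+1}\cap V_{\crt(k_m)}.
\end{equation*}
The preimages therefore cohere below the increasing critical points, $A'=\bigcup_{m\in\omega}\bigl(A_{m+1}\cap V_{\crt(k_m)}\bigr)$ is well defined, and the same telescoping you used for $B$ gives $K(A')=A$. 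This iterated capture with its automatic stabilization is exactly how the paper proves the lemma; everything else in your proposal coincides with that proof.
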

		\begin{proof}
		 We define $k_m$ and $A_m$ by induction, with repeated uses of Lemma \ref{change}. At the end, $K$ will be the inverse limit of the $k_m$'s, and the $A_m$'s will be the images of $A'$ through the inverse limit. 
		 
		 Let $A_0=A$, $k_0$ and $A_1$ such that $k_0$ is a $\Sigma^1_n$ elementary embedding, $A_1\subseteq V_\lambda$, $k_0(A_1)=A=A_0$, 
		 \begin{equation*}
		  k_0(J_1(B))=j_0(J_1(B))=J(B)
		 \end{equation*}
		 and $\crt(k_0)<\crt(j_0)$. 
		 
		 More generally, $k_{m+1}$ and $A_{m+2}$ are such that $k_{m+1}$ is a $\Sigma^1_n$ elementary embedding, $A_{m+2}\subseteq V_\lambda$, $k_{m+1}(A_{m+2})=A_{m+1}$, 
		 \begin{equation*}
		  k_{m+1}(J_{m+2}(B))=j_{m+1}(J_{m+2}(B))
		 \end{equation*}
		 and $\crt(j_m)<\crt (k_{m+1})<\crt (j_{m+1})$.
		 
		 So $k_m$ and $A_m$ satisfy:
		 \begin{itemize}
		 	\item $k_m$ is $\Sigma^1_n$;
		 	\item $k_m(A_{m+1})=A_m$;
		 	\item $k_m(J_{m+1}(B))=J_m(B)$;
		 	\item $\crt(k_0)<\crt(j_0)<\crt(k_1)<\dots<\crt(j_m)<\crt(k_m)<\crt(j_{m+1})<\dots$
		 \end{itemize}
		 
		 Let $K$ be the inverse limit of the $k$'s. Then $\sup_{m\in\omega}\crt (k_m)=\alpha$, and by Lemma \ref{InverseLimit} $K:V_\alpha\prec V_\lambda$ is an elementary embedding. Note that
		 \begin{multline*}
		 	K(B\cap V_{\crt(k_m)})=k_0\circ k_1\circ\dots\circ k_m(K_{m+1}(B\cap V_{\crt(k_m)})=\\
			=k_0\circ\dots\circ k_m(B\cap V_{\crt(k_m)})=k_0\circ\dots\circ k_m(J_{m+1}(B\cap V_{\crt(k_m)})=\\
			=J(B\cap V_{\crt(k_m)}).
		 \end{multline*}
		 So $K(B)=J(B)$.
		 
		 Finally, consider $A_{m+1}\cap V_{\crt(k_m)}$:
		 \begin{equation*}
		 	A_{m+2}\cap V_{\crt(k_m)}=k_{m+1}(A_{m+2}\cap V_{\crt(k_m)})=A_{m+1}\cap V_{\crt(k_m)},
		 \end{equation*}
		 so
		 \begin{equation*}
		 	K(A_{m+1}\cap V_{\crt(k_m)})=k_0\circ\dots\circ k_m(A_{m+1}\cap V_{\crt(k_m)})=A\cap V_{K(\crt(k_m))}.
		 \end{equation*}
		 Define $A'=\bigcup_{m\in\omega}(A_{m+1}\cap V_{\crt(k_m)})$. Then $K(A')=A$.
		\end{proof}
		
		We use Lemma \ref{ChangeInverse} to calculate the strength of an inverse limit:
		
		\begin{teo}[Laver, \cite{Laver3}]
		\label{StrengthInvLimit}
			If $J:V_\alpha\prec V_\lambda$ is an inverse limit of $\Sigma^1_n$ elementary embeddings, then $J$ is $\Sigma^1_n$.
		\end{teo}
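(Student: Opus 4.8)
The plan is to prove the statement by induction on $n$, showing that $J^+\colon V_{\alpha+1}\to V_{\lambda+1}$ is $\Sigma_n$-elementary, in close parallel with the proof of Theorem \ref{sameconsistency} but with the single-embedding square root replaced by its inverse-limit counterpart, Lemma \ref{ChangeInverse}. As arranged before Lemma \ref{InverseLimit}, I assume $\crt(j_m)=\alpha_m$ with $\alpha_m<\alpha_{m+1}$ and $\alpha=\sup_m\alpha_m$, so that $J\colon V_\alpha\prec V_\lambda$ is already (first-order) elementary by Lemma \ref{InverseLimit}. For the base case $n=0$ I would check that $J$ is $\Sigma^1_0$, i.e.\ that $J^+$ is $\Delta_0$-elementary; this is exactly the Skolemization-plus-reflection argument of Theorem \ref{Delta0}, carried out in the two-rank setting $V_\alpha\to V_\lambda$ just as the elementarity of $J$ was obtained at the end of the proof of Lemma \ref{InverseLimit}.

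For the inductive step fix $n\ge 1$ and assume the theorem for $n-1$. Write a $\Sigma^1_n$ formula as $\psi(Y)\equiv\exists X\,\Phi(X,Y)$ with $\Phi$ a $\Pi^1_{n-1}$ formula, and fix $B\subseteq V_\alpha$. Since every $j_m$ is $\Sigma^1_n$ it is in particular $\Sigma^1_{n-1}$, so $J$ is an inverse limit of $\Sigma^1_{n-1}$ embeddings and the inductive hypothesis gives that $J$ is $\Sigma^1_{n-1}$; hence $J^+$ preserves $\Pi^1_{n-1}$ formulas in both directions. The upward direction is then immediate: if $V_\alpha\vDash\psi(B)$, a witness $A_0$ gives $V_\alpha\vDash\Phi(A_0,B)$, hence $V_\lambda\vDash\Phi(J^+(A_0),J^+(B))$ by $\Pi^1_{n-1}$-preservation, so $V_\lambda\vDash\psi(J^+(B))$.

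The downward direction is the crux, and here I would use Lemma \ref{ChangeInverse}. Suppose $V_\lambda\vDash\psi(J^+(B))$ and fix $A\subseteq V_\lambda$ with $V_\lambda\vDash\Phi(A,J^+(B))$. Applying Lemma \ref{ChangeInverse} with its index equal to $n-1$ --- legitimate because $J$ is an inverse limit of $\Sigma^1_n=\Sigma^1_{(n-1)+1}$ embeddings --- to the pair $(A,B)$ produces an inverse limit $K\colon V_\alpha\prec V_\lambda$ of $\Sigma^1_{n-1}$ embeddings together with $A'\subseteq V_\alpha$ such that $K^+(A')=A$ and $K^+(B)=J^+(B)$. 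By the inductive hypothesis $K$ itself is $\Sigma^1_{n-1}$, so $K^+$ preserves the $\Pi^1_{n-1}$ formula $\Phi$ in both directions. Thus from $V_\lambda\vDash\Phi(K^+(A'),K^+(B))$ I obtain $V_\alpha\vDash\Phi(A',B)$, whence $V_\alpha\vDash\psi(B)$. Combining the two directions shows $J^+$ is $\Sigma_n$-elementary, closing the induction.

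I expect the main obstacle to be conceptual rather than computational: everything hinges on Lemma \ref{ChangeInverse} lowering the complexity of the approximating embeddings by a single level (from $\Sigma^1_{n+1}$ to $\Sigma^1_n$), in contrast with the single-embedding square root of Lemma \ref{change}, which drops two levels and forces the odd/even bookkeeping of Theorem \ref{sameconsistency}. Because the inverse-limit square root is parity-free, the induction on $n$ runs uniformly and no case split is needed; the only other point requiring care is the base case, which is the $V_\alpha\to V_\lambda$ analogue of Theorem \ref{Delta0}.
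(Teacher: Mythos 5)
Your proof is correct and follows essentially the same route as the paper: induction on $n$ with the base case $n=0$ given by Lemma \ref{InverseLimit} together with a two-rank generalization of Theorem \ref{Delta0}, and the inductive step obtained by applying the inductive hypothesis to $J$ itself for the upward direction and applying Lemma \ref{ChangeInverse} to the witness $A$, then pulling back through $K$, for the downward direction. The only cosmetic difference is that you take $K$ to be an inverse limit of $\Sigma^1_{n-1}$ embeddings (matching the statement of Lemma \ref{ChangeInverse}), whereas the paper writes $\Sigma^1_{n-2}$; both suffice, since even $\Sigma^1_{n-2}$-elementarity of $K$ pulls back the $\Pi^1_{n-1}$ formula by instantiating its outer universal quantifier pointwise.
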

		\begin{proof}
			The case $n=0$ is Lemma \ref{InverseLimit} combined with an obvious generalization of Theorem \ref{Delta0}, so we proceed by induction on $n$.
			
			Suppose that $J$ is $\Sigma^1_{n-1}$, we need to prove that for every $\varphi$ $\Pi^1_{n-1}$-formula, and any $B\subseteq V_\lambda$, 
			\begin{equation*}
			 V_\lambda\vDash\exists X\varphi(X,J(B))\rightarrow V_\alpha\vDash\exists X \varphi(X,B).
			\end{equation*}
			Suppose $V_\lambda\vDash\exists X\varphi(X,J(B))$, and fix $A$ a witness. Using Lemma \ref{ChangeInverse}, we find $K$ inverse limit of $\Sigma^1_{n-2}$ elementary embeddings such that $K(A')=A$ and $K(B)=J(B)$ for some $A'\subseteq V_\alpha$. So $V_\lambda\vDash\varphi(K(A'),K(B))$, and by elementarity $V_\alpha\vDash\varphi(A',B)$, that is $V_\alpha\vDash\exists X\ \varphi(X,B)$.
		\end{proof}

		Finally, we can prove that the existence of a $\Sigma^1_{n+2}$ elementary embedding strongly implies the existence of a $\Sigma^1_n$ elementary embedding.

		\begin{teo}[Laver, \cite{Laver3}]
		\label{strongimplEn}
		 Let $j:V_\lambda\prec V_\lambda$ be $\Sigma^1_{n+2}$. Then
		 \begin{itemize}
		  \item for every $B\subseteq V_\lambda$, there exist an $\alpha<\lambda$ and a $k_\alpha:V_\alpha\prec V_\lambda$ such that $k_\alpha(B_\alpha)=B$ for some $B_\alpha\subseteq V_\alpha$. In fact, we can find an $\omega$-club $C\subseteq\lambda$ of such $\alpha$'s.
		  \item there exist an $\alpha<\lambda$ and a $j_\alpha:V_\alpha\prec V_\alpha$ that is $\Sigma^1_n$. Moreover, we can find an $\omega$-club $C\subseteq\lambda$ of such $\alpha$'s. Therefore ``there exists $j:V_\lambda\prec V_\lambda$ $\Sigma^1_{n+2}$'' strongly implies ``$j:V_\lambda\prec V_\lambda$ $\Sigma^1_{n+2}$''. In particular I2 strongly implies I3.
		 \end{itemize}
		\end{teo}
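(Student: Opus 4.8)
The plan is to prove the two bullets by an inverse-limit reflection, with the square-root technology of Lemma \ref{change} as the engine. The first bullet is a pure reflection statement for subsets of $V_\lambda$; the second then follows by feeding into it a $B$ that codes a $\Sigma^1_n$ square root of $j$ and reflecting the predicate ``being $\Sigma^1_n$'' downwards. Throughout I may assume $n$ is odd, passing between $\Sigma^1_n$ and $\Sigma^1_{n+1}$ freely by Theorem \ref{sameconsistency}; the case $n=0$ (which yields that I2 strongly implies I3) is the degenerate one where ``being $\Sigma^1_0$'' is vacuous by Theorem \ref{Delta0}.

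For the first bullet, fix $B\subseteq V_\lambda$ and build an inverse limit $k_\alpha=k_0\circ k_1\circ\cdots$ of $\Sigma^1_n$ embeddings whose critical points strictly increase to some $\alpha<\lambda$ and which realizes $B$, i.e. $k_\alpha(B_\alpha)=B$ for some $B_\alpha\subseteq V_\alpha$. The crucial point is that the critical points can be kept bounded below $\lambda$: by Lemma \ref{change}, for every $\beta<\crt(j)$ there is a $\Sigma^1_n$ square root of $j$ with critical point in $(\beta,\crt(j))$, so the critical points of square roots of $j$ are cofinal in $\crt(j)$, and more generally (using $j^m$, whose critical point is $\kappa_m$) one can target any $\alpha\le\kappa_m$. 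Choosing the successive $k_i$ as square roots with critical points climbing to a prescribed $\alpha$, while at each stage using the clause $k(A')=A$ of Lemma \ref{change} to keep $B$ in the range of the partial composition — exactly as in the proof of Lemma \ref{ChangeInverse} — yields the desired sequence. By Lemma \ref{InverseLimit}, $k_\alpha:V_\alpha\prec V_\lambda$, and by Theorem \ref{StrengthInvLimit} it is $\Sigma^1_n$. For the $\omega$-club: the attainable $\alpha$ are unbounded (target $\alpha=\kappa_N$ with $N$ large, since $\kappa_N\nearrow\lambda$) and closed under suprema of $\omega$-sequences (concatenate the approximating inverse limits), so they contain an $\omega$-club $C$.

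For the second bullet, use that $j$ is two levels up: by Lemma \ref{change}, $j$ has a $\Sigma^1_n$ square root $h:V_\lambda\prec V_\lambda$ (this is precisely where $\Sigma^1_{n+2}$, rather than $\Sigma^1_{n+1}$, is needed). Code $h\upharpoonright V_\lambda$ as a subset $B\subseteq V_\lambda$ and apply the first bullet to this $B$, obtaining $\alpha<\lambda$ and a $\Sigma^1_n$ embedding $k_\alpha:V_\alpha\prec V_\lambda$ with $k_\alpha(B_\alpha)=B$; let $j_\alpha:V_\alpha\to V_\alpha$ be the function coded by $B_\alpha$. By Lemma \ref{Sigmavalue} the assertion ``$B$ codes a $\Sigma^1_n$ elementary embedding of $V_\lambda$ into itself'' is $\Pi^1_{n+1}$, and it holds of $h$ in $V_\lambda$. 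Since $k_\alpha$ is $\Sigma^1_n$-elementary, this statement reflects down: writing it as $\forall X\,\theta(X,B)$ with $\theta\in\Sigma^1_n$, for any $A\subseteq V_\alpha$ we have $V_\lambda\vDash\theta(k_\alpha(A),k_\alpha(B_\alpha))$, whence $V_\alpha\vDash\theta(A,B_\alpha)$ by $\Sigma^1_n$-elementarity; as $A$ was arbitrary, $V_\alpha\vDash\forall X\,\theta(X,B_\alpha)$. Thus $j_\alpha:V_\alpha\prec V_\alpha$ is $\Sigma^1_n$, and the $\omega$-club of such $\alpha$ is inherited from the first bullet. Taking $n=0$ gives that I2 (equivalently $\Sigma^1_2$, by Theorem \ref{martin}) strongly implies I3.

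The main obstacle is the first bullet: simultaneously forcing the critical points of the square roots to increase to a chosen $\alpha<\lambda$ (keeping them bounded away from $\lambda$, which is where the cofinality-of-square-root-critical-points observation is essential) while threading the target $B$ through the tail of the composition, together with checking $\omega$-closure of the set of attainable $\alpha$. Once that reflection for subsets is in hand, the passage to a self-embedding is the comparatively soft downward reflection of the $\Pi^1_{n+1}$ predicate ``being $\Sigma^1_n$'', which works cleanly precisely because $k_\alpha$ is $\Sigma^1_n$-elementary in both directions.
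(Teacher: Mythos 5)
Your proof follows essentially the same route as the paper's: iterate the Square Root Lemma \ref{change} to produce a sequence of $\Sigma^1_n$ embeddings with strictly increasing critical points threading $B$ downwards (the paper organizes this as the tree $G$ of finite approximations; you build the inverse limit directly, which is the same construction), conclude that $k_\alpha:V_\alpha\prec V_\lambda$ is $\Sigma^1_n$ by Lemma \ref{InverseLimit} and Theorem \ref{StrengthInvLimit}, recover $B_\alpha$ exactly as in Lemma \ref{ChangeInverse}, and then, for the second bullet, reflect the $\Pi^1_{n+1}$ statement of Lemma \ref{Sigmavalue} downward through $k_\alpha$. Your quantifier-stripping argument for that downward reflection is correct, and in fact slightly more self-contained than the paper's appeal to $\Sigma^1_{n+1}$-elementarity of $k_\alpha$ via Theorem \ref{sameconsistency}. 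Two inessential differences: the paper takes $B$ to code $j$ itself (which is already $\Sigma^1_n$, being $\Sigma^1_{n+2}$), so your preliminary square root $h$ is a harmless but unnecessary extra step; and your parenthetical that this step is ``precisely where $\Sigma^1_{n+2}$, rather than $\Sigma^1_{n+1}$, is needed'' misplaces the hypothesis --- $\Sigma^1_{n+2}$ is what makes the \emph{first} bullet work, since Lemma \ref{change} drops two levels of elementarity, and only then are the roots out of which $k_\alpha$ is assembled $\Sigma^1_n$.

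There is one concrete step that fails as written: ``target $\alpha=\kappa_N$''. Every attainable $\alpha$ is the supremum of a strictly increasing $\omega$-sequence of critical points, hence has cofinality $\omega$, while $\kappa_N$ is the critical point of an elementary embedding of $V_\lambda$, hence measurable and in particular regular; so no branch can have supremum exactly $\kappa_N$. The conclusion you want (unboundedness in $\lambda$ of the attainable $\alpha$'s) survives with a small repair: given $\gamma<\lambda$, pick $N$ with $\kappa_N>\gamma$, note that $j^N$ is still $\Sigma^1_{n+2}$ by Theorem \ref{Application}, and build the inverse limit from $\Sigma^1_n$ square roots of $j^N$ whose first critical point is above $\gamma$; the resulting $\alpha$ lies strictly between $\gamma$ and $\kappa_N$. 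I note in your favor that invoking the iterates $j^m$ here is not only correct but necessary: the paper's own tree $G$ confines all critical points below $\kappa_0$, so its branches only yield $\alpha\leq\kappa_0$, which cannot give a set cofinal in $\lambda$. Finally, your $\omega$-closure claim (``concatenate the approximating inverse limits'') is glossed: splicing branches is obstructed by the fact that distinct branches thread $B$ through incompatible sequences $B_0,B_1,\dots$, and Lemma \ref{change} only bounds critical points from below, so arranging critical points cofinal in a prescribed limit $\alpha$ requires an additional argument. The paper is equally terse on this point (``any infinite branch works''), so this is comparable sketchiness rather than a defect specific to your write-up, but be aware that it is the genuinely delicate part of the $\omega$-club assertion.
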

		\begin{proof}
		 \begin{itemize}
		  \item Let 
		  	\begin{multline*}
					G=\{\langle l_0,\dots,l_m\rangle:l_i:V_\lambda\prec V_\lambda\text{ is }\Sigma^1_n,\ \crt l_0<\crt l_1<\dots<\crt l_m<\kappa_0,\\
					\exists B_0,\dots,B_m\ l_0(B_0)=B,\ \forall i\ l_{i+1}(B_{i+1})=B_i\}. 
				\end{multline*}
				By Lemma \ref{change} the set 
				\begin{equation*}
					\{\theta<\kappa_0:\exists l\ \langle l_0,\dots,l_m,l\rangle\in G,\crt(l)=\theta\} 
				\end{equation*}
				is unbounded in $\kappa_0$. Pick an infinite branch $\langle l_0,l_1,\dots\rangle$ of $G$, and let $\alpha=\sup_{i\in\omega}\crt(l_i)$. Let $k_\alpha$ be the inverse limit of the $l_i$'s. Then by Theorem \ref{StrengthInvLimit} $k_\alpha$ is $\Sigma^1_n$. Define $B'=\bigcup_{m\in\omega}(B_{m+1}\cap V_{\crt(l_m)})$ as in Lemma \ref{ChangeInverse} to have $k_\alpha(B')=B$. To prove the existence of the $\omega$-club $C$, note that we could have used \emph{any} infinite branch of $T$, and the set of the ordinals that are the supremum of the critical points of the elementary embeddings appearing in an infinite branch of $T$ (like $\alpha$) contains an $\omega$-club.

		 \item	Let the $B$ above be $j$. Then there exists $\alpha$ (again, any $\alpha\in C$ works), $k_\alpha:V_\alpha\prec V_\lambda$, and $j_\alpha\subseteq V_\alpha$ such that $k_\alpha(j_\alpha)=j$. Suppose, by Theorem \ref{sameconsistency}, that $n$ is odd. Then by Lemma \ref{Sigmavalue} ''$j$ is $\Sigma^1_n$'' is $\Pi^1_{n+1}$. Again by Theorem \ref{sameconsistency} and Lemma \ref{StrengthInvLimit} $k_\alpha$ is $\Sigma^1_{n+1}$, so by elementarity $j_\alpha:V_\alpha\prec V_\alpha$ is $\Sigma^1_n$.	
		\end{itemize}
		\end{proof}

 The situation for I1 has some peculiarities. First of all, if $j:V_{\lambda+1}\prec V_{\lambda+1}$, then $j$ is definable from $j\upharpoonright V_\lambda$, so it is actually a definable class of $V_{\lambda+1}$ This is not possible in the I3 case, as $V_\lambda\vDash\ZFC$, so it would contradict the naive Kunen's Theorem. We can reformulate Lemma \ref{Sigmavalue} in this way:

\begin{rem}
 \label{I1}
 Suppose that $j:V_{\lambda+1}\to V_{\lambda+1}$. Then there are $\Pi_{n+1}$ formulas $\varphi_n(x)$ such that for any $n\in\omega$, $V_{\lambda+1}\vDash\varphi_n(j\upharpoonright V_\lambda)$ iff $j$ is an elementary embedding.  
 \end{rem}

So let $j:V_{\lambda+1}\prec V_{\lambda+1}$. Consider $j^-=j\upharpoonright V_\lambda$. We can define $j^-(j^-):V_\lambda\prec V_\lambda$. By the remark above $[j^-(j^-)]^+$ is an elementary embedding, and we write it simply as $j(j)$. So we can iterate also the I1 embeddings.

As $j:V_{\lambda+1}\prec V_{\lambda+1}$ is $\Sigma^1_n$ for every $n$, by Lemma \ref{change} for any $n$ it is possible to find a square root of $j$ that is $\Sigma^1_n$. Therefore for any $n$ is it possible to find $\alpha<\lambda$ and $K:V_\alpha\prec V_\lambda$ inverse limit that is $\Sigma^1_n$, so I1 strongly implies any ``There exists $j:V_\lambda\prec V_\lambda$ that is $\Sigma^1_n$''. We can do more:

\begin{cor}
 I1($\lambda$) strongly implies $\forall n\ \exists j:V_\lambda\prec V_\lambda$ that is $\Sigma^1_n$
\end{cor}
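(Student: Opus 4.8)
The plan is to split ``strongly implies'' into its two halves. The easy half is that I1($\lambda$) yields the scheme: if $j:V_{\lambda+1}\prec V_{\lambda+1}$ then $j\upharpoonright V_\lambda:V_\lambda\prec V_\lambda$ is a \emph{single} embedding that is $\Sigma^1_n$ for every $n$, so a fortiori for each $n$ there is a $\Sigma^1_n$ embedding of $V_\lambda$ into itself. The real content is the strict part: letting $\lambda_0$ be the least $\lambda$ satisfying I1, I must produce some $\alpha<\lambda_0$ at which the whole scheme ``$\forall n\ \exists$ $\Sigma^1_n$ embedding $V_\alpha\prec V_\alpha$'' already holds. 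I would fix $j:V_{\lambda_0+1}\prec V_{\lambda_0+1}$ and write $j^-=j\upharpoonright V_{\lambda_0}$, which is $\Sigma^1_m$ for all $m$, with $\kappa_0=\crt(j^-)$. The naive route, namely to apply Theorem \ref{strongimplEn} for each $n$ to get an $\omega$-club $C_n\subseteq\lambda_0$ of ordinals carrying a $\Sigma^1_n$ embedding and then intersect, fails, because countably many $\omega$-clubs in a cardinal of cofinality $\omega$ may have empty intersection (e.g.\ the tails $[\mu_k,\lambda_0)$ of a cofinal sequence $\langle\mu_k\rangle$). Instead I would build a \emph{single} inverse-limit tower whose tails witness all levels simultaneously at one common ordinal.

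Concretely, imitating the branch construction in the proof of Theorem \ref{strongimplEn} but letting the strength grow, I would produce a sequence $\langle l_m:m\in\omega\rangle$ of embeddings $V_{\lambda_0}\prec V_{\lambda_0}$ together with sets $B_0=j^-,B_1,B_2,\dots$ such that $l_m$ is $\Sigma^1_m$, $\crt(l_0)<\crt(l_1)<\dots<\kappa_0$, $l_{m+1}(B_{m+1})=B_m$, and the $l_m$'s cohere as a square-root tower in the sense of Lemma \ref{ChangeInverse}. Each extension step is an application of the square-root Lemma \ref{change} to $j^-$ (which is $\Sigma^1_N$ for arbitrarily large $N$), placing the new critical point in the prescribed interval $(\crt(l_{m-1}),\kappa_0)$ and demanding target strength $\Sigma^1_m$; this is possible precisely because $j^-$ has unbounded strength and because Lemma \ref{change} lets the critical point of a square root sit above any fixed $\beta<\kappa_0$. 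Setting $\alpha=\sup_m\crt(l_m)\le\kappa_0<\lambda_0$ and $K=l_0\circ l_1\circ\cdots$, Lemma \ref{InverseLimit} gives $K:V_\alpha\prec V_{\lambda_0}$; moreover each $B_m$, being the pullback of the embedding $B_0=j^-$ through the elementary $l_1,\dots,l_m$, is again an embedding $V_{\lambda_0}\prec V_{\lambda_0}$ of the appropriate strength.

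The payoff comes from reading off the tails. Fix $n$. Since $l_m$ is $\Sigma^1_m$, hence $\Sigma^1_n$, for every $m\ge n$, the tail $K_n=l_n\circ l_{n+1}\circ\cdots$ is an inverse limit of $\Sigma^1_n$ embeddings, so by Theorem \ref{StrengthInvLimit} it is itself $\Sigma^1_n$; and deleting the first $n$ factors does not change the supremum of the critical points, so $K_n:V_\alpha\prec V_{\lambda_0}$ has the \emph{same} domain $V_\alpha$ for every $n$. Exactly as in Lemma \ref{ChangeInverse} one extracts $B_n'\subseteq V_\alpha$ with $K_n(B_n')=B_n$. Now ``$x$ is $\Sigma^1_{n-1}$'' is a $\Pi^1_n$ property (Lemma \ref{Sigmavalue}, with Theorem \ref{sameconsistency} arranging the parity), and a $\Sigma^1_n$-elementary embedding preserves $\Pi^1_n$ formulas biconditionally; since $B_n$ is $\Sigma^1_{n-1}$ in $V_{\lambda_0}$, reflecting through $K_n$ gives that $B_n':V_\alpha\prec V_\alpha$ is $\Sigma^1_{n-1}$. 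As $n$ is arbitrary and Theorem \ref{sameconsistency} closes the gap between consecutive levels, this exhibits a $\Sigma^1$-embedding of $V_\alpha$ into itself at every level, i.e.\ the scheme holds at $\alpha<\lambda_0$, completing the strong implication.

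The step I expect to be the main obstacle is the simultaneous bookkeeping in the tower construction: one must check that a single sequence $\langle l_m\rangle$ can be built with strictly increasing critical points bounded below $\kappa_0$, strengths tending to infinity, \emph{and} the square-root coherence $l_{m+1}(B_{m+1})=B_m$ that makes Lemma \ref{ChangeInverse}'s inverse-limit computation apply to \emph{every} tail at once. Coupled with this is the routine but delicate parity tracking (odd versus even $n$) needed so that each reflected statement ``$B_n$ is $\Sigma^1_{n-1}$'' lands in the correct $\Pi^1_n$ class that $K_n$ actually preserves.
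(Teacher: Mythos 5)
Your rejection of the ``naive route'' is well founded --- the tails example is correct, and $\omega$-clubs in a cardinal of cofinality $\omega$ need not have nonempty countable intersection --- and, though you could not know it, that naive route is exactly the paper's own one-line proof (``the intersection of $\omega$ $\omega$-clubs is an $\omega$-club''). So a tower argument like yours would be a genuine improvement if it worked. But it has a gap, sitting precisely where you feared: the claim that each $B_m$, being the pullback of $j^-$ through $l_1,\dots,l_m$, is ``an embedding of the appropriate strength.'' Nothing certifies this. Your tower has a \emph{single} coherence chain, so every $B_n$ is pulled back through the earliest, weakest links. By Lemma \ref{Sigmavalue}, ``$x$ is $\Sigma^1_k$'' (for odd $k$) is $\Pi^1_{k+1}$, so to carry this property across $l_m$ you need $l_m$ to be $\Sigma^1_{k+1}$-elementary; but $l_1$ is only $\Sigma^1_1$ (hence $\Sigma^1_2$, by Theorem \ref{sameconsistency}), so pulling $B_0=j^-$ through $l_1$ certifies only that $B_1$ is $\Sigma^1_1$, and every later $B_m$ is in turn a pullback of $B_1$, so no $B_m$ is certified beyond $\Sigma^1_1$. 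Consequently your final reflection step yields, for every $n$, only a $\Sigma^1_1$ embedding $V_\alpha\prec V_\alpha$: the higher elementarity of $j^-$ is lost at the first link. No assignment of strengths to a single chain can avoid this, since it would need each $l_m$ to be $\Sigma^1_k$ for every $k$ simultaneously, and such full-strength square roots are not available from I1 via Lemma \ref{change}; they are what Lemma \ref{ReflectionI0} extracts from the strictly stronger hypothesis of an embedding of $L_1(V_{\lambda+1})$.

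The repair keeps your tower and your tails but runs one coherence chain \emph{per level}, each starting at its own stage and pulling back the undamaged $j^-$: require $l_n(B^n_n)=j^-$ and $l_{m+1}(B^n_{m+1})=B^n_m$ for $m\geq n$. This costs nothing: at stage $m$, Lemma \ref{change} must place the finitely many sets $B^0_{m-1},\dots,B^{m-1}_{m-1},j^-$ in the range of $l_m^+$, and finitely many subsets of $V_\lambda$ can be coded as a single one, the decoding being $\Sigma^1_0$ and hence respected by $l_m^+$ (Theorem \ref{Delta0}). Now the chain for level $n$ only ever crosses $l_m$ with $m\geq n$, all of which are $\Sigma^1_n$-elementary, so the computation of Lemma \ref{ChangeInverse} applied to the tail gives $B'_n\subseteq V_\alpha$ with $K_n(B'_n)=j^-$ --- the full-strength $j^-$, not a degraded $B_n$. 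Taking $n$ odd (which suffices, by Theorem \ref{sameconsistency}), $K_n$ is $\Sigma^1_{n+1}$ by Theorem \ref{StrengthInvLimit} together with Theorem \ref{sameconsistency}, ``is $\Sigma^1_n$'' is $\Pi^1_{n+1}$, and $j^-$ is $\Sigma^1_n$; hence $B'_n:V_\alpha\prec V_\alpha$ is $\Sigma^1_n$, for every $n$, at the single ordinal $\alpha=\sup_m\crt(l_m)<\lambda$. With this modification your proof is complete, and it is a genuinely different argument from the paper's --- one that does not rest on the problematic intersection of $\omega$-clubs.
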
 
\begin{proof}
 Run the proof of Theorem \ref{strongimplEn} $n$ times, each time considering that $j$ is $\Sigma^1_n$. So there are $C_n$ $\omega$-clubs in $\lambda$ such that for any $\alpha\in C_n$ there exists $k:V_\alpha\prec V_\alpha$ that is $\Sigma^1_n$. The intersection of $\omega$ $\omega$-clubs is an $\omega$-club, so $\alpha\in\bigcap_{n\in\omega}C_n$ is as desired. 
\end{proof}

It is possible to push this kind of proof even more:

\begin{lem} [Square root Lemma for $L_(V_{\lambda+1})$, Laver, \cite{Laver3}]
   \label{ReflectionI0}
   Let $j:L_1(V_{\lambda+1})\prec L_1(V_{\lambda+1})$. For every $A,B\subseteq V_\lambda$, $\beta<\crt(j)$ there exists $k:V_{\lambda+1}\prec V_{\lambda+1}$, with $\beta<\crt(k)<\crt(j)$, such that $k(B)=j(B)$ and there exists $A'\subseteq V_\lambda$ such that $k(A')=A$.
  \end{lem}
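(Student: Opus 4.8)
The plan is to imitate the reflection argument of Lemma \ref{change}, replacing the role of $\Sigma^1_n$-elementarity of a map $V_\lambda\prec V_\lambda$ by the full elementarity of $j$ on $L_1(V_{\lambda+1})$. First I would set notation: write $\bar\jmath=j\upharpoonright V_{\lambda+1}$ and $j^-=j\upharpoonright V_\lambda$. Since $V_{\lambda+1}=L_0(V_{\lambda+1})$ is definable in $L_1(V_{\lambda+1})$, the map $j$ fixes it setwise and $\bar\jmath:V_{\lambda+1}\prec V_{\lambda+1}$ is an I1 embedding whose $V_\lambda$-part is $j^-\in V_{\lambda+1}$; by the earlier Lemma $\bar\jmath=(j^-)^+$. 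Because $A,B\in V_{\lambda+1}$ we have $j(A)=\bar\jmath(A)$ and $j(B)=\bar\jmath(B)$. Put $\kappa=\crt(j)=\crt(\bar\jmath)$.

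Next I would isolate the property to be reflected. Consider the formula $\Phi(k,Y;x,B,A,\beta,\kappa)$, to be evaluated in $L_1(V_{\lambda+1})$, asserting: ``$k\subseteq V_\lambda$ codes an I1 embedding, $k^+(B)=x^+(B)$, $k^+(Y)=A$, and $\beta<\crt(k)<\kappa$''. The goal is exactly $L_1(V_{\lambda+1})\vDash\exists k\,\exists Y\,\Phi(k,Y;j^-,B,A,\beta,\kappa)$, which yields the sought $k$ together with $A'=Y$.

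The main obstacle, and the reason the hypothesis is stated for $L_1(V_{\lambda+1})$ rather than merely for an I1 embedding, is to see that ``$k$ codes an I1 embedding'' is a single first-order formula over $L_1(V_{\lambda+1})$. By Remark \ref{I1}, over $V_{\lambda+1}$ alone one captures only each fixed $\Sigma^1_n$-fragment via the $\Pi_{n+1}$ formula $\varphi_n$, and being I1 is the conjunction of all of them, which is not first-order over $V_{\lambda+1}$ by Tarski's theorem. The point I would invoke is the standard fact that the satisfaction relation of $(V_{\lambda+1},\in)$ is definable over $(L_1(V_{\lambda+1}),\in)$ (truth for $L_\alpha(X)$ is definable over $L_{\alpha+1}(X)$, here with $\alpha=0$). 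With this truth predicate available, the statements ``$k^+:V_{\lambda+1}\prec V_{\lambda+1}$ is fully elementary'', as well as $\crt(k)$ and the application operator, all become first-order over $L_1(V_{\lambda+1})$, so $\exists k\,\exists Y\,\Phi$ is a legitimate formula with parameters $j^-,B,A,\beta,\kappa\in L_1(V_{\lambda+1})$.

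The remainder is the reflection itself and is routine. By elementarity of $j$ (using $j(\beta)=\beta$ since $\beta<\kappa$),
\begin{equation*}
  L_1(V_{\lambda+1})\vDash\exists k\,\exists Y\,\Phi(k,Y;j^-,B,A,\beta,\kappa)\quad\text{iff}\quad L_1(V_{\lambda+1})\vDash\exists k\,\exists Y\,\Phi(k,Y;j(j^-),j(B),j(A),\beta,j(\kappa)).
\end{equation*}
I would verify the right-hand side directly with the witnesses $k:=j^-$ and $Y:=A$: the map $j^-$ is I1; its critical point $\kappa$ satisfies $\beta<\kappa<j(\kappa)$ because $\kappa=\crt(j)$; the equation $k^+(Y)=j(A)$ reads $\bar\jmath(A)=j(A)$, which holds; and the equation $k^+(j(B))=\big(j(j^-)\big)^+(j(B))$ follows from the elementarity identity $j(X)(j(x))=j(X(x))$ applied with $X=\bar\jmath$ and $x=B$, giving both sides equal to $\bar\jmath(\bar\jmath(B))$. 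Pulling back along $j$ then produces $k:V_{\lambda+1}\prec V_{\lambda+1}$ and $A'=Y\subseteq V_\lambda$ with $\beta<\crt(k)<\crt(j)$, $k(B)=j(B)$ and $k(A')=A$, as required. I expect the only genuinely delicate step to be the definability of $V_{\lambda+1}$-truth over $L_1(V_{\lambda+1})$; once that is granted, the computation is a verbatim transcription of Lemma \ref{change}.
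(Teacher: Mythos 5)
Your proposal is correct and takes essentially the same route as the paper: the paper's own (very terse) proof consists precisely of the observation that ``$k:V_{\lambda+1}\prec V_{\lambda+1}$'' is expressible in $L_1(V_{\lambda+1})$ because the satisfaction relation for $(V_{\lambda+1},\in)$ is definable there, after which the argument of Lemma \ref{change} is repeated \emph{mutatis mutandis} --- exactly your two steps. Your explicit check that $k:=j^-$, $Y:=A$ witness the $j$-shifted formula, and the pullback by elementarity, is just the spelled-out version of what the paper leaves implicit.
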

  \begin{proof}
   The only detail we should care of is the fact that ``$j:V_{\lambda+1}\prec V_{\lambda+1}$'' must be definable in $L_1(V_{\lambda+1})$. But this is true, because the satisfaction relation in $V_{\lambda+1}$ is definable in $L_1(V_{\lambda+1})$, so the proof is, \emph{mutatis mutandis}, the same as Lemma \ref{change}. 
  \end{proof}
	
	\begin{teo}[Laver, \cite{Laver3}]
    \label{FirstStepI0}
    ``There exists $j:L_1(V_{\lambda+1})\prec L_1(V_{\lambda+1})$'' strongly implies ``there exists $j:V_{\lambda+1}\prec V_{\lambda+1}$''.   
  \end{teo}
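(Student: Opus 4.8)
The plan is to establish two things: that any $j:L_1(V_{\lambda+1})\prec L_1(V_{\lambda+1})$ already yields an embedding $V_{\lambda+1}\prec V_{\lambda+1}$ at the \emph{same} $\lambda$, and that moreover such a $j$ reflects an I1 embedding down to some $\alpha<\lambda$, so that the least $\lambda$ carrying an $L_1$-embedding is strictly above the least one carrying I1. The first point is almost immediate. Since $V_{\lambda+1}=L_0(V_{\lambda+1})$ is definable without parameters in $L_1(V_{\lambda+1})$, we have $j(V_{\lambda+1})=V_{\lambda+1}$, so $j\upharpoonright V_{\lambda+1}$ maps $V_{\lambda+1}$ to itself. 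The satisfaction relation of $(V_{\lambda+1},\in)$ is definable in $L_1(V_{\lambda+1})$; applying elementarity of $j$ to that definable predicate shows that $j\upharpoonright V_{\lambda+1}$ preserves and reflects satisfaction, i.e.\ is elementary. It is nontrivial, because if $j\upharpoonright V_{\lambda+1}$ were the identity then every element of $L_1(V_{\lambda+1})$, being definable over $V_{\lambda+1}$, would be fixed, contradicting $j\neq\id$.

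For the strong part I would mirror the proof of Theorem \ref{strongimplEn}, using the $L_1$ square root lemma in place of Lemma \ref{change}. Fix the target $B^*=j\upharpoonright V_\lambda$, coded as a subset of $V_\lambda$, and build a tree $G$ of finite sequences $\langle l_0,\dots,l_m\rangle$ of I1 embeddings $l_i:V_{\lambda+1}\prec V_{\lambda+1}$ with $\crt(l_0)<\cdots<\crt(l_m)<\crt(j)$, together with witnesses $B_0,\dots,B_m$ satisfying $l_0(B_0)=B^*$ and $l_{i+1}(B_{i+1})=B_i$. Lemma \ref{ReflectionI0} is exactly what lets us extend a node: given $\langle l_0,\dots,l_m\rangle$ we apply it with the slot $A:=B_m$ and $\beta:=\crt(l_m)$ to obtain a genuine I1 embedding $l_{m+1}$ with $\crt(l_m)<\crt(l_{m+1})<\crt(j)$ and some $B_{m+1}$ with $l_{m+1}(B_{m+1})=B_m$. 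Crucially, the square roots produced by Lemma \ref{ReflectionI0} are \emph{full} I1 embeddings, not merely $\Sigma^1_n$ ones. The available critical points are unbounded below $\crt(j)$, so $G$ has infinite branches; choose one, set $\alpha=\sup_i\crt(l_i)$ (a proper supremum), and let $K=l_0\circ l_1\circ\cdots$ be its inverse limit.

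By Lemma \ref{InverseLimit}, $K:V_\alpha\prec V_\lambda$, which I extend to $K^+:V_{\alpha+1}\to V_{\lambda+1}$, and by the telescoping computation of Lemma \ref{ChangeInverse} the set $B'=\bigcup_m(B_{m+1}\cap V_{\crt(l_m)})$ satisfies $K^+(B')=B^*$. Since each $l_i$ is I1, it is $\Sigma^1_n$ for every $n$, so Theorem \ref{StrengthInvLimit} applied separately at each $n$ shows $K$ is $\Sigma^1_n$ for every $n$; thus $K^+:V_{\alpha+1}\prec V_{\lambda+1}$ is fully (first-order) elementary. Now I feed in Remark \ref{I1}: the statement ``$x$ is the $V_\lambda$-part of an elementary embedding $V_{\lambda+1}\prec V_{\lambda+1}$'' is the conjunction of the $\Pi_{n+1}$ formulas $\varphi_n(x)$, all of which hold of $B^*$. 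Because $K^+$ preserves every $\varphi_n$ and $K^+(B')=B^*$, all the $\varphi_n$ hold of $B'$ in $V_{\alpha+1}$, so $B'$ codes an I1 embedding $k_\alpha:V_{\alpha+1}\prec V_{\alpha+1}$ with $\alpha<\lambda$. As in Theorem \ref{strongimplEn}, the possible values of $\alpha$ contain an $\omega$-club. Taking $\lambda$ least with an $L_1$-embedding, this exhibits I1 at some $\alpha<\lambda$, which gives the strong implication.

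The main obstacle I expect is the third step: making sure the inverse limit of the full I1 square roots is itself fully elementary from $V_{\alpha+1}$ into $V_{\lambda+1}$, i.e.\ $\Sigma^1_n$ for every $n$ simultaneously, since only then does the $\Pi_{n+1}$ characterization of I1-ness in Remark \ref{I1} transfer back to level $\alpha$. This is where Theorem \ref{StrengthInvLimit} and Remark \ref{I1} have to be combined with care, together with checking that Lemma \ref{ReflectionI0} really delivers honest I1 square roots whose critical points can be arranged to be cofinal in the desired $\alpha$.
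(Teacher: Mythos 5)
Your proposal is correct and is exactly the paper's intended argument: the paper's entire proof of Theorem \ref{FirstStepI0} is the single line ``the proof follows the same method of Theorem \ref{strongimplEn}'', and your write-up carries out precisely that method, with Lemma \ref{ReflectionI0} replacing Lemma \ref{change} to generate the tree of square roots, the inverse limit taken on the $V_\lambda$-restrictions and then extended to $V_{\alpha+1}$, and Remark \ref{I1} playing the role of Lemma \ref{Sigmavalue} in the reflection step. The two points you flag as delicate --- full elementarity of the inverse limit obtained by applying Theorem \ref{StrengthInvLimit} at every $n$ (legitimate, since each square root is a genuine I1 embedding and hence $\Sigma^1_n$ for all $n$), and the fact that one must extend the inverse limit of the restrictions rather than take an inverse limit of the I1 embeddings themselves --- are exactly the subtleties the paper itself emphasizes in the discussion surrounding the theorem, and your handling of both is sound.
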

	\begin{proof}
   The proof follows the same method of Theorem \ref{strongimplEn}.
	\end{proof}
	
	Note that in this case there is a fundamental difference between being $\Sigma^1_n$ for every $n$ and being an elementary embedding from $V_{\lambda+1}$ to itself. Suppose that we have $j_i:V_{\lambda+1}\prec V_{\lambda+1}$. Then the inverse limit $J$ of $\langle j_i:i\in\omega\rangle$ is \emph{not} an elementary embedding from $V_{\lambda'+1}$ to $V_{\lambda+1}$, because the domain of $J$ is just not $V_{\lambda'+1}$. Consider again the definition of the domain of $J$: 
	\begin{equation*}
	 H=\{x\in V_{\lambda+1}:\exists n\in\omega\ \forall m\geq n\ j_m(x)=x\}. 
	\end{equation*}
	Following this definition, for example, $\lambda'\notin H$, because it is moved by all $j_i$, while on the other hand $\lambda\in H$, because it is never moved. What we can prove with the methods provided, is that the unique extension of the inverse limits of the $j_i\upharpoonright V_\lambda$ to $V_{\lambda'+1}$ is an elementary embedding. 
	
	As I0 is equivalent to $j:L_\Theta(V_{\lambda+1})\prec L_\Theta(V_{\lambda+1})$, we consider now hypotheses like ``there exists $j:L_\alpha(V_{\lambda+1})\prec L_\alpha(V_{\lambda+1})$'', $\alpha<\Theta$, with the aim of finding strong implications among them. 
	
	Note that with such aim not any $\alpha$ will be appropriate. We defined strong implications only between properties of $\lambda$, while ``there exists $j:L_\alpha(V_{\lambda+1})\prec L_\alpha(V_{\lambda+1})$'' is a property of $\alpha$ and $\lambda$. We should consider therefore only $\alpha$'s that depend on $\lambda$, for example $\alpha$'s that are definable from $\lambda$ (e.g., $\lambda^+$, $\lambda^++\omega$, \dots) or just absolutely definable (e.g., $\omega$, $\omega^\omega$, \dots). This kind of ordinals have also the characteristic of being a fixed point for any large enough embedding, so we already have that I0 implies ``there exists $j:L_\alpha(V_{\lambda+1})\prec L_\alpha(V_{\lambda+1})$''.
	
	Our objective is now, given $j:L_\beta(V_{\lambda+1})\prec L_\beta(V_{\lambda+1})$ to find a $\alpha<\beta$ and $\bar{\alpha}$, $\bar{\lambda}<\lambda$ such that $L_{\bar{\alpha}}(V_{\bar{\lambda}+1})\equiv L_\alpha(V_{\lambda+1})$ and there exists $k:L_{\bar{\alpha}}(V_{\bar{\lambda}+1})\prec L_{\bar{\alpha}}(V_{\bar{\lambda}+1})$. With $\beta$ and $\alpha$ as above, this will imply that ``exists $j:L_\beta(V_{\lambda+1})\prec L_\beta(V_{\lambda+1})$'' strongly implies ``exists $j:L_\alpha(V_{\lambda+1})\prec L_\alpha(V_{\lambda+1})$''.
	
	The strategy is to follow Laver's proof for strong implications as much as possible. The first step is therefore to prove some square root principle like Lemma \ref{change}. The best possible outcome would be to find for any $j:L_{\alpha+1}(V_{\lambda+1})\prec L_{\alpha+1}(V_{\lambda+1})$ a square root $k$ that is $k:L_\alpha(V_{\lambda+1})\prec L_\alpha(V_{\lambda+1})$. For this, it is needed that 
	\begin{equation*}
	L_{\alpha+1}(V_{\lambda+1})\vDash \exists k:L_\alpha(V_{\lambda+1})\prec L_\alpha(V_{\lambda+1}), 
	\end{equation*}
	i.e., that $j\upharpoonright L_\alpha(V_{\lambda+1})\in L_{\alpha+1}(V_{\lambda+1})$. Can we assume that?
	
	The next step would be to define an inverse limit of embeddings at the $L_\alpha(V_{\lambda+1})$ level. Here another problem arises following the remarks after Theorem \ref{FirstStepI0}: the domain inverse limit of the $k_i:L_\alpha(V_{\lambda+1})\prec L_\alpha(V_{\lambda+1})$ is not some $L_{\bar{\alpha}}(V_{\bar{\lambda}+1})$, in fact it does not even contain $V_{\bar{\lambda}+1}$, therefore we should do the inverse limit of $k_i\upharpoonright V_\lambda$, and then extend it to some $L_{\bar{\alpha}}(V_{\bar{\lambda}+1})$. How to do it?
	
	Both problems were solved with the study of the structure of the sets $L_\alpha(V_{\lambda+1})$ for $\alpha<\Theta$ made by Laver in \cite{Laver4}.

\begin{defin}[Laver, \cite{Laver4}]
 Let $\lambda$ be a cardinal and let $\alpha<\Theta$. Then $\alpha$ is \emph{good} iff every element of $L_\alpha(V_{\lambda+1})$ is definable in $L_\alpha(V_{\lambda+1})$ from an element in $V_{\lambda+1}$.
\end{defin}

The successor of a good ordinal is a good ordinal: let $\alpha$ be good; the largest ordinal in $L_{\alpha+1}(V_{\lambda+1})$ is $\lambda+1+\alpha$, therefore $\alpha$ is definable in $L_{\alpha+1}(V_{\lambda+1})$ with $\lambda$ as a parameter, so $L_\alpha(V_{\lambda+1})$ is definable in $L_{\alpha+1}(V_{\lambda+1})$ with $\lambda$ as a parameter. But every element in $L_{\alpha+1}(V_{\lambda+1})$ is definable using $L_\alpha(V_{\lambda+1})$ and elements of $L_\alpha(V_{\lambda+1})$ (which in turn are definable with parameters from $V_{\lambda+1}$), and therefore $\alpha+1$ is good. This proves that the natural numbers are good. But also $\omega$ is good: every element of $L_\omega(V_{\lambda+1})$ is in some $L_n(V_{\lambda+1})$, $n\in V_{\lambda+1}$ and $n$ is good for every $n$. Following the same line of reasoning, every ordinal up to $\lambda$ is good, and considering that all ordinals less than $\lambda^+$ are coded as subsets of $\lambda$, and therefore in $V_{\lambda+1}$, every ordinal up to $\lambda^+$ is good.

On the other side, non-good ordinals exist. The definition of good ordinal is restricted to ordinals strictly less than $\Theta$ because larger ones are trivially not good: if $x$ is definable with a parameter, then it is uniquely determined by its definition, therefore for any $\alpha$ there exists in $L(V_{\lambda+1})$, 
\begin{multline*}
 \pi:V_{\lambda+1}\twoheadrightarrow G_\alpha=\{x\in L_\alpha(V_{\lambda+1}):\\
 L_\alpha(V_{\lambda+1})\vDash x\text{ is definable from an element in }V_{\lambda+1}\}; 
\end{multline*}
 then if $\Theta\subseteq L_\alpha(V_{\lambda+1})$, $G_\alpha$ must be strictly contained in $L_\alpha(V_{\lambda+1})$, by definition of $\Theta$. But non-good ordinals exist also below $\Theta$: define $L_\gamma(V_{\lambda+1})$ as the collapse of the Skolem closure of $V_{\lambda+1}$ in $L_{\Theta}(V_{\lambda+1})$; as $L_{\Theta}(V_{\lambda+1})\vDash\exists x\ x\text{ not definable from an element in }V_{\lambda+1}$, by elementarity the same must be true in $L_\gamma(V_{\lambda+1})$, as the collapse does not collapse $V_{\lambda+1}$, therefore $\gamma$ is not good. 

One can ask how many good ordinals there are.

\begin{lem}[Laver, \cite{Laver4}]
 Let $\lambda$ be a strong limit cardinal. Then the good ordinals are unbounded in $\Theta$.
\end{lem}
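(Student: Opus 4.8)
The plan is to reduce the statement to a pointwise-definability condition and then climb past an arbitrary $\alpha_0<\Theta$ using two engines already visible in the preceding discussion. First note that $\gamma<\Theta$ is good exactly when $L_\gamma(V_{\lambda+1})=H^{L_\gamma(V_{\lambda+1})}(V_{\lambda+1})$, and, via the canonical surjection $\Phi\colon\Ord\times V_{\lambda+1}\twoheadrightarrow L(V_{\lambda+1})$, this is equivalent to the apparently weaker demand that \emph{every ordinal} $\eta<\gamma$ be definable in $L_\gamma(V_{\lambda+1})$ from a parameter in $V_{\lambda+1}$ (a set $\Phi(\zeta,a)$ is then definable from $a$ once $\zeta$ is). So the goal becomes: given $\alpha_0<\Theta$, find a limit $\gamma$ with $\alpha_0<\gamma<\Theta$ all of whose ordinals are $V_{\lambda+1}$-definable inside $L_\gamma(V_{\lambda+1})$. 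Throughout I would use that $\Theta$ is regular in $L(V_{\lambda+1})$ (Lemma \ref{lambdastructure}).

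The first engine is a hull-and-condensation step. For $\beta<\Theta$ let $H_\beta=H^{L_\beta(V_{\lambda+1})}(V_{\lambda+1})$ be the closure of $V_{\lambda+1}$ under the partial Skolem functions $h_{\varphi,a}$ with $a\in V_{\lambda+1}$, and let $\pi_\beta\colon H_\beta\cong L_{\gamma_\beta}(V_{\lambda+1})$ be its transitive collapse (of this form by condensation, since $V_{\lambda+1}$ is fixed). I would check that each $\gamma_\beta$ is good: if $x\in H_\beta$ then $x$ is the unique witness of some fixed formula $L_\beta(V_{\lambda+1})\vDash\psi(x,a)$ with $a\in V_{\lambda+1}$, and elementarity together with the isomorphism $\pi_\beta$ transports this \emph{single} formula, so that $\pi_\beta(x)$ is the unique witness of $\psi(\cdot,a)$ in $L_{\gamma_\beta}(V_{\lambda+1})$; since only one formula is transferred, no truth predicate is needed and Tarski's obstruction does not arise. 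As $H_\beta$ is a surjective image of $V_{\lambda+1}$, the set $\Ord\cap H_\beta$ has order type below $\Theta$, whence $\gamma_\beta<\Theta$ by regularity; here the strong-limit hypothesis on $\lambda$ is used to code $\lambda$-sequences from $V_{\lambda+1}$ into single elements of $V_{\lambda+1}$, which is what keeps these hulls surjective images of $V_{\lambda+1}$.

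The second engine is the successor/top-ordinal trick already exploited in the proof that the successor of a good ordinal is good: in $L_{\eta+1}(V_{\lambda+1})$ the largest ordinal is $\lambda+1+\eta$, hence $\eta$ is definable there from $\lambda\in V_{\lambda+1}$. This shows that $V_{\lambda+1}$-definable ordinals occur cofinally in $\Theta$ (for each $\eta$, $\lambda+1+\eta>\eta$ is definable at level $\eta+1$). The plan is to interleave the two engines along a continuous chain: starting above $\alpha_0$, alternately pass to hull collapses (producing good ordinals) and to successor levels (making freshly exposed ordinals definable), take unions at limits, and invoke the regularity of $\Theta$ to keep the supremum of the chain strictly below $\Theta$, so that a final condensation at that supremum yields a good $\gamma>\alpha_0$.

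The hard part, and the real content of the lemma, is the limit stages. The top-ordinal trick produces definability of $\eta$ only at the exact level $L_{\eta+1}(V_{\lambda+1})$; in a larger model $L_\gamma(V_{\lambda+1})$ the ordinal $\eta$ is ``buried'' and need not remain definable, so a limit of good ordinals is not automatically good and one cannot simply make all ordinals below a limit simultaneously $V_{\lambda+1}$-definable. Reconciling this \emph{internal} definability in $L_\gamma(V_{\lambda+1})$ with the external, $L_\Theta$-relative definability used to build the hulls is exactly where the finer analysis of the levels $L_\alpha(V_{\lambda+1})$ from \cite{Laver4} (in the spirit of \cite{Steel}) is required: it supplies, below $\Theta$, enough definable structure to carry the relevant definitions across limits, and it is this step — not the routine collapse computations — that consumes the strong-limit hypothesis and the regularity of $\Theta$.
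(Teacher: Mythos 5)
The paper gives no proof of this lemma (it is simply quoted from \cite{Laver4}), so your proposal has to stand on its own, and it does not: it is explicitly incomplete at the one point where the lemma has content. What you do have is correct. The reduction of goodness of $\gamma$ to ``every ordinal $<\gamma$ is definable in $L_\gamma(V_{\lambda+1})$ from an element of $V_{\lambda+1}$'' is fine, granting the standard stratified fact that every element of $L_\gamma(V_{\lambda+1})$ is definable there from ordinals below $\gamma$ together with an element of $V_{\lambda+1}$ (plus coding of finite tuples, which is where strong limitness enters). Your first engine is also sound: the transitive collapse of the closure of $V_{\lambda+1}\cup\{V_{\lambda+1}\}$ under the partial Skolem functions of $L_\beta(V_{\lambda+1})$ is a good level, by exactly the single-formula transfer you describe. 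What is missing is any argument that some such collapse exceeds a prescribed $\alpha_0<\Theta$. The order type of the collapse equals the order type of the set of ordinals of $L_\beta(V_{\lambda+1})$ that are definable there from $V_{\lambda+1}$-parameters, and nothing in your proposal forces this to be larger than $\alpha_0$ for \emph{any} $\beta$. To force it, one needs a prewellordering of $V_{\lambda+1}$ of length at least $\alpha_0$ that is definable over the ambient level from $V_{\lambda+1}$-parameters \emph{alone}; the prewellordering witnessing $\alpha_0<\Theta$ (Lemma \ref{Thetasubsets}) only comes with parameters in some $L_{\beta'}(V_{\lambda+1})$, and throwing it into the hull as a parameter destroys goodness, since the collapse is then only good \emph{relative to} that prewellordering. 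Eliminating such parameters is precisely the content of Laver's lemma, so deferring it to ``the finer analysis of \cite{Laver4}'' makes the proposal circular at its crux. Concretely, your interleaving chain cannot work as described: a hull taken in $L_\beta(V_{\lambda+1})$ with $\beta>\alpha_0$ may collapse far below $\alpha_0$ (the hull's ordinals are not downward closed, so the largest-ordinal trick puts ordinals near $\beta$ into the hull, but they simply collapse to something small); from a good $\gamma$ the successor-type steps only yield $\gamma+1,\gamma+2,\dots$; and at the first limit the construction stalls, as you yourself concede. So the chain produces good ordinals only in an interval $[\gamma_0,\gamma_0+\omega)$ with no lower bound on $\gamma_0$, and the lemma is not proved.

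Two further remarks. First, the regularity of $\Theta$ (Lemma \ref{lambdastructure}) is doing no real work in your sketch: keeping an $\omega$-chain below $\Theta$ is trivial, and regularity by itself cannot substitute for the parameter-elimination mechanism above. Second, be aware that your engine, applied at $\beta=\Theta$, contradicts the paper's own preceding remark that the collapse of the Skolem closure of $V_{\lambda+1}$ in $L_{\Theta}(V_{\lambda+1})$ is \emph{not} good. The discrepancy lies in that remark, not in your argument: ``there exists $x$ not definable from an element of $V_{\lambda+1}$'' is not a first-order statement about the structure, so the appeal to elementarity there is illegitimate, whereas transferring one fixed definition through an isomorphism of elementary substructures is unproblematic. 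I point this out so that you do not try to ``repair'' your proposal by discarding the one ingredient that is actually correct; the repair has to come instead from producing, cofinally in $\Theta$, prewellorderings of $V_{\lambda+1}$ definable over some level from $V_{\lambda+1}$-parameters only, which is what the cited analysis supplies.
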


Therefore assuming $\alpha$ good is in most cases a reasonable choice.

By the usual condensation argument, if $\alpha<\Theta$ and $i:L_\alpha(V_{\lambda+1})\to L_\alpha(V_{\lambda+1})$ then $i\in L_{\Theta}(V_{\lambda+1})$. If $\alpha$ is good, however, it is possible to be much more precise:

\begin{lem}[Laver, Woodin, \cite{Kafkoulis},\cite{Laver4}]
\label{SimplyGood}
 Let $\lambda$ and $\alpha$ be such that $\alpha$ is good and there exists $i:L_\alpha(V_{\lambda+1})\prec L_\alpha(V_{\lambda+1})$ with $\crt(i)<\lambda$. Then $i$ is induced by $i\upharpoonright V_\lambda$, and therefore $i\in L_{\alpha+1}(V_{\lambda+1})$.
\end{lem}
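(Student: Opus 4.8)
The plan is to strip the embedding $i$ down layer by layer — first the ordinals and the ranks, then $V_{\lambda+1}$, then all of $L_\alpha(V_{\lambda+1})$ — and only at the very end to read off the definability bound.

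First I would record the structural constraints forced by $\crt(i)<\lambda$. Both $V_\lambda$ and $V_{\lambda+1}$ are definable in $L_\alpha(V_{\lambda+1})$ (for instance $\lambda$ is the largest ordinal lying in the base set $V_{\lambda+1}$, and $V_\lambda=\{x:\mathrm{rank}(x)<\lambda\}$), so $i$ fixes each of them setwise. Since the satisfaction predicates of $(V_\lambda,\in)$ and $(V_{\lambda+1},\in)$ are themselves available inside $L_\alpha(V_{\lambda+1})$, elementarity of $i$ descends to give $i\upharpoonright V_\lambda:V_\lambda\prec V_\lambda$ and $i\upharpoonright V_{\lambda+1}:V_{\lambda+1}\prec V_{\lambda+1}$. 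As $\crt(i)<\lambda$ the latter is nontrivial, hence a genuine I1 embedding, and in particular $\lambda$ is the supremum of its critical sequence. I can therefore apply the earlier lemma ($(j\upharpoonright V_\lambda)^+=j$ for $j:V_{\lambda+1}\prec V_{\lambda+1}$) to $j=i\upharpoonright V_{\lambda+1}$ and conclude $i\upharpoonright V_{\lambda+1}=(i\upharpoonright V_\lambda)^+=:e^+$, where $e:=i\upharpoonright V_\lambda$. Note $e\in V_{\lambda+1}$ (it codes as a subset of $V_\lambda$) and that $e^+$ is definable over $(V_{\lambda+1},\in)$ from $e$, so $e^+\in L_1(V_{\lambda+1})\subseteq L_\alpha(V_{\lambda+1})$.

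Next I would use goodness to propagate the determination from $V_{\lambda+1}$ to the whole model. Fix $x\in L_\alpha(V_{\lambda+1})$; by goodness there are a formula $\varphi$ and a parameter $a\in V_{\lambda+1}$ with $x$ the unique solution of $\varphi(\cdot,a)$ in $L_\alpha(V_{\lambda+1})$. Elementarity of $i$ sends $x$ to the unique solution of $\varphi(\cdot,i(a))$, and $i(a)=e^+(a)$ is already pinned down by $e$. Hence $i(x)$ depends only on $(\varphi,a)$ and on $e$; in particular any two elementary embeddings of $L_\alpha(V_{\lambda+1})$ agreeing with $i$ on $V_\lambda$ agree everywhere. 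This is exactly the assertion that $i$ is induced by $i\upharpoonright V_\lambda$.

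For the last clause, that $i\in L_{\alpha+1}(V_{\lambda+1})=\Def(L_\alpha(V_{\lambda+1}))$, I would write
\[
 i=\bigl\{(x,y): \exists\varphi\ \exists a\in V_{\lambda+1}\ \bigl(x\text{ is the unique }\varphi(\cdot,a)\text{-witness and }y\text{ the unique }\varphi(\cdot,e^+(a))\text{-witness}\bigr)\bigr\}.
\]
The only non-first-order ingredient here is the quantifier over formulas $\varphi$, i.e. the internal satisfaction relation of $L_\alpha(V_{\lambda+1})$, and this is the main obstacle: by Tarski that relation is not definable over $L_\alpha(V_{\lambda+1})$ itself, so the naive reading lands $i$ one level too high, in $L_{\alpha+2}$. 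To recover the sharp index I would replace the appeal to arbitrary defining formulas by the canonical relative-constructibility coding: the surjection $\Phi:\Ord\times V_{\lambda+1}\twoheadrightarrow L(V_{\lambda+1})$ is uniformly $\Sigma_1$ over the levels $L_\beta(V_{\lambda+1})$, so the map sending $x$ to its $<_{L}$-least code $(\gamma_x,a_x)$ is definable over $L_\alpha(V_{\lambda+1})$ without the full truth predicate; since $i$ commutes with the definable $\Phi$, one gets $i(x)=\Phi(i(\gamma_x),e^+(a_x))$, reducing the whole graph to the parameter $e^+\in L_\alpha(V_{\lambda+1})$ together with $i\!\upharpoonright\!\Ord$. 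Controlling the complexity of this $\Sigma_1$ coding, and checking that the ordinal part $i\!\upharpoonright\!\Ord$ is absorbed at the same level, is the delicate point on which the stated bound $L_{\alpha+1}$ (rather than $L_{\alpha+2}$) rests; everything else is routine bookkeeping with elementarity and the ``plus'' construction.
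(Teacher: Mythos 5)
Your argument for the first clause --- that $i$ is induced by $i\upharpoonright V_\lambda$ --- is correct, and it is the standard one: elementarity descends to $V_{\lambda+1}$ (relativizing formulas to the fixed elements $V_\lambda$, $V_{\lambda+1}$), the earlier lemma identifies $i\upharpoonright V_{\lambda+1}$ with $(i\upharpoonright V_\lambda)^+$, and goodness plus elementarity then pin down $i(x)$ from any definition of $x$ with a $V_{\lambda+1}$-parameter. Note that the paper itself gives no proof of this lemma (it is cited to Laver and to Woodin's notes), so your proof has to stand on its own; and it is precisely in the last paragraph, the definability clause $i\in L_{\alpha+1}(V_{\lambda+1})$, that it does not.

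You correctly identify the Tarski obstruction as the crux, but your proposed repair fails on two counts. First, ``the $<_L$-least code $(\gamma_x,a_x)$'' does not exist in this setting: relative constructibility over $V_{\lambda+1}$ provides a definable \emph{surjection} $\Phi:\Ord\times V_{\lambda+1}\twoheadrightarrow L(V_{\lambda+1})$, not a definable well-order. One can minimize the ordinal coordinate of a code, but there is no definable selection of the $V_{\lambda+1}$-coordinate --- the non-well-orderability of $V_{\lambda+1}$ is the whole point of this theory, and the ``least code'' intuition imported from $L$ is illegitimate here. Second, and fatally, your formula $i(x)=\Phi(i(\gamma_x),e^+(a_x))$ takes $i\upharpoonright\Ord$ as given data. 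But $i\upharpoonright(\Ord\cap\alpha)$ is exactly the part of $i$ that cannot be recovered by any level-by-level recursion: at limit ordinals of uncountable cofinality $i$ is discontinuous, so $i(\beta)$ is genuinely new information, not determined by $i\upharpoonright L_\beta(V_{\lambda+1})$; and by goodness the ordinals below $\alpha$ are tied to $V_{\lambda+1}$-parameters only through the satisfaction relation of $L_\alpha(V_{\lambda+1})$, the very relation Tarski denies you. So the step you defer as ``routine bookkeeping'' is not bookkeeping; it is the entire content of the clause, and your argument is circular at exactly that point. (Your side claim that $\Phi$ is ``uniformly $\Sigma_1$ over the levels $L_\beta(V_{\lambda+1})$'' is also unjustified: a good $\alpha$ can be very small, e.g.\ $\alpha=\omega$, and these levels are weak, non-admissible structures.) What a correct argument must exploit is that satisfaction over $L_\alpha(V_{\lambda+1})$ becomes expressible one level up, where $L_\alpha(V_{\lambda+1})$ and its partial satisfaction predicates are elements --- this is exactly how the lemma is used later, in the proof of Lemma \ref{FinalChange}, where ``$\exists k:L_\alpha(V_{\lambda+1})\prec L_\alpha(V_{\lambda+1})$'' is said to be expressible in $L_{\alpha+1}(V_{\lambda+1})$ --- and whether this lands the graph of $i$ in $L_{\alpha+1}(V_{\lambda+1})$ itself, rather than one level higher, is precisely the delicate counting your proof would need to control; as written it controls neither.
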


Finally:
	
	\begin{lem}[Square root Lemma for $L_\alpha(V_{\lambda+1})$]
	\label{FinalChange}
	 Let $\alpha$ be good, and let $j:L_{\alpha+1}(V_{\lambda+1})\prec L_{\alpha+1}(V_{\lambda+1})$. Then for any $A,B\in L_\alpha(V_{\lambda+1})$, for any $\beta<\crt(j)$ there exists $k:L_\alpha(V_{\lambda+1})\prec L_\alpha(V_{\lambda+1})$ such that $\beta<\crt(k)<\crt(j)$, $k(k\upharpoonright V_\lambda)=j\upharpoonright V_\lambda$, $k(B)=j(B)$ and there exists $A\in\ran(k)$.
	\end{lem}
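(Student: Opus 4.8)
The plan is to follow the template of Lemma \ref{change}, in the form already adapted to levels of $L(V_{\lambda+1})$ for Lemma \ref{ReflectionI0}. The crucial new ingredient is that, since $\alpha$ is good, an elementary self-embedding of $L_\alpha(V_{\lambda+1})$ with critical point below $\lambda$ is completely coded by its restriction to $V_\lambda$ and lives one level up: this is exactly Lemma \ref{SimplyGood}. Because $j:L_{\alpha+1}(V_{\lambda+1})\prec L_{\alpha+1}(V_{\lambda+1})$ is \emph{fully} elementary, I do not need the $\Sigma^1_n$-bookkeeping of Lemma \ref{change}; ordinary elementarity of $j$ will transfer the statement ``a square root exists'' once it is written as a genuine first-order formula over $L_{\alpha+1}(V_{\lambda+1})$.

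First I would set up the encoding. Since $j$ fixes $V_{\lambda+1}=L_0(V_{\lambda+1})$ and hence $\lambda$, we have $j(L_\alpha(V_{\lambda+1}))=L_\alpha(V_{\lambda+1})$, so $i:=j\upharpoonright L_\alpha(V_{\lambda+1})$ is an elementary self-embedding of $L_\alpha(V_{\lambda+1})$ with $\crt(i)=\crt(j)=:\kappa$. Now $L_\alpha(V_{\lambda+1})$ is definable in $L_{\alpha+1}(V_{\lambda+1})$ (with parameter $\lambda$), its satisfaction relation is definable there as well, and by Lemma \ref{SimplyGood} together with goodness every such embedding is the unique map induced by its restriction $\tau=k\upharpoonright V_\lambda\in V_{\lambda+1}$. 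Hence the property ``$\tau\in V_{\lambda+1}$ is the restriction to $V_\lambda$ of some $k:L_\alpha(V_{\lambda+1})\prec L_\alpha(V_{\lambda+1})$'' is expressible by a formula of $L_{\alpha+1}(V_{\lambda+1})$, and so is the value of the induced $k$ on any fixed element of $L_\alpha(V_{\lambda+1})$. I would then write the formula $\Phi(A,B,b,\sigma,\kappa,\beta)$ asserting ``there exist $\tau\in V_{\lambda+1}$ and $A'$ such that $\tau$ induces some $k:L_\alpha(V_{\lambda+1})\prec L_\alpha(V_{\lambda+1})$ with $\beta<\crt(k)<\kappa$, $k(\tau)=\sigma$, $k(B)=b$ and $k(A')=A$''. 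The goal becomes $L_{\alpha+1}(V_{\lambda+1})\vDash\Phi(A,B,j(B),j\upharpoonright V_\lambda,\kappa,\beta)$, which produces exactly the desired $k$.

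The square-root step itself is then short. Applying $j$ to all parameters and using $j(\beta)=\beta$ (as $\beta<\kappa=\crt(j)$), full elementarity of $j$ gives that $\Phi(A,B,j(B),j\upharpoonright V_\lambda,\kappa,\beta)$ holds iff $\Phi(j(A),j(B),j(j(B)),j(j\upharpoonright V_\lambda),j(\kappa),\beta)$ holds. The latter I would witness by $\tau=j\upharpoonright V_\lambda$ (which induces $k=i$) and $A'=A$: indeed $\crt(i)=\kappa$ satisfies $\beta<\kappa<j(\kappa)$, and since $j\upharpoonright V_\lambda$, $j(B)$ and $A$ all lie in $L_\alpha(V_{\lambda+1})$ while $i=j\upharpoonright L_\alpha(V_{\lambda+1})$, we get $i(j\upharpoonright V_\lambda)=j(j\upharpoonright V_\lambda)$, $i(j(B))=j(j(B))$ and $i(A)=j(A)$, matching the three required equalities. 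Pulling this back along $j$ yields a genuine $k:L_\alpha(V_{\lambda+1})\prec L_\alpha(V_{\lambda+1})$ with $\beta<\crt(k)<\crt(j)$, $k(k\upharpoonright V_\lambda)=j\upharpoonright V_\lambda$, $k(B)=j(B)$ and $A\in\ran(k)$, as required.

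I expect the main obstacle to be the encoding, not the transfer: one must check that ``$\tau$ is the restriction to $V_\lambda$ of an elementary self-embedding of $L_\alpha(V_{\lambda+1})$'' and the evaluation $x\mapsto k(x)$ for $x\in L_\alpha(V_{\lambda+1})$ are genuinely first-order over $L_{\alpha+1}(V_{\lambda+1})$. This is where goodness is indispensable: every $x\in L_\alpha(V_{\lambda+1})$ is definable from some $a\in V_{\lambda+1}$, so $k(x)$ is forced to be the object defined by the same formula from $k(a)$, and $k(a)$ is in turn determined by $\tau$; Lemma \ref{SimplyGood} guarantees that this recipe is coherent and really reconstructs a total elementary $k\in L_{\alpha+1}(V_{\lambda+1})$. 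Once this definability is secured, the elementarity of $j$ does all the remaining work.
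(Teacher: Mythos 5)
Your proposal is correct and is essentially the paper's own argument: the paper proves Lemma \ref{FinalChange} by running the proof of Lemma \ref{change} verbatim, where goodness (via Lemma \ref{SimplyGood}) makes ``$\exists k\ k:L_\alpha(V_{\lambda+1})\prec L_\alpha(V_{\lambda+1})$'' expressible over $L_{\alpha+1}(V_{\lambda+1})$ so that it can replace ``$k$ is $\Sigma^1_{n-2}$'' in the formula, with $j\upharpoonright L_\alpha(V_{\lambda+1})$ serving as the witness. Your write-up simply makes explicit the coding of $k$ by $k\upharpoonright V_\lambda$, the smart-quantification step, and the pull-back along the full elementarity of $j$, all of which the paper's terse proof leaves implicit.
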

	\begin{proof}
	Exactly as Lemma \ref{change}. The key point is that as $\alpha$ is good, ``$\exists k\ k:L_\alpha(V_{\lambda+1})\prec L_\alpha(V_{\lambda+1})$'' is expressible in $L_{\alpha+1}(V_{\lambda+1})$ and it is satisfied by $j\upharpoonright L_\alpha(V_{\lambda+1})$, so it can be put inside the formula instead of ``$j$ is $\Sigma^1_{n-2}$''. 
	\end{proof}
	
	Note that if $\alpha$ is not good, then we can have a similar result: for $j:L_\beta(V_{\lambda+1})\prec L_\beta(V_{\lambda+1})$, let $\alpha$ be maximum such that $j\upharpoonright L_\alpha(V_{\lambda+1})\in L_\beta(V_{\lambda+1})$. Then we can find a square root that extend to $L_\alpha(V_{\lambda+1})$. The difference is that in this case the gap can be large, and it is somewhat arbitrary, while for the good case is just one.
	
	Moreover, I0 implies ``$\exists j:L_\alpha(V_{\lambda+1})\prec L_\alpha(V_{\lambda+1})$'':

\begin{lem}[Woodin, \cite{Kafkoulis}]
\label{Fixed}
 Let $\lambda$ be a cardinal. If there exists $j:L_{\Theta}(V_{\lambda+1})\prec L_{\Theta}(V_{\lambda+1})$, then for any $\alpha<\Theta$ there exists an $i:L_\alpha(V_{\lambda+1})\prec L_\alpha(V_{\lambda+1})$.
\end{lem}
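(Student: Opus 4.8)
The plan is to argue by reflection on a least counterexample, exploiting that $j:L_\Theta(V_{\lambda+1})\prec L_\Theta(V_{\lambda+1})$ is elementary together with the fact that the existence of a self-embedding of a level below $\Theta$ is an \emph{internal, absolute} property of $L_\Theta(V_{\lambda+1})$. The first and crucial step is to establish this absoluteness: I claim that for every $\beta<\Theta$ the statement ``there exists a nontrivial elementary $i:L_\beta(V_{\lambda+1})\prec L_\beta(V_{\lambda+1})$'' is decided correctly inside $L_\Theta(V_{\lambda+1})$. The point is that any such $i$ is a subset of $L_\beta(V_{\lambda+1})^2$, and since $\beta<\Theta$ there is a surjection $\rho:V_{\lambda+1}\twoheadrightarrow L_\beta(V_{\lambda+1})$ (obtained from the definable surjection $\Phi:\Ord\times V_{\lambda+1}\twoheadrightarrow L(V_{\lambda+1})$ and a surjection $V_{\lambda+1}\twoheadrightarrow\beta$, using that pairs from $V_{\lambda+1}$ can be coded by single elements of $V_{\lambda+1}$). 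Pulling $i$ back along $\rho$ codes it as a subset of $V_{\lambda+1}$, so by Lemma \ref{Thetasubsets}(2) this code lies in some $L_{\beta'}(V_{\lambda+1})$ with $\beta'<\Theta$, whence $i\in L_\Theta(V_{\lambda+1})$. Since $L_\beta(V_{\lambda+1})$ is a set in $L_\Theta(V_{\lambda+1})$, its satisfaction relation is definable there, so ``$i$ is elementary and nontrivial'' is a genuine formula of $L_\Theta(V_{\lambda+1})$; as all witnesses live below $\Theta$, the existence statement is absolute between $L_\Theta(V_{\lambda+1})$ and $V$.

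With this absoluteness in hand, suppose toward a contradiction that some $\alpha<\Theta$ admits no self-embedding, and let $\alpha^*$ be the least such ordinal. Then $L_\Theta(V_{\lambda+1})\vDash$``$\alpha^*$ is the least $\beta$ for which no $i:L_\beta(V_{\lambda+1})\prec L_\beta(V_{\lambda+1})$ exists''. Applying the elementary $j$ yields the same statement about $j(\alpha^*)$, and since ``the least such $\beta$'' is unique, $j(\alpha^*)=\alpha^*$. But then $j(L_{\alpha^*}(V_{\lambda+1}))=L_{j(\alpha^*)}(V_{\lambda+1})=L_{\alpha^*}(V_{\lambda+1})$, and because $L_{\alpha^*}(V_{\lambda+1})$ is definable in $L_\Theta(V_{\lambda+1})$ from the parameter $\alpha^*$, the restriction $j\upharpoonright L_{\alpha^*}(V_{\lambda+1})$ is an elementary embedding of $L_{\alpha^*}(V_{\lambda+1})$ into itself. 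It is nontrivial, since $V_{\lambda+1}\subseteq L_{\alpha^*}(V_{\lambda+1})$ while $\crt(j)<\lambda$ forces $j$ to move some element of $V_\lambda$. This contradicts the choice of $\alpha^*$, so no counterexample exists and the lemma follows.

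The main obstacle is precisely the first step, namely turning ``there is a self-embedding of $L_\beta(V_{\lambda+1})$'' into an internal and absolute property of $L_\Theta(V_{\lambda+1})$: one must be sure that a witnessing embedding cannot escape $L_\Theta(V_{\lambda+1})$, which is exactly what the coding of subsets of $V_{\lambda+1}$ together with Lemma \ref{Thetasubsets}(2) delivers. Once this is secured, the rest is a routine least-counterexample reflection; note in passing that the argument directly forces $\alpha^*$ to be a fixed point of $j$, in line with the unboundedness of such fixed points below $\Theta$ recorded in Corollary \ref{FixedpointsbelowTheta}.
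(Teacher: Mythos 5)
Your proof follows the same skeleton as the paper's: assume a counterexample exists, take the least one $\alpha^*$, argue it is definable in $L_\Theta(V_{\lambda+1})$ and hence fixed by $j$, and conclude that $j\upharpoonright L_{\alpha^*}(V_{\lambda+1})$ is a witness, giving a contradiction. The problem lies in the step you yourself identify as the crux, and your justification of it has a genuine gap: you claim that any elementary $i:L_\beta(V_{\lambda+1})\prec L_\beta(V_{\lambda+1})$ existing in $V$ must lie in $L_\Theta(V_{\lambda+1})$ because its pullback along a surjection $\rho:V_{\lambda+1}\twoheadrightarrow L_\beta(V_{\lambda+1})$ is a subset of $V_{\lambda+1}$, to which Lemma \ref{Thetasubsets}(2) applies. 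But Lemma \ref{Thetasubsets}(2) carries the hypothesis $Z\in L(V_{\lambda+1})$, and that hypothesis is doing all the work: it is precisely what you have not verified for the code of $i$. A subset of $V_{\lambda+1}$ manufactured in $V$ need not belong to $L(V_{\lambda+1})$ at all; indeed, in the intended context (I0, or the paper's Kunen-style argument with $\crt(j)<\lambda$) a code of a well-ordering of $V_{\lambda+1}$ — which exists in $V$ by \AC{} — \emph{cannot} lie in $L(V_{\lambda+1})$, since otherwise the argument of Theorem \ref{kunens} would refute the hypothesis of the Lemma. So "codable as a subset of $V_{\lambda+1}$" gives no purchase on membership in $L(V_{\lambda+1})$, and your argument is circular exactly where the work has to happen. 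Note also that your coding argument never uses the elementarity of $i$, which should be a warning sign: the claim is simply false for arbitrary functions $i:L_\beta(V_{\lambda+1})\to L_\beta(V_{\lambda+1})$.

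What closes the gap is an argument that exploits elementarity: this is the content of the remark preceding Lemma \ref{SimplyGood} ("by the usual condensation argument, if $\alpha<\Theta$ and $i:L_\alpha(V_{\lambda+1})\to L_\alpha(V_{\lambda+1})$ then $i\in L_\Theta(V_{\lambda+1})$"), and it is exactly what the paper's proof cites at this point. In the special case of good $\alpha$ and $\crt(i)<\lambda$, Lemma \ref{SimplyGood} makes the mechanism concrete: $i$ is induced by $i\upharpoonright V_\lambda$, which is an element of $V_{\lambda+1}$, so $i\in L_{\alpha+1}(V_{\lambda+1})$. Once your first step is replaced by an appeal to this fact, the remainder of your argument — internal definability of the least counterexample, $j(\alpha^*)=\alpha^*$, restriction of $j$ — coincides with the paper's proof. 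A minor further blemish: to see that $j\upharpoonright L_{\alpha^*}(V_{\lambda+1})$ is not the identity you invoke $\crt(j)<\lambda$, which is not among the hypotheses of the Lemma as stated (though it holds in the I0 setting where the Lemma is applied); the paper's own proof silently glosses over the same point, so this is not a defect specific to your write-up, but it is worth being aware of.
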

\begin{proof}
 Suppose it is false. Then there is a counterexample $\alpha$ such that every $i:L_\alpha(V_{\lambda+1})\to L_\alpha(V_{\lambda+1})$ is not an elementary embedding. All such $i$'s are in $L_{\Theta}(V_{\lambda+1})$ (see remark before Lemma \ref{SimplyGood}), therefore $L_{\Theta}(V_{\lambda+1})\vDash\exists\alpha\ (\alpha\text{ is a counterexample})$. Let $\alpha_0$ be the least counterexample. Then $\alpha_0$ is definable in $L_{\Theta}(V_{\lambda+1})$ and $j(\alpha_0)=\alpha_0$. Then $j\upharpoonright L_{\alpha_0}(V_{\lambda+1})$ is as in the lemma, contradiction.
\end{proof}

Compare this lemma with the fact that $U_j\notin L(V_{\lambda+1})$, and therefore $j\upharpoonright L_\Theta(V_{\lambda+1})\notin L(V_{\lambda+1})$. Then if $j(\alpha)=\alpha$ and 
\begin{equation*}
 j\upharpoonright L_\alpha(V_{\lambda+1}):L_\alpha(V_{\lambda+1})\prec L_\alpha(V_{\lambda+1}), 
\end{equation*}
for any $\beta>\alpha$ there must be a $j_1$ elementary embedding with domain $L_\beta(V_{\lambda+1})$ and such that $j_1\neq j\upharpoonright L_\beta(V_{\lambda+1})$, otherwise one could build $j\upharpoonright L_\Theta(V_{\lambda+1})$ inside $L(V_{\lambda+1})$. In other words, the tree of embeddings with domain of the type $L_\alpha(V_{\lambda+1})$ has levels of ``cardinality'' $<\Theta$ (by \ref{inaccessible}), has height $\Theta$ and there is no cofinal branch in $L(V_{\lambda+1})$. In a certain sense, it is a $\Theta$-Aronszajn tree in $L(V_{\lambda+1})$. 

	Now we can do the inverse limit like in Theorem \ref{ChangeInverse}, and so we have $J:V_{\bar{\lambda}}\prec V_{\lambda}$, inverse limit of embeddings that can be extended to $L_\alpha(V_{\lambda+1})$. With $\bar{\alpha}$ fixed, we can extend $J$ to $L_{\bar{\alpha}}(V_{\bar{\lambda}+1})$. But how to find a $\bar{\alpha}$ so that $J$ is an elementary embedding?
	
	This point needs more work, and has been solved by Scott Cramer in his PhD Thesis. The idea is always to use \ref{FinalChange} repeatedly, to build inverse limits that have in the range witnesses for existential formulas, so that in the end the inverse limit is elementary. The difference is that we need $\lambda$ different witnesses to be considered every time, so the plan is to build a forcing that at the same time constructs a limit of inverse limits and collapses $\lambda$ to $\omega$. We follow with a sketch of the ideas behind such a construction, for the details we refer the reader to \cite{Cramer}.
	
	The structure of the proof of Theorem \ref{ChangeInverse} is like this:
		\newline
\newline
	\begin{tabular}{c c c c c c}
		& $j\upharpoonright V_\lambda$ & $j\upharpoonright V_\lambda$ & $j\upharpoonright V_\lambda$ & $j\upharpoonright V_\lambda$ & \dots\\
		& & & & & \\
		$K:$ & $k_0$ & $k_1$ & $k_2$ & $k_3$ & \dots\\
	\end{tabular}
		\newline
\newline
	
	where each embedding below is a square root of the embedding above. We have some freedom for the critical points in the embedding belows, in that for each one we can say that it must be bigger than a certain fixed ordinal, and (optionally) less than $\crt(j)$. In this way, we can push the limit of the critical points of the $k_i$'s to be $\lambda$, so that the inverse limit will be from $V_\lambda$ to itself, or less than $\crt(j)$, as in Lemma \ref{ChangeInverse}, so to have $J:V_{\bar{\lambda}}\prec V_\lambda$. So, by Lemma \ref{FinalChange}, the existence of $j:L_{\alpha+1}(V_{\lambda+1})\prec L_{\alpha+1}(V_{\lambda+1})$ implies the existence of an inverse limit $J$ of square roots of $j\upharpoonright V_\lambda$ such that $J:V_{\bar{\lambda}}\prec V_\lambda$ (possibly with $\bar{\lambda}=\lambda$), and all the square roots can be extended to $L_\alpha(V_{\lambda+1})$. 
	
	Instead of $j$, we can start already with an inverse limit:
	\newline
\newline
			\begin{tabular}{c c c c c c}
				$J:$ & $j_0$ & $j_1$ & $j_2$ & $j_3$ & \dots\\
				& & & & & \\
				$K:$ & $k_0$ & $k_1$ & $k_2$ & $k_3$ & \dots\\
			\end{tabular}
	\newline
\newline
	We say that $K$ (the inverse limit of the $k_i$'s) is a \emph{limit root} of $J$ (the inverse limit of the $j_i$'s) iff there exists an $n\in\omega$ such that $k_i=j_i$ for $i<n$ and $k_i$ is a square root of $j_i$ for $i\geq n$. So for any inverse limit $J$ of embeddings that extend to $L_{\alpha+1}(V_{\lambda+1})$ there exist a limit root of $J$ of embeddings that extend to $L_\alpha(V_{\lambda+1})$. Moreover, by \ref{FinalChange}, if $J:V_{\bar{\lambda}}\prec V_{\lambda}$, for any $B\in V_{\bar{\lambda}+1}$ and any $A\in V_{\lambda+1}$ we can have $K$ as before such that $K:V_{\bar{\lambda}}\prec V_\lambda$, $K(B)=J(B)$ and $A\in\ran(K)$.
	
	The next step is to build a limit of inverse limits. Let $j$ be an embedding that extends to $L_{\alpha+\omega}(V_{\lambda+1})$. Then consider something like this:
	\newline
\newline
	 \begin{tabular}{c c c c c c}
	         &  $j$  & $j$   & $j$   & $j$   & \dots\\
					& & & & & \\
			$J:$ & $j_0$ & $j_1$ & $j_2$ & $j_3$ & \dots\\
								& & & & & \\
			$K_1$ &	$j_0$ & $k^1_1$ & $k^1_2$ & $k^1_3$ & \dots\\
								& & & & & \\
			$K_2$ &	$j_0$ & $k^1_1$ & $k^1_2$ & $k^2_3$ & \dots\\
								& & & & & \\
			$\vdots$ & $\vdots$ & $\vdots$ & $\vdots$ & $\vdots$ & \dots\\
	 \end{tabular}
	\newline
\newline
	 where $J$ is an inverse limit of square roots of $j$, and every $K_{i+1}$ is a limit root of $K_i$. We can choose the $k^n_m$'s so that they all extend to $L_\alpha(V_{\lambda+1})$: Let $n_i$ be an increasing sequence of integers. Then choose $j_i$ to be a square root of $j$ extendible to $L_{\alpha+n_i}(V_{\lambda+1})$. Once defined $k^m_i$, if it is extendible to some $L_{\alpha+n}(V_{\lambda+1})$ with $n>0$, then choose $k^{m+1}_i$ to be a square root of $k^m_i$ that is extendible to $L_{\alpha+n-1}(V_{\lambda+1})$, otherwise $k^{m+1}_i=k^m_i$. As $n_i$ was increasing, the last case can happen only a finite amount of times, so $K_{m+1}$ will always be a limit square root of $K_m$. Also, at each stage, for any $A_m\subseteq V_{\bar{\lambda}}$ (where $V_{\bar{\lambda}})$ is the domain of $J$) and any $B_m\subseteq V_\lambda$, we can have $K^{m+1}(A)=K^m(A)$ and $B\in\ran(k^{m+1}_i\circ k^{m+1}_{i+1}\circ\dots)$, where $i$ is the minimum such that $k^m_i\neq k^{m+1}_i$. Note that this is less than a Square root Lemma for inverse limits for $L_\alpha(V_{\lambda+1})$, as we ask only $B$ to be in the range of a tail of the limit, and not of $K$. But this will be enough.
	
	\begin{teo}[Cramer, 2015 \cite{Cramer}]
	\label{therightalpha}
	 Let $\alpha$ be good, and let $j:L_{\alpha+\omega}(V_{\lambda+1})\prec L_{\alpha+\omega}(V_{\lambda+1})$. Let $J$ be as the first step of the construction above, with $V_{\bar{\lambda}}$ its domain. Then there exist $\bar{\alpha},\ \bar{\lambda}<\lambda$ such that $L_{\bar{\alpha}}(V_{\bar{\lambda}+1})\equiv L_\alpha(V_{\lambda+1})$.
	\end{teo}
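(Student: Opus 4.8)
The plan is to realize the ``limit of inverse limits'' sketched above inside a forcing extension where everything in sight is countable, use the resulting $\omega$-bookkeeping to catch all the witnesses needed for a Tarski--Vaught argument, and then transfer the conclusion back to $V$. The crucial observation that makes the forcing harmless is that the statement $L_{\bar\alpha}(V_{\bar\lambda+1})\equiv L_\alpha(V_{\lambda+1})$ concerns the first-order theories of two \emph{fixed} sets, and is therefore absolute between $V$ and any extension. So I would first force with a collapse making $L_\alpha(V_{\lambda+1})$ countable (this is where $\lambda$ is collapsed to $\omega$ in the sketch), pass to $V[G]$, and reduce the theorem to producing, in $V[G]$, ordinals $\bar\alpha$ and $\bar\lambda<\lambda$ together with an elementary embedding $\hat K\colon L_{\bar\alpha}(V_{\bar\lambda+1})\prec L_\alpha(V_{\lambda+1})$ \emph{between the ground-model structures}: such an embedding forces the two fixed structures to be elementarily equivalent in $V[G]$, hence in $V$.

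Working in $V[G]$, I fix an enumeration $\langle(\vec b_m,\varphi_m):m\in\omega\rangle$ listing cofinally often every pair of a finite parameter-tuple from $L_\alpha(V_{\lambda+1})$ and an existential formula, and then run the construction. The first inverse limit $J=j_0\circ j_1\circ\cdots$ of square roots of $j$ has domain $V_{\bar\lambda}$ with $\bar\lambda<\lambda$ by Lemma \ref{InverseLimit}, its components chosen extendible to $L_{\alpha+n_i}(V_{\lambda+1})$ with $n_i$ increasing. I then build the descending sequence $K_1,K_2,\dots$ of limit roots, each obtained from the previous one column by column via the Square Root Lemma \ref{FinalChange}; the freedom in that lemma is spent, at stage $m$, to catch the witness for the $m$-th requirement, i.e.\ to guarantee that if $\vec b_m$ already lies in the range of the current tail and $L_\alpha(V_{\lambda+1})\vDash\exists x\,\varphi_m(x,\vec b_m)$, then a witness enters the range of a tail of the next limit, all the while keeping the already-secured values fixed.

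Because the extendibility indices are increasing, each column $k_i$ is altered only finitely often, so the diagonal limit $K_\infty$ is well defined and is itself an inverse limit $K_\infty\colon V_{\bar\lambda}\prec V_\lambda$ of embeddings extendible to $L_\alpha(V_{\lambda+1})$. All the maps involved are restrictions of ground-model embeddings, so $K_\infty$ relates the ground-model sets $V_{\bar\lambda}$ and $V_\lambda$ even though the sequence defining it lives in $V[G]$; it then extends canonically to $\hat K\colon L_{\bar\alpha}(V_{\bar\lambda+1})\to L_\alpha(V_{\lambda+1})$, where $\bar\alpha$ is read off from the transitive collapse of $\ran(\hat K)$ by condensation.

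The heart of the matter, and the step I expect to be the main obstacle, is verifying that $\hat K$ is genuinely elementary. This is a Tarski--Vaught closure argument, but the delicate point already flagged in the construction is that the Square Root Lemma only places each witness in the range of a \emph{tail} of $K_\infty$, not literally in $\ran(\hat K)$; one must check that this tailwise catching, combined with the stabilization of the columns and the fact that each requirement is met cofinally often, actually forces $\ran(\hat K)\prec L_\alpha(V_{\lambda+1})$. Granting this, condensation identifies $\ran(\hat K)$ with some $L_{\bar\alpha}(V_{\bar\lambda+1})$, the embedding $\hat K$ witnesses the elementary equivalence in $V[G]$, and absoluteness delivers $L_{\bar\alpha}(V_{\bar\lambda+1})\equiv L_\alpha(V_{\lambda+1})$ in $V$ with $\bar\lambda<\lambda$ inherited from $J$, as required.
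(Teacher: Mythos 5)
Your skeleton is exactly Cramer's: collapse $\lambda$ to $\omega$, reduce by absoluteness to producing an elementary embedding between the ground-model structures, run the limit-of-limit-roots construction with $\omega$-bookkeeping, take the common part, and finish by condensation. But the step you explicitly grant --- ``one must check that this tailwise catching \dots actually forces $\ran(\hat K)\prec L_\alpha(V_{\lambda+1})$'' --- is not a verification that can be waved through; it is the actual mathematical content of the theorem, and nothing in your setup yet supplies it. The missing idea is the following. At stage $m$, Lemma \ref{FinalChange} gives a witness $b$ only in the range of the tail $(K_{m+1})_p=k^{m+1}_p\circ k^{m+1}_{p+1}\circ\cdots$, say $b=\rho\bigl((K_{m+1})_p(a)\bigr)$ for some $a\in V_{\bar{\lambda}+1}$, so that
\begin{equation*}
 L_\alpha(V_{\lambda+1})\vDash\psi\bigl(\rho((K_{m+1})_p(a)),\rho((K_{m+1})_p(a_{i_0})),\dots\bigr).
\end{equation*}
One now applies the \emph{head} $k^{m+1}_0\circ\cdots\circ k^{m+1}_{p-1}$: this is a finite composition of embeddings each of which extends to an elementary embedding of $L_\alpha(V_{\lambda+1})$ into itself, and since $\rho$ is definable these extensions commute with $\rho$. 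Applying the head's extension to the displayed statement converts ``witness in the range of a tail'' into
\begin{equation*}
 L_\alpha(V_{\lambda+1})\vDash\psi\bigl(\rho(K_{m+1}(a)),\rho(K_{m+1}(a_{i_0})),\dots\bigr),
\end{equation*}
i.e.\ a witness in the range of the \emph{full} embedding $K_{m+1}$, with the already-secured parameters unchanged. This elementarity-of-the-head trick is precisely why the weak, tail-only form of the square root principle ``will be enough,'' as the paper puts it, and it is absent from your argument.

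The second missing half is that the witness must then be \emph{frozen}: the preimage $a=a_t$ has to be added to the secured finite set $A_{m+1}$, so that every later limit root is constrained to satisfy $K_{m'}(a)=K_{m+1}(a)$; only then does $\rho(J_*(a))=\rho(K_{m+1}(a))$ actually belong to $M=\rho''J_*''V_{\bar{\lambda}+1}$ and close the Tarski--Vaught loop. Your bookkeeping (range-side tuples listed cofinally often) can replace the paper's domain-side enumeration, but it does not substitute for this freezing step: later square roots are only constrained on the finite set you explicitly fix, so a witness caught at stage $m$ and not secured can be moved at stage $m+1$, and ``met cofinally often'' does not repair this, since the same existential question re-asked later is answered by a possibly different, again unfrozen, witness. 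So the approach is the right one, but the proposal has a genuine gap at its declared crux, and the head-elementarity argument together with the securing of witnesses in $A_{m+1}$ is what fills it.
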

	\begin{proof}
	 First of all, via a definable surjection $\rho:V_{\lambda+1}\twoheadrightarrow L_\alpha(V_{\lambda+1})$, we can think of any element of $L_\alpha(V_{\lambda+1})$ as a subset of $V_{\lambda}$. We work in $V[G]$, where $G$ is the generic of the collapse of $\lambda$ on $\omega$. Therefore $V_{\bar{\lambda}+1}$ is countable: let $\langle a_n:n\in\omega\rangle$ be an enumeration of it, and let $\langle\varphi_i:i\in\omega\rangle$ be an enumeration of all formulas.
	
	 We continue the construction of the limit of limit roots as above. The idea is that we gradually add to $A_m$ all the elements of $V_{\bar{\lambda}}$, each time adding just a finite amount, while $B_m$ will be the set of witnesses of existential formulas with parameters in $\rho''K_m''A_m$. 
	
	So suppose that we have defined $K_m$ and that there exists an $n_m$ such that $A_m=\langle a_i:i<n_m\rangle$. If $\varphi^m$ is not a formula with an existential quantifier, then $K_{m+1}=K_m$ and $A_{m+1}=A_m$. Otherwise suppose that 
	\begin{equation*}
	 L_\alpha(V_{\lambda+1})\vDash \exists x\ \psi(x,\rho(K_m(a_{i_0})),\dots\rho(K_m(a_{i_l}))), 
	\end{equation*}
	with $i_0,\dots, i_l<n_m$. We are following the construction of the limit of limit roots, so at this step there exists a $p$ such that $K^{m+1}$ should be a limit square root of $K_m$ such that $k^{m+1}_0=k^m_0$, $\dots$, $k^{m+1}_p=k^m_p$, $k^{m+1}_{p+1}\neq k^m_{p+1}$, etc... Call $(K_m)_p=k^m_p\circ k^m_{p+1}\circ\dots$. Then by elementarity (and the definability of $\rho$) 
	\begin{equation*}
	 L_\alpha(V_{\lambda+1})\vDash \exists x\ \psi(x,\rho((K_m)_p(a_{i_0})),\dots\rho((K_m)_p(a_{i_l}))). 
	\end{equation*}
	
	Let $b$ that witnesses this, and let $B_m$ be such that $\rho(B_m)=b$. Finally let $K_{m+1}$ be the limit root of $K_m$ such that $K_{m+1}(A_m)=K_m(A_m)$ and $B_m\in\ran((K_{m+1})_p)$. By the remarks above $K_{m+1}$ exists, and there exists $a\in V_{\bar{\lambda}+1}$ such that $\rho((K_{m+1})_p(a))=b$, so 
	\begin{equation*}
	 L_\alpha(V_{\lambda+1})\vDash\psi(\rho((K_{m+1})_p(a)),\rho((K_{m+1})_p(a_{i_0})),\dots\rho((K_{m+1})_p(a_{i_l})))
	\end{equation*}
	(note that $(K_{m+1})_p(a_i)=(K_m)_p(a_i)$). By elementarity 
	\begin{equation*}
	 L_\alpha(V_{\lambda+1})\vDash\psi(\rho((K_{m+1})(a)),\rho((K_{m+1})(a_{i_0})),\dots\rho((K_{m+1})(a_{i_l}))). 
	\end{equation*}

	Let $a=a_t$, and fix $A_{m+1}=\langle a_n:n\leq t\rangle$. 

 If $a=a_n\in V_{\bar{\lambda}}$, then for any $m$ such that $n_m>n$ $K_m(a)=K_{m+1}(a)$. Define $J_*:V_{\bar{\lambda}+1}\prec V_{\lambda+1}$ as $J_*(a_n)=K^m(a_n)$ with $n_m>n$. It is called the common part of the sequence $\langle K_i:i<\omega\rangle$. Let $M=\rho''J_*''V_{\bar{\lambda+1}}$. Then $M\prec L_\alpha(V_{\lambda+1})$: suppose that 
 \begin{equation*}
   L_\alpha(V_{\lambda+1})\vDash \exists x\ \psi(x,\rho(J_*(a_{i_0})),\dots\rho(J_*(a_{i_l}))), 
 \end{equation*}
 then there exists $m$ such that $n_m>n_{i,0},\dots,n_{i_l}$ and 
 \begin{equation*}
  L_\alpha(V_{\lambda+1})\vDash \exists x\ \psi(x,\rho(K_m(a_{i_0})),\dots\rho(K_m(a_{i_l}))). 
 \end{equation*}
  But we built $K_{m+1}$ so that 
	\begin{equation*}
	 L_\alpha(V_{\lambda+1})\vDash\psi(\rho((K_{m+1})(a)),\rho((K_{m+1})(a_{i_0})),\dots\rho((K_{m+1})(a_{i_l}))), 
	\end{equation*}
	and $K_{m+1}(a)=J_*(a)$. By condensation, the collapse of $M$ is a $L_{\bar{\alpha}}(V_{\bar{\lambda}+1})$ for some $\bar{\alpha}$. By absoluteness, this is true also in $V$.
	\end{proof}

Theorem \ref{therightalpha} gives us finally the right $\bar{\alpha}$ where to extend $J$, the inverse limit of embeddings that can be extended to $L_\alpha(V_{\lambda+1})$ in the most natural way: as $\bar{\alpha}$ is $(\bar{\lambda}-)$ good, then every element $b\in L_{\bar{\alpha}}(V_{\bar{\lambda}+1})$ is defined from some parameter $a\in V_{\bar{\lambda}+1}$, and $J(b)$ is the element of $L_\alpha(V_{\lambda+1})$ that is defined with the same formula and parameter $J(a)$. It remains to see that $J$ is actually an elementary embedding. Theorem \ref{therightalpha} gives us an elementary embedding, $J_*$, but it is in $V[G]$. But if we consider the partial order of all possible sequences of $K_i$'s, that start with $J$, coupled with the collapse, then $J_*$ will be the common part of the generic of such forcing. Considering $J$ as a sequence of length 1, for any $b\in L_{\bar{\alpha}}(V_{\bar{\lambda}+1})$ we can always extend $J$ so that $J^*(b)=J(b)$, and then elementarity comes from absoluteness. All the details are in \cite{Cramer}.

\begin{teo}[Cramer, 2015, \cite{Cramer}]
 If $j:L_{\alpha+\omega}(V_{\lambda+1})\prec L_{\alpha+\omega}(V_{\lambda+1})$, then there exist $\bar{\alpha}$, $\bar{\lambda}<\lambda$ and $k:L_{\bar{\alpha}}(V_{\bar{\lambda}})\prec L_{\bar{\alpha}}(V_{\bar{\lambda}})$. In particular, if $\alpha$ depends only on $\lambda$, then ``$\exists j:L_{\alpha+\omega}(V_{\lambda+1})\prec L_{\alpha+\omega}(V_{\lambda+1})$'' strongly implies ``$\exists j:L_\alpha(V_{\lambda+1})\prec L_\alpha(V_{\lambda+1})$''.
\end{teo}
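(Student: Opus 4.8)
The plan is to exploit the slack in the hypothesis --- that $j$ embeds $L_{\alpha+\omega}(V_{\lambda+1})$ rather than merely $L_\alpha(V_{\lambda+1})$ --- in order to place a \emph{witnessing} self-embedding of $L_\alpha(V_{\lambda+1})$ \emph{inside} a single successor stage, and then to push the bare existence of such a witness downward along the elementary equivalence furnished by Theorem \ref{therightalpha}. First I would handle two pieces of bookkeeping: since $\alpha$ is good its successor $\alpha+1$ is good as well (the successor-of-good argument used above), and by ordinal arithmetic $(\alpha+1)+\omega=\alpha+\omega$. Hence $j$ equally witnesses $L_{(\alpha+1)+\omega}(V_{\lambda+1})\prec L_{(\alpha+1)+\omega}(V_{\lambda+1})$, so Theorem \ref{therightalpha} applies verbatim with $\alpha+1$ in place of $\alpha$, yielding $\bar\beta,\bar\lambda<\lambda$ with $L_{\bar\beta}(V_{\bar\lambda+1})\equiv L_{\alpha+1}(V_{\lambda+1})$. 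Because ``being a successor stage'' is first-order detectable (the structure carries a definable top level obtained as $\Def$ of its predecessor), $\bar\beta$ is itself a successor $\bar\alpha+1$, and reading off the predecessor gives $L_{\bar\alpha}(V_{\bar\lambda+1})\equiv L_\alpha(V_{\lambda+1})$.

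The heart of the argument is the transfer of a self-embedding. Since $\alpha$ depends only on $\lambda$ and $j(\lambda)=\lambda$, the ordinal $\alpha$ is a fixed point, $j(\alpha)=\alpha$, so (taking the standing convention $\crt(j)<\lambda$) $e:=j\upharpoonright L_\alpha(V_{\lambda+1})$ is an elementary self-embedding of $L_\alpha(V_{\lambda+1})$ with $\crt(e)<\lambda$. By Lemma \ref{SimplyGood} this $e$ lies in $L_{\alpha+1}(V_{\lambda+1})$. Now the predicate ``$x$ is an elementary self-embedding of $L_\alpha(V_{\lambda+1})$ with critical point below $\lambda$'' is first-order over $L_{\alpha+1}(V_{\lambda+1})$: the domain $L_\alpha(V_{\lambda+1})$, the ordinal $\lambda$ (definable from the generator), and the satisfaction relation of $L_\alpha(V_{\lambda+1})$ are all definable there. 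Thus $L_{\alpha+1}(V_{\lambda+1})$ models ``such an $x$ exists'', and by the equivalence $L_{\bar\alpha+1}(V_{\bar\lambda+1})$ models ``there is an elementary self-embedding of $L_{\bar\alpha}(V_{\bar\lambda+1})$ with critical point below $\bar\lambda$''. Any internal witness $k$ is a genuine embedding $k:L_{\bar\alpha}(V_{\bar\lambda+1})\prec L_{\bar\alpha}(V_{\bar\lambda+1})$ with $\crt(k)<\bar\lambda$, since the internally computed satisfaction relation is the true one; this yields the first assertion. Note that this route needs only the elementary \emph{equivalence} output of Theorem \ref{therightalpha}, not the inverse-limit embedding $J$ itself.

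For the strong implication I would record that, when $\alpha$ is given by a formula $\theta(\lambda)$, the statement ``the ground ordinal is $\theta$ of the definable cardinal $\lambda$'' is first-order over $L_{\alpha+1}(V_{\lambda+1})$ and hence transfers, forcing $\bar\alpha=\theta(\bar\lambda)$; thus $\bar\alpha$ is the \emph{same} function of $\bar\lambda$ that $\alpha$ is of $\lambda$. Since $\bar\lambda<\lambda$, the least $\lambda$ admitting $j:L_{\alpha+\omega}(V_{\lambda+1})\prec L_{\alpha+\omega}(V_{\lambda+1})$ strictly exceeds the least $\bar\lambda$ admitting $k:L_{\bar\alpha}(V_{\bar\lambda+1})\prec L_{\bar\alpha}(V_{\bar\lambda+1})$, which is exactly strong implication.

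I expect the genuine obstacle to be absorbed entirely into Theorem \ref{therightalpha}, whose proof supplies the elementary equivalence through the limit-of-limit-roots construction and the forcing-absoluteness passage from the $V[G]$-embedding $J_*$ back to $V$; granting that theorem, the rest is the light reflection packaging above. The only delicate point outside it is verifying that $\lambda$, the level $L_\alpha(V_{\lambda+1})$, and its satisfaction relation are uniformly definable, so that ``a self-embedding exists'' is truly first-order and transfers correctly in tandem with the dependence $\alpha=\theta(\lambda)$.
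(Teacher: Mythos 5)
Your proposal is correct in substance and runs on the same engine as the paper's: both feed the hypothesis into Theorem \ref{therightalpha} shifted one level up (using $(\alpha+1)+\omega=\alpha+\omega$ and that successors of good ordinals are good), and both convert the output into a self-embedding by exploiting the fact, already invoked in the proofs of Lemmas \ref{FinalChange} and \ref{SimplyGood}, that ``there is an elementary self-embedding of $L_\alpha(V_{\lambda+1})$ with critical point below $\lambda$'' is first-order over $L_{\alpha+1}(V_{\lambda+1})$ and is witnessed there by $j\upharpoonright L_\alpha(V_{\lambda+1})$. The difference is the last step. The paper's route, as laid out in the paragraph following Theorem \ref{therightalpha}, actually extends the inverse limit $J$ to an elementary embedding $J:L_{\bar\alpha}(V_{\bar\lambda+1})\prec L_\alpha(V_{\lambda+1})$ (via goodness of $\bar\alpha$ and the forcing-absoluteness argument) and then, in the pattern of Theorems \ref{strongimplEn} and \ref{FirstStepI0}, pulls a witness back through the range of $J$. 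You dispense with $J$ entirely: mere elementary equivalence at the successor level, internal detectability of ``I am a successor stage,'' and absoluteness of satisfaction for set structures transfer the parameter-free existential sentence and hand you $k$ directly. This is a genuine economy — you need neither the elementarity of $J$ nor the arrangement that $j\upharpoonright V_\lambda$ lies in its range — and your explicit transfer of ``the top index is $\theta$ of $\lambda$,'' forcing $\bar\alpha=\theta(\bar\lambda)$, is actually more careful than the paper, which glosses that point of the strong implication. What the paper's heavier route buys is the embedding $J$ itself, which is reused elsewhere (e.g., in Theorem \ref{allperfect} and Theorem \ref{partitionS}) and is indispensable when one must reflect statements carrying arbitrary parameters rather than a single existential sentence.

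One caveat, which you inherit from the paper rather than introduce: the first clause of the theorem is stated for arbitrary $\alpha$, but your witness $e=j\upharpoonright L_\alpha(V_{\lambda+1})$ exists only when $j(\alpha)=\alpha$ (you derive this from ``$\alpha$ depends only on $\lambda$,'' which is the hypothesis of the second clause only), and Theorem \ref{therightalpha} requires $\alpha$ good. So, as written, your argument — like the paper's own sketch, whose square-root construction equally presupposes that $j$ restricts to self-embeddings of the levels $L_{\alpha+n}(V_{\lambda+1})$ — proves the theorem for good $\alpha$ fixed by $j$; the fully general first clause needs Cramer's handling of non-good $\alpha$, e.g., passing to the largest $\beta$ such that $j\upharpoonright L_\beta(V_{\lambda+1})\in L_{\alpha+\omega}(V_{\lambda+1})$, as the paper mentions after Lemma \ref{FinalChange}.
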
  

So all the following are strong implications:

\begin{multline*}
 I0(\lambda) \Rightarrow \dots \Rightarrow \exists j:L_{\alpha+\omega}(V_{\lambda+1})\prec L_{\alpha+\omega}(V_{\lambda+1}) \Rightarrow \exists j:L_\alpha(V_{\lambda+1})\prec L_\alpha(V_{\lambda+1}) \Rightarrow \\
\Rightarrow\dots \Rightarrow \exists j:L_\omega(V_{\lambda+1})\prec L_\omega(V_{\lambda+1}) \Rightarrow \exists j:L_1(V_{\lambda+1})\prec L_1(V_{\lambda+1} \Rightarrow\\
\Rightarrow I1(\lambda)\equiv\exists j:V_\lambda\prec V_\lambda\ \Sigma^1_\omega \Rightarrow \forall n\ \exists j:V_\lambda\prec V_\lambda \Sigma^1_n \Rightarrow \dots\\
\dots\Rightarrow \exists j:V_\lambda\prec V_\lambda\ \Sigma^1_n \equiv \exists j:V_\lambda\prec V_\lambda\ \Sigma^1_{n+1}\ \text{($n$ is odd)} \Rightarrow \dots \\
\dots\Rightarrow \exists j:V_\lambda\prec V_\lambda\ \Sigma^1_2 \equiv j:V_\lambda\prec V_\lambda \Sigma^1_1 \equiv I2(\lambda) \Rightarrow \text{iterable}\ I3(\lambda) \Rightarrow \dots\\
\dots\Rightarrow \omega\cdot\alpha\text{-iterable}\ I3(\lambda) \Rightarrow \alpha\text{-iterable}\ I3(\lambda) \Rightarrow I3(\lambda).
\end{multline*}

\section{Similarities with $\AD^{L(\mathbb{R})}$}
\label{simil}

We already noticed that for $I0(\lambda)$ to hold it is necessary that $L(V_{\lambda+1})\nvDash\AC$, just like for \AD{} to hold in $L(\mathbb{R})$ it is necessary that $L(\mathbb{R})\nvDash\AC$. This is the first sign of something bigger. The research is still ongoing on this respect, but there are many results that indicate a strong similarity between $L(V_{\lambda+1})$ under I0 and $L(\mathbb{R})$ under \AD. On the other hand, there are also striking differences, so there need to be further work on how much the two models are similar and why (and why they are not too similar).

The first similarity will be the following:
\begin{teo}[Solovay]
 \label{measinR}
 Suppose $L(\mathbb{R})\vDash\AD$. Then $L(\mathbb{R})\vDash\omega_1$ is measurable.
\end{teo}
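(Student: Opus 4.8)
The plan is to produce a countably complete nonprincipal ultrafilter on $\omega_1$ inside $L(\mathbb{R})$, which suffices because for the cardinal $\omega_1$ the notions ``$\sigma$-complete'' and ``$\omega_1$-complete'' coincide (there are only countably many sets in any subfamily of size ${<}\omega_1$), so a $\sigma$-complete nonprincipal ultrafilter on $\omega_1$ is exactly a witness of measurability. The cleanest route is through Martin's cone measure. First I would observe that $\AD$ implies \emph{Turing determinacy}: any set of reals invariant under Turing equivalence is determined, and a standard strategy-copying argument then yields Martin's cone theorem, namely that every Turing-invariant set of reals either contains a cone $\{y:y\geq_T x_0\}$ or is disjoint from one. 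This makes the collection of sets of degrees containing a cone a nonprincipal ultrafilter on the Turing degrees $\mathcal{D}$, and it is $\sigma$-complete: given cones with bases $x_n$, the join $\bigoplus_n x_n$ is a single real whose cone lies inside all of them.

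Next I would transport this ultrafilter to $\omega_1$ via the map $f\colon\mathcal{D}\to\omega_1$ sending the degree of $x$ to $\omega_1^x$, the least ordinal not recursive in $x$ (the $x$-Church--Kleene ordinal); this is well defined on degrees and always countable. Define $\mu(A)=1$ for $A\subseteq\omega_1$ exactly when $f^{-1}(A)=\{x:\omega_1^x\in A\}$ contains a cone. Since the cone filter is an ultrafilter, for each $A$ precisely one of $f^{-1}(A)$, $f^{-1}(\omega_1\setminus A)$ contains a cone, so $\mu$ is an ultrafilter on $\omega_1$; $\sigma$-completeness of the cone filter passes directly to $\mu$ because $f^{-1}$ commutes with countable intersections. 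For nonprincipality, fix $\alpha<\omega_1$ and a real $w$ coding a wellordering of order type $\alpha$; every $x\geq_T w$ then has $\alpha<\omega_1^x$, so $\{x:\omega_1^x=\alpha\}$ is disjoint from a cone and $\mu(\{\alpha\})=0$. Hence $\mu$ is a $\sigma$-complete (equivalently $\omega_1$-complete) nonprincipal ultrafilter on $\omega_1$, all of its defining data being definable from a real and therefore lying in $L(\mathbb{R})$, which is the conclusion.

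The main obstacle is the nonelementary input: Martin's cone theorem, i.e.\ deriving the cone dichotomy for Turing-invariant sets from determinacy, together with the careful verification that $\omega_1^x$ is a genuinely countable, degree-invariant ordinal so that $f$ lands in $\omega_1$ and covers it cofinally. Everything after that is bookkeeping. As an alternative I could instead try to show directly that the closed unbounded filter on $\omega_1$ is an ultrafilter, using an auxiliary game and a boundedness argument on codes of countable ordinals in $WO$; this is closer to Solovay's original formulation and to the prewellordering/coding techniques used later in the paper for the $L(V_{\lambda+1})$ analogue, but it requires more descriptive-set-theoretic machinery than the cone-measure argument, so I would present the cone measure as the primary proof.
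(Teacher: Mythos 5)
Your proof is correct, but it is not the proof the paper has in mind. The paper cites this result to Solovay and gives no proof; the argument it alludes to (and the one its surrounding text depends on) is Solovay's original one, which shows via a game on $WO$-codes of countable ordinals and a boundedness argument that the \emph{club filter} on $\omega_1$ is itself an ultrafilter in $L(\mathbb{R})$. You instead give Martin's argument: Turing determinacy yields the cone ultrafilter on degrees, which you push forward along $x\mapsto\omega_1^x$. This is a genuinely different decomposition, and each route buys something. Yours is shorter and more elementary: it uses only Turing determinacy (a tiny fragment of \AD), and every step after the cone theorem is indeed bookkeeping; your nonprincipality argument via a real $w$ coding a wellordering of type $\alpha$ is exactly right. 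What it does \emph{not} deliver, as written, is the feature the paper highlights immediately after Lemma \ref{measurable}: that ``in both theorems the measure $\dots$ comes from the club filter'', i.e.\ that $\omega_1$ is \emph{$\omega$-strongly measurable}, which is the precise point of analogy with the $L(V_{\lambda+1})$ result, where the measure is the club filter restricted to a stationary subset of $\lambda^+$. To recover that from your construction you would need the additional (classical, but not free) fact that the pushforward of the Martin measure under $x\mapsto\omega_1^x$ is normal, hence extends the club filter; your ``alternative'' sketch in the last paragraph is in fact Solovay's proof and is the one matching the paper. Two small points to make explicit when running your argument inside $L(\mathbb{R})$: choosing cone bases $x_n$ for $\sigma$-completeness, and seeing that $\omega_1^x<\omega_1$ (equivalently that $\omega_1$ is regular), both use $\AC_\omega(\mathbb{R})$, which is available because it follows from \AD{} and so holds in $L(\mathbb{R})$ under the hypothesis.
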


\begin{lem}[Woodin, \cite{Kafkoulis}]
\label{measurable}
			Suppose that there exists $j:L(V_{\lambda+1})\prec L(V_{\lambda+1})$ with $\crt(j)<\lambda$. Then for every $\delta<\lambda$ regular, define
			\begin{equation*}
				S_\delta^{\lambda^+}=\{\eta<\lambda^+:\cof(\eta)=\delta\},
			\end{equation*}
			the stationary set of ordinals with the same cofinality and let ${\cal F}$ be the club filter on $\lambda^+$. Then there exists $\eta<\lambda$ and $\langle S_\alpha:\alpha<\eta\rangle\in L(V_{\lambda+1})$ a partition of $S_\delta^{\lambda^+}$ such that for every $\alpha<\eta$ ${\cal F}\upharpoonright S_\alpha$ is a $L(V_{\lambda+1})$-ultrafilter in $\lambda^+$. In particular $\lambda^+$ is measurable.
		\end{lem}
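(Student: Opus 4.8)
The plan is to establish the $L(V_{\lambda+1})$-analogue of Solovay's Theorem \ref{measinR} by replacing the appeal to determinacy with the anti-partition phenomenon at the heart of Woodin's proof of Kunen's Theorem \ref{kunens}. Fix a regular $\delta<\lambda$, write $S=S^{\lambda^+}_\delta$, and work throughout inside $L(V_{\lambda+1})$, where by Lemma \ref{lambdastructure} we have $\DC_\lambda$ at our disposal. Passing to the quotient $\mathcal{B}=\mathcal{P}(S)/(\mathrm{NS}_{\lambda^+}\upharpoonright S)$ computed in $L(V_{\lambda+1})$, the condition that $\mathcal{F}\upharpoonright S_\alpha$ be an $L(V_{\lambda+1})$-ultrafilter says exactly that $S_\alpha$ is $\mathcal{F}$-positive (stationary) but cannot be split in $L(V_{\lambda+1})$ into two stationary pieces, i.e.\ that $[S_\alpha]$ is an atom of $\mathcal{B}$. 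So the whole lemma reduces to proving that, in $L(V_{\lambda+1})$, the algebra $\mathcal{B}$ is atomic, its atoms exhaust $S$ modulo $\mathrm{NS}$, and there are fewer than $\lambda$ of them; these atoms then are the desired partition $\langle S_\alpha:\alpha<\eta\rangle$.

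The embedding enters only to bound antichains. Since $\lambda=\sup_n\kappa_n$, choose $n$ with $\kappa_n>\delta$ and let $j'$ be the $n$-fold application of $j$ to itself, an embedding $j':L(V_{\lambda+1})\prec L(V_{\lambda+1})$ with $\crt(j')=\kappa_n>\delta$ (the application of an I0-embedding to itself is again an I0-embedding, as in the I1 case). Then $j'(\delta)=\delta$, $j'(\lambda^+)=\lambda^+$, and $j'(S)=S$, and $C=\{\alpha<\lambda^+:j'(\alpha)=\alpha\}$ is club: it is closed because $j'$ is continuous at suprema, and unbounded because iterating $j'$ upward from any $\beta<\lambda^+$ yields a fixed point. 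I claim there is no partition of $S$ into more than $\kappa_n$ stationary pieces in $L(V_{\lambda+1})$. Indeed, if $\langle P_\xi:\xi<\mu\rangle\in L(V_{\lambda+1})$ with $\mu>\kappa_n$ were one, then $j'(\langle P_\xi:\xi<\mu\rangle)=\langle Q_\beta:\beta<j'(\mu)\rangle$ is again a partition of $S$ into stationary pieces; $Q_{\kappa_n}$ is stationary, so $Q_{\kappa_n}\cap C$ is stationary and we fix $\eta\in Q_{\kappa_n}\cap C\subseteq S$. As the $P_\xi$ cover $S$ we have $\eta\in P_\zeta$ for some $\zeta$, whence $\eta=j'(\eta)\in j'(P_\zeta)=Q_{j'(\zeta)}$; but $\kappa_n\notin\ran(j')$ forces $j'(\zeta)\neq\kappa_n$, contradicting the disjointness of $Q_{\kappa_n}$ and $Q_{j'(\zeta)}$. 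This is precisely the contradiction in Woodin's proof of \ref{kunens}, and it bounds every antichain of $\mathcal{B}$ by $\kappa_n<\lambda$; in particular $\mathcal{B}$ has fewer than $\lambda$ atoms.

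Granting that $\mathcal{B}$ is atomic with its atoms summing to $1$, the atoms form a partition $\langle S_\alpha:\alpha<\eta\rangle$ with $\eta\leq\kappa_n<\lambda$. Since $\mathrm{NS}_{\lambda^+}$ is $\lambda^+$-complete and $\eta<\lambda<\lambda^+$, the stationary set $S$ cannot be a union of $\eta$ nonstationary pieces, so at least one atom $S_{\alpha_0}$ is stationary; then $\mathcal{F}\upharpoonright S_{\alpha_0}$ is a nonprincipal, $\lambda^+$-complete $L(V_{\lambda+1})$-ultrafilter on $\lambda^+$, and $\lambda^+$ is measurable. To obtain the atomic decomposition itself I would run a splitting recursion of length ${\leq}\kappa_n$ using $\DC_\lambda$: starting from $S$, repeatedly split a stationary piece that is not yet an atom into two stationary pieces; a run that failed to terminate would, after $\kappa_n+1$ steps, have produced $\kappa_n+1$ pairwise disjoint stationary sets, contradicting the antichain bound, so the recursion halts at a partition into atoms.

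The main obstacle is the atomicity itself --- equivalently, showing there is no $\mathcal{F}$-positive $B\subseteq S$ below which $\mathcal{B}$ is atomless --- together with the preservation of stationarity through the limit stages of the recursion. The crude antichain bound of the previous paragraph does \emph{not} suffice here: because $\lambda$ is a strong limit, binary splitting of an atomless $B$ yields only $2^{<\lambda}<\lambda$ pieces at each level below $\lambda$, and the branch-intersections may collapse to nonstationary sets, so no $\lambda$-sized antichain is manufactured this way. Ruling out an atomless part therefore requires feeding the $L(V_{\lambda+1})$-coded subsets of $\lambda^+$ (which live in $L_\Theta(V_{\lambda+1})$, cf.\ Lemma \ref{Thetasubsets}) back through the embedding $j'$ and exploiting the normality and $\lambda^+$-completeness of the club filter to keep the pieces stationary at limits. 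This reflection step --- proving that the $L(V_{\lambda+1})$-measure algebra on $S$ is atomic --- is the real content of the lemma, and plays the role that determinacy plays in Solovay's Theorem \ref{measinR}.
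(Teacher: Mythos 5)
Your overall skeleton is the paper's: reduce the lemma to atomicity of $\mathcal{B}=\mathcal{P}(S^{\lambda^+}_\delta)/\mathrm{NS}$ computed in $L(V_{\lambda+1})$, and get an antichain bound from a Kunen-style argument with the embedding. But the proof stops exactly where the lemma lives: atomicity is never proved, you acknowledge as much, and --- worse --- your closing paragraph argues that the splitting-tree approach \emph{cannot} prove it, which is backwards, since Woodin's proof is precisely that tree argument. If some stationary $B\subseteq S^{\lambda^+}_\delta$ had no atom below it, one builds (choices by $\DC_\lambda$) a binary tree of stationary subsets of $B$, each node split into two disjoint stationary successors, taking branch-intersections at limit levels. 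Your two objections both misfire. First, you do not need a $\lambda$-sized antichain: a partition of $S^{\lambda^+}_\delta$ into $\kappa_n$-many stationary sets in $L(V_{\lambda+1})$ already contradicts your embedding bound, and $\kappa_n<\lambda$, so the tree only needs height $\kappa_n$. Second, the fact that level $\alpha\le\kappa_n$ has at most $2^{|\alpha|}<\lambda$ branch-intersections is not a defect but the engine of the proof: since the nonstationary ideal is $\lambda^+$-complete in $L(V_{\lambda+1})$ (by $\DC_\lambda$), fewer than $\lambda$ nonstationary sets cannot cover a stationary one, so at each limit level some branch-intersection is still stationary and the accumulated residue stays nonstationary; hence the tree reaches height $\kappa_n$, and an element of that limit level \emph{is} a branch $\langle T_\alpha:\alpha<\kappa_n\rangle$. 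The successive differences $T_\alpha\setminus T_{\alpha+1}$ are pairwise disjoint and stationary (each contains the discarded sibling), and after absorbing the leftover into one piece they give a partition of $S^{\lambda^+}_\delta$ into $\kappa_n$-many stationary sets in $L(V_{\lambda+1})$ --- the desired contradiction. So the missing step is not some new use of normality or of ``feeding coded sets through $j'$''; it is the branch-and-differences bookkeeping at limit stages, and without it the central claim is unproved.

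Two further problems in the part you did write out. (1) Your $j'=j^n$ presupposes that an I0-embedding can be applied to itself, and the justification ``as in the I1 case'' is wrong: unlike I1, where $j^+(j\upharpoonright V_\lambda)$ makes sense inside $V_{\lambda+1}$, an I0-embedding is not amenable --- $j\upharpoonright L_\Theta(V_{\lambda+1})\notin L(V_{\lambda+1})$ --- so $j(j)$ cannot be formed naively. Finite iterability is a genuine theorem (Lemma \ref{FinIterable}) and needs $j$ weakly proper, or a first pass to $j_U$ via Theorem \ref{allproper}. The paper sidesteps this entirely: assuming the lemma fails, the \emph{least} bad $\delta$ is definable from $\lambda$, hence fixed by $j$; since every ordinal in $[\crt(j),\lambda)$ is moved by $j$, this forces $\delta<\crt(j)$, and the original $j$ suffices. (2) Your $C=\{\alpha<\lambda^+:j'(\alpha)=\alpha\}$ is not a club, and ``$j'$ is continuous at suprema'' is false: continuity is only guaranteed at suprema of sequences of length $<\crt(j')$, so $C$ is merely a $\delta$-club. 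The argument survives because a stationary subset of $S^{\lambda^+}_\delta$ meets the closure of a $\delta$-club in a point of cofinality $\delta$, which then lies in the $\delta$-club itself --- but this is exactly the subtlety the paper flags (``here we need that $C$ is a $\delta$-club''), and one must also note, as the paper does, that $j'\upharpoonright\lambda^+\in L(V_{\lambda+1})$, so that $C$ is visible to the model in which $Q_{\kappa_n}$ is stationary.
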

		\begin{proof}
			Define $\kappa_0=\crt(j)$. Suppose that there is a $\delta$ such that this is false, and pick the minimum one. Then $\delta$ is definable (using only $\lambda$ as a parameter, that is a fixed point), so $j(\delta)=\delta$, and this means that $\delta<\kappa_0$. Moreover, note that $j\upharpoonright{\lambda^+}$ is in $L(V_{\lambda+1})$, because the elements of $\lambda^+$ are well-orders of $\lambda$, so $j\upharpoonright{\lambda^+}$ depends on $j\upharpoonright V_{\lambda+1}$, that in turn is defined by $j\upharpoonright V_\lambda$, that is in $V_{\lambda+1}$.
			
			As in the proof of \ref{kunens}, there is no partition of $S_\delta^{\lambda^+}$ in $\kappa_0$ stationary sets. The difference is that here we need that $C=\{\alpha<\lambda^+:j(\alpha)=\alpha\}$ is a $\delta$-club. But it is: let $\vec{T}$ be a $\delta$-sequence of elements of $D$. Then $\vec{T}$ can be coded in $V_{\lambda+1}$, and since $j(\eta)=\eta$ for every $\eta\in\vec{t}$ and $j(\delta)=\delta$ we have $j(\vec{t})=\vec{t}$, so $j(\sup\vec{t})=\sup j(\vec{t})=\sup\vec{t}$.
			
			Let $I$ be the non-stationary ideal, i.e. the ideal of the non-stationary sets, and consider $\mathbb{B}={\cal P}(S_\delta^{\lambda^+})\slash I$. We prove that $\mathbb{B}$ is atomic, and its atoms are dense. Looking for a contradiction suppose that there exists a set $S\in I^+$ that contains no atom, so that 
			\begin{equation*}
				\forall T\subset S\ (T\in I^+\rightarrow(\exists T_0,T_1\in I^+\ (T_0\cup T_1=T\wedge T_0\cap T_1=\emptyset))).
			\end{equation*}
			We construct by induction a tree ${\cal T}$ of subsets of $S$ that refines reverse inclusion. Indicating $Lev_\gamma({\cal T})$ as the $\gamma$-th level of ${\cal T}$ we define $Lev_0({\cal T})=\{S\}$ and every $T\in{\cal T}$ has exactly two successors $T_0,T_1\in I^+$ such that $T_0\cup T_1=T$ and $T_0\cap T_1=\emptyset$, chosen with $\DC_\lambda$. At a limit $\alpha$, we put the intersections $T=\bigcap_{\beta<\alpha}T_\beta$, with $T_\beta\in Lev_\beta({\cal T})$, such that $T\in I^+$. We call $Res(\alpha)=S\setminus Lev_\alpha({\cal T})$ the set of the points in $S$ that don't belong to any $T$ in $Lev_\alpha({\cal T})$. Now, let $\alpha\leq\kappa_0$ be a limit ordinal, we want to prove by induction that $Res(\alpha)\in I$ and $Lev_\alpha({\cal T})\neq\emptyset$. Let $x\in S\setminus\bigcup_{\beta<\alpha}Res(\beta)$, then for every $\beta<\alpha$ there exists $T\in Lev_\beta({\cal T})$ such that $x\in T$, so we can define $b_x=\{Z\in\bigcup_{\beta<\alpha}Lev_\beta({\cal T}):x\in Z\}$ a branch of ${\cal T}$ of length $\alpha$ and $x\in\bigcap b_x$. So $S$ is the union of $\bigcup_{\beta<\alpha} Res(\beta)$, the points already skimmed by the process, and all of the $\bigcap b_x$. By induction every $Res(\beta)\in I$, and since $I$ is the dual of the club filter, $I$ is $\lambda^+$-complete by $\DC_\lambda$, so $\bigcup_{\beta<\alpha}Res(\beta)\in I$. We consider $\{\bigcap b_x:x\in S\setminus\bigcup_{\beta<\alpha}Res(\beta)\}$: it is a partition of $S\setminus \bigcup_{\beta<\alpha} Res(\beta)\in I^+$ and since $\lambda$ is a strong limit we have that there cannot be more than $2^{|\alpha|}<\lambda$ branches, so by $\lambda^+$-completeness there must be at least one set $T\in I^+$ such that $T=\bigcap b_x$ for some branch $b_x$, i.e. there must exist $T\in Lev_\alpha({\cal T})$. As for the branches such that $\bigcap b_x\notin I^+$, their union, again by completeness, must be in $I$, so 
			\begin{equation*}
				Res(\alpha)=\bigcup_{\beta<\alpha}Res(\beta)\cup\bigcup\{\bigcap b_x:\bigcap b_x\notin I^+\}\in I, 
			\end{equation*}
			and we can carry on the induction. If we consider a branch in ${\cal T}$ of length $\kappa_0$, say $\langle T_\alpha:\alpha<\kappa_0\rangle$, then 
			\begin{equation*}
				\{Res(\kappa_0)\cup(T_0\setminus T_1)\}\cup\{T_\beta\setminus T_{\beta+1}:\beta<\kappa_0\} 
			\end{equation*}
			is a partition of $S$ in $\kappa_0$ stationary sets, and we've just seen that this cannot be, so we've reached a contradiction.
			
			We've proved that the atoms of $\mathbb{B}={\cal P}(S_\delta^{\lambda^+})/I$ are dense, so for every $S\subseteq S_\delta^{\lambda^+}$ there exists $T\subseteq S$ atom for $\mathbb{B}$, in other words there exists $T\subseteq S$ in $I^+$ such that ${\cal F}\upharpoonright T$ is an ultrafilter. Since by $\DC_\lambda$ we have that ${\cal F}$ is $\lambda^+$-complete, ${\cal F}\upharpoonright T$ is a measure. By induction (using $\DC_\lambda$) we define $\langle S_\alpha:\alpha<\eta\rangle$:
			\begin{itemize}
				\item Let $S=S_\delta^{\lambda^+}$. Then there exists a $T\in I^+$ such that ${\cal F}\upharpoonright T$ is a measure, and we choose $S_0=T$.
				\item Let $\alpha$ ordinal, and suppose that for every $\beta<\alpha$, $S_\beta$ is defined. If $S_\delta^{\lambda^+}\setminus\bigcup_{\beta<\alpha}S_\beta\in I$, then we stop the sequence and $\eta=\alpha$. Otherwise there exists $S_\alpha\subseteq S_\delta^{\lambda^+}\setminus\bigcup_{\beta<\alpha}S_\beta$ such that $S_\alpha\in I^+$ is stationary and ${\cal F}\upharpoonright S_\alpha$ is a measure.
			\end{itemize}
   		In particular, ${\cal F}\upharpoonright S_0$ is a measure for $\lambda^+$. 		
		\end{proof}
		
		It is remarkable that the proof is completely different, yet in both theorems the measure is not just any measure, but it comes from the club filter. Woodin calls the cardinals that are measurable for this reason ``$\omega$-strongly measurable cardinals'', and they have a key role in the development of the inner model descriptive set theory.
		
		The fact that $\lambda^+$ is measurable has a lot of $\AD^{\mathbb{R}}$-like consequences:
		
		\begin{teo}[Shi, Trang, 2017 \cite{ShiTrang}]
		\label{firstShiTrang}
		 Suppose I0($\lambda$) holds. Then in $L(V_{\lambda+1})$:
		 \begin{itemize}
		  \item there is no $\lambda^+$-Aronszajn tree;
			\item $\Box_\lambda$ fails;
			\item there is no scale at $\lambda$ in $V_{\lambda+1}$;
			\item $\Diamond_{\lambda^+}$ fails;
			\item there is no $\lambda^+$-sequence of distinct members of ${\cal P}(\lambda)$.
		 \end{itemize}
		\end{teo}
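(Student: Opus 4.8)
The plan is to derive all five items from the measurability of $\lambda^+$ in $L(V_{\lambda+1})$ just established in Lemma \ref{measurable}; more precisely, from the fact that the club filter yields a $\lambda^+$-complete $L(V_{\lambda+1})$-ultrafilter $U={\cal F}\upharpoonright S_0$ on $\lambda^+$, whose $\lambda^+$-completeness comes from $\DC_\lambda$ (Lemma \ref{lambdastructure}). The logical backbone is the last item, the non-existence of a $\lambda^+$-sequence of distinct members of ${\cal P}(\lambda)$, from which the scale and the diamond items follow immediately; the Aronszajn and square items instead go through the tree property at $\lambda^+$. Throughout one works inside $L(V_{\lambda+1})$, using the definable surjection $\Phi:\Ord\times V_{\lambda+1}\twoheadrightarrow L(V_{\lambda+1})$ as a surrogate for choice.

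For the last item, suppose toward a contradiction that $\langle x_\alpha:\alpha<\lambda^+\rangle$ is a sequence of pairwise distinct subsets of $\lambda$. For each $\xi<\lambda$ put $B_\xi=\{\alpha<\lambda^+:\xi\in x_\alpha\}$ and let $x=\{\xi<\lambda: B_\xi\in U\}$; writing $C_\xi$ for whichever of $B_\xi$, $\lambda^+\setminus B_\xi$ lies in $U$, the set $\bigcap_{\xi<\lambda}C_\xi$ is in $U$ by $\lambda^+$-completeness, and every $\alpha$ in it satisfies $x_\alpha=x$. Thus $\{\alpha:x_\alpha=x\}\in U$ has more than one element, contradicting distinctness. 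The scale item is now immediate: the functions of a scale at $\lambda$ form a $<^*$-increasing, hence injective, sequence $\langle f_\alpha:\alpha<\lambda^+\rangle$ in ${}^{\omega}\lambda$, and these are $\lambda^+$ distinct elements of $V_{\lambda+1}\cong{\cal P}(\lambda)$. For the diamond item one does not even need the measure: if $\langle A_\alpha:\alpha<\lambda^+\rangle$ is a $\Diamond_{\lambda^+}$-sequence then, taking the guessed set to be an arbitrary $X\subseteq\lambda$, the stationary set $\{\alpha:A_\alpha=X\cap\alpha\}$ meets $[\lambda,\lambda^+)$, where $X\cap\alpha=X$; so $\alpha\mapsto A_\alpha$ surjects onto ${\cal P}(\lambda)$, yielding a well-ordering of $V_{\lambda+1}$ in $L(V_{\lambda+1})$ and contradicting I0.

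The first two items go through the tree property at $\lambda^+$. Form the ultrapower embedding $j:L(V_{\lambda+1})\prec\Ult(L(V_{\lambda+1}),U)$, which is well-founded by $\omega_1$-completeness and has $\crt(j)=\lambda^+$. Given a $\lambda^+$-Aronszajn tree $T$ (with levels injecting into $\lambda$, hence fixed by $j$), the tree $j(T)$ has height $j(\lambda^+)>\lambda^+$ and agrees with $T$ on all levels below $\lambda^+=\crt(j)$; choosing any node $b$ on level $\lambda^+$ of $j(T)$, its predecessors are of the form $j(t_\gamma)$ with $t_\gamma\in T_\gamma$, and elementarity turns $\langle t_\gamma:\gamma<\lambda^+\rangle$ into a cofinal branch of $T$ inside $L(V_{\lambda+1})$, a contradiction. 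The square item follows from this: $\Box_\lambda$ yields a special $\lambda^+$-Aronszajn tree by Jensen's construction (which uses only the square sequence and $\DC_\lambda$), contradicting the first item; equivalently, $\Box_\lambda$ produces a non-reflecting stationary subset of $\lambda^+$, whereas measurability forces stationary reflection.

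The main obstacle is not the combinatorics but the choiceless bookkeeping. The majority-vote and diamond arguments are robust, needing only $\lambda^+$-completeness of $U$. The delicate step is the tree-property argument: producing the ultrapower by a measure on $\lambda^+$ and certifying enough of \L os' Theorem for $j$ to be elementary, since selecting existential witnesses ranges over $\lambda^+$ many coordinates and is not covered by $\DC_\lambda$. One repairs this exactly as in the \L os Theorem for $U_j$ proved above, minimizing $\Phi$-codes to make witness selection definable, and exploiting that $T$ and its levels are coded in $V_{\lambda+1}$ and that $U$ arises from the club filter; the full verification is carried out in \cite{ShiTrang}.
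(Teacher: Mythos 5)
The survey itself gives no proof of this theorem: it is quoted from \cite{ShiTrang}, framed only by the remark that it is a family of consequences of the measurability of $\lambda^+$ established in Lemma \ref{measurable}. Your overall plan follows exactly that framing, and your treatment of the last three items is correct and complete: the majority-voting argument with the $\lambda^+$-complete ultrafilter $U={\cal F}\upharpoonright S_0$ (all the objects $\langle B_\xi\rangle$, $\langle C_\xi\rangle$, $U$ lie in $L(V_{\lambda+1})$, and completeness there is available via $\DC_\lambda$, Lemma \ref{lambdastructure}) rules out $\lambda^+$-sequences of distinct subsets of $\lambda$; a scale is such a sequence, so that item follows; and the diamond argument is right, since a $\Diamond_{\lambda^+}$-sequence yields a surjection $\lambda^+\twoheadrightarrow{\cal P}(\lambda)$, hence a well-ordering of $V_{\lambda+1}$ inside $L(V_{\lambda+1})$, contradicting I0 as observed in Section \ref{LVlambda}.

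The genuine gap is in the first two items, and it is worse than the ``delicate step'' you acknowledge: the elementarity of $j:L(V_{\lambda+1})\prec\Ult(L(V_{\lambda+1}),U)$, i.e.\ \L os' Theorem for a measure on $\lambda^+$, is not just unproved here -- it is \emph{refuted} by your own fifth item. Indeed, if \L os held, apply it to the formula ``$w$ is a well-ordering of $\lambda$ of order type $v$'' at the point $[\id]$ (noting $[\lambda,\lambda^+)\in U$): it produces a single $h\in L(V_{\lambda+1})$ with $h(\alpha)$ a well-ordering of $\lambda$ of type $\alpha$ for $U$-many $\alpha$, i.e.\ a $\lambda^+$-sequence of pairwise distinct subsets of $\lambda$ in $L(V_{\lambda+1})$, which you have just shown cannot exist. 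Your proposed repair also cannot work: the survey's \L os Theorem for $U_j$ is not ``minimizing $\Phi$-codes'' -- after minimizing the ordinal coordinate one must still select elements of $V_{\lambda+1}$, and that proof succeeds only because the points of $U_j$ are square roots $k$ of $j$, so the witness is decoded by $H(k)=c$ where $k(c)=b$; the points of your $U$ are ordinals, $V_{\lambda+1}$ is not well-orderable in $L(V_{\lambda+1})$, and $\DC_\lambda$ gives only $\lambda$-length selections. Deferring the verification to \cite{ShiTrang} is moreover circular, as that is the theorem's source. The Aronszajn item can in fact be saved with no ultrapower at all, by running your own voting argument on the tree: for a tree on $\lambda^+$ with levels injecting into $\lambda$, the nodes of level $\geq\alpha$ form a $U$-measure-one set; its partition by the projection to level $\alpha$ into $\leq\lambda$ pieces has, by $\lambda^+$-completeness, a unique measure-one piece $A_{t_\alpha}$; since $A_{t_\beta}\subseteq A_{t_\beta\upharpoonright\alpha}$ for $\alpha<\beta$, the nodes $t_\alpha$ cohere into a cofinal branch definable from $T$ and $U$, hence lying in $L(V_{\lambda+1})$. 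The square item, however, remains open in your proposal: route B (``measurability forces stationary reflection'') is the same ultrapower argument in disguise, and route A (Jensen's construction of a special tree from $\Box_\lambda$) is a $\lambda^+$-length recursion requiring choices you have not justified from $\DC_\lambda$; what would suffice is a Fodor/normality property for $U$ (press down $\ot(C_\alpha)$ and then each $\alpha\mapsto C_\alpha(i)$ to make measure-one many $\alpha$ share the same $C_\alpha$, absurd since $C_\alpha$ determines $\alpha=\sup C_\alpha$), but normality of $U$ is not established in the survey and does not follow from completeness alone.
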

		
		In fact Theorem \ref{measinR} was just the first $\omega$-strongly measurable cardinal found in $L(\mathbb{R})$, when there is a rich structure of them under \AD. The same is true in $L(V_{\lambda+1})$ under I0. Moschovakis' Coding Lemma is the way to find them:
		
		\begin{teo}[Coding Lemma, Moschovakis]
			Suppose $L(\mathbb{R})\vDash\AD$. Let $\eta<\Theta$ and $\rho:\mathbb{R}\twoheadrightarrow\eta$, $\rho\in L(\mathbb{R})$. Then there exists $\gamma_\rho<\Theta$ such that for every $A\subseteq \mathbb{R}\times \mathbb{R}$, $A\in L(\mathbb{R})$, there exists $B\subseteq \mathbb{R}\times\mathbb{R}$ such that:
			\begin{itemize}
				\item $B\subseteq A$;
				\item $B\in L_{\gamma_\rho}(\mathbb{R})$;
				\item for every $\alpha<\eta$ if $\exists(a,b)\in A\ \rho(a)=\alpha$ then $\exists(a,b)\in B\ \rho(a)=\alpha$.
			\end{itemize}
		\end{teo}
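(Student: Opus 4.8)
The plan is to replace $\rho$ by a \emph{norm} living in a definable pointclass, build a canonical coding set by a self-referential (Recursion Theorem) construction, and read the uniform bound $\gamma_\rho$ off the complexity of that construction; determinacy enters only through the regularity properties of the norm and a boundedness argument. First I would fix a pointclass $\Gamma$ — concretely, the relations $\Sigma_1$-definable over a suitable level $L_\beta(\mathbb{R})$ — chosen so that (a possibly re-scaled copy of) $\rho$ is a regular $\Gamma$-norm of length $\eta$ and so that $\Gamma$ has the prewellordering property; such a $\Gamma$ exists in $L(\mathbb{R})$ under $\AD$ by the standard periodicity/scale analysis. I then set $\gamma_\rho$ to be a level past which every $\exists^{\mathbb{R}}\Gamma$ subset of $\mathbb{R}\times\mathbb{R}$ has appeared in the $L_\alpha(\mathbb{R})$-hierarchy; since $\exists^{\mathbb{R}}\Gamma$ admits a $\mathbb{R}$-parametrized universal set, $\gamma_\rho<\Theta$ and depends only on $\rho$, not on $A$. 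With this reduction it suffices, for each $A$, to produce a choice set $B\subseteq A$ that is $\exists^{\mathbb{R}}\Gamma$ with a code uniformly computable from a single real, since any such $B$ then automatically lies in $L_{\gamma_\rho}(\mathbb{R})$.

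Fixing a $\Gamma$-universal set $U$ (so the slices $U_e$, $e\in\mathbb{R}$, enumerate the $\Gamma$-subsets of $\mathbb{R}\times\mathbb{R}$), I would define an operator that sends a real $e$ — thought of as coding a partial attempt $U_e\cap A$ at a choice set — to a $\Gamma$-code for the set of pairs $(a,b)\in A$ such that $\rho(a)$ is the least $\rho$-value not already covered by $U_e$ among values $\le\rho(a)$. The point is that, because $\rho$ is a $\Gamma$-norm and $\Gamma$ has the prewellordering property, the predicate ``$\rho(a)$ is not covered by $U_e$ below $\rho(a)$'' is itself $\Gamma$ in the parameters, so the operator is given by a continuous (recursive) map on codes. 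Applying the Recursion Theorem yields a fixed real $e^{*}$; I set $B^{*}=U_{e^{*}}\cap A$, which is $\exists^{\mathbb{R}}\Gamma$ with code computable from $e^{*}$, hence in $L_{\gamma_\rho}(\mathbb{R})$.

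The crux is to show $B^{*}$ covers \emph{every} $\rho$-value present in $A$. I would argue by least counterexample: suppose $\alpha$ is the least value with some $(a,b)\in A$, $\rho(a)=\alpha$, but no pair of $B^{*}$ at value $\alpha$. By minimality of $\alpha$ all strictly smaller present values are covered, so the defining clause at the fixed point should have thrown a pair at $\alpha$ into $B^{*}$ — the contradiction, \emph{provided} the ``least uncovered value'' is genuinely reached and the selection among pairs at level $\alpha$ is $\Gamma$-definable. Pinning this down is exactly where $\AD$ is doing real work: one formulates an auxiliary game in which one player challenges with a value and the other must answer with a covering pair computed from $e^{*}$, and determinacy (available since the payoff is definable in $L(\mathbb{R})$) together with a boundedness theorem for $\Gamma$-norms rules out the challenger winning — a challenger strategy would manufacture a $\Gamma$-prewellordering of length $>\eta$, impossible as $\eta$ is the length of the $\Gamma$-norm $\rho$. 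Limit stages of the implicit covering recursion are handled using $\DC_{\mathbb{R}}$, which holds in $L(\mathbb{R})$ under $\AD$.

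The hard part, then, is twofold and both difficulties stem from the absence of choice. The genuinely delicate step is the coverage argument: without $\AC$ one cannot simply pick a representative in each $\rho$-class, so the selection must be made definable and the covering forced by the determinacy/boundedness package rather than by a naive transfinite recursion. The second point to watch is \emph{uniformity} — that the single ordinal $\gamma_\rho$ bounds the complexity of $B^{*}$ for \emph{all} $A$ at once; this is secured because the code $e^{*}$ is a single real and the operator depends only on $\Gamma$ and $\rho$, so the resulting $\exists^{\mathbb{R}}\Gamma$ bound is insensitive to $A$. A surjection $\mathbb{R}\twoheadrightarrow\Theta$ would contradict the very definition of $\Theta$, and it is this failure of long prewellorderings that ultimately forces coverage at the bounded level.
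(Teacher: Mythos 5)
Your construction breaks down exactly where the Recursion Theorem is invoked. The operator you describe is supposed to send a real $e$ to ``a $\Gamma$-code for the set of pairs $(a,b)\in A$ such that $\rho(a)$ is the least uncovered value'' --- but membership in $A$ is part of that definition, and $A$ is an \emph{arbitrary} element of $L(\mathbb{R})$, possibly lying far above anything $\Gamma$ can express. So no such $\Gamma$-code exists and the operator is simply undefined; the fixed point $e^{*}$ never gets off the ground. Intersecting with $A$ afterwards does not repair this: $B^{*}=U_{e^{*}}\cap A$ is then only as simple as $A$ itself, so the conclusion $B^{*}\in L_{\gamma_\rho}(\mathbb{R})$ --- which is the entire content of the theorem, since otherwise one could take $B=A$ --- is lost. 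The bound survives only if $U_{e^{*}}$ is already a subset of $A$, and arranging \emph{that} definably is precisely the choice problem the theorem is about; your argument is circular at its core. The secondary difficulty is that your coverage argument rests on a false principle: a pointclass admitting a norm of length $\eta$ can perfectly well contain prewellorderings much longer than $\eta$ (already $\Sigma^1_1$ has norms of length $\omega$ and prewellorderings of every countable length), so a challenger strategy producing a ``$\Gamma$-prewellordering of length $>\eta$'' is no contradiction at all.

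The selection-inside-$A$ problem is what the game is for in Moschovakis's proof, and it must come first, not as an afterthought. One inducts on the levels of $\rho$ (least bad initial segment); both players play $\Gamma$-codes, and the payoff --- which may mention $A$ freely, since under $\AD$ every payoff in $L(\mathbb{R})$ is determined, definable or not --- counts a player as good only if the set he codes is a \emph{subset of $A$} that is a choice set up to some level. If the builder wins, the union of his responses over all of the opponent's plays is $\exists^{\mathbb{R}}\Gamma$ (this is where closure under real quantification enters) and, by the induction hypothesis, covers cofinally many levels: that union is $B$. The Recursion Theorem appears only in the other branch, producing a self-referential code that defeats a winning strategy for the challenger. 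So the correct architecture is: game and case split first, strategy does the selecting, fixed point kills the bad case --- rather than fixed point first, game as verification. Note finally that the paper does not prove this statement at all (it quotes it as Moschovakis's classical theorem); its own Weak Coding Lemma and Coding Lemma for $L(V_{\lambda+1})$ are proved by a completely different mechanism, replacing determinacy with the I0 embedding $j$ and a descending-sequence-of-ordinals contradiction, followed by an induction on $\eta$, so none of the paper's techniques can be borrowed to repair the operator you defined.
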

		
		It is a sort of very weak choice principle for subsets of $\mathbb{R}$: Consider $A_\xi=\{b:(a,b)\in A,\ \rho(a)=\xi\}$. Thus $A$ can be seen as an $\eta$-sequence of subsets of $\mathbb{R}$. If we had the Axiom of Choice, we could have a selector for such a sequence. Instead the Coding Lemma gives us a sequence $B_\xi\subseteq A_\xi$ that is in some $L_{\gamma_\rho}(\mathbb{R})$. But however ``complex'' (i.e., high in the constructive hierarchy) we choose $A$, $\gamma_\rho$ is the same, so the ``selector'' is actually simpler than $A$.
		
		We want to prove the Coding Lemma in $L(V_{\lambda+1})$. First we prove a weakening of the Coding Lemma, that will be used in the proof of the Coding Lemma itself.
		
		\begin{lem}[Weak Coding Lemma, Woodin, \cite{Kafkoulis}]
			Suppose that there exists $j:L(V_{\lambda+1})\prec L(V_{\lambda+1})$ with $\crt(j)<\lambda$. Let $\eta<\Theta$ and $\rho:V_{\lambda+1}\twoheadrightarrow\eta$, $\rho\in L(V_{\lambda+1})$. Then there exists $\gamma_\rho<\Theta$ such that for every $A\subseteq V_{\lambda+1}\times V_{\lambda+1}$, $A\in L(V_{\lambda+1})$, there exists $B\subseteq V_{\lambda+1}\times V_{\lambda+1}$ such that:
			\begin{itemize}
				\item $B\subseteq A$;
				\item $B\in L_{\gamma_\rho}(V_{\lambda+1})$;
				\item for cofinally $\alpha<\eta$ if $\exists(a,b)\in A\ \rho(a)=\alpha$ then $\exists(a,b)\in B\ \rho(a)=\alpha$.
			\end{itemize}
		\end{lem}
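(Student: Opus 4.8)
The plan is to run Moschovakis' proof of the Coding Lemma with $V_{\lambda+1}$ in the role of $\mathbb{R}$ and $\rho$ in the role of a $\Gamma$-norm, letting the embedding $j$ supply the two ingredients that determinacy supplies classically: a self-referential fixed point (the analogue of Kleene's Recursion Theorem) and the reflection that bounds complexity. Before anything else I would isolate the nontrivial case. Writing $D_A=\{\alpha<\eta:\exists(a,b)\in A,\ \rho(a)=\alpha\}$, the displayed clause is witnessed by $B=\emptyset$ whenever $\eta\setminus D_A$ is cofinal in $\eta$, since those $\alpha$ make the implication vacuously true; so I may assume $A_\alpha\neq\emptyset$ for all large $\alpha<\eta$ and must produce $B\subseteq A$ of complexity below a single $\gamma_\rho$ whose $\rho$-projection is cofinal in $\eta$.

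Next I would fix the coding machinery. For every $\gamma<\Theta$ there is in $L(V_{\lambda+1})$ a surjection $\pi_\gamma\colon V_{\lambda+1}\twoheadrightarrow L_\gamma(V_{\lambda+1})$: compose a surjection $V_{\lambda+1}\twoheadrightarrow\beta$ for a suitable $\beta<\Theta$ with the definable $\Phi$, using $V_{\lambda+1}\cong V_{\lambda+1}\times V_{\lambda+1}$. Restricting to ${\cal P}(V_{\lambda+1}\times V_{\lambda+1})\cap L_\gamma(V_{\lambda+1})$ lets me treat an element $x\in V_{\lambda+1}$ as a code for a cover $\pi_\gamma(x)\subseteq A$; choosing $\gamma$ good (in the sense introduced above) makes these codes canonical and the relevant relations on them definable inside a fixed $L_{\gamma'}(V_{\lambda+1})$.

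The core is a proof by contradiction against the regularity of $\Theta$ (Lemma \ref{lambdastructure}). Suppose no uniform bound exists. For each single $A$ the value $c(A)=$ ``least $\gamma$ admitting a good cover $B\in L_\gamma(V_{\lambda+1})$'' is below $\Theta$, taking $B=A$ and applying Lemma \ref{Thetasubsets}(2), so failure means $c$ is unbounded in $\Theta$ as $A$ ranges over ${\cal P}(V_{\lambda+1}\times V_{\lambda+1})^{L(V_{\lambda+1})}$. Since this class admits no surjection from $V_{\lambda+1}$, I cannot contradict regularity directly; the point is to compress the unboundedly many hard instances into one $V_{\lambda+1}$-indexed family. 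This is where $j$ enters: because $\crt(j)<\lambda$, the map $j\upharpoonright V_\lambda$ is a single element of $V_{\lambda+1}$ that, by Lemma \ref{onlyfromVlambda}, already determines $j$ on all of $L_\Theta(V_{\lambda+1})$, so one $V_{\lambda+1}$-parameter names the whole reflecting embedding. Using it I would define, relative to a code $x$, a diagonal cover whose definition quantifies over the covers coded strictly below $x$ — the self-reference the Recursion Theorem provides classically — and then reflect a putative least uncovered section $\beta<\eta$ through $j$, arranging by Corollary \ref{FixedpointsbelowTheta} that the reflection targets sit at fixed points of $j$, to obtain a bounded-complexity cover reaching past $\beta$. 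Diagonalising these pushes the covered part cofinally in $\eta$ while holding the complexity below one fixed $\gamma_\rho$, which is the desired contradiction; reassembling the finally chosen pairs into the single set $B$ uses $\DC_\lambda$ (Lemma \ref{lambdastructure}) in place of the absent full choice.

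The main obstacle is precisely this last step: building the self-referential diagonal cover from $j\upharpoonright V_\lambda$ and proving that its complexity never escapes a single $\gamma_\rho$ independent of $A$. Classically this is the delicate coupling of the Recursion Theorem with $\Gamma$-boundedness, and here it must be reconstructed from elementarity, checking both that the $V_{\lambda+1}$-code for the embedding genuinely defines the diagonal cover inside a fixed level $L_{\gamma'}(V_{\lambda+1})$ and that $\DC_\lambda$ is strong enough to collect the $\leq\lambda$-many choices required. The reduction, the coding, and the closing appeal to the regularity of $\Theta$ are then routine once this engine is in place.
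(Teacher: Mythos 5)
Your framework is sound as far as it goes: reducing to the case where the projection of $A$ is cofinal in $\eta$, observing that the regularity of $\Theta$ cannot be attacked directly because ${\cal P}(V_{\lambda+1}\times V_{\lambda+1})\cap L(V_{\lambda+1})$ is not a surjective image of $V_{\lambda+1}$, and flagging that $j\upharpoonright V_\lambda\in V_{\lambda+1}$ is the one small parameter that, by Lemma \ref{onlyfromVlambda}, determines $j$ on all of $L_\Theta(V_{\lambda+1})$. But the proposal has a genuine gap exactly where you admit it does: the ``self-referential diagonal cover'' that is supposed to play the role of Kleene's Recursion Theorem is never constructed, and there is no reason to believe it can be constructed in that form. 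Elementarity of $j$ gives reflection of statements, not a fixed-point theorem for definable operators on codes; the classical recursion-theorem argument needs a good parametrization (a universal set) for a pointclass, which is precisely what is missing in $L(V_{\lambda+1})$. Deferring that ``engine'' defers the entire content of the lemma, since the reduction, the coding, and the appeal to regularity of $\Theta$ are indeed routine.

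It is worth seeing what actually replaces the Recursion Theorem in Woodin's proof, because it is a different device and it pinpoints the missing idea. One takes $\eta$ to be the \emph{least} counterexample (hence definable, hence $j(\eta)=\eta$) and uses $\DC_\lambda$ (Lemma \ref{lambdastructure}) to build a $\lambda$-sequence $\langle(\gamma_\xi,Z_\xi):\xi<\lambda\rangle$ in which each $Z_{\xi+1}$ witnesses failure at level $\gamma_\xi$: its $\rho$-projection is unbounded in $\eta$, yet every subset of $Z_{\xi+1}$ lying in $L_{\gamma_\xi}(V_{\lambda+1})$ has bounded projection. One then applies $j$ to the whole sequence repeatedly, setting $\rho_{n+1}=j(\rho_n)$ and $\langle(\gamma^{n+1}_\xi,Z^{n+1}_\xi):\xi<\lambda\rangle=j(\langle(\gamma^n_\xi,Z^n_\xi):\xi<\lambda\rangle)$, and uses well-foundedness of the ordinals to find $n$ with $\gamma^n_{\kappa_0+1}\leq\gamma^{n+1}_{\kappa_0+1}$: this stabilization point is the substitute for your fixed point. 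Finally --- and this is the mechanism your sketch lacks --- the \emph{pointwise image} $B=\{(j(a),j(b)):(a,b)\in Z^n_{\kappa_0+1}\}$ is definable from the two parameters $j\upharpoonright V_\lambda$ and $Z^n_{\kappa_0+1}$, so it lies just one level up, in $L_{\gamma^n_{\kappa_0+1}+1}(V_{\lambda+1})\subseteq L_{\gamma^{n+1}_{\kappa_1}}(V_{\lambda+1})$; it is a subset of $j(Z^n_{\kappa_0+1})=Z^{n+1}_{\kappa_1+1}$; and its $\rho_{n+1}$-projection is unbounded in $\eta$, since $\rho_n(a)>\alpha$ gives $\rho_{n+1}(j(a))=j(\rho_n(a))>j(\alpha)\geq\alpha$. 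This contradicts the defining property of $Z^{n+1}_{\kappa_1+1}$. So the low-complexity cover is not obtained by diagonalizing over codes at all: it is the $j$-image of a hard set, cheap because $j\upharpoonright V_\lambda$ is a single element of $V_{\lambda+1}$, and unbounded because $j$ pushes the projection upward. Until you either carry out your recursion-theoretic construction in full or switch to a device of this kind, the proposal remains a plan rather than a proof.
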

		\begin{proof}
			Since this lemma is quite rich in terms of quantifiers, we will use some abbreviation, calling $WCL$ the sentence of the Lemma, $WCL_\eta$ the same, but with fixed $\eta$, $WCL_{\eta,\rho}$ the same, but with fixed $\eta$ and $\rho$, and so on\dots
			
			Looking for a contradiction, suppose that $WCL$ is false and let $\eta$ be the least such that $\neg WCL_\eta$. Then $\eta$ is definable, so $j(\eta)=\eta$, and $\eta$ is a limit: in fact, $WCL_\eta\rightarrow WCL_{\eta+1}$, since for cofinally $\alpha<\eta+1$ means for $\alpha=\eta$, and in that case we can just choose an element in $\{(a,b)\in A:\rho(a)=\eta\}$.
			
			Let $\rho$ be such that $\neg WCL_{\eta,\rho}$. We define by induction, using $\DC_\lambda$, $\langle(\gamma_\xi,Z_\xi):\xi<\lambda\rangle$:
			\begin{itemize}
				\item Suppose that we have defined $(\gamma_\xi,Z_\xi)$. Since $\neg WCL_{\eta,\rho}$, for every $\gamma<\Theta$ $\neg WCL_{\eta,\rho,\gamma}$; so by $\neg WCL_{\eta,\rho,\gamma_\xi}$ there exists $Z_{\xi+1}\subseteq V_{\lambda+1}\times V_{\lambda+1}$ such that 
					\begin{itemize}
						\item $\{\rho(a):\exists b\ (a,b)\in Z_{\xi+1}\}$ is unbounded in $\eta$;
						\item for every $B\subseteq Z_{\xi+1}$, $B\in L_{\gamma_\xi}(V_{\lambda+1})$, $\{\rho(a):\exists b\ (a,b)\in B\}$ is bounded in $B$.
					\end{itemize}
					Let $\gamma_{\xi+1}$ be such that $Z_{\xi+1}\in L_{\gamma_{\xi+1}}(V_{\lambda+1})$ and $\gamma_\xi+\omega<\gamma_{\xi+1}$. By Lemma \ref{Thetasubsets} we can suppose that $\gamma_{\xi+1}<\Theta$.
				\item In the limit case, we choose $\gamma_\xi$ such that $\gamma_\xi>\gamma_\zeta+\omega$ for every $\zeta<\xi$ and a $Z_\xi\subseteq V_{\lambda+1}\times V_{\lambda+1}$ such that $Z_\xi\in L_{\gamma_\xi}(V_{\lambda+1})$.
			\end{itemize}
			
			So, for every $\xi_1<\xi_2<\lambda$ we have:
			\begin{itemize}
				\item $\gamma_{\xi_1}+\omega<\gamma_{\xi_2}$;
				\item $Z_{\xi_1}\in L_{\gamma_{\xi_1}}(V_{\lambda+1})$;
				\item $\{\rho(a):\exists b\ (a,b)\in Z_{\xi+1}\}$ is unbounded in $\eta$ and for every $B\subseteq Z_{\xi+1}$, $B\in L_{\gamma_\xi}(V_{\lambda+1})$, $\{\rho(a):\exists b\ (a,b)\in B\}$ is bounded in $B$.
			\end{itemize}
			
			Let $\rho_0=\rho$, $\rho_{n+1}=j(\rho_n)$,
			\begin{equation*}
				\langle(\gamma_\xi^0,Z_\xi^0):\xi<\lambda\rangle=\langle(\gamma_\xi,Z_\xi):\xi<\lambda\rangle
			\end{equation*}
			and 
			\begin{equation*}
				\langle(\gamma_\xi^{n+1},Z_\xi^{n+1}):\xi<\lambda\rangle=j(\langle(\gamma_\xi^n,Z_\xi^n):\xi<\lambda\rangle). 
			\end{equation*}
			Let $n$ be the minimum such that $\gamma_{\kappa_0+1}^n\leq\gamma_{\kappa_0+1}^{n+1}$ (there must exist, otherwise $\langle\gamma_{\kappa_0}^n:n\in\omega\rangle$ would be a descendent chain of ordinals). By elementarity we have that:
			\begin{itemize}
				\item $Z_{\kappa_0+1}^n\in L_{\gamma_{\kappa_0+1}^n}(V_{\lambda+1})$;
				\item $\{\rho_{n+1}(a):\exists b\ (a,b)\in Z_{\kappa_1+1}^{n+1}\}$ is unbounded in $\eta$ and for every $B\subseteq Z_{\kappa_1+1}^{n+1}$, $B\in L_{\gamma_{\kappa_1}^{n+1}}(V_{\lambda+1})$ we have that $\{\rho_{n+1}(a):\exists b\ (a,b)\in B\}$ is bounded in $\eta$;
				\item $j(Z_{\kappa_0+1}^n)=Z_{\kappa_1+1}^{n+1}$.
			\end{itemize}
			
			Let $B=\{(j(a),j(b)):(a,b)\in Z_{\kappa_0+1}^n\}$. The parameters used in the definition of $B$ are $j\upharpoonright V_{\lambda+1}$ (that in turn is defined by $j\upharpoonright V_\lambda$) and $Z_{\kappa_0+1}^n$, so $B\in L_{\gamma_{\kappa_0+1}^n+1}(V_{\lambda+1})\subseteq L_{\gamma_{\kappa_1}^n}(V_{\lambda+1})$. If $(a,b)\in Z_{\kappa_0+1}^n$, then 
			\begin{equation*}
				j((a,b))=(j(a),j(b))\in j(Z_{\kappa_0+1}^n)=Z_{\kappa_1+1}^{n+1}, 
			\end{equation*}
			so $B\subseteq Z_{\kappa_1+1}^{n+1}$. Finally, for every $\alpha<\eta$ there exists $(a,b)\in Z_{\kappa_0+1}^n$ such that $\alpha<\rho_n(a)$, so $\alpha\leq j(\alpha)<\rho_{n+1}(j(a))$, and $\{\rho_{n+1}(a):\exists b\ (a,b)\in B\}$ is unbounded in $\eta$. Contradiction.			
		\end{proof}
		
		\begin{teo}[Coding Lemma, Woodin, \cite{Kafkoulis}]
			Suppose that there exists $j:L(V_{\lambda+1})\prec L(V_{\lambda+1})$ with $\crt(j)<\lambda$. Let $\eta<\Theta$ and $\rho:V_{\lambda+1}\twoheadrightarrow\eta$, $\rho\in L(V_{\lambda+1})$. Then there exists $\gamma_\rho<\Theta$ such that for every $A\subseteq V_{\lambda+1}\times V_{\lambda+1}$, $A\in L(V_{\lambda+1})$, there exists $B\subseteq V_{\lambda+1}\times V_{\lambda+1}$ such that:
			\begin{itemize}
				\item $B\subseteq A$;
				\item $B\in L_{\gamma_\rho}(V_{\lambda+1})$;
				\item for every $\alpha<\eta$ if $\exists(a,b)\in A\ \rho(a)=\alpha$ then $\exists(a,b)\in B\ \rho(a)=\alpha$.
			\end{itemize}
		\end{teo}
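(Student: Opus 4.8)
The plan is to deduce the (strong) Coding Lemma from the Weak Coding Lemma by a least-counterexample argument, using the regularity of $\Theta$ to collapse the many initial-segment bounds into a single uniform one. Abbreviate by $CL_{\eta,\rho}$ the statement of the Lemma for fixed $\eta,\rho$, and by $CL_\eta$ the assertion that $CL_{\eta,\rho}$ holds for every $\rho$. Suppose the conclusion fails and let $\eta<\Theta$ be least with $\neg CL_\eta$. As in the Weak Coding Lemma, $\eta$ is definable from $\lambda$, so $j(\eta)=\eta$; moreover $\eta$ must be a limit ordinal, since $CL_\eta$ trivially implies $CL_{\eta+1}$ (from a set catching every level below $\eta$ one either adjoins a single pair of $A$ at the top level or not, raising the bound by one). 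Fix $\rho:V_{\lambda+1}\twoheadrightarrow\eta$ witnessing $\neg CL_{\eta,\rho}$.

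First I would extract a uniform bound for initial segments. For each $\beta<\eta$ the restriction of $\rho$ to $\{a:\rho(a)<\beta\}$ is a surjection onto $\beta<\eta$, so by minimality of $\eta$ we have $CL_\beta$, giving a bound $\gamma_\beta<\Theta$ such that every $A$ has a subset in $L_{\gamma_\beta}(V_{\lambda+1})$ catching every level below $\beta$. The map $\beta\mapsto\gamma_\beta$ lies in $L(V_{\lambda+1})$ and has domain $\eta<\Theta$, so by regularity of $\Theta$ (Lemma \ref{lambdastructure}) its supremum $\gamma^*=\sup_{\beta<\eta}\gamma_\beta$ is below $\Theta$. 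Since $\gamma^*<\Theta$, composing a surjection $V_{\lambda+1}\twoheadrightarrow\gamma^*$ with the definable $\Phi:\Ord\times V_{\lambda+1}\twoheadrightarrow L(V_{\lambda+1})$ yields a surjection $\pi:V_{\lambda+1}\twoheadrightarrow L_{\gamma^*}(V_{\lambda+1})$ in $L(V_{\lambda+1})$, exactly as in the proof of Lemma \ref{lambdastructure}.

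The core idea is to feed the Weak Coding Lemma a relation whose levels code initial-segment coding sets, so that ``cofinally many levels caught'' upgrades to ``all levels caught''. Given $A\subseteq V_{\lambda+1}\times V_{\lambda+1}$, set $A'=\{(a,d):\pi(d)\subseteq A\text{ and }\pi(d)\text{ catches every level }<\rho(a)\}$. By the previous paragraph $A'$ meets every level $\rho(a)=\beta<\eta$, and $A'\in L(V_{\lambda+1})$, so the Weak Coding Lemma applies to $A'$ with $\rho$ and produces a bound $\gamma_{\rho'}<\Theta$, uniform in $A$, together with $B'\subseteq A'$, $B'\in L_{\gamma_{\rho'}}(V_{\lambda+1})$, such that $\{\rho(a):\exists d\ (a,d)\in B'\}$ is cofinal in $\eta$. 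Putting $B=\bigcup\{\pi(d):\exists a\ (a,d)\in B'\}$ gives the desired coding set: $B\subseteq A$ since each $\pi(d)\subseteq A$; $B$ catches every $\alpha<\eta$, because choosing $(a,d)\in B'$ with $\rho(a)>\alpha$ the set $\pi(d)$ already catches level $\alpha$; and $B$ is definable from $B'$ and $\pi$ over $L_{\max(\gamma_{\rho'},\gamma^*)}(V_{\lambda+1})$, hence lies in $L_{\gamma_\rho}(V_{\lambda+1})$ for the uniform bound $\gamma_\rho=\max(\gamma_{\rho'},\gamma^*)+1$. This contradicts $\neg CL_{\eta,\rho}$.

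The main obstacle is the uniformity of the final bound: catching \emph{all} levels naively requires an $\eta$-indexed union of initial-segment coding sets, which choice does not provide in $L(V_{\lambda+1})$. The two devices that resolve this are the regularity of $\Theta$, which collapses the $\eta$ many bounds $\gamma_\beta$ into one $\gamma^*$, and the reparametrization through $A'$, which lets the single uniform bound furnished by the Weak Coding Lemma select a cofinal family of initial-segment codes all at once. The point to verify with care is that $\gamma_\rho$ depends only on $\rho$ (through $\gamma^*$ and $\gamma_{\rho'}$) and not on $A$, which is precisely the content of the Weak Coding Lemma being uniform over all relations.
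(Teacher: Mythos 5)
Your argument is, in substance, exactly the paper's proof: the same induction on $\eta$ (phrased as a least counterexample), the same use of the regularity of $\Theta$ to collapse the initial-segment bounds into one $\gamma^*$, the same auxiliary relation pairing points $a$ with $\pi$-codes of coding sets for the levels below $\rho(a)$, the same application of the Weak Coding Lemma to that relation, and the same final union $B=\bigcup\{\pi(d):\exists a\ (a,d)\in B'\}$. Your conditional reading of ``catches every level'' is if anything slightly cleaner, since it avoids the paper's unexplained reduction to the case where $A$ meets every level.

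There is, however, one step that as written is not justified: the final bound $\gamma_\rho=\max(\gamma_{\rho'},\gamma^*)+1$. For $B$ to lie in $L_{\max(\gamma_{\rho'},\gamma^*)+1}(V_{\lambda+1})$ you need it to be definable over $L_{\max(\gamma_{\rho'},\gamma^*)}(V_{\lambda+1})$ from parameters \emph{in} that structure, and while $B'$ is there, nothing in your construction places (the code of) $\pi$ there. In general it will not be: a surjection $\pi:V_{\lambda+1}\twoheadrightarrow L_{\gamma^*}(V_{\lambda+1})$ diagonalizes out of $L_{\gamma^*}(V_{\lambda+1})$ when $\gamma^*$ is a limit (if its code appeared at some level $\beta<\gamma^*$, then $D=\{a:\pi(a)\subseteq V_{\lambda+1}\wedge a\notin\pi(a)\}$ would be in $L_{\gamma^*}(V_{\lambda+1})\cap V_{\lambda+2}=\ran(\pi)\cap V_{\lambda+2}$, which is absurd), so the code of $\pi$ typically first appears strictly above $\gamma^*$, and possibly above $\gamma_{\rho'}$ as well. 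The repair is exactly the move the paper makes: by Lemma \ref{Thetasubsets} the code of $\pi$ lies in $L_\beta(V_{\lambda+1})$ for some $\beta<\Theta$, which we may take with $\beta\geq\gamma_{\rho'}$; since $\beta$ is determined by $\rho$ alone (through $\gamma^*$, $\pi$ and the Weak Coding Lemma bound), setting $\gamma_\rho=\beta+1$ preserves the uniformity in $A$ and completes your argument.
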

		\begin{proof}
			The proof is by induction on $\eta$. Let $\rho:V_{\lambda+1}\twoheadrightarrow\eta$. For every $\alpha<\eta$ we define $\rho_\alpha:V_{\lambda+1}\twoheadrightarrow\alpha$:
			\begin{equation*}
				\rho_\alpha(a)=\begin{cases}
					\rho(a)   & \text{if $\rho(a)<\alpha$};\\
					0         & \text{otherwise}.
					\end{cases}
			\end{equation*}
			By induction, for every $\alpha<\eta$ there exists $\gamma_{\rho_\alpha}<\Theta$ that satisfies the Coding Lemma for $\eta, \rho_\alpha$. Let $\beta_0=\sup\{\gamma_{\rho_\alpha}:\alpha<\eta\}$. Since $\Theta$ is regular, we have that $\beta_0<\Theta$. Let $\gamma_\rho$ be the ordinal that witnesses the Weak Coding Lemma for $\eta,\rho$, and let $\beta_1=\sup\{\beta_0,\gamma_\rho\}$. Since $\beta_1<\Theta$, there exists $\pi:V_{\lambda+1}\twoheadrightarrow L_{\beta_1}(V_{\lambda+1})$, and this $\pi$ can be codified as a subset of $V_{\lambda+1}$. We call $\beta$ the ordinal $<\Theta$ such that the code of $\pi$ is in $L_\beta(V_{\lambda+1})$. We prove that $\beta+1$ witnesses the Coding Lemma for $\eta,\rho$.
			
			Fix $A$ and define $A_{<\alpha}=\{(a,b)\in A:\rho(a)<\alpha\}$. We can suppose that for every $\alpha<\eta$ there exists $(a,b)\in A$ such that $\rho(a)=\alpha$. We want to code the set $\{(A_{<\alpha},B): \alpha<\eta,\ B\text{ witnesses the Coding Lemma for }\eta,\rho_\alpha,A_{<\alpha}\}$ as a subset of $V_{\lambda+1}\times V_{\lambda+1}$:
			\begin{multline*}
				A^*=\{(a,b)\in V_{\lambda+1}\times V_{\lambda+1}:\rho(a)>0,\pi(b)\subseteq A_{<\rho(a)},\\
				\forall\xi<\rho(a)\ \exists(x,y)\in\pi(b)\ \rho(x)=\xi\}.
			\end{multline*}
			Since $\beta$ witnesses the Coding Lemma for all the $\rho_\alpha$, we have that $\{\rho(a):\exists b\ (a,b)\in A^*\}=\eta$, and since the Weak Coding Lemma holds for $\eta,\rho,\beta$, there exists $B^*\subseteq A^*$ such that $B^*\in L_\beta(V_{\lambda+1})$ and $\{\rho(a):\exists b\ (a,b)\in B^*\}$ is cofinal in $\eta$. 
			
			Let $B=\bigcup\{\pi(b):(a,b)\in B^*\}$. Then $B\subseteq A$ and $B\in L_{\beta+1}(V_{\lambda+1})$. Moreover, for every $\alpha<\eta$, there exists $(a,b)\in B^*$ such that $\rho(a)>\alpha$, $\pi(b)\in A_{<\rho(a)}$ and there exists $(x,y)\in\pi(b)$ such that $\rho(x)=\alpha$. But then $(x,y)\in B$, and $B$ witnesses the Coding Lemma for $\eta,\rho,\beta+1,A$. 
		\end{proof}
		
		The Coding Lemma is used to prove a sort of inaccessibility of $\Theta$ in $L(V_{\lambda+1})$: as $L(V_{\lambda+1})\nvDash\AC$, we cannot say $2^\gamma<\Theta$ for all $\gamma<\Theta$; but we have the following:
		
		\begin{lem}[Woodin, \cite{Kafkoulis}]
		\label{inaccessible}
			Suppose that there exists $j:L(V_{\lambda+1})\prec L(V_{\lambda+1})$ with $\crt(j)<\lambda$. Then in $L(V_{\lambda+1})$ for every $\alpha<\Theta$ there exists a surjection $\pi:V_{\lambda+1}\twoheadrightarrow{\cal P}(\alpha)$.
		\end{lem}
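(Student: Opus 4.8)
The plan is to let the Coding Lemma do all the work. For a fixed surjection $\rho$ onto $\alpha$, the Coding Lemma produces a single bound $\gamma_\rho<\Theta$, uniform in the coded set, so that \emph{every} subset of $\alpha$ can be encoded by a subset of $V_{\lambda+1}\times V_{\lambda+1}$ lying in $L_{\gamma_\rho}(V_{\lambda+1})$. Since $\gamma_\rho<\Theta$ forces $V_{\lambda+1}$ to surject onto $L_{\gamma_\rho}(V_{\lambda+1})$, composing the decoding map with this surjection will yield the desired $\pi:V_{\lambda+1}\twoheadrightarrow{\cal P}(\alpha)$.

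First I would fix $\alpha<\Theta$ and, by definition of $\Theta$ together with Lemma \ref{Thetasubsets}, a surjection $\rho:V_{\lambda+1}\twoheadrightarrow\alpha$ in $L(V_{\lambda+1})$; let $\gamma_\rho<\Theta$ be the ordinal given by the Coding Lemma for $\eta=\alpha$ and $\rho$. For each $X\subseteq\alpha$ I set
\begin{equation*}
 A_X=\{(a,b)\in V_{\lambda+1}\times V_{\lambda+1}:\rho(a)\in X\},
\end{equation*}
so that $b$ ranges freely and the $\rho$-values occurring in $A_X$ are exactly the elements of $X$. The Coding Lemma then hands me a $B_X\subseteq A_X$ with $B_X\in L_{\gamma_\rho}(V_{\lambda+1})$ such that for every $\beta<\alpha$, if some pair of $A_X$ has $\rho$-value $\beta$ then so does some pair of $B_X$.

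The key verification is that $X$ is recovered exactly, namely $X=\{\rho(a):\exists b\ (a,b)\in B_X\}$: the inclusion $\subseteq$ follows since $B_X\subseteq A_X$ and every pair of $A_X$ has $\rho$-value in $X$, while for $\supseteq$, given $\beta\in X$ I use surjectivity of $\rho$ to pick $a$ with $\rho(a)=\beta$, note $(a,b)\in A_X$ for any $b$, and invoke the coding property to get a pair in $B_X$ with $\rho$-value $\beta$. Consequently the definable map $\Psi(B)=\{\rho(a):\exists b\ (a,b)\in B\}$, restricted to $L_{\gamma_\rho}(V_{\lambda+1})\cap{\cal P}(V_{\lambda+1}\times V_{\lambda+1})$, surjects onto ${\cal P}(\alpha)$: each value lies in ${\cal P}(\alpha)$, and each $X$ is hit by its code $B_X$.

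It then remains to surject $V_{\lambda+1}$ onto $L_{\gamma_\rho}(V_{\lambda+1})$. Using the definable $\Phi:\Ord\times V_{\lambda+1}\twoheadrightarrow L(V_{\lambda+1})$ I fix $\gamma'<\Theta$ with $L_{\gamma_\rho}(V_{\lambda+1})\subseteq\Phi''(\gamma'\times V_{\lambda+1})$; a surjection $g:V_{\lambda+1}\twoheadrightarrow\gamma'$ exists because $\gamma'<\Theta$, and since $\lambda$ is strong limit there is a bijection $V_{\lambda+1}\cong V_{\lambda+1}\times V_{\lambda+1}$, so writing the image of $x$ as $(x_0,x_1)$ the map $x\mapsto\Phi(g(x_0),x_1)$ surjects onto $L_{\gamma_\rho}(V_{\lambda+1})$. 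Composing this with $\Psi$ (sending anything not a subset of $V_{\lambda+1}\times V_{\lambda+1}$ to $\emptyset$) produces the required $\pi:V_{\lambda+1}\twoheadrightarrow{\cal P}(\alpha)$, definable in $L(V_{\lambda+1})$. The only genuine content beyond this bookkeeping is the uniformity of $\gamma_\rho$ furnished by the Coding Lemma, which is exactly what lets all codes $B_X$ sit inside a single $L_{\gamma_\rho}(V_{\lambda+1})$; the rest is standard relative constructibility.
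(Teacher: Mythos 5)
Your proof is correct and takes essentially the same approach as the paper's: both apply the Coding Lemma (with $\eta=\alpha$ and the surjection $\rho$) to the sets $\{(a,b):\rho(a)\in X\}$ for $X\subseteq\alpha$, obtain codes for all such $X$ inside the single level $L_{\gamma_\rho}(V_{\lambda+1})$, and finish by composing with a surjection of $V_{\lambda+1}$ onto that level of the hierarchy. The only difference is bookkeeping: the paper takes $\gamma$ large enough that also $\rho\in L_\gamma(V_{\lambda+1})$, concludes ${\cal P}(\alpha)\subseteq L_{\gamma+1}(V_{\lambda+1})$, and surjects onto $L_{\gamma+1}(V_{\lambda+1})$ directly, whereas you surject onto the codes and then apply the decoding map $\Psi$.
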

		\begin{proof}
			Fix $\alpha<\Theta$, $\rho:V_{\lambda+1}\twoheadrightarrow\alpha$, and let $\gamma$ be the maximum between the least $\beta<\Theta$ such that $\rho\in L_\beta(V_{\lambda+1})$ and the witness for the Coding Lemma for $\alpha,\rho$. For every $A\subseteq\alpha$ define $A^*=\{(a,0)\in V_{\lambda+1}:\rho(a)\in A\}$. Then $A^*\in L(V_{\lambda+1})$. So there exists $B^*\subseteq A^*$, $B^*\in L_\gamma(V_{\lambda+1})$ such that $\{\rho(a):\exists b\ (a,b)\in B^*\}=A$. But then $A\in L_{\gamma+1}(V_{\lambda+1})$. This means that ${\cal P}(\alpha)\subseteq L_{\gamma+1}(V_{\lambda+1})$, and using $\pi:V_{\lambda+1}\twoheadrightarrow\gamma+1$ we're done.
		\end{proof}
		
		The following is the strongest theorem related to the Coding Lemma that has been proved until the writing of this paper, and it is akin to a similar one in $\AD^{L(\mathbb{R})}$ (cfr. with \cite{Moschovakis}):
		
		\begin{teo}[Woodin, \cite{Kafkoulis}]
		\label{manymeas}
			Suppose that there exists $j:L(V_{\lambda+1})\prec L(V_{\lambda+1})$ with $\crt(j)=\kappa_0<\lambda$. Then $\Theta$ is a limit of $\gamma$ such that:
			\begin{enumerate}
				\item $\gamma$ is weakly inaccessible in $L(V_{\lambda+1})$;
				\item $\gamma=\Theta^{L_\gamma(V_{\lambda+1})}$ and $j(\gamma)=\gamma$;
				\item for all $\beta<\gamma$, ${\cal P}(\beta)\cap L(V_{\lambda+1})\in L_\gamma(V_{\lambda+1})$;
				\item for cofinally $\kappa<\gamma$, $\kappa$ is an $\omega$-strongly measurable cardinal in $L(V_{\lambda+1})$;
				\item $L_\gamma(V_{\lambda+1})\prec L_\Theta(V_{\lambda+1})$.
			\end{enumerate}
		\end{teo}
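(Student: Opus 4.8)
The plan is to produce the required $\gamma$'s as simultaneous closure points of a finite family of definable functions on $\Theta$, and to verify the five clauses by reflecting global facts about $L_\Theta(V_{\lambda+1})$ down to the segments $L_\gamma(V_{\lambda+1})$. Because $\Theta$ is regular in $L(V_{\lambda+1})$ (Lemma \ref{lambdastructure}), the set of closure points of any single function $\Theta\to\Theta$ is a club, and a finite intersection of clubs is again a club; so it suffices to attach each clause to closing off under one appropriate operation and then to intersect.

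First I would establish clauses (5) and (2). Let $D=\{\gamma<\Theta:L_\gamma(V_{\lambda+1})\prec L_\Theta(V_{\lambda+1})\}$. Using the partial Skolem functions $h_{\varphi,a}$ and the definable surjection $\Phi\colon\Ord\times V_{\lambda+1}\twoheadrightarrow L(V_{\lambda+1})$, the Skolem hull of $V_{\lambda+1}\cup\beta$ taken inside $L_\Theta(V_{\lambda+1})$ is a surjective image of $V_{\lambda+1}$ (there is a surjection of $V_{\lambda+1}$ onto $\beta$, hence onto the finite parameter sequences), so by condensation it collapses to some $L_\gamma(V_{\lambda+1})$ with $\beta\le\gamma<\Theta$; thus $D$ is club. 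Closing $\gamma$ off additionally under the map sending $\beta<\Theta$ to the least level coding a prewellordering of length $\beta$ (which lands below $\Theta$ by Lemma \ref{Thetasubsets}) gives $\Theta^{L_\gamma(V_{\lambda+1})}=\gamma$ for $\gamma\in D$, the first half of (2); the reverse inequality uses that codes of prewellorderings lying in $L_\gamma$ have length $<\gamma$ by elementarity. The second half, $j(\gamma)=\gamma$, is obtained by intersecting with the club $C=\{\gamma<\Theta:j(\gamma)=\gamma\}$ of Corollary \ref{FixedpointsbelowTheta} (whose proof, via continuous chains of hulls, shows $C$ is in fact closed).

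For $\gamma\in D\cap C$ with $\gamma=\Theta^{L_\gamma(V_{\lambda+1})}$, clauses (3) and (1) follow by reflection. For (3), the Coding Lemma attaches to each surjection $\rho\colon V_{\lambda+1}\twoheadrightarrow\beta$ a single witness level $\gamma_\rho<\Theta$ bounding the codes of all members of ${\cal P}(\beta)\cap L(V_{\lambda+1})$, exactly as in the computation of Lemma \ref{inaccessible}; closing $\gamma$ under $\beta\mapsto\gamma_\rho$ forces ${\cal P}(\beta)\cap L(V_{\lambda+1})\in L_\gamma(V_{\lambda+1})$ for all $\beta<\gamma$. For (1), a cofinal map $\mu\to\gamma$ with $\mu<\gamma$ in $L(V_{\lambda+1})$ would be coded below $\gamma$ by (3) and contradict $\gamma=\Theta^{L_\gamma(V_{\lambda+1})}$, so $\gamma$ is regular; and (3) likewise rules out a largest cardinal below $\gamma$, so $\gamma$ is a limit cardinal, hence weakly inaccessible.

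The hard part will be clause (4): cofinally many $\omega$-strongly measurable cardinals below each $\gamma$. Pure reflection is circular here, since the global existence of such cardinals cofinal in $\Theta$ is essentially what (4) asserts, so I would instead run the atomic-quotient argument of Lemma \ref{measurable} directly. For suitable regular $\delta$ one shows that the quotient ${\cal P}(S^{\mu}_\delta)/I_{\mathrm{NS}}$ computed in $L(V_{\lambda+1})$ is atomic with a dense set of atoms, whence the club filter restricted to an atom is a measure; the inputs are $\DC_\lambda$, the strong-limit behaviour of the cardinals involved, and the Coding Lemma standing in for Solovay's partition theorem (just as it replaces \AC{} in the \AD{} analogue treated by Moschovakis). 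The two delicate points are to check that the tree-of-stationary-sets construction still terminates, i.e.\ that there is no partition into too many stationary pieces at these higher cardinals, and that the resulting measurables can be placed cofinally in $\gamma$ rather than sporadically. Once the set of $\gamma$ carrying clause (4) is seen to be stationary (indeed club), intersecting it with $D\cap C$ and the clubs from (1) and (3) gives, by regularity of $\Theta$, an unbounded set of $\gamma$ witnessing all five clauses simultaneously.
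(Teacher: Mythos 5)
Your architecture---produce the $\gamma$'s as points of a finite intersection of clubs of closure points---cannot yield clauses (1) and (4), and the place where this surfaces concretely is your claimed implication (2)$\wedge$(3)$\Rightarrow$(1). The club of closure points you describe is a definable class of $L(V_{\lambda+1})$, so the $\omega$-sequence enumerating its successive elements above any ordinal is definable and hence lies in $L(V_{\lambda+1})$; its supremum is then a closure point of cofinality $\omega$ \emph{in} $L(V_{\lambda+1})$. At such a $\gamma$ clauses (2) and (3) hold, but $\gamma$ is singular in $L(V_{\lambda+1})$, so (1) fails: the set of $L(V_{\lambda+1})$-regular ordinals below $\Theta$ is never closed, and no club can consist of points satisfying (1) or (4). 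The error in your argument is that (3) only places ${\cal P}(\beta)\cap L(V_{\lambda+1})$ for $\beta<\gamma$ inside $L_\gamma(V_{\lambda+1})$, whereas a cofinal map $f:\mu\to\gamma$ is not a subset of any $\beta<\gamma$; turning $f$ into a prewellordering of length $\geq\gamma$ \emph{appearing at a level below} $\gamma$ requires a boundedness property that provably fails at the singular closure points. This is exactly why the paper does not take arbitrary club points: it fixes a fixed point $\alpha$ of $j$ and takes $\gamma$ to be the \emph{least} level with $L_\gamma(V_{\lambda+1})\prec_1 L_\Theta(V_{\lambda+1})$ over $\alpha$, so that $j(\gamma)=\gamma$ comes from definability (no appeal to closedness of $C$, which Corollary \ref{FixedpointsbelowTheta} does not give), and so that the fine structure of Lemma \ref{finestructure} is available at $\gamma$: first $\cof(\gamma)>\lambda$ by gluing $\mathbf{\Delta}^2_1(\alpha)$ prewellorderings via $\DC_\lambda$, then the Weak Coding Lemma relativized to $L_\gamma(V_{\lambda+1})$, then regularity by a $\Sigma_1$-boundedness argument. (A smaller but real problem of the same kind: your proof that $D$ is club conflates the inverse-collapse embedding of a Skolem hull, which is an elementary map $L_{\gamma}(V_{\lambda+1})\to L_\Theta(V_{\lambda+1})$, with inclusion-elementarity; unboundedness of $D$ needs a Tarski--Vaught chain together with the fact that no function from $V_{\lambda+1}$ into $\Theta$ is cofinal.)

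Clause (4), which you correctly identify as the hard part, is in fact left unproven, and the plan you sketch cannot succeed as stated: running the atomic-quotient argument of Lemma \ref{measurable} at cardinals far above $\lambda^+$ founders not on the partition-termination step but on \emph{completeness}. $\DC_\lambda$ makes the nonstationary ideal only $\lambda^+$-complete, so the atom argument can only ever produce a $\lambda^+$-complete measure, while $\omega$-strong measurability of $\gamma$ requires the club filter restricted to the atom to be $\gamma$-complete. The paper's proof supplies this with ingredients absent from your proposal: it replaces the club filter by the filter ${\cal F}_0$ generated by the fixed-point sets $C_k$ of \emph{all} elementary embeddings $k:L_\gamma(V_{\lambda+1})\prec L_\gamma(V_{\lambda+1})$ (with $j\upharpoonright L_\gamma(V_{\lambda+1})$ among them, which is what makes the Solovay-style partition contradiction go through); and for $\gamma$-completeness it uses that $\gamma$ is good, so that by Lemma \ref{SimplyGood} each such $k$ is coded by $k\upharpoonright V_\lambda\in V_{\lambda+1}$, which allows a $\beta$-sequence of measure-one sets together with the $C_\sigma$'s to be coded as a subset of $V_{\lambda+1}\times V_{\lambda+1}$, the Coding Lemma (relativized to $L_\gamma(V_{\lambda+1})$, hence requiring the regularity already established) to select a small subfamily ${\cal E}_B$ of embeddings, and a $\Sigma_1$-boundedness argument to find a common fixed point inside every set of the sequence. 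Finally, the cofinally many $\omega$-strongly measurable $\kappa<\gamma$ of clause (4) come from the fact that each constructed $\gamma$ is \emph{itself} $\omega$-strongly measurable and that the construction can be carried out cofinally below any given such $\gamma$; your proposal never identifies which cardinals are to be the measurable ones. Without goodness, Lemma \ref{SimplyGood}, the relativized Coding Lemma and the filter built from ${\cal E}$, the two ``delicate points'' you flag are not delicate details---they are the theorem.
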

		\begin{proof}
 Fix $\alpha<\Theta$, $j(\alpha)=\alpha$ and let $\gamma<\Theta$ be the minimum such that $L_\gamma(V_{\lambda+1})\prec_1 L_\Theta(V_{\lambda+1})$. As also $j(\Theta)=\Theta$, we have $j(\gamma)=\gamma$. This will be the $\gamma$ as in the Theorem. By Corollary \ref{FixedpointsbelowTheta} we can have $\gamma$ as large as we want below $\Theta$.

It is tempting to stop here, and say that $\gamma$ has the same properties of $\Theta$ by elementarity, but we do not have this directly, as $\Theta\notin L_\Theta(V_{\lambda+1})$.

Note that, by \ref{finestructure} relativized to $\Sigma_1(\alpha)$ formulas, $\mathbf{\Delta}^2_1(\alpha)$ sets, etc\dots, $\Theta^{L_\gamma(V_{\lambda+1})}=\gamma$, and therefore (2). Also, $\cof(\delta)>\lambda$: Otherwise let $\langle\beta_\xi:\xi<\mu\rangle$ be a sequence cofinal in $\delta$ with $\mu<\lambda$. Then any $\beta_\xi$ is the length of a $\mathbf{\Delta}^2_1(\alpha)$ pwo. With $\DC_\lambda$ choose one pwo for any $\xi$, and then gluing this pwos together it is possible to find a $\mathbf{\Delta}^2_1(\alpha)$ pwo of $\gamma$, and this contradicts \ref{finestructure}. Now the Weak Coding Lemma can be re-proved in $L_\gamma(V_{\lambda+1})$: the key points are that since $\Theta^{L_{\gamma}(V_{\lambda+1})}=\gamma$, $\gamma_{\xi+1}<\gamma$ and since $\cof(\gamma)>\lambda$ the sequence of the $Z_\xi$ is longer than $\lambda$, and we can find the contradiction.

With the Weak Coding Lemma we prove that $\gamma$ is regular. It is similar to the proof that its cofinality is larger than $\lambda$, using the Weak Coding Lemma instead of $\DC_\lambda$: Let $\beta<\gamma$ and $f:\beta\to\gamma$ cofinal in $\gamma$. Let $\pi$ be a $\Sigma_1(\alpha)$ partial bounded surjection from $V_{\lambda+1}$ to $L_\gamma(V_{\lambda+1})$, as in Lemma \ref{finestructure}. As $\Theta^{L_\gamma(V_{\lambda+1})}=\gamma$, there exists $\rho:V_{\lambda+1}\twoheadrightarrow\beta$ with $\rho\in L_\gamma(V_{\lambda+1})$. Let $\delta$ witness the Weak Coding Lemma for $\rho$ in $L_\gamma(V_{\lambda+1})$, and consider 
\begin{equation*}
  A=\{(a,b):b\in\dom(\pi)\wedge\pi(b)=f(\rho(a))\}. 
\end{equation*}

Then there exists $B\subseteq A$, $B\in L_\delta(V_{\lambda+1})$ such that for cofinally $\xi<\beta$ if there exists $(a,b)\in A$ $\rho(a)=\xi$ then there exists $(a,b)\in B$ such that $\rho(a)=\xi$. As for every $\xi$, $A_\xi\neq\emptyset$, this means that for cofinally $\xi$, $B_\xi\neq\emptyset$. Let $B^0=\{a:\exists b\ (a,b)\in B\}$. Then $\pi''B^0$ is cofinal in $\beta$, so $(f\circ\pi)''B^0$ is cofinal in $\gamma$. Let $Z=\{b:\exists a\in V_{\lambda+1}\ (a,b)\in B\}$. As $B\subseteq A$, $\rho''Z=(f\circ\pi)''B^0$, but that contradicts the boundedness of $\rho$.

Since $\gamma$ is regular, then the Coding Lemma can be re-proved relativized to $L_\gamma(V_{\lambda+1})$, so also Lemma \ref{inaccessible} holds in $L_\gamma(V_{\lambda+1})$, therefore (3) is proved. So $\gamma$ is weakly inaccessible because of regularity and (3). 

Let $S_\omega^\gamma=\{\alpha<\gamma:\cof(\alpha)=\omega\}$. Let ${\cal F}$ be the club filter on $S_\omega^\gamma$. Instead of proving that ${\cal F}$ is an ultrafilter on a stationary set, we define another filter ${\cal F}_0\subseteq {\cal F}$, that is an ultrafilter on a stationary set, and that coincides with ${\cal F}$ there.

Let ${\cal E}=\{k:L_\gamma(V_{\lambda+1})\prec L_\gamma(V_{\lambda+1})\}$. For each $k\in{\cal E}$, let $C_k=\{\eta\in S_\omega^\gamma:k(\eta)=\eta\}$. As $\gamma$ is regular, $C_k$ is an $\omega$-club. For each $\sigma\subseteq{\cal E}$ such that $|\sigma|\leq\lambda$, let $C_\sigma=\cap\{C_k:k\in\sigma\}$. Let ${\cal F}_0$ be the filter on $S_\omega^\gamma$ generated by the sets $C_\sigma\cap S_\omega^\gamma$. It is $\lambda^+$-complete by $\DC_\lambda$. All its elements are clubs on $S_\omega^\gamma$, so ${\cal F}_0\subseteq{\cal F}$. Also, $j({\cal F}_0)={\cal F}_0$. 

As in \ref{measurable} $S$ cannot be partitioned in $\kappa_0$ many ${\cal F}_0$-positive sets, since $j\upharpoonright L_\gamma(V_{\lambda+1})\in{\cal E}$ and therefore $C_j\in{\cal F}_0$. So there must exists a $T\subseteq S$ stationary such that ${\cal F}_0\upharpoonright T$ is an ultrafilter. As it is a maximal filter on $T$, it must be that ${\cal F}\upharpoonright T={\cal F}_0\upharpoonright T$. It remains to show that ${\cal F}$ on $T$ is $\gamma$-complete.

Let $\beta<\gamma$ and $\langle A_\xi:\xi<\beta\rangle$ a sequence of subsets of $T$ of ${\cal F}$-measure one. Let $\rho:V_{\lambda+1}\twoheadrightarrow\beta$, with $\rho\in L_\gamma(V_{\lambda+1})$. Let $\eta$ to witness the Coding Lemma in $L_\gamma(V_{\lambda+1})$ for $\rho$. Ideally, we would like to use the Coding Lemma on 
\begin{equation*}
 A=\{(A_\xi,C_\sigma):\sigma\subseteq{\cal E},\ |\sigma|\leq\lambda,\ \xi<\beta,\ C_\sigma\subseteq A_\xi\}, 
\end{equation*}
to ``choose'' some $C_\sigma$ inside every $A_\xi$, so that we can intersect them in the following way: Suppose we can find $B\subseteq A$ ``small'' enough, so that, if 
\begin{equation*}
 {\cal E}_B=\{k\in{\cal E}:\exists\xi\ \exists\sigma\ \xi<\beta,\ \sigma\subseteq{\cal E},\ |\sigma|\leq\lambda,\ (A_\xi,C_\sigma)\in B\, k\in\sigma\}, 
\end{equation*}
for any $\xi<\gamma$, $\sup\{k(\xi):k\in{\cal E}_B\}<\gamma$. Then pick $\xi_0>\alpha$, $\xi_{n+1}=\sup\{k(\xi_n):k\in{\cal E}_B\}$. Then $\xi_\omega=\sup_{n\in\omega}\xi_n\in T$ and for any $k\in{\cal E}_B$ $k(\xi_\omega)=\xi_\omega$. This means that for any $A_\xi$, $\xi_\omega\in A_\xi$, so we found an intersection. 

Therefore we have to code $A$ as a subset of $V_{\lambda+1}\times V_{\lambda+1}$: Note that $\gamma$ is good, therefore by \ref{SimplyGood} for any $k_1,k_2\in{\cal E}$, $k_1=k_2$ iff $k_1\upharpoonright V_\lambda=k_2\upharpoonright V_\lambda$, so there is a bijection between ${\cal E}$ and ${\cal E}_0=\{k\upharpoonright V_\lambda:k\in{\cal E}\}\subseteq V_{\lambda+1}$. Moreover let $F$ be any bijection between $V_{\lambda+1}$ and $^\lambda V_{\lambda+1}$: for $a\in V_{\lambda+1}$ we can define $C_a=C_{\sigma(a)}$ with $\sigma(a)=\{k:k\upharpoonright V_\lambda\in F(a)\}$.  

Let 
\begin{equation*}
 A=\{(a,b)\in V_{\lambda+1}\times V_{\lambda+1}:F(b)\subseteq{\cal E}_0,\ C_b\subseteq A_{\rho(a)}\}. 
\end{equation*}
Let $B\subseteq A$, $B\in L_\eta(V_{\lambda+1})$ given by the Coding Lemma. Let 
\begin{equation*}
 {\cal E}_B=\cup\{F(b):\exists a\ (a,b)\in B\}\in L_{\eta+1}(V_{\lambda+1}), 
\end{equation*}
a set of codes for embeddings $k$ such that $C_k\subseteq A_\xi$ for some $\xi$. Let $a\in\dom(\pi)$. Note that, as $\pi$ is $\Sigma_1(\alpha)$-definable, if $a\in\dom(\pi)$ then $k(a)\in\dom(\pi)$ for any $k$ embedding, so 
\begin{equation*}
 W_a=\{k(a):k\in{\cal E},\ k\upharpoonright V_\lambda\in{\cal E}_B\}\subseteq\dom(\pi), 
\end{equation*}
and $W_a\in L_{\eta+2}(V_{\lambda+1})$. So 
\begin{equation*}
 \pi''W_a=\{\pi(k(a)):k\in{\cal E},\ k\upharpoonright V_\lambda\in{\cal E}_B\}=\{k(\pi(a)):k\in{\cal E},\ k\upharpoonright V_\lambda\in{\cal E}_B\}. 
\end{equation*}
Let $\xi<\gamma$, and $a\in V_{\lambda+1}$ such that $\pi(a)=\xi$. So $\pi''W_a=\{k(\xi):k\in{\cal E},\ k\upharpoonright V_\lambda\in{\cal E}_B\}$. As $W_a$ is bounded, then $\pi''W_a$ is bounded, so $\sup\{k(\xi):k\in{\cal E},\ k\upharpoonright V_\lambda\in{\cal E}_B\}<\gamma$. Now we can find a fixed point for all the extension of embeddings in ${\cal E}_B$, and we are done.

For the point (5) we just sketch the proof. The idea is to consider $P$, the theory of $L_\Theta(V_{\lambda+1})$, and relativize the previous proof for $L_\gamma(V_{\lambda+1})[P]$. Then 
\begin{equation*}
 (L_\gamma(V_{\lambda+1})[P],P\cap L_\gamma(V_{\lambda+1}))\prec_1 (L_\Theta(V_{\lambda+1}),P)
\end{equation*}
implies full elementarity. 
		\end{proof}
		
There are still open questions about the combinatorial nature of cardinals between $\lambda$ and $\Theta$, following the similarities with $\AD^{L(\mathbb{R})}$:

\begin{Q}
 Suppose I0($\lambda$), are $\lambda^{++}$, $\lambda^{+++}$ regular or singular in $L(V_{\lambda+1})$?
\end{Q}

\begin{Q}
 Suppose I0($\lambda$). Is it true that in $L(V_{\lambda+1})$ every regular cardinal below $\Theta$ is measurable?
\end{Q}
		
Another point in favour of \AD{} is the regularity properties that it entails, and this is partially true also for I0.

We consider on $V_{\lambda+2}$ the topology with basic open sets $O_{(\alpha,a)}=\{x\subseteq V_\lambda:x\cap V_\alpha=a\}$, with $\alpha<\lambda$ and $a\subseteq V_\alpha$. Then a set $X\subseteq V_{\lambda+1}$ is perfect iff $\Pi_{n\in\omega}\kappa_n$ can be continuously embedded into $X$. A set has the perfect set property iff it has cardinality $\leq\lambda$ or it contains a perfect subset.

\begin{teo}[Cramer, 2015 \cite{Cramer}]
 \label{allperfect}
 Suppose there exists $j:L(V_{\lambda+1})\prec L(V_{\lambda+1})$, with $\crt(j)<\lambda$. Then every set in $L(V_{\lambda+1})\cap V_{\lambda+2}$ has the perfect set property in $L(V_{\lambda+1})$.
\end{teo}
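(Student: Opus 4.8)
The plan is to run a transfinite Cantor--Bendixson analysis adapted to the $\lambda$-topology, invoking I0 only at the one point where it is genuinely needed. First I note that since $\lambda$ is a strong limit the space has only $\lambda$-many basic open sets $O_{(\gamma,a)}$ (here $\gamma<\lambda$, $a\subseteq V_\gamma$, and $2^{|V_\gamma|}<\lambda$). Call $O$ \emph{small} if $A\cap O$ injects into $\lambda$ in $L(V_{\lambda+1})$, and set $A^{sc}=\bigcup\{A\cap O:O\text{ small}\}$. Because there are only $\lambda$-many opens and each small piece is well-orderable of type $<\lambda^+$, a single appeal to $\DC_\lambda$ (to choose the injections) yields $|A^{sc}|\leq\lambda$. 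If the kernel $A^*=A\setminus A^{sc}$ is empty we are done, with $|A|\leq\lambda$; so assume $A^*\neq\emptyset$. By construction every basic open meeting $A^*$ is \emph{large}, i.e.\ meets $A^*$ in a set not injecting into $\lambda$.

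Next I build a perfect scheme on $\bigcup_n\prod_{i<n}\kappa_i$. Since the topology is Hausdorff with a base of size $\lambda$ and $|A^*\cap O|>\lambda\geq\kappa_n$ for every open $O$ meeting the kernel, any kernel-open $O$ splits into $\kappa_n$ pairwise disjoint basic opens, each again meeting $A^*$ hence large. Recursively I choose $\langle O_s=O_{(\gamma_s,a_s)}:s\in\bigcup_n\prod_{i<n}\kappa_i\rangle$ with $\gamma_s$ strictly increasing along branches, $a_t\cap V_{\gamma_s}=a_s$ for $s\subsetneq t$, and the $\kappa_n$ successors of $s$ occupying disjoint refinements of $O_s$. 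The $\kappa_n$-fold and limit choices are made uniformly by one application of the Coding Lemma, coding the splitting data as a subset of $V_{\lambda+1}$ below $\Theta$, and $\DC_\lambda$ threads the $\omega$ levels, so the scheme lies in $L(V_{\lambda+1})$. For $x\in\prod_n\kappa_n$ the increasing stems determine a single point $f(x)=\bigcup_n a_{x\upharpoonright n}$, and disjointness makes $f$ a continuous injection; at this stage $f$ maps into $\overline{A^*}\subseteq\overline{A}$.

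The essential use of I0 is to force $f(x)\in A$, not merely into the closure. Here I localize: by Lemma \ref{Thetasubsets} fix $\alpha<\Theta$ with $A\in L_\alpha(V_{\lambda+1})$, and using that the good ordinals are unbounded (Laver) together with Corollary \ref{FixedpointsbelowTheta} take $\alpha$ good, a fixed point of $j$, with $L_\alpha(V_{\lambda+1})\prec L_\Theta(V_{\lambda+1})$, so $A$ is definable from a parameter in $V_{\lambda+1}$. By Lemma \ref{Fixed} there are embeddings $k\colon L_\alpha(V_{\lambda+1})\prec L_\alpha(V_{\lambda+1})$ fixing that parameter (hence $A$), and by Lemma \ref{SimplyGood} each is coded by $k\upharpoonright V_\lambda\in V_{\lambda+1}$, which is what lets the whole device live over $V_{\lambda+1}$. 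Using the square-root and limit-root machinery behind Lemma \ref{FinalChange} and Theorem \ref{therightalpha}, I would attach to each branch $x$ a coherent sequence of square roots whose \emph{common part} is an embedding $J_x$ with $a_{x\upharpoonright n}\in\ran(J_x)$ for every $n$ and $\ran(J_x)\cap V_{\lambda+1}\subseteq A$; this pins $f(x)$ inside $A$. Equivalently, one may first extract from $U_j$ and the measurability of $\lambda^+$ (Lemma \ref{measurable}) a Suslin representation $A=p[T]$ and run the splitting on $T$, so that each branch carries a witness certifying that its limit lies in $A$.

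The step I expect to be the main obstacle is exactly this last clause: arranging the $|\prod_n\kappa_n|$-many common parts $J_x$ (or tree-witnesses) to cohere \emph{continuously} in $x$, so that the single map $f$ is definable in $L(V_{\lambda+1})$ while simultaneously respecting the disjointness of the opens chosen above. Classically this coherence is free for closed or Suslin sets; in the present setting it is precisely what the reflection apparatus---the regularity of $\Theta$ and the many correct levels $L_\gamma(V_{\lambda+1})\prec L_\Theta(V_{\lambda+1})$ supplied by Theorem \ref{manymeas}, together with the Coding Lemma and $\DC_\lambda$---must be orchestrated to deliver. Everything else in the argument is soft topology; the content of the theorem is concentrated in making the limit points land in $A$ uniformly.
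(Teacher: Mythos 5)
There is a genuine gap, and you have flagged it yourself: everything up to and including your perfect scheme only produces a continuous injection of $\prod_{n\in\omega}\kappa_n$ into the \emph{closure} of $A^*$, and for an arbitrary $A\in L(V_{\lambda+1})\cap V_{\lambda+2}$ (which is the whole point of the theorem --- it is not about closed sets) this proves nothing. The device meant to repair this, attaching to each branch $x$ an embedding $J_x$ with $a_{x\upharpoonright n}\in\ran(J_x)$ and $\ran(J_x)\cap V_{\lambda+1}\subseteq A$, is never proved, and as stated it cannot work: there is no reason for the range of an elementary embedding to be contained in $A$, and no coherence of square roots will force points that were already chosen by purely topological splitting to fall inside $A$ after the fact. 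The claimed ``equivalent'' fallback is also unavailable: a Suslin representation $A=p[T]$ for every set in $L(V_{\lambda+1})\cap V_{\lambda+2}$ cannot be extracted from $U_j$ and Lemma \ref{measurable}; whether all such sets admit a tree representation of this kind ($\mathbb{U}(j)$-representability) is exactly the open Question \ref{representable}, so it may not be assumed.

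Cramer's argument runs in the opposite direction, and the difference is precisely what closes your gap. One first reflects: take an inverse limit $J:L_{\bar{\alpha}+1}(V_{\bar{\lambda}+1})\prec L_{\alpha+1}(V_{\lambda+1})$ and $\bar{X}$ with $J(\bar{X})=X$. The points of the perfect set are not located inside $X$ by refining opens; they are \emph{generated} as the images $K(a)$ of a single fixed $a\in\bar{X}$ under a tree of limit roots $K$ of $J$ chosen so that $K(\bar{X})=X$. Membership is then automatic by elementarity, $K(a)\in K(\bar{X})=X$, so no certification step is ever needed. The splitting you tried to extract from largeness of a Cantor--Bendixson kernel comes instead from a pigeonhole on embeddings: if $|X|>\lambda$, then for each $a\in\bar{X}$ there are more than $\lambda$ limit roots of $J$ with pairwise distinct images on $a$, and iterating this multiplicity along $\bigcup_n\prod_{i<n}\kappa_i$ yields the perfect scheme. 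In short, in the actual proof the embeddings produce the points, so membership in $X$ is free; in yours the points are produced first and the embeddings are asked to ratify them afterwards, and that step is not a technical obstacle to be orchestrated away but the place where the approach breaks.
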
 		

The proof uses heavily all the inverse limit tools developed in Section \ref{strimp}. It exploits the fact that there is a multiplicity of possible square roots: Consider $J:L_{\bar{\alpha}+1}(V_{\bar{\lambda}+1})\prec L_{\alpha+1}(V_{\lambda+1})$, inverse limit, and let $X\in L_{\alpha+1}(V_{\lambda+1})\cap V_{\lambda+2}$ and $\bar{X}\in L_{\bar{\alpha}+1}(V_{\bar{\lambda}+1})$ such that $J(\bar{X})=X$. Then if $|X|>\lambda$ we can find for any $a\in\bar{X}$ many ($>\lambda$) limit roots of $J$, all with different images on $a$. Repeated uses of this kind of pigeonhole principle will be used to construct a perfect set inside $X$.

\begin{Q}
 Is there an equivalent in $L(V_{\lambda+1})$ for Baire property or Lebesgue measurability?
\end{Q}

Some key properties of subsets of $\mathbb{R}$ are proven thanks to the fact that under $\AD$ all such sets have a nice tree-structure, namely they are weakly homogeneously Suslin. Woodin introduced a definition that wants to develop a similarity with this concept: $\mathbb{U}(j)$-representability.

If $x\in V_{\lambda+1}$ and $\langle\lambda_i:i\in\omega\rangle$ is cofinal in $\lambda$, then we can see $x$ as the $\omega$-sequence $\langle x\cap V_{\lambda_i}:i\in\omega\rangle$, just like any real can be seen as an $\omega$-sequence of natural numbers. So any set $X\subseteq V_{\lambda+1}$ can be seen as a set of $\omega$-sequences. In $\mathbb{R}$, homogeneous Suslin-ness consists in labeling any finite sequence of natural numbers with an ultrafilter, so that if a sequence labeled with $U_2$ extends another labeled with $U_1$, then $U_2$ projects to $U_1$. If $U_2$ projects to $U_1$, then there is a related elementary embedding from $\Ult(V,U_1)$, to $\Ult(V,U_2)$, therefore for any infinite sequence one can define the direct limit of all the ultrapowers of the ultrafilters that label its finite initial segments. Then a set $Z$ is homogeneously Suslin iff there is such labeling so that a branch is in $Z$ iff such direct limit is well-founded. 
\newline
\newline
\begin{tikzpicture}
 \draw (0,8) -- (4,0) -- (8,8);
 \draw (3.5,1) -- (4.5,1);
 \draw (3,2) -- (5,2);
 \draw (2.5,3) -- (5.5,3);
 \draw (2,4) -- (6,4);
 \draw [thick] (4,0) -- (4.2,1);
 \draw [thick] (4.2,1) -- (4.7,2);
 \draw [thick] (4.7,2) -- (4.3,3);
 \draw [thick] (4.3,3) -- (4.8,4);
 \draw [thick] (4.8,4) -- (5.7,7.8);
 \draw (4.2,1) node[anchor=north west] {$U_1$};
 \draw (4.7,2) node[anchor=north east] {$U_2$};
 \draw (4.3,3) node[anchor=north west] {$U_3$};
 \draw (9,1) node {$U_1$};
 \draw (9,2) node {$U_2$};
 \draw (9,3) node {$U_3$};
 \draw (9,4) node {$U_4$};
 \draw (9,5) node {$U_5$};
 \draw (9,6) node {$\vdots$};
 \draw [->] (9,1.7) -- (9,1.3);
 \draw [->] (9,2.7) -- (9,2.3);
 \draw [->] (9,3.7) -- (9,3.3);
 \draw [->] (9,4.7) -- (9,4.3);
 \draw [->] (9,5.7) -- (9,5.3);
 \draw (11,1) node {$\Ult(V,U_1)$};
 \draw (11,2) node {$\Ult(V,U_2)$};
 \draw (11,3) node {$\Ult(V,U_3)$};
 \draw (11,4) node {$\Ult(V,U_4)$};
 \draw (11,5) node {$\Ult(V,U_5)$};
 \draw (11,6) node {$\vdots$};
 \draw [<-] (11,1.7) -- (11,1.3);
 \draw [<-] (11,2.7) -- (11,2.3);
 \draw [<-] (11,3.7) -- (11,3.3);
 \draw [<-] (11,4.7) -- (11,4.3);
 \draw [<-] (11,5.7) -- (11,5.3);
\end{tikzpicture}
\newline
\newline
Weakly homogeneous Suslin-ness adds another dimension, considering instead the couples of finite sequences.

So the first step is to decide which ultrafilters are going to label our sequences:  

\begin{defin}
 Let $\mathbb{U}(j)$ be the set of $U\in L(V_{\lambda+1})$ such that in $L(V_{\lambda+1})$ the following hold:
\begin{itemize}
 \item $U$ is a $\lambda^+$-complete ultrafilter;
 \item for some $\gamma<\Theta$, $U\subseteq{\cal P}(L_\gamma(V_{\lambda+1}))$;
 \item for some $A\in U$ and all sufficiently large $n\in\omega$, $j^n(U)=U$ and $\{a\in A:j^n(a)=a\}\in U$.
\end{itemize}

Let $\kappa<\Theta$, $\kappa\leq\Theta^{L_\kappa(V_{\lambda+1})}$. Let $\langle a_i:i<\omega\rangle$ be such that for all $i\in\omega$ there exists $n\in\omega$ such that $j^n(a_i)=a_i$. Then $\mathbb{U}(j,\kappa,\langle a_i:i\in\omega\rangle)$ is the set of $U\in\mathbb{U}(j)$ such that there exists an $n\in\omega$ such that for all $k:L_\kappa(V_{\lambda+1})\prec L_\kappa(V_{\lambda+1})$ with the property that there are $m,l\in\omega$, $k^m=j^l\upharpoonright L_\kappa(V_{\lambda+1})$, if for all $i\leq n$ $k(a_i)=a_i$, then $\{a\in L_\kappa(V_{\lambda+1}):k(a)=a\}\in U$.
\end{defin}

Now we can introduce $\mathbb{U}(j)$ representability:

\begin{defin}[Woodin, 2011 \cite{Woodin}]
 Let $Z\in L(V_{\lambda+1})\cap V_{\lambda+2}$. Then $Z$ is $\mathbb{U}(j)$-representable if there are $\kappa$, $\langle a_i:i\in\omega\rangle$, an increasing sequence $\langle\lambda_i:i\in\omega\rangle$, and a function $\pi:\bigcup\{V_{\lambda_i+1}\times V_{\lambda_i+1}\times\{i\}:i\in\omega\}\to\mathbb{U}(j,\kappa,\langle a_i:i\in\omega\rangle)$ such that the following hold:
\begin{itemize}
 \item for all $i\in\omega$ and $(a,b,i)\in\dom(\pi)$ there exists $A\subseteq (L(V_{\lambda+1}))^i$ such that $A\in\pi(a,b,i)$;
 \item for all $i\in\omega$ and $(a,b,i)\in\dom(\pi)$, if $m<i$ then $(a\cap V_{\lambda_m},b\cap V_{\lambda_m},m)\in\dom(\pi)$ and $\pi(a,b,i)$ projects to $\pi(a\cap V_{\lambda_m},b\cap V_{\lambda_m},m)\in\dom(\pi)$, i.e., for every $A\in\pi(a\cap V_{\lambda_m},b\cap V_{\lambda_m},m)$ 
 \begin{equation*}
  \{x\in (L(V_{\lambda+1}))^i:x\upharpoonright m\in A\}\in\pi(a,b,i);
 \end{equation*}
 \item for all $x\subseteq V_\lambda$, $x\in Z$ iff there exists $y\subseteq V_\lambda$ such that the tower $\langle\pi(x\cap V_{\lambda_m},y\cap V_{\lambda_m},m:m\in\omega\rangle$ is well-founded.
\end{itemize}
\end{defin} 

The question is now whether there are sets in $L(V_{\lambda+1})\cap V_{\lambda+2}$ that are $\mathbb{U}(j)$-representable, and particularly which ones are. The desired result would be, in similarity with \AD, that they are all $\mathbb{U}(j)$-representable. There has been some twists and turns on the still ongoing quest towards this result, and it is worth to see them step by step, with some advice for the reader that wants to know all the details.

The main source for $\mathbb{U}(j)$-representability is \cite{Woodin}. There the first closure properties are introduced, reminiscent of the closure properties for weakly homogeneously Suslin sets:

\begin{teo}[Woodin, Lemma 114, Lemma 115 and Theorem 134 in \cite{Woodin}]
 Suppose $j:L(V_{\lambda+1})\prec L(V_{\lambda+1})$ with $\crt(j)<\lambda$. Then 
 \begin{itemize}
  \item the $\lambda$-union of $\mathbb{U}(j)$-representable sets is $\mathbb{U}(j)$-representable 
	\item the projection of a $\mathbb{U}(j)$-representable set is $\mathbb{U}(j)$-representable
	\item if $Z\in L_\lambda(V_{\lambda+1})$ is $\mathbb{U}(j)$-representable, then its complement is $\mathbb{U}(j)$-representable.
 \end{itemize}
 Therefore all sets in $L_\lambda(V_{\lambda+1})\cap V_{\lambda+2}$ are $\mathbb{U}(j)$-representable.
\end{teo}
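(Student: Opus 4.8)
The plan is to establish the three closure properties in turn and then obtain the final assertion by an induction on the levels of $L_\lambda(V_{\lambda+1})$, in close analogy with the classical fact that the weakly homogeneously Suslin sets form a pointclass closed under unions, projections and (under a supercompactness-type hypothesis) complements. The two bullet points other than complementation are routine, while complementation is the genuine content.

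First I would treat the two easy closures. For the $\lambda$-union of a family $\langle Z_\xi:\xi<\lambda\rangle$ with representations $\pi_\xi$, I would produce a single $\pi$ by absorbing the index into the witness: since $\lambda$ is a strong limit every $\xi<\lambda$ is coded by an element of $V_\lambda$, so one declares $x\in\bigcup_{\xi<\lambda}Z_\xi$ iff there are $\xi$ and a witness $y$ for $x\in Z_\xi$, and lets $\pi$ read $\xi$ off an initial segment of $y$ and thereafter follow $\pi_\xi$. The projection and completeness clauses for $\pi$ follow coordinatewise from those of the $\pi_\xi$, the uniform choices being made with $\DC_\lambda$. For the projection, if $W\subseteq V_{\lambda+1}\times V_{\lambda+1}$ is $\mathbb{U}(j)$-representable and $Z=\{x:\exists w\ (x,w)\in W\}$, then, because the defining clause of representability is itself an existential statement over $V_{\lambda+1}$, I would simply merge the projected variable $w$ with the witness $y$ into a single witness $\langle w,y\rangle$; the tower for $Z$ at $x$ is obtained from the tower for $W$ at $(x,w)$ by reindexing, and the ultrafilter conditions are inherited verbatim.

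The hard part will be the complement, which is the analogue of the Martin--Steel theorem that the complement of a weakly homogeneously Suslin set is homogeneously Suslin. Given a representation $\pi$ of $Z\in L_\lambda(V_{\lambda+1})$, I would build for each $x\notin Z$ an auxiliary tower of measures witnessing the ill-foundedness of \emph{every} candidate $Z$-tower for $x$. The mechanism is exactly that of Lemma \ref{SatisfactionTree1}: to each node one attaches the rank function, with values below $\Theta$, that certifies ill-foundedness, and these rank functions are pushed through the direct limit. What makes the reflection go through is the $\lambda^+$-completeness of the members of $\mathbb{U}(j)$ together with $\DC_\lambda$, which is what converts ``for all $y$ the $(x,y)$-tower is ill-founded'' into a single well-founded object; the fixed-point clause $j^n(U)=U$ in the definition of $\mathbb{U}(j)$ is then used to check that the newly constructed measures again belong to $\mathbb{U}(j,\kappa,\langle a_i:i\in\omega\rangle)$ for an appropriate $\kappa$ and appropriate fixed points $a_i$. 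Here the hypothesis $Z\in L_\lambda(V_{\lambda+1})$ is essential: it bounds the complexity of $\pi$ so that the auxiliary rank functions stay below $\Theta$ and the whole construction remains inside $L(V_{\lambda+1})$. Verifying that the resulting $\pi'$ satisfies the projection and well-foundedness clauses is the technically demanding step.

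Finally, for the concluding statement I would argue by induction on $\beta<\lambda$ that every subset of $V_{\lambda+1}$ in $L_\beta(V_{\lambda+1})$ is $\mathbb{U}(j)$-representable. The base case is that the basic open sets $O_{(\alpha,a)}$ are $\mathbb{U}(j)$-representable by a direct construction, the requisite $\lambda^+$-complete ultrafilters being furnished by the measurability phenomena of Lemma \ref{measurable} together with the fixed-point structure of $j$. For the step, a set appearing at level $\beta$ is definable over $\bigcup_{\alpha<\beta}L_\alpha(V_{\lambda+1})$ by a first-order formula; its Boolean connectives become complements and $\lambda$-unions, while each quantifier, whether over $V_{\lambda+1}$, over the ordinals of $L_\beta(V_{\lambda+1})$ (of which there are only $\lambda$ many), or over a lower level $L_\alpha(V_{\lambda+1})$ (which surjects off $V_{\lambda+1}$ since $\alpha<\Theta$), reduces to a projection or a $\lambda$-union of sets already known to be representable. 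Since the whole induction stays inside $L_\lambda(V_{\lambda+1})$, the complementation closure is available at every step, so the three properties combine to yield representability of the set. This gives that all of $L_\lambda(V_{\lambda+1})\cap V_{\lambda+2}$ is $\mathbb{U}(j)$-representable.
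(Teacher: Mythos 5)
First, a point of calibration: the paper does not actually prove this theorem — it is quoted from \cite{Woodin} (Lemmas 114, 115 and Theorem 134) with no argument reproduced — so your proposal can only be measured against the surrounding discussion. Your union and projection sketches are plausible and, as you say, essentially routine in spirit, though the union step hides a real bookkeeping issue you should not wave at with $\DC_\lambda$ alone: each $\pi_\xi$ maps into some $\mathbb{U}(j,\kappa_\xi,\langle a^\xi_i:i\in\omega\rangle)$, and a single representation of the union needs a \emph{single} system $\mathbb{U}(j,\kappa,\langle a_i:i\in\omega\rangle)$; one can take $\kappa=\sup_\xi\kappa_\xi<\Theta$ by regularity of $\Theta$, but one cannot interleave $\lambda$ many $\omega$-sequences of fixed points into one $\omega$-sequence, so the representations must be chosen coherently from the outset rather than merged after the fact.

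The genuine gap is in the complementation step, and it is exactly the gap the paper warns about. Your Martin--Solovay-style construction needs, for each ill-founded (or well-founded) tower arising from the given representation, a choice of rank function (or thread) made \emph{uniformly} across all towers, since these choices must be assembled into new measures. In $L(V_{\lambda+1})$, where $\AC$ fails, this uniform selection is precisely the Tower Condition: compare its definition in Section \ref{simil}, which asks for a function $F$ selecting, uniformly in the tower, the data whose existence certifies well-foundedness. The paper is explicit that general complementation was \emph{blocked} on exactly this point — Lemma \ref{lambdaplusrepr} has the Tower Condition as a hypothesis, and that hypothesis was open at the time of \cite{Woodin} and only later proved by Cramer in \cite{Cramer}. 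So if your argument worked as written, "$\lambda^+$-completeness plus $\DC_\lambda$" would yield Lemma \ref{lambdaplusrepr} with its hypothesis erased, which is not credible. Nor does your stated use of the hypothesis $Z\in L_\lambda(V_{\lambda+1})$ repair this: keeping rank functions "below $\Theta$" is not the obstruction, uniform selection is, and an ordinal bound on ranks does not produce the selector. The restriction to $L_\lambda(V_{\lambda+1})$ has to enter structurally — roughly, because representations at levels $\beta<\lambda$ are built by an induction in which the complement is carried along, exploiting that every $\beta<\lambda$ is eventually fixed by the iterates $j^n$ (which is what keeps the constructed measures inside $\mathbb{U}(j)$) — not by applying a general complementation operator to an arbitrary representation. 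A secondary but real problem: your base case invokes Lemma \ref{measurable} to furnish the ultrafilters, but the club-filter measures on $\lambda^+$ produced there are not of the required form (they do not live on ${\cal P}(L_\gamma(V_{\lambda+1}))$ with the $j^n$-fixity clause of the definition of $\mathbb{U}(j)$); the measures that drive representability come from $j$ itself, in the style of $U_j$.
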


This was the best result for $\mathbb{U}(j)$-representability in \cite{Woodin}. Further research was blocked by the Tower Condition. Remember by \ref{gaifpow} that a tower of ultrafilters $\langle U_n:n\in\omega\rangle$ is well-founded iff there exists an $X$ such that $X\cap\dom(U_n)\in U_n$ for all $n\in\omega$. The Tower Condition holds for $\mathbb{U}(j)$ iff for any $\mathbb{U}(j)$-representation, it is possible to find such an $X$ for any well-founded tower uniformly:

\begin{defin}[Woodin, 2011 \cite{Woodin}]
 The Tower Condition holds for $\mathbb{U}(j)$ iff for any $A\subseteq\mathbb{U}(j)$, $A\in L(V_{\lambda+1})$, $|A|\leq\lambda$, there is a function $F:A\to L(V_{\lambda+1})$ such that, for all $\langle U_i:i\in\omega\rangle\in L(V_{\lambda+1})$ that satisfies:
 \begin{itemize}
  \item for all $i\in\omega$ there exists $Z\in U_i$ such that $Z\subseteq L(V_{\lambda+1})^i$;
	\item for all $i\in\omega$ $U_i\in A$ and $U_{i+1}$ projects to $U_i$.
 \end{itemize}
 Then the tower is wellfounded iff there exists a function $f:\omega\to L(V_{\lambda+1})$ such that for all $i\in\omega$, $f\upharpoonright i\in F(U_i)$.
\end{defin}

Note that if $\pi$ is a $\mathbb{U}(j)$ representation and $A=\ran(\pi)$, then all towers in $A$ satisfy the conditions. Woodin immediately noticed that the Tower Condition was central for the study of $\mathbb{U}(j)$-representable sets:

\begin{lem}[Woodin, Lemma 127 in \cite{Woodin}]
 \label{lambdaplusrepr}
 If the Tower Condition holds for $\mathbb{U}(j)$, then the complement of a $\mathbb{U}(j)$-representable set is $\mathbb{U}(j)$-representable, therefore all the sets in $L_{\lambda^+}(V_{\lambda+1})$ are $\mathbb{U}(j)$-representable.
\end{lem}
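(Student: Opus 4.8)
The plan is to prove closure of $\mathbb{U}(j)$-representability under complements by transplanting the Martin--Solovay tree construction into $L(V_{\lambda+1})$, the Tower Condition playing the role that countable completeness of towers plays classically, and then to derive the statement about $L_{\lambda^+}(V_{\lambda+1})$ by an induction on levels fed by this new closure together with the closure under $\lambda$-unions and projections established above.

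First I would fix a $\mathbb{U}(j)$-representation $\pi$ of $Z$, with parameters $\kappa$, $\langle a_i:i\in\omega\rangle$ and $\langle\lambda_i:i\in\omega\rangle$, so that for $x\subseteq V_\lambda$ one has $x\in Z$ iff there is $y\subseteq V_\lambda$ for which the tower $\langle U^{x,y}_m:m\in\omega\rangle$, with $U^{x,y}_m=\pi(x\cap V_{\lambda_m},y\cap V_{\lambda_m},m)$, is well-founded. Hence $x\notin Z$ iff for \emph{every} $y\subseteq V_\lambda$ this tower is ill-founded. The entire point of the Tower Condition is to turn ``ill-founded'' into a positive, well-founded condition: applying it to $A=\ran(\pi)$ (note $|A|\le\lambda$) yields a single function $F:A\to L(V_{\lambda+1})$ such that each such tower is well-founded iff there is $f:\omega\to L(V_{\lambda+1})$ with $f\restriction i\in F(U_i)$ for all $i$. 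Setting
\[ S_{x,y}=\{s:\forall i\le\lh(s)\ s\restriction i\in F(U^{x,y}_i)\}, \]
the tower along $(x,y)$ is ill-founded iff the tree $S_{x,y}$ has no infinite branch, i.e. iff $S_{x,y}$ is well-founded and hence carries an ordinal rank.

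Next I would encode the single second-order-universal statement ``every $S_{x,y}$ is well-founded'' as the well-foundedness of one new tower, following Martin--Solovay. The witness $y^{*}$ in the new representation codes a coherent assignment of ordinal ranks to the threading trees built from the $\le\lambda$-many measures of $\ran(\pi)$; since the relevant ranks range below the ordinal $\delta=\sup$ of the $\le\lambda$-many ultrapower ordinals attached to the measures in $\ran(\pi)$, and $\delta<\Theta$ because $\Theta$ is regular and above $\lambda$ (Lemma \ref{lambdastructure}), while by Lemma \ref{Thetasubsets} every ordinal below $\Theta$ is coded by a subset of $V_{\lambda+1}$, such a $y^{*}$ can again be taken in $V_{\lambda+1}$. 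The new ultrafilters $\pi^{c}(x\cap V_{\lambda_m},y^{*}\cap V_{\lambda_m},m)$ are obtained from the original $U^{x,y}_m$ together with $F$ and the rank map, the $\le\lambda$-many measures at each level being combined into one by a $\lambda^{+}$-complete average; projection coherence between levels is arranged exactly as classically, decreasing ranks producing a descending chain precisely when the combined thread-tree is ill-founded.

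The central verifications, and where I expect the real work to lie, are twofold. First, that the combined measures are genuinely $\lambda^{+}$-complete and, for all large $n$, concentrate on sets of $j^{n}$-fixed points with $j^{n}(\pi^{c}(\cdots))=\pi^{c}(\cdots)$, so that they lie in some $\mathbb{U}(j,\kappa',\langle a'_i:i\in\omega\rangle)$: this is where $\lambda^{+}$-completeness is indispensable (it is exactly what lets the $\le\lambda$ measures be amalgamated) and where one must track how $j$ acts on the whole construction. Second, correctness: the new tower is well-founded iff $x\notin Z$, the forward direction reading a uniform failure of threading off $F$ and the backward direction building the coherent ranks from the actual well-foundedness of every $S_{x,y}$; the uniformity supplied by $F$ is precisely what allows the universal quantifier over $y$ to be absorbed into a single rank structure rather than $2^{\lambda}$-many separate ones (cf. Lemma \ref{gaifpow}). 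This gives that $V_{\lambda+1}\setminus Z$ is $\mathbb{U}(j)$-representable. Finally, with complements now available alongside closure under $\lambda$-unions and projections, and all of $L_\lambda(V_{\lambda+1})\cap V_{\lambda+2}$ already representable, I would induct on $\alpha<\lambda^{+}$: at successor steps a set definable over $L_\alpha(V_{\lambda+1})$ is built from lower sets by Boolean operations and quantification over $L_\alpha(V_{\lambda+1})$, and fixing a surjection $V_{\lambda+1}\twoheadrightarrow L_\alpha(V_{\lambda+1})$ (available since $\alpha<\Theta$) converts those quantifiers into projections and $\lambda$-unions. The induction runs exactly to $\lambda^{+}$ because $\lambda^{+}$ is regular, so every $\alpha<\lambda^{+}$ has $\cof(\alpha)\le\lambda$ and limit stages are absorbed by $\lambda$-union closure; beyond $\lambda^{+}$ one would need $\lambda^{+}$-unions, which are not available. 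Hence every set in $L_{\lambda^{+}}(V_{\lambda+1})\cap V_{\lambda+2}$ is $\mathbb{U}(j)$-representable.
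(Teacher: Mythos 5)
A preliminary remark: the paper states this lemma without proof, importing it verbatim as Lemma 127 of \cite{Woodin}, so your attempt can only be measured against the mathematics itself rather than against a proof in the text. Your opening reduction is sound: since $\lambda$ is strong limit, $|\dom(\pi)|\leq\lambda$, so the Tower Condition does apply to $A=\ran(\pi)$, and the tower along $(x,y)$ is ill-founded exactly when the threading tree $S_{x,y}$ has no infinite branch, i.e.\ is well-founded. The closing induction up to $\lambda^+$ (complements, projections and $\lambda$-unions, with every limit stage below $\lambda^+$ having cofinality $\leq\lambda$ and regularity of $\lambda^+$ marking the stopping point) is also the standard argument, modulo the usual bookkeeping of uniform representable codes for the structures $L_\alpha(V_{\lambda+1})$.

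The gap is in the central step, and as written it is fatal. In a $\mathbb{U}(j)$-representation the witness must be a set $y^{*}\subseteq V_\lambda$, hence an element of $V_{\lambda+1}$: an object of hereditary cardinality $\lambda$, which can code well-orders, hence ordinals, only of length below $\lambda^{+}$ (this is precisely the coding discussed at the start of Section \ref{rank}). You propose that $y^{*}$ code the coherent rank assignment, justifying this by the bound $\delta<\Theta$ together with Lemma \ref{Thetasubsets}; but Lemma \ref{Thetasubsets} codes ordinals below $\Theta$ by subsets of $V_{\lambda+1}$, that is, by elements of $V_{\lambda+2}$ --- one type too high to serve as a witness. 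And the ranks in question really are large: the sets $F(U)$ supplied by the Tower Condition are arbitrary sets in $L(V_{\lambda+1})$, so the trees $S_{x,y}$, and likewise the Martin--Solovay seeds (which live in ultrapowers of $L(V_{\lambda+1})$ by measures concentrating on sets of size at least $|V_{\lambda+1}|$), have ranks with no bound below $\lambda^{+}$; even your bound $\delta<\Theta$ does not follow from regularity of $\Theta$, since the relevant ultrapowers are computed from function spaces that are not surjective images of $V_{\lambda+1}$. This is exactly where the classical analogy you invoke breaks down: Martin--Solovay places the ordinal seeds in the tree coordinates and therefore yields only a \emph{Suslin} representation of the complement, whereas here the complement must again be represented by towers of ultrafilters. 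The ordinal data must therefore be absorbed into the new measures themselves, so that well-foundedness of the new tower --- of its direct limit --- is what simultaneously witnesses ill-foundedness of all the old towers, while $y^{*}$ carries only $\lambda$-sized information; producing such measures and verifying that they are $\lambda^{+}$-complete, coherent under projection, and fixed by $j^{n}$ for large $n$ (i.e.\ that they belong to $\mathbb{U}(j)$) is the actual content of Woodin's lemma, and your substitute for it, combining the infinitely many measures at each level ``by a $\lambda^{+}$-complete average'', is not an existing operation: there is no amalgamation of infinitely many ultrafilters on distinct spaces into a single ultrafilter, $\lambda^{+}$-completeness notwithstanding.
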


For much of Section 7 of \cite{Woodin}, the Tower Condition is a constant and rather cumbersome presence, as at the time of the publication it was still not proven. In \cite{Cramer} Cramer actually managed to prove the Tower Condition, therefore unlocking much of the content of Section 7 of \cite{Woodin}. The suggestion for the completist reader is therefore first to read \cite{Cramer}, and reading Section 7 of \cite{Woodin} after, so that the Tower Condition provides the most results.

This approach would also make some proofs in \cite{Woodin} more direct and easier to read. For example:

\begin{lem}[Woodin, Lemma 121 of \cite{Woodin}]
 Suppose $Z\in L(V_{\lambda+1})$ is $\mathbb{U}(j)$-representable and $|Z|>\lambda$. Then $Z$ contains a $\mathbb{R}$-perfect set\footnote{With $\mathbb{R}$-perfect set we intend that $2^\omega$ can be continuously embedded in $Z$, a weaker property than to be a perfect set. Therefore this Lemma is now incorporated in \ref{allperfect}}.
\end{lem}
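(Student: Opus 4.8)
The plan is to realize $Z$ as the $x$-projection of the set of branches of a tree of finite approximations, and then to run the classical ``uncountable Suslin set contains a perfect set'' argument inside the generalized descriptive set theory of $V_\lambda$. Fix a $\mathbb{U}(j)$-representation $\pi$ of $Z$ with auxiliary data $\kappa,\langle a_i:i\in\omega\rangle,\langle\lambda_i:i\in\omega\rangle$. By definition, for $x\subseteq V_\lambda$ one has $x\in Z$ iff there is $y\subseteq V_\lambda$ making the tower $\langle\pi(x\cap V_{\lambda_m},y\cap V_{\lambda_m},m):m\in\omega\rangle$ well-founded. Invoking the Tower Condition (proved by Cramer, and used in exactly this way in Lemma \ref{lambdaplusrepr}) for the set $A=\ran(\pi)$, which has size $\leq\lambda$, I would replace ``the tower is well-founded'' by the equivalent combinatorial statement ``there is a thread $f:\omega\to L(V_{\lambda+1})$ with $f\upharpoonright i\in F(\pi(x\cap V_{\lambda_i},y\cap V_{\lambda_i},i))$ for all $i$,'' where $F$ is the function furnished by the Tower Condition. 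This turns membership in $Z$ into the assertion that a tree $T$ of triples of finite approximations $(x\cap V_{\lambda_m},\,y\cap V_{\lambda_m},\,f\upharpoonright m)$ has an infinite branch, so that $Z=p[T]$ is literally a projection.

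Next I would build a Cantor scheme inside $T$. By recursion on $2^{<\omega}$ I construct nodes $t_s\in T$ (each $t_s$ a finite approximation up to some level $m_s$) so that $s\subseteq t\Rightarrow t_s\subseteq t_t$; so that $t_{s^\frown 0}$ and $t_{s^\frown 1}$ are incomparable extensions of $t_s$ whose $x$-coordinates already disagree below some $V_{\lambda_m}$; and so that every $t_s$ lies on the portion of $T$ projecting onto more than $\lambda$ elements of $Z$. For each $\alpha\in{}^\omega 2$ set $x_\alpha=\bigcup\{(\text{$x$-coordinate of }t_s):s\subset\alpha\}$: coherence of the scheme gives $x_\alpha\subseteq V_\lambda$, the splitting clause gives injectivity, and since basic open sets of $V_{\lambda+1}$ have the form $O_{(\alpha,a)}$ the map $\alpha\mapsto x_\alpha$ is continuous. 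The branch of $T$ through the $t_s$ along $\alpha$ also produces a $y$ and a thread $f$, certifying $x_\alpha\in Z$. As ${}^\omega 2$ is compact and the bounded topology on $V_{\lambda+1}$ is Hausdorff, the continuous injection $\alpha\mapsto x_\alpha$ is automatically a homeomorphic embedding, so its range is an $\mathbb{R}$-perfect subset of $Z$.

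The heart of the argument, and the step I expect to be the main obstacle, is the splitting lemma that keeps the recursion going: every node $t$ whose projection meets more than $\lambda$ members of $Z$ admits two extensions in $T$, disagreeing on the $x$-coordinate, each again projecting onto more than $\lambda$ members of $Z$. Two difficulties must be handled at once. First, the counting must be done without choice. Following the Cantor--Bendixson idea, I would let $B\subseteq Z$ be the set of $x$ whose branch through $T$ eventually passes only through non-splitting nodes, and show $|B|\leq\lambda$: each such $x$ is determined, from some level on, by a node with at most one $x$-extension, and these nodes are indexed by subsets of some $V_{\lambda_m}$, of which there are fewer than $\lambda$ since $\lambda$ is a strong limit; summing over the $\omega$ levels (using $\cof(\lambda)=\omega$) keeps the count at $\leq\lambda$. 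Since $|Z|>\lambda$, the kernel $Z\setminus B$ is nonempty and every node on it splits cofinally, which is what feeds the recursion. Second, the two chosen extensions must remain in $T$, i.e.\ the relevant coordinates must stay of $\pi(\cdots)$-measure one so that the towers along the eventual branches really stay well-founded; this is precisely where the Tower Condition is indispensable, since it is what guarantees that an infinite branch of $T$ yields a genuine element of $Z$ and not merely a consistent sequence of finite guesses.

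Finally I would record the bookkeeping that makes the recursion legitimate inside $L(V_{\lambda+1})$: the $\lambda$-length dependent choices needed to run the scheme are supplied by $\DC_\lambda$ (Lemma \ref{lambdastructure}), and the uniform selection of witnessing approximations and measure-one sets at each node is supplied by the Coding Lemma, which plays the role of the selectors that $\AC$ would otherwise provide. Assembling the three steps, $\alpha\mapsto x_\alpha$ is a continuous injection of $2^\omega$ into $Z$, so $Z$ contains an $\mathbb{R}$-perfect set. (As noted in the footnote, once the stronger perfect set property of Theorem \ref{allperfect} is available this lemma is subsumed by it; the point of the present argument is the direct derivation straight from a $\mathbb{U}(j)$-representation.)
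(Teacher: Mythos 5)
Your reduction step is fine, and it matches the only thing this survey actually says about the proof: the survey does not prove the lemma at all, it merely records that the opening move of Woodin's argument (the auxiliary $j_0\colon V\to M_0$ with $(M_0)^{\lambda^+}\subseteq M_0$ and $A_0=\{j_0(U):U\in\ran(\pi)\}$) served only to secure the Tower Condition, which Cramer's theorem now gives for free, so one may take $M_0=V$, $j_0=\id$, $A_0=\ran(\pi)$. Your use of this is correct: $|\ran(\pi)|\leq\lambda$, so the Tower Condition applies, and with it $x\in Z$ iff there exist $y$ and a thread $f$, so that $Z=p[T]$ for the tree $T$ of triples $(x\cap V_{\lambda_m},y\cap V_{\lambda_m},f\upharpoonright m)$ and an infinite branch of $T$ genuinely certifies membership in $Z$.

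The gap is in the counting that is supposed to produce splitting, and it is fatal as written. A node of $T$ is not determined by its first two coordinates: the third coordinate $f\upharpoonright m$ ranges over $F(\pi(a,b,m))$, a measure-one set for a $\lambda^+$-complete measure concentrating on $(L(V_{\lambda+1}))^m$. Nothing bounds these sets by $\lambda$: whenever the measure is nonprincipal they have size greater than $\lambda$, and in $L(V_{\lambda+1})$ they need not be well-orderable at all. So your claim that the nodes witnessing eventual non-splitting ``are indexed by subsets of some $V_{\lambda_m}$, of which there are fewer than $\lambda$'' is false; the Cantor--Bendixson count gives only $|B|\leq|T|$, which is vacuous, and the dichotomy ``either $|Z|\leq\lambda$ or the kernel is nonempty'' does not follow. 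The same obstruction recurs in your invariant formulation: having split on the $x$-coordinate, you must descend to a \emph{single} node $(a',b',t')$ whose projection still meets more than $\lambda$ elements of $Z$, but that projection is a union over the huge family of candidate pairs $(b',t')$, and largeness of the union passes to no single member without counting that family. Nor can you evade this by dropping the thread coordinate: the tree of pairs $(a,b)$ does have at most $\lambda$ nodes, but its branches only provide level-by-level thread pieces rather than one coherent infinite thread, so they no longer certify membership in $Z$ --- that distinction is precisely what the Tower Condition is about. This is exactly where $\mathbb{U}(j)$-representability diverges from classical $\kappa$-Suslin-ness, where the weakly homogeneous tree lives on $\omega\times\kappa$ and is itself of size $\kappa$, so the Mansfield--Solovay count applies; here no small tree is available, and the hypothesis $|Z|>\lambda$ has to be exploited through the measures and embeddings themselves. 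That missing step is the real content of Woodin's Lemma 121, and it is also why the full perfect set property (Theorem \ref{allperfect}) required Cramer's inverse limit techniques rather than a descriptive-set-theoretic node count.
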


The proof of such Lemma starts by fixing an elementary embedding $j_0:V\to M_0$ such that $(M_0)^{\lambda^+}\subseteq M_0$, $A_0=\{j_0(U):U\in\ran(\pi)\}$. All of this is to prove the Tower Condition for $j_0''A_0$. Now that we know that the Tower Condition holds, we can ignore this trick and just consider $M_0=V$, $j_0=\id$ and $A_0=\ran(\pi)$.

It is also possible to improve \ref{lambdaplusrepr}:

\begin{lem}[Woodin, Corollary 148 of \cite{Woodin}]
 Suppose there is $j:L(V_{\lambda+1})\prec L(V_{\lambda+1})$ with $\crt(j)<\lambda$. Let $\kappa=\lambda^+$ and $\eta=\sup\{(\kappa^+)^{L[A]}:A\subseteq\lambda\}$. Then every set in $L_\eta(V_{\lambda+1})$ is $\mathbb{U}(j)$-representable.
\end{lem}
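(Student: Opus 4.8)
The plan is to transport the classical Martin--Solovay tree construction into the $V_{\lambda+1}$ setting, reading the definition $\eta=\sup\{(\kappa^+)^{L[A]}:A\subseteq\lambda\}$ (with $\kappa^+=\lambda^{++}$) as the exact analogue of the classical identity $u_2=\sup\{\omega_2^{L[x]}:x\in\mathbb{R}\}$ for the second uniform indiscernible. The engine is the pointclass $\Gamma$ of $\mathbb{U}(j)$-representable subsets of $V_{\lambda+1}$, and of its finite powers coded inside $V_{\lambda+1}$. By the cited closure theorem, $\Gamma$ is closed under $\lambda$-unions, continuous preimages, and projection along $V_{\lambda+1}$; and by Lemma \ref{lambdaplusrepr}---hence by the Tower Condition, now available through Cramer's \cite{Cramer}---it is also closed under complements, with $L_{\lambda^+}(V_{\lambda+1})\cap V_{\lambda+2}\subseteq\Gamma$ as the base case. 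Everything then reduces to showing that closing this base under the listed operations both stays inside $L_\eta(V_{\lambda+1})$ and exhausts it.

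First I would carry out the climb through the levels $L_\alpha(V_{\lambda+1})$ for $\lambda^+\le\alpha<\eta$ by induction on $\alpha$, using the fine-structure stratification of \ref{finestructure} relativized to $\Sigma_1$ formulas with parameters: limit stages are automatic, since each subset of $V_{\lambda+1}$ in $L_\alpha(V_{\lambda+1})$ already appears at a bounded stage, while at successor stages each new set is a projection followed by a complement of earlier sets, i.e.\ a $\mathbf{\Sigma}^2$/$\mathbf{\Pi}^2$-style operation over $V_\lambda$, and $\Gamma$ is closed under both. The non-routine point, exactly as in Martin--Solovay, is to exhibit a genuine representation of the complement with a controlled Suslin ordinal. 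Given a $\mathbb{U}(j)$-representation $\pi$ of $Z$, I would build the tree whose nodes are pairs $(s,h)$, with $s$ a finite approximation to $x\in V_\lambda$ and $h$ a descending assignment of ordinals in the ultrapower images $i_U(\kappa^+)$, $U\in\operatorname{ran}(\pi)$ and $i_U\colon L(V_{\lambda+1})\to\Ult(L(V_{\lambda+1}),U)$, recording ranks that certify ill-foundedness of every candidate tower $\langle\pi(x\cap V_{\lambda_m},y\cap V_{\lambda_m},m):m\in\omega\rangle$ simultaneously; by \ref{gaifpow} well-foundedness of this tree is equivalent to $x\notin Z$, so its projection is a representation of the complement.

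The heart of the matter is the ordinal bound, and this is the step I expect to be the main obstacle. I must show that every rank entering the tree lies below $\eta$, i.e.\ that $i_U(\kappa^+)\le(\kappa^+)^{L[A]}$ whenever $A\subseteq\lambda$ codes $U$ together with the relevant part of the representation; taking the supremum over $U\in\operatorname{ran}(\pi)$ then caps the tree's height at $\eta$, and conversely choosing $A$ with $(\kappa^+)^{L[A]}$ large reaches cofinally in $\eta$, so no level below $\eta$ is missed. This is a Kunen--Martin-style counting of the functions representing ordinals of $\Ult(L(V_{\lambda+1}),U)$ below $i_U(\kappa^+)$, but carried out without the Axiom of Choice: here $\DC_\lambda$ (Lemma \ref{lambdastructure}) and the Coding Lemma must play the role of the selectors used in the classical argument, and one must check that the relevant count is absolute to a single $L[A]$ with $A\subseteq\lambda$. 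Matching this Suslin bound to the constructibility rank---so that the representable sets fill up precisely $L_\eta(V_{\lambda+1})$---uses once more the fine-structural description of \ref{finestructure}, together with the $\omega$-strong measurability of $\lambda^+$ from Lemma \ref{measurable}, which is what makes the base ultrafilters of $\mathbb{U}(j)$ available in the first place.
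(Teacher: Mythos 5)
The paper itself does not prove this lemma; it is quoted verbatim from Woodin's \emph{Suitable extender models II} (Corollary 148), so your attempt can only be measured against the mathematics, and there it has two concrete problems. First, your pivotal inequality $i_U(\kappa^+)\le(\kappa^+)^{L[A]}$ ``whenever $A\subseteq\lambda$ codes $U$'' is both ill-posed and backwards. An ultrafilter $U\in\mathbb{U}(j)$ concentrates on subsets of some $L_\gamma(V_{\lambda+1})$, $\gamma<\Theta$; it is an object of cardinality far beyond $2^\lambda$, so no $A\subseteq\lambda$ can code it, together with a representation or otherwise. And since $\Ult(L(V_{\lambda+1}),U)$ is wellfounded, $i_U$ does not decrease ordinals, so $i_U(\kappa^+)\ge\kappa^+\ge(\kappa^+)^{L[A]}$; your inequality could hold only in the degenerate case where all three ordinals coincide. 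The classical Kunen/Martin--Solovay analysis you are transporting bounds ultrapower images of $u_1=\omega_1$, not of $\omega_2$, and its engine is a capture statement: every function relevant to the tower measures agrees, modulo the measures, with a function lying in some $L[x]$. The analogue of that capture statement (functions on $\kappa=\lambda^+$ in $L(V_{\lambda+1})$ captured by functions in some $L[A]$, $A\subseteq\lambda$) is exactly what would have to be proved here, and your sketch never formulates it, let alone proves it.

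Second, and more structurally, your level-by-level induction makes no genuine use of $\eta$, which is a sign it cannot be right. The reason Lemma \ref{lambdaplusrepr} stops at $\lambda^+$ is that only ordinals $<\lambda^+$ have codes inside $V_{\lambda+1}$ (wellorderings of $\lambda$, as in Lemma \ref{Thetasubsets}), so definability over the levels $L_\alpha(V_{\lambda+1})$ with $\alpha<\lambda^+$ reduces to projections and complements over $V_{\lambda+1}$, which the Tower Condition handles. Past $\lambda^+$ this reduction breaks: a wellordering of $\kappa$ is an element of $V_{\lambda+2}$, not of $V_{\lambda+1}$. If ``limit stages are automatic and successor stages are a projection followed by a complement of earlier sets'' were sufficient, your induction would march unimpeded through all of $L_\Theta(V_{\lambda+1})$ and settle Question \ref{representable}, which is open. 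What makes precisely the window $[\lambda^+,\eta)$ accessible is that every ordinal below $(\kappa^+)^{L[A]}$ is the order type of a canonical $L[A]$-wellordering of $\kappa$, and one must show that the induced comparison relation, transferred to codes (subsets of $V_{\lambda+1}\times V_{\lambda+1}$, coding ordinals $<\kappa$ by wellorders of $\lambda$), is itself $\mathbb{U}(j)$-representable; that is where the bound $\eta$ actually enters and where the real work of Woodin's Corollary 148 lies. Your proposal identifies the right analogy ($\eta$ as the analogue of $u_2$) and the right ingredients (Tower Condition, closure properties), but the step that makes the theorem true is missing, and the step you do state in its place is false.
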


So it is still open:

\begin{Q}
 \label{representable}
 Are all sets in $L(V_{\lambda+1})\cap V_{\lambda+2}$ $\mathbb{U}(j)$-representable?
\end{Q}

\section{Rank-into-rank and forcing}
\label{forcing}

A recurrent image for describing the difference of the extension of the universe of sets via large cardinals or via forcing is that large cardinals extend the universe vertically, while forcing extends the universe horizontally. Such image has definitively its shortcomings, first because stronger large cardinals do not always mean larger large cardinals (we will see an example of this), second because it implies that large cardinals and forcing are orthogonal, while there are cases of interdependence. It is therefore natural, facing new axioms, to ask what is their relationship with forcing. The ideal results are two:
\begin{itemize}
 \item A forcing always destructs the large cardinal axiom. This would be a strong indication that the propositions forced could be inconsistent with the large cardinal.
 \item A forcing never destructs the large cardinal axiom. This means that the proposition forced is independent from the large cardinal, and we are actually in a situation of orthogonality.
\end{itemize}

The test case has always been Cohen forcing: any large cardinal cannot be destructed by Cohen forcing, therefore the Continuum Hypotheses is independent from any large cardinal. 

The main lemma for proving the indestructibility of large cardinals via forcing is the following:

\begin{lem}[Lifting Lemma]
 Let $j:M\prec N$. If $\mathbb{P}\in M$ is a forcing notion, and $G$ is $\mathbb{P}$-generic, then for any $H$ $j(\mathbb{P})$-generic such that $j''G\subseteq H$ there exists $j^*:M[G]\prec N[H]$ such that $j^*\upharpoonright M=j$.
\end{lem}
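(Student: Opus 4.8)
The plan is to define $j^*$ by acting on $\mathbb{P}$-names. Every element of $M[G]$ has the form $\tau^G$ for some $\mathbb{P}$-name $\tau\in M$, and since $j:M\prec N$ is elementary, $j(\tau)$ is a $j(\mathbb{P})$-name in $N$; so I set $j^*(\tau^G)=j(\tau)^H$, which makes sense because $H$ is $j(\mathbb{P})$-generic. The whole argument then rests on the Forcing Theorem (the Definability and Truth Lemmas), which is available because both $M$ and $N$ satisfy $\ZF$ (as $j$ is elementary). The crucial point is that the forcing relation is \emph{definable}: the assertion ``$p\Vdash_{\mathbb{P}}\varphi(\tau_1,\dots,\tau_n)$'' is first-order in $p,\mathbb{P},\tau_1,\dots,\tau_n$ inside $M$, so that $j$ transfers it to the corresponding assertion about $j(p),j(\mathbb{P}),j(\tau_1),\dots,j(\tau_n)$ inside $N$.

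Following the pattern used in the proof of Theorem \ref{Delta0}, I would establish elementarity by proving just one implication for every formula $\varphi$, since the reverse then follows by applying the result to $\neg\varphi$. So suppose $M[G]\vDash\varphi(\tau_1^G,\dots,\tau_n^G)$. By the Truth Lemma there is $p\in G$ with $M\vDash p\Vdash_{\mathbb{P}}\varphi(\tau_1,\dots,\tau_n)$. By elementarity $N\vDash j(p)\Vdash_{j(\mathbb{P})}\varphi(j(\tau_1),\dots,j(\tau_n))$. Now the hypothesis $j''G\subseteq H$ enters decisively: since $p\in G$ we have $j(p)\in H$, so the Truth Lemma in $N$ gives $N[H]\vDash\varphi(j(\tau_1)^H,\dots,j(\tau_n)^H)$, that is, $N[H]\vDash\varphi(j^*(\tau_1^G),\dots,j^*(\tau_n^G))$. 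Taking $\varphi$ to be the atomic formula $v_0=v_1$ shows at once that $j^*$ is well defined (and, with $v_0\neq v_1$, injective), so no separate well-definedness check is needed.

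It remains to verify that $j^*\upharpoonright M=j$, which I would do through the canonical check names: for $a\in M$ one has $\check{a}^G=a$, and since the map $a\mapsto\check{a}$ is definable, elementarity yields $j(\check{a})=\check{j(a)}$, whence $j^*(a)=j(\check{a})^H=\check{j(a)}^H=j(a)$. The main obstacle, and the only place demanding genuine care, is the interplay between the two distinct forcing relations $\Vdash^M_{\mathbb{P}}$ and $\Vdash^N_{j(\mathbb{P})}$: everything hinges on the Definability Lemma guaranteeing that ``$p$ forces $\varphi$'' is uniformly first-order expressible, for only then does the elementarity of $j$ let us pass from a forcing fact in $M$ to the corresponding forcing fact in $N$. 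Once this is in place, the remainder is routine bookkeeping.
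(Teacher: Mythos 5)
Your proof is correct and follows essentially the same route as the paper's: define $j^*$ on names by $j^*(\tau_G)=(j(\tau))_H$, use the definability of the forcing relation together with elementarity and $j''G\subseteq H$, and obtain the reverse implication by passing to $\neg\varphi$. Your additional checks (well-definedness via atomic formulas and $j^*\upharpoonright M=j$ via check names) are details the paper leaves implicit, so they only make the argument more complete.
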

\begin{proof}
 The key point is that the forcing relation is definable. For any $\mathbb{P}$-name $\tau$, define $j^*(\tau_G)=(j(\tau))_H$. Then $M[G]\vDash\varphi(\tau_G)$ iff there exists $p\in G$ such that $p\vDash\varphi(\tau)$. So by elementarity $j(p)\vDash\varphi(j(\tau))$. But $j(p)\in H$, i.e., $N[H]\vDash\varphi((j(\tau))_H)$, i.e., $N[H]\vDash\varphi(j^*(\tau_G))$. The opposite direction holds considering $\neg\varphi$.
\end{proof}

This gives immediately a result of lifting: Let $j:V_\lambda\prec V_\lambda$, $\mathbb{P}\in V_{\crt(j)}$. Then $j(\mathbb{P})=\mathbb{P}$, so $G=H$ in the Lifting Lemma works, as $j''G=G$. Therefore $j$ extends to $j^*:V_\lambda[G]\prec V_\lambda[G]$. But this is still not $V[G]\vDash I3(\lambda)$, because that is $\exists k:V[G]_\lambda\prec V[G]_\lambda$, where $V[G]_\lambda=(V_\lambda)^{V[G]}$. The difference is subtle, but fundamental: $V_\lambda[G]$ is the set of $\tau_G$ such that $\tau\in V_\lambda$, $V[G]_\lambda$ is the set of $\tau_G$ with rank of $\tau_G$ less than $\lambda$ in $V[G]$. As the rank of the interpretation of a name is always less or equal than the rank of the name, clearly $V_\lambda[G]\subseteq V[G]_\lambda$, but it is not immediate to see whether the opposite direction is true. This is solved by the nice names lemma:

\begin{lem}[Name Rank Lemma]
 If $\tau$ is a $\mathbb{P}$-name, $\mathbb{P}\in V_{\gamma+1}$ and $\tau_G\in V[G]_\beta$ for any $G$ $\mathbb{P}$-generic, then there is a name $\sigma\in V_{\gamma+3\cdot\beta}$ such that $\sigma_G=\tau_G$ for any $G$ $\mathbb{P}$-generic.
\end{lem}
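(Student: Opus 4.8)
The plan is to reformulate the hypothesis and then argue by induction on $\beta$. Since $\tau_G\in V[G]_\beta$ holds for \emph{every} $\mathbb{P}$-generic $G$, the forcing theorem (definability of $\Vdash$, exactly as used in the Lifting Lemma) lets me rephrase it as $\mathbb{1}\Vdash\mathrm{rank}(\tau)<\check\beta$. I would then prove, by induction on $\beta$, the statement: whenever $\mathbb{P}\subseteq V_\gamma$ (which is what $\mathbb{P}\in V_{\gamma+1}$ gives) and $\mathbb{1}\Vdash\mathrm{rank}(\tau)<\check\beta$, there is a name $\sigma\in V_{\gamma+3\cdot\beta}$ with $\mathbb{1}\Vdash\sigma=\tau$.

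The candidate name is
\begin{equation*}
 \sigma=\{(\mu,p):p\in\mathbb{P},\ \mu\text{ a }\mathbb{P}\text{-name of sufficiently small rank},\ p\Vdash\mu\in\tau\},
\end{equation*}
which is a genuine set defined in $V$ precisely because $\Vdash$ is definable and the eligible $\mu$'s range over a set. The inclusion $\sigma_G\subseteq\tau_G$ is immediate: if $(\mu,p)\in\sigma$ and $p\in G$ then $p\Vdash\mu\in\tau$ gives $\mu_G\in\tau_G$.

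The content is the reverse inclusion, for which I must witness every $z\in\tau_G$ by a low-rank name. Fix such a $z$; then $\mathrm{rank}(z)+1\le\mathrm{rank}(\tau_G)<\beta$, so with $\alpha=\mathrm{rank}^{V[G]}(z)$ one has $\alpha+1<\beta$. Pick $(\rho,q)\in\tau$ with $q\in G$ and $\rho_G=z$, and then $p\le q$ in $G$ deciding $\mathrm{rank}(\rho)=\check\alpha$. Now comes the key manoeuvre, which is what makes the induction go through: the relation $p\Vdash\mathrm{rank}(\rho)\le\check\alpha$ is only \emph{local} (below $p$), so I cannot feed $\rho$ directly to the induction hypothesis. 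Instead I relativise $\rho$ to $p$, setting
\begin{equation*}
 \rho^{(p)}=\{(\nu,r):\exists s\,((\nu,s)\in\rho),\ r\le p,\ r\le s\}.
\end{equation*}
Using that $G$ is a filter and upward closed, one checks $\rho^{(p)}_G=\rho_G$ whenever $p\in G$, and $\rho^{(p)}_G=\emptyset$ otherwise; hence $\mathbb{1}\Vdash\mathrm{rank}(\rho^{(p)})\le\check\alpha$ \emph{globally}, while $p\Vdash\rho^{(p)}=\rho$. Since $\alpha+1<\beta$, the induction hypothesis applied to $\rho^{(p)}$ yields $\mu\in V_{\gamma+3\cdot(\alpha+1)}$ with $\mathbb{1}\Vdash\mu=\rho^{(p)}$; then $p\Vdash\mu=\rho\in\tau$ and $\mu_G=z$, so $(\mu,p)\in\sigma$ and $z\in\sigma_G$. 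As this works for all $G$, the forcing theorem gives $\mathbb{1}\Vdash\sigma=\tau$.

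Finally I would carry out the rank bookkeeping confirming $\sigma\in V_{\gamma+3\cdot\beta}$: the eligible $\mu$'s have rank below $\gamma+3\cdot\alpha'$ for various $\alpha'<\beta$, so the supremum of their ranks is at most $\gamma+3\cdot\beta$; each Kuratowski pair $(\mu,p)$ costs two further ranks and collecting the pairs one more, and it is exactly this ``$+3$ per level of $\in$-nesting'' that the factor $3$ in $3\cdot\beta$ is designed to absorb (one adjusts the precise level cut in the definition of $\sigma$ to land inside $V_{\gamma+3\cdot\beta}$ rather than just above it). \textbf{The main obstacle} is precisely the passage from a locally forced rank bound to a globally forced one: without the relativisation $\rho^{(p)}$ the induction hypothesis is simply inapplicable, and arranging that the relativised interpretation agrees with $\rho_G$ below $p$ yet collapses to $\emptyset$ elsewhere—so that its rank is controlled on \emph{all} generics—is the delicate point. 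The remaining ingredients, namely density of the conditions deciding ranks and the definability of $\Vdash$, are routine.
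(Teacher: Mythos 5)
The paper states the Name Rank Lemma without proof, so there is no argument of the author's to compare against; the right benchmark is the standard folklore construction, and that is essentially what you give. The core of your proof is correct: recasting the hypothesis as $\mathbb{1}\Vdash\mathrm{rank}(\tau)<\check\beta$, building $\sigma$ from pairs $(\mu,p)$ with $p\Vdash\mu\in\tau$, and above all the relativisation $\rho^{(p)}$, whose interpretation equals $\rho_G$ when $p\in G$ and $\emptyset$ otherwise, so that a rank bound forced only by $p$ becomes one forced by $\mathbb{1}$ and the induction hypothesis applies. You are right that this is the one non-routine step, and you execute it correctly.

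The genuine gap is the parenthetical claim at the end that one can "adjust the precise level cut" so as to land strictly inside $V_{\gamma+3\cdot\beta}$. Reading $3\cdot\beta$ as ordinal multiplication (so $3\cdot\omega=\omega$), at a limit $\beta$ your eligible names lie in $V_{\gamma+3\cdot\alpha'}$ for $\alpha'<\beta$ and the pairs add finitely many ranks, so you only get $\mathrm{rank}(\sigma)\le\sup_{\alpha'<\beta}\bigl(\gamma+3\cdot\alpha'+2\bigr)=\gamma+3\cdot\beta$, with equality possible; and in fact no adjustment can avoid this, because the strict bound in the statement is false at some limits. Concretely, let $\mathbb{P}$ be Cohen forcing (so $\gamma=\omega$), $\beta=\omega^2$, and let $\tau$ be a name (built by mixing over the maximal antichain of conditions deciding $n_G$, the least $n$ with $G(n)=1$) such that $\tau_G=V[G]_{\omega\cdot n_G}$ for every generic $G$; then $\tau_G\in V[G]_{\omega^2}$ always. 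But the same "$+3$ per level of $\in$-nesting" computation you use for the upper bound gives, by induction on $\delta$, that any name whose interpretation in some generic is $V[G]_\delta$ has rank at least $3\cdot\delta$; hence any single $\sigma$ working for all generics has $\mathrm{rank}(\sigma)\ge\sup_n 3\cdot(\omega\cdot n)=\omega^2=\omega+3\cdot\omega^2=\gamma+3\cdot\beta$, so $\sigma\notin V_{\gamma+3\cdot\beta}$. The repair is to claim only what your construction actually proves, namely $\mathrm{rank}(\sigma)\le\gamma+3\cdot\beta$, i.e.\ $\sigma\in V_{\gamma+3\cdot\beta+1}$. This weakening is invisible in the paper's sole application of the lemma: there $\gamma<\kappa_0$ and $\beta=\kappa_n$, so $\gamma+3\cdot\kappa_n+1=\kappa_n+1<\lambda$, the name still lies in $V_\lambda$, and $V_\lambda[G]=V[G]_\lambda$ follows exactly as before.
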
  

In particular, if $\mathbb{P}\in V_{\crt(j)}$, any element of $V[G]_\lambda$ will be in some $V[G]_{\kappa_n}$, and so it will have a name in $V_{3\cdot\kappa_n}\subseteq V_\lambda$, therefore $V_\lambda[G]=V[G]_\lambda$. 

This is peculiar to rank-into-rank axioms: the indestructibility proof must pass two phases: first we have to extend the embedding, then we have to make sure that the domain of the embedding is exactly what we want.

By the discussion before Proposition \ref{verylarge}, $\crt(j)$ can be as large as we want below $\lambda$, therefore we have the following:

\begin{lem}
 Suppose I3($\lambda$), let $\mathbb{P}\in V_\lambda$ be a forcing notion and $G$ be $\mathbb{P}$-generic. Then $V[G]\vDash I3(\lambda)$, So I3 is not destructed by ``small'' forcings.
\end{lem}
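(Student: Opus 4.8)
The plan is to combine the two preparatory lemmas with the fact, established in ``the discussion before Proposition \ref{verylarge}'', that the critical points of an I3 embedding are unbounded in $\lambda$. First I would exploit that $\mathbb{P}\in V_\lambda$ and $\lambda=\sup_n\kappa_n$ to fix an $n$ with $\mathbb{P}\in V_{\kappa_n}$; the application iterate $j^n$ satisfies $\crt(j^n)=\kappa_n$, so setting $j'=j^n:V_\lambda\prec V_\lambda$ we obtain an I3 embedding with $\mathbb{P}\in V_{\crt(j')}$. The whole point of passing to an iterate is precisely that the original $\crt(j)$ might be too small to contain $\mathbb{P}$, whereas the interchangeability of the critical points lets us push the critical point above the rank of $\mathbb{P}$ while keeping the same $\lambda$.

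The second step is to lift $j'$ across the forcing. Since $\mathbb{P}\in V_{\crt(j')}$, every condition is fixed by $j'$, whence $j'(\mathbb{P})=\mathbb{P}$ and $j''G=G$. Thus taking $H=G$ meets the hypotheses of the Lifting Lemma (applied with $M=N=V_\lambda$), and I get $j'^*:V_\lambda[G]\prec V_\lambda[G]$ extending $j'$, exactly as in the lifting observation following the Name Rank Lemma.

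The decisive third step is to identify the domain $V_\lambda[G]$ with $(V_\lambda)^{V[G]}$, so that $j'^*$ genuinely witnesses $I3(\lambda)$ in the extension rather than acting merely on an inner model of the correct $V_\lambda$. The inclusion $V_\lambda[G]\subseteq(V_\lambda)^{V[G]}$ is immediate from $\mathrm{rank}(\tau_G)\le\mathrm{rank}(\tau)$. For the converse I would take $x\in(V_\lambda)^{V[G]}$, fix $\beta<\lambda$ with $x\in V[G]_\beta$, and apply the Name Rank Lemma with $\gamma$ chosen so that $\mathbb{P}\in V_{\gamma+1}$ and $\gamma<\crt(j')<\lambda$: this yields a name $\sigma\in V_{\gamma+3\cdot\beta}$ with $\sigma_G=x$. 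Because $\lambda$ is a strong limit cardinal, hence closed under ordinal arithmetic below it, $\gamma+3\cdot\beta<\lambda$, so $\sigma\in V_\lambda$ and $x\in V_\lambda[G]$. This gives $V_\lambda[G]=(V_\lambda)^{V[G]}$, and therefore $j'^*:(V_\lambda)^{V[G]}\prec(V_\lambda)^{V[G]}$ is the desired embedding.

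I do not expect a genuine obstacle, since the substantive content is packaged inside the two cited lemmas; the only point demanding care is the bookkeeping in the third step, namely verifying that the rank bound $\gamma+3\cdot\beta$ really stays below $\lambda$ and that the two phases (extending the embedding, then pinning down its domain) are kept conceptually separate. This two-phase structure is, as the preceding discussion stresses, the feature specific to rank-into-rank axioms, and it is the part I would present most explicitly.
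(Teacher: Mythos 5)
Your proposal is correct and follows essentially the same route as the paper: pass to an application iterate $j^n$ so that $\mathbb{P}\in V_{\crt(j^n)}$, lift via the Lifting Lemma with $H=G$, and then use the Name Rank Lemma to verify $V_\lambda[G]=(V_\lambda)^{V[G]}$. The only difference is cosmetic bookkeeping in the last step (the paper bounds names inside $V_{3\cdot\kappa_n}\subseteq V_\lambda$ for suitable $\kappa_n$, while you bound $\gamma+3\cdot\beta<\lambda$ using that $\lambda$ is a cardinal), which amounts to the same verification.
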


The Lifting Lemma holds easily also for $j:V_{\lambda+1}\prec V_{\lambda+1}$ and $\mathbb{P}\in V_{\crt(j)}$, so there exists $j^*:V_{\lambda+1}[G]\prec V_{\lambda+1}[G]$. We have to prove that $V_{\lambda+1}[G]=V[G]_{\lambda+1}$. If $\tau_G\in V[G]_{\lambda+1}$, then $\tau_G\subseteq V[G]_\lambda$, so $\tau_G=\bigcup_{n\in\omega}(\tau_G\cap V[G]_{\kappa_n})$. As $V_\lambda[G]=V[G]_\lambda$, any $\tau_G\cap V[G]_{\kappa_n}$ has a name in $V_\lambda$. The name of a union of sets is the union of the names of the sets, therefore there exists a name for $\tau_G$ that is the $\omega$-union of sets in $V_\lambda$, so it must be in $V_{\lambda+1}$. 

Again, $\crt(j)$ can be unbounded below $\lambda$, so:

\begin{lem}
 Suppose I1($\lambda$) (or anything slightly weaker, like I2 or $\Sigma^1_n$), let $\mathbb{P}\in V_\lambda$ be a forcing notion and $G$ be $\mathbb{P}$-generic. Then $V[G]\vDash I1(\lambda)$ (or the corresponding weaker axiom). So I1 is not destructed by ``small'' forcings.
\end{lem}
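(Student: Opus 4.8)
The plan is to follow the two-phase strategy that the preceding paragraphs have set up for rank-into-rank axioms: first lift the embedding, then verify that the domain of the lift is exactly the $V_{\lambda+1}$ computed in the generic extension. I would start with an embedding $j:V_{\lambda+1}\prec V_{\lambda+1}$ witnessing I1$(\lambda)$. Since $\mathbb{P}\in V_\lambda$ and, by the discussion before Proposition \ref{verylarge}, the critical point may be taken as large as we like below $\lambda$ (replacing $j$ by an iterate $j^n$, whose critical point is $\kappa_n$), I would first arrange $\mathbb{P}\in V_{\crt(j)}$. Because $\mathbb{P}$ then has rank below $\crt(j)$, both $j(\mathbb{P})=\mathbb{P}$ and $j''G=G$ hold, so the hypothesis $j''G\subseteq H$ of the Lifting Lemma is satisfied by taking $H=G$.

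With this in place, the Lifting Lemma yields $j^*:V_{\lambda+1}[G]\prec V_{\lambda+1}[G]$ extending $j$, where $V_{\lambda+1}[G]$ denotes the interpretations $\tau_G$ of names $\tau\in V_{\lambda+1}$. The crucial second phase is to show $V_{\lambda+1}[G]=V[G]_{\lambda+1}$, i.e.\ that this coincides with the genuine $(V_{\lambda+1})^{V[G]}$. The inclusion $\subseteq$ is immediate, since the rank of an interpretation never exceeds the rank of its name. For the reverse inclusion I would reuse the identity $V_\lambda[G]=V[G]_\lambda$ already established for the I3 case via the Name Rank Lemma: given $\tau_G\in V[G]_{\lambda+1}$ we have $\tau_G\subseteq V[G]_\lambda$, hence $\tau_G=\bigcup_{n\in\omega}(\tau_G\cap V[G]_{\kappa_n})$; each piece lies in $V[G]_{\kappa_n+1}\subseteq V[G]_\lambda=V_\lambda[G]$ and so has a name in $V_\lambda$, and the union of these names is an $\omega$-union of elements of $V_\lambda$, which therefore lands in $V_{\lambda+1}$.

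Having shown $V_{\lambda+1}[G]=V[G]_{\lambda+1}$, the lift $j^*$ is an elementary self-embedding of $(V_{\lambda+1})^{V[G]}$, which is exactly I1$(\lambda)$ holding in $V[G]$. For the weaker axioms the same two-phase argument applies: for I2 one lifts $j:V\prec M$ and performs the analogous domain computation, while for a $\Sigma^1_n$-embedding one checks that the restriction $j^*\upharpoonright V_\lambda[G]$ is the $V[G]$-version of the original and retains the required degree of elementarity, using that the tree representations (as in Lemma \ref{SatisfactionTree1}) witnessing $\Sigma^1_n$-satisfaction are absolute between the structures involved.

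The step I expect to be the main obstacle is the domain identification $V_{\lambda+1}[G]=V[G]_{\lambda+1}$. A priori the lift supplied by the Lifting Lemma acts only on interpretations of names drawn from $V_{\lambda+1}$, and it is not formally obvious that every genuine subset of $V[G]_\lambda$ in the extension arises this way. This is precisely where the Name Rank Lemma does the real work, bounding the rank of a name by a function of the rank of the forced set so that names stay inside $V_{\lambda+1}$; once that is in hand, everything else is bookkeeping already carried out at the I3 level.
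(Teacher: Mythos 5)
Your proposal is correct and follows essentially the same route as the paper: lift via the Lifting Lemma after arranging $\mathbb{P}\in V_{\crt(j)}$ (using an iterate of $j$ so the critical point is large enough), then identify $V_{\lambda+1}[G]=V[G]_{\lambda+1}$ by writing $\tau_G=\bigcup_{n\in\omega}(\tau_G\cap V[G]_{\kappa_n})$, invoking $V_\lambda[G]=V[G]_\lambda$ from the I3 case, and taking the $\omega$-union of names inside $V_{\lambda+1}$. The paper's own argument is exactly this, so there is nothing to add.
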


Finally we consider I0 and $\mathbb{P}\in V_{\crt(j)}$. Again, the Lifting Lemma holds, so there exists $j^*:L(V_{\lambda+1})[G]\prec L(V_{\lambda+1})[G]$ with $\crt(j^*)<\lambda$. This time we cannot always prove that $L(V_{\lambda+1})[G]=L(V[G]_{\lambda+1})$, but it actually does not matter: as a constructible set will have a constructible name, we have $L(V[G]_{\lambda+1})\subseteq L(V_{\lambda+1})[G]$. Also, $L(V[G]_{\lambda+1})$ is a definable (with parameter $\lambda$) subclass of $L(V_{\lambda+1})[G]$, therefore 
\begin{equation*}
 j^*\upharpoonright L(V[G]_{\lambda+1}):L(V[G]_{\lambda+1})\prec L(V[G]_{\lambda+1})
\end{equation*}
and we have I0($\lambda$) in the extension. 

Can we say that $\crt(j)$ can be unbounded below $\lambda$? For this, we have to prove that a $j$ that witnesses I0 is at least finitely iterable. If $j$ is not weakly proper, then it is not clear whether this is possible. But if $j$ is weakly proper, then it is:

\begin{lem}[Woodin, Lemma 16 of \cite{Woodin}]
\label{FinIterable}
 Let $j:L(V_{\lambda+1})\prec L(V_{\lambda+1})$ with $\crt(j)<\lambda$ and weakly proper. Then $j$ is finitely iterable.
\end{lem}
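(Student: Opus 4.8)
The plan is to realise every finite iterate as $L(V_{\lambda+1})$ itself, so that well-foundedness is automatic, and thereby reduce the statement to the single claim that the application $g\mapsto g\cdot g$ sends a weakly proper embedding to a weakly proper embedding. Since $\crt(j)<\lambda$ forces $j(V_{\lambda+1})=V_{\lambda+1}$, and $L(V_{\lambda+1})\vDash V=L(V_{\lambda+1})$, we have $M_{1}=j(M_{0})=L(V_{\lambda+1})$; iterating, every finite iterate $M_{n}$ is again the transitive class $L(V_{\lambda+1})$, hence trivially well-founded. Thus the only content of ``$j$ is finitely iterable'' is that the embeddings $j^{n}$, defined by $j^{0}=j$ and $j^{n+1}=j^{n}(j^{n})=j^{n}\cdot j^{n}$, all exist as elementary embeddings of $L(V_{\lambda+1})$ into itself. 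I will prove by induction that each $j^{n}$ is \emph{weakly proper} with $\crt(j^{n})=\kappa_{n}$; this both produces the next iterate and feeds the induction.

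The heart of the matter is the inductive step: defining $g\cdot g$ for a weakly proper $g$ with $\crt(g)=\kappa<\lambda$. Write $e=g\upharpoonright V_\lambda\in V_{\lambda+1}$. By Corollary \ref{FixedpointsbelowTheta} the fixed points of $g$ below $\Theta$ are unbounded, and the good ordinals are unbounded in $\Theta$, so there is a cofinal set of good $\alpha<\Theta$ with $g(\alpha)=\alpha$. For such $\alpha$ the map $g\upharpoonright L_\alpha(V_{\lambda+1})\colon L_\alpha(V_{\lambda+1})\prec L_\alpha(V_{\lambda+1})$ has critical point $<\lambda$, so by Lemma \ref{SimplyGood} it is induced by $e$ and, crucially, $g\upharpoonright L_\alpha(V_{\lambda+1})\in L_{\alpha+1}(V_{\lambda+1})\subseteq L(V_{\lambda+1})$. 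This is exactly where weak properness enters: it places the relevant bounded restrictions of $g$ \emph{inside} the domain of $g$, so that $g$ may be applied to them. Setting $k_\alpha:=g\bigl(g\upharpoonright L_\alpha(V_{\lambda+1})\bigr)$, elementarity (with parameter $\alpha$, since ``$x$ is an embedding of $L_\alpha(V_{\lambda+1})$ with critical point $<\lambda$ induced by its $V_\lambda$-part'' is expressible) gives $k_\alpha\colon L_\alpha(V_{\lambda+1})\prec L_\alpha(V_{\lambda+1})$ with $\crt(k_\alpha)=g(\kappa)$ and $k_\alpha\upharpoonright V_\lambda=g(e)$.

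These maps cohere: for good fixed points $\alpha<\beta$, using $g(\alpha)=\alpha$ and that $g$ commutes with restriction, $k_\beta\upharpoonright L_\alpha(V_{\lambda+1})=g\bigl(g\upharpoonright L_\alpha(V_{\lambda+1})\bigr)=k_\alpha$. Their union is an embedding $\ell\colon L_\Theta(V_{\lambda+1})\prec L_\Theta(V_{\lambda+1})$ with $\ell\upharpoonright V_\lambda=g(e)$ and $\crt(\ell)=g(\kappa)$. By Lemma \ref{Thetasubsets}(2) every $X\in\mathcal P(V_{\lambda+1})\cap L(V_{\lambda+1})$ lies in some $L_{\alpha}(V_{\lambda+1})$ with $\alpha<\Theta$, so $\ell(X)$ is defined and
\begin{equation*}
U=\{X\in\mathcal P(V_{\lambda+1})\cap L(V_{\lambda+1}):\ g(e)\in \ell(X)\}
\end{equation*}
is a well-defined $L(V_{\lambda+1})$-ultrafilter. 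The equivalence of I0 with the existence of an embedding of $L_\Theta(V_{\lambda+1})$, together with the decomposition of Theorem \ref{allproper}, furnishes the weakly proper embedding $j_{U}$ extending $\ell$, and Lemma \ref{onlyfromVlambda} identifies its $\Theta$-part: $j_{U}\upharpoonright L_\Theta(V_{\lambda+1})=\ell$, with $j_{U}\upharpoonright V_\lambda=g(e)$. We set $g\cdot g:=j_{U}$, a weakly proper embedding with $\crt(g\cdot g)=g(\kappa)$. Applying this with $g=j^{n}$ (so $g(\kappa)=\kappa_{n+1}$) completes the induction: every $j^{n}$ exists and each $M_{n}=L(V_{\lambda+1})$ is well-founded, so $j$ is finitely iterable.

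The main obstacle is the definability issue isolated in the second paragraph: to form $j^{n}(j^{n})$ one must plug $j^{n}$ into itself, yet $j^{n}\notin L(V_{\lambda+1})$. Weak properness is what makes the self-application meaningful, because it forces $j^{n}$ to be determined by its $V_\lambda$-part and, via Lemma \ref{SimplyGood} at good fixed points, its bounded restrictions to be genuine \emph{elements} of $L(V_{\lambda+1})$; the self-application can then be computed level by level below $\Theta$ and lifted to all of $L(V_{\lambda+1})$ by an ultrapower. For a non-weakly-proper embedding the portion above $\Theta$ is not pinned down by the $V_\lambda$-part, and it is unclear that any coherent self-application exists — which is precisely why the hypothesis of weak properness cannot simply be dropped.
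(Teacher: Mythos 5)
Your plan is genuinely different from the paper's (the paper never descends below $\Theta$: it hulls with finitely many fixed points lying \emph{above} $\Theta$), but as written it has two real gaps. The first is the claim that the union $\ell=\bigcup_\alpha k_\alpha$ is elementary from $L_\Theta(V_{\lambda+1})$ to itself. You have coherent elementary self-embeddings of the levels $L_\alpha(V_{\lambda+1})$ for $\alpha$ good and fixed, but nothing in goodness or fixedness gives $L_\alpha(V_{\lambda+1})\prec L_\Theta(V_{\lambda+1})$, and a union of elementary self-maps of a non-elementary chain need not be elementary on the union: satisfaction in $L_\Theta(V_{\lambda+1})$ is not absolute to the levels. (This is repairable, but only by invoking the $\gamma$'s produced in the proof of Theorem \ref{manymeas}, which are good, fixed by the embedding, and satisfy $L_\gamma(V_{\lambda+1})\prec L_\Theta(V_{\lambda+1})$ --- a far heavier tool.) Note also that your opening step, ``fixed points are unbounded and good ordinals are unbounded, so good fixed points are unbounded,'' is a non sequitur as stated: two unbounded subsets of $\Theta$ can be disjoint. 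The statement is true, but needs an argument (e.g.\ that goodness is preserved both ways by elementarity, so the embedding commutes with the increasing enumeration of the good ordinals).

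The second and more serious gap is the passage from $\ell$ to a weakly proper embedding of all of $L(V_{\lambda+1})$. You outsource this to ``the equivalence of I0 with the existence of an embedding of $L_\Theta(V_{\lambda+1})$,'' which the paper asserts in passing but never proves, and which carries essentially the same difficulty as the lemma you are proving. The paper's tools cannot substitute for it: \L os' Theorem and the well-foundedness of $\Ult(L(V_{\lambda+1}),U_j)$ are both proved there \emph{using} an ambient full embedding $j$ (well-foundedness comes for free from the internal representation $\{j(f)(j\upharpoonright V_\lambda)\}\subseteq L(V_{\lambda+1})$), and with only $\ell$ in hand well-foundedness is a genuine problem, since the relevant countable completeness involves external $\omega$-sequences of elements of $L_\Theta(V_{\lambda+1})\cap V_{\lambda+2}$, which $L(V_{\lambda+1})$ need not contain. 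Likewise Theorem \ref{allproper} and Lemma \ref{onlyfromVlambda} are statements about embeddings already defined on all of $L(V_{\lambda+1})$, so they cannot be applied to the pair $(\ell,j_U)$ to conclude $j_U\upharpoonright L_\Theta(V_{\lambda+1})=\ell$. The paper's proof avoids both problems at once: it sets $Z_s=Hull^{L(V_{\lambda+1})}(V_{\lambda+1}\cup\{V_{\lambda+1}\}\cup s)$ for finite sets $s$ of fixed points, so that each $j\upharpoonright Z_s$ is definable from $j\upharpoonright V_\lambda$ and $s$ and hence lies in $L(V_{\lambda+1})$, each $Z_s$ is an \emph{elementary} substructure of $L(V_{\lambda+1})$ (so the direct limit of the $j(j\upharpoonright Z_s)$ is automatically elementary), and the $Z_s$ union up to all of $L(V_{\lambda+1})$ (so no ultrapower extension is needed). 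Finally, your diagnosis of where weak properness enters is off: Lemma \ref{SimplyGood} needs only goodness; what weak properness actually buys, via the ultrapower analysis in the proof of Lemma \ref{onlyfromVlambda}, is the proper class of fixed points from which the paper's hulls are built.
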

\begin{proof}
 Let $I$ be the class of fixed points of $\xi$ (it is a class by the proof of \ref{onlyfromVlambda}). For each $s\in[I]^{<\omega}$, let $Z_s=Hull^{L(V_{\lambda+1})}(V_{\lambda+1}\cup\{V_{\lambda+1}\}\cup s)$. Then $\langle Z_s:s\in[I]^{<\omega}\rangle$ is a directed system. Let $Z$ be its limit. Then $Z\prec L(V_{\lambda+1})$. 

For each $s\in [I]^{<\omega}$, $j\upharpoonright Z_s$ is definable from $\{j\upharpoonright V_\lambda,s\}\in L(V_{\lambda+1})$, therefore $k_s=j(j\upharpoonright Z_s)$ is well-defined. As $j(Z_s)=Z_s$, we have that $k_s:Z_s\prec Z_s$. Then we can define $k^*:Z\prec Z$, its direct limit. Composing with the collapse of $Z$, we have $k:L(V_{\lambda+1})\prec L(V_{\lambda+1})$, and $k\upharpoonright V_\lambda=j(j\upharpoonright V_\lambda)$. If $k$ is not weakly proper, then we consider $k_U$ its weakly proper part, and we call it still $k$. 

This proves the existence of $j^2$. As $j^2$ is weakly proper, we can prove by induction that $j^n$ exists and is weakly proper for every $n\in\omega$.
\end{proof}

In particular $j^n\upharpoonright V_\lambda=(j\upharpoonright V_\lambda)^n$, therefore $\crt(j^n)=\kappa_n$, and so:

\begin{lem}
\label{nosmallforcing}
 Suppose I0($\lambda$), let $\mathbb{P}\in V_\lambda$ be a forcing notion and $G$ be $\mathbb{P}$-generic. Then $V[G]\vDash I0(\lambda)$, So I0 is not destructed by ``small'' forcings.
\end{lem}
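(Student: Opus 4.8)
The plan is to reduce the statement to the case already handled in the discussion preceding Lemma \ref{FinIterable}, where the forcing lives strictly below the critical point, by replacing $j$ with a suitable finite iterate whose critical point exceeds the rank of $\mathbb{P}$. First I would fix a witness $j:L(V_{\lambda+1})\prec L(V_{\lambda+1})$ for I0($\lambda$) and, using Theorem \ref{allproper}, assume without loss of generality that $j$ is weakly proper (replacing $j$ by $j_U$ if necessary; by Theorem \ref{allproper} this does not change its behaviour on $V_\lambda$, and since $\crt(j_U)=\crt(j)<\lambda$ it still witnesses I0). By Lemma \ref{FinIterable} such a $j$ is finitely iterable, so the iterates $j^n$ exist and are again weakly proper I0-embeddings; moreover $j^n\upharpoonright V_\lambda=(j\upharpoonright V_\lambda)^n$, so $\crt(j^n)=\kappa_n$, and the critical points of the iterates are cofinal in $\lambda$.

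Next, given a forcing notion $\mathbb{P}\in V_\lambda$, I would choose $n\in\omega$ large enough that $\mathbb{P}\in V_{\kappa_n}=V_{\crt(j^n)}$. Since every condition $p\in\mathbb{P}$ lies below the critical point of $j^n$, we have $j^n(\mathbb{P})=\mathbb{P}$ and $(j^n)''G=G$ for any $\mathbb{P}$-generic $G$; hence the hypothesis $j''G\subseteq H$ of the Lifting Lemma is met by the choice $H=G$. Applying the Lifting Lemma to $j^n$ therefore produces an elementary embedding $(j^n)^*:L(V_{\lambda+1})[G]\prec L(V_{\lambda+1})[G]$ extending $j^n$, with $\crt((j^n)^*)=\kappa_n<\lambda$.

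It then remains to cut this embedding down to the correct domain, exactly as in the discussion before Lemma \ref{FinIterable}. Since any set constructible from $V[G]_{\lambda+1}$ has a name constructible from $V_{\lambda+1}$, we have $L(V[G]_{\lambda+1})\subseteq L(V_{\lambda+1})[G]$, and $L(V[G]_{\lambda+1})$ is a subclass definable in $L(V_{\lambda+1})[G]$ with parameter $\lambda$; as $(j^n)^*(\lambda)=\lambda$, the restriction $(j^n)^*\upharpoonright L(V[G]_{\lambda+1})$ maps $L(V[G]_{\lambda+1})$ elementarily into itself, with critical point $\kappa_n<\lambda$. This is precisely a witness to I0($\lambda$) computed in $V[G]$, so $V[G]\vDash I0(\lambda)$.

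The only genuinely non-routine ingredient is the passage to an iterate with critical point above the rank of $\mathbb{P}$: for I3 and I1 this is immediate from the application operator, but for I0 it rests on finite iterability, which is exactly what Lemma \ref{FinIterable} supplies (and which in turn forces us to work with a weakly proper embedding, where the argument of Lemma \ref{onlyfromVlambda} guarantees the proper class of fixed points on which the directed system of that lemma is built).
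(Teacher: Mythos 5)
Your proposal is correct and takes essentially the same approach as the paper: pass to the weakly proper part of $j$ via Theorem \ref{allproper}, use Lemma \ref{FinIterable} to obtain a finite iterate $j^n$ with $\crt(j^n)=\kappa_n$ above the rank of $\mathbb{P}$, lift $j^n$ through the Lifting Lemma with $H=G$, and then restrict the lifted embedding to the definable (with parameter $\lambda$) subclass $L(V[G]_{\lambda+1})\subseteq L(V_{\lambda+1})[G]$. This is exactly the argument the paper assembles in the discussion surrounding Lemma \ref{FinIterable}, so there is nothing to add.
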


The reason why we do not know whether $L(V_{\lambda+1})[G]=L(V[G]_{\lambda+1})$ is quite deep:

\begin{teo}[Woodin, Theorem 175 in \cite{Woodin}]
 Suppose $j:L(V_{\lambda+1})\prec L(V_{\lambda+1})$ with $\crt(j)<\lambda$. Let $\mathbb{P}\in V_\lambda$, $G$ be $\mathbb{P}$-generic and $\lambda^\omega\neq(\lambda^\omega)^{V[G]}$. Then $V_{\lambda+1}\notin L(V[G]_{\lambda+1})$, so $L(V_{\lambda+1})[G]\neq L(V[G]_{\lambda+1})$\footnote{The proof in \cite{Woodin} is more convoluted, as it proves that $V_{\lambda+1}$, if in $L(V[G]_{\lambda+1})$ and $\mathbb{U}(j)$-representable, is a perfect set, and this brings a contradiction. By \ref{allperfect} we immediately have that $V_{\lambda+1}$ is perfect.}.
\end{teo}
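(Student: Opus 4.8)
The plan is to push everything into $V[G]$ and run a perfect set argument there. By Lemma \ref{nosmallforcing}, since $\mathbb{P}\in V_\lambda$ is small, $V[G]\vDash I0(\lambda)$; fix $j^*:L(V[G]_{\lambda+1})\prec L(V[G]_{\lambda+1})$ with $\crt(j^*)<\lambda$, so that Theorem \ref{allperfect}, read inside $V[G]$, gives the perfect set property for every set in $L(V[G]_{\lambda+1})\cap V[G]_{\lambda+2}$. First I would record that the ground model object $V_{\lambda+1}$ embeds into the extension's: each $x\in V_{\lambda+1}$ is a subset of $V_\lambda\subseteq V[G]_\lambda$ of rank $\le\lambda$, hence $V_{\lambda+1}\subseteq V[G]_{\lambda+1}$ and $V_{\lambda+1}\in V[G]_{\lambda+2}$. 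Arguing for a contradiction I assume $V_{\lambda+1}\in L(V[G]_{\lambda+1})$; then $V_{\lambda+1}$ is a legitimate target for the perfect set property and must either have cardinality $\le\lambda$ or contain a perfect subset in $L(V[G]_{\lambda+1})$.

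The first alternative is eliminated by a cardinal-preservation computation. Since $\mathbb{P}\in V_\lambda$ we have $|\mathbb{P}|<\kappa_n$ for some $n$, so $\mathbb{P}$ is $\kappa_n^+$-c.c.; counting names for subsets of $\lambda$ gives $(2^\lambda)^{V[G]}=(2^\lambda)^V$, a cardinal not collapsed below $\lambda^+$. Hence in $V[G]$ one still has $|V_{\lambda+1}|=(2^\lambda)^V>\lambda$, and any injection of $V_{\lambda+1}$ into $\lambda$ witnessing ``cardinality $\le\lambda$'' inside $L(V[G]_{\lambda+1})$ would live in $V[G]$, a contradiction. So $V_{\lambda+1}$ must contain a perfect subset $P$ in $L(V[G]_{\lambda+1})$: a continuous injection $e:\prod_{n\in\omega}\kappa_n\hookrightarrow V_{\lambda+1}$ with $e\in L(V[G]_{\lambda+1})\subseteq V[G]$ and $\ran(e)\subseteq V_{\lambda+1}$.

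The final and hardest step is to contradict the hypothesis $\lambda^\omega\neq(\lambda^\omega)^{V[G]}$. This hypothesis says precisely that $\mathbb{P}$ adds a new $\omega$-sequence of ordinals below $\lambda$, so (after the canonical coding of such sequences) there is a new branch $f^*\in(\prod_{n}\kappa_n)^{V[G]}\setminus V$ and $V_{\lambda+1}\subsetneq V[G]_{\lambda+1}$ properly. Continuity reduces $e$ to its action $\tilde e(s)=e(f)\cap V_{\beta_{|s|}}$ (for any $f\supseteq s$) on the finite sequences $s$ of ordinals below $\lambda$; since each value $e(f)$ lies in the \emph{ground-model} set $V_{\lambda+1}$, every $\tilde e(s)$ is an element of $V_\lambda$ lying in $V$. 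Thus $\ran(e)$ is the branch space of a splitting ``$\lambda$-perfect tree'' $T=\ran(\tilde e)\subseteq V_\lambda$, \emph{all} of whose $(\prod_n\kappa_n)^{V[G]}$-branches — including $e(f^*)$ — lie in $V_{\lambda+1}$. I would now argue, exactly as in the classical fact that adding a new $\omega$-sequence prevents the old $V_{\lambda+1}$ from carrying a perfect set, that the splitting of $T$ together with the genericity yielding $f^*$ lets one diagonalize against $V$ and read off a cofinal branch of $T$ that codes the new sequence, hence a branch outside $V$ — contradicting $\ran(e)\subseteq V_{\lambda+1}$. This is the delicate point: in \cite{Woodin} it is executed through the $\mathbb{U}(j)$-representation machinery (cf.\ Question \ref{representable} and the surrounding lemmas), showing that $V_{\lambda+1}$, were it $\mathbb{U}(j)$-representable and in $L(V[G]_{\lambda+1})$, would be perfect, and the streamlined route merely replaces that representability input by a direct appeal to Theorem \ref{allperfect}; the genericity/splitting argument against the new sequence remains the crux. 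Having reached a contradiction, $V_{\lambda+1}\notin L(V[G]_{\lambda+1})$, while $V_{\lambda+1}\in L(V_{\lambda+1})\subseteq L(V_{\lambda+1})[G]$ trivially, so $L(V_{\lambda+1})[G]\neq L(V[G]_{\lambda+1})$.
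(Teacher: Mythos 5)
Your proposal matches the paper's proof faithfully up to and including the extraction of the continuous injection: lifting $j$ to $j^*:L(V[G]_{\lambda+1})\prec L(V[G]_{\lambda+1})$ (the paper assumes $\mathbb{P}\in V_{\kappa_0}$ and uses Lemma \ref{nosmallforcing}), invoking Theorem \ref{allperfect} inside $V[G]$, discarding the alternative $|V_{\lambda+1}|\leq\lambda$, and observing that since the values of $e$ are ground-model objects, $e$ can be recoded as a continuous injection $\pi:(\prod_{n\in\omega}\kappa_n)^{V[G]}\to(\prod_{n\in\omega}\kappa_n)^{V}$. The gap is your final step, and it is the heart of the theorem. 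You propose to use the new sequence $f^*$ to ``read off a cofinal branch of $T$ that codes the new sequence, hence a branch outside $V$, contradicting $\ran(e)\subseteq V_{\lambda+1}$.'' But no such branch can exist: every branch arising from $\ran(e)$, including the one determined by $f^*$, has union $e(f^*)$, which is an element of $V_{\lambda+1}$ and hence an element of $V$. The newness of $f^*$ does not transfer to $e(f^*)$, because the decoding map $e^{-1}$ (equivalently, the tree $T=\ran(\tilde e)$) exists only in $V[G]$; knowing $e(f^*)\in V$ while $e^{-1}\notin V$ yields no contradiction. The classical fact you invoke requires the perfect tree to lie in the \emph{ground} model, and that is exactly what fails here: $T$ is built from $e\in L(V[G]_{\lambda+1})$, and in general $T\notin V$. (There is also a smaller issue: $\tilde e(s)=e(f)\cap V_{\beta_{|s|}}$ is not well defined from continuity alone, since the modulus of continuity depends on $f$, not only on $|s|$; but this is secondary.)

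What the paper does to close the argument runs in the opposite direction, and this is the idea your proposal is missing: instead of exhibiting a new branch, one shows that the existence of the embedding forces \emph{every} $\omega$-sequence to be old. Working in $V$ with a name $\dot\pi$ for the embedding and the forcing relation, one defines for each finite sequence $s$ a set $S_s\subseteq\kappa_{\lh(s)}$ of ``potential next values'': the sets $P_\alpha$ of minima forced by some condition have size less than $\kappa_0$ (because $|\mathbb{P}|<\kappa_0$), hence are bounded, and $S_s$ is generated by closing under $\alpha\mapsto\max P_\alpha+1$. Crucially, the whole family $\{S_s:s\}$ lies in $V$. Then, back in $V[G]$, one re-threads $\pi$ into a new continuous injection $\pi'$ arranged so that $\pi'(a)(n)$ always falls strictly between the $a(n)$-th and the $(a(n)+1)$-th elements of $S_{\pi'(a)\upharpoonright n}$. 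Now the inverse of $\pi'$ is computable in $V$: given the ground-model object $\pi'(a)\in(\prod_n\kappa_n)^{V}$, the index of the gap of $S_{\pi'(a)\upharpoonright n}$ containing $\pi'(a)(n)$ recovers $a(n)$. Hence every $a\in(\prod_n\kappa_n)^{V[G]}$ lies in $V$, i.e.\ $(\lambda^\omega)^{V[G]}=\lambda^\omega$, contradicting the hypothesis directly. So the crux is a forcing-relation construction carried out in $V$ that manufactures a ground-model surrogate for the missing tree; without this device (or some equivalent way of making the decoding $V$-definable), your argument cannot close.
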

\begin{proof}

Let $\langle\kappa_i:i\in\omega\rangle$ be the critical sequence of $j$. As $\mathbb{P}\in V_\lambda$, we can suppose that $\mathbb{P}\in V_{\kappa_0}$, and then there exists $j^*:L(V[G]_{\lambda+1})\prec L(V[G]_{\lambda+1})$ that extends $j$. Suppose that $V_{\lambda+1}\in L(V[G]_{\lambda+1})$. Then by Theorem \ref{allperfect} in $L(V[G]_{\lambda+1})$ it is possible to embed continuously $(\lambda^\omega)^{V[G]}$ in $V_{\lambda+1}$. 

We do some cosmetic changes so that the proof is easier to read: we fix $\rho_n$ bijections between $V_{\kappa_n+1}$ and $|V_{\kappa_n+1}|$, rename $|V_{\kappa_n+1}|$ as $\kappa_n$, and modify the continuous embedding so that its domain is $(\Pi_{n\in\omega}\kappa_i)^{V[G]}$. So there exists $\pi$ that continuously embed $(\Pi_{n\in\omega}\kappa_i)^{V[G]}$ to $(\Pi_{n\in\omega}\kappa_i)^{V}$. We prove that $(\Pi_{n\in\omega}\kappa_i)^{V[G]}\subseteq V$, and therefore $(\lambda^\omega)^{V[G]}=\lambda^\omega$, contradiction.

Let $\pi$ be such embedding. We define for every $s\in\bigcup_{n\in\omega}\Pi_{i<n}\kappa_n$ a set $S_s\subseteq \kappa_{\lh(s)}$, $S_s\in V$ in such a way:
	\begin{itemize}
		\item $0\in S_s$;
		\item Fix $n=\lh(s)$. Let $\alpha\in S_s$. Then let $P_\alpha=\{\beta<\kappa_n:\exists p\in\mathbb{p}\ p\vDash\beta=\min\{\dot{\pi}(a)(n):a\in(\Pi_{n\in\omega}\kappa_n)^v,\ \pi(a)\upharpoonright n=s,\ \dot{\pi}(a)(n)>\alpha\}\}$. Then $|P_\alpha|<\kappa_0$, therefore $P_\alpha$ is bounded in $\kappa_n$. As for any $s\in\bigcup_{n\in\omega}\Pi_{i<n}\kappa_n$ $\{\pi(a)(n):a\in(\Pi_{n\in\omega}\kappa_n)^{V[G]},\ \pi(a)\upharpoonright n=s\}$ is unbounded in $\kappa_n$, $P_\alpha\neq\emptyset$, so let $\alpha'=\max P_\alpha +1\in S_s$. 
	\end{itemize}
	
	Note that not only $S_s\in V$, but $\{S_s:s\in \bigcup_{n\in\omega}\Pi_{i<n}\kappa_n\}\in V$. 
	
	By the construction, (*) for every $a\in (\Pi_{n\in\omega}\kappa_n)^{V[G]}$ and every $\alpha\in S_{\pi(a)\upharpoonright n}$, there exists $b\in (\Pi_{n\in\omega}\kappa_n)^{V[G]}$ such that $\pi(a)\upharpoonright n=\pi(b)\upharpoonright n=s$, and $\alpha<\pi(b)(n)<\min S_s\setminus\alpha$. 
	
	We define now another continuous embedding, $\pi'$. For any $S_s$, let $S_s(\alpha)$ be the $\alpha$-th element of $S_s$. 
	
	Let $a\in(\Pi_{n\in\omega}\kappa_n)^{V[G]}$. Then we define by induction $b_n$:
	\begin{itemize}
		\item Applying (*) with $n=0$, $a$ anything and $\alpha=S_{\emptyset}(a(0))$, there exists $b_0$ such that $S_{\emptyset}(a(0))<\pi(b_0)(0)<S_{\emptyset}(a(0)+1)$;
		\item Suppose that $b_n$ is already defined. Applying (*) with $a=b_n$ and $\alpha=S_{\pi(b_n)\upharpoonright n}(a(n+1))$, there exists $b_{n+1}$ such that $\pi(b_{n+1})\upharpoonright n=\pi(b_n)\upharpoonright n$ and $S_{\pi(b_n)\upharpoonright n}(a(n+1))<\pi(b_{n+1})(n+1)<S_{\pi(b_n)\upharpoonright n}(a(n+1)+1)$.
	\end{itemize}
	
	As $\pi$ is continuous, the $b_n$'s are more and more closer. Let $b$ be their limit. Then we define $\pi'(a)=\pi(b)$.
	
	Note that, if $\alpha$ and $\alpha'$ are two consecutive members of $S_s$, then $|\{\pi(b)(n):\exists a\in(\Pi_{n\in\omega}\kappa_n)^{V[G]},\ \pi'(a)=\pi(b),\ \pi(b)\upharpoonright n=s\}\cap [\alpha,\alpha']|=1$. Therefore $\pi'$ is well-defined and injective. But then the inverse of $\pi'$ is definable in $V$, as for any $a\in(\Pi_{n\in\omega}\kappa_n)^{V[G]}$, $\pi'(a)(n)=|S_{\pi'(a)\upharpoonright n}\cap a(n)|$.
\end{proof}

On the other hand, if the forcing is $\omega$-closed in $V_\lambda$ then we have equality:

\begin{teo}
\label{omegaclosed}
 Suppose $j:L(V_{\lambda+1})\prec L(V_{\lambda+1})$ with $\crt(j)<\lambda$. Let $\mathbb{P}\in V_\lambda$ be $\omega$-closed in $V_\lambda$, i.e., $(V_\lambda)^\omega=((V_\lambda)^\omega)^{V[G]}$, $G$ be $\mathbb{P}$-generic. Then $L(V_{\lambda+1})[G]=L(V[G]_{\lambda+1})$.
\end{teo}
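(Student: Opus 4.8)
The inclusion $L(V[G]_{\lambda+1})\subseteq L(V_{\lambda+1})[G]$ is already in hand from the discussion preceding Lemma \ref{nosmallforcing} (a constructible set has a constructible name), so the entire content of the theorem is the reverse inclusion $L(V_{\lambda+1})[G]\subseteq L(V[G]_{\lambda+1})$. My plan is to reduce this to two membership facts and then concentrate all the work in one of them. Since $L(V[G]_{\lambda+1})$ is a transitive model of \ZF{} containing all the ordinals, it suffices to prove that $V_{\lambda+1}\in L(V[G]_{\lambda+1})$ and $G\in L(V[G]_{\lambda+1})$: the first gives $L(V_{\lambda+1})\subseteq L(V[G]_{\lambda+1})$ by the minimality of relative constructibility, and then the minimality of the forcing extension of $L(V_{\lambda+1})$, together with $G\in L(V[G]_{\lambda+1})$, yields $L(V_{\lambda+1})[G]\subseteq L(V[G]_{\lambda+1})$.

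The membership of $G$ is immediate: as $\mathbb{P}\in V_\lambda$, say $\mathbb{P}\in V_\nu$ with $\nu<\lambda$, the generic $G\subseteq\mathbb{P}$ is a subset of $V_\lambda$ of rank below $\lambda$, so $G\in V[G]_{\lambda+1}\subseteq L(V[G]_{\lambda+1})$. The observation that unlocks the harder part is that rank is absolute, whence $V_{\lambda+1}\subseteq V[G]_{\lambda+1}$; in particular $V_\lambda$, the sequence $\langle V_{\kappa_n}:n\in\omega\rangle$, and---crucially---the sequence $\langle V_{\kappa_n+1}:n\in\omega\rangle$ of ground-model power sets at the levels of a fixed cofinal $\omega$-sequence $\langle\kappa_n\rangle$ in $\lambda$, are all $\omega$-sequences of elements of $V_\lambda$ lying in $V$, hence elements of $V_{\lambda+1}\subseteq V[G]_{\lambda+1}$, and thus available as parameters inside $L(V[G]_{\lambda+1})$.

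The crux is then to establish
\[
V_{\lambda+1}=\{y\in V[G]_{\lambda+1}:y\subseteq V_\lambda\ \wedge\ \forall n\in\omega\ (y\cap V_{\kappa_n}\in V_{\kappa_n+1})\},
\]
which, given the parameters above, is a definition of $V_{\lambda+1}$ over $V[G]_{\lambda+1}$ and so places $V_{\lambda+1}$ in $L(V[G]_{\lambda+1})$. The left-to-right inclusion is routine absoluteness of intersection: if $y\in V_{\lambda+1}$ then $y\cap V_{\kappa_n}$ is a ground-model subset of $V_{\kappa_n}$, i.e.\ an element of $V_{\kappa_n+1}$. The right-to-left inclusion is exactly where the hypothesis enters. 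Given such a $y\in V[G]$, each piece $w_n:=y\cap V_{\kappa_n}$ lies in $V_{\kappa_n+1}\subseteq V_\lambda$ and hence in $V$; the sequence $\langle w_n:n\in\omega\rangle$ is definable in $V[G]$ and is therefore an $\omega$-sequence of elements of $V_\lambda$ lying in $V[G]$. By $\omega$-closedness, $(V_\lambda)^\omega=((V_\lambda)^\omega)^{V[G]}$, this sequence already belongs to $V$, so $y=\bigcup_n w_n\in V$ and, being a subset of $V_\lambda$, $y\in V_{\lambda+1}$.

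I expect the main obstacle to be conceptual rather than computational: recognizing that the ground-model power sets $V_{\kappa_n+1}$ at the cofinal levels are themselves members of $V_\lambda$, so that the predicate ``$y\cap V_{\kappa_n}$ is a ground-model set'' is expressible inside $L(V[G]_{\lambda+1})$ without invoking any ground-model-definability machinery. Once the parameters are secured, $\omega$-closure performs all the gluing of the approximations $w_n$ back into a single ground-model set. It is worth noting that this is precisely the feature shown to fail in the preceding theorem when $\lambda^\omega$ is enlarged, so the hypothesis of $\omega$-closedness is used here in an essential way.
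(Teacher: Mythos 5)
Your proof is correct and follows essentially the same route as the paper: the paper fixes a bijection in $V$ between $V_\lambda$ and $\lambda$ and defines ${\cal P}(\lambda)\cap V$ inside $L(V[G]_{\lambda+1})$ as $\{a\in{\cal P}^{V[G]}(\lambda):\forall n\in\omega\ a\cap\kappa_n\in V\}$, gluing the pieces back together via $\omega$-closedness exactly as you glue the $w_n$. Your version merely works with $V_{\lambda+1}$ directly instead of transferring to ${\cal P}(\lambda)$, and spells out the parameter $\langle V_{\kappa_n+1}:n\in\omega\rangle$ and the final minimality bookkeeping (membership of $G$ and of $V_{\lambda+1}$) that the paper leaves implicit.
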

\begin{proof}
 Fix a bijection in $V$ between $V_\lambda$ and $\lambda$, this extends naturally to a bijection from $V_{\lambda+1}$ and ${\cal P}(\lambda)$, so it suffices to define ${\cal P}(\lambda)$ inside $L(V[G]_{\lambda+1})$: 
\begin{equation*}
 {\cal P}(\lambda)=\{a\in {\cal P}^{V[G]}(\lambda):\forall n\in\omega\ a\cap\kappa_n\in V\}. 
\end{equation*}
This is because if $a\cap\kappa_n=a_n\in V$, then $\langle a_n:n\in\omega\rangle\in V$ and $a=\bigcup_{n\in\omega}a_n\in V$. 
\end{proof}

\begin{cor}
  Suppose $j:L(V_{\lambda+1})\prec L(V_{\lambda+1})$ with $\crt(j)<\lambda$. Let $\mathbb{P}\in V_\lambda$ be a forcing notion, $G$ be $\mathbb{P}$-generic. Then $L(V_{\lambda+1})[G]=L(V[G]_{\lambda+1})$ iff $(V_\lambda)^\omega=((V_\lambda)^\omega)^{V[G]}$.
\end{cor}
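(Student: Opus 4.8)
The plan is to derive this corollary by combining Theorem~\ref{omegaclosed} with the preceding theorem (Woodin, Theorem 175), the only subtlety being that the two results are phrased with slightly different closure hypotheses: Theorem~\ref{omegaclosed} uses $(V_\lambda)^\omega=((V_\lambda)^\omega)^{V[G]}$, whereas the preceding theorem uses $\lambda^\omega=(\lambda^\omega)^{V[G]}$. So the first thing I would do is reconcile these two conditions.

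To do so, I would fix a bijection $f:V_\lambda\to\lambda$ in $V$; such an $f$ exists since $\lambda$ is strong limit and hence $|V_\lambda|=\lambda$. Composition with $f$ gives a bijection $g\mapsto f\circ g$ between $(V_\lambda)^\omega$ and $\lambda^\omega$ which lies in $V$, and therefore also in $V[G]$. Because $f$ and $f^{-1}$ both belong to $V$, for any $g\in((V_\lambda)^\omega)^{V[G]}$ we have $g\in V$ iff $f\circ g\in V$. Consequently $V[G]$ contains a new $\omega$-sequence of elements of $V_\lambda$ exactly when it contains a new $\omega$-sequence of ordinals below $\lambda$; that is, $(V_\lambda)^\omega=((V_\lambda)^\omega)^{V[G]}$ if and only if $\lambda^\omega=(\lambda^\omega)^{V[G]}$.

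With this equivalence in hand, both directions follow immediately. For the right-to-left implication, assuming $(V_\lambda)^\omega=((V_\lambda)^\omega)^{V[G]}$, Theorem~\ref{omegaclosed} applies verbatim and yields $L(V_{\lambda+1})[G]=L(V[G]_{\lambda+1})$. For the left-to-right implication I would argue by contraposition: if $(V_\lambda)^\omega\neq((V_\lambda)^\omega)^{V[G]}$, the equivalence above gives $\lambda^\omega\neq(\lambda^\omega)^{V[G]}$, so the preceding theorem yields $V_{\lambda+1}\notin L(V[G]_{\lambda+1})$. Since $V_{\lambda+1}\in L(V_{\lambda+1})\subseteq L(V_{\lambda+1})[G]$, the two models cannot coincide, i.e.\ $L(V_{\lambda+1})[G]\neq L(V[G]_{\lambda+1})$.

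There is no real obstacle here beyond the bookkeeping in the first step: the essential point to get right is that the translating bijection must be chosen in the ground model $V$, so that it identifies $V$-sequences with $V$-sequences and $V[G]$-sequences with $V[G]$-sequences simultaneously. Once that is observed, the corollary is a direct consequence of the two theorems it cites.
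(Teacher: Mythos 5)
Your proof is correct and is essentially the argument the paper intends: the corollary is stated without proof precisely because it is the conjunction of Theorem~\ref{omegaclosed} (one direction) and the contrapositive of the preceding theorem on $\lambda^\omega\neq(\lambda^\omega)^{V[G]}$ (the other direction), with the two closure conditions identified via a ground-model bijection $V_\lambda\to\lambda$, exactly as you do. Your explicit check that the bijection must be chosen in $V$ is the right bookkeeping; the paper leaves it implicit, using the same identification inside the proof of Theorem~\ref{omegaclosed}.
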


So I3, I2, I1 and I0 are not changed by forcing notions bounded in $\lambda$. Also, trivially, if a forcing notion is $\lambda$-closed, then $V[G]_{\lambda+1}=V_{\lambda+1}$, therefore I3, I2, I1 and I0 are witnessed by the exact same embedding, and are not destructed. In other words, they are not changed by forcings that are above $\lambda$. 

This case can give some very nice results:

\begin{teo}[Shi, Trang, 2017 \cite{ShiTrang}]
 \label{secondShiTrang}
 Suppose I0($\lambda$) holds. Then all the following statements are true in a generic extension where I0($\lambda$) still holds:
 \begin{itemize}
  \item special $\lambda^+$-Aronszajn trees exist;
	\item $\lambda^+$-Suslin tree exist;
	\item there is a very good scale at $\lambda^+$;
	\item Stationary Reflections fails at $\lambda^+$;
	\item $\Diamond_{\lambda^+}$:
 \end{itemize}
\end{teo}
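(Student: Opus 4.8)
The guiding idea is to realize all five principles at $\lambda^+$ by a forcing that adds no new subset of $\lambda$, so that the witness of I0$(\lambda)$ survives untouched. Concretely, I would take $\mathbb{P}$ to be a finite iteration of the standard posets of bounded approximations to, respectively, a very good scale (of length $\lambda^+$) in $\prod_n\kappa_n$, a non-reflecting stationary subset of $\{\eta<\lambda^+:\cof(\eta)=\omega\}$, a special $\lambda^+$-Aronszajn tree, and, placed \emph{last}, a $\lambda^+$-Suslin tree. Each factor consists of conditions of length $<\lambda^+$, and the union of an increasing chain of length $\le\lambda$ is again a condition (its length stays below the regular cardinal $\lambda^+$), with the structural constraints kept alive at limit stages by the obvious strategy; hence every factor is $(<\lambda^+)$-strategically closed, a property inherited by the finite iteration. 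The remaining principle, $\Diamond_{\lambda^+}$, I would not force directly but extract from cardinal arithmetic: since $\lambda$ is an uncountable strong limit, prefacing the iteration by the $(<\lambda^+)$-closed collapse $\mathrm{Coll}(\lambda^+,2^\lambda)$ gives $2^\lambda=\lambda^+$, and $\Diamond_{\lambda^+}$ then holds by Shelah's theorem that $2^\lambda=\lambda^+$ implies $\Diamond_{\lambda^+}$ at successors of singulars; $(<\lambda^+)$-closure of the whole of $\mathbb{P}$ keeps $2^\lambda=\lambda^+$, so $\Diamond_{\lambda^+}$ persists to the final model.

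The preservation of I0 is then immediate and is the whole point of insisting on $(<\lambda^+)$-closure. A $(<\lambda^+)$-strategically closed forcing is $(<\lambda^+)$-distributive, so $\mathbb{P}$ adds no new function $\lambda\to\Ord$ and hence no new subset of $\lambda$: $(V_{\lambda+1})^{V[G]}=(V_{\lambda+1})^{V}$, and in particular $\lambda$ stays a strong limit of cofinality $\omega$ and $\lambda^+$ is preserved. As $L(V_{\lambda+1})$ is computed from $V_{\lambda+1}$ alone, $L(V[G]_{\lambda+1})=L(V_{\lambda+1})$ with the same membership, so, exactly as noted after Theorem \ref{omegaclosed}, the embedding $j:L(V_{\lambda+1})\prec L(V_{\lambda+1})$ witnessing I0$(\lambda)$ in $V$ — being determined by $j\upharpoonright V_\lambda\in V_{\lambda+1}$ — is still present and elementary in $V[G]$ and witnesses I0$(\lambda)$ there. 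There is no clash with Theorem \ref{firstShiTrang}: that result places the \emph{failures} of these principles inside $L(V_{\lambda+1})$, whereas each object produced here is a generic of $V[G]$ living outside $L(V[G]_{\lambda+1})=L(V_{\lambda+1})$.

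It then remains to verify that $G$ forces each statement and that the factors do not destroy one another. The individual forcing-and-density verifications are routine; the delicate bookkeeping is the order of the iteration. The special Aronszajn tree is robust (its specializing function forbids a cofinal branch under any further forcing), the non-reflecting stationary set and the very good scale are preserved because $(<\lambda^+)$-closed forcing preserves stationary subsets and clubs of $\lambda^+$ and adds no new member of $\prod_n\kappa_n$, and $\Diamond_{\lambda^+}$ is tied only to the (preserved) value of $2^\lambda$; the one genuinely fragile object, the Suslin tree, is added last so that nothing afterwards can thread it with a branch.

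The main obstacle I anticipate is precisely the $(<\lambda^+)$-strategic-closure (equivalently, the ``no new subset of $\lambda$'') verification for the scale and the non-reflection factors, since $\lambda$ is singular of cofinality $\omega$ and the combinatorics of very good scales and non-reflecting stationary sets at the successor of a singular of countable cofinality is where the usual approximation arguments are most fragile. Any slip that let $\mathbb{P}$ add even one new subset of $\lambda$ would move $V_{\lambda+1}$, change $\lambda^\omega$, and throw one into the genuinely hard lifting problem — the regime governed by Theorem \ref{omegaclosed} and the negative result following it, where $L(V_{\lambda+1})[G]\neq L(V[G]_{\lambda+1})$ — so the entire argument rests on keeping $\mathbb{P}$ suitably closed.
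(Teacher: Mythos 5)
Your proposal is correct and is essentially the paper's own argument: the survey presents this theorem precisely as an application of the immediately preceding observation that a forcing adding no new subsets of $\lambda$ leaves $V_{\lambda+1}$, hence $L(V_{\lambda+1})$ and the witnessing embedding $j$, literally unchanged, so that I0($\lambda$) survives with the very same witness — exactly your $(<\lambda^+)$-closure mechanism, including your explanation of why this does not clash with Theorem \ref{firstShiTrang}. The one caveat is that the verifications you defer as routine are where the real work lies; the cited proof of Shi and Trang \cite{ShiTrang} packages them by forcing the single principle $\Box_\lambda$ together with $2^\lambda=\lambda^+$ (both by standard strategically closed posets) and then deriving all five statements outright in \ZFC{} — weak square for the special $\lambda^+$-Aronszajn tree, Jensen's $\Box_\lambda+\Diamond_{\lambda^+}$ construction for the Suslin tree, $\Box_\lambda$ for the very good scale and the non-reflecting stationary set, and Shelah's theorem for $\Diamond_{\lambda^+}$ — which sidesteps both the delicate strategic-closure arguments for direct tree/scale forcings at the successor of a singular of countable cofinality and the ordering/mutual-preservation bookkeeping that your five-fold iteration requires.
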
 

This is particularly striking if compared with Theorem \ref{firstShiTrang}.

It remains to see what happens when the forcing has size $\lambda$. The first case is when the forcing is unbounded in $\lambda$ (the second case will be when the forcing is exactly at $\lambda$). The typical case is when the forcing is a $\lambda$-iteration of small forcings. In such case, the reverse Easton iteration has been proven in the past to behave very well with large cardinals:

\begin{defin}
 Let $\mathbb{P}_\alpha$ be a forcing iteration of length $\alpha$, where $\alpha$ is either a strong limit cardinal or is equal to $\infty$, the class of all ordinals. Then $\mathbb{P}_\alpha$ is a reverse Easton iteration if nontrivial forcing is done only at infinite cardinal stages, direct limits are taken at all inaccessible cardinal limit stages, and inverse limits are taken at all other limit stages; moreover, $\mathbb{P}_\alpha$ is the direct limit of $\langle\mathbb{P}_\delta:\delta<\alpha\rangle$ if $\alpha$ is regular or $\infty$, the inverse limit otherwise.
\end{defin}

Such iteration was introduced by Easton to force different behaviours of the function $\kappa\mapsto 2^\kappa$, the power function. There are two known rules for the power function: if $\kappa<\eta$ then $2^\kappa\leq 2^\eta$, and $\cof(2^\kappa)>\kappa$ for $\kappa$ regular. Easton proved that any function that satisfies this two rules can be the power function on the regular cardinals:

\begin{defin}
 Let $E:\Reg\to\Card$ be a class function. Then $E$ is an Easton function iff:
 \begin{itemize}
   \item $\alpha<\beta\rightarrow E(\alpha)\leq E(\beta)$;
	 \item $\cof(E(\alpha))>\alpha$ for all $\alpha\in\Reg$.
 \end{itemize}
\end{defin}

Then Easton proved:

\begin{teo}[Easton, 1970 \cite{Easton}]
 Let $E$ be an Easton function. Then there exists a generic extension $V[G]$ of $V$ such that $V[G]\vDash\forall\kappa(\kappa\in\Reg\rightarrow 2^\kappa=E(\kappa))$.
\end{teo}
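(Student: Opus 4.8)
The plan is to realise $E$ by an \emph{Easton product} of Cohen forcings, one at each regular cardinal, and to pin down the power function one regular $\kappa$ at a time by splitting the forcing at $\kappa$ into a ``small, highly distributive'' piece and a ``large, highly closed'' piece. First I would reduce to the case $V\vDash\GCH$: a standard class forcing (the reverse Easton iteration that adds one subset of $\kappa^+$ at each infinite cardinal $\kappa$) forces $\GCH$ while preserving all cardinals and cofinalities. Since the regular cardinals are unchanged and the two Easton clauses are absolute between models with the same cardinals, $E$ is still an Easton function there; so I may assume $\GCH$ holds in $V$.

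Next I would define the forcing $\mathbb{P}$. A condition is a function $p$ whose domain is a set of regular cardinals, with each $p(\kappa)$ a partial function from $E(\kappa)\times\kappa$ to $2$ of cardinality $<\kappa$, subject to the \emph{Easton support} restriction that for every regular $\gamma$ one has $|\{\kappa<\gamma:p(\kappa)\neq\emptyset\}|<\gamma$; this is exactly the prescription ``direct limits at inaccessibles, inverse limits elsewhere'' of the reverse Easton iteration. The $\kappa$-th coordinate adjoins $E(\kappa)$ distinct subsets of $\kappa$, which after checking cardinal preservation will give the lower bound $2^\kappa\geq E(\kappa)$ in $V[G]$.

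The core is the factorisation $\mathbb{P}\cong\mathbb{P}_{\leq\kappa}\times\mathbb{P}_{>\kappa}$ for each regular $\kappa$, together with three standard lemmas under $\GCH$. (i) The tail $\mathbb{P}_{>\kappa}$ is $\leq\kappa$-closed, hence adds no subset of $\kappa$ and cannot change $2^\kappa$. (ii) The head $\mathbb{P}_{\leq\kappa}$ has the $\kappa^+$-chain condition, by the usual $\Delta$-system argument using $\kappa^{<\kappa}=\kappa$; so it preserves all cardinals and cofinalities $\geq\kappa^+$, and by Easton's lemma the product of a $\kappa^+$-cc forcing with a $\leq\kappa$-closed one preserves cofinalities, which combined over all $\kappa$ shows $\mathbb{P}$ preserves every cardinal and cofinality. (iii) A nice-name count: monotonicity of $E$ and $\GCH$ give $|\mathbb{P}_{\leq\kappa}|\leq E(\kappa)$, antichains have size $\leq\kappa$ by (ii), so the number of nice $\mathbb{P}_{\leq\kappa}$-names for a subset of $\kappa$ is at most $E(\kappa)^{\kappa}=E(\kappa)$, the last equality using $\cof(E(\kappa))>\kappa$ and $\GCH$. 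Thus $2^\kappa\leq E(\kappa)$ in $V[G_{\leq\kappa}]$, and by (i) this value survives to $V[G]$. Running the argument at every regular $\kappa$ yields $V[G]\vDash\forall\kappa\,(\kappa\in\Reg\to 2^\kappa=E(\kappa))$.

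The main obstacle is that $\mathbb{P}$ is a \emph{proper-class} forcing: one must check that $V[G]\vDash\ZFC$ --- definability of the forcing relation, the ordinal-parametrised factorisation at each $\kappa$, and that no sets appear at unexpected ranks --- and, most delicately, that the \emph{global} preservation of cofinalities really follows from the local chain-condition/closure split. This is precisely where the Easton support is indispensable: taking direct limits at inaccessible stages keeps the initial segments small enough for the chain-condition computation, while a naive full-support or finite-support product would collapse cardinals and destroy the bookkeeping.
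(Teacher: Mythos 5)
The paper itself does not prove this theorem---it is quoted from \cite{Easton}---and the surrounding discussion only sketches the construction, packaged as a reverse Easton \emph{iteration} whose nontrivial stages sit at cardinals $\delta$ closed under $E$, each $\mathbb{Q}_\delta$ being the Easton product of the Cohen forcings handling the block $[\delta,\delta^*)$. Your proposal is instead Easton's original argument: one global \emph{product} of the forcings $\mathrm{Add}(\kappa,E(\kappa))$ with Easton support. The two routes are interchangeable for this statement, and yours buys simpler bookkeeping: the factorisation $\mathbb{P}\cong\mathbb{P}_{\leq\kappa}\times\mathbb{P}_{>\kappa}$ is literal rather than a two-step iteration factoring, and your three lemmas (closure of the tail; $\kappa^+$-cc of the head by the $\Delta$-system argument, where---as you correctly stress---the Easton support is exactly what makes conditions in $\mathbb{P}_{\leq\kappa}$ have size $<\kappa$ when $\kappa$ is inaccessible; and the nice-name count via $E(\kappa)^\kappa=E(\kappa)$) are the standard ones and are correctly deployed. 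What the iteration formulation buys in exchange is compatibility with the lifting technology the paper needs later (cf.\ Theorem \ref{forcingupto}, where $j$-coherence is a property of an iteration). One caveat: do not call the Easton-support product ``exactly'' the reverse Easton iteration; they are different posets (the iteration computes its later stages in intermediate extensions) that happen to have the same effect here.

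There is, however, one genuine flaw: the opening reduction to \GCH. A forcing making \GCH{} true cannot preserve all cardinals unless \GCH{} already held: if $2^\kappa>\kappa^+$ in $V$ and $V[G']\vDash 2^\kappa=\kappa^+$, then ${\cal P}(\kappa)^V$ survives into $V[G']$ and has $V[G']$-cardinality at most $(\kappa^+)^{V[G']}$, so every $V$-cardinal in the interval $(\kappa^+,(2^\kappa)^V]$ is collapsed. Worse, after such a collapse $E$ need not remain an Easton function, and its values need not even remain cardinals: take $V\vDash 2^\omega=\omega_2$ and $E(\omega)=(\omega_2)^V$, a legitimate Easton value in $V$; forcing \CH{} collapses $(\omega_2)^V$, after which $2^\omega=E(\omega)$ is unachievable in any further extension, since $2^\omega$ is always a cardinal. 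So the general statement is not rescued by this preliminary step; the correct reading---and the one the paper implicitly uses, since its discussion right after the theorem starts from a \GCH{} model---is that Easton's theorem is a statement about ground models of \GCH{} (equivalently, a consistency result). Under that reading your proof is correct, modulo the class-forcing checks you flag at the end; those are indeed handled by the very factorisation you set up, since every set of ordinals of size $\kappa$ in $V[G]$ already lies in $V[G_{\leq\kappa}]$ by Easton's lemma, which is what gives Replacement and Power Set in $V[G]$.
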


The forcing used was a reverse Easton iteration. The easiest application is to force \GCH: in that case for any $\gamma<\alpha$ $\mathbb{Q}_\gamma$ forces that $2^\gamma=\gamma^+$ if $\gamma$ is a cardinal, it is trivial otherwise, $\mathbb{P}_{\gamma+1}=\mathbb{P}_\gamma*\dot{Q}_\gamma$ and $G$ will the generic filter of the reverse Easton iteration $\mathbb{P}_\infty$. The problem is when one, for example, starts with a model of \GCH{} and wants to increase everywhere on the regulars the power function: If we force with $\mathbb{Q}_\omega$ $2^\omega=\omega_2$ and after that we force $2^{\omega_1}=\omega_3$ via Cohen forcing, then in the final model $2^\omega$ will collapse to $\omega_1$ again. The idea is then that $\mathbb{Q}_\delta$ is defined only on $\delta$ closed under $E$, and $\mathbb{Q}_\delta$ is the Easton product\footnote{It is the product with support the Easton ideal, i.e., the sets that are bounded below every inaccessible cardinal.} of the forcings that make $2^\gamma=E(\gamma)$ for all $\gamma\in[\delta,\delta^*)$, where $\delta^*$ is the smallest cardinal closed under $E$ bigger than $\delta$ (this is still in \cite{Easton}).

So let $E$ be an Easton function, and $\mathbb{P}_\lambda$ that forces $2^\kappa=E(\kappa)$ for any regular $\kappa$, with the stages indicated with $\mathbb{Q}_\delta$ for $\delta<\lambda$. We want to know for which $E$'s the forcing $\mathbb{P}_\lambda$ does not destroy I3, $\dots$, I0. 

If for some $\delta<\lambda$ $|\mathbb{Q}_\delta|\geq\lambda$, then there is a $\alpha<\delta^*$ such that in $V[G]$ $2^\alpha\geq\lambda$, therefore $\lambda$ is not strong limit anymore in $V[G]$ and the embedding is destroyed. Therefore we must have $|\mathbb{Q}_\delta|<\lambda$ for any $\delta<\lambda$.

If for some $\delta<\lambda$ we have that $j(\mathbb{P}_\delta)\neq\mathbb{P}_{j(\delta)}$, then possibly $j(2^\alpha)\neq 2^{j(\alpha)}$ for some $\alpha<\delta$, therefore we should ask that $j(\mathbb{P}_\delta)=\mathbb{P}_{j(\delta)}$. If $E$ is definable then this is trivial.

Also note that $\mathbb{Q}_\delta$ is $\delta$-directed closed. This is enough to lift I3, $\dots$, I0 in the generic extension:

\begin{teo}[\cite{DimFrie}]
\label{forcingupto}
Suppose $j$ witnesses I3($\lambda$) (or I1($\lambda$) $\dots$). Let $\mathbb{P}_\lambda$ be a forcing iteration of length $\lambda$ that is reverse Easton, directed closed (i.e., for all $\delta<\lambda$, $\mathbb{Q}_\delta$ is $\delta$-directed closed), $\lambda$-bounded (i.e., for all $\delta<\lambda$, $|\mathbb{Q}_\delta|<\lambda$) and $j$-coherent (i.e., for all $\delta<\lambda$, $j(\mathbb{P}_\delta)=\mathbb{P}_{j(\delta)}$). Then $j$ lifts to any generic extension via $\mathbb{P}_\lambda$.
\end{teo}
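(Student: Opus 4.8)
The plan is to make the Lifting Lemma do the real work and to reduce the whole statement to the construction of a single generic for the image forcing. In the I3 case I would first extend $j$ to the $\Delta_0$-elementary map $j^+:V_{\lambda+1}\to V_{\lambda+1}$ of Theorem \ref{Delta0} (in the I1, resp.\ I0, case $j$ already acts on $V_{\lambda+1}$, resp.\ on $L(V_{\lambda+1})$). Since $\mathbb{P}_\lambda\subseteq V_\lambda$ we have $\mathbb{P}_\lambda\in V_{\lambda+1}$, and because $j^+(\lambda)=\lambda$ together with $j$-coherence forces all proper initial segments to stay below $\lambda$, one checks that $j^+(\mathbb{P}_\lambda)=\mathbb{P}_\lambda$ and that $p\mapsto j^+(p)$ preserves order and incompatibility. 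Note this already marks the difference from the classical measurable/supercompact lifting: there is \emph{no} tail forcing above $\lambda$ to absorb, the image of the poset is the poset itself. By the Lifting Lemma it then suffices to produce, inside $V[G]$, a $\mathbb{P}_\lambda$-generic $H$ over $V$ with $j^+{}''G\subseteq H$ and $V[H]=V[G]$: the lemma yields $j^*\supseteq j$ with $j^*:V[G]\prec V[H]=V[G]$, and by the Name Rank Lemma $V_\lambda^{V[G]}=V_\lambda[G]$, so $j^*\upharpoonright V_\lambda^{V[G]}$ witnesses the axiom in $V[G]$ ($\lambda$-boundedness is exactly what keeps $\lambda$ strong limit in $V[G]$, so that $V_\lambda^{V[G]}$ is the right object).

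To build $H$ I would factor the iteration along the critical sequence as $\mathbb{P}_\lambda=\mathbb{P}_{\kappa_0}*\mathbb{R}_0*\mathbb{R}_1*\cdots$, where $\mathbb{R}_n=\mathbb{P}_{[\kappa_n,\kappa_{n+1})}$ is $\kappa_n$-directed closed and, by $j$-coherence, $j^+$ sends block $n$ into block $n+1$. Using that $\crt(j)$ can be taken as large as we like below $\lambda$ (the discussion before Proposition \ref{verylarge}), I would arrange $\kappa_0$, and hence each $\kappa_n$, to be a closure point of the iteration, so that $\mathbb{R}_n\in V_{\kappa_{n+1}}\subseteq V_\lambda$ and $|\mathbb{Q}_\delta|<\kappa_{n+1}$ for $\delta<\kappa_{n+1}$. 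The base step is then free: as $\kappa_0=\crt(j)$ and $\mathbb{P}_{\kappa_0}\in V_{\kappa_0}$, we have $j^+\upharpoonright\mathbb{P}_{\kappa_0}=\id$, so writing $G=G_0*g_0*g_1*\cdots$ we may keep the $\mathbb{P}_{\kappa_0}$-part of $H$ equal to $G_0$.

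The heart of the matter is the recursive construction of the higher blocks. The useful observation is that each generic block $g_n$ lies in $V_{\lambda+1}$, so $j^+(g_n)$ is defined, and since every $p\in g_n$ satisfies $p\in V_\beta$ for some $\beta<\lambda$ and hence $j^+(p)=j(p)\in j(g_n\cap V_\beta)\subseteq j^+(g_n)$, the set $j^+(g_n)$ is a filter on $\mathbb{R}_{n+1}$ that already contains $j^+{}''g_n$. This provides the master conditions; the $\kappa_{n+1}$-directed closure of $\mathbb{R}_{n+1}$, combined with the size bookkeeping secured by the closure points, guarantees that the directed sets arising coordinatewise have lower bounds and that the construction survives to a genuine generic, while genericity over the relevant model is verified through the reflection $V_{\kappa_n}\prec V_\lambda$ of Lemma \ref{reflection} and the elementarity of $j^+$ (full for I1 and I0, $\Delta_0$ plus the tree-absoluteness of Theorem \ref{Delta0} for I3). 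Threading through the $\omega$ stages produces $H$ with $j^+{}''G\subseteq H$.

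The main obstacle is the final clause $V[H]=V[G]$, which is what turns $j^*$ into a self-embedding of $V_\lambda^{V[G]}$ rather than a map into a spurious extension; precisely because there is no tail to discard, one cannot simply build $H$ freely and be done. Here the directed closure and, crucially, the homogeneity of the directed-closed blocks must be used to recover $G$ from $H$ and vice versa (for the typical GCH-pattern iterands, which are products of highly closed Cohen-type forcings, related generics give the same model), and this is the step most sensitive to the exact support and closure hypotheses. For I0 there is the further point, already isolated in the forcing discussion, that one needs $j^*\upharpoonright L(V[G]_{\lambda+1}):L(V[G]_{\lambda+1})\prec L(V[G]_{\lambda+1})$; since $L(V[G]_{\lambda+1})$ is a subclass of $L(V_{\lambda+1})[G]$ definable from $\lambda$, this drops out once the embedding of $L(V_{\lambda+1})[G]$ is in hand, exactly as in Lemma \ref{nosmallforcing}.
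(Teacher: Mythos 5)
Your architecture diverges from the paper's proof at the decisive point, and the divergence creates genuine gaps rather than an alternative route. The paper never introduces a second generic $H$ at all: using the size bookkeeping ($|\mathbb{P}_\gamma|<\kappa_n$ for $\gamma<\kappa_n$, obtained from $\lambda$-boundedness, elementarity and iterated applications of $j$), it observes that the pointwise $j$-image of the stage-$\kappa_n$ data is a small directed subset of the $\kappa_{n+1}$-directed closed stage $\mathbb{Q}_{\kappa_{n+1}}$, hence has a lower bound; doing this for every $n$ and for the intermediate pieces $\mathbb{P}_{\kappa_n,\kappa_{n+1}}$ assembles a master condition which, if it lies in $G$, forces $j''G\subseteq G$. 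The Lifting Lemma is then applied with $H=G$, so the two problems your plan must solve -- producing a new generic and proving $V[H]=V[G]$ -- simply never arise.

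Concretely, three steps of your proposal fail. First, the object $j^+(g_n)$ driving your recursion is undefined: Theorem \ref{Delta0} defines $j^+$ only on $V_{\lambda+1}$ as computed in $V$, via $j^+(A)=\bigcup_{\beta<\lambda}j(A\cap V_\beta)$, which requires $A\cap V_\beta\in V_\lambda^V=\dom(j)$; but $g_n\notin V$, and for $\beta$ above the rank of the block $g_n\cap V_\beta$ is $g_n$ itself, so $j(g_n\cap V_\beta)$ makes no sense. Only the pointwise image $j''g_n=\{j(p):p\in g_n\}$ exists -- this is exactly the $V_\lambda[G]$-versus-$V[G]_\lambda$ distinction the paper keeps stressing. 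Second, even granting master conditions, you give no mechanism by which your blocks $h_{n+1}$ become generic over the relevant models: a generic for $\mathbb{R}_{n+1}$ over $V$ cannot be manufactured inside $V[G]$ by a recursion, since $\mathbb{R}_{n+1}$ has up to $2^{|\mathbb{R}_{n+1}|}$-many dense subsets in $V$ while the closure available is only $\kappa_{n+1}$; the only generics present in $V[G]$ are those read off from $G$, which is precisely why the proof must arrange $H=G$ and then argue that $G$ itself contains the master conditions. Third, your resolution of the clause $V[H]=V[G]$ appeals to homogeneity of the iterands, which is not among the hypotheses of Theorem \ref{forcingupto} (reverse Easton, directed closed, $\lambda$-bounded, $j$-coherent); the theorem is meant to cover, e.g., iterations forcing $V=\HOD$ or $\Diamond$, which are designed not to be homogeneous -- and even for homogeneous forcings, homogeneity yields agreement of theories, not equality of the extensions. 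The parts of your write-up that do match the paper -- the reduction via the Lifting Lemma, the Name Rank Lemma and directed-closure argument that the lifted embedding has the right domain $V[G]_\lambda$ (respectively $V[G]_{\lambda+1}$, $L(V[G]_{\lambda+1})$ as in Lemma \ref{nosmallforcing}) -- are fine; it is the core construction that must be replaced by the master-conditions-inside-$G$ argument.
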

\begin{proof}[Sketch of proof]
 The proof is in two stages: lifting and right domain.

 First notice that if $\gamma<\kappa_n$, then $|\mathbb{P}_\gamma|<\kappa_n$, by elementarity, $\lambda$-boundedness and $n$ applications of $j$. Fixing an $n<\omega$, consider the image of $\mathbb{Q}_{\kappa_n}$ under $j$: it is a subset of $\mathbb{Q}_{\kappa_{n+1}}$ of cardinality less than $\kappa_{n+1}$, therefore by directed closeness there is an element $\mathbb{Q}_{\kappa_{n+1}}$ that extends all the $j(p)$ with $p\in\mathbb{Q}_{\kappa_n}$. Doing the same for all $n$ and also for $\mathbb{P}_{\kappa_n,\kappa_{n+1}}$, one gets a condition of $\mathbb{P}_\lambda$ that, if in a generic $G$, implies that $j''G\subseteq G$. Therefore we can apply the Lifting Lemma.

As for the right domain, since the forcing is directed closed every element in the generic extension is actually in a generic extension for some $\mathbb{P}_{\kappa_n}$, therefore the previous proofs still hold.
\end{proof}

Theorem \ref{forcingupto} is just the final step of a series of theorems that goes in that direction: in \cite{Hamkins} Hamkins has proved such theorem for I1 and reverse Easton directed closed $j$-coherent iterations such that $|\mathbb{Q}_\delta|<2^\delta$; in \cite{Corazza2} Corazza has proved it for I3 and reverse Easton directed closed $j$-coherent iterations such that $|\mathbb{Q}_\delta|<i(\delta)$, where $i(\delta)$ is the smallest inaccessible larger than $\delta$; in \cite{Friedman} Friedman has proved it for I2 (called there $\omega$-superstrong) and GCH.

The hypotheses in Theorem \ref{forcingupto} are very reasonable, and most of the reverse Easton forcing usually employed satisfy them. So, for example:

\begin{cor}
 Suppose I3 (or I1, $\dots$). Then in a generic extension can hold:
 \begin{itemize}
  \item I3 + \GCH;
	\item I3 + $2^\kappa=\kappa^{+++}$ for any $\kappa$ regular;
	\item I3 + $V=\HOD$;
	\item I3 + $\Diamond$ everywhere;
	\item I3 + every $\kappa$ supercompact is Laver indestructible (see \cite{Shi});
	\item $\dots$
 \end{itemize}
\end{cor}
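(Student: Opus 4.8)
The plan is to read this Corollary as a uniform application of Theorem \ref{forcingupto}: for each entry of the list it suffices to produce a forcing witnessing the stated combinatorial conclusion whose active part below $\lambda$ is a reverse Easton iteration meeting the three hypotheses of that theorem (directed closed, $\lambda$-bounded, $j$-coherent), so that $j$ lifts to $V[G]$ and I3 (resp.\ I1, \dots) survives. Since some of the conclusions are genuinely global (\GCH\ everywhere, $V=\HOD$, $\Diamond$ everywhere), I would first factor the relevant class iteration as $\mathbb{P}_\infty=\mathbb{P}_\lambda\ast\dot{\mathbb{P}}_{\mathrm{tail}}$, where $\mathbb{P}_\lambda$ collects all the forcing strictly below $\lambda$. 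Arranging the tail to do nontrivial forcing only at cardinals $\ge\lambda^+$ makes it closed under $\le\lambda$-sequences in $V^{\mathbb{P}_\lambda}$, so by the remark preceding Theorem \ref{secondShiTrang} it leaves $V_{\lambda+1}$ untouched and preserves the embedding on the nose. The entire problem then reduces to $\mathbb{P}_\lambda$, which is exactly the regime of Theorem \ref{forcingupto}.

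Concretely I would take $\mathbb{P}_\lambda$ to be: for \GCH, the reverse Easton iteration collapsing $2^\delta$ to $\delta^+$ at each cardinal stage $\delta$ with conditions of size $\le\delta$; for $2^\kappa=\kappa^{+++}$, the iteration with $\dot{\mathbb{Q}}_\delta=\mathrm{Add}(\delta,\delta^{+++})$; for $\Diamond$ everywhere, the iteration adding a $\Diamond_\delta$-sequence by the usual $<\delta$-closed forcing at each stage; for $V=\HOD$ a McAloon-style coding iteration; and for the last entry the Laver preparation, following \cite{Shi}. The three hypotheses are then checked in parallel. Directed closure holds because each $\dot{\mathbb{Q}}_\delta$ is forced to be $<\delta$-directed closed. $\lambda$-boundedness is immediate from $\lambda$ being a strong limit (its critical sequence consists of measurables, so $2^\alpha<\lambda$ for every $\alpha<\lambda$), whence $|\mathbb{Q}_\delta|<\lambda$ for all $\delta<\lambda$. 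Finally $j$-coherence, $j(\mathbb{P}_\delta)=\mathbb{P}_{j(\delta)}$, is automatic as soon as the recipe is lightface definable, or definable using only the fixed point $\lambda$; this disposes of \GCH, $2^\kappa=\kappa^{+++}$ and $\Diamond$ at once. With the hypotheses in hand, Theorem \ref{forcingupto} lifts $j$ and yields the target axiom together with the forced statement.

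The main obstacle is $j$-coherence precisely in the two cases that are not read off a definable continuum pattern, namely $V=\HOD$ and Laver indestructibility. For $V=\HOD$ one must design the coding so that the data coded below $\kappa_n$ is sent by $j$ to the data coded below $\kappa_{n+1}$, i.e.\ use a coding predicate that commutes with $j$; for the Laver preparation one needs a Laver function $f$ with $j(f\upharpoonright\delta)$ agreeing with $f$ on the relevant interval, which is exactly the $j$-coherent Laver function of \cite{Shi}. A second care-point, which I would flag rather than suppress, is the behaviour exactly at the singular cardinal $\lambda$ (of cofinality $\omega$): for the strictly global conclusions one must check that the desired pattern already holds at $\lambda$ after $\mathbb{P}_\lambda$, since the $\le\lambda$-closed tail cannot repair it, and this involves the usual singular-cardinal bookkeeping. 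Lifting $j$ is, finally, not the end of the argument: one still confirms that the forced statement genuinely holds in $V[G]$ and is undisturbed by the tail; for all the classical forcings listed this is standard, so I would cite the known computations rather than repeat them.
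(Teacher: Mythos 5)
Your proposal is correct and is essentially the paper's own argument: the Corollary is stated there as a direct consequence of Theorem \ref{forcingupto}, whose hypotheses (directed closed, $\lambda$-bounded, $j$-coherent) are satisfied by the standard reverse Easton iterations for each listed statement, with forcing above $\lambda$ handled by the earlier observation that $\lambda$-closed forcing leaves $V_{\lambda+1}$ (hence the embedding) untouched. Your explicit factoring $\mathbb{P}_\infty=\mathbb{P}_\lambda\ast\dot{\mathbb{P}}_{\mathrm{tail}}$ and the flagged care-points about $j$-coherence for the $V=\HOD$ and Laver-preparation cases simply make precise what the paper leaves implicit.
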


Finally the last case is when the forcing is changing $\lambda^+$. This is the hardest case, and at the moment the indestructibility results are very scarce. For example, Gitik short extenders Prikry forcing construction in \cite{Gitik} can be carried on under I3 (it needs $o(\kappa_n)\geq(\kappa_n)^{+m}$ for any $n,m\in\omega$): such construction push $2^\lambda$ to $\lambda^{+\delta+1}$ for any desired $\delta<\aleph_1$, but does not add bounded subsets of $\lambda$, therefore $V[G]_\lambda=V_\lambda$ and I3 is not destructed. Yet, this construction for I1 does not work, as $V_{\lambda+1}\neq V[G]_{\lambda+1}$.

In \cite{CummFore} Cummings and Foreman, using something more than I2, manage to prove that I2($\lambda$) is consistent with $2^\lambda=\lambda^{++}$.

The most successful strategy has been instead to start with I0($\lambda$), and using some sophisticated tool to prove that, adding a Prikry sequence to the critical point $\kappa_0$, in the extension I1($\kappa_0$) holds. 

\begin{defin}
 Let $\kappa$ be a measurable cardinal, and $U$ a normal measure on $\kappa$. Then $\mathbb{P}$, the Prikry forcing on $\kappa$ via $U$, is the set of $(s,A)$ such that $s\in[\kappa]^{<\omega}$, $A\in U$ and $\min A>\max s$.

 We say that $(s,A)<(t,B)$ if $s\sqsupseteq t$, $A\subseteq B$ and for any $n\in\lh(s)\setminus\lh(t)$, $s(n)\in B$.
\end{defin}

If $(s,A)$ and $(t,B)$ are in the generic set $G$, then, $s$ and $t$ must be compatible. Therefore by density $\bigcup\{s:\exists A\ (s,A)\in G\}$ is an $\omega$-sequence cofinal in $\kappa$. So $(\cof(\kappa)=\omega)^{V[G]}$, but it is a very delicate forcing, as it does not add any bounded subset of $\kappa$. 

There is a convenient condition for an $\omega$-sequence cofinal in $\kappa$ to be generic:

\begin{teo}[Mathias Condition, or geometric condition]
 Let $\kappa$ be a measurable cardinal, $\mathbb{P}$ the Prikry forcing on $\kappa$ via the normal ultrafilter $U$ and let $\langle\alpha_n:n\in\omega\rangle$ be a cofinal sequence in $\kappa$. Then $\langle\alpha_n:n\in\omega\rangle$ is generic for $\mathbb{P}$ iff for any $A\in U$ the set $\langle\alpha_n:n\in\omega\rangle\setminus A$ is finite.
\end{teo}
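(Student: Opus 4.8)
The plan is to route the whole equivalence through the canonical filter attached to the sequence. Given a cofinal $\vec\alpha=\langle\alpha_n:n\in\omega\rangle$ (which, being the increasing enumeration of a Prikry sequence, I take to be increasing), I set
\begin{equation*}
 G_{\vec\alpha}=\{(s,B)\in\mathbb{P}:\exists k\,(s=\langle\alpha_0,\dots,\alpha_{k-1}\rangle\wedge\forall n\geq k\ \alpha_n\in B)\},
\end{equation*}
and I declare $\vec\alpha$ \emph{generic} to mean that $G_{\vec\alpha}$ is $V$-generic. A routine check shows $G_{\vec\alpha}$ is a filter: any two of its conditions have stems that are initial segments of $\vec\alpha$, hence comparable, and refining with the intersected side-sets stays in $G_{\vec\alpha}$, while upward closure under $<$ is immediate from the definition of the Prikry order. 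Since genericity is equivalent to meeting every dense \emph{open} set, it suffices to treat such $D$, and I split into the two implications.

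For the forward direction, fix $A\in U$ and consider $D_A=\{(t,C):C\subseteq A\}$. Given any $(t,C)$, the direct extension $(t,C\cap A)$ lies in $D_A$ because $C\cap A\in U$ by $\kappa$-completeness, so $D_A$ is dense (and open). If $\vec\alpha$ is generic then $G_{\vec\alpha}$ meets $D_A$, so some $(s,B)\in G_{\vec\alpha}$ has $B\subseteq A$. By the definition of $G_{\vec\alpha}$ every $\alpha_n$ with $n\geq\lh(s)$ lies in $B\subseteq A$, whence $\vec\alpha\setminus A\subseteq\{\alpha_0,\dots,\alpha_{\lh(s)-1}\}$ is finite. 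This is the easy half.

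The substance is the converse. Assume the Mathias condition and fix a dense open $D$. The engine is Prikry's pure-decision lemma: for every condition $(s,A)$ and every statement of the forcing language there is a direct extension $(s,B)$, with $B\subseteq A$, $B\in U$, that decides it. From this one extracts, for the fixed $D$, a single set $A_D\in U$ with the following \emph{capturing property}: for every finite stem $s$ for which some $(s,C)$ with $C\in U$ meets $D$, there is a length $m=m(s)\in\omega$ such that for every $t\sqsupseteq s$ with $t\setminus s$ consisting of $m$ points of $A_D$ above $\max s$, one has $(t,A_D\setminus(\max t+1))\in D$. Producing this one uniform $A_D$ by diagonalizing the decision lemma over all stems and invoking normality is the genuinely nontrivial step and the main obstacle of the proof.

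Once $A_D$ is in hand the Mathias hypothesis closes the argument. Applying it to $A_D\in U$ yields $k$ with $\alpha_n\in A_D$ for all $n\geq k$. Put $s=\langle\alpha_0,\dots,\alpha_{k-1}\rangle$, an initial segment of $\vec\alpha$; by density $s$ can be extended into $D$, so the capturing property supplies a length $m=m(s)$ such that adjoining any $m$ points of $A_D$ above $\max s=\alpha_{k-1}$ lands in $D$. The points $\alpha_k,\dots,\alpha_{k+m-1}$ are exactly such points, so with $t=\langle\alpha_0,\dots,\alpha_{k+m-1}\rangle$ we get $(t,A_D\setminus(\alpha_{k+m-1}+1))\in D$; and since the tail $\langle\alpha_n:n\geq k+m\rangle$ lies in $A_D\setminus(\alpha_{k+m-1}+1)$, this condition belongs to $G_{\vec\alpha}$. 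Hence $G_{\vec\alpha}\cap D\neq\emptyset$ for every dense open $D$, so $\vec\alpha$ is generic, completing the equivalence.
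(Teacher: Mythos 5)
The paper does not prove this theorem at all: it is quoted as a classical result, with Mathias's paper \cite{Mathias} in the references, so there is no internal proof to compare yours against, and I judge your proposal on its own terms. The skeleton is the standard one and much of it is fine: identifying genericity of the sequence with genericity of the induced filter $G_{\vec\alpha}$, the verification that $G_{\vec\alpha}$ is a filter, the reduction to dense \emph{open} sets, and the forward direction via the sets $D_A=\{(t,C):C\subseteq A\}$ are all correct and complete, as is the final deduction of genericity from the Mathias condition together with the capturing property (and your standing assumption that the sequence is increasing is indeed needed to form the stems).

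The gap is exactly where you say it is, and it is a genuine one: the capturing property — a single $A_D\in U$ with uniform lengths $m(s)$, usually called the strong Prikry lemma — \emph{is} the mathematical content of the hard direction, and you assert it rather than prove it. It does not follow by merely ``diagonalizing the pure-decision lemma over all stems and invoking normality''. The standard argument, for a fixed stem $s$, colors each finite $t$ above $\max s$ according to whether some $(s^\frown t,B)$ with $B\in U$ lies in $D$, applies Rowbottom's partition theorem for normal ultrafilters to get $A\in U$ on which the color depends only on $|t|$, diagonally intersects the witnessing side sets (this is where normality enters), and then uses density of $D$ to conclude that some length receives color $1$; a final diagonal intersection over all stems yields the uniform $A_D$. (Alternatively one can start from pure decision applied to the statements ``$\dot{G}$ meets $\check{D}$ at stem length at most $m$'', use $\kappa$-completeness to decide all $m$ simultaneously, density to force some least $m^*$, and openness of $D$ plus two rounds of diagonal intersection to convert the forced statement into the combinatorial one and uniformize the side set — again far from a one-line diagonalization.) Until one of these arguments is carried out, the converse direction is unestablished. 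A smaller blemish: the qualifier ``for which some $(s,C)$ with $C\in U$ meets $D$'' in your capturing statement is either vacuous (every condition has an extension in a dense set) or, if read as $(s,C)\in D$, too restrictive for the way you apply it; the lemma holds for every stem, and that is what your last paragraph actually uses.
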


In Theorem \ref{FinIterable} we noticed that if $j$ witnesses I0($\lambda$) is weakly proper, then it is finitely iterable, and its iterates are still weakly proper embeddings from $L(V_{\lambda+1})$ to itself. Woodin proved that iterability can go further:

\begin{teo}[Woodin, Lemma 21 in \cite{Woodin}]
 Suppose that $j:L(V_{\lambda+1})\prec L(V_{\lambda+1})$ with $\crt(j)<\lambda$ is weakly proper. Then $j$ is iterable.
\end{teo}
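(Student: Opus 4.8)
The plan is to run the iteration as the linear iteration of the $L(V_{\lambda+1})$-ultrafilter $U_j$. Since $j$ is weakly proper, $j=j_{U_j}$ and $\Ult(L(V_{\lambda+1}),U_j)\cong L(V_{\lambda+1})$ (the analysis preceding Theorem~\ref{allproper}); thus each iterate $(M_\alpha,j^\alpha)$ can be identified, up to the transitive collapse, with an iterated ultrapower of $L(V_{\lambda+1})$ by $U_j$ and its images, and proving $j$ iterable amounts to proving that every stage of this iterated ultrapower is well-founded. First I would record the finite stages: applying Lemma~\ref{FinIterable} to $j^n$ in place of $j$ shows that $j^{n+1}=j^n(j^n)$ exists and is again weakly proper, so $M_n\cong L(V_{\lambda+1})$ is well-founded and transitive for every $n$.

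Next I would isolate where the real difficulty lies by the usual cofinality reduction. At a limit ordinal $\alpha$ the model $M_\alpha$ is the direct limit of the $M_\beta$, $\beta<\alpha$; if $\cof(\alpha)>\omega$ then any $\in$-descending $\omega$-sequence in $M_\alpha$ has all of its countably many entries in the range of a single $j_{\beta,\alpha}$ with $\beta<\alpha$, and so pulls back to an $\in$-descending sequence in the well-founded $M_\beta$, which is impossible. Likewise a single ultrapower of a well-founded model by $U_j$ (or an image of it) is well-founded. Hence the whole statement reduces to the well-foundedness of the direct limits of cofinality $\omega$, and in particular of $M_\omega$. This is exactly the Gaifman--Powell phenomenon already used in Lemma~\ref{gaifpow}: a linear iteration stays well-founded provided the generating ultrafilter is countably complete \emph{in $V$}.

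The key lemma to prove is therefore that $U_j$ is countably complete in $V$, i.e. that for every sequence $\langle X_n:n\in\omega\rangle\in V$ with each $X_n\in U_j$ one has $\bigcap_{n}X_n\neq\emptyset$. When the sequence happens to lie in $L(V_{\lambda+1})$ this is immediate: as $\omega<\crt(j)$ the elementarity of $j$ on sequences of length $<\crt(j)$ gives $j(\langle X_n\rangle)=\langle j(X_n)\rangle$, whence $j(\bigcap_n X_n)=\bigcap_n j(X_n)\ni j\upharpoonright V_\lambda$ and so $\bigcap_n X_n\in U_j$. The obstacle I expect to be the hard part is precisely that the sequence $\langle X_n\rangle$ witnessing a putative failure of well-foundedness is a priori only a sequence of $V$, and $L(V_{\lambda+1})$ fails $\AC$, so one cannot feed $\langle X_n\rangle$ to $j$ directly.

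To overcome this I would argue in the spirit of Proposition~\ref{iterableI3}, pushing the search for a descending chain back into $L(V_{\lambda+1})$. Suppose $M_\omega$ were ill-founded, and let $\langle f_n:n\in\omega\rangle$ represent an $\in$-descending chain, so that the associated sets $A_n$, of coordinates on which the successive representing functions decrease, each lie in $U_j$. Using the definable surjection $\Phi:\Ord\times V_{\lambda+1}\twoheadrightarrow L(V_{\lambda+1})$ together with $\DC_\lambda$ in $L(V_{\lambda+1})$ (Lemma~\ref{lambdastructure}), I would code the representing data by a single object of $L(V_{\lambda+1})$, so that $\langle A_n\rangle$ becomes the decoding of a sequence belonging to $L(V_{\lambda+1})$; the $L(V_{\lambda+1})$-completeness computed above then yields a point in $\bigcap_n A_n$, on which the $f_n$ give a genuine $\in$-descending sequence inside the well-founded model $L(V_{\lambda+1})$ -- a contradiction. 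The same completeness, applied to the images $j_{0,\alpha}(U_j)$, which remain countably complete in $V$ since $j$ and all its iterates are weakly proper along the induction, disposes of every limit of cofinality $\omega$, and the induction on $\alpha$ then gives full iterability of $j$.
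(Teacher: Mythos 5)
The paper never proves this theorem, it only cites Woodin's Lemma 21 of \cite{Woodin}, so your argument has to stand on its own; and it does not: there is a genuine gap, located exactly at the step you yourself call the key lemma. The skeleton around it is fine and standard. Finite iterability by iterating Lemma \ref{FinIterable}, the reduction to limits of countable cofinality (successor stages are ultrapowers whose well-foundedness transfers by elementarity, and a descending $\omega$-sequence in a limit of uncountable cofinality pulls back along a single $j_{\beta,\alpha}$), and the identification of a countable-completeness property of the derived ultrafilters as the crux: all of this is the correct Gaifman-style frame. (Two repairable imprecisions: one should write $U_{j^\alpha}$, the ultrafilter derived from the $\alpha$-th iterate, rather than $j_{0,\alpha}(U_j)$, which is meaningless since $U_j\notin L(V_{\lambda+1})$; and the ill-foundedness sets live in the finite iterates of $U_j$ on $(V_{\lambda+1})^n$, so what is really needed is a thread through a tower, not bare countable intersections.)

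What cannot be repaired as written is your proof of the key lemma. You propose to capture a $V$-sequence $\langle A_n:n\in\omega\rangle$ of members of $U_j$ inside $L(V_{\lambda+1})$ by writing each $A_n$ as $\Phi(\gamma_n,b_n)$ and invoking $\DC_\lambda$. But $\DC_\lambda$ (Lemma \ref{lambdastructure}) is a choice principle \emph{internal} to $L(V_{\lambda+1})$: it produces sequences lying in $L(V_{\lambda+1})$, chosen by functions of $L(V_{\lambda+1})$; it cannot import a sequence handed to you by $V$. And the coding founders on the ordinal coordinates: the $b_n$'s can indeed be bundled into a single element of $V_{\lambda+1}$, because $V_{\lambda+1}$ is closed under $\omega$-sequences of its elements, but $\langle\gamma_n:n\in\omega\rangle$ is a $V$-countable sequence of ordinals below $\Theta$, and such a sequence need not belong to $L(V_{\lambda+1})$ at all. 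Concretely, if $(V_{\lambda+1})^\sharp$ exists --- a hypothesis consistent with, indeed strictly stronger than, I0, see Section \ref{Icarus} --- then $\cof^V(\Theta)=\omega$ while $\Theta$ is regular inside $L(V_{\lambda+1})$ by Lemma \ref{lambdastructure}; so there are $V$-countable sets of ordinals below $\Theta$, and hence $V$-countable families of sets in $L(V_{\lambda+1})\cap V_{\lambda+2}$, that no single object of $L(V_{\lambda+1})$ can code. Since the theorem must hold (and is most needed) precisely in such models, the central step of your argument provably fails there; the key lemma itself may be true, but nothing you wrote establishes it. The missing ingredient is the actual content of weak properness, which you use only to get $j=j_{U_j}$ and finite iterability: by the proof of Lemma \ref{onlyfromVlambda} and Corollary \ref{FixedpointsbelowTheta}, $L(V_{\lambda+1})=H^{L(V_{\lambda+1})}(I\cup V_{\lambda+1})$ for a class $I$ of ordinals fixed by all the iterates, so every relevant datum is a definable term in fixed ordinals and an element of $V_{\lambda+1}$, and applying the iterates moves only the $V_{\lambda+1}$-coordinate; handling the countably many fixed-point parameters (which again form a set possibly outside $L(V_{\lambda+1})$) is exactly the delicate part of Woodin's argument, not a routine coding. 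A symptom that your route proves too much: nothing in it is specific to $X=V_{\lambda+1}$, since $\Phi$, $\DC_\lambda$, \L os' Theorem and the regularity of $\Theta$ hold for every Icarus $L(X,V_{\lambda+1})$; so your argument, if correct, would show that every \emph{weakly proper} $j:L(X,V_{\lambda+1})\prec L(X,V_{\lambda+1})$ is iterable, whereas, as recorded in Section \ref{Icarus}, in that generality weak properness is only known to yield finite iterability, and full iterability is proved only from properness --- that is, precisely from the capturability inside the model of the sequences $\langle j^n(X):n\in\omega\rangle$, which your argument never uses.
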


From the proof of Lemma \ref{iterable} $j_{0,\omega}(\kappa_0)=\lambda$, therefore $M_\omega$ is different than $L(V_{\lambda+1})$: in it, by elementarity, $\lambda$ is regular. Let $U$ be the ultrafilter from $j$. Then $j_{0,\omega}(U)$ is a normal ultrafilter on $\lambda$, therefore $\lambda$ is measurable in $M_\omega$. If $\mathbb{P}$ is the Prikry forcing on $\kappa_0$ via $U$, then $j_{0,\omega}(\mathbb{P})$ is the Prikry forcing on $\lambda$ via $j_{0,\omega}(U)$.

\begin{rem}
 If $\langle\kappa_n:n\in\omega\rangle$ is the critical sequence of $j$, then $\langle\kappa_n:n\in\omega\rangle$ is $j_{0,\omega}(\mathbb{P})$-generic in $M_\omega$.
\end{rem}
\begin{proof}
 Note that Mathias' characterization needs the Axiom of Choice, but by elementarity $(V_\lambda)^{M_\omega}\vDash\AC$, so we can use it. If $A\in j_{0,n}(U)$, then by the definition of $U$ $\kappa_n\in j_{n,\omega}(A)$, as $A\in j_{0,n}(U)$ iff $\kappa_n\in j^n(A)$ and 
\begin{equation*}
 \kappa_n=j_{n+1,\omega}(\kappa_n)\in j_{n+1,\omega}(j^n(A))=j_{n,\omega}(A). 
\end{equation*}
So if $A\in j_{0,\omega}(U)$, there exist $n\in\omega$ and $\bar{A}\in L(V_{\lambda+1})$ such that $A=j_{n,\omega}(\bar{A})$. By elementarity $\bar{A}\in j_{0,n}(U)$ and $j_{n,n+i}(\bar{A})\in j_{0,n+i}(U)$. So for any $i\in\omega$, $\kappa_{n+i}\in j_{n+i,\omega}(j_{n,n+i}(\bar{A}))=A$.
\end{proof}

So it make sense to consider $M[\langle\kappa_i:i\in\omega\rangle]$.

\begin{teo}[Generic Absoluteness, Woodin, \cite{Woodin}]
 \label{GenAbsoluteness}
 Let $j:L(V_{\lambda+1})\prec L(V_{\lambda+1})$ with $\crt(j)$ be a weakly proper elementary embedding. Let $\langle\kappa_n:n\in\omega\rangle$ be its critical sequence, and $M_\omega$ its $\omega$-iterated model. 

Let $\delta$ be such that all the sets in $L_\delta(V_{\lambda+1})\cap V_{\lambda+2}$ are $\mathbb{U}(j)$-representable. Then there exists an elementary embedding 
\begin{equation*}
 \pi:L_\delta(M_\omega[\langle\kappa_n:n\in\omega\rangle]\cap V_{\lambda+1})\prec L_\delta(V_{\lambda+1}) 
\end{equation*}
such that $\pi\upharpoonright V_{\lambda+1}$ is the identity.
\end{teo}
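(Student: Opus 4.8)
The plan is to realize $\pi$ as the canonical \emph{term comparison map} between the two constructible hierarchies and to reduce its elementarity to a single Tarski--Vaught reflection step, which is exactly where $\mathbb{U}(j)$-representability and the Prikry-genericity of the critical sequence do the work. Write $N=M_\omega[\langle\kappa_n:n\in\omega\rangle]\cap V_{\lambda+1}$. Since $V_\lambda\subseteq M_\omega$ and the Prikry sequence adds no bounded subset of $\lambda$, we have $N\subseteq V_{\lambda+1}$, and every element of $L_\delta(N)$ has the form $t^{L_\delta(N)}[\vec s]$ for some G\"odel/Skolem term $t$ (built from the definable surjection $\Phi$) and parameters $\vec s\in N$. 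I would define
\[\pi\bigl(t^{L_\delta(N)}[\vec s]\bigr)=t^{L_\delta(V_{\lambda+1})}[\vec s],\]
so that $\pi\upharpoonright N=\id$ by construction. It then remains to check well-definedness and elementarity, i.e.\ that for every formula $\varphi$ and all $\vec s\in N$,
\[L_\delta(N)\vDash\varphi(t_1[\vec s],\dots,t_k[\vec s])\quad\Longleftrightarrow\quad L_\delta(V_{\lambda+1})\vDash\varphi(t_1[\vec s],\dots,t_k[\vec s]).\]
I would prove this by induction on $\varphi$; the atomic and Boolean cases are routine, so the theorem reduces to the quantifier step, and by symmetry to the downward direction: given $L_\delta(V_{\lambda+1})\vDash\exists w\,\varphi(w,\vec a)$ with $\vec a\in N$, I must produce a witness in $\ran(\pi)$, i.e.\ one definable over $L_\delta(N)$ from $N$-parameters.

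First I would reduce the witness to a coded subset of $V_\lambda$. By Lemma \ref{Thetasubsets} every $w\in L_\delta(V_{\lambda+1})$ is determined by an ordinal $\alpha<\delta$ together with a set $W\in L_\delta(V_{\lambda+1})\cap V_{\lambda+2}$ coding a pwo, and by the hypothesis on $\delta$ every such $W$ is $\mathbb{U}(j)$-representable. Since I must reflect both $\varphi$ and $\neg\varphi$, I would use that the class of $\mathbb{U}(j)$-representable sets below $\delta$ is closed under complement (this is exactly where the Tower Condition, now available after Cramer, is essential) as well as under projection and $\lambda$-unions. In this way the assertion ``$w$ witnesses $\varphi(\cdot,\vec a)$'' is rewritten as ``there is $y\subseteq V_\lambda$ such that the tower $\langle\pi_0(x\cap V_{\lambda_m},y\cap V_{\lambda_m},m):m\in\omega\rangle$ is well-founded,'' with all the ultrafilters drawn from $\mathbb{U}(j,\kappa,\langle a_i\rangle)$ for a suitable $\mathbb{U}(j)$-representation $\pi_0$.

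The core step is to transfer this well-foundedness into $M_\omega[\langle\kappa_n\rangle]$. Here I would invoke that $\langle\kappa_n:n\in\omega\rangle$ is $j_{0,\omega}(\mathbb{P})$-generic over $M_\omega$ (the Remark preceding this theorem) together with Gaifman--Powell, Lemma \ref{gaifpow}: a $\mathbb{U}(j)$-tower is well-founded precisely when it is threaded, and the Prikry sequence supplies the thread uniformly because the members of $\mathbb{U}(j)$ are, by definition, eventually fixed by the $j^n$ and hence move coherently along the iteration $j_{0,\omega}$. Pushing $\pi_0$ through $j_{0,\omega}$ and reading off the thread witnessed by $\langle\kappa_n\rangle$ yields a witness $w$ already present in $N$; its code $W$ lies in $M_\omega[\langle\kappa_n\rangle]$, so $w$ is definable over $L_\delta(N)$ and lies in $\ran(\pi)$, completing the Tarski--Vaught step and hence the elementarity of $\pi$.

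The main obstacle I expect is precisely the coherence between the representation data and the iteration: one must verify that the $\mathbb{U}(j)$-representation $\pi_0$ of the witnessing set (computed in $V$) is correctly read inside $M_\omega[\langle\kappa_n\rangle]$ and that well-foundedness of the associated tower is genuinely absolute between $V$ and this Prikry extension. This is the analogue of the homogeneity/absoluteness heart of Woodin's generic-absoluteness argument for $L(\mathbb{R})$ under \AD, and it rests on the eventual-fixedness clause in the definition of $\mathbb{U}(j)$ together with the completeness of towers guaranteed by the Tower Condition. Everything else --- well-definedness of $\pi$, the Boolean induction, and the coding of witnesses --- is bookkeeping once this absoluteness is in hand.
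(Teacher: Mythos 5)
Your overall architecture --- a term map over $N=M_\omega[\langle\kappa_n:n\in\omega\rangle]\cap V_{\lambda+1}$, reduction to a Tarski--Vaught step, and transfer of witnesses via the fact that the ultrafilters in a $\mathbb{U}(j)$-representation satisfy $j^n(U)=U$ and hence pass coherently into $M_\omega$ --- is the same idea the paper itself sketches (the paper gives no proof, only this heuristic, deferring to Woodin). But your proposal has a genuine gap exactly at the step you yourself flag as the main obstacle: you never actually produce the witness in $\ran(\pi)$, and the mechanism you invoke for it is the wrong one. Lemma \ref{gaifpow} (Gaifman--Powell) concerns complete $\lambda$-towers of fine, normal ultrafilters on $[\kappa_{n+1}]^{\kappa_n}$, i.e.\ the I2 machinery; it says nothing about the towers appearing in a $\mathbb{U}(j)$-representation, which are towers of $\lambda^+$-complete $L(V_{\lambda+1})$-ultrafilters concentrating on $(L(V_{\lambda+1}))^i$. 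For those towers, well-foundedness is characterized by threading only through the Tower Condition, and the thread there is supplied by the function $F$ of that condition, not by the critical sequence: the claim that ``the Prikry sequence supplies the thread uniformly'' is unsubstantiated, and it is not the actual mechanism. What Mathias-genericity of $\langle\kappa_n:n\in\omega\rangle$ over $M_\omega$ really buys is that the $j_{0,\omega}$-images of the representations can be interpreted inside $M_\omega[\langle\kappa_n:n\in\omega\rangle]$, so that each representable $Z$ has a decoded counterpart there. The heart of Woodin's argument is then showing that this decoded set is exactly the trace $Z\cap N$; note that the two sides do not even speak about literally the same towers (on the $N$ side one has iterated images of the original ultrafilters), so neither direction of the well-foundedness transfer is a routine absoluteness claim --- both need the invariance $j^n(U)=U$ and genericity in an essential way. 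That argument is entirely missing from your proposal.

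There is also a structural problem with the term map itself. Elements of $L_\delta(N)$ are in general definable only from parameters in $N$ \emph{together with ordinals}; definability from $N$-parameters alone is the analogue of Laver's goodness and fails for many $\delta$, which is not restricted in the hypothesis. Once ordinal parameters enter, your definition of $\pi$ silently fixes them, but an elementary $\pi$ that is the identity on $V_{\lambda+1}$ need not be the identity on ordinals, and well-definedness of an ordinal-fixing term map is not available for free --- it is essentially the statement being proved. Relatedly, in your Tarski--Vaught step the witness $w$ is coded (via Lemma \ref{Thetasubsets}) by a pwo $W\subseteq V_{\lambda+1}$ \emph{and an ordinal} $\alpha<\delta$; you must also show that this ordinal lies in $\ran(\pi)$, which your sketch never addresses. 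Finally, a minor point: the appeal to closure of the representable sets under complements, projections and $\lambda$-unions (and hence to the Tower Condition) is essentially redundant here, since the theorem's hypothesis already grants representability of every set in $L_\delta(V_{\lambda+1})\cap V_{\lambda+2}$, complements included; the proposal leans on machinery it does not need while omitting the argument it does need.
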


Two examples to understand better the theorem:
\begin{itemize}
 \item As $\langle\kappa_n:n\in\omega\rangle\in V_{\lambda+1}$, clearly 
  \begin{equation*}
	 M_\omega[\langle\kappa_n:n\in\omega\rangle]\cap V_{\lambda+1}=(V_{\lambda+1})^{M_\omega[\langle\kappa_n:n\in\omega\rangle]}\subseteq V_{\lambda+1}. 
	\end{equation*}
 Since $\pi\upharpoonright V_{\lambda+1}=\id$, the Theorem says that $M_\omega[\langle\kappa_n:n\in\omega\rangle]\cap V_{\lambda+1}\prec V_{\lambda+1}$. 
 \item If $A$ is definable on $M_\omega[\langle\kappa_n:n\in\omega\rangle]$, then $\pi(A)$ will be the set defined in the same way on $V_{\lambda+1}$, and $\pi(A)\cap M_\omega[\langle\kappa_n:n\in\omega\rangle]=A$.
\end{itemize}

The proof exploits the fact that in the definition of $\mathbb{U}(j)$-representability we had $j^n(U)=U$ for the ultrafilters used in a representation, therefore given a set in $L_\delta(V_{\lambda+1})$, its representation is also in $M_\omega$, in a certain sense is the set encrypted. When we add the critical sequence to $\lambda$, then we can rebuild something similar to the set (if not the set itself) using such representation. The Theorem uses the theory of $L_\delta(V_{\lambda+1})$, as a set, to build an embedding.

This essential tool has an immediate corollary:

\begin{cor}
 Suppose that there exists $j:L(V_{\lambda+1})\prec L(V_{\lambda+1})$ with $\crt(j)=\kappa_0<\lambda$. Let $\mathbb{P}$ be the Prikry forcing on $\kappa_0$. Then there exists $G$ $\mathbb{P}$-generic such that there exists $k:V[G]_{\kappa_0+1}\prec V[G]_{\kappa_0+1}$.
\end{cor}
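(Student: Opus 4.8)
My plan is to transport the genuine I1 embedding living at $\lambda$ in $V$ down into the Prikry extension $M_\omega[\langle\kappa_n\rangle]$, and then to pull the resulting statement back along the iteration map. First I would replace $j$ by its weakly proper part; this is harmless, since by \ref{allproper} it has the same critical point $\kappa_0<\lambda$, so I may assume $j$ is weakly proper and hence iterable (Woodin's iterability theorem for weakly proper embeddings). I would then form the $\omega$-th iterate $M_\omega$ with $j_{0,\omega}\colon L(V_{\lambda+1})\prec M_\omega$, so that $j_{0,\omega}(\kappa_0)=\lambda$, so that $\lambda$ is measurable in $M_\omega$ via $W=j_{0,\omega}(U)$, and so that, by the Remark preceding \ref{GenAbsoluteness}, the critical sequence $\langle\kappa_n:n\in\omega\rangle$ is $j_{0,\omega}(\mathbb{P})$-generic over $M_\omega$, where $j_{0,\omega}(\mathbb{P})$ is the Prikry forcing on $\lambda$ via $W$. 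Since $\mathcal{P}(V_{\kappa_0})\subseteq V_{\lambda+1}\subseteq L(V_{\lambda+1})$, the structure $V[G]_{\kappa_0+1}$ is computed identically in $V[G]$ and in $L(V_{\lambda+1})[G]$ for any $\mathbb{P}$-generic $G$, so the assertion ``$\exists k\colon V[G]_{\kappa_0+1}\prec V[G]_{\kappa_0+1}$'' is absolute between these two models. Hence it is enough to produce, inside $L(V_{\lambda+1})$, a condition of $\mathbb{P}$ forcing ``there is an I1 embedding at $\kappa_0$'', and, by elementarity of $j_{0,\omega}$ (which carries $\kappa_0\mapsto\lambda$ and $\mathbb{P}\mapsto j_{0,\omega}(\mathbb{P})$), this in turn reduces to showing that $M_\omega[\langle\kappa_n\rangle]\vDash$ ``there is an I1 embedding at $\lambda$''.

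This last point is the heart of the matter, and it is precisely where Generic Absoluteness (\ref{GenAbsoluteness}) enters. I would fix $\delta<\Theta$ large enough that $L_\delta(V_{\lambda+1})$ contains both $j\upharpoonright V_{\lambda+1}$ and the satisfaction relation of $(V_{\lambda+1},\in)$, yet small enough that every subset of $V_{\lambda+1}$ lying in $L_\delta(V_{\lambda+1})$ is $\mathbb{U}(j)$-representable; such $\delta$ exist by the representability results of \cite{Cramer,Woodin}. Over $L_\delta(V_{\lambda+1})$ the predicate ``$e$ codes an I1 self-embedding of $V_{\lambda+1}$'', that is, ``$e\upharpoonright V_\lambda$ induces a map that is $\Sigma^1_n$ for every $n$'' (see \ref{I1} and \ref{Sigmavalue}), is expressible by a single parameter-free formula $\varphi$, because both $V_{\lambda+1}$ and its satisfaction relation are definable there, so the scheme ``$\Sigma^1_n$ for all $n$'' collapses to one formula quantifying over $n\in\omega$. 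By \ref{I1} the embedding $j\upharpoonright V_{\lambda+1}$ witnesses $L_\delta(V_{\lambda+1})\vDash\exists e\,\varphi(e)$. Now \ref{GenAbsoluteness} furnishes $\pi\colon L_\delta(N)\prec L_\delta(V_{\lambda+1})$ with $\pi\upharpoonright N=\id$, where $N=(V_{\lambda+1})^{M_\omega[\langle\kappa_n\rangle]}$; as $\exists e\,\varphi(e)$ is a sentence, elementarity gives $L_\delta(N)\vDash\exists e\,\varphi(e)$, and the witness is a genuine element of $L_\delta(N)\subseteq M_\omega[\langle\kappa_n\rangle]$ coding an I1 self-embedding of $N$. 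That is exactly I1 at $\lambda$ in $M_\omega[\langle\kappa_n\rangle]$.

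Chaining these steps, $M_\omega[\langle\kappa_n\rangle]$ satisfies I1 at $\lambda$, so some condition of $j_{0,\omega}(\mathbb{P})$ forces it over $M_\omega$; applying $j_{0,\omega}^{-1}$ yields a condition of $\mathbb{P}$ forcing I1 at $\kappa_0$ over $L(V_{\lambda+1})$, and taking any $G$ that is $\mathbb{P}$-generic over $V$ below that condition, together with the $V_{\kappa_0+1}$-absoluteness above, produces the desired $k\colon V[G]_{\kappa_0+1}\prec V[G]_{\kappa_0+1}$. The delicate point I expect to be the real obstacle is that a \emph{single} embedding must be transported: the mere elementarity $N\prec V_{\lambda+1}$ would yield only, for each $n$ separately, a $\Sigma^1_n$ embedding of $N$, which is strictly weaker than I1, so the bare reflection is not enough. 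It is the full $L_\delta$-elementarity of \ref{GenAbsoluteness}, combined with the definability of the satisfaction relation that lets ``$\Sigma^1_n$ for every $n$'' become one first-order condition, that delivers a uniform I1 witness. I would also need to verify that $U,\mathbb{P}\in L(V_{\lambda+1})$ and that the forcing calculus behaves correctly under $\DC_\lambda$ in place of full choice, but these are routine.
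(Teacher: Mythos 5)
Your proof is correct and follows essentially the same route as the paper: pass to the weakly proper part, iterate out to $M_\omega$, use the Prikry-genericity of the critical sequence together with Generic Absoluteness (Theorem \ref{GenAbsoluteness}) to obtain an I1 embedding at $\lambda$ inside $M_\omega[\langle\kappa_n:n\in\omega\rangle]$, and then pull the forced statement back to a condition of $\mathbb{P}$ by elementarity of $j_{0,\omega}$. The only cosmetic difference is that the paper produces the witness concretely as $\pi^{-1}(j)=j\cap M_\omega[\langle\kappa_n:n\in\omega\rangle]$ (using that $j\upharpoonright V_{\lambda+1}\in L_1(V_{\lambda+1})$), whereas you transfer the existential sentence ``there is an I1 embedding'' along $\pi$; both versions hinge on exactly the same point, namely that the satisfaction relation of $V_{\lambda+1}$ is definable in $L_\delta(V_{\lambda+1})$, so that ``$\Sigma^1_n$ for all $n$'' becomes a single first-order condition there.
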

\begin{proof}
 We have seen that $j\upharpoonright V_{\lambda+1}$ is definable from $j\upharpoonright V_\lambda$, therefore it is in $L_1(V_{\lambda+1})$. Then 
  \begin{equation*}
	 \pi^{-1}(j)=j\cap M_\omega[\langle\kappa_n:n\in\omega\rangle]:(V_{\lambda+1})^{M_\omega[\langle\kappa_n:n\in\omega\rangle]}\prec (V_{\lambda+1})^{M_\omega[\langle\kappa_n:n\in\omega\rangle]}. 
	\end{equation*}
 So in $M_\omega$ there is a condition in $j_{0,\omega}(\mathbb{P})$ that forces the existence of a I1($\lambda$) embedding. By elementarity, in $V$ there is a condition on $\mathbb{P}$ that forces the existence of a I1($\kappa_0$) embedding.
\end{proof}

This corollary, coupled with Theorem \ref{forcingupto}, can expand the possibilities of consistency results: for example one can force $2^{\kappa_0}>\kappa_0^+$ with the reverse Easton forcing, and then force with Prikry to have I1($\kappa_0$). Prikry will not change $2^{\kappa_0}>\kappa_0^+$, and therefore we have a model for $I1(\lambda)+2^\lambda>\lambda^+$.

\begin{teo}
 \label{atlambda1}
 Suppose there exists $j:L(V_{\lambda+1})\prec L(V_{\lambda+1})$ with $\crt(j)<\lambda$. Then for every Easton function $E$ such that $E\upharpoonright\lambda$ is definable over $V_\lambda$, there is a generic extension $V[G]$ of $V$ that satisfies $\exists\gamma<\lambda\ \exists k:V_{\gamma+1}\prec V_{\gamma+1}\wedge 2^\gamma=E(\gamma)$. 
\end{teo}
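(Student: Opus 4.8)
The plan is to witness the conclusion at $\gamma=\kappa_0=\crt(j)$ by stacking two of the forcing constructions developed above: a reverse Easton iteration that realizes $E$ below $\lambda$ while preserving I0($\lambda$), followed by Prikry forcing at $\kappa_0$, which converts I0($\lambda$) into I1($\kappa_0$) without moving the value of $2^{\kappa_0}$.

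First I would unpack the hypothesis that $E\upharpoonright\lambda$ is definable over $V_\lambda$. Its values then lie in $V_\lambda$, so $E(\delta)<\lambda$ for every regular $\delta<\lambda$; hence the canonical reverse Easton iteration $\mathbb{P}_\lambda$ realizing $E$ is $\lambda$-bounded and keeps $\lambda$ strong limit. Its stages $\mathbb{Q}_\delta$ are the usual $\delta$-directed closed forcings, and since $j\upharpoonright V_\lambda:V_\lambda\prec V_\lambda$ fixes $\lambda$ and preserves the formula defining $E$, the iteration is $j$-coherent, i.e.\ $j(\mathbb{P}_\delta)=\mathbb{P}_{j(\delta)}$ for all $\delta<\lambda$. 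Thus the master condition/lifting argument of Theorem \ref{forcingupto} applies to the I0-embedding exactly as it does to I1 and I3: letting $G$ be $\mathbb{P}_\lambda$-generic, $j$ lifts to $j^*$ on $L(V_{\lambda+1})[G]$, and, as in the discussion following Lemma \ref{nosmallforcing}, $L(V[G]_{\lambda+1})$ is a $j^*$-invariant definable subclass, so $j^*\upharpoonright L(V[G]_{\lambda+1})$ witnesses I0($\lambda$) in $V[G]$ with unchanged critical point $\kappa_0$. By construction $V[G]\vDash 2^\delta=E(\delta)$ for every regular $\delta<\lambda$, in particular $2^{\kappa_0}=E(\kappa_0)$.

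Working now inside $V[G]$, by Theorem \ref{allproper} I may assume the I0-embedding is weakly proper, hence iterable, so $\kappa_0$ is measurable via $U=\{X\subseteq\kappa_0:\kappa_0\in j^*(X)\}$. The Corollary preceding this theorem, applied in $V[G]$, then yields a generic $H$ for the Prikry forcing $\mathbb{P}$ on $\kappa_0$ (via $U$) with $V[G][H]\vDash\exists k:V_{\kappa_0+1}\prec V_{\kappa_0+1}$, that is, I1($\kappa_0$). It remains to see that this last step leaves $2^{\kappa_0}$ fixed: $\mathbb{P}$ adds no bounded subsets of $\kappa_0$ (the Prikry property) and satisfies the $\kappa_0^+$-chain condition (two conditions sharing a stem are compatible, and there are only $\kappa_0$ stems), while $|\mathbb{P}|=2^{\kappa_0}$; a nice-name count then gives $(2^{\kappa_0})^{V[G][H]}=(2^{\kappa_0})^{V[G]}=E(\kappa_0)$. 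Hence $\gamma=\kappa_0<\lambda$ witnesses $\exists k:V_{\gamma+1}\prec V_{\gamma+1}\wedge 2^\gamma=E(\gamma)$ in $V[G][H]$.

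The one genuinely delicate point is the first step: lifting a full I0-embedding through the length-$\lambda$ iteration and identifying $L(V[G]_{\lambda+1})$ as the correct domain, rather than merely lifting an I1 or I3 embedding of $V_{\lambda+1}$. This is handled by the directed closure of the stages exactly as in Theorem \ref{omegaclosed} and Theorem \ref{forcingupto} (every relevant subset of $V[G]_{\lambda+1}$ is captured by a name living below $\lambda$); by contrast the Prikry step is unproblematic for domains, since it takes place at $\kappa_0<\lambda$ and produces an honest elementary embedding of $V[G][H]_{\kappa_0+1}$.
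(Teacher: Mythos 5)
Your proposal is correct and takes essentially the same route as the paper, whose (implicit) proof is exactly the two-step construction you describe: the reverse Easton iteration of Theorem \ref{forcingupto} realizing $E$ below $\lambda$ while lifting the I0-embedding, followed by the corollary to Generic Absoluteness giving I1($\kappa_0$) via Prikry forcing at $\kappa_0$ without altering $2^{\kappa_0}$. The only addition on your side is that you spell out the cardinal-arithmetic bookkeeping for the Prikry step (Prikry property, $\kappa_0^+$-c.c., nice-name count), which the paper leaves implicit in the remark preceding the theorem.
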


In particular this proves that it is consistent with I1($\lambda$) that there are no strongly compact or supercompact cardinals below $\lambda$, since $\lambda$ is singular and Solovay's Theorem says that above a strongly compact or a supercompact cardinal we have $2^\lambda=\lambda^+$ on singular cardinals. Therefore this is one of the cases where ``stronger'' does not mean ``larger''. 

The first way to improve such results is to note that Theorem \ref{GenAbsoluteness} can express more than just I1, therefore the same proof works for stronger hypotheses (for example $\exists j:L_\lambda(V_{\lambda+1})\prec L_\lambda(V_{\lambda+1})$), with the only bound given by which sets are $\mathbb{U}(j)$-representable, making Question \ref{representable} even more relevant.

Another way is to notice that in the proof of Theorem \ref{GenAbsoluteness} it is not necessary to limit ourselves to Prikry forcing and to $\langle\kappa_n:n\in\omega\rangle$: the important thing is that there is a $G\in V$ $j_{0,\omega}(\mathbb{P})$-generic and there is an $\omega$-sequence cofinal in $\lambda$ in $M_\omega[G]$. If $G\notin V$ then there are easy counterexamples: if \CH{} holds in $V$, then it holds also in $M_\omega$, and a forcing that adds a Cohen real to $M_\omega$ would make $V_{\lambda+1}^{M_\omega[G]}$ not elementary in $V_{\lambda+1}$ (not even contained, in fact).

There are numerous variations on Prikry forcing that add $\omega$-sequences to a large cardinal, and as $\lambda$ is a very large cardinal in $M_\omega$ (see \ref{verylarge}) they are usually applicable in this context. For example, it is clear in Theorem \ref{atlambda1} that the reverse Easton forcing forces $2^\eta=E(\eta)$ for all the regular cardinals $\eta<\gamma$, not only for $\gamma$, therefore with the construction of Theorem \ref{atlambda1}, based on classical Prikry forcing, $\gamma$ is not the first cardinal on which \GCH{} fails. Being the first one is a stronger property, that is forced usually with Gitik-Magidor long extender Prikry forcing (see \cite{GItik2}). If $\mathbb{P}$ is therefore the Gitik-Magidor long extender Prikry forcing on $\kappa_0$ and it could be possible to find a $G\in V$ $j_{0,\omega}(\mathbb{P})$-generic in $M_\omega$, then by Generic Absoluteness we would have a generic extension in which I1($\kappa_0$) + $2^{\kappa_0}>\kappa_0^+$ and \GCH{} below $\kappa_0$.

To check if this and other forcing notions work with Generic Absoluteness there is the need of a criterion that, if satisfied, implies that there is a $G\in V$, $j_{0,\omega}(\mathbb{P})$-generic in $M_\omega$.

\begin{defin}
 Let $\lambda$ be an infinite cardinal. A partially ordered set $\mathbb{P}$ is $\lambda$-good if it adds no bounded subsets of $\lambda$ and for every $\mathbb{P}$-generic $G$ and every $A\in V[G]$, $A\subset\Ord$, $|A|<\lambda$, there is a non-$\subset$-decreasing $\omega$-sequence $\langle A_i:i<\omega\rangle\subseteq V$ such that $A=\bigcup_{i\in\omega} A_i$.
\end{defin}

\begin{prop}[Shi, Proposition 3.20 in \cite{Shi}]
 Let $j:L(V_{\lambda+1})\prec L(V_{\lambda+1})$ with $\crt(j)=\kappa_0<\lambda$ and let $\mathbb{P}\in V_\lambda$ be a $\kappa_0$-good forcing. Then if $(M_\omega,j_{0,\omega})$ is the $\omega$-iterate of $j$, there is a $G\in V$ $j_{0,\omega}(\mathbb{P})$-generic on $M_\omega$.
\end{prop}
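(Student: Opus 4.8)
The plan is to generalize the argument behind the earlier Remark, where for Prikry forcing the critical sequence $\langle\kappa_n:n\in\omega\rangle$ was shown to be $j_{0,\omega}(\mathbb{P})$-generic over $M_\omega$ via Mathias' criterion; the hypothesis that $\mathbb{P}$ be $\kappa_0$-good is exactly the abstraction that makes this go through for an arbitrary forcing. Write $\mathbb{Q}=j_{0,\omega}(\mathbb{P})$ and $\mathbb{P}_n=j_{0,n}(\mathbb{P})$, so that $\mathbb{Q}=j_{n,\omega}(\mathbb{P}_n)$ for every $n$. Recall from the proof of Proposition \ref{iterable} that $V_\lambda\subseteq M_\omega$ and $j_{0,\omega}(\kappa_0)=\lambda$. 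Applying $j_{0,\omega}$ to the (first-order, since forcing is definable) assertion ``$\mathbb{P}$ is $\kappa_0$-good'' and using $j_{0,\omega}(\kappa_0)=\lambda$, I obtain that $M_\omega$ believes $\mathbb{Q}$ is ``$\lambda$-good'': it adds no bounded subset of $\lambda$, and every set of ordinals of size $<\lambda$ in a $\mathbb{Q}$-generic extension of $M_\omega$ is a non-decreasing $\omega$-union of sets from $M_\omega$. This $\omega$-approximation property is what replaces the Prikry property used in the Remark.

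First I would bound, in $V$, the number of dense subsets of $\mathbb{Q}$ lying in $M_\omega$. Since $M_\omega$ is a direct limit, every element of $M_\omega$ is $j_{n,\omega}(x)$ for some $n\in\omega$ and $x\in M_n=L(V_{\lambda+1})$; in particular every $D\in M_\omega$ dense in $\mathbb{Q}$ is $j_{n,\omega}(\bar D)$ for some $n$ and some $\bar D\in L(V_{\lambda+1})$ which, by elementarity of $j_{n,\omega}$, is dense in $\mathbb{P}_n$ over $M_n$. Because $\mathbb{P}\in V_{\kappa_m}$ for some $m$ and $j_{0,n}(\kappa_m)=\kappa_{m+n}<\lambda$, we have $\mathbb{P}_n\in V_{\kappa_{m+n}}\subseteq V_\lambda$, so $|\mathbb{P}_n|^V<\lambda$ and hence $|{\cal P}(\mathbb{P}_n)^{L(V_{\lambda+1})}|^V\le 2^{|\mathbb{P}_n|}<\lambda$, as $\lambda$ is strong limit. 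Thus the collection of dense subsets of $\mathbb{Q}$ in $M_\omega$ equals $\bigcup_{n\in\omega}j_{n,\omega}[\{\bar D\in L(V_{\lambda+1}):\bar D\text{ dense in }\mathbb{P}_n\}]$, a countable union of sets each of size $<\lambda$, so there are at most $\lambda$ of them in $V$. I would enumerate them in $V$ in $\omega$-blocks $\langle{\cal D}_n:n\in\omega\rangle$, where ${\cal D}_n$ is the image under $j_{n,\omega}$ of the dense subsets of $\mathbb{P}_n$ over $M_n$.

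With the dense sets so organized, I would construct $G$ by a fusion of length $\omega$. Working in $V$, one builds a decreasing sequence of conditions with stabilizing stems (as for Prikry-type forcing) together with partial filters $g_n$, so that round $n$ secures meeting of every member of ${\cal D}_n$. Here one exploits that $\mathbb{P}_n$ is $\kappa_n$-good over $M_n$ (apply $j_{0,n}$ to the goodness of $\mathbb{P}$, using $j_{0,n}(\kappa_0)=\kappa_n$), so it carries the completeness needed to meet the $<\lambda$ dense sets of block $n$ inside $M_n$, after which $j_{n,\omega}$ transports the result to $\mathbb{Q}$. The $\omega$-union property of $\mathbb{Q}$ over $M_\omega$ then shows that the blocks cohere into a single object $G=\bigcup_n j_{n,\omega}{}''g_n$ and that $G\in V$. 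As $G$ is a filter meeting every $D\in\bigcup_n{\cal D}_n$, it is $\mathbb{Q}$-generic over $M_\omega$, as required.

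The main obstacle is precisely this fusion step. Because $\cof(\lambda)=\omega$ and $\mathbb{Q}$ need not be $<\lambda$-closed, a naive descending construction of length $\lambda$ has no usable limit stages; what rescues the argument is that $\kappa_0$-goodness replaces closure by an $\omega$-approximation principle, allowing the $\lambda$-many dense sets to be met in $\omega$ stem-controlled rounds and allowing the union to be reconstructed inside $V$. Verifying that the stem/coherence conditions can be maintained at every round, and that the resulting $G$ genuinely lies in $V$ and meets all dense sets, is where the real work sits; this is exactly the Prikry computation of the Remark, now carried out for an arbitrary $\kappa_0$-good $\mathbb{P}$.
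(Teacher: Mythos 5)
Your second paragraph (the counting and block organization of the dense sets) is correct and is essentially the first half of the only proof the paper actually contains in this vicinity, namely the proof of the Remark for $\kappa_0$-\emph{geometric} forcings; note that the paper never proves the present Proposition at all, but quotes it from Shi. The problem is your third paragraph, which is where the entire content of the Proposition lives, and the step you describe there cannot work. You propose that $\kappa_n$-goodness of $\mathbb{P}_n=j_{0,n}(\mathbb{P})$ ``carries the completeness needed to meet the $<\lambda$ dense sets of block $n$ inside $M_n$, after which $j_{n,\omega}$ transports the result to $\mathbb{Q}$.'' Since ${\cal P}(\mathbb{P}_n)\subseteq V_\lambda$ is absolute between $V$, $L(V_{\lambda+1})$ and $M_n$, the preimage under $j_{n,\omega}$ of your block ${\cal D}_n$ is the family of \emph{all} dense open subsets of $\mathbb{P}_n$; a filter $g_n$ meeting all of them would be fully $\mathbb{P}_n$-generic over $V$, and for atomless $\mathbb{P}$ no such filter exists in $V$ (or in $M_n$). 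So $G$ cannot meet $j_{n,\omega}(\bar D)$ by ``meeting $\bar D$ at stage $n$ and pushing forward''; the meeting must be deferred to later stages, using $j_{n,\omega}(\bar D)=j_{m,\omega}(j_{n,m}(\bar D))$ for $m>n$, which is what the Mathias computation secretly does in the Prikry Remark. Furthermore, the paper's definition of $\kappa_0$-good is purely an $\omega$-union property of sets of ordinals in generic extensions: an abstract good poset has no stems, no direct-extension order, no fusion, and no length function, so the dense sets $E_m$ and the ``stem-controlled rounds'' you invoke do not exist. The $\omega$-union property, by itself, does not produce, for a given $<\lambda$-sized family of dense sets lying in the model, any condition or small object from which $V$ can later assemble a filter meeting that family. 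That missing tool is exactly what $\lambda$-geometricity adds (a single condition $q$ such that any filter containing $q$ and meeting the $E_m$'s meets the whole family), and it is what the paper's Remark uses to do the block-by-block assembly in $V$. Substituting goodness for geometricity while keeping the same proof skeleton is precisely the step that requires a new idea, and your sketch does not supply one; indeed the paper hints that Shi's original definition of goodness is formulated via families of dense sets, i.e., it builds in the filter-construction principle that the $\omega$-union reformulation does not obviously yield.

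There are also two smaller gaps you should not gloss over. First, you apply $j_{0,\omega}$ to the assertion ``$\mathbb{P}$ is $\kappa_0$-good,'' but $j_{0,\omega}$ is an elementary embedding of $L(V_{\lambda+1})$, not of $V$, while the hypothesis is goodness in $V$; you must first transfer goodness down to $L(V_{\lambda+1})$. This is true but needs an argument: given $A\in L(V_{\lambda+1})[G]$ of size $<\kappa_0$, bound it by a set $B\in L(V_{\lambda+1})$ of size $<\lambda$ using the forcing relation, fix in $L(V_{\lambda+1})$ a bijection of $B$ with an ordinal $\gamma<\lambda$, and use ${\cal P}(\gamma)^V={\cal P}(\gamma)^{L(V_{\lambda+1})}$ together with goodness in $V$ to pull the decomposition into $L(V_{\lambda+1})$. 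Second, any appeal to a goodness- or geometricity-type property of $\mathbb{Q}$ \emph{inside} $M_\omega$ for the block ${\cal D}_n$ requires ${\cal D}_n\in M_\omega$, which you never check; it does hold, because $j_{n,\omega}\upharpoonright V_{\kappa_k}=j_{k,\omega}(j_{n,k}\upharpoonright V_{\kappa_k})\upharpoonright V_{\kappa_k}\in M_\omega$ for each $k$, so the pointwise image $j_{n,\omega}[{\cal P}(\mathbb{P}_n)]$ is a set of $M_\omega$ of size $<\lambda$ there, but again this is an argument that must be made, not assumed.
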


The original definition of $\lambda$-goodness involved families of dense sets, and this is a more convenient way to prove that various Prikry forcings are $\lambda$-good. The following is a sufficient condition:

\begin{defin}[\cite{DimonteWu}]
 Let $\lambda$ be an infinite cardinal. Let $\mathbb{P}$ be a forcing notion equipped with a length measure of the conditions $l:\mathbb{P}\to\omega$ such that $l(1_{\mathbb{P}})=0$ and for $p,q\in\mathbb{P}$, if $p\leq q$ then $l(p)\geq l(q)$ (for example if $\mathbb{P}$ is the Prikry forcing, then $l(s,A)=\lh(s)$). 

 We say that $\mathbb{P}$ is $\lambda$-geometric if it does not add bounded subsets of $\lambda$ and for any $\alpha<\lambda$, any $\langle D_\beta:\beta<\alpha\rangle$ collection of open dense sets and any $p\in\mathbb{P}$ there exists a condition $q\leq p$ such that whenever a filter contains $q$ and meets all the dense open sets $E_n=\{p:l(p)>n\}$, it also meets all the $D_\beta$'s.
\end{defin}

\begin{rem}
 Let $\lambda$ be an infinite cardinal. Let $\mathbb{P}$ be a $\lambda$-geometric forcing notion. Then $\mathbb{P}$ is $\lambda$-good.
\end{rem}
\begin{proof}
 Let $G$ be $\mathbb{P}$-generic. Let $A\subseteq\Ord$, $|A|=\mu<\lambda$, $A\in V[G]$. Let $f:\mu\to\Ord$ such that $\ran(f)=A$, $f\in V[G]$. For any $\alpha<\mu$, let $D_\alpha=\{p\in\mathbb{P}:\exists\beta\ p\vDash\dot{f}(\alpha)=\beta\}$, the set of the conditions that decide the $\alpha$-th element of $A$. By $\lambda$-geometricity, the set of $q$'s such that every filter that contains $q$ and intersects $E_n$ intersects also all $D_\alpha$'s, is dense in $\mathbb{P}$, therefore there exists $q_0\in G$ with such property. Define by induction $q_{n+1}$ that extends $q_n$ such that $q_{n+1}\in G\cap E_n$. Let $B_n=\{\alpha<\mu:q_n\in D_\alpha\}\in V$ be the set of $\alpha$'s such that $q_n$ decides $f(\alpha)$. Then since ${\cal F}=\bigcup_{n\in\omega}{\cal F}_{q_n}$, by $\lambda$-geometricity, intersects all $D_\alpha$'s (${\cal F}_{q_n}$ is the filter generated by $q_n$), for any $\alpha<\mu$ there exists $n\in\omega$ such that $q_n\in D_\alpha$, therefore $\mu=\bigcup_{n\in\omega}B_n$. Define 
\begin{equation*}
 A_n=\{\beta:\exists\alpha\in B_n\ q_n\vDash\dot{f}(\alpha)=\beta\}\in V. 
\end{equation*}
 If $\beta\in A$, then there exists $\alpha<\mu$ such that $f(\alpha)=\mu$, and there exists $n\in\omega$ such that $\alpha\in B_n$, i.e., $q_n$ decides $f(\alpha)$. But since $q_n\in G$, it must be that $q_n\vDash\dot{f}(\alpha)=\beta$, so $\beta\in A_n$, therefore $A=\bigcup_{n\in\omega}A_n$.
\end{proof}

The key point is that many variations of Prikry forcing satisfy a variation of the Prikry condition: Let $p\leq^* q$ be $p\leq q$ and $l(p)=l(q)$. Then such variation says that for any dense set $D$ and every $p\in\mathbb{P}$ there is a $q\leq^* p$, such that for any $r\leq q$ sith $l(r)=l(q)+n$, $r\in D$. In other words every condition can be ``changed'' so that any extension above a certain length is in the dense set. This condition appears frequently in literature, for example in \cite{Mathias} for Prikry forcing, or \cite{Merimovich} for Gitik-Magidor long extender Prikry forcing. In \cite{DimonteWu2} there are other examples, and in both \cite{ShiTrang} and \cite{DimonteWu2} it is exploited to prove the $\lambda$-goodness of Gitik-Magidor long extender Prikry forcing, in different ways. If $\leq^*$ is sufficiently closed, it is easy to see that with repeated uses of this variation of Prikry condition one can find a $q$ that witnesses $\lambda$-geometricity, and therefore $\lambda$-goodness.

It is possible also to prove it directly:

\begin{rem}[\cite{DimonteWu}]
  Let $j:L(V_{\lambda+1})\prec L(V_{\lambda+1})$ with $\crt(j)=\kappa_0<\lambda$ and let $\mathbb{P}\in V_\lambda$ be a $\kappa_0$-geometric forcing. Then if $(M_\omega,j_{0,\omega})$ is the $\omega$-iterate of $j$, there is a $G\in V$ $j_{0,\omega}(\mathbb{P})$-generic on $M_\omega$.
\end{rem}
\begin{proof}
 Note that in $V$ there are only $\lambda$ open dense sets of $j_{0,\omega}(\mathbb{P})$: as $\mathbb{P}\in V_\lambda$ there exists $n\in\omega$ such that $\mathbb{P}\in V_{\kappa_n}$, so $j_{0,\omega}(\mathbb{P})\in M_\omega\cap V_{j_{0,\omega}(\kappa_n)}$. But $M_\omega\cap V_{j_{0,\omega}(\lambda)}$ is the range of all $j_{n,\omega}\upharpoonright V_\lambda$, therefore $|j_{0,\omega}(\mathbb{P})|<|M_\omega\cap V_{j_{0,\omega}(\lambda)}|=|V_\lambda|=\lambda$.

 Let $\langle D_\alpha:\alpha<\lambda\rangle$ be an enumeration of the dense sets of $j_{0,\omega}(\mathbb{P})$ in $V$. For every $n\in\omega$, fix $q_n$ that witnesses $\lambda$-geometricity for $\langle D_\alpha:\alpha<\kappa_n\rangle$. Then for every $m$ there exists a $q'_{n,m}<q_n$ such that $q'_{n,m}\in E_m$. Let $H$ be the filter $\bigcup_{n,m\in\omega}{\cal F}_{q'_{n,m}}$, with ${\cal F}_q$ the filter generated by $q$. Then $H\in V$ is generic.
\end{proof}

Therefore, thanks to the work that in literature has already been done with the structure of Prikry-like forcing, for many variations of Prikry forcing it is possible to prove Generic Absoluteness. Interestingly, for many applications such variations need a forcing preparation that is actually a reverse Easton iteration, therefore Theorem \ref{atlambda1} is necessary. In \cite{DimonteWu} are collected some results:

\begin{teo}
 Suppose I0($\lambda$). Then the following are consistent:
 \begin{itemize}
  \item I1($\gamma$) + $2^\gamma>\gamma^+$ + $\forall\kappa<\gamma\ 2^\kappa=\kappa^+$ (see \cite{Gitik2}, preparation forcing to force \GCH, then Gitik-Magidor long extender Prikry forcing);
	\item I1($\gamma$) + Tree property at $\gamma^+$ (see \cite{Neeman}, preparation forcings that make supercompact cardinals Laver indestructible and add many subsets to all regular cardinals, then diagonal supercompact Prikry forcing);
	\item I1($\gamma$) + Tree property at $\gamma^{++}$ (see \cite{DobrinenFriedman}, preparation forcing is reverse Easton iteration of Sacks forcings, then Prikry forcing);
	\item $\dots$
 \end{itemize} 
\end{teo}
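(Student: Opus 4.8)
The plan is to derive all three items (and any further ones) from a single template that separates the large-cardinal content from the purely combinatorial content. Fix a weakly proper $j:L(V_{\lambda+1})\prec L(V_{\lambda+1})$ witnessing I0($\lambda$), with $\crt(j)=\kappa_0$; by Woodin's iterability theorem for weakly proper embeddings $j$ is iterable, so the $\omega$-iterate $(M_\omega,j_{0,\omega})$ exists, $j_{0,\omega}(\kappa_0)=\lambda$, and $\lambda$ is measurable in $M_\omega$ via $j_{0,\omega}(U_j)$. For each item I would select from the cited literature a two-step forcing $\mathbb{R}*\dot{\mathbb{P}}\in V_\lambda$, where $\mathbb{R}$ is a reverse Easton preparation below $\kappa_0$ and $\mathbb{P}$ is the appropriate Prikry-type forcing on $\kappa_0$ (classical Prikry for the third item, Gitik--Magidor long extender Prikry for the first, diagonal supercompact Prikry for the second), arranged so that over the prepared model $\mathbb{P}$ makes $\kappa_0$ singular of cofinality $\omega$ and forces the target combinatorial statement at $\gamma:=\kappa_0$.

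First I would carry out the preparation. Each $\mathbb{R}$ is reverse Easton, directed closed, $\lambda$-bounded, and (for a definable Easton function) $j$-coherent, so Theorem \ref{forcingupto} lifts $j$ through $\mathbb{R}$ and I0($\lambda$) persists in the prepared model $W=V[H]$; since $\mathbb{R}$ adds no new bounded subsets of $\lambda$, the domain of the lifted embedding is the ambient $V_\lambda$ of $W$, the power function below $\kappa_0$ (or the desired indestructibility, or a reverse Easton iteration of Sacks forcings) is in place, and $\mathbb{P}$ remains a member of $V_\lambda^W$. All the iteration apparatus is thereby reconstituted over $W$, so I may re-use the notation $j,\ M_\omega,\ j_{0,\omega}$ for the corresponding objects of $W$.

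The heart of the matter is to transfer I1 into the Prikry extension of $W$. I would first verify that $\mathbb{P}$ is $\kappa_0$-geometric: each of these forcings satisfies a strong (``diagonal'') Prikry property over a sufficiently closed direct-extension order $\leq^*$, exactly as recorded in the cited references, and repeated applications of that property furnish, for any $\kappa_0$-indexed family of dense sets, a master condition witnessing $\kappa_0$-geometricity, hence $\kappa_0$-goodness. By the criterion of \cite{DimonteWu} this produces a $G\in W$ that is $j_{0,\omega}(\mathbb{P})$-generic over $M_\omega$. Now ``there exists $k:V_{\lambda+1}\prec V_{\lambda+1}$'' is read off from $j\upharpoonright V_\lambda$ and so already lives in $L_1(V_{\lambda+1})$, a level whose subsets of $V_{\lambda+1}$ are $\mathbb{U}(j)$-representable; thus Theorem \ref{GenAbsoluteness} supplies $\pi:L_\delta(M_\omega[G]\cap V_{\lambda+1})\prec L_\delta(V_{\lambda+1})$ with $\pi\upharpoonright V_{\lambda+1}=\id$, and $\pi^{-1}(j)$ witnesses I1($\lambda$) in $M_\omega[G]$. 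Combining this with the fact that $j_{0,\omega}(\mathbb{P})$ forces the combinatorial property at $\lambda$ over $M_\omega$ --- the singular-cardinal content of the cited constructions, available because $\lambda$ carries the requisite strength in $M_\omega$ by \ref{verylarge} --- some condition of $j_{0,\omega}(\mathbb{P})$ forces I1($\lambda$) together with the target property. Pulling this back along the elementarity of $j_{0,\omega}$ gives a condition of $\mathbb{P}$ forcing I1($\kappa_0$) and the property at $\gamma=\kappa_0$ over $W$; since a $W$-generic for $\mathbb{P}$ is equivalently a $V$-generic for $\mathbb{R}*\dot{\mathbb{P}}$, the two statements are consistent together.

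The main obstacle is the per-item verification that the concrete Prikry variant really is $\kappa_0$-geometric and that its generic, the preparation $\mathbb{R}$, and the master condition forcing the combinatorial target can be made mutually compatible; the long extender and diagonal supercompact versions require genuine work with their $\leq^*$-orders and with the specific reverse Easton preparations they demand. A secondary, conceptual limitation is that this scheme only transfers statements sitting at a $\mathbb{U}(j)$-representable level: I1 is safe, but pushing the conclusion to stronger embeddings such as $\exists k:L_\alpha(V_{\lambda+1})\prec L_\alpha(V_{\lambda+1})$ would run straight into Question \ref{representable}.
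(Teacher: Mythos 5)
Your proposal is correct and follows essentially the same route as the paper: the theorem is exactly the paper's collection (deferred to \cite{DimonteWu}) of applications of the Section \ref{forcing} machinery --- reverse Easton preparation lifted via Theorem \ref{forcingupto}, verification that the relevant Prikry variant is $\kappa_0$-geometric hence $\kappa_0$-good, a generic over $M_\omega$ found in the ground model, Generic Absoluteness (Theorem \ref{GenAbsoluteness}) applied at the $L_1(V_{\lambda+1})$ level to recover I1($\lambda$) in $M_\omega[G]$, and a pullback along $j_{0,\omega}$. The only blemish is your claim that the preparation ``adds no new bounded subsets of $\lambda$'' --- it generally does; the correct justification for the domain being $V[H]_\lambda$ is the one inside Theorem \ref{forcingupto} itself (bounded sets appear at bounded stages, with names of bounded rank), which you are invoking anyway.
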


\section{Dissimilarities with $\AD^{L(\mathbb{R})}$}
\label{dissimil}

While there are many similarities between I0 and $\AD^{L(\mathbb{R})}$, not everything really matches. The first thing that stands out is that while $L(\mathbb{R})\vDash\AD^{L(\mathbb{R})}$, $L(V_{\lambda+1})\nvDash I0(\lambda)$. But the big problem is Theorem \ref{nosmallforcing}: the fact that many structural properties below $\lambda$ are independent from I0 in fact implies that I0 cannot possibly prove them, and these properties can be the similarities we are interested in. In the $\AD^{L(\mathbb{R})}$ case this situation is avoided as there are no forcings below $\omega$. For example, we have proven that $\lambda^+$ is measurable in $L(V_{\lambda+1})$ under I0, just like $\omega_1$ was measurable in $L(\mathbb{R})$ under \AD. But in the $L(\mathbb{R})$ case we have that the club filter itself is an ultrafilter, while Theorem \ref{measurable} stops short and proves only that the club filter on a stationary set is measurable. So the question is: how close can we get to the original case?

As $\lambda>\omega$, for any $\alpha<\lambda$ regular $S^{\lambda^+}_\alpha$ is stationary, and for $\alpha\neq\beta$ $S^{\lambda^+}_\alpha\cap S^{\lambda^+}_\beta=\emptyset$, therefore it is not possible to have the club filter to be an ultrafilter. The best outcome would be then to have the club filter to be an ultrafilter on every $S^{\lambda^+}_\alpha$; this is called Ultrafilter Axiom at $\lambda$. But this is consistently false because of Theorem \ref{nosmallforcing}:

\begin{prop}
\label{noultrafilter}
 Let $j:L(V_{\lambda+1})\prec L(V_{\lambda+1})$ with $\crt(j)<\lambda$, let $\omega<\gamma<\lambda$ regular. Then in a generic extension the following is true: I0($\lambda$) still holds and in $L(V_{\lambda+1})$ there are $S_1$ and $S_2$ disjoint stationary sets such that $S_1\cup S_2=S^{\lambda^+}_\gamma$.
\end{prop}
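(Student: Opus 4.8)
The plan is to lean on Lemma \ref{nosmallforcing}: force with a \emph{small} poset $\mathbb{P}\in V_\lambda$, so that I0($\lambda$) survives into $V[G]$ (witnessed by the restriction to $L(V[G]_{\lambda+1})$ of the lift of $j$), and arrange that the wanted partition materialises in $L(V[G]_{\lambda+1})$. The first observation guiding the choice of $\mathbb{P}$ is that it must genuinely enlarge $V_{\lambda+1}$, i.e.\ add bounded subsets of $\lambda$: a $\lambda$-closed forcing leaves $V_{\lambda+1}$, hence $L(V_{\lambda+1})$ and all of its subsets of $\lambda^+$, completely untouched, and so could never introduce a new partition of $S^{\lambda^+}_\gamma$. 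I would therefore take $\mathbb{P}$ to be a small forcing adding a new subset $g$ of some $\kappa_n<\lambda$ --- for definiteness Cohen forcing $\mathbb{P}=\mathrm{Add}(\omega,1)$, which is c.c.c., hence preserves cofinalities and stationary subsets of $\lambda^+$. Then $g\in V[G]_{\lambda+1}$ is available as a parameter inside $L(V[G]_{\lambda+1})$, and since $V_{\lambda+1}\subsetneq V[G]_{\lambda+1}$ the model $L(V[G]_{\lambda+1})$ carries strictly more subsets of $\lambda^+$ than $L(V_{\lambda+1})$ did.

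Next I would manufacture the two stationary sets inside $L(V[G]_{\lambda+1})$ out of the generic. In $V$, which satisfies \AC, Solovay's Theorem splits $S^{\lambda^+}_\gamma$ into a partition $\langle A_n:n<\omega\rangle$ of $\omega$ disjoint stationary pieces; the role of the forcing is to read off, via $g$, a coarsening of such a partition into exactly two stationary blocks and to certify that the resulting pair belongs to $L(V[G]_{\lambda+1})$. Concretely I would aim to define, over $(V[G]_{\lambda+1},g)$, a map $c:S^{\lambda^+}_\gamma\to 2$ and set $S_1=c^{-1}(0)$ and $S_2=c^{-1}(1)$: both are then in $L(V[G]_{\lambda+1})$ because $c$ is definable from the parameter $g\in V[G]_{\lambda+1}$, while the stationarity of each block should follow from a density/genericity argument showing that $g$ forces each of $S_1,S_2$ to meet every ground-model club, together with a reflection step upgrading this to every club of $L(V[G]_{\lambda+1})$.

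The main obstacle is exactly the tension between definability and stationarity. In the only interesting case the Ultrafilter Axiom holds at $\gamma$ in the ground model, so by Lemma \ref{measurable} the club filter is an ultrafilter on $S^{\lambda^+}_\gamma$ in $L(V_{\lambda+1})$; consequently $L(V_{\lambda+1})$ contains \emph{no} partition of $S^{\lambda^+}_\gamma$ into two stationary pieces whatsoever, and the \AC-partition $\langle A_n\rangle$ of $V$ is genuinely absent from $L(V_{\lambda+1})$. The split I produce must thus be truly created by the generic. The delicate work is to set up a ground-model, $L(V_{\lambda+1})$-definable assignment attaching to each $\eta\in S^{\lambda^+}_\gamma$ a canonical parameter in $V_{\lambda+1}$ (for instance extracted from codes of cofinal maps $\gamma\to\eta$) together with a fixed relation $R$, so that one may put $c(\eta)=0$ iff $R$ holds between $g$ and the parameter attached to $\eta$, in such a way that (i) the definition is code-independent, guaranteeing $c\in L(V[G]_{\lambda+1})$, and (ii) a density argument over $\mathbb{P}$, using the surjection $V_{\lambda+1}\twoheadrightarrow{\cal P}(\lambda^+)^{L(V_{\lambda+1})}$ of Lemma \ref{inaccessible} and the chain condition of $\mathbb{P}$, forces both $S_1$ and $S_2$ to be stationary in $L(V[G]_{\lambda+1})$. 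Checking that a single scheme delivers simultaneously the constructibility of the split and the co-stationarity of each half --- against the fact that $L(V_{\lambda+1})$ admits no split at all --- is where essentially all the difficulty concentrates; once it is in place, Lemma \ref{nosmallforcing} supplies I0($\lambda$) in $V[G]$ for free and the proof is complete.
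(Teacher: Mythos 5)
Your choice of forcing is the fatal step, not a detail to be fixed later. Cohen forcing $\mathrm{Add}(\omega,1)$ adds a new $\omega$-sequence of ordinals below $\lambda$, so by the Corollary following Theorem \ref{omegaclosed} the identity $L(V_{\lambda+1})[G]=L(V[G]_{\lambda+1})$ \emph{fails}; worse, by Woodin's Theorem 175 in \cite{Woodin} (quoted right after Lemma \ref{nosmallforcing}), under I0 adding a Cohen real gives $V_{\lambda+1}\notin L(V[G]_{\lambda+1})$. This has two consequences that break your plan. First, your claim that $L(V[G]_{\lambda+1})$ ``carries strictly more subsets of $\lambda^+$'' than $L(V_{\lambda+1})$ is false: since $V_{\lambda+1}\in L(V_{\lambda+1})$ but $V_{\lambda+1}\notin L(V[G]_{\lambda+1})$, the new model does not even contain the old one. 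Second, $V_{\lambda+1}$ cannot be a definable class of $L(V[G]_{\lambda+1})$ (otherwise separation would make it an element), so your coloring $c$ --- which is to be computed from $g$ together with ``a ground-model, $L(V_{\lambda+1})$-definable assignment'' of codes in $V_{\lambda+1}$ --- cannot be defined inside $L(V[G]_{\lambda+1})$: that model has no way to recognize which of its elements are ground-model codes. This is precisely the obstruction that forces the hypothesis $\gamma>\omega$ in the Proposition and keeps the $\gamma=\omega$ analogue open, as the paper remarks immediately after the proof. On top of this, the combinatorial heart of your argument (the relation $R$, code-independence, and the density argument making both halves stationary) is never constructed --- you concede that ``essentially all the difficulty concentrates'' there --- and since in the interesting case $L(V_{\lambda+1})$ contains \emph{no} stationary split of $S^{\lambda^+}_\gamma$ at all, nothing in the paper's toolkit supplies such a scheme.

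The idea you are missing is that one should not try to \emph{create} new stationary sets with the generic; one should change which cofinality class old ones belong to. The paper forces with $Coll(\gamma^+,\gamma)\in V_\lambda$, which makes $(\gamma^+)^V$ an ordinal of cofinality $\gamma$ and, crucially, is $\omega$-closed because $\gamma>\omega$. The two sets $S_1=(S^{\lambda^+}_{\gamma})^V$ and $S_2=(S^{\lambda^+}_{\gamma^+})^V$ are disjoint, stationary, and already lie in $L(V_{\lambda+1})$ (cofinalities below $\lambda^+$ are computed correctly there, as $V_{\lambda+1}$ is closed under $\lambda$-sequences); no new set has to be built. After forcing, every ordinal of old cofinality $\gamma^+$ has cofinality $\gamma$, so $S_1\cup S_2=(S^{\lambda^+}_\gamma)^{V[G]}$; smallness of the forcing preserves the stationarity of both; $\omega$-closure and Theorem \ref{omegaclosed} give $L(V_{\lambda+1})[G]=L(V[G]_{\lambda+1})$, putting $S_1,S_2$ in the right model; and Lemma \ref{nosmallforcing} preserves I0($\lambda$). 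One small point in your favour: the ``reflection step'' you worry about at the end is vacuous, since being a club is absolute between transitive models, so stationarity in $V[G]$ already implies stationarity in the inner model $L(V[G]_{\lambda+1})$.
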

\begin{proof}
 First of all, notice that $\lambda^+$ is coded by subsets of $\lambda$, so it can be considered a subset of $V_{\lambda+1}$. As $V_{\lambda+1}$ is closed by $\lambda$-sequences, the cofinality of a cardinal less than $\lambda^+$ is the same in $V$ and in $L(V_{\lambda+1})$ (because is witnessed by an element of $V_{\lambda+1}$), therefore $(S^{\lambda^+}_\eta)^V=(S^{\lambda^+}_\eta)^{L(V_{\lambda+1})}$ for any $\eta$ regular. Let $\mathbb{P}\in L(V_{\lambda+1})$ be $Coll(\gamma^+,\gamma)$. Then by \ref{nosmallforcing} I0($\lambda$) holds in $V[G]$. As $\gamma>\omega$, $\mathbb{P}$ is $\omega$-closed. Let $G$ be $\mathbb{P}$ generic. Consider $(S^{\lambda^+}_{\gamma^+})^V$ and $(S^{\lambda^+}_\gamma)^V$, both in $L(V_{\lambda+1})$ and therefore in $L(V_{\lambda+1})[G]$. Since $\mathbb{P}$ does not add unbounded sets in $\lambda^+$, $(S^{\lambda^+}_{\gamma^+})^V$ and $(S^{\lambda^+}_\gamma)^V$ are still stationary in $L(V_{\lambda+1})[G]$, are both subsets of $(S^{\lambda^+}_\gamma)^{V[G]}=(S^{\lambda^+}_\gamma)^{L(V_{\lambda+1})[G]}$ and of course are disjoint. Since $\mathbb{P}$ is $\omega$-closed, by \ref{omegaclosed} $L(V_{\lambda+1})[G]=L(V[G]_{\lambda+1})$, so $(S^{\lambda^+}_{\gamma^+})^V,\ (S^{\lambda^+}_\gamma)^V\in L(V[G]_{\lambda+1})$ are $S_1$ and $S_2$. 
\end{proof}

Note that if $\gamma=\omega$ then the proof breaks down: then $L(V_{\lambda+1})[G]\neq L(V[G]_{\lambda+1})$ and it is not clear why $S_1$ and $S_2$ would be in $L(V[G]_{\lambda+1})$. The following problems are still open:

\begin{Q}
 Is the Ultrafilter Axiom consistent with I0?
\end{Q}

\begin{Q}
 Is it consistent with I0 to have the club filter an ultrafilter on cofinality $\omega$? Could it be a consequence of I0?
\end{Q}

Towards a solution of the second problem, there are the following results:

\begin{teo}[Woodin, Theorem 176 in \cite{Woodin}]
 Suppose I0($\lambda$). Then there is no partition of $S^{\lambda^+}_\omega$ into two stationary sets such that all the sets definable from them are $\mathbb{U}(j)$-representable\footnote{This rather clumsy statement comes from the fact that the proof uses the fact that some $Z_S$ defined from the stationary set $S$ is $\mathbb{U}(j)$-representable. If all the sets in some $L_\delta(V_{\lambda+1})$, with $\delta$ limit, are $\mathbb{U}(j)$-representable, this implies that we cannot find two disjoint stationary sets in $L_\delta(V_{\lambda+1})$. If all the sets are $\mathbb{U}(j)$-representable, this means that the club filter is an ultrafilter on cofinality $\omega$.}.
\end{teo}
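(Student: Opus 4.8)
The plan is to argue by contradiction and to show that the existence of such a partition would force the club filter ${\cal F}$ on $\lambda^+$ to be an \emph{ultrafilter} on the definable-from-the-partition subsets of $S^{\lambda^+}_\omega$, which is absurd: an ultrafilter extending the club filter cannot contain two disjoint positive sets, yet $S_1$ and $S_2$ are both stationary, hence both ${\cal F}$-positive, and partition $S^{\lambda^+}_\omega$. So suppose $S^{\lambda^+}_\omega=S_1\sqcup S_2$ with $S_1,S_2$ stationary and every set definable from $S_1,S_2$ being $\mathbb{U}(j)$-representable. The whole point is that this representability hypothesis is exactly what upgrades Lemma~\ref{measurable}---which only produces an ultrafilter on \emph{some} stationary $T\subseteq S^{\lambda^+}_\omega$---to an ultrafilter on all of $S^{\lambda^+}_\omega$.

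To extract the verdict ``is $S$ or its complement in ${\cal F}$'' from representability, I would, following the footnote, attach to a stationary $S\subseteq S^{\lambda^+}_\omega$ a set $Z_S\subseteq V_{\lambda+1}$ coding, for each $\eta\in S^{\lambda^+}_\omega$, a cofinal $\omega$-sequence witnessing $\cof(\eta)=\omega$ together with the bit $\eta\in S$; recall every $\eta<\lambda^+$ is coded in $V_{\lambda+1}$ and every $x\in V_{\lambda+1}$ is an $\omega$-sequence $\langle x\cap V_{\lambda_i}:i\in\omega\rangle$, so $Z_S$ is a genuine subset of $V_{\lambda+1}$, definable from $S$, hence $\mathbb{U}(j)$-representable by hypothesis. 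The plan is then to fix a limit $\delta<\Theta$ large enough that $S_1,S_2,Z_{S_1},Z_{S_2}$ and all the sets definable from them lie in $L_\delta(V_{\lambda+1})$ and are $\mathbb{U}(j)$-representable, and to apply Generic Absoluteness (Theorem~\ref{GenAbsoluteness}) to obtain $\pi:L_\delta(M_\omega[\langle\kappa_n:n\in\omega\rangle]\cap V_{\lambda+1})\prec L_\delta(V_{\lambda+1})$ with $\pi\upharpoonright V_{\lambda+1}=\id$, where $M_\omega$ is the $\omega$-iterate of a weakly proper $j$ and $\langle\kappa_n\rangle$ is $j_{0,\omega}(\mathbb{P})$-Prikry-generic over $M_\omega$.

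The heart of the argument takes place on the $M_\omega$-side. There $\lambda=j_{0,\omega}(\kappa_0)$ is measurable, hence regular, so $\lambda^+$ computed there is regular and, by elementarity of $\pi$, the partition $\pi^{-1}(S_1)\sqcup\pi^{-1}(S_2)$ is a partition of the relevant cofinality-$\omega$ set into two stationary pieces; but after adjoining the Prikry sequence $\lambda$ acquires cofinality $\omega$, and the critical sequence furnishes a canonical cofinal $\omega$-sequence. I would then run the fixed-point--club argument from the proof of Kunen's Theorem~\ref{kunens} (as recycled in Lemma~\ref{measurable}): the set $C=\{\alpha<\lambda^+:j(\alpha)=\alpha\}$ is an $\omega$-club, and the well-foundedness of the tower given by the $\mathbb{U}(j)$-representation of $Z_S$---being absolute and, by the clause $j^n(U)=U$ in the definition of $\mathbb{U}(j)$, already computed inside $M_\omega$---decides, uniformly along $C$, which piece the generic supremum falls into. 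Thus one of $\pi^{-1}(S_1),\pi^{-1}(S_2)$ contains a club and the other is nonstationary; pushing this back through $\pi$ (elementary, and the identity on $V_{\lambda+1}$) makes one of $S_1,S_2$ nonstationary in $V$, the desired contradiction.

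The main obstacle is precisely this last ``uniform decision'' step: one must show that the $\mathbb{U}(j)$-representation of $Z_S$ transfers soundly to $M_\omega$ and that the Prikry-generic supremum genuinely and \emph{club-uniformly} determines membership in $S$, rather than giving different answers for different guessing sequences. This is exactly where the Tower Condition (now available, after Cramer) and the fixed-point clause $j^n(U)=U$ do the work, guaranteeing that the representation is a single object shared between $V$ and $M_\omega$ so that well-foundedness of the towers is absolute; making this uniformity rigorous---turning ``measurable on a positive set'' into ``ultrafilter on $S^{\lambda^+}_\omega$''---is the crux, and is the reason the conclusion is stated only for those partitions all of whose definable sets are representable, Question~\ref{representable} being what would remove the hypothesis altogether.
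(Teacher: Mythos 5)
First, a caveat on the comparison itself: the survey does not actually prove this theorem. It is quoted from Woodin's \emph{Suitable extender models II} (Theorem 176 there), and the only indication of the argument is the footnote, which says the proof rests on the $\mathbb{U}(j)$-representability of a set $Z_S$ defined from the stationary set $S$. Your outline --- code each stationary $S\subseteq S^{\lambda^+}_\omega$ by a set $Z_S\subseteq V_{\lambda+1}$, use representability together with Theorem \ref{GenAbsoluteness} to show that the club filter decides every such $S$, and conclude that two disjoint stationary pieces are impossible --- is compatible with that hint, and your closing logic (a filter extending the club filter that decides both $S_1$ and $S_2$ cannot see both as positive) is sound. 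But the proposal has genuine gaps, and they sit exactly where the mathematical content of the theorem lies.

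First, $\pi^{-1}(S_1)$ and $\pi^{-1}(S_2)$ need not exist: Theorem \ref{GenAbsoluteness} gives an elementary embedding $\pi:L_\delta(M_\omega[\langle\kappa_n:n\in\omega\rangle]\cap V_{\lambda+1})\prec L_\delta(V_{\lambda+1})$ which is the identity on $M_\omega[\langle\kappa_n:n\in\omega\rangle]\cap V_{\lambda+1}$, but nothing places $S_1$, $S_2$, or $Z_{S_1}$, $Z_{S_2}$ in the range of $\pi$, which is in general a small submodel of $L_\delta(V_{\lambda+1})$; also note that the model on the left computes its own, generally smaller, version of $\lambda^+$. The clause $j^n(U)=U$ in the definition of $\mathbb{U}(j)$ is precisely what allows a representation to be carried along the iteration into $M_\omega$ and a counterpart of $Z_{S_i}$ to be reconstructed in $M_\omega[\langle\kappa_n:n\in\omega\rangle]$; relating that reconstructed set to the true $Z_{S_i}$ is the transfer you defer to your ``main obstacle'' paragraph and never carry out, yet without it the argument never gets started. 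Second, the ``uniform decision'' step is not an argument as stated: the supremum of the Prikry sequence is the single ordinal $\lambda$, so it cannot ``fall into'' $S_1$ or $S_2$ in a way that settles stationarity, and no subset of $S^{\lambda^+}_\omega$ can literally ``contain a club'' of $\lambda^+$ (clubs contain points of cofinality $\omega_1$). What must actually be proved is that for every $\eta$ in the intersection of $S^{\lambda^+}_\omega$ with some $\omega$-club, the well-foundedness of the towers attached by the representation to codes of $\omega$-sequences converging to $\eta$ gives the same verdict, so that the club filter restricted to cofinality $\omega$ decides $S_1$. That uniformity is the entire core of Woodin's theorem; your proposal assembles the right ingredients (the coding $Z_S$, Generic Absoluteness, the $\omega$-club of fixed points of $j$, the Tower Condition) but, as you yourself concede, leaves precisely this step unproven, so what you have is a plausible proof plan rather than a proof.
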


\begin{teo}[Cramer, Corollary 4.6 in \cite {Cramer}]
 \label{partitionS}
 Suppose there exists $j:L_\omega(V_{\lambda+1}^\sharp, V_{\lambda+1})\prec L_\omega(V_{\lambda+1}^\sharp, V_{\lambda+1})$. Then there is no partition of $S^{\lambda^+}_\omega$ into two stationary (in $V$) sets such that $S_1,\ S_2\in L(V_{\lambda+1})$.
\end{teo}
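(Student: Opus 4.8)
The plan is to argue by contradiction, showing that the sharp hypothesis forces the club filter on $S^{\lambda^+}_\omega$ to be an $L(V_{\lambda+1})$-ultrafilter, which is incompatible with any splitting of $S^{\lambda^+}_\omega$ into two $V$-stationary pieces lying in $L(V_{\lambda+1})$. So suppose $S_1, S_2 \in L(V_{\lambda+1})$ are disjoint, stationary in $V$, with $S_1 \cup S_2 = S^{\lambda^+}_\omega$. As in Proposition \ref{noultrafilter}, since $V_{\lambda+1}$ is closed under $\lambda$-sequences we have $(S^{\lambda^+}_\omega)^V = (S^{\lambda^+}_\omega)^{L(V_{\lambda+1})}$, and as in Lemma \ref{measurable} the restriction $j\upharpoonright\lambda^+$ lies in $L(V_{\lambda+1})$ (it is read off from $j\upharpoonright V_\lambda$), so that $C = \{\eta < \lambda^+ : j(\eta) = \eta\}$ is an $\omega$-club. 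The objective is to produce a single club $D \subseteq \lambda^+$ in $V$ with $D \cap S^{\lambda^+}_\omega$ entirely inside $S_1$ or entirely inside $S_2$: one of $S_1, S_2$ is then disjoint from a club and hence non-stationary in $V$, the desired contradiction.

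I would then exploit the feature of the hypothesis that separates it from bare I0, namely the presence of the sharp. The embedding $j : L_\omega(V_{\lambda+1}^\sharp, V_{\lambda+1}) \prec L_\omega(V_{\lambda+1}^\sharp, V_{\lambda+1})$ supplies a closed unbounded class $I$ of indiscernibles for $L(V_{\lambda+1})$ with $L(V_{\lambda+1}) = \mathrm{Hull}(V_{\lambda+1}\cup I)$, and, since $j$ is elementary on the expanded structure carrying the sharp, $j$ maps indiscernibles to indiscernibles. Fix a definition $S_1 = \{\eta < \lambda^+ : L(V_{\lambda+1}) \vDash \varphi(\eta, \vec\imath, a)\}$ with $\vec\imath \in [I]^{<\omega}$ and $a \in V_{\lambda+1}$; recall that every $\eta < \lambda^+$ is itself coded by an element of $V_{\lambda+1}$, so the only genuine parameters are $\vec\imath$ and $a$.

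The crux is a homogeneity argument. I would arrange the witnessing embeddings to have critical point above the rank of $a$: replacing $j$ by an iterate $j^n$, whose critical points $\kappa_n$ are cofinal below $\lambda$ by Lemma \ref{FinIterable}, we may assume $j(a) = a$, so that $j$ merely shifts $\vec\imath \mapsto j(\vec\imath)$. By indiscernibility the truth value of $\varphi(\eta, \vec\imath, a)$ is unchanged, whence $j(S_1) = S_1$ as objects; by elementarity this says $S_1$ is $j$-invariant, $\eta \in S_1 \iff j(\eta) \in S_1$ for all $\eta$. Letting $j$ range over this cofinal family and using that, modulo the club filter, the sharp identifies any two points of $S^{\lambda^+}_\omega$ realizing the same $L(V_{\lambda+1})$-type over $V_{\lambda+1}\cup I$, the invariance forces membership in $S_1$ to depend only on that type. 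Choosing a point of $S_1 \cap C$ (which exists, $S_1$ being stationary and $C$ an $\omega$-club) and observing that its type is realized on a club $D$ of $S^{\lambda^+}_\omega$, which is a genuine club in $V$ since $I$ and the embeddings live in $V$, yields $D \cap S^{\lambda^+}_\omega \subseteq S_1$ and the contradiction.

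I expect the principal difficulty to be precisely this last step: converting the $j$-invariance of $S_1$, for the whole sharp-generated family of embeddings, into an honest club contained in $S_1$. This is where the sharp is indispensable rather than merely convenient — it is what provides the rich indiscernible structure allowing distinct points of $S^{\lambda^+}_\omega$ to be compared, and it is exactly what replaces the $\mathbb{U}(j)$-representability that Woodin had to assume in his Theorem 176 (compare the footnote there and Lemma \ref{lambdaplusrepr}). Everything else — the $\omega$-club of fixed points, the passage from ``disjoint from a club'' to ``non-stationary'', and the reduction to a single piece — is routine given Lemma \ref{measurable} and the remarks following Proposition \ref{noultrafilter}.
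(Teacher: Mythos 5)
There is a genuine gap --- in fact two, and both occur at what you yourself identify as the crux. First, the parameter-fixing step fails: you cannot ``arrange $j(a)=a$ by replacing $j$ with an iterate $j^n$''. The parameter $a$ lives in $V_{\lambda+1}$, i.e., it is a subset of $V_\lambda$, and every iterate has critical point $\kappa_n<\lambda$; an embedding with critical point below $\lambda$ moves subsets of $V_\lambda$ in general (for example, applying $j^+$ to the critical sequence $\{\kappa_n:n\in\omega\}$ yields $\{\kappa_{n+1}:n\in\omega\}$), so no iterate fixes $a$ unless $a\in V_\lambda$. Consequently $j(S_1)=S_1$ is never established. Second, even granting invariance, the homogeneity step is empty: every ordinal $\eta<\lambda^+$ is coded by a well-ordering of $\lambda$, which is itself an element of $V_{\lambda+1}$, so the type of $\eta$ over $V_{\lambda+1}\cup I$ contains a formula pinning $\eta$ down as the order type of its own code. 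Distinct points of $S^{\lambda^+}_\omega$ therefore realize distinct types, and there is no club on which ``the same type is realized''. The assertion that ``the sharp identifies any two points of $S^{\lambda^+}_\omega$ realizing the same type, modulo the club filter'' is exactly the statement that needs proof, and nothing in the sharp's indiscernibility (which concerns swapping tuples of indiscernibles over $V_{\lambda+1}$-parameters, not comparing ordinals below $\lambda^+$) supplies it.

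A useful sanity check that something must be wrong: your argument uses the sharp only through these two broken steps, so if it were correct it would run under I0 alone, showing that the club filter restricted to $S^{\lambda^+}_\omega$ is an $L(V_{\lambda+1})$-ultrafilter there. But that is precisely the question stated as open just before Theorem \ref{partitionS}, and Woodin's Theorem 176 in \cite{Woodin} obtains such a conclusion only under additional $\mathbb{U}(j)$-representability hypotheses. The proof the paper points to is of a different nature altogether: it is Cramer's Corollary 4.6 in \cite{Cramer}, obtained from the inverse limit reflection machinery of Section \ref{strimp} (square roots and limit roots, the same tools behind Theorem \ref{allperfect}), not from an indiscernibility/homogeneity argument. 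To repair your approach you would need, at minimum, a correct mechanism for comparing distinct points of $S^{\lambda^+}_\omega$ --- which is what inverse limits (or, in Woodin's version, $\mathbb{U}(j)$-representations) provide and what the sharp by itself does not.
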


Note that if $S$ is stationary in $V$, then it is also stationary in $L(V_{\lambda+1})$, but not necessarily viceversa, therefore it is still open whether there can be two disjoint subsets of $S^{\lambda^+}_\omega$ that are in $L(V_{\lambda+1})$, stationary there, but not stationary in $V$.

Hypotheses like in Theorem \ref{partitionS} are investigated further in Section \ref{Icarus}. It is proven using inverse limits.

Towards a solution for the first problem, we have a weakening of the Ultrafilter Axiom. Theorem \ref{measurable} says that we can split $S^{\lambda^+}_\gamma$ in $\langle S_\alpha:\alpha<\eta_\gamma\rangle$ disjoint stationary sets for any $\gamma<\lambda^+$ regular. The Ultrafilter Axiom says that $\eta_\gamma=1$ for any $\gamma<\lambda^+$ regular. The Weak Ultrafilter Axiom states that $\eta_\gamma\leq\gamma^+$ for any $\gamma<\lambda^+$ regular.

\begin{teo}[Woodin, Theorem 16 in \cite{Woodin2}]
 Suppose I0($\lambda$). Then in a generic extension I0($\lambda$) + the Weak Ultrafilter Axiom hold.
\end{teo}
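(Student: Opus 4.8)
The Weak Ultrafilter Axiom asserts, for every regular $\gamma<\lambda$ (the only regular cardinals below $\lambda^+$, since $\lambda$ is singular), that the atomic Boolean algebra $({\cal P}(S^{\lambda^+}_\gamma)/I)^{L(V_{\lambda+1})}$ — whose atoms are exactly the pieces $S_\alpha$ produced by Lemma \ref{measurable}, with $I$ the nonstationary ideal — has at most $\gamma^+$ atoms. By Lemma \ref{measurable} this number $\eta_\gamma$ is always $<\lambda$, so the task is to force it down to $\gamma^+$. Two observations shape the strategy. Since WUA is a statement about $L(V_{\lambda+1})$, and a $\le\lambda$-closed forcing leaves $V_{\lambda+1}$, hence $L(V_{\lambda+1})$, literally unchanged, the forcing must genuinely modify $V_{\lambda+1}$; and since any forcing preserving stationary subsets of $\lambda^+$ can only \emph{split} atoms (it adds no clubs killing old stationary sets), the forcing must instead \emph{destroy} the stationarity of the superfluous atoms. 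First I would fix, for each regular $\gamma<\lambda$, the $L(V_{\lambda+1})$-partition $\langle S^\gamma_\alpha:\alpha<\eta_\gamma\rangle$ and, using $\AC$ in $V$, a set $A_\gamma$ of at most $\gamma^+$ indices to retain; the plan is to shoot clubs through the complements of all the remaining atoms, so that only $\le\gamma^+$ of them survive as stationary in the new $L(V[G]_{\lambda+1})$.

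The forcing $\mathbb{Q}$ is then an Easton-supported iteration indexed by the regular $\gamma<\lambda$, whose $\gamma$-th block iterates the club-shooting posets $C_S$ that kill the stationarity of each discarded atom $S=S^\gamma_\alpha$, $\alpha\notin A_\gamma$; here $C_S$ consists of closed bounded $c\subseteq\lambda^+$ with $c\cap S=\emptyset$, and is $<\gamma$-strategically closed. Arranging the supports and the interleaving so that $\lambda$ stays a strong limit and $\lambda^+$ is preserved should be routine bookkeeping, given that each $\eta_\gamma<\lambda$ and $\lambda$ is strong limit. I would then verify that in the extension each retained atom is still stationary (the club-shooting avoids it) while each discarded atom becomes nonstationary, so that in $L(V[G]_{\lambda+1})$ the surviving atoms of $S^{\lambda^+}_\gamma$ number at most $\gamma^+$. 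A point requiring care is that $\mathbb{Q}$ must not \emph{split} the retained atoms into new stationary pieces; this should follow from the strategic closure of the club-shooting, which prevents new subsets of the relevant cofinality from appearing below the closure point, so the kept $S^\gamma_\alpha$ remain atoms. This yields $\eta_\gamma\le\gamma^+$ for every $\gamma$.

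The hard part will be preserving I0, i.e.\ lifting $j$ to the new $L(V[G]_{\lambda+1})$: because $\mathbb{Q}$ alters $V_{\lambda+1}$ and is \emph{not} $\le\lambda$-closed, neither Lemma \ref{nosmallforcing} nor the $\lambda$-closed case of Section \ref{forcing} applies. My plan here is to take $j$ weakly proper, hence iterable and in particular finitely iterable (Lemma \ref{FinIterable} and the subsequent theorem), to exploit that $\crt(j)$ can be chosen unboundedly below $\lambda$, and to use that $\mathbb{Q}$ is definable in $L(V_{\lambda+1})$ from the club filter and the fixed partitions; choosing the retained index sets $A_\gamma$ in a $j$-coherent fashion should make $\mathbb{Q}$ sufficiently $j$-invariant that, via a master-condition argument built from the strategic closure, one obtains $H\supseteq j''G$ generic for $j(\mathbb{Q})$ and applies the Lifting Lemma to produce $j^*:L(V_{\lambda+1})[G]\prec L(V_{\lambda+1})[G]$. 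One then checks, exactly as in Section \ref{forcing}, that $L(V[G]_{\lambda+1})$ is the correct definable inner model and that $j^*\upharpoonright L(V[G]_{\lambda+1})$ witnesses I0($\lambda$); combined with the computation of the previous paragraph, this gives I0 $+$ WUA. The genuinely delicate issues are the simultaneous preservation of $\lambda^+$ and of the kept atoms' stationarity, and the $j$-coherent choice of which atoms to discard. As an alternative route one could instead work inside the $\omega$-iterate $M_\omega$, where $\lambda$ is measurable, force the analogue of WUA at $\lambda^+$ there, and transfer it back through the Generic Absoluteness of Theorem \ref{GenAbsoluteness}, provided the club-killing forcing can be shown $\lambda$-good.
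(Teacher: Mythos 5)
The survey never proves this theorem: it is stated as a quotation of Theorem 16 of \cite{Woodin2}, so there is no in-paper argument to compare yours against, and your proposal has to stand on its own. It does not, and the failure is precisely the observation you yourself open with. You note, correctly, that WUA is a statement about the model $L(V_{\lambda+1})$, so a forcing can affect it only by adding new subsets of $\lambda$ (equivalently, new elements of $V_{\lambda+1}$). But the forcing you then build does exactly not that. The closure/distributivity properties you invoke for your club-shooting iteration at $\lambda^+$ --- and which you need, both to preserve $\lambda^+$ and to argue that the retained atoms are not split --- entail that it adds no new subsets of $\lambda$; hence $V[G]_{\lambda+1}=V_{\lambda+1}$, and by absoluteness of relative constructibility $L(V[G]_{\lambda+1})=L(V_{\lambda+1})$ is \emph{literally the same model}. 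The generic clubs you shoot exist in $V[G]$ but do not belong to that model, so every ``discarded'' atom $S^\gamma_\alpha$ is still stationary in the only sense that matters for WUA, each $\eta_\gamma$ keeps its ground-model value, and nothing has been forced: I0 is preserved trivially, but WUA holds in the extension if and only if it already held. (Compare Proposition \ref{noultrafilter}: there the stationary-set structure of $L(V_{\lambda+1})$ is changed by a collapse \emph{below} $\lambda$, i.e.\ by changing $V_{\lambda+1}$; it is never changed by manipulating clubs of $\lambda^+$ over an unchanged $V_{\lambda+1}$.) If, to escape this, your iteration were allowed to add new subsets of $\lambda$ --- the only way the killing clubs could become visible to the new $L(V_{\lambda+1})$ --- then none of your preservation claims ($\lambda^+$ survives, retained atoms remain stationary atoms, I0 lifts) has any support: the paper's preservation theorems (\ref{nosmallforcing}, \ref{omegaclosed}, \ref{forcingupto}) cover forcings in $V_\lambda$, $\lambda$-closed forcings, and reverse Easton iterations below $\lambda$, and yours is none of these.

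Two subsidiary steps would also fail even granting the setup. The master-condition lifting cannot work as stated: conditions are (sequences of) bounded closed subsets of $\lambda^+$, while the conditions in $j''G$ have suprema cofinal in $\lambda^+$ (since $\sup j''\lambda^+=\lambda^+$), so no condition of $j(\mathbb{Q})$ lies below all of $j''G$; in the distributive reading no lifting is needed anyway, because $j$ itself still witnesses I0 for the unchanged model. Your fallback via Theorem \ref{GenAbsoluteness} is likewise off target: Generic Absoluteness concerns Prikry-type forcings at $\kappa_0=\crt(j)$ (with $\mathbb{P}\in V_\lambda$, which a club-shooting at $\lambda^+$ is not), and it transfers statements so as to produce conclusions \emph{at} $\kappa_0$ in a generic extension of $V$ --- this is how I1($\kappa_0$)-type results are obtained --- not statements about $\lambda^+$ and the club filter of $L(V_{\lambda+1})$. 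What the theorem actually requires, and this follows from the very logic you articulated, is a forcing that genuinely changes $V_{\lambda+1}$ (hence lives at or below $\lambda$, where I0-preservation can be brought to bear), together with an internal argument that in the resulting, genuinely new, $L(V_{\lambda+1})$ the atomic decomposition of each $S^{\lambda^+}_\gamma$ provably has at most $\gamma^+$ pieces: the number of atoms must be controlled from inside the new model by the combinatorics forced below $\lambda$, not trimmed from outside by killing stationary sets of $V$. That machinery is the content of \cite{Woodin2} and is absent from your sketch.
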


Another problematic point is the Wadge hierarchy. Wadge's Lemma states that under \AD{} for any two subsets $A,B$ of $\mathbb{R}$ either there is a continuous function $f$ with $A=f^{-1}[B]$ or there is one with $B=f^{-1}[\mathbb{R}\setminus A]$. Therefore any two subsets of $\mathbb{R}$ are reducible one to another. Under I0 this is consistently false:

\begin{teo}[Woodin, Theorem 174 in \cite{Woodin}]
 Suppose I0($\lambda$). Suppose $c$ is a generic Cohen real of $V$. Then there exist $X,Y\in V[c]_{\lambda+2}$ such that $X\notin L_\omega(Y,V[c]_{\lambda+1})$ and $Y\notin L_\omega(X,V[c]_{\lambda+1})$. 
\end{teo}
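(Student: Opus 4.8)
The strategy is to manufacture the required asymmetry from the Cohen real itself, exploiting that a single Cohen real splits into two mutually generic halves. First I would note that $\mathbb{C}$, the Cohen forcing, lies in $V_{\crt(j)}$ (it is essentially an element of $V_{\omega+2}$, while $\crt(j)$ is inaccessible), so by Lemma \ref{nosmallforcing} I0($\lambda$) persists in $V[c]$ and the ground embedding lifts, via the Lifting Lemma, to some $j^*:L(V[c]_{\lambda+1})\prec L(V[c]_{\lambda+1})$; in particular $\lambda$ remains a strong limit of cofinality $\omega$, so $V[c]_\lambda$ still carries the tree-coding apparatus of Section \ref{strimp}. Writing $c_0(n)=c(2n)$ and $c_1(n)=c(2n+1)$, the reals $c_0,c_1$ are mutually Cohen generic, $V[c]=V[c_0][c_1]=V[c_1][c_0]$, and $c_0\notin V[c_1]$, $c_1\notin V[c_0]$. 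Fixing the canonical isomorphism between the two coordinate copies $\mathbb{C}_0\cong\mathbb{C}_1$, this symmetry is what I will turn into Wadge incomparability.

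Next I would record the complexity bookkeeping forced by the smallness of $\mathbb{C}$. By the $\omega$-union argument used earlier for I1 (equivalently, the Name Rank Lemma), every element of $V[c]_{\lambda+1}$ has a $\mathbb{C}$-name already inside $V_{\lambda+1}$; hence such a name is itself an element of $V[c]_{\lambda+1}$ and is available as a parameter inside any $L_\omega(\,\cdot\,,V[c]_{\lambda+1})$, and the interpretation map $\rho\mapsto\rho_c$ is of bounded complexity once $c$ is given. By contrast, a name for a genuine element of $V[c]_{\lambda+2}$ has rank $\sim\lambda+2$ and is \emph{not} an element of $V[c]_{\lambda+1}$. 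This is precisely the gap that makes non-reducibility possible: evaluating a high-rank name $\tau$ at $c_0$ inside $L_\omega(Y,V[c]_{\lambda+1})$ requires $\tau$ itself to be reconstructible from $Y$ together with the low-rank data in $V[c]_{\lambda+1}$.

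I would then fix a single $\mathbb{C}$-name $\tau$ of rank $\lambda+2$ that ``spreads the generic across $V_\lambda$'', so that its $c_0$-section and its $c_1$-section are genuinely separated, and set $X=\tau^{c_0}$ and $Y=\tau^{c_1}$ (the same name read off the two halves), so that $X,Y\in V[c]_{\lambda+2}$. By the coordinate symmetry $c_0\leftrightarrow c_1$ it suffices to show $X\notin L_\omega(Y,V[c]_{\lambda+1})$. Supposing otherwise, there are a term $t$ of the relative-constructibility hierarchy and parameters $\vec p\in V[c]_{\lambda+1}$ with $\tau^{c_0}=t(\tau^{c_1},\vec p)$, forced by a condition $(p_0,p_1)\in\mathbb{C}_0\times\mathbb{C}_1$. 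Replacing each entry of $\vec p$ by one of its names in $V_{\lambda+1}$ and pushing all dependence on $c_0$ into the single object $\tau^{c_0}$, a homogeneity argument for $\mathbb{C}_0$ over $V[c_1]$ should let me vary $c_0$ above $p_0$ while holding $Y=\tau^{c_1}$ and $t$ fixed, thereby pinning down $\tau^{c_0}$ from $c_1$-side data alone and placing $c_0$-generic information inside $V[c_1]$ --- contradicting mutual genericity.

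The step I expect to be the real obstacle is the last one: controlling the parameters $\vec p$. Since $V[c]_{\lambda+1}$ is built from \emph{all} of $c$, its elements carry names that may depend on both halves, so the naive ``freeze the $c_1$-side and wiggle the $c_0$-side'' move does not by itself isolate $c_0$. Making it work requires choosing $\tau$ by a diagonalization against level-$\omega$ terms so that its $c_0$-section remains Cohen generic over $V[c_1]$ even after adjoining any bounded amount of $L_\omega$-machinery, and then invoking the product structure of $\mathbb{C}_0\times\mathbb{C}_1$ to conclude that no such term over the $c_1$-side can decide the $c_0$-section of $\tau$. The perfect-set analysis of Theorem \ref{allperfect} is what guarantees there is enough room in $V[c]_{\lambda+1}$ to spread $\tau$ in this way, and it is the engine behind the final contradiction.
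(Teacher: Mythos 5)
Your opening moves are sound, and your choice of sets can be instantiated as what is actually needed: smallness of Cohen forcing preserves I0 by Lemma \ref{nosmallforcing}, the generic splits into mutually generic halves $c_0,c_1$, and reading one canonical name at the two halves amounts to taking $X=(V_{\lambda+1})^{V[c_0]}$ and $Y=(V_{\lambda+1})^{V[c_1]}$. (For the record, the survey states this theorem without proof, citing \cite{Woodin}, so your attempt is being measured against what the surrounding material supports.) The fatal gap is your contradiction mechanism. You want ``$X\in L_\omega(Y,V[c]_{\lambda+1})$'' to mean that $c_0$-information has been captured by $c_1$-side data, and then to contradict mutual genericity via homogeneity of $\mathbb{C}_0\times\mathbb{C}_1$. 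But $c_0$ and $c_1$ are reals, hence elements of $V[c]_{\lambda+1}$, hence already sit inside the very first level of $L_\omega(Y,V[c]_{\lambda+1})$ (level $0$ is a transitive closure); every parameter of rank $\leq\lambda$ built from \emph{both} halves of $c$ is allowed in a reduction. So membership in $L_\omega(Y,V[c]_{\lambda+1})$ is not a statement about definability over $V[c_1]$, homogeneity says nothing once parameters outside $V[c_1]$ are permitted, and ``generic over $V[c_1]$'' does not imply ``not in $L_\omega(Y,V[c]_{\lambda+1})$'' --- $c_0$ itself is Cohen generic over $V[c_1]$ and trivially belongs to that structure. Your proposed repair by diagonalization fails for the same reason: the terms to be defeated carry arbitrary parameters from $V[c]_{\lambda+1}$, there are $2^\lambda$ many of them and they exist only in $V[c]$, and their values depend globally on $Y$ (unlike continuous reductions, $L_\omega$-terms are not local), so no single name chosen in $V$ can dodge them all.

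What actually closes the argument is the perfect-set machinery, used quite differently from the way your last sentence gestures at Theorem \ref{allperfect}. Suppose $X=(V_{\lambda+1})^{V[c_0]}\in L_\omega(Y,V[c]_{\lambda+1})$. Work over $V[c_1]$, where I0 holds by Lemma \ref{nosmallforcing}, and lift an I0-embedding through the Cohen forcing adding $c_0$; since every element of $V[c]_{\lambda+1}$ has a name in $(V_{\lambda+1})^{V[c_1]}=Y$ (Name Rank Lemma over $V[c_1]$), the lifted embedding fixes $Y$, $c_0$ and $V[c]_{\lambda+1}$ as sets and therefore acts elementarily on $L(Y,V[c]_{\lambda+1})$. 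One then needs the representability/perfect-set property for sets appearing in the first $\omega$ levels of this model --- the generalization of Theorem \ref{allperfect} to models of the form $L(Y,V_{\lambda+1})$, which is precisely why the statement is restricted to $L_\omega$ --- to conclude that $X$, having cardinality $>\lambda$, contains a perfect set in $V[c]$. Since every element of $X$ lies in $V[c_0]$, the decoding argument written out in the survey's proof of Woodin's Theorem 175 (the unlabelled theorem in Section \ref{forcing} showing $V_{\lambda+1}\notin L(V[G]_{\lambda+1})$), run over $V[c_0]$ instead of $V$, yields $(\lambda^\omega)^{V[c]}\subseteq V[c_0]$. That is false, because $c_1$ codes an $\omega$-sequence of ordinals below $\lambda$ that is not in $V[c_0]$. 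Note where mutual genericity enters: only here, at the very end, as the statement that $V[c]$ has an $\omega$-sequence missing from $V[c_0]$ --- not through any analysis of which parameters a putative reduction may use.
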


As by \ref{nosmallforcing} in $L(V[c]_{\lambda+1})$ I0($\lambda$) holds, this proves the consistency of I0 with a negation of Wadge Lemma.

Another aspect one can consider is partition combinatorics. In $L(\mathbb{R})$, under \AD, for all $\alpha<\omega_1$, $\omega_1\rightarrow(\omega_1)^\alpha_\omega$\footnote{Recall that $\kappa\rightarrow(\kappa)^\alpha_\beta$ is that if $\pi:\{\sigma\subseteq\kappa:\ot(\sigma)=\alpha\}\to\beta$ then there is a set $H\subseteq\kappa$ of cardinality $\kappa$ homogeneous for $\pi$, i.e., $|\{\pi(\sigma):\sigma\subseteq H,\ \ot(\sigma)=\alpha\}|=1$.}. A direct generalization always fails:

\begin{rem}[\ZF+\DC]
 Suppose there exists an $\omega_1$-sequence of distinct reals. Then there is no $\kappa$ such that $\kappa\rightarrow(\kappa)^{\omega_1}_2$.
\end{rem}

So the best attempt could only be $\lambda^+\rightarrow(\lambda^+)^\omega_\lambda$. It is still an open question even whether $\kappa\rightarrow(\kappa)^\omega_2$ is consistent at all. In \cite{Woodin} Lemma 212 proves that a universal strong version of $\lambda^+\rightarrow(\lambda^+)^\omega_\lambda$ is consistently false with I0.

But there is a partial result:

\begin{prop}[Lemma 180 in \cite{Woodin}]
 Suppose I0($\lambda$). Then for all $\alpha<\lambda$, $L(V_{\lambda+1})\vDash\lambda^+\rightarrow (\lambda^+)^\alpha_{(\lambda,<\lambda)}$, i.e., for all $\pi:\{\sigma\subseteq\lambda^+:\ot(\sigma)=\alpha\}\to\lambda$ there exists $H\subseteq\lambda^+$ such that $|\{\pi(\sigma):\sigma\subseteq H,\ \ot(\sigma)=\alpha\}|<\lambda$.
\end{prop}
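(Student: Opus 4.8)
The plan is to deduce this partition relation from the measurability of $\lambda^+$ in $L(V_{\lambda+1})$, using the Coding Lemma as a surrogate for the missing choice principle. First I would fix, by Lemma \ref{measurable}, a stationary $S_0\subseteq S^{\lambda^+}_\omega$ such that $\mu:={\cal F}\upharpoonright S_0$ is an $L(V_{\lambda+1})$-ultrafilter on $\lambda^+$; as in that proof $\mu$ is $\lambda^+$-complete (via $\DC_\lambda$ from Lemma \ref{lambdastructure}) and, being a restriction of the club filter, normal, so Fodor's lemma holds on $S_0$. Every $H\in\mu$ has size $\lambda^+$, so it suffices to prove by induction on $\alpha<\lambda$ the sharper statement: for every $\pi$ defined on the $\alpha$-sequences of $\lambda^+$ with values in $\lambda$, $\pi\in L(V_{\lambda+1})$, there are $H\in\mu$ and $n\in\omega$ with $\pi''[H]^\alpha\subseteq\kappa_n$, where $[H]^\alpha$ denotes the increasing $\alpha$-sequences from $H$. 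Since $\kappa_n<\lambda$ this gives $|\pi''[H]^\alpha|<\lambda$, which is exactly $\lambda^+\to(\lambda^+)^\alpha_{(\lambda,<\lambda)}$.

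The case $\alpha=1$ is just $\lambda^+$-completeness: $\pi$ is constant on some $H\in\mu$, with $n$ chosen so that the value is below $\kappa_n$. For the successor step I would set, for $\xi<\lambda^+$, $\pi_\xi(\tau)=\pi(\{\xi\}\cup\tau)$ where $\tau$ ranges over $\alpha$-sequences above $\xi$; the induction hypothesis furnishes for each $\xi$ a witness $(H_\xi,n_\xi)$. The one place where choice threatens — selecting a single witness for each of the $\lambda^+$ many $\xi$ — I would resolve with the Coding Lemma: fixing $\rho:V_{\lambda+1}\twoheadrightarrow\lambda^+$ and coding witnesses as subsets of $V_{\lambda+1}$, the set $A$ of pairs $(a,b)$ with $b$ coding a legitimate witness for $\rho(a)$ has $A_\xi\neq\emptyset$ for every $\xi$, hence a subset $B$ lying in a fixed $L_{\gamma_\rho}(V_{\lambda+1})$ that selects $(H_\xi,n_\xi)$ for all $\xi$. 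As $\xi\mapsto n_\xi$ takes only $\omega<\lambda^+$ values, $\lambda^+$-completeness yields $W\in\mu$ and a single $n^*$ with $n_\xi=n^*$ on $W$; putting $H=W\cap\triangle_{\xi<\lambda^+}H_\xi$, normality gives $H\in\mu$, and for $\sigma=\{\xi_0\}\cup\tau\in[H]^{\alpha+1}$ the tail $\tau$ lies in $H_{\xi_0}$, so $\pi(\sigma)=\pi_{\xi_0}(\tau)\in\kappa_{n^*}$.

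The main obstacle I expect to be the limit-exponent case, where the colour of a limit-length sequence is not reducible to the colours of its initial segments and where the naive construction is precisely the one blocked in $\ZFC$ by the well-ordering obstruction to $\kappa\to(\kappa)^\omega_2$. Here the failure of choice in $L(V_{\lambda+1})$ is what should rescue the argument: being $\lambda^+$-complete, $\mu$ is countably complete, so all its iterated ultrapowers are well-founded, and for limit $\alpha<\lambda$ the length-$\alpha$ iteration produces critical points $\langle\gamma_i:i<\alpha\rangle$ indiscernible for $L(V_{\lambda+1})$-definable relations; the value of $\pi$ on an increasing $\alpha$-sequence of these is then forced below some $\kappa_n$, and normality should let me pull a genuine $H\in\mu$ with $\pi''[H]^\alpha\subseteq\kappa_n$ back into $V$. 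Making the $\alpha$ successive ultrapower choices is legitimate since $\alpha<\lambda$ and $\DC_\lambda$ holds, and the selections within each step are again uniformized by the Coding Lemma; the crucial input is that every $\pi\in L(V_{\lambda+1})$ is coded from $V_{\lambda+1}$ via the surjection $\Phi$ and so cannot encode a well-ordering of the $\alpha$-sequences of $\lambda^+$, the very colouring that would defeat the relation. The technically delicate heart will be checking that the iterated-ultrapower indiscernibility transfers back to an honest homogeneous subset of $\lambda^+$, uniformly in the Coding-Lemma selections.
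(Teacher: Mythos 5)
The survey states this Proposition without proof, citing Lemma 180 of \cite{Woodin}, so your proposal has to stand on its own, and it has a genuine gap. Before the main point: even your successor step asks more of the Coding Lemma than it delivers. The Coding Lemma produces a sub-relation $B\subseteq A$ with full projection, not a selector, so for each $\xi$ you are left with a possibly $2^\lambda$-sized family of legitimate witnesses $(H,n)$ inside $L_{\gamma_\rho}(V_{\lambda+1})$; you cannot intersect them (the club filter is only $\lambda^+$-complete) and unions destroy homogeneity, so the sequence $\langle H_\xi:\xi<\lambda^+\rangle$ whose diagonal intersection you form is never actually defined. For finite exponents this is repaired by the canonical ``$\mu$-majority colour'' recursion, which needs no choice at all, but on top of a non-canonical inductive hypothesis at infinite exponents it is a real obstruction; similarly, normality of ${\cal F}\upharpoonright S_0$ is not free in $L(V_{\lambda+1})$ (Fodor at $\lambda^+$ requires a uniform choice of clubs) and has to be recovered from the atomicity given by Lemma \ref{measurable}.

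The decisive gap is the limit-exponent case, exactly where you locate it, and the mechanism you sketch would fail. First, iterating $L(V_{\lambda+1})$ by $\mu={\cal F}\upharpoonright S_0$ requires \L os' Theorem for that ultrapower, which is not available without \AC: the paper proves \L os only for $U_j$, and that proof exploits the fact that $U_j$ concentrates on square roots $k$ of $j$, so that $j$ itself can select witnesses ($H(k)=c$ where $k(c)=b$); there is no analogue of this trick for the club-restriction measure. Second, even granting a well-behaved iteration, its critical points $\lambda^+=\gamma_0<\gamma_1<\cdots$ lie at and above $\lambda^+$; they are not subsets of $\lambda^+$, so $\pi$ cannot colour them. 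You would have to colour them with the iterated image of $\pi$ and then transfer homogeneity back to an honest $H\subseteq\lambda^+$ of size $\lambda^+$ inside $L(V_{\lambda+1})$, and for this transfer you offer no mechanism beyond the word ``normality'', which only yields diagonal intersections (a successor-step tool). Note that every ingredient your limit case actually uses --- a normal $\lambda^+$-complete measure, well-founded iterates, indiscernibility of the critical points --- is available in \ZFC{} for any measurable cardinal, where $\kappa\to(\kappa)^\omega_2$ is provably false; so the mechanism as described cannot suffice, and your remark that a colouring in $L(V_{\lambda+1})$ ``cannot encode a well-ordering'' explains why the \ZFC{} counterexamples are absent but does not construct a homogeneous set. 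A correct proof must use the embedding $j$ itself rather than only its shadow, the measurability of $\lambda^+$: for instance the fixed points of $j$ below $\lambda^+$ (a legitimate candidate for $H$, since $j\upharpoonright\lambda^+\in L(V_{\lambda+1})$ and $j(\sigma)=\sigma$ for every $\sigma$ of order type $<\crt(j)$ consisting of fixed points), the amenability of the sequences $\langle j^n(\pi):n\in\omega\rangle$ coming from properness, square-root and inverse-limit arguments, or the $\mathbb{U}(j)$-representability machinery. None of these appear in your limit case, and ``measurability of $\lambda^+$ plus Coding Lemma plus $\DC_\lambda$'' is not known to imply any infinite-exponent partition relation --- compare the situation at $\omega_1$ in $\ZF+\DC$, where the club filter being an ultrafilter is not known to give $\omega_1\to(\omega_1)^\omega_2$, whose known proofs go through determinacy itself.
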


Finally, we consider the measure on Turing degrees. A standard result of \AD{} is that the filter $U=\{A:\ A$ contains a cone$\}$ on sets of Turing degrees is an ultrafilter (Martin), i.e., every set of degrees either contains or is disjoint from a cone. This result is usually called Turing Determinacy, and it is at the base of many results, like Silver Dichotomy for every set in $L(\mathbb{R})$ under \AD.  In the context of I0 Turing degrees can be substituted with Zermelo degrees: given $a\subseteq\lambda$, let $M(a)$ be the smallest model of Zermelo set theory that contains $a$. We say that $a$ is equivalent with $b$ if $M(a)=M(b)$, and $a$ is reducible to $b$ if $M(a)\subseteq M(b)$. Then a Zermelo degree is a class of such equivalence relation. Unfortunately Zermelo Degree Determinacy is consistently false under I0:

\begin{teo}[Shi, 2015, \cite{Shi}]
 Suppose I0($\lambda$) and that all the $\lambda$-supercompact cardinals under $\lambda$ are Laver indestructible. Then there is a set of Zermelo degrees in $L(V_{\lambda+1})$ that neither contains nor is disjoint from a cone.
\end{teo}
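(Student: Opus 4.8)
The plan is to build the non-measurable set $X$ by encoding a freely choosable bit into each Zermelo degree, exploiting the extra room provided by a Laver-indestructible supercompact cardinal below $\lambda$. First I would attach to each minimal Zermelo model $M(a)$ (for $a\subseteq\lambda$) a canonical reference cardinal $\kappa_a$: by Proposition \ref{verylarge} the members of the critical sequence are $\gamma$-supercompact for every $\gamma<\lambda$, and by hypothesis every such cardinal is Laver indestructible, so there is at least one indestructible supercompact $\kappa<\lambda$, and the configuration is consistent relative to I0($\lambda$) via a Laver preparation as in the indestructibility corollary following Theorem \ref{forcingupto}. Take $\kappa_a$ to be the least Laver-indestructible supercompact of $M(a)$ and define a property $\Psi(a)$ by ``$M(a)\vDash 2^{\kappa_a}=\kappa_a^+$''. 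Since $M(a)$ depends only on the degree $[a]$, the property $\Psi$ is degree-invariant, and I set $X=\{[a]:\Psi(a)\}$. The goal is then to show that both $X$ and its complement meet the cone above \emph{every} degree, which is exactly the failure of Zermelo Degree Determinacy.

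Next I would verify that $X\in L(V_{\lambda+1})$. Representing a set of degrees by the $\equiv$-closed set of its members, $X$ is a subset of $V_{\lambda+1}$. Every $a\subseteq\lambda$ lies in $V_{\lambda+1}$, the minimal Zermelo model $M(a)$ is set-sized and coded by a subset of $V_\lambda$, and the forcing relation, the identification of $\kappa_a$, and the satisfaction of $\Psi$ are all first-order over this code; hence $X$ is definable over $L(V_{\lambda+1})$ from the parameter $\lambda$. Since $L(V_{\lambda+1})\vDash\ZF$, such a definable subset of $V_{\lambda+1}$ is an element of the model by Separation (equivalently, via the definable surjection $\Ord\times V_{\lambda+1}\twoheadrightarrow L(V_{\lambda+1})$ together with Lemma \ref{Thetasubsets}(2)).

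The heart of the argument is a toggling lemma: for every degree $d=[a]$ and every $\varepsilon\in\{0,1\}$ there is $b\geq a$ with $[b]\in X$ iff $\varepsilon=1$. Given $a$, I would force over $M(a)$ with a forcing $\mathbb{Q}_\varepsilon$ that drives $2^{\kappa_a}$ to $\kappa_a^+$ (using $\mathrm{Coll}(\kappa_a^+,(2^{\kappa_a})^{M(a)})$, which is $\kappa_a^+$-closed, hence $\kappa_a$-directed closed) when $\varepsilon=1$, or to $\kappa_a^{++}$ (using an $\mathrm{Add}$-type, $\kappa_a$-directed closed forcing) when $\varepsilon=0$. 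Because $\mathbb{Q}_\varepsilon$ is $\kappa_a$-directed closed and $\kappa_a$ is Laver indestructible, $\kappa_a$ remains supercompact in the extension and stays the least indestructible supercompact, so $\Psi$ is evaluated at the same reference point and is controlled by $\varepsilon$. As $\kappa_a<\lambda$ and $\lambda$ is strong limit, $|\mathbb{Q}_\varepsilon|<\lambda$, so a generic $g$ exists in $V$ (set forcing over a set-sized model with $V\vDash\AC$) and is coded by a bounded $\bar g\subseteq\lambda$. Taking $b=a\oplus\bar g$ gives $M(a)\subseteq M(b)=M(a)[g]$, whence $[b]\geq d$ and $[b]\in X\Leftrightarrow\varepsilon=1$. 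Running this with $\varepsilon=1$ and $\varepsilon=0$ shows both $X$ and $X^{c}$ are cofinal in the cone above $d$, so no cone lies in $X$ and none in $X^{c}$, as required.

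The main obstacle is the degree computation in the toggling lemma: verifying $M(a\oplus\bar g)=M(a)[g]$ and, crucially, that $\kappa_a$ is correctly preserved and re-identified in the extension, so that the truth value of $\Psi$ is genuinely dictated by the choice of $\mathbb{Q}_\varepsilon$ rather than accidentally altered. This is exactly where Laver indestructibility is indispensable: the $\kappa_a$-directed closed bit-setting forcings keep $\kappa_a$ supercompact and thus keep the canonical anchor stable, whereas without indestructibility the forcing needed to move $2^{\kappa_a}$ could destroy the large-cardinal reference and the coding would collapse. The remaining points---that minimal Zermelo models of subsets of $\lambda$ behave well under such forcing, that the generic descends to a bounded subset of $\lambda$, and that $b$ genuinely sits above $d$ in the degree order---are routine but must be checked with care.
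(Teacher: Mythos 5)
Your toggling lemma is where the proof collapses, and the defect is not cosmetic. The justification ``a generic $g$ exists in $V$ (set forcing over a set-sized model with $V\vDash\AC$)'' is false: set-sizedness gives nothing, it is \emph{countability} of the ground model that produces generic filters in $V$ (Rasiowa--Sikorski), and your models $M(a)$ have size at least $\lambda$. In fact generic existence provably fails on a cone, exactly where the theorem forces you to work. Since $|V_\lambda|=\lambda$ there is $v\subseteq\lambda$ coding $(V_\lambda,\in)$, and the Zermelo degree theory of \cite{Shi} is arranged so that on the cone above $[v]$ the models $M(a)$ contain $V_\lambda$ as a subset (they are levels of $L[a]$ past $(\lambda^{+})^{L[a]}$, by which stage the code has been collapsed to the actual set $V_\lambda$). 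For such $a$ your forcing $\mathbb{Q}_\varepsilon$ is an element of $V_\lambda$, hence \emph{every} dense subset of $\mathbb{Q}_\varepsilon$ existing in $V$ is an element of $V_\lambda\subseteq M(a)$; an $M(a)$-generic filter would thus be fully $V$-generic, and no such filter lies in $V$ (if $G\in V$ is a filter on an atomless poset, its complement is a dense set belonging to $M(a)$ that $G$ misses). Concretely: on this cone $M(a)$ computes $\mathcal{P}(\kappa_a)$, $\kappa_a^{+}$ and $2^{\kappa_a}$ correctly, so your $\varepsilon=1$ generic would be a function in $V$ collapsing the true $V$-cardinal $2^{\kappa_a}$ onto $\kappa_a^{+}$, which is absurd. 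Since refuting determinacy requires exhibiting members of $X$ and of its complement above \emph{every} degree, in particular above $[v]$, the mechanism fails precisely where it is indispensable. Two further symptoms of the same disease: on that cone the invariant ``$M(a)\vDash 2^{\kappa_a}=\kappa_a^{+}$'' is frozen, being decided by $V$ alone, so nothing could toggle it anyway; and below the cone the thin, $L$-like models $M(a)$ in general have no supercompact cardinals at all, so your anchor $\kappa_a$ need not even exist.

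This is why the theorem is hard and why the approach, not just the write-up, is at fault: any correct proof must toggle a property of the part of $M(a)$ lying \emph{strictly above} $\lambda$ (the only part not determined once $V_\lambda\subseteq M(a)$), and must manufacture the witnessing degrees by some mechanism other than set forcing over $M(a)$. The survey states the result without proof, citing \cite{Shi}; Shi's argument is designed around exactly this obstruction. It uses the structure theory that I0 makes available --- iterated embeddings, the $\lambda$-supercompactness measures, and the fact that over suitably thin models (such as $\omega$-iterates) Prikry-type generics do exist in $V$ because $\cof(\lambda)=\omega$, cf.\ $\lambda$-goodness and Theorem \ref{GenAbsoluteness} --- and the Laver-indestructibility hypothesis enters as a structural assumption keeping the supercompact cardinals alive through the forcing constructions of that argument, not as a device for flipping bits by small forcing over minimal models. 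This matches the survey's own comment that the failure of Zermelo degree determinacy appears ``intrinsic'' to I0 ``for deep reasons'': at $\lambda$, unlike at $\omega$, the minimal models above a code of $V_\lambda$ are not thin relative to $V$, and the Turing-degree template ``force over the minimal model to flip a bit'' is simply unavailable.
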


This result is significantly more problematic respect the previous results: in \ref{noultrafilter} and Lemma 212 in \cite{Woodin} we had that small changes would destroy that $\AD^{L(\mathbb{R})}$-like properties, but in this case, modulo a reasonable indestructibility preparation, it seems that it is intrinsic from I0, and for deep reasons. But there is one ambiguity still to settle: for now, there is no proof that there are $\lambda$ singular cardinals of cofinality $\omega$ such that in $L(V_{\lambda+1})$ Zermelo Degree Determinacy holds, so maybe its failure is not something peculiar to I0, but a general property of such $\lambda$'s, just like it is not possible to have a club filter an ultrafilter everywhere or the full partition property.

\begin{Q}
 Is there a $\lambda$ such that Zermelo Degree Determinacy holds in $L(V_{\lambda+1})$?
\end{Q}

\section{Icarus sets}
\label{Icarus}

Is it possible to go beyond I0? The question is very interesting: if I0 holds, and since Reinhardt is inconsistent in \ZFC, there should be something in between, or maybe even a hierarchy between the two, where at a certain point the inconsistency comes up, so it is worth to investigate exactly where it is. But the question is not only speculative: results like \ref{atlambda1} are reflection results, from I0 to I1, so to find combinatorial properties that are independent from I0 one should have a starting point in a higher ground.

It seems that there is not much space above I0: already an elementary embedding from $L(V_{\lambda+2})$ to itself is inconsistent. We consider then models that are strictly between the two, i.e., $L(V_{\lambda+1})\subset M\subset L(V_{\lambda+2})$. They should be models of \ZF, so something like $L(N)$, with $V_{\lambda+1}\subset N\subset V_{\lambda+2}$, i.e., models built from collections of subsets of $V_{\lambda+1}$. In similarity with $\AD^{L(\mathbb{R})}$, the most important of such models are of the form $L(X,V_{\lambda+1})$, with $X\subset V_{\lambda+1}$:

\begin{defin}
 A set $X\subseteq V_{\lambda+1}$ is an Icarus set iff there is an elementary embedding $j:L(X,V_{\lambda+1})\prec L(X,V_{\lambda+1})$ with $\crt(j)<\lambda$. 
\end{defin}

The name comes from the fact that we are trying to go as high as possible before reaching an inconsistency. Trivially, if I0($\lambda$) holds, $V_{\lambda+1}$ (and in fact every set in $L(V_{\lambda+1})\cap V_{\lambda+2}$) is Icarus. On the other hand, a well-ordering of $V_{\lambda+1}$ cannot be Icarus, because of Kunen's theorem.

In this kind of models many structural properties of $L(V_{\lambda+1})$ still hold:

\begin{defin}
 Let $X\subseteq V_{\lambda+1}$. Then in $L(X,V_{\lambda+1})$ $\DC_\lambda$ holds and $\Theta=\Theta^{L(X,V_{\lambda+1})}_{V_{\lambda+1}}$ is regular.
\end{defin}

The structure of such embeddings is also very similar to the I0 ones:

\begin{teo}[Woodin, Lemma 5 in \cite{Woodin}]
 	Let $X\subseteq V_{\lambda+1}$ be an Icarus set. For every $j:L(X,V_{\lambda+1})\prec L(X,V_{\lambda+1})$ there exist a $L(X,V_{\lambda+1})$-ultrafilter $U$ in $V_{\lambda+1}$ and $j_U,k_U:L(X,V_{\lambda+1})\prec L(X,V_{\lambda+1})$ such that $j_U$ is the elementary embedding from $U$, $j=j_U\circ k_U$ and $j\upharpoonright L_\Theta(X,V_{\lambda+1})=j_U\upharpoonright L_\Theta(X,V_{\lambda+1})$.
\end{teo}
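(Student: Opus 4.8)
The plan is to relativize the proof of Theorem \ref{allproper} essentially verbatim, replacing $V_{\lambda+1}$-constructibility by $X$-relative constructibility and carrying $X$ along as a fixed parameter. First I would record that every structural ingredient of that proof survives the relativization. By the theory of relative constructibility there is a surjection $\Phi:\Ord\times V_{\lambda+1}\twoheadrightarrow L(X,V_{\lambda+1})$ definable in $L(X,V_{\lambda+1})$ (with $X$ itself as the only extra parameter), so $L(X,V_{\lambda+1})\vDash V=\HOD_{X,V_{\lambda+1}}$, and the partial Skolem functions, the pwo-coding of ordinals below $\Theta$ and Lemma \ref{Thetasubsets} all go through with $\Theta=\Theta^{L(X,V_{\lambda+1})}_{V_{\lambda+1}}$. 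Together with the already-noted facts that $\DC_\lambda$ holds and $\Theta$ is regular in $L(X,V_{\lambda+1})$, this supplies everything the ultrapower construction needs.

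Next I would set $U_j=\{Y\subseteq V_{\lambda+1}:Y\in L(X,V_{\lambda+1}),\ j\upharpoonright V_\lambda\in j(Y)\}$ and check, as before, that it is a normal non-principal $L(X,V_{\lambda+1})$-ultrafilter on $V_{\lambda+1}$. The crucial point is \L os' Theorem for I0 for this filter, and here I would reuse the original argument unchanged: its first stage, producing $H$ from a square root $k$ of $j\upharpoonright V_\lambda$ with $k(k)=j\upharpoonright V_\lambda$, quantifies only over embeddings $V_\lambda\prec V_\lambda$ and never mentions $X$, while the reduction of the general case to that stage uses only the surjection $\Phi$ above. Thus $\Ult(L(X,V_{\lambda+1}),U_j)$ is well-founded (being isomorphic to the class ${\cal Z}=\{j(f)(j\upharpoonright V_\lambda):\dom f=V_{\lambda+1}\}$ of genuine sets), its canonical embedding is $i=j$, and $j:L(X,V_{\lambda+1})\prec{\cal Z}$ is elementary.

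Finally I would collapse ${\cal Z}$ to a transitive $N$ and read off the factorization, and here lies the one genuinely new point, which I expect to be the main obstacle. In the I0 case the collapse is forced to be $L(V_{\lambda+1})$ because $j(V_{\lambda+1})=V_{\lambda+1}$; but $X$ need not be a fixed point of $j$, so a priori $N=L(j(X),V_{\lambda+1})$, and I must show $N=L(X,V_{\lambda+1})$. I would resolve this by proving $X\in{\cal Z}$. The key observation is that $j\upharpoonright V_{\lambda+1}\in{\cal Z}$: it equals $j(g)(j\upharpoonright V_\lambda)$, where $g\in L(X,V_{\lambda+1})$ is the definable map sending each $k:V_\lambda\prec V_\lambda$ to $k^+$, since $(j\upharpoonright V_\lambda)^+=j\upharpoonright V_{\lambda+1}$. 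As also $V_{\lambda+1}\subseteq{\cal Z}$, every $R\subseteq V_{\lambda+1}$ lying in $L(X,V_{\lambda+1})$ satisfies $R=\{a\in V_{\lambda+1}:(j\upharpoonright V_{\lambda+1})(a)\in j(R)\}$ and so is definable inside ${\cal Z}$ from the parameters $j\upharpoonright V_{\lambda+1},j(R)\in{\cal Z}$; hence $R\in{\cal Z}$. Applied to pwos this gives $\Theta\subseteq{\cal Z}$, so $\crt(k_U)>\Theta$ and $j\upharpoonright L_\Theta(X,V_{\lambda+1})=j_U\upharpoonright L_\Theta(X,V_{\lambda+1})$; applied to (a code for) $X$ it gives $X\in{\cal Z}$, and since the collapse fixes every subset of $V_{\lambda+1}$ it fixes $X$, whence $X\in N$ and $N=L(j(X),V_{\lambda+1})=L(X,V_{\lambda+1})$. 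Taking $j_U$ to be the ultrapower embedding and $k_U$ the inverse of the collapse then yields $j=j_U\circ k_U$ with the stated agreement on $L_\Theta(X,V_{\lambda+1})$.
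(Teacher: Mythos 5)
Your proposal is correct and follows essentially the same route as the paper: relativize the proof of Theorem \ref{allproper}, observe that the only genuinely new point is that the transitive collapse of ${\cal Z}$ is a priori $L(j(X),V_{\lambda+1})$ rather than $L(X,V_{\lambda+1})$, and resolve it via the identity $X=\{a\in V_{\lambda+1}:j(a)\in j(X)\}$. The paper packages this last step as mutual constructibility ($j(X)\in L(X,V_{\lambda+1})$ and $X$ definable from $j(X)$ and $j\upharpoonright V_\lambda$, so $L(j(X),V_{\lambda+1})=L(X,V_{\lambda+1})$), whereas you show $X\in{\cal Z}$ and that the collapse fixes it; this is the same argument in slightly different clothing.
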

\begin{proof}
 The proof is the same as in \ref{allproper}. The only delicate point is to prove that the collapse of the ultrapower is indeed $L(X,V_{\lambda+1})$, as by elementarity ${\cal Z}\equiv L(j(X),V_{\lambda+1})$. But $j(X)\in L(X,V_{\lambda+1})$, and $X=\{a\in V_{\lambda+1}:j(a)\in j(X)\}$, therefore $L(j(X),V_{\lambda+1})=L(X,V_{\lambda+1})$. 
\end{proof}

There is one difference: in the I0 case we had that a weakly proper embedding was nicely amenable, and for every $a\in L(V_{\lambda+1})$ if we defined $a_0=a$ and $a_{n+1}=j(a_n)$, then $\langle a_n:n\in\omega\rangle\in L(V_{\lambda+1})$. This is because everything is definable from $V_{\lambda+1}$ and fixed points, and $V_{\lambda+1}$ is morally closed under $\omega$-sequences. But in the Icarus case it is not clear whether it is necessary that $\langle X_n:n\in\omega\rangle\in L(X,V_{\lambda+1})$. 

\begin{defin}[Woodin, \cite{Woodin}]
 Let $X\subseteq V_{\lambda+1}$ be an Icarus set, and let $j:L(X,V_{\lambda+1})\prec L(X,V_{\lambda+1})$ with $\crt(j)<\lambda$. Then:
 \begin{itemize}
  \item $j$ is weakly proper iff $j=j_U$;
	\item $j$ is proper iff it is weakly proper and $\langle X_n:n\in\omega\rangle\in L(X,V_{\lambda+1})$, with $X_0=X$ and $X_{n+1}=j(X_n)$. 
 \end{itemize}
\end{defin}

In the particular case of $X=V_{\lambda+1}$, then, weakly proper and proper are the same concept.

One could ask whether it is a good definition, as there can be a different $Z\subseteq V_{\lambda+1}$ such that $L(X,V_{\lambda+1})=L(Z,V_{\lambda+1})$. For this, we have to study the fixed points of $j$ under $\Theta$:

\begin{prop}
 Let $X\subseteq V_{\lambda+1}$ be an Icarus set and let $j:L(X,V_{\lambda+1})\prec L(X,V_{\lambda+1})$ with $\crt(j)<\lambda$. Then $\langle X_n:n\in\omega\rangle\in L(X,V_{\lambda+1})$ iff the fixed points of $j$ are cofinal in $\Theta$. 
\end{prop}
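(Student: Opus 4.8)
The plan is to reduce to the weakly proper case and then prove the two implications separately, both via the Skolem-hull analysis underlying Lemma \ref{onlyfromVlambda} and Corollary \ref{FixedpointsbelowTheta}. For the reduction I would write $j=j_U\circ k_U$ as in the Icarus analogue of Theorem \ref{allproper}, so that $\crt(k_U)>\Theta$ and $j\upharpoonright L_\Theta(X,V_{\lambda+1})=j_U\upharpoonright L_\Theta(X,V_{\lambda+1})$. Each $X_n$ is an element of $V_{\lambda+2}$ lying in $L(X,V_{\lambda+1})\cap V_{\lambda+2}=L_\Theta(X,V_{\lambda+1})\cap V_{\lambda+2}$ (by the analogue of Lemma \ref{Thetasubsets}), so $X=X_0\in L_\Theta$ and inductively $X_{n+1}=j(X_n)=j_U(X_n)$; hence $X_n=j_U^n(X)$ and the sequence does not depend on whether we use $j$ or $j_U$. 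Likewise every ordinal $<\Theta$ lies in $L_\Theta(X,V_{\lambda+1})$, so the fixed points of $j$ below $\Theta$ are exactly those of $j_U$. Thus I may assume $j$ weakly proper; the class $I$ of ordinal fixed points is then proper, by the proof of Lemma \ref{onlyfromVlambda} we have $L(X,V_{\lambda+1})=H^{L(X,V_{\lambda+1})}(I\cup V_{\lambda+1}\cup\{X\})$, and $j$ shifts the orbit, $j(X_n)=X_{n+1}$.

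For the implication ``fixed points cofinal $\Rightarrow$ sequence in the model'', let $\gamma_0,\gamma_1<\Theta$ be the construction levels of $X_0$ and $X_1$ and choose a fixed point $\delta_1\in(\max(\gamma_0,\gamma_1),\Theta)$. Since $j(\delta_1)=\delta_1$ and $X_1\in L_{\delta_1}(X,V_{\lambda+1})$, we get $j(L_{\delta_1}(X,V_{\lambda+1}))=L_{\delta_1}(X_1,V_{\lambda+1})\subseteq L_{\delta_1}(X,V_{\lambda+1})$, so $j$ maps $L_{\delta_1}(X,V_{\lambda+1})$ into itself and, by induction, the whole orbit $\{X_n\}$ stays inside it. I would then let $\delta$ be the least good ordinal $\ge\delta_1$ (good ordinals are unbounded in $\Theta$ by Laver's lemma); as ``least good ordinal $\ge\xi$'' is definable from $\lambda$ and $\delta_1$ is fixed, $\delta$ is again fixed, is good, and still exceeds the levels of $X_0,X_1$. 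Now $j\upharpoonright L_\delta(X,V_{\lambda+1})$ is an elementary map of $L_\delta(X,V_{\lambda+1})$ into itself with $\delta$-good domain and range, so by (the proof of) Lemma \ref{SimplyGood} it is induced by $j\upharpoonright V_\lambda$ and lies in $L_{\delta+1}(X,V_{\lambda+1})\subseteq L(X,V_{\lambda+1})$. Finally $\langle X_n:n\in\omega\rangle$ is defined inside $L(X,V_{\lambda+1})$ by the recursion $X_0=X$, $X_{n+1}=(j\upharpoonright L_\delta)(X_n)$, all values staying in $L_\delta(X,V_{\lambda+1})$.

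For the converse, suppose $s=\langle X_n\rangle\in L(X,V_{\lambda+1})$; then $L(X,V_{\lambda+1})=L(s,V_{\lambda+1})$ and, crucially, $j(s)=\langle X_{n+1}\rangle$ is the shift of $s$, hence definable from $s$. Fixing $\eta\in I$, I would form the full elementary Skolem hull $Z_\eta=H^{L(X,V_{\lambda+1})}(V_{\lambda+1}\cup\{s,\eta\})$ with transitive collapse $L_{\gamma_\eta}(X,V_{\lambda+1})$, $\gamma_\eta<\Theta$. Because $j$ fixes $\eta$, maps $V_{\lambda+1}$ into itself, and sends $s$ to $j(s)\in Z_\eta$, elementarity gives $j(Z_\eta)=H(V_{\lambda+1}\cup\{j(s),\eta\})\subseteq Z_\eta$; since $j(Z_\eta)\prec Z_\eta$, comparing transitive collapses yields $L_{j(\gamma_\eta)}(X_1,V_{\lambda+1})\subseteq L_{\gamma_\eta}(X,V_{\lambda+1})$, so $j(\gamma_\eta)\le\gamma_\eta$ and therefore $j(\gamma_\eta)=\gamma_\eta$. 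As $\eta$ ranges over $I$ the hulls exhaust $H(I\cup V_{\lambda+1}\cup\{s\})=L(X,V_{\lambda+1})$, and using the regularity of $\Theta$ (Lemma \ref{lambdastructure}) together with the bookkeeping of Corollary \ref{FixedpointsbelowTheta} one concludes that these fixed points $\gamma_\eta$ are cofinal in $\Theta$.

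The hardest parts, which I would treat with the most care, are exactly the two invariance issues: in the forward direction, arranging a single level $\delta$ that is simultaneously good, fixed by $j$, and above every orbit level, so that Lemma \ref{SimplyGood} really places $j\upharpoonright L_\delta$ inside the model; and in the converse, confirming that the shifted parameter $s$ keeps each hull $j$-invariant enough to force $j(Z_\eta)\subseteq Z_\eta$ and that the resulting fixed points are cofinal rather than bounded. The underlying phenomenon is that membership of $\langle X_n\rangle$ in the model is precisely what forces $j$ to act continuously on a cofinal set of ordinals below $\Theta$.
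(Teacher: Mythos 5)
Your reduction to the weakly proper case is fine, and you have isolated the paper's key insight for the ``sequence in the model'' hypothesis: $j(s)$ is the shift of $s$, hence definable from $s$ (together with $j\upharpoonright V_\lambda\in V_{\lambda+1}$), so suitably generated hulls are mapped into themselves. But your converse direction breaks down at the final step, in two concrete ways. First, hulls $Z_\eta$ generated by a \emph{single} indiscernible do not exhaust anything: an ordinal $\beta<\Theta$ whose definition requires two or more elements of $I$ lies in no $Z_\eta$, and the union of the $Z_\eta$'s is not the hull of $I\cup V_{\lambda+1}\cup\{s\}$ (hulls do not commute with unions of generator sets). Second, and more fundamentally, even when $\beta\in Z_\eta$ this does not force $\gamma_\eta>\beta$: the collapse ordinal is the order type of $Z_\eta\cap\Ord$, and a hull can contain very large ordinals while having small order type. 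To get $\gamma_\eta\geq\alpha_0$ one needs $\alpha_0\subseteq Z_\eta$ as an initial segment, which is how Corollary \ref{FixedpointsbelowTheta} proceeds --- but there the hull is generated by $\alpha_0$-many indiscernibles, an infinite set of fixed points, for which the $j$-invariance $j(Z)\subseteq Z$ is no longer the easy computation you rely on. The paper avoids collapses entirely: given $\alpha_0<\Theta$, definable from finitely many $i_0,\dots,i_n\in I$ and parameters in $V_{\lambda+1}\cup\{X\}$, it sets $\alpha=\sup$ of all ordinals below $\Theta$ definable from $\{i_0,\dots,i_n\}\cup L_\omega(Y,V_{\lambda+1})$, where $Y=\langle X_n:n\in\omega\rangle$; this $\alpha$ is $<\Theta$ by regularity of $\Theta$ (Lemma \ref{lambdastructure}), is $\geq\alpha_0$, and is fixed because $j$ fixes the finite set $\{i_0,\dots,i_n\}$ pointwise and fixes $L_\omega(Y,V_{\lambda+1})$ setwise --- your shift observation. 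Your argument becomes correct if you replace the collapse ordinal $\gamma_\eta$ by $\sup(Z\cap\Theta)$ and generate $Z$ from $\{s,i_0,\dots,i_n\}\cup V_{\lambda+1}$.

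In the other direction your detour through good ordinals is both heavier than needed and not supported by the paper's lemmas: Laver's unboundedness of good ordinals and Lemma \ref{SimplyGood} are established only for the hierarchy $L_\alpha(V_{\lambda+1})$, and the analogue of Lemma \ref{SimplyGood} is not literally true as you cite it --- an elementary self-map $i$ of $L_\delta(X,V_{\lambda+1})$ is \emph{not} induced by $i\upharpoonright V_\lambda$ alone, since elements of the $X$-hierarchy are definable from elements of $V_{\lambda+1}$ \emph{and} $X$, so one also needs the parameter $i(X)$. (Here $j(X)\in L_{\gamma_1}(X,V_{\lambda+1})$ lies in the model, so the conclusion $j\upharpoonright L_\delta(X,V_{\lambda+1})\in L(X,V_{\lambda+1})$ can be salvaged, but not by the lemma as stated; this distinction is exactly why properness is an issue for general Icarus sets and trivial for $X=V_{\lambda+1}$.) The paper's own argument is a one-liner: take a fixed point $\alpha$ above the level $\gamma$ with $j(X)\in L_\gamma(X,V_{\lambda+1})$; then $j(L_\alpha(X,V_{\lambda+1}))=L_\alpha(X,V_{\lambda+1})$, so the entire orbit $\{X_n\}$ stays in $L_\alpha(X,V_{\lambda+1})$, and this set is closed under $\omega$-sequences (any $\omega$-sequence of its elements is recovered inside the model from a surjection $V_{\lambda+1}\twoheadrightarrow L_\alpha(X,V_{\lambda+1})$ and a code in $V_{\lambda+1}$ for an $\omega$-sequence of preimages), whence $\langle X_n:n\in\omega\rangle\in L(X,V_{\lambda+1})$. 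Note that both your write-up and the paper gloss over the alignment $L_\alpha(j(X),V_{\lambda+1})=L_\alpha(X,V_{\lambda+1})$, which requires $\gamma+\alpha=\alpha$ rather than merely $\alpha>\gamma$; this can be arranged, but it is not automatic for an arbitrary fixed point and deserves a line in either proof.
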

\begin{proof}
 Suppose that $\langle X_n:n\in\omega\rangle=Y\in L(X,V_{\lambda+1})$. We can suppose that $j$ is weakly proper. Then the proof is similar to the proof of Corollary \ref{FixedpointsbelowTheta}: In this case every ordinal $\beta<\Theta$ is definable from some elements of $I\cup V_{\lambda+1}\cup\{X\}$. As $Y\in L(X,V_{\lambda+1})$, $L(Y,V_{\lambda+1})=L(X,V_{\lambda+1})$, and $j(L_\omega(Y,V_{\lambda+1}))=L_\omega(Y,V_{\lambda+1})$. Let $\alpha_0<\Theta$, and suppose that $\alpha_0$ is definable from $\{i_0,\dots,i_n\}\cup L_\omega(Y,V_{\lambda+1})$. Let $\alpha$ be the supremum of the ordinals that are definable from $\{i_0,\dots,i_n\}\cup L_\omega(Y,V_{\lambda+1})$. Since $\Theta$ is regular, $\alpha<\Theta$, and since $j(L_\omega(Y,V_{\lambda+1}))=L_\omega(Y,V_{\lambda+1})$, $j(\alpha)=\alpha>\alpha_0$.

 Now suppose that the fixed points of $j$ are cofinal in $\Theta$. Let $j(X)\in L_\gamma(X,V_{\lambda+1})$ and $\alpha>\gamma$ such that $j(\alpha)=\alpha$. Then $j(L_\alpha(X,V_{\lambda+1}))=L_\alpha(X,V_{\lambda+1})$ and $L_\alpha(X,V_{\lambda+1})$ is closed by $\omega$-sequences, therefore $\langle j(X), j^2(X),\dots\rangle\in L_\alpha(V_{\lambda+1})$.
\end{proof}

\begin{prop}[Woodin, Lemma 6 of \cite{Woodin}]
 Let $X\subseteq V_{\lambda+1}$ and $j:L(X,V_{\lambda+1})\prec L(X,V_{\lambda+1})$ with $\crt(j)<\lambda$. Then if $\langle X_n:n\in\omega\rangle\in L(X,V_{\lambda+1})$, for all $Y\in L(X,V_{\lambda+1})\cap V_{\lambda+2}$, $\langle Y_n:n\in\omega\rangle\in L(X,V_{\lambda+1})$, with $Y_0=Y$ and $Y_{n+1}=j(Y_n)$.
\end{prop}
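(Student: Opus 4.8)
The plan is to rerun the mechanism behind the preceding Proposition, now with an arbitrary $Y$ in place of $X$. First I would reduce to the case in which $j$ is weakly proper: since $\crt(k_U)>\Theta$ one has $j\upharpoonright L_\Theta(X,V_{\lambda+1})=j_U\upharpoonright L_\Theta(X,V_{\lambda+1})$, and every iterate $Y_n$ stays inside $L_\Theta(X,V_{\lambda+1})$ (if $Y_n\in L_\gamma(X,V_{\lambda+1})$ with $\gamma<\Theta$, then $j(Y_n)\subseteq V_{\lambda+1}$ lies in $L_{j(\gamma)}(X,V_{\lambda+1})$ with $j(\gamma)<j(\Theta)=\Theta$). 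Hence the iterates computed with $j$ agree with those computed with $j_U$, and I may assume $j=j_U$. Then, by the preceding Proposition, the hypothesis $\langle X_n:n\in\omega\rangle\in L(X,V_{\lambda+1})$ supplies exactly what I need: the fixed points of $j$ below $\Theta$ are cofinal in $\Theta$.

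The heart of the argument is to manufacture a single good fixed point $\alpha<\Theta$ above $Y$ on which $j$ restricts to an \emph{amenable} self-embedding. Using Lemma \ref{Thetasubsets} I fix $\gamma<\Theta$ with $Y\in L_\gamma(X,V_{\lambda+1})$, and I recall from the proof of the Icarus version of Theorem \ref{allproper} that $X=\{a\in V_{\lambda+1}:j(a)\in j(X)\}$, so that $L(X,V_{\lambda+1})=L(j(X),V_{\lambda+1})$. I then choose $\alpha_0$ good and additively indecomposable (such ordinals are cofinal in $\Theta$, since good ordinals are cofinal and one can thin a fast-growing sequence of them), above $\gamma$ and above the levels at which $j(X)$ appears in $L(X,V_{\lambda+1})$ and $X$ appears in $L(j(X),V_{\lambda+1})$. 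Because goodness and additive indecomposability are definable and $j$ is elementary, each $j^n(\alpha_0)$ is again good and additively indecomposable, so $\alpha=\sup_{n\in\omega}j^n(\alpha_0)$ is good, additively indecomposable, and — having cofinality $\omega<\crt(j)$ — a fixed point: $j(\alpha)=\sup_n j^{n+1}(\alpha_0)=\alpha$. Additive indecomposability forces the two relative hierarchies to agree below $\alpha$, i.e. $L_\alpha(j(X),V_{\lambda+1})=L_\alpha(X,V_{\lambda+1})$, and together with $j(\alpha)=\alpha$ this yields $j(L_\alpha(X,V_{\lambda+1}))=L_\alpha(X,V_{\lambda+1})$. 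Thus $j\upharpoonright L_\alpha(X,V_{\lambda+1})$ is a self-embedding of $L_\alpha(X,V_{\lambda+1})$ with critical point $<\lambda$.

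Now the relativized form of Lemma \ref{SimplyGood} applies: because $\alpha$ is good, the embedding $e:=j\upharpoonright L_\alpha(X,V_{\lambda+1})$ is induced by $j\upharpoonright V_\lambda$ and hence $e\in L_{\alpha+1}(X,V_{\lambda+1})\subseteq L(X,V_{\lambda+1})$. The external map $j$ has been converted into an internal parameter, and the rest is routine: working inside $L(X,V_{\lambda+1})$, which models \ZF, I define by ordinary $\omega$-recursion with parameter $e$ the sequence $Y_0=Y$ and $Y_{n+1}=e(Y_n)$; since $e$ maps $L_\alpha(X,V_{\lambda+1})$ into itself and $Y\in L_\gamma\subseteq L_\alpha$, every $Y_n$ stays in $\dom(e)$, so the recursion is entirely internal and yields $\langle Y_n:n\in\omega\rangle\in L(X,V_{\lambda+1})$. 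As $e=j\upharpoonright L_\alpha(X,V_{\lambda+1})$ and each $Y_n\in L_\alpha(X,V_{\lambda+1})$, these $Y_n$ are precisely the iterates $j^n(Y)$. The main obstacle is the second step, namely producing a good \emph{and} additively indecomposable fixed point $\alpha$ with $L_\alpha(X,V_{\lambda+1})=L_\alpha(j(X),V_{\lambda+1})$: it is exactly this coincidence of the three closure conditions with the mutual definability of $X$ and $j(X)$ that makes $j\upharpoonright L_\alpha$ amenable and lets Lemma \ref{SimplyGood} do the work.
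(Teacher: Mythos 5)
Your overall scaffolding (reduce to $j$ weakly proper, invoke the preceding Proposition to get that the fixed points of $j$ are cofinal in $\Theta$, pick a fixed point $\alpha$ above the level $\gamma$ where $Y$ appears and above the levels witnessing the mutual definability of $X$ and $j(X)$, so that $j(L_\alpha(X,V_{\lambda+1}))=L_\alpha(X,V_{\lambda+1})$) is sound and matches the paper. But the heart of your argument --- manufacturing a \emph{good} fixed point so that the relativized Lemma \ref{SimplyGood} internalizes $e=j\upharpoonright L_\alpha(X,V_{\lambda+1})$ --- has a genuine gap. You need two facts that neither the paper nor your proposal establishes: (i) that good, additively indecomposable ordinals exist cofinally (or even at all) above $\gamma$; and (ii) that the supremum of the $\omega$-sequence $\langle j^n(\alpha_0):n\in\omega\rangle$ of good ordinals is again good. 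Laver's lemma gives only that good ordinals are \emph{unbounded} in $\Theta$, while additively indecomposable ordinals form a club; an unbounded set and a club can have empty intersection, so ``thinning a fast-growing sequence'' of good ordinals does not produce a good a.i.\ ordinal. Worse, goodness is only known to be preserved under successor (the paper proves exactly this, plus goodness below $\lambda^+$); there is no closure under $\omega$-suprema --- indeed the paper's own example of a \emph{non}-good ordinal below $\Theta$ is produced as a limit (a collapsed Skolem hull), which should make you suspicious of (ii). There is also a smaller unaddressed point: you must argue $\sup_n j^n(\alpha_0)<\Theta$ (this does follow by bounding with a fixed point above $\alpha_0$, but it needs saying). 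Without a good fixed point, Lemma \ref{SimplyGood} never applies, $e$ is not known to be an element of $L(X,V_{\lambda+1})$, and the internal recursion cannot start.

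The instructive contrast is that the paper's proof never internalizes the embedding at all, so none of this machinery is needed. Once you have a fixed point $\alpha>\gamma$ with $j(L_\alpha(X,V_{\lambda+1}))=L_\alpha(X,V_{\lambda+1})$, every $Y_n$ lies in $L_\alpha(X,V_{\lambda+1})$, and the sequence $\langle Y_n:n\in\omega\rangle$ --- built externally, in $V$, by applying $j$ --- is then \emph{transferred} into the model: since $\alpha<\Theta$ there is in $L(X,V_{\lambda+1})$ a surjection $\rho:V_{\lambda+1}\twoheadrightarrow L_\alpha(X,V_{\lambda+1})$; using \AC{} in $V$ choose $x_n\in V_{\lambda+1}$ with $\rho(x_n)=Y_n$; the $\omega$-sequence $\langle x_n:n\in\omega\rangle$ is coded by a single element of $V_{\lambda+1}$ (this is the sense in which $L_\alpha(X,V_{\lambda+1})$ is ``closed under $\omega$-sequences''); decoding and applying $\rho$ inside $L(X,V_{\lambda+1})$ yields $\langle Y_n:n\in\omega\rangle\in L(X,V_{\lambda+1})$. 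So the embedding stays external and only the sequence is imported, which is why goodness, Lemma \ref{SimplyGood}, and additive indecomposability never enter the paper's argument. If you want to salvage your write-up, replace the entire good-fixed-point construction by this coding step.
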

\begin{proof}
 Again, let $Y\in L_\gamma(X,V_{\lambda+1})$ and $\alpha>\gamma$ such that $j(\alpha)=\alpha$. Then $j(L_\alpha(X,V_{\lambda+1}))=L_\alpha(X,V_{\lambda+1})$ and $L_\alpha(X,V_{\lambda+1})$ is closed by $\omega$-sequences, therefore $\langle Y_n:n\in\omega\rangle\in L_\alpha(V_{\lambda+1})$.
\end{proof}

Not surprisingly, given that being proper implies some degree of amenability, properness is bound to iterability:

\begin{teo}[Woodin, Lemma 16 and Lemma 21 in \cite{Woodin}]
 Let $X\subseteq V_{\lambda+1}$ and $j:L(X,V_{\lambda+1})\prec L(X,V_{\lambda+1})$ with $\crt(j)<\lambda$. If $j$ is weakly proper, then it is finitely iterable. If $j$ is proper, then it is iterable.
\end{teo}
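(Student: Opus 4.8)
The plan is to mimic the two arguments already carried out for the case $X=V_{\lambda+1}$, namely Lemma \ref{FinIterable} for finite iterability and the subsequent iterability theorem for I0, paying attention to the extra parameter $j(X)$ that now appears.

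First I would treat weak properness and finite iterability. As in the proof of Lemma \ref{FinIterable}, let $I$ be the class of fixed points of $j$; by the analysis of weakly proper embeddings (as in the proof of Lemma \ref{onlyfromVlambda}) every strong limit cardinal of cofinality bigger than $\Theta$ is fixed, so $I$ is a proper class and $L(X,V_{\lambda+1})=H^{L(X,V_{\lambda+1})}(I\cup V_{\lambda+1}\cup\{X\})$. For $s\in[I]^{<\omega}$ set $Z_s=H^{L(X,V_{\lambda+1})}(V_{\lambda+1}\cup\{X\}\cup s)$; these form a directed system whose union is all of $L(X,V_{\lambda+1})$. The point is that $j\upharpoonright Z_s$ is amenable: every element of $Z_s$ is a term evaluated at arguments from $V_{\lambda+1}\cup\{X\}\cup s$, and $j$ fixes $s$ and $V_{\lambda+1}$, so $j\upharpoonright Z_s$ is definable from $\{j\upharpoonright V_\lambda,\ j(X),\ s\}$, all of which lie in $L(X,V_{\lambda+1})$ (recall $j(X)\in L(X,V_{\lambda+1})$ and $j\upharpoonright V_{\lambda+1}$ is defined by $j\upharpoonright V_\lambda$). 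Hence $k_s=j(j\upharpoonright Z_s)$ makes sense, and since $j$ carries $Z_s$ to the hull $Z_s'$ of $V_{\lambda+1}\cup\{j(X)\}\cup s$ and $Z_s'$ to the hull of $V_{\lambda+1}\cup\{j^2(X)\}\cup s$, the maps $k_s$ cohere into a direct limit. Because $L(j(X),V_{\lambda+1})=L(j^2(X),V_{\lambda+1})=L(X,V_{\lambda+1})$ (as $j(X),j^2(X)\in L(X,V_{\lambda+1})$ and $X$ is recoverable from them), both source and target of the limit are exhausted, giving $k:L(X,V_{\lambda+1})\prec L(X,V_{\lambda+1})$ with $k\upharpoonright V_\lambda=(j\upharpoonright V_\lambda)\cdot(j\upharpoonright V_\lambda)$; replacing $k$ by its weakly proper part $k_U$ (using the Icarus version of Lemma \ref{allproper}) we obtain $j^2$ as a weakly proper embedding. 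Iterating this construction shows $j^n$ exists and is weakly proper for every $n$, i.e. finite iterability.

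For the second statement I would add properness. By the Propositions proved above, properness is equivalent to the fixed points of $j$ being cofinal in $\Theta$, and it guarantees that for every $Y\in L(X,V_{\lambda+1})\cap V_{\lambda+2}$ the whole iterate sequence $\langle Y_n:n\in\omega\rangle$ lies in $L(X,V_{\lambda+1})$. This amenability is exactly what keeps the transfinite recursion within reach: at each step $j^\alpha$ is again proper (its iterate sequences remain captured), so $j^{\alpha+1}=j^\alpha(j^\alpha)$ and $M_{\alpha+1}=j^\alpha(M_\alpha)$ are defined as above, and at limits one forms the direct limit. What remains is well-foundedness of every $M_\alpha$. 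Here I would follow the two-step pattern of Proposition \ref{iterableI3}: first reduce full iterability to $\omega_1$-iterability by taking a countable elementary substructure, collapsing, running the iteration inside it, and commuting the collapse maps, so that the least ill-founded stage $\nu$ would already be witnessed at a countable ordinal $\bar\nu$; then prove $\omega_1$-iterability directly.

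The hard part will be well-foundedness at limit stages of cofinality $\omega$, where in the presence of choice one would invoke the countable completeness of the generating ultrafilter (the Gaifman--Powell tower argument, Lemma \ref{gaifpow}). In the present choiceless setting the substitute is supplied precisely by properness: a hypothetical $\in$-descending sequence in a direct-limit iterate $M_\gamma$ is an $\omega$-sequence of elements, each coming from some earlier $M_{\beta_n}$ with $\beta_n\nearrow\gamma$; using that $V_{\lambda+1}$ is closed under $\omega$-sequences, that $\DC_\lambda$ holds in $L(X,V_{\lambda+1})$, and that the relevant iterate sequences $\langle Y_n\rangle$ are captured in the model by properness, one pulls this $\omega$-sequence back into a single earlier model and obtains a genuine descending sequence there, contradicting the well-foundedness of the shorter iterate. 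Carrying this reflection uniformly through all countable stages yields $\omega_1$-iterability, and with the reduction above, full iterability; the main technical care is ensuring that the pulled-back sequence really lands inside one model, which is exactly where the difference between merely weakly proper (only finitely many iterates controlled) and proper (all iterate sequences amenable) becomes decisive.
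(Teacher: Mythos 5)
First, a point of reference: the paper does not actually prove this theorem at all --- it is quoted from Woodin's \emph{Suitable extender models II} (Lemmas 16 and 21). The only proof the paper contains is of Lemma \ref{FinIterable}, which is the case $X=V_{\lambda+1}$ of the first assertion. Your first part is a correct adaptation of that proof to a general Icarus set: you track the new parameter $j(X)$ properly, noting that $j\upharpoonright Z_s$ is now definable from $\{j\upharpoonright V_\lambda, j(X), s\}$, that $j$ shifts the hull $Z_s$ to the hull generated by $j(X)$, and that the direct limit still exhausts $L(X,V_{\lambda+1})$ because $X$ is recoverable from $j(X)$ and $j\upharpoonright V_\lambda$ via $X=\{a\in V_{\lambda+1}:j^+(a)\in j(X)\}$. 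That half is fine.

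The second half has a genuine gap, located exactly where all the work of Woodin's Lemma 21 lies: well-foundedness at limit stages of cofinality $\omega$. Your proposed mechanism --- ``pull the descending $\omega$-sequence back into a single earlier model'' --- cannot work as stated. At such a stage $\gamma$ the terms of a descending sequence are $x_n=j_{\beta_n,\gamma}(a_n)$ with $\beta_n$ cofinal in $\gamma$, and the iteration maps are not surjective: there is no single $\beta<\gamma$ whose range contains all the $x_n$ (if there were, well-foundedness would be trivial, which is precisely why limits of uncountable cofinality are unproblematic). The sequence must instead be represented over the base model, $x_n=j_{0,\gamma}(f_n)(\vec{\sigma}_n)$ with $f_n\in L(X,V_{\lambda+1})$, turning ill-foundedness into an \emph{external} sequence of functions of the model whose ``descent sets'' lie in the derived tower of $L(X,V_{\lambda+1})$-ultrafilters; one then needs a completeness property for that tower, as in Lemma \ref{gaifpow}. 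Here the argument that worked one level down (Lemma \ref{completetower}) breaks: there, an external $\omega$-sequence of measure-one sets was itself coded by an element of $V_{\lambda+1}$, hence a legitimate parameter to which elementarity could be applied; now the measure-one sets are elements of $L(X,V_{\lambda+1})\cap V_{\lambda+2}$, and an external $\omega$-sequence of such sets is in general not in the model at all. Invoking $\DC_\lambda$ inside $L(X,V_{\lambda+1})$ does not help: the sequence lives in $V$, where \AC{} holds; the obstacle is amenability, not choice.

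This is also where properness must actually be cashed in, and your sketch never does so. Properness gives $\langle X_n:n\in\omega\rangle\in L(X,V_{\lambda+1})$; since $j_{0,n}\upharpoonright L_\Theta(X,V_{\lambda+1})$ is determined by $j_{0,n}\upharpoonright V_\lambda$ and $j_{0,n}(X)=X_n$ (the Icarus analogue of Lemma \ref{onlyfromVlambda}), the whole sequence of restrictions $\langle j_{0,n}\upharpoonright L_\Theta(X,V_{\lambda+1}):n\in\omega\rangle$, and with it the derived tower, becomes amenable to the model. It is this internalization that allows one to put the tree of finite attempts at a descending sequence inside $L(X,V_{\lambda+1})$, transfer ill-foundedness into the model (absoluteness of having no rank function, plus \DC{} there to extract a branch), and only then apply elementarity for a contradiction. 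None of this appears in your sketch; moreover your parenthetical claim that ``at each step $j^\alpha$ is again proper'' is itself a nontrivial induction step (past a limit the very notion of properness, and the definition of $j^\gamma$ itself, must be re-interpreted over the new model $M_\gamma$, to which $j^\gamma$ is not an element). In short: your part one is right, and your part two correctly locates the difficulty, but the proposed resolution would fail, and the actual role of properness is missing.
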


The Coding Lemma holds for any embedding, but the equivalent of \ref{manymeas} is proven in Lemma 207 of \cite{Woodin} only for the case where the embedding is proper. It is not known whether properness is necessary. In fact it is not even known whether it is a concept that is distinct from weak properness:

\begin{Q}
 Is there an Icarus set $X\subseteq V_{\lambda+1}$ and an elementary embedding $j:L(X,V_{\lambda+1})\prec L(X,V_{\lambda+1})$ that is weakly proper but not proper?
\end{Q}

We have seen that if we limit ourselves to I0 the answer is negative. Towards a solution for this problem in \cite{Dimonte1} and \cite{Dimonte2} it is proven that there are $N_1$ and $N_2$, $V_{\lambda+1}\subseteq N_1,N_2\subseteq V_{\lambda+2}$, such that there are embeddings weakly proper but non proper\footnote{We say that $j:L(N)\prec L(N)$ is proper if for any $X\in N$, $\langle X_n:n\in\omega\rangle\in L(N)$.} from $L(N_1)$ to itself, and such that all weakly proper embeddings from $L(N_2)$ to itself are not proper. But it is also proven that there cannot be $X\subseteq V_{\lambda+1}$ such that $L(N_1)=L(X,V_{\lambda+1})$ or $L(N_2)=L(X,V_{\lambda+1})$.

The existence of an Icarus set outside $L(V_{\lambda+1})\cap V_{\lambda+2}$ is not automatically an axiom strictly stronger than I0. We end this section introducing canonical Icarus sets, whose existence is strictly stronger than I0, and that form a hierarchy above it. 

\begin{defin}
 Let $A$ be a transitive set. Define ${\cal L}^+_A$ as LST, the language of set theory, extended with the constants $\{C\}\cup\{c_a\}_{a\in A}\cup\{d_i\}_{i\in\omega}$.

 Let $X\subseteq V_{\lambda+1}$. Suppose that in $L(X,V_{\lambda+1})$ there is a class of indiscernibles for formulas with parameters in $\{X\}\cup V_{\lambda+1}$. Then $X^\sharp$ is the theory in ${\cal L}^+_A$ of
\begin{equation*}
  (L(X,V_{\lambda+1}),\{X\}\cup V_{\lambda+1},\{a\}_{a\in \{X\}\cup V_{\lambda+1}},\{d_i\}_{i\in\omega}), 
\end{equation*}
where $d_i$ is the $i$-th indiscernible.
\end{defin}

We can consider therefore $(V_{\lambda+1})^\sharp$. Contrary to the $0^\sharp$ case, we do not know whether an I0 embedding implies the existence of $(V_{\lambda+1})^\sharp$: it is an inner model question, see Remark 28 in \cite{Woodin}. On the other hand, if $(V_{\lambda+1})^\sharp$ exists, with a shift of indiscernibles there is surely an embedding $k:L(V_{\lambda+1})\prec L(V_{\lambda+1})$, but it is not an I0 embedding, as $\crt(k)>\Theta$.

So we suppose that $(V_{\lambda+1})^\sharp$ exists. 

\begin{lem}
 Suppose that $(V_{\lambda+1})^\sharp$ exists. Then $\cof(\Theta^{L(V_{\lambda+1})}_{V_{\lambda+1}})=\omega$. 
\end{lem}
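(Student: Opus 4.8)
The plan is to use the indiscernibles coded by $(V_{\lambda+1})^\sharp$ to write $\Theta$ as the supremum of an increasing $\omega$-sequence of ordinals each strictly below $\Theta$. Write $I=\langle i_\xi:\xi\in\Ord\rangle$ for the closed unbounded class of (Silver) indiscernibles provided by $(V_{\lambda+1})^\sharp$, so that $L(V_{\lambda+1})=H^{L(V_{\lambda+1})}(V_{\lambda+1}\cup I)$, where the Skolem hull is taken with respect to the partial Skolem functions $h_{\varphi,a}$ ($a\in V_{\lambda+1}$) introduced after the surjection lemma. First I would record the basic location fact: every ordinal definable in $L(V_{\lambda+1})$ from parameters in $\{V_{\lambda+1}\}\cup V_{\lambda+1}$ alone, i.e.\ without indiscernibles, lies below $i_0$. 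Indeed, if $\delta=h_{\varphi,a}()$ with $a\in V_{\lambda+1}$ and $\delta\ge i_0$, then the formula ``$h_{\varphi,a}()\ge x_1$'' holds of $i_0$, hence by indiscernibility of every $i_n$, so $\delta\ge\sup_n i_n$; since $I$ is unbounded in $\Ord$ this would place $\delta$ above arbitrarily large ordinals, a contradiction. As $\Theta$ is definable from the parameter $V_{\lambda+1}$, this gives $\Theta<i_0$.

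Second, I would show that every ordinal $\beta<\Theta$ already lies in $H_\omega:=H^{L(V_{\lambda+1})}(V_{\lambda+1}\cup\{i_n:n<\omega\})$. Any such $\beta$ equals $h_{\varphi,a}(j_1,\dots,j_n)$ for some $a\in V_{\lambda+1}$ and indiscernibles $j_1<\cdots<j_n$; since $\beta<\Theta\le i_0\le j_1$, the value of this Skolem term is below its least indiscernible argument, and by the standard constancy property of indiscernibles such a value does not depend on the chosen increasing tuple. Hence $\beta=h_{\varphi,a}(i_0,\dots,i_{n-1})\in H_\omega$, and therefore $\Theta\subseteq H_\omega$.

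The third step is the boundedness of the finite approximations. Put $H_n:=H^{L(V_{\lambda+1})}(V_{\lambda+1}\cup\{i_0,\dots,i_{n-1}\})$. Since $\{i_0,\dots,i_{n-1}\}$ is a finite set of ordinals it belongs to $L(V_{\lambda+1})$, and the functions $h_{\varphi,a}$ are definable there, so $H_n\in L(V_{\lambda+1})$; moreover, coding a formula together with a finite tuple of parameters from $V_{\lambda+1}$ and finitely many of the fixed ordinals $i_0,\dots,i_{n-1}$ into a single element of $V_{\lambda+1}$ yields a surjection $V_{\lambda+1}\twoheadrightarrow H_n$ lying in $L(V_{\lambda+1})$. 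Consequently $\ot(H_n\cap\Theta)<\Theta$, and since $\Theta$ is regular in $L(V_{\lambda+1})$ (Lemma \ref{lambdastructure}) we get $\gamma_n:=\sup(H_n\cap\Theta)<\Theta$. It is essential here that $H_\omega$ itself is \emph{not} in $L(V_{\lambda+1})$: the $\omega$-sequence $\langle i_n:n<\omega\rangle$ cannot belong to $L(V_{\lambda+1})$, for otherwise $L(V_{\lambda+1})$ would compute its own sharp; this is exactly why there is no surjection $V_{\lambda+1}\twoheadrightarrow H_\omega$ in $L(V_{\lambda+1})$ and hence no contradiction with $\Theta\subseteq H_\omega$.

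Finally, combining the pieces, $\Theta=H_\omega\cap\Theta=\bigcup_{n<\omega}(H_n\cap\Theta)$, so $\Theta=\sup_{n<\omega}\gamma_n$ with each $\gamma_n<\Theta$; thinning to a strictly increasing subsequence exhibits a cofinal map $\omega\to\Theta$, whence $\cof(\Theta)\le\omega$, and as $\Theta>\lambda>\omega$ we conclude $\cof(\Theta)=\omega$. The main obstacles are the two facts about the indiscernibles --- that $\Theta$ lies below the least indiscernible, and that a Skolem term whose value falls below its least indiscernible argument is constant across increasing tuples --- both of which I would import as standard properties of the indiscernibles encoded by $(V_{\lambda+1})^\sharp$; the only genuinely $L(V_{\lambda+1})$-specific ingredient is the regularity of $\Theta$ together with the surjection coding, which is what bounds the finite hulls below $\Theta$.
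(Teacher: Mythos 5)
Your architecture is exactly that of the paper's proof --- use the sharp to write every ordinal below $\Theta$ as a Skolem value on parameters from $V_{\lambda+1}$ together with the first $n$ indiscernibles (your steps 1 and 2, which are correct: the unboundedness argument giving $\Theta<i_0$ and the appeal to remarkability are both fine), and then exhibit $\Theta$ as a countable supremum of smaller ordinals. But your third step has a genuine gap. You assert that $H_n=H^{L(V_{\lambda+1})}(V_{\lambda+1}\cup\{i_0,\dots,i_{n-1}\})$ belongs to $L(V_{\lambda+1})$, together with a surjection $V_{\lambda+1}\twoheadrightarrow H_n$ lying in $L(V_{\lambda+1})$, on the grounds that ``the functions $h_{\varphi,a}$ are definable there.'' Each $h_{\varphi,a}$ is indeed definable in $L(V_{\lambda+1})$ \emph{for each fixed} $\varphi$, but forming the closure under the whole family $\{h_{\varphi,a}:\varphi\text{ a formula},\ a\in V_{\lambda+1}\}$, or the evaluation map $(\varphi,a)\mapsto h_{\varphi,a}(i_0,\dots,i_{n-1})$, requires satisfaction for $L(V_{\lambda+1})$ \emph{uniformly in} $\varphi$; by Tarski's theorem this is not definable in $L(V_{\lambda+1})$ (it is definable in $V$ from $(V_{\lambda+1})^\sharp$, which is exactly what is not in $L(V_{\lambda+1})$). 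So neither $H_n\in L(V_{\lambda+1})$ nor the existence of that surjection inside $L(V_{\lambda+1})$ is established, and the key claim $\gamma_n=\sup(H_n\cap\Theta)<\Theta$ is left unsupported. Note that you diagnose the obstruction for $H_\omega$ as the sequence $\langle i_n:n<\omega\rangle$ not being in $L(V_{\lambda+1})$; but the real obstruction is the uniformity in $\varphi$, and it is already present for $H_0$, the hull of $V_{\lambda+1}$ alone, where the generators certainly are in $L(V_{\lambda+1})$.

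The gap is local and the repair keeps your architecture (and is, in effect, what the paper's one-line proof is doing): work formula by formula. For each fixed $\varphi$ and each $n$, the set
\begin{equation*}
 S_{\varphi,n}=\{h_{\varphi,a}(i_0,\dots,i_{n-1}):a\in V_{\lambda+1}\}\cap\Theta
\end{equation*}
\emph{is} an element of $L(V_{\lambda+1})$: for a single $\varphi$ the graph of $(a,\vec{x})\mapsto h_{\varphi,a}(\vec{x})$ is given by one formula (using the definable $\OD_a$ well-ordering), and $i_0,\dots,i_{n-1}$ are just ordinal parameters. Moreover $S_{\varphi,n}$ is the surjective image of $V_{\lambda+1}$ under a function in $L(V_{\lambda+1})$, so $\ot(S_{\varphi,n})<\Theta$, and by regularity of $\Theta$ in $L(V_{\lambda+1})$ (Lemma \ref{lambdastructure}) each $S_{\varphi,n}$ is bounded below $\Theta$. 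Your steps 1 and 2 give $\Theta=\bigcup_{n<\omega}\bigcup_{\varphi}S_{\varphi,n}$, a union over the countably many pairs $(\varphi,n)$, so the countable family $\{\sup S_{\varphi,n}:\varphi,\ n<\omega\}$ is cofinal in $\Theta$ and $\cof(\Theta)=\omega$. Whether each of your $\gamma_n$ is below $\Theta$ then becomes irrelevant: if some $\gamma_n=\Theta$, the $\varphi$-indexed suprema for that $n$ already witness cofinality $\omega$.
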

\begin{proof}
 By the theory of the sharp, if $(V_{\lambda+1})^\sharp$ exists every element of $L(V_{\lambda+1})$ is defined from a finite subset of $V_{\lambda+1}$ and finite indiscernibles. Let $\alpha_n$ be the supremum of the ordinals definable from $V_{\lambda+1}$ and the first $n$ indiscernibles. Then the supremum of the $\alpha_n$'s is $\Theta^{L(V_{\lambda+1})}_{V_{\lambda+1}}$.
\end{proof}

Therefore, if $(V_{\lambda+1})^\sharp$ exists, $L(V_{\lambda+1})$ is very far from $V$ (cfr. with Theorem \ref{manymeas}). 

What if $(V_{\lambda+1})^\sharp$ is an Icarus set? Note then that I0 is the same as $\exists j:(V_{\lambda+1},(V_{\lambda+1})^\sharp)\prec (V_{\lambda+1},(V_{\lambda+1})^\sharp)$, therefore not only $(V_{\lambda+1})^\sharp$ is Icarus is strictly stronger then I0($\lambda$), but any I0-embedding must be definable in $L((V_{\lambda+1})^\sharp, V_{\lambda+1})$. By the same proof of \ref{FirstStepI0}, therefore, $(V_{\lambda+1})^\sharp$ is Icarus strongly implies I0($\lambda$) (in fact, a $j:L_\omega((V_{\lambda+1})^\sharp, V_{\lambda+1})\prec L_\omega((V_{\lambda+1})^\sharp, V_{\lambda+1})$ suffices to carry on the construction of the inverse limits). 

If such construction is carefully analyzed, one can realize that the $\bar{\lambda}$ such that I0($\bar{\lambda}$) holds constructed in this way has more properties, as it is limit of critical points of embeddings:

\begin{teo}[Shi, Trang, 2017 \cite{ShiTrang}]
 Suppose that $(V_{\lambda+1})^\sharp$ is Icarus and let $\bar{\lambda}<\lambda$ be such that I0($\bar{\lambda}$) holds because of the inverse limit construction. Then $\bar{\lambda}$ is limit of $\bar{\lambda}^+$-supercompact cardinals. Therefore there is no $\bar{\lambda}^+$-Aronszajn tree, there is no good scale at $\bar{\lambda}$ and Stationary Reflection at $\bar{\lambda}^+$ is true. 
\end{teo}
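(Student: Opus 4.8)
The plan is to read the large-cardinal conclusion straight off the anatomy of the inverse limit construction, and then hand it to known singular-cardinal combinatorics. Recall that the passage from ``$(V_{\lambda+1})^\sharp$ is Icarus'' to $I0(\bar\lambda)$ runs the machinery of Theorem \ref{FirstStepI0}, which is itself the method of Theorem \ref{strongimplEn}: one picks an infinite branch $\langle k_i:i\in\omega\rangle$ of square roots, each with $k_i\upharpoonright V_\lambda:V_\lambda\prec V_\lambda$, with $\crt(k_0)<\crt(k_1)<\dots$, and forms the inverse limit $K=k_0\circ k_1\circ\dots$ whose domain is $V_{\bar\lambda}$ with $\bar\lambda=\sup_{i\in\omega}\crt(k_i)$, exactly as computed in Lemma \ref{InverseLimit}. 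The decisive feature is that each block restricts to a genuine member of ${\cal E}_\lambda$, so that each $\crt(k_i)$ is really the critical point of a rank-into-rank embedding of $V_\lambda$, and these critical points are cofinal in $\bar\lambda$.

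First I would establish that $\bar\lambda$ is a limit of $\bar\lambda^+$-supercompact cardinals. Since $\lambda$ is a strong limit of cofinality $\omega$, it is not a successor cardinal, so from $\bar\lambda<\lambda$ we get $\bar\lambda^+<\lambda$. Applying Proposition \ref{verylarge} to each $k_i\upharpoonright V_\lambda:V_\lambda\prec V_\lambda$, the critical point $\crt(k_i)$ is $\gamma$-supercompact for every $\gamma<\lambda$, and in particular for $\gamma=\bar\lambda^+$. As the $\crt(k_i)$ are cofinal in $\bar\lambda$, the cardinal $\bar\lambda$ is a limit of $\bar\lambda^+$-supercompact cardinals, all witnessed in $V$. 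The only point to check here is that the supercompactness is genuine in $V$ rather than in some ultrapower, and this is immediate because the $k_i\upharpoonright V_\lambda$ are honest elements of ${\cal E}_\lambda$ and Proposition \ref{verylarge} is stated for $V$.

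The remaining three assertions are then standard consequences of $\bar\lambda$ being a singular strong limit of cofinality $\omega$ that is a limit of $\bar\lambda^+$-supercompact, hence $\bar\lambda^+$-strongly compact, cardinals. For stationary reflection at $\bar\lambda^+$, given stationary $S\subseteq\bar\lambda^+$ I would fix $\kappa<\bar\lambda$ which is $\bar\lambda^+$-supercompact with $\kappa$ above the cofinalities concentrated in $S$, take an embedding $i:V\prec M$ with $\crt(i)=\kappa$ and $M^{\bar\lambda^+}\subseteq M$, and argue in the usual way that $\sup i''\bar\lambda^+$ is a reflection point of $S$. For the non-existence of a $\bar\lambda^+$-Aronszajn tree, i.e.\ the tree property at $\bar\lambda^+$, I would invoke the Magidor--Shelah theorem, whose hypothesis, namely that $\bar\lambda$ is a singular limit of $\bar\lambda^+$-strongly compact cardinals, is precisely what the first step delivered. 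For the absence of a good scale at $\bar\lambda$, I would use that the reflection furnished by cofinally many $\bar\lambda^+$-supercompacts forces the set of good points of any scale to be non-stationary, since a good scale would produce a stationary set of approachable points contradicting this reflection.

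The hard part is not any single deduction but making rigorous the assertion that $\bar\lambda$ is a \emph{limit of critical points of full $V_\lambda\prec V_\lambda$ embeddings}: one must verify that Cramer's limit-of-limit-roots procedure, run in the $\sharp$-setting over the collapse of $\lambda$ as in Theorem \ref{FirstStepI0}, really yields blocks $k_i$ whose restrictions to $V_\lambda$ lie in ${\cal E}_\lambda$ with critical points cofinal in $\bar\lambda$, and not merely partial or $\Sigma^1_n$-approximate embeddings to which Proposition \ref{verylarge} would fail to apply. Once this bookkeeping is settled the supercompactness is automatic, and the three combinatorial corollaries follow from the cited results on successors of singular limits of supercompact cardinals.
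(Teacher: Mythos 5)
Your proposal is correct and follows essentially the same route as the paper, which offers no proof of its own (it defers to Shi--Trang) but whose lead-in paragraph sketches exactly your argument: $\bar{\lambda}$ is by construction a limit of critical points of the square-root embeddings produced by the inverse-limit machinery of Theorem \ref{FirstStepI0}, each such critical point is $\gamma$-supercompact for every $\gamma<\lambda$ (hence $\bar{\lambda}^+$-supercompact, since $\bar{\lambda}^+<\lambda$) by Proposition \ref{verylarge}, and the three combinatorial conclusions are then the standard theorems on successors of singular limits of $\bar{\lambda}^+$-supercompact (hence $\bar{\lambda}^+$-strongly compact) cardinals. The only slip is in your justification of the scale clause: what the supercompact reflection argument actually yields is that the set of \emph{bad} points of every scale at $\bar{\lambda}$ is stationary (which is precisely the failure of goodness), obtained by showing that $\sup i''\bar{\lambda}^+$ is a bad point of $i(\vec{f})$ and then reflecting via elementarity; asserting that the \emph{good} points are non-stationary is stronger than needed and is not what this argument gives, and goodness should not be conflated with approachability (approachable points are good, not conversely).
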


This is interesting if compared to Theorem \ref{secondShiTrang}.

The following is the key to the analysis of Icarus sets:

\begin{teo}[Woodin, Lemma 213 in \cite{Woodin}]
 Suppose that $X\subseteq V_{\lambda+1}$ is an Icarus set, and let $Y\in L(X,V_{\lambda+1})\cap V_{\lambda+2}$ such that $\Theta^{L(Y,V_{\lambda+1})}<\Theta^{L(X,V_{\lambda+1})}$. Then $Y^\sharp$ exists and $Y^\sharp\in L(X,V_{\lambda+1})$. 
\end{teo}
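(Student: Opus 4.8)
The plan is to reduce the existence of $Y^\sharp$ to the existence of a single nontrivial self-embedding of $L(Y,V_{\lambda+1})$ fixing $V_{\lambda+1}$ pointwise, to build such an embedding by reflecting $j$ below a good fixed point lying strictly between $\Theta_Y:=\Theta^{L(Y,V_{\lambda+1})}_{V_{\lambda+1}}$ and $\Theta_X:=\Theta^{L(X,V_{\lambda+1})}_{V_{\lambda+1}}$, and finally to read off the theory $Y^\sharp$ inside $L(X,V_{\lambda+1})$.

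First, some reductions and a rigidity observation. By the Icarus version of \ref{allproper} we may assume $j$ is weakly proper, and by \ref{FixedpointsbelowTheta} its fixed points are cofinal in $\Theta_X$. Suppose $i:L(Y,V_{\lambda+1})\prec L(Y,V_{\lambda+1})$ is nontrivial with $i\upharpoonright V_{\lambda+1}=\id$. Since every $a\in V_{\lambda+1}$ and every prewellordering $R\subseteq V_{\lambda+1}$ is coded by a subset of $V_{\lambda+1}$ and hence fixed by $i$, the ordertype of each such $R$ is fixed; by \ref{Thetasubsets} every ordinal below $\Theta_Y$ is the length of such a prewellordering, so $\crt(i)>\Theta_Y$, and automatically $i(Y)=Y$ (as $a\in Y\iff i(a)\in i(Y)\iff a\in i(Y)$). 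Thus such an $i$ is exactly a \emph{shift of indiscernibles} for $L(Y,V_{\lambda+1})$ over $\{Y\}\cup V_{\lambda+1}$. Iterating $i$ linearly through $\Ord$, which is well-founded at every stage by the standard sharp-iterability argument, produces a closed unbounded class of Silver indiscernibles generating $L(Y,V_{\lambda+1})$, whose theory is by definition $Y^\sharp$. So it suffices to produce one such $i$, preferably inside $L(X,V_{\lambda+1})$.

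The heart of the argument is the construction of $i$ from $j$, and this is where the hypothesis $\Theta_Y<\Theta_X$ is essential. Using \ref{manymeas} together with the cofinality of the fixed points, fix a good $\gamma$ with $\Theta_Y<\gamma<\Theta_X$, $j(\gamma)=\gamma$ and $L_\gamma(X,V_{\lambda+1})\prec L_{\Theta_X}(X,V_{\lambda+1})$. By the Icarus analogue of \ref{SimplyGood}, $\bar{\jmath}:=j\upharpoonright L_\gamma(X,V_{\lambda+1})$ is induced by $j\upharpoonright V_\lambda$ and so $\bar{\jmath}\in L(X,V_{\lambda+1})$; moreover $Y$ and all of $L(Y,V_{\lambda+1})\cap V_{\lambda+2}$ lie in $L_\gamma(X,V_{\lambda+1})$ because $\gamma>\Theta_Y$ (by \ref{Thetasubsets} relativised to $Y$). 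The \textbf{main obstacle} is that $\bar{\jmath}$ has critical point below $\lambda$ and moves $V_{\lambda+1}$, whereas we need an embedding that fixes $V_{\lambda+1}$ pointwise and has critical point above $\Theta_Y$; the two are genuinely different, and converting one into the other is precisely the work that the gap $\Theta_Y<\Theta_X$ makes possible. The plan is to take, inside $L(X,V_{\lambda+1})$, a Skolem hull $H\prec L_\gamma(X,V_{\lambda+1})$ containing $V_{\lambda+1}\cup\{X,Y\}$ and a carefully chosen cofinal block of $\bar{\jmath}$-fixed ordinals of $(\Theta_Y,\gamma)$, transitively collapse it — the collapse being the identity on $V_{\lambda+1}$ since $V_{\lambda+1}\subseteq H$ — and use $\bar{\jmath}$ to induce on the collapse a nontrivial embedding whose only movement is a shift of the chosen ordinals, hence with critical point above $\Theta_Y$. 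Restricting to the inner model built over $Y$ then yields $i:L(Y,V_{\lambda+1})\prec L(Y,V_{\lambda+1})$ with $i\upharpoonright V_{\lambda+1}=\id$.

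Finally, for membership in the model: the ordinal $\gamma$, the embedding $\bar{\jmath}$, the hull $H$ and its collapse, and therefore $i$ itself, are all elements of $L(X,V_{\lambda+1})$, everything being witnessed below $\gamma<\Theta_X$. Since $i\in L(X,V_{\lambda+1})$, the entire linear iteration of $i$ and the resulting class of indiscernibles are definable in $L(X,V_{\lambda+1})$, so the theory $Y^\sharp$ — a subset of $\omega\times(\{Y\}\cup V_{\lambda+1})^{<\omega}$, codeable as a subset of $V_{\lambda+1}$ — is an element of $L(X,V_{\lambda+1})$, as required. I expect the collapse-and-shift construction of $i$ to demand the most care: one must verify that the selected fixed ordinals genuinely form a block of indiscernibles for the collapsed structure and that $\bar{\jmath}$ descends to a total elementary self-embedding of $L(Y,V_{\lambda+1})$ rather than only a partial map.
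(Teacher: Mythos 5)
Your opening paragraph is essentially right, and it is the only part of the argument that is: a nontrivial $i:L(Y,V_{\lambda+1})\prec L(Y,V_{\lambda+1})$ with $i\upharpoonright V_{\lambda+1}=\id$ fixes every subset of $V_{\lambda+1}$ in $L(Y,V_{\lambda+1})$, hence every pwo and every ordinal below $\Theta_Y$, so $\crt(i)>\Theta_Y$ and $i(Y)=Y$, and such an $i$ does yield $Y^\sharp$ by the Kunen-style iteration argument (itself nontrivial, but a reasonable black box). Note also that the paper gives no proof of this theorem --- it defers entirely to Woodin's Lemma 213 --- so your attempt can only be judged on its own terms, and on those terms the second paragraph, which is the entire content of the theorem, does not work.

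The central gap is the construction of $i$. Any embedding ``induced by $\bar{\jmath}=j\upharpoonright L_\gamma(X,V_{\lambda+1})$'' on the transitive collapse of a hull $H\prec L_\gamma(X,V_{\lambda+1})$ with $V_{\lambda+1}\subseteq H$ still has critical point $\crt(j)<\lambda$: the collapse is the identity on $V_{\lambda+1}$, so the conjugated map acts as $j$ there, and no amount of hulling and collapsing can convert a map moving $V_\lambda$-elements into one that is the identity on $V_{\lambda+1}$. The only other reading of your construction is that $i$ is the shift map along the ``carefully chosen block of $\bar{\jmath}$-fixed ordinals''; but then the elementarity of $i$ is precisely the assertion that those ordinals are indiscernibles for the collapsed structure over $V_{\lambda+1}\cup\{Y\}$, and being a fixed point of $\bar{\jmath}$ confers no indiscernibility whatsoever (two fixed points are in general distinguishable by first-order properties --- one can be a cardinal of the model and the other not). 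Producing indiscernibles \emph{is} the theorem; your sketch defers it to ``care'' at exactly the point where the idea is needed. Symptomatically, the hypothesis $\Theta_Y<\Theta_X$ does no real work in your argument: its actual role is to place $Y$-related objects \emph{inside} $L(X,V_{\lambda+1})$ --- since $\Theta_Y<\Theta_X$, the structure $L_{\Theta_Y}(Y,V_{\lambda+1})$ is coded by a subset of $V_{\lambda+1}$ lying in $L_{\Theta_X}(X,V_{\lambda+1})$, whence $j\upharpoonright L_{\Theta_Y}(Y,V_{\lambda+1})$ (equivalently, the trace of $U_j$ on $L(Y,V_{\lambda+1})\cap V_{\lambda+2}$) is definable in $L(X,V_{\lambda+1})$ from $j\upharpoonright V_{\lambda+1}$ and $j$ of the code --- and this amenability is the launching point for extracting $Y^\sharp$ and for getting it into the model; using the gap merely to ``find room for fixed points'' yields nothing. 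Two further problems: you invoke \ref{FixedpointsbelowTheta} and \ref{manymeas} in the Icarus setting, but there their analogues are tied to $j$ being \emph{proper}, not merely weakly proper (the paper shows that having fixed points cofinal in $\Theta$ is \emph{equivalent} to properness, and whether weakly proper implies proper is open), so after your reduction to a weakly proper $j$ these citations are unjustified; and $i$, being a class function, cannot literally be an element of $L(X,V_{\lambda+1})$ --- what can lie in the model is a set-sized fragment such as $i\upharpoonright L_{\Theta_Y}(Y,V_{\lambda+1})$ or a derived ultrafilter, and the final claim $Y^\sharp\in L(X,V_{\lambda+1})$ must be routed through such an object rather than asserted.
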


So let $X\subseteq V_{\lambda+1}$ be an Icarus set such that $\Theta^{L(X,V_{\lambda+1})}>\Theta^{L(V_{\lambda+1})}$. Then $(V_{\lambda+1})^\sharp$ exists and, since it is definable, it is an Icarus set. Moreover $\Theta^{L(X,V_{\lambda+1})}\geq\Theta^{L((V_{\lambda+1})^\sharp,V_{\lambda+1})}$. In this sense $(V_{\lambda+1})^\sharp$ is a ``canonical'' Icarus set. If $X\subseteq V_{\lambda+1}$ is any Icarus set such that $\Theta{L(X,V_{\lambda+1})}>\Theta^{L((V_{\lambda+1})^\sharp,V_{\lambda+1})}$, then $(V_{\lambda+1})^{\sharp\sharp}$ exists, it is an Icarus set and $\Theta^{L(X,V_{\lambda+1})}\geq\Theta^{L((V_{\lambda+1})^{\sharp\sharp},V_{\lambda+1})}$. 

In other words, if $X\subseteq V_{\lambda+1}$ is an Icarus set then the largeness of $\Theta^{L(X,V_{\lambda+1})}$ ``measures'' how strong it actually is, as an axiom, and the $\Theta$'s of the models built from the sharps (therefore $\Theta^{L((V_{\lambda+1})^\sharp,V_{\lambda+1}}$, $\Theta^{L((V_{\lambda+1})^{\sharp\sharp},V_{\lambda+1}}$, $\dots$) are the reference for such measure. Also, the construction for strong implication still holds. Therefore we have a new hierarchy: 

\begin{prop}
 Let $\lambda$ be a cardinal. Then $V_{\lambda+1}$ is Icarus is strongly implied by $(V_{\lambda+1})^\sharp$ is Icarus, that is strongly implied by $(V_{\lambda+1})^{\sharp\sharp}$ is Icarus, etc. In other words, $(V_{\lambda+1})^{(n+1)\sharp}$ is Icarus strongly implies that $(V_{\lambda+1})^{n\sharp}$ is Icarus. Moreover, if $X\subseteq V_{\lambda+1}$ is Icarus and there exists $n\in\omega$ such that $\Theta^{L(X,V_{\lambda+1})}<\Theta^{L((V_{\lambda+1})^{n\sharp},V_{\lambda+1})}$, then there exists an $m<n$ such that $\Theta^{L(X,V_{\lambda+1})}=\Theta^{L((V_{\lambda+1})^{m\sharp},V_{\lambda+1})}$ and for any $s\leq m$ $(V_{\lambda+1})^{m\sharp}$ is Icarus.
\end{prop}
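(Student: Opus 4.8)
The plan is to separate the two assertions: the strong-implication chain is handled by the inverse-limit machinery of Section \ref{strimp}, and the ``moreover'' clause by Woodin's theorem producing sharps of Icarus sets (Lemma 213 in \cite{Woodin}) together with a maximality argument. Throughout, fix $n$ and write $Y=(V_{\lambda+1})^{n\sharp}$, so that $(V_{\lambda+1})^{(n+1)\sharp}=Y^\sharp$. First I would record the level-$Y$ analogue of the equivalence already noted for I0: since $Y^\sharp$ encodes the Skolem functions of $L(Y,V_{\lambda+1})$ through its indiscernibles, ``$Y$ is Icarus'' is equivalent to the existence of an embedding of $(V_{\lambda+1},Y,Y^\sharp)$ into itself, and in particular every Icarus embedding for $Y$ is definable inside $L(Y^\sharp,V_{\lambda+1})$. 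Granting that $Y^\sharp$ is Icarus, pick $j:L(Y^\sharp,V_{\lambda+1})\prec L(Y^\sharp,V_{\lambda+1})$ with $\crt(j)<\lambda$; because $Y^\sharp$ (and hence $Y$ and $V_{\lambda+1}$) is definable, $j$ fixes it, exactly as $j$ fixes $V_{\lambda+1}$ in the I0 case, so $j\upharpoonright L(Y,V_{\lambda+1})$ already witnesses that $Y$ is Icarus at the same $\lambda$.

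For the strong part I would run the proof of Theorem \ref{FirstStepI0} verbatim, with $L_\omega(Y^\sharp,V_{\lambda+1})$ in place of $L_1(V_{\lambda+1})$: as already observed for the base case before the Proposition, an embedding $j:L_\omega(Y^\sharp,V_{\lambda+1})\prec L_\omega(Y^\sharp,V_{\lambda+1})$ suffices to launch the square-root and inverse-limit construction, since the satisfaction relation of $(V_{\lambda+1},Y,Y^\sharp)$ is definable there, which is precisely the ingredient that Lemma \ref{FinalChange} (and Lemma \ref{ReflectionI0}) require in order to be reproved in this setting. The construction reflects to some $\bar\lambda<\lambda$; because $Y=(V_{\lambda+1})^{n\sharp}$ is defined uniformly from $\lambda$, the reflected object is $(V_{\bar\lambda+1})^{n\sharp}$, and the inverse limit yields an embedding of $L((V_{\bar\lambda+1})^{n\sharp},V_{\bar\lambda+1})$. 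Hence the least $\lambda$ making $(V_{\lambda+1})^{(n+1)\sharp}$ Icarus strictly exceeds the least making $(V_{\lambda+1})^{n\sharp}$ Icarus, which is strong implication in the sense of Theorem \ref{strongimplEn}; iterating over $n$ gives the whole chain down to $V_{\lambda+1}$ Icarus, i.e. I0($\lambda$).

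For the ``moreover'' clause I would first set up two monotonicity facts. If $Z\in L(X,V_{\lambda+1})$ then $L(Z,V_{\lambda+1})\subseteq L(X,V_{\lambda+1})$, so $\Theta^{L(Z,V_{\lambda+1})}\le\Theta^{L(X,V_{\lambda+1})}$; and the sharp strictly raises $\Theta$, that is $\Theta^{L(Z^\sharp,V_{\lambda+1})}>\Theta^{L(Z,V_{\lambda+1})}$, because $Z^\sharp$ supplies a definable cofinal $\omega$-sequence in $\Theta^{L(Z,V_{\lambda+1})}$ (as in the lemma giving $\cof(\Theta)=\omega$), making $\Theta^{L(Z,V_{\lambda+1})}$ a surjective image of $V_{\lambda+1}$ in $L(Z^\sharp,V_{\lambda+1})$. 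Thus $\langle\Theta^{L((V_{\lambda+1})^{k\sharp},V_{\lambda+1})}:k\le n\rangle$ is strictly increasing. Now, writing $\Theta_X=\Theta^{L(X,V_{\lambda+1})}$ and using $\Theta^{L(V_{\lambda+1})}\le\Theta_X$ (since $V_{\lambda+1}\in L(X,V_{\lambda+1})$) together with the hypothesis $\Theta_X<\Theta^{L((V_{\lambda+1})^{n\sharp},V_{\lambda+1})}$, the value $m=\max\{k\le n:\Theta^{L((V_{\lambda+1})^{k\sharp},V_{\lambda+1})}\le\Theta_X\}$ is well defined and satisfies $m<n$.

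To finish I would argue by induction along $k<m$: strict monotonicity gives $\Theta^{L((V_{\lambda+1})^{k\sharp},V_{\lambda+1})}<\Theta_X$ strictly, so Woodin's sharp theorem applied to $X$ and $(V_{\lambda+1})^{k\sharp}$ places $(V_{\lambda+1})^{(k+1)\sharp}$ in $L(X,V_{\lambda+1})$, whence $(V_{\lambda+1})^{m\sharp}\in L(X,V_{\lambda+1})$. If $\Theta^{L((V_{\lambda+1})^{m\sharp},V_{\lambda+1})}<\Theta_X$ strictly, one more application of the sharp theorem to $(V_{\lambda+1})^{m\sharp}$ forces $\Theta^{L((V_{\lambda+1})^{(m+1)\sharp},V_{\lambda+1})}\le\Theta_X$, contradicting the maximality of $m$; therefore $\Theta^{L((V_{\lambda+1})^{m\sharp},V_{\lambda+1})}=\Theta_X$. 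Finally, for each $s\le m$ the set $(V_{\lambda+1})^{s\sharp}$ lies in $L(X,V_{\lambda+1})$ and is definable, hence fixed by any Icarus embedding of $L(X,V_{\lambda+1})$, whose restriction witnesses that $(V_{\lambda+1})^{s\sharp}$ is Icarus. I expect the main obstacle to be the strong (reflection) step: verifying that $L_\omega(Y^\sharp,V_{\lambda+1})$ genuinely supports the square-root Lemma \ref{FinalChange} with the extra sharp predicate, and that the sharp operation is sufficiently absolute and uniform in $\lambda$ for the reflected embedding at $\bar\lambda$ to be an embedding of exactly $L((V_{\bar\lambda+1})^{n\sharp},V_{\bar\lambda+1})$ rather than of some spurious structure.
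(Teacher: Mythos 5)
Your proposal is correct and takes essentially the same route as the paper, which proves this proposition only through the discussion preceding it: the same-$\lambda$ implication via definability/absoluteness of the sharps, the strong implication by rerunning the square-root and inverse-limit construction of Theorem \ref{FirstStepI0} with the sharp added as a predicate (the paper's ``the construction for strong implication still holds''), and the ``moreover'' clause by iterating Woodin's Lemma 213 of \cite{Woodin} together with monotonicity of $\Theta$ under inclusion of the models $L(Z,V_{\lambda+1})$. Your explicit maximality-plus-induction argument and the caveats you flag at the end merely fill in details that the paper glosses over.
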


Pushing this result above the first $\omega$ sharps is more difficult. One can try to define $Y_{\alpha+1}=Y_\alpha^\sharp$ and $Y_\gamma=\bigcup_{\alpha<\gamma}Y_\alpha$ for $\gamma$ limit, but at a certain point there would be a $\gamma$ limit such that $Y_\gamma$ is not a subset of $V_{\lambda+1}$, so then $(Y_\gamma)^\sharp$ is still not a subset of $V_{\lambda+1}$, and so on. The way to do it is to enlarge our analysis to $L(N)$ with $V_{\lambda+1}\subset N\subset V_{\lambda+2}$, and if necessary at the successor of a limit stage adding just less of $N^\sharp$ so that everything is codeable with a subset of $V_{\lambda+1}$. It is a complex endeavour, that takes all Section 4 of \cite{Woodin}.

\section{Further Developments}
\label{further}

The main line of research right now is finding an answer for Question \ref{representable}. The research is ongoing and very promising: if the answer is yes, then not only many already known results will get unlocked (for example the Ultrafilter Axiom at $\omega$ or the full power of Generic Absoluteness), but it would give an elegant tree representation to all subsets of $V_{\lambda+1}$. The potential would be huge, just like what happened for  $\AD^{L(\mathbb{R})}$. 

Another approach would be to investigate the inner model theory of I0. Here the main result is this:

\begin{teo}[Woodin, 2011, \cite{Woodin0}, \cite{WooDavRod}]
 Let $\delta$ be an extendible cardinal. Assume that $N$ is a weak extender model for $\delta$ supercompact and $\gamma>\delta$ is a cardinal in $N$. Let $j:H(\gamma^+)^N\to H(j(\gamma)^+)^N$ be an elementary embedding with $\delta\leq\crt(j)$ and $j\neq\id$. Then $j\in N$.
\end{teo}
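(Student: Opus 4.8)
The plan is to reduce everything to a single capturing statement and then read $j$ off from the object it captures. Writing $\kappa=\crt(j)\ge\delta$, I would first derive from $j$ the $(\kappa,j(\gamma))$-extender over $N$,
\begin{equation*}
 E=\langle E_a:a\in[j(\gamma)]^{<\omega}\rangle,\qquad E_a=\{X\subseteq[\gamma]^{|a|}:a\in j(X)\},
\end{equation*}
equivalently the normal fine $N$-ultrafilter on $({\cal P}_\kappa(\gamma))^N$ with seed $j''\gamma$. Standard extender bookkeeping shows that $E$ codes $j$ completely: $j$ is recovered as (the restriction of) the internal extender ultrapower embedding $i_E^N\colon H(\gamma^+)^N\to\Ult(H(\gamma^+)^N,E)$, the factor map being trivial on the relevant initial segment. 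Hence $j\in N$ if and only if $E\in N$, and the real content becomes a capturing/absorption fact about the extender derived from $j$.

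To capture $E$ I would use both hypotheses in tandem. Since $\delta$ is extendible it is $\eta$-supercompact for every $\eta$, so for $\eta$ large enough that $j$, $E$ and $H(j(\gamma)^+)^N$ all sit below $\eta$ I may fix a normal fine $\delta$-complete ultrafilter $U$ on ${\cal P}_\delta(\eta)$; and because $N$ is a weak extender model for $\delta$ supercompact I may take $U$ so that ${\cal P}_\delta(\eta)\cap N\in U$ and, crucially, $U\cap N\in N$. Let $i\colon V\prec M=\Ult(V,U)$ be the associated embedding. Then $\crt(i)=\delta\le\kappa=\crt(j)$, the model $M$ is closed under $\eta$-sequences, the seed $i''\eta$ lies in $i(N)$ (because ${\cal P}_\delta(\eta)\cap N\in U$), and the capturing clause $U\cap N\in N$ lets $N$ itself compute $i\upharpoonright N$ as the internal ultrapower embedding $i_{U\cap N}^{N}$. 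These are exactly the features of weak extender models that make the supercompactness embeddings of $V$ restrict to $N$.

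With $i$ in hand I would run the absorption. As $M$ is closed under $\eta$-sequences and $E$ is coded by an $\eta$-sized object, $E\in M$; the key step is to push this to $E\in i(N)$. Here one exploits that $\crt(i)=\delta\le\crt(j)$, so $E$ lives above the critical point of $i$ and is ``seen'' by the image model through the captured seed $i''\eta\in i(N)$. Finally I would descend: since $i\upharpoonright N=i_{U\cap N}^N$ is internal to $N$ and $N$ agrees with $i(N)$ below $\eta$, the relation $E\in i(N)$ reflects back to $E\in N$, which by the first paragraph yields $j\in N$.

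The step I expect to be the main obstacle is precisely the absorption $E\in i(N)$ — the genuine ``universality'' phenomenon. Verifying it requires a careful covering argument matching critical points and supports: one must show that the agreement between $N$ and $i(N)$ furnished by the captured measure $U$ is tight enough to locate the seed $j''\gamma$ (equivalently $E$) as an \emph{element} of $i(N)$, not merely as a subset of $N$ viewed from outside. Extendibility is what guarantees such embeddings for arbitrarily large $\eta$, and hence for arbitrary $\gamma>\delta$, while the clause $U\cap N\in N$ is what converts external agreement into internal definability; the delicate point is ensuring that the reflection of ``$E\in i(N)$'' through $i$ is faithful.
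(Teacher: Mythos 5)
Your first paragraph is essentially correct: taking $E$ to be the extender derived from $j$ with all supports $a\in[j(\gamma)]^{<\omega}$, the factor map from $\Ult(H(\gamma^+)^N,E)$ to $H(j(\gamma)^+)^N$ is the identity, so $j=i_E$ and indeed $j\in N$ iff $E\in N$. But note that this reduction buys nothing: $E$ and $j$ are each definable from the other over $N$, so ``capture $E$'' is word-for-word the original problem. In the setup there are then two inaccuracies. First, extendibility is a red herring: the definition of weak extender model already supplies, for every $\eta>\delta$, a normal fine $\delta$-complete $U$ on ${\cal P}_\delta(\eta)$ with ${\cal P}_\delta(\eta)\cap N\in U$ and $U\cap N\in N$; conversely, a measure handed to you by supercompactness of $\delta$ cannot in general be ``taken'' to satisfy these two clauses. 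Second, the claim that $U\cap N\in N$ makes $i\upharpoonright N$ equal to the internal ultrapower embedding $i^N_{U\cap N}$ is false as stated: what is true is $i\upharpoonright N=\tau\circ i^N_{U\cap N}$, where $\tau:\Ult(N,U\cap N)\to i(N)$ is the factor map $[f]_{U\cap N}\mapsto[f]_U$, and $\tau$ need not be the identity, since $i(N)$ is computed from all functions $f:{\cal P}_\delta(\eta)\to N$ lying in $V$, not only those lying in $N$.

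The genuine gap is that the two steps carrying the entire content of the theorem --- the absorption $E\in i(N)$ and the descent from $E\in i(N)$ to $E\in N$ --- are not proved: you explicitly defer the first as ``the main obstacle'', and you justify the second by an agreement claim that runs in exactly the wrong direction. From $i''\eta\in i(N)$ one does get $i\upharpoonright\eta\in i(N)$ and hence $N\cap H(\eta^+)\subseteq i(N)$ (for $A\in N\cap{\cal P}(\eta)$, $A=\{\alpha<\eta:i(\alpha)\in i(A)\}$ is computable inside $i(N)$); but the descent needs the \emph{opposite} inclusion $i(N)\cap H(\eta^+)\subseteq N$, which follows from nothing in your outline --- in Woodin's development it is a consequence of the $\delta$-approximation property of weak extender models, a substantial theorem in its own right. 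The device your outline is missing, and which powers the actual proof (note that the survey states this theorem without proof, quoting \cite{Woodin0} and \cite{WooDavRod}), is the observation that an elementary embedding $h$ with $\crt(h)\geq\delta$ carries all its small restrictions inside its target: if $\sigma$ is in the domain model, $|\sigma|<\delta$, and $e:\xi\to\sigma$ is a bijection with $\xi<\delta$, then $h\upharpoonright\sigma=\{(e(\alpha),h(e)(\alpha)):\alpha<\xi\}$ is computable from $e$ and the single value $h(e)$. With the $\delta$-approximation property in hand this finishes immediately, applying the device to $j$ itself: $j\cap a\in N$ for every $a\in N$ of $N$-size $<\delta$, hence $j\in N$. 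Alternatively, in your ultrapower one first checks $i\upharpoonright H(j(\gamma)^+)^N\in i(N)$ and $\sigma=i''H(\gamma^+)^N\in i(N)$, then applies the device \emph{inside} $M$ to $i(j)$, whose critical point is $i(\crt(j))\geq i(\delta)>\eta$ while $\sigma$ has size $<i(\delta)$ in $i(N)$, obtaining $i(j)\upharpoonright\sigma\in i(N)$ without the circular assumption $i(j)\in i(N)$ (which is equivalent to the conclusion), and recovers $j=(i\upharpoonright H(j(\gamma)^+)^N)^{-1}\circ(i(j)\upharpoonright\sigma)\circ(i\upharpoonright H(\gamma^+)^N)$ inside $i(N)$; even then the passage from $j\in i(N)$ to $j\in N$ still needs the approximation machinery. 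As written, your proposal restates the theorem as two unproved sub-claims rather than proving it.
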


In other words, if there is a (reasonable) canonical inner model for supercompactness and I0 holds, then I0 must hold in the inner model. There is no hope therefore to reach results of consistency equivalence to I0 via inner models, and for this reason Woodin conceived Ultimate-$L$, a canonical inner model for all large cardinals. If such hypothesis will prove solid, then we would have an actual canonical model, a preferred $V$, to work with. This would overcome all the problems in the Section \ref{dissimil}, and moreover it would be interesting to understand the role of I0 in this setting.

\begin{Q}
 Is it true that Ultimate-$L\vDash I0(\lambda)$ iff Ultimate-$L\vDash L(V_{\lambda+1})\nvDash\AC$?
\end{Q}

One can ask if there are other axioms above I0, still in \ZFC. The most natural approach would be to add to $L(V_{\lambda+1})$ the measures that define $j$, just like for $L[\mu]$, but this is still open:

\begin{Q}
 Let $j:L(V_{\lambda+1})\prec L(V_{\lambda+1})$ weakly proper, and $U_j$ that defines it. Is $V\neq L(V_{\lambda+1})[U_j]$?
\end{Q}

Finally, there is the Reinhardt cardinal approach, i.e., large cardinals in \ZF{} without \AC. There is a whole hierarchy above, like super-Reinhardt cardinals and Berkeley cardinals, and it goes frontally against the Ultimate-$L$ approach. Yet, the situation about Reinhardt cardinals is similar to the I0 one: very few published results (for example \cite{AptSarg}), and a lot of underground activity, via lecture notes, seminar slides, Wikipedia pages, MathOverflow questions, $\dots$. The possible intersections with inner model theory, though, promise to make this topic a catalyst of set theorists' attention in the near future, and the hope is that this will bring more publications, and maybe even a survey, of this exciting topic. 

\emph{Acknowledgments}. The paper was written under the Italian program ``Rita Levi Montalcini 2013'', but it is a collection of notes, thoughts, sketches and ideas that started in the Summer Semester of 2008 at Berkeley, propelled by a course on ``Beyond I0'' by Woodin. With nine years in the making it is therefore a hard task to thank all the people that helped me writing this paper, as almost all the people I interacted with in my academic career has been an inspiration for this, directly or indirectly. Knowing that this is just a small percentage of the whole picture, I would like to thank first of all Hugh Woodin, the demiurge of the I0 world, that welcomed me in Berkeley when I was still a student and continued to be a creative influence during the years; Alessandro Andretta, for his steady support in all this years, and because of his (decisive) insistence for this survey to be written. I would also like to thank Sy Friedman and Liuzhen Wu, that helped to push my research in an unexpected direction, and Scott Cramer and Xianghui Shi, for the many, many discussions on I0 and similars. Special thanks also for Luca Motto Ros, that helped me in shaping this paper.

\end{document}